\documentclass[12pt]{amsart}
\usepackage{amssymb}
\usepackage{enumitem}
\usepackage{graphicx}
\usepackage[cmtip,all]{xy}
\usepackage{bbm}
\usepackage{comment}
\usepackage{mathrsfs}
\usepackage{mathtools}
\usepackage{xcolor}
\usepackage[dvipdfmx]{hyperref}
\usepackage{autobreak}
\usepackage{etoolbox}

\hypersetup{
 colorlinks,
 linkcolor={blue!50!black},
 citecolor={blue!50!black},
 urlcolor={blue!80!black}
}

\everymath{\displaystyle}

\calclayout

\def\DefineSymbol#1#2{\newcommand{#1}{{\mathrm {#2}}}}
\def\DefineCategory#1#2{\newcommand{#1}{{\mathrm {#2}}}}

\theoremstyle{plain}
	\newtheorem{theorem}{Theorem}[section]
	\newtheorem{lemma}[theorem]{Lemma}
	\newtheorem{proposition}[theorem]{Proposition}
	\newtheorem{corollary}[theorem]{Corollary}
	\newtheorem{conjecture}[theorem]{Conjecture}
	
\theoremstyle{definition}
	\newtheorem{definition}[theorem]{Definition}
	\newtheorem{lemma and definition}[theorem]{Lemma and Definition}
	\newtheorem{example}[theorem]{Example}

	\newtheorem{construction}[theorem]{Construction}
	\newtheorem{notation}[theorem]{Notation}
	
\theoremstyle{remark}
	\newtheorem{remark}[theorem]{Remark}
	\numberwithin{equation}{section}

\DefineSymbol{\pr}{pr}
\DefineSymbol{\ab}{ab}
\DefineSymbol{\can}{can}
\DefineSymbol{\perf}{perf}
\DefineSymbol{\id}{id}
\DefineSymbol{\const}{const}
\DefineSymbol{\op}{op}
\DefineSymbol{\diag}{diag}
\DefineSymbol{\proet}{pro\acute{e}t}
\DefineSymbol{\cond}{cond}
\DefineSymbol{\cont}{cont}
\DefineSymbol{\conti}{conti}
\DefineSymbol{\Cond}{Cond}
\DefineSymbol{\dCond}{dCond}
\DefineSymbol{\disc}{disc}
\DefineSymbol{\Tot}{Tot}
\DefineSymbol{\triv}{triv}
\DefineSymbol{\Bun}{Bun}
\DefineSymbol{\adic}{adic}
\DefineSymbol{\rig}{rig}
\DefineSymbol{\nc}{nc}
\DefineSymbol{\nuc}{nuc}
\DefineSymbol{\cpt}{cpt}
\DefineSymbol{\an}{an}
\DefineSymbol{\alg}{alg}
\DefineSymbol{\la}{la}
\DefineSymbol{\rla}{rla}
\DefineSymbol{\dep}{dep}
\DefineSymbol{\red}{red}
\DefineSymbol{\ex}{ex}
\DefineSymbol{\dR}{dR}
\DefineSymbol{\sdR}{sdR}
\DefineSymbol{\HT}{HT}
\DefineSymbol{\st}{st}
\DefineSymbol{\pst}{pst}
\DefineSymbol{\Sen}{Sen}
\DefineSymbol{\unr}{unr}
\DefineSymbol{\wt}{wt}

\DeclareMathOperator{\Hom}{Hom}

\DeclareMathOperator{\Fun}{Fun}

\DeclareMathOperator{\Ker}{Ker}
\DeclareMathOperator{\Coker}{Coker}

\DeclareMathOperator{\Gal}{Gal}
\DeclareMathOperator{\Lie}{Lie}
\DeclareMathOperator{\Frac}{Frac}

\DeclareMathOperator{\cofib}{cofib}

\DeclareMathOperator{\Fil}{Fil}
\DeclareMathOperator{\gr}{gr}
\DeclareMathOperator{\Fit}{Fit}

\DeclareMathOperator{\rk}{rk}

\DeclareMathOperator{\Sym}{Sym}
\DeclareMathOperator{\Spec}{Spec}
\DeclareMathOperator{\AnSpec}{AnSpec}
\DeclareMathOperator{\Spa}{Spa}

\DeclareMathOperator{\Spm}{Spm}

\DeclareMathOperator{\intHom}{\underline{Hom}}

\newcommand{\lra}{\longrightarrow}

\newcommand{\heart}{\heartsuit}

\newcommand{\hotimes}{\mathbin{\hat{\otimes}}}

\DefineCategory{\Set}{Set}
\DefineCategory{\Ab}{Ab}
\DefineCategory{\Ring}{Ring}
\DefineCategory{\Mod}{Mod}
\DefineCategory{\LMod}{LMod}
\DefineCategory{\Rep}{Rep}
\DefineCategory{\coMod}{coMod}
\DefineCategory{\Alg}{Alg}
\DefineCategory{\Ch}{Ch}
\DefineCategory{\Mon}{Mon}
\DefineCategory{\CMon}{CMon}
\DefineCategory{\PCoh}{PCoh}
\DefineCategory{\Perf}{Perf}
\DefineCategory{\Vect}{Vect}
\DefineCategory{\FP}{FP}
\DefineCategory{\Sch}{Sch}
\DefineCategory{\cAff}{cAff}
\DefineCategory{\Aff}{Aff}
\DefineCategory{\AlgAff}{AlgAff}
\DefineCategory{\AlgAnsp}{AlgAnsp}
\DefineCategory{\AffRing}{AffRing}
\DefineCategory{\AnRing}{AnRing}
\DefineCategory{\AnAdic}{AnAdic}
\DefineCategory{\AffAnAdic}{AffAnAdic}
\DefineCategory{\Ani}{Ani}
\DefineCategory{\CAlg}{CAlg}
\DefineCategory{\Corr}{Corr}
\DefineCategory{\Comm}{Comm}

\newcommand{\Cbb}{\mathbb{C}}

\newcommand{\Gbb}{\mathbb{G}}

\newcommand{\Lbb}{\mathbb{L}}

\newcommand{\Nbb}{\mathbb{N}}

\newcommand{\Qbb}{\mathbb{Q}}

\newcommand{\Zbb}{\mathbb{Z}}

\newcommand{\Acal}{\mathcal{A}}
\newcommand{\Bcal}{\mathcal{B}}

\newcommand{\Dcal}{\mathcal{D}}

\newcommand{\Lcal}{\mathcal{L}}

\newcommand{\Ocal}{\mathcal{O}}

\newcommand{\Rcal}{\mathcal{R}}

\newcommand{\Wcal}{\mathcal{W}}

\newcommand{\Afrak}{\mathfrak{A}}

\newcommand{\Xfrak}{\mathfrak{X}}

\newcommand{\mfrak}{\mathfrak{m}}
\newcommand{\nfrak}{\mathfrak{n}}

\renewcommand{\tilde}{\widetilde}
\renewcommand{\hat}{\widehat}
\renewcommand{\bar}{\overline}

\begin{document}

\title[$p$-adic monodromy theorem]{The $p$-adic monodromy theorem over algebraic-affinoid algebras}
\author{Yutaro Mikami}
\date{\today}
\address{RIKEN, Center for Advanced Intelligence Project AIP, Mathematical Science Team, Wako, Saitama 351-0918, Japan}
\email{yutaro.mikami@riken.jp}
\subjclass{}
\begin{abstract}
In the previous paper \cite{Mikami24}, motivated by the categorical $p$-adic local Langlands correspondence, the author studied families of $G_K$-equivariant vector bundles over the Fargues-Fontaine curve parametrized by algebraic-affinoid $\Qbb_{p,\square}$-algebras (e.g., $\Qbb_p[T]$).
In this paper, we study the $p$-adic Hodge theoretic properties of such families.
More precisely, we define the notions of Hodge-Tate, de Rham, and semistable representations for such families, and then prove the $p$-adic monodromy theorem (``de Rham'' implies ``potentially semistable'') in this setting.
This is a generalization of the work of Berger-Colmez (\cite{BC08}).
As an application, we prove the classification of families of $G_K$-equivariant line bundles.
While a similar classification was previously obtained in \cite{Mikami24} under a certain freeness condition by relying on the results of Kedlaya--Pottharst--Xiao \cite{KPX14}, the approach in the present paper removes this freeness condition and entirely bypasses their results.
\end{abstract}
\maketitle

\tableofcontents

\addtocontents{toc}{\protect\setcounter{tocdepth}{1}}
\section*{Introduction}

\subsection{Background}
Let $K$ be a finite extension of $\Qbb_p$.
The \textit{(locally analytic) $p$-adic local Langlands correspondence} (for $GL_n(K)$) is a conjectural relation between locally analytic representations of $GL_n(K)$ and $G_K$-equivariant vector bundles over the Fargues--Fontaine curve $X_{\Cbb_p}$ associated to $\Cbb_p^{\flat}$ (or equivalently, $(\varphi,\Gamma_K)$-modules over the Robba ring $\Rcal_K$ (\cite{FF18})).
When $GL_n(K)=GL_2(\Qbb_p)$, the correspondence was constructed by Colmez in \cite{Col10, Col16}.
However, in contrast to the smooth local Langlands correspondence (i.e., $l$-adic, $l\neq p$), even formulating the conjecture is difficult outside the cases of $GL_2(\Qbb_p)$ (and $GL_1$).

Recently, in \cite{FS24}, Fargues--Scholze formulated the geometrization of the smooth local Langlands correspondence using the Fargues--Fontaine curve, and stated the categorical local Langlands correspondence (conjecture).
As a $p$-adic analog of this, Emerton--Gee--Hellmann proposed the categorical $p$-adic local Langlands correspondence in \cite{EGH23}.
This is stated as follows:

\begin{conjecture}[{\cite[Conjecture 6.2.4]{EGH23}}]
There exists an exact functor
$$\Afrak_{GL_n(K)}^{\rig} \colon \Dcal(\ast/GL_n(K)^{\la}) \to \Dcal(\Xfrak_{n,K})$$
satisfying many good properties (e.g., compatibilities with other categorical (local) Langlands correspondences), where
\begin{itemize}
	\item $\Dcal(\ast/GL_n(K)^{\la})$ is the stable $\infty$-category of locally analytic representations of $GL_n(K)$ (\cite{RJRC23}),
	\item $\Xfrak_{n,K}$ is the rigid analytic moduli stack of $G_K$-equivariant vector bundles over $X_{\Cbb_p}$ (\cite[5.1]{EGH23}),
	\item $\Dcal(\Xfrak_{n,K})$ is the stable $\infty$-category of solid quasi-coherent sheaves on $\Xfrak_{n,K}$.
\end{itemize}
\end{conjecture}

Moreover by using $\Bun_{GL_n(K)}^{\la}$, the $p$-adic version of $\Bun_{GL_n(K)}$, which was defined by Ansch\"{u}tz--Le Bras--Rodr\'{\i}guez Camargo--Scholze, we can also state the following conjecture.

\begin{conjecture}\label{CpLLC}
There exists an exact functor
$$\Afrak_{GL_n(K)}^{\rig} \colon \Dcal(\Bun_{GL_n(K)}^{\la}) \to \Dcal(\Xfrak_{n,K})$$
satisfying many good properties.
\end{conjecture}

\begin{remark}
More precisely, it seems necessary to restrict to the subcategories consisting of objects satisfying suitable finiteness conditions, but we will not pursue this in this paper.
\end{remark}

However, it was pointed out in \cite[Remark 6.2.9 (c)]{EGH23} that the functor $\Afrak_{GL_n(K)}^{\rig}$ is not expected to be fully faithful.
When $n=1$, this problem was resolved by Rodrigues Jacinto--Rodr\'{\i}guez Camargo in \cite{RJRC23} by modifying $\Xfrak_{1,K}$.
More precisely, by \cite{KPX14}, there is an explicit description (over the maximal unramified extension field $K_0$ of $\Qbb_p$ in $K$)
$$\Xfrak_{1,K}\cong (\Wcal\times \Gbb_m^{\an})/\Gbb_m^{\an} $$
with trivial $\Gbb_m^{\an}$-action, where $\Wcal$ is the rigid analytic moduli space of continuous characters of $\Ocal_K^{\times}$, and where $\Gbb_m^{\an}$ is the rigid analytic multiplicative group.
They defined the modification $\Xfrak_{1,K}^{\mathrm{mod}}$ of $\Xfrak_{1,K},$ given by 
$$\Xfrak_{1,K}^{\mathrm{mod}}=(\Wcal\times \Gbb_m^{\alg})/\Gbb_m^{\alg},$$
where $\Gbb_m^{\alg}=\AnSpec (K_0[T^{\pm 1}],\Zbb_p)_{\square}$ is an analytic space in the sense of Clausen and Scholze \cite{AG}, and where we regard $\Wcal$ as an analytic space.
\begin{theorem}[{\cite[Theorem 4.4.4]{RJRC23}}]
     There is a natural equivalence of stable $\infty$-categories
     $$\Dcal(\Bun_{K^{\times}}^{\la})\overset{\sim}{\to}\Dcal_{\square}(\Xfrak_{1,K}^{\mathrm{mod}}).$$
\end{theorem}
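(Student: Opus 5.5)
Since this theorem is quoted from \cite[Theorem 4.4.4]{RJRC23}, I would follow the structure of that argument and not build anything new. The idea is to compute both sides as categories of modules over explicit (solid) group algebras or function algebras, and then match them.

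First, I would describe $\Bun_{K^{\times}}^{\la}$. For a torus, and in particular $K^{\times}$, the Harder--Narasimhan/Kottwitz stratification is discrete. So $\Bun_{K^{\times}}^{\la}$ decomposes as a disjoint union over $\Zbb$ (the degree) of classifying stacks $\ast/(K^{\times})^{\la}$, since each line bundle of fixed degree has automorphism group the locally analytic group $K^{\times}$. Hence
$$\Dcal(\Bun_{K^{\times}}^{\la})\simeq \prod_{d\in\Zbb}\Dcal(\ast/(K^{\times})^{\la}),$$
where the latter is the category of solid locally analytic representations of $K^{\times}$, as in \cite{RJRC23}.

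Second, I would write $K^{\times}\cong \Ocal_K^{\times}\times \varpi^{\Zbb}$ and treat the two factors separately.

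For the compact factor $\Ocal_K^{\times}$, the locally analytic distribution algebra $\Dcal^{\la}(\Ocal_K^{\times},K_0)$ is identified, via the Amice/Schneider--Teitelbaum Fourier transform, with global functions on the weight space $\Wcal$. I would then show that solid locally analytic representations of $\Ocal_K^{\times}$ are modules over this algebra, using the description of the locally analytic classifying stack as analytic modules over distributions. This gives $\Dcal(\ast/(\Ocal_K^{\times})^{\la})\simeq \Dcal_{\square}(\Wcal)$.

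For the discrete factor $\Zbb$, its representations are modules over $K_0[T^{\pm1}]$ with the induced analytic ring structure $(K_0[T^{\pm 1}],\Zbb_p)_{\square}$. This is exactly $\Dcal_{\square}(\Gbb_m^{\alg})$. This step is what forces the algebraic $\Gbb_m^{\alg}$ rather than $\Gbb_m^{\an}$: the group $\Zbb$ is discrete, so no analytic completion in $T$ appears.

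Third, I would account for the degree grading. The product over $d\in\Zbb$ corresponds on the spectral side to the quotient by the trivially acting $\Gbb_m^{\alg}$, because representations of $\Gbb_m^{\alg}$ on a trivial-action stack are $\Zbb$-graded objects. Combining the three steps gives
$$\Dcal_{\square}\bigl((\Wcal\times\Gbb_m^{\alg})/\Gbb_m^{\alg}\bigr)\simeq\prod_{d}\Dcal_{\square}(\Wcal\times\Gbb_m^{\alg}).$$
I would also need a Künneth formula, $\Dcal_{\square}(\Wcal\times \Gbb_m^{\alg})\simeq \Dcal_{\square}(\Wcal)\otimes\Dcal_{\square}(\Gbb_m^{\alg})$, matching $\Dcal(\ast/K^{\times,\la})$ as a tensor product over the two factors of $K^{\times}$.

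The main obstacle is the compact-factor step. One has to show that locally analytic representations of $\Ocal_K^{\times}$ are exactly quasi-coherent sheaves on $\Wcal$, including the non-quasi-compact, Stein nature of $\Wcal$, which is a union of open discs. Proving this requires the equivalence between analytic modules over $\Dcal^{\la}$ and the colimit/limit over affinoid covers, that is, descent for solid modules on Stein spaces. I would first attempt it for $\Zbb_p$: there $\Wcal$ is the open unit disc, and one uses the presentation $\Dcal^{\la}(\Zbb_p)=\varprojlim_r \Ocal(\mathbb{D}_r)$ together with the idempotent-algebra description of locally analytic vectors.
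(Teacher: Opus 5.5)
The paper gives no proof of this statement; it is quoted from \cite[Theorem 4.4.4]{RJRC23}, so your outline has to stand on its own. Its architecture is sensible. You split $\Bun_{K^{\times}}^{\la}$ by degree, write $K^{\times}=\Ocal_K^{\times}\times\pi^{\Zbb}$, obtain $\Gbb_m^{\alg}$ from the uncompleted group algebra $K_0[T^{\pm1}]$ of the discrete factor, and match the degree grading with the trivial $\Gbb_m^{\alg}$-gerbe.

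The gap is in the step you yourself single out, and the mechanism you propose for it is false. You want $\Rep^{\la}(\Ocal_K^{\times})\simeq\Mod_{\Dcal^{\la}}=\Mod_{\Ocal(\Wcal)}\simeq\varprojlim\Dcal(\text{affinoids})=\Dcal_{\square}(\Wcal)$, and neither outer equivalence holds.

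Already for $\Zbb_p$, with $\Wcal=\mathring{\mathbb{D}}$, let $K_0(\chi_n)$ denote $K_0(\zeta_{p^n})\cong\Dcal^{\la}/(\Phi_{p^n}(1+T))$, with $1\in\Zbb_p$ acting by $\zeta_{p^n}$. Its points $x=\zeta-1$ satisfy $|x|\to1$. Each $K_0(\chi_n)$ is a $\Dcal^{\la}$-module, hence so is $\prod_nK_0(\chi_n)$, for any analytic ring structure on $\Dcal^{\la}$, since module categories of analytic rings are closed under limits. But the vector $(1,1,\ldots)$ is not $h$-analytic for any $h$: once $n>h$, $z\mapsto\zeta_{p^n}^z$ is locally constant and nonconstant on $p^h\Zbb_p$. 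The locally analytic vectors of the product are therefore $\bigoplus_nK_0(\chi_n)$.

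So $\Rep^{\la}$ is a proper full subcategory of $\Dcal^{\la}$-modules that is not closed under products. The reason is that $V^{R\la}=\varinjlim_hR\intHom_{K_{0,\square}[\Zbb_p]}(\Dcal^{(h)},V)$ has the colimit on the outside, whereas $\Dcal^{\la}=\varprojlim_h\Dcal^{(h)}$.

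Dually, for $M=\bigoplus_nK_0(\chi_n)$ the unit $M\to\varprojlim_r M\otimes_{\Ocal(\mathring{\mathbb{D}})}\Ocal(\mathbb{D}_{\le r})$ is the map $\bigoplus_n\to\prod_n$. Hence $\Mod_{\Ocal(\mathring{\mathbb{D}})}\to\varprojlim_r\Dcal(\mathbb{D}_{\le r})$ is not fully faithful. There is no ``descent for solid modules on Stein spaces'' of the kind you invoke; the Schneider--Teitelbaum comparison only concerns coadmissible modules.

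What the compact-factor step actually requires is to bypass the Fréchet--Stein algebra and argue at finite level. The $h$-analytic representations are governed by the distribution algebras $\Dcal^{(h)}$ of $h$-analytic functions. Their multivariable Amice transforms are functions on (overconvergent) closed polydiscs $\Wcal_h$ exhausting $\Wcal$. The Schneider--Teitelbaum transform treats $K$-analytic distributions and is not the relevant one here.

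Then $\Rep^{\la}$ is the colimit of the categories $\Rep^{h\text{-an}}\simeq\Dcal(\Wcal_h)$ along the fully faithful inclusions, i.e.\ along pushforwards. By contrast, $\Dcal_{\square}(\Wcal)$ is the limit of the $\Dcal(\Wcal_h)$ along pullbacks. Showing that these two agree is the real content of the step, and your outline does not provide it. One possible route is to interleave the $\Wcal_h$ with affinoid polydiscs, on which pushforward from the smaller closed disc coincides with extension by zero.
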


This result suggests that a test category containing both rigid analytic varieties and ``algebraic varieties'' (e.g., $\mathrm{AnSpec}(\mathbb{Q}_p[T^{\pm 1}], \mathbb{Z})_{\square}$) is necessary to define the modification of the moduli stack $\mathfrak{X}_{n,K}^{\mathrm{mod}}$.
Motivated by this observation, in \cite{Mikami24}, the author defined the test category, and proposed a candidate of $\Xfrak_{n,K}^{\mathrm{mod}}$ as a moduli stack over this test category.

\begin{definition}
\begin{enumerate}
	\item Let $E$ be a finite extension of $\Qbb_p$.
	An analytic $E_{\square}$-algebra $\Acal=(A,A^+)_{\square}$ is called an \textit{algebraic-affinoid analytic $E_{\square}$-algebra} if there is an affinoid $E$-algebra $R$ and a morphism $R\to A$ such that $A$ is relatively discrete and finitely generated $R$-algebra.
	Let $\AlgAff_{E}$ denote the category of algebraic-affinoid analytic $E_{\square}$-algebras.
	\item Let $\Xfrak_{n,K}^{\mathrm{mod}}$ be the sheafification (with respect to a suitable topology) of the presheaf of groupoids on $\AlgAff_{K_0}^{\op}$ that assigns to each $\Acal=(A,A^+)_{\square}\in \AlgAff_{K_0}$ the groupoid of $G_K$-equivariant ``vector bundles'' of rank $n$ over $X_{\Cbb_p}\times \AnSpec\Acal$.
\end{enumerate}
\end{definition}

\begin{remark}
	An algebraic-affinoid analytic $L_{\square}$-algebra is not necessarily Fredholm (cf. \cite[Appendix A]{Mikami24}), and because of this, there is a subtle issue on the descent of finite projective modules.
	To avoid this problem, we take the sheafification.
\end{remark}

\begin{theorem}[{\cite[Theorem 5.20]{Mikami24}}]\label{thm:classification intro}
	Under a certain freeness assumption, the stack $\Xfrak_{1,K}^{\mathrm{mod}}$ defined via the moduli interpretation coincides with the one defined by Rodrigues Jacinto--Rodr\'{\i}guez Camargo.
\end{theorem}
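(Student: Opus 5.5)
This is a result quoted from \cite{Mikami24}; I would prove it by comparing the two stacks through their values on algebraic-affinoid test objects.

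\textbf{Step 1: construct the comparison map.} On the Rodrigues Jacinto--Rodr\'{\i}guez Camargo side, a point of $(\Wcal\times\Gbb_m^{\alg})/\Gbb_m^{\alg}$ over $\Acal=(A,A^+)_{\square}$ consists of:
\begin{itemize}
	\item a character $\delta_0\colon \Ocal_K^\times\to A^\times$, pulled back from $\Wcal$;
	\item a unit $a\in A^\times$, the value at a uniformizer $\varpi$;
	\item a torsor accounting for the trivial $\Gbb_m^{\alg}$-action, i.e.\ a line bundle on $\AnSpec\Acal$.
\end{itemize}
From $\delta_0$ and $a$ I form a character $\delta\colon K^\times\to A^\times$. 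Following the explicit construction of Kedlaya--Pottharst--Xiao, I then build the rank one $(\varphi,\Gamma_K)$-module $\Rcal_A(\delta)\otimes \mathcal L$. Equivalently, this is a $G_K$-equivariant line bundle on $X_{\Cbb_p}\times\AnSpec\Acal$. The construction is functorial in $\Acal$ and compatible with the topology, so after sheafification it gives a morphism $\Xfrak_{1,K}^{\mathrm{RJRC}}\to\Xfrak_{1,K}^{\mathrm{mod}}$.

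\textbf{Step 2: show the map is an equivalence locally.} Both sides are sheaves, so it suffices to work locally on $\Acal$. Local essential surjectivity and full faithfulness then amount to the following: every $G_K$-equivariant line bundle $\mathcal M$ over $\Acal$, subject to the freeness assumption, is locally of the form $\Rcal_A(\delta)\otimes\mathcal L$, with $(\delta,\mathcal L)$ unique up to unique isomorphism.
\begin{itemize}
	\item \emph{Automorphisms.} The automorphisms of $\Rcal_A(\delta)$ should be $A^\times$. This follows from $H^0_{\varphi,\Gamma}(\Rcal_A)=A$, which I would prove by base change from the affinoid $R$ to the relatively discrete finitely generated $A$. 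The base change is possible because cohomology of $(\varphi,\Gamma)$-modules is computed by perfect complexes that commute with such base change.
	\item \emph{Extracting $\delta$.} To obtain $\delta$ from $\mathcal M$, I would use the freeness hypothesis to trivialize $\mathcal M$ as a module and thereby reduce to the affinoid situation. There one can apply \cite{KPX14}: for $\mathcal M$ of rank one over an affinoid, $\mathcal M\cong\Rcal(\delta)\otimes\mathcal L$ locally. One reads off $\delta|_{\Ocal_K^\times}$ from the $\Gamma$-action via Sen theory, and $\delta(\varpi)$ from the $\varphi$-action.
	\item \emph{Descent to $A$.} The resulting character must be shown to descend from the affinoid $R$ to $A$. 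For this I would express $\delta$ as $H^0$ of $\mathcal M\otimes\Rcal(\delta)^{-1}$ being invertible, which is again compatible with base change.
\end{itemize}

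\textbf{Main obstacle.} I expect the main difficulty to be the descent from affinoid algebras to general algebraic-affinoid $A$, for example $A=R[T^{\pm1}]$. Such $A$ are not Fredholm, so finite projective modules need not descend, and this is exactly why the freeness assumption is imposed. To handle it, I would first try to write $A$ as a filtered colimit of affinoid-like pieces, i.e.\ rigid tubes in $T$. I would apply KPX on each piece and glue using uniqueness of $\delta$, and the freeness assumption would guarantee that the glued data is algebraic in $T$ rather than merely analytic.
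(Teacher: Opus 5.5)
The step you flag as the main obstacle is where the proposal breaks, and neither mechanism you offer for it works.

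\textbf{The reduction to the affinoid case.} Trivializing $\mathcal M$ only gives you a $\varphi$-matrix and a $\Gamma_K$-cocycle with values in units of (period ring)$\otimes A$. Nothing forces these to come from an affinoid algebra of definition $A'\subset A$, and in general they do not. Take $K=\Qbb_p$ and $A=\Qbb_p[T^{\pm1}]$. Every affinoid algebra of definition of $A$ lies in $A^b=\Qbb_p$ (Lemma~\ref{lemma:bounded reduced}). Yet $\Ocal_{X_{\Cbb_p,\Acal}}(\delta)$ with $\delta(p)=T$ and $\delta|_{\Zbb_p^\times}=1$ is free with $\varphi(e)=Te$. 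Since $(\tilde{B}^I_{\Cbb_p}[T^{\pm1}])^\times=(\tilde{B}^I_{\Cbb_p})^\times T^{\Zbb}$, no change of basis removes $T$.

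\textbf{The tube fallback.} $R[T^{\pm1}]$ is not a filtered colimit of rigid tubes. The affinoids $R\langle p^nT,p^nT^{-1}\rangle$ receive maps from $A$, form an inverse system, and together only exhaust the analytic locus $\Gbb_m^{\an}\times\AnSpec R$, not $\AnSpec\Acal$. Gluing \cite{KPX14} over them therefore gives the classification underlying the unmodified stack $\Xfrak_{1,K}\cong(\Wcal\times\Gbb_m^{\an})/\Gbb_m^{\an}$. Concretely, you get characters with values in $\Ocal(\Gbb_m^{\an}\times\AnSpec R)^\times$. Showing those values lie in $A^\times$ is the entire difference between $\Gbb_m^{\an}$ and $\Gbb_m^{\alg}$, and you leave it to ``the freeness assumption would guarantee it'' with no argument. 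It is not automatic: over $\Qbb_p[T,\epsilon]/(\epsilon^2)$ the analytic units include $1+\epsilon g(T)$ for every entire $g$.

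\textbf{Further steps that are only asserted.} Sen theory recovers $\wt(\delta)$, not $\delta|_{\Ocal_K^\times}$; finite-order characters are invisible to the Sen operator. An $A$-valued $\delta$ can only exist if the Sen weight $a\in K\otimes A$ lies in $K\otimes A^b$. For instance, over $\Qbb_p[T]$ every continuous character $\Ocal_K^\times\to A^\times$ is $\Qbb_p^\times$-valued. Your plan never confronts this; in the paper it is the separate input Theorem~\ref{thm:HTS wts are analytic}. Similarly, uniqueness of $\delta$ over non-affinoid $A$ does not follow from your automorphism computation.

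\textbf{What the paper does instead.} The paper only quotes this statement from \cite{Mikami24}, where it rested on the affinoid results of \cite{KPX14}. What it proves itself is the stronger Theorem~\ref{thm:classification of line bundles}, with no freeness hypothesis and no descent from affinoids at all:
\begin{enumerate}
\item Bound the Sen weight, so $a\in K\otimes A^b$.
\item After a finite \'etale cover, build a character $\delta_0$ of weight $a$ (Lemma~\ref{lem:wt character}).
\item Twist by $\delta_0^{-1}$ to Hodge--Tate weight $0$; the twist is then de Rham (Corollary~\ref{cor:HT of HT wt 0 implies dR}) and hence potentially semistable (Theorem~\ref{thm:p-adic monodromy}).
\item Read off $\delta$ from the rank-one filtered $(\varphi,N,G_K)$-module (Proposition~\ref{prop:dR character type}). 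This is automatically algebraic over $A$ because $D^L_{\st}$ is finite projective over $K_0\otimes A$.
\end{enumerate}
Uniqueness comes from the Sen polynomial together with $D^L_{\st}$, rather than from \cite{KPX14}.
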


The stack $\Xfrak_{n,K}$ has a substack consisting of de Rham $G_K$-equivariant vector bundles over $X_{\Cbb_p}$.
Following the work of Berger--Colmez (\cite{BC08}), it is explained in \cite[5.2]{EGH23} that de Rham families of $G_K$-equivariant vector bundles over $X_{\Cbb_p}$ can be described as families of filtered $(\varphi, N, G_K)$-modules, which is called the \textit{$p$-adic monodromy theorem} (for arithmetic families).
Since $(\varphi,N, G_K)$-modules can also be described in terms of Weil--Deligne representations (\cite{Fontaine94l}), this substack will play an important role in the comparison between categorical locally analytic $p$-adic local Langlands correspondence and the smooth categorical local Langlands correspondence.
The aim of this paper is to extend the above observation to families parametrized by algebraic-affinoid analytic $\Qbb_{p,\square}$-algebras.

\subsection{Statements of the main results}
Let us now describe what is carried in this paper.
Let $\Acal=(A,A^+)_{\square}$ be an algebraic-affinoid $\Qbb_{p,\square}$-algebra.

\subsubsection{Hodge-Tate representations and de Rham representations}
Let $B_{\HT}$ be the usual Fontaine's period ring.
We regard it as a solid $\Qbb_{p,\square}$-algebra.
We set $B_{\HT,A}=B_{\HT}\otimes A(=B_{\HT}\otimes \Acal)$.
As usual, there is a natural morphism $\AnSpec (B_{\HT},\Zbb)_{\square}\to X_{\Cbb_p}$.
Using this, from a $G_K$-equivariant vector bundle $V$ over $X_{\Cbb_p,\Acal}=X_{\Cbb_p}\times_{\AnSpec\Qbb_{p,\square}} \AnSpec \Acal$, we construct a finite projective $B_{\HT,A}$-module $V_{\HT}$ with a semilinear $G_K$-action.
We say that a $G_K$-equivariant vector bundle $V$ over $X_{\Cbb_p,\Acal}$ is \textit{Hodge-Tate} if the natural morphism
$V_{\HT}^{G_K}\otimes_A B_{\HT,A}\to V_{\HT}$
is an isomorphism.

In the context of de Rham representations, there are two candidates $B_{\dR,A}^+$: the tensor product $B_{\dR}^+\otimes A$, and its $t$-adic completion (where $t\in B_{\dR}^+$ is Fontaine's $t$).
Let $B_{\dR,A}^+$ denote the latter one, and $B_{\sdR,A}^+$ denote the former one\footnote{$\sdR$ stands for ``strongly de Rham''.}.
We note that when $A$ is a Banach $\Qbb_p$-algebra, $B_{\sdR,A}^+$ is already $t$-adically complete.
In general, however, this is not the case; for instance, if $A=\Qbb_p[T]$, then $B_{\sdR,A}^+=B_{\dR}^+[T]$ is not $t$-adically complete.
By inverting $t$, we obtain $B_{\dR,A}$ and $B_{\sdR,A}$.
In the same way as in the Hodge-Tate case, we define the notion of \textit{de Rham} (resp. \textit{strongly de Rham}) $G_K$-equivariant vector bundles over $X_{\Cbb_p,\Acal}$.
It is easy to see that ``strongly de Rham'' implies ``de Rham''.
We can also prove the converse (Theorem~\ref{thm:cdR implies dR}).
Therefore, these notions are in fact equivalent.
The definition using $B_{\dR,A}$ is convenient for studying the relationship with Hodge-Tate $G_K$-equivariant vector bundles or filtered modules, whereas the definition using $B_{\sdR,A}$ is convenient for proving the $p$-adic monodromy theorem.

\subsubsection{Semistable representations}
Let $K_0$ be the maximal unramified extension of $\Qbb_p$ in $K$.
In \cite{Ber02}, Berger defined the notion of semistable representations for $(\varphi,\Gamma_K)$-modules over the Robba ring (which is essentially a ``limit of $p$-adic period rings'').
However, directly adapting the Robba ring to our setting is problematic; taking such limits behaves poorly for algebraic-affinoid $\Qbb_{p,\square}$-algebras such as $\Qbb_p[T]$.
Therefore, it is necessary to give a different definition.
Let $V$ be a $G_K$-equivariant vector bundle over $X_{\Cbb_p,\Acal}$.
As usual, for each closed interval $[r,s]\subset (0,\infty)$, there is a natural morphism $Y_{\Cbb_p,\Acal}^{[r,s]}=Y_{\Cbb_p}^{[r,s]}\times_{\AnSpec \Qbb_{p,\square}} \AnSpec\Acal\to X_{\Cbb_p,\Acal}$.\footnote{Our normalization is chosen such that the distinguished point of $Y_{\Cbb_p}$ lies in $Y_{\Cbb_p}^{[1,1]}$.}
Let $V^{[r,s]}$ denote the pullback of $V$ along this morphism.
We write $\tilde{B}_{\Cbb_p}^{[r,s]}=\Ocal(Y_{\Cbb_p}^{[r,s]})$ and $\tilde{B}_{\Cbb_p,A}^{[r,s]}=\tilde{B}_{\Cbb_p}^{[r,s]}\otimes A$.
\begin{definition}
We say that $V$ is \textit{semistable} if the natural morphism
$$V^{[p^{-1},1]}[\log [p^{\flat}],1/t]^{G_K}\otimes_{K_0\otimes A} \tilde{B}_{\Cbb_p,A}^{[p^{-1},1]}[\log [p^{\flat}],1/t] \to V^{[p^{-1},1]}[\log [p^{\flat}],1/t]$$
is an isomorphism.
\end{definition}
Note that the choice of $[p^{-1},1]$ is not essential, and we may replace this with a closed interval $I$ such that $p^n,p^{n+1}\in I$ for some $n\in \Zbb$.
By gluing along the Frobenius $\varphi$, we can extend the fixed vectors over $[p^{-1},1]$
to those over any closed interval $[r,s]\subset (0,\infty)$.
Based on this observation, we prove that when $A=\Qbb_p$, the above definition coincides with Berger's definition (Theorem~\ref{thm:comparison with the classical definition}).
Moreover, by the standard argument, we prove that the category of semistable $G_K$-equivariant vector bundles over $X_{\Cbb_p,\Acal}$ is equivalent to the category of filtered $(\varphi,N)$-modules over $K_0\otimes A$ (Theorem~\ref{thm:dR filtered phi N Gal-module}).

\subsubsection{The $p$-adic monodromy theorem}
The main theorem of this paper is the following:

\begin{theorem}[{Theorem~\ref{thm:p-adic monodromy}}]
	Let $V$ be a $G_K$-equivariant vector bundle over $X_{\Cbb_p,\Acal}$.
	Then $V$ is de Rham if and only if it is potentially semistable (i.e., semistable as a $G_L$-equivariant vector bundle for some finite extension $L/K$).
\end{theorem}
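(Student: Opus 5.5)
The easy direction comes first. A semistable $G_L$-equivariant bundle is de Rham over $L$: the period comparison for semistable objects (Theorem~\ref{thm:dR filtered phi N Gal-module}) identifies $V^{[p^{-1},1]}[\log[p^\flat],1/t]$ with a base change of a $(\varphi,N)$-module over $L_0\otimes A$. Localizing at the distinguished point in $Y^{[1,1]}_{\Cbb_p}$ gives a map $\tilde{B}^{[p^{-1},1]}_{\Cbb_p}[\log[p^\flat]]\to B_{\dR}^+$, and hence an isomorphism $V_{\sdR}\cong D\otimes_{L_0\otimes A}B_{\sdR,A}$ with $G_L$-invariants $D_{\dR,L}=L\otimes_{L_0}D$. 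Being de Rham is then descended from $L$ to $K$ by Galois descent for the finite free $L\otimes A$-module of $G_L$-invariants with its residual $\Gal(L/K)$-action. Here I would use that ``strongly de Rham'' is equivalent to ``de Rham'' (Theorem~\ref{thm:cdR implies dR}), so we may work with $B_{\sdR,A}$ throughout.

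For the hard direction, following Berger--Colmez \cite{BC08}, I would first reduce to the case of a $V$ that is de Rham with all Hodge--Tate weights $0$, meaning $V_{\dR}^+:=V\otimes B_{\dR,A}^+$ is generated by its $G_K$-invariants. The plan is to modify $V$ at the distinguished point along the $B_{\dR,A}^+$-lattice $D_{\dR}(V)\otimes_{K\otimes A}B_{\dR,A}^+$. This uses the Beauville--Laszlo type gluing for vector bundles over $X_{\Cbb_p,\Acal}$ from \cite{Mikami24}, and the $t$-adic completeness of $B^+_{\dR,A}$ is exactly what makes the gluing available. The modification $V'$ is again $G_K$-equivariant. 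The equivalence of Theorem~\ref{thm:cdR implies dR} ensures that $D_{\dR}$ is finite projective over $K\otimes A$, so after Zariski localization on $A$ we may assume it is free. Since the modification only changes the bundle at $t$, $V$ is potentially semistable if and only if $V'$ is, because the defining condition is stated after inverting $t$.

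For weight-$0$ objects I would use the Sen/de Rham connection argument. The $B_{\sdR,A}$-description shows that the locally analytic vectors of $V'$ over $\tilde{B}^{[r,s]}_{\Cbb_p,A}$, for $K_\infty=K(\mu_{p^\infty})$, carry a connection $\nabla$ whose horizontal sections over $K_\infty\otimes A$ recover $D_{\dR}$. Concretely, one descends $V'^{[r,s]}$ to a $(\varphi,\Gamma_K)$-module over the Robba-type ring $\Rcal^{[r,s]}_{K,A}$, which is obtained by base change along $A$ from a Banach ring, following the technique of Berger's descent from \cite{Mikami24}. One then shows that $\nabla$ is nilpotent and unipotent after a finite extension $L/K$, $L\subset K_\infty K'$. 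This is the monodromy step. Over a point it is Andr\'e/Kedlaya/Mebkhout's $p$-adic local monodromy theorem. In families, I would instead use Berger--Colmez's direct argument: $\nabla$-stable lattices combined with the Frobenius structure show that solutions of the differential equation converge on the whole annulus after adjoining $\log[p^\flat]$ and passing to a finite extension. I would first try to make this argument uniform in $A$ by writing $A$ as a quotient of $R[T_1,\dots,T_m]$ with $R$ affinoid, and treating the polynomial variables by filtering by degree. Each graded piece is then a Banach family over $R$, to which \cite{BC08} applies.

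The main obstacle is this last step. One must get a single finite extension $L$ working uniformly over a non-Banach $A$ such as $\Qbb_p[T]$. One must also ensure that the invariants $(V^{[p^{-1},1]}[\log[p^\flat],1/t])^{G_L}$ are finite projective over $L_0\otimes A$ of the correct rank, so that the comparison map is an isomorphism rather than merely injective. For this I would prove injectivity by the usual independence-of-invariants argument, and then deduce surjectivity by a rank count using determinants. In rank $1$ the question becomes potential semistability of a character, which can be checked directly.
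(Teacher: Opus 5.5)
Your easy direction is fine and agrees with the paper (Lemma~\ref{lem:log-crys implies dR} plus Lemma~\ref{lem:potentially dR}). The hard direction has a genuine gap at exactly the step you call the main obstacle, and the tools you propose would not close it.

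\cite{BC08} contains no ``direct'' family argument that makes $\nabla$ unipotent from $\nabla$-stable lattices and Frobenius. The missing idea is a reduction to coefficient \emph{fields}. For a reduced affinoid $R$ and a non-zero-divisor $f$, the paper proceeds in three steps.
\begin{enumerate}
\item Embed $R_f$ into a finite product $E$ of complete discretely valued fields with algebraically closed residue fields, with $R\to E$ a closed embedding (\cite[Corollaire 2.1.4]{BC08}).
\item Apply the pointwise monodromy theorem (Andr\'e, Kedlaya, Mebkhout) over $E$. This gives a single $L$ because $E$ has finitely many factors.
\item Descend via a Banach splitting $E=R\oplus C$. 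This gives $B_{\sdR,R_f}\cap\bigl(L\otimes_{L_0}\tilde{B}^{[p^{-1},1]}_{\log,\Cbb_p,E}[1/t]\bigr)=L\otimes_{L_0}\tilde{B}^{[p^{-1},1]}_{\log,\Cbb_p,R_f}[1/t]$, and hence $L\otimes_{L_0}D^L_{\st}(V)\cong D^L_{\dR}(V)$ (Theorem~\ref{thm:intersetion}, Corollary~\ref{cor:intersection}, Theorem~\ref{thm:p-adic monodromy reduced}). This is where $B_{\sdR}$, and so Theorem~\ref{thm:cdR implies dR}, is really used.
\end{enumerate}

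Your substitute, filtering $A=R[T_1,\dots,T_m]/J$ by degree, does not work. The $G_K$- and $\varphi$-structures are $A$-linear, but their matrix entries have positive degree in the $T_i$, so they mix degrees and need not preserve any such filtration. Moreover, the graded pieces are finite $R$-modules, not reduced $R$-algebras, so there is nothing to which \cite{BC08} applies. You also never handle non-reduced $A$, which \cite{BC08} excludes. The paper needs a separate d\'evissage: reduce to $A=R_f$, then induct on $\dim A$ using a finite flat map $\Qbb_p\langle T_1,\dots,T_d\rangle_g\to R_{fg}$ from Noether normalization, together with Lemma~\ref{lem:log-crys zariski dense criterion} ($V$ is semistable iff $V_g$ and $V/g^m$ are).

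Your last step would also fail as stated. Over a base that is not a field, an injection between finite projective modules of equal rank need not be surjective. Knowing that $D^L_{\st}(V)$ is finite projective of the right rank is itself the crux. Passing to determinants does not help: that the determinant of the comparison map is a unit is exactly what has to be proved, and it does not follow from any statement about $\det V$. Nor can the rank-one case simply be ``checked directly'', since the paper's classification of $G_K$-equivariant line bundles is deduced from this very theorem.

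What the paper uses instead is the pointwise criterion, Theorem~\ref{thm:log-crys criterion}. The Fitting ideal $\Fit_0(\Coker\iota)$ of the comparison map $\iota$ over $B^{[p^{-1},1]}_{\log,K,\infty,A}[1/t]$ is $\Gamma_K$-stable, hence extended from an ideal of $K_0\otimes A$ (Theorem~\ref{thm:Gamma-stable ideal}, Corollary~\ref{cor:Gamma-stable ideal}). So it suffices that $D^L_{\st}(V)\otimes_A A/\mfrak\to D^L_{\st}(V/\mfrak)$ be surjective for every $\mfrak\in\Spm A$. This follows from $L\otimes_{L_0}D^L_{\st}(V)\cong D^L_{\dR}(V)$ and base change for $D_{\dR}$.

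Your reduction to Hodge--Tate weight $0$ by modifying $V$ at the distinguished point is harmless: the gluing of Lemma~\ref{lem:Beauville-Laszlo} allows it, and inverting $t$ does not see it. But it does not address any of these difficulties.
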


It is easy to see that ``potentially semistable'' implies ``de Rham''.
Let us briefly outline the proof of the converse.
First, we reduce to the case $A=R_f=R[1/f]$, where $R$ is an affinoid $\Qbb_p$-algebra and $f\in R$ is a non-zero divisor.
When $A$ is reduced, then the same argument as in \cite{BC08} works well.
More precisely, there is a closed embedding $R\to \prod_{i=1}^n E_i,$ where each $E_i/\Qbb_p$ is a complete discretely valued field with algebraically closed residue field. 
It also induces an injection $A=R_f\to \prod_{i=1}^n E_i$.
Then from the $p$-adic monodromy theorem for $E_i$ proved by \cite{Andre02,Mebkhout02,Ked22book}, we can deduce the theorem for $A=R_f$.
For the general non-reduced case, we proceed by d\'{e}vissage.

\subsubsection{A classification of $G_K$-equivariant line bundles}
As an application of the $p$-adic monodromy theorem, we prove the classification of $G_K$-equivariant line bundles over $X_{\Cbb_p,\Acal}$.
More precisely, we prove the following.
\begin{theorem}[{Theorem~\ref{thm:classification of line bundles}}]\label{thm:classification of line bundles intro}
	Let $V$ be a $G_K$-equivariant line bundle over $X_{\Cbb_p,\Acal}$.
	Then there exist a unique continuous character $\delta\colon K^{\times}\to A^{\times}$ and a unique finite projective $\Acal$-module $M$ of rank $1$ such that $V\cong \Ocal_{X_{\Cbb_p,\Acal}}(\delta)\otimes_{\Acal}M.$
	For the definition of $\Ocal_{X_{\Cbb_p,\Acal}}(\delta)$, see Definition~\ref{def:character type}.
\end{theorem}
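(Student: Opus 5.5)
We will prove Theorem~\ref{thm:classification of line bundles intro} by first using the $p$-adic Hodge theory of the earlier sections to reduce to the case of semistable line bundles with trivial Hodge--Tate weight, where everything becomes a statement about rank one filtered $(\varphi,N)$-modules. The first step is to make $V$ de Rham after a twist. The Sen operator of $V$ is an element $\theta\in A$ (the module $V_{\HT}$, or its Sen-theoretic analogue, has rank one). We construct a continuous character $\delta_0\colon \Ocal_K^{\times}\to A^{\times}$ whose Sen weight is $-\theta$, using the locally analytic exponential on a small open subgroup of $\Ocal_K^\times$, extended in a suitable way to $\Ocal_K^{\times}$. This is possible Zariski-locally on $A$, and we use the sheafification built into $\Xfrak_{1,K}^{\mathrm{mod}}$ together with the uniqueness statement proven at the end to glue. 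Then $V\otimes\Ocal(\delta_0^{-1})$ has Sen weight $0$. We must then show that a rank one object with Sen weight $0$ is Hodge--Tate and even de Rham, i.e.\ that $V_{\HT}^{G_K}$ and $(V_{\dR}^+)^{G_K}$ are invertible $A$-modules. We do this by d\'evissage on nilpotents, reducing to $A=R_f$ with $R$ reduced, and then checking on points $R_f\to E_i$ as in the proof of Theorem~\ref{thm:p-adic monodromy}. Here we use Theorem~\ref{thm:cdR implies dR} to move between the two notions of de Rham.

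Next we apply Theorem~\ref{thm:p-adic monodromy}: $V'=V\otimes\Ocal(\delta_0^{-1})$ is semistable over some finite $L/K$. Since it has rank one, $N$ is nilpotent on a rank one module and hence zero, so the inertia of $L/K$ acts through a finite quotient, giving a character of $\Ocal_K^\times$ of finite order. We twist this away by a finite order character of $\Ocal_K^\times$ with values in $A^\times$; after extending scalars to contain the relevant roots of unity, we descend by Galois descent. After this twist $V'$ is crystalline over $K$. By Theorem~\ref{thm:dR filtered phi N Gal-module}, $V'$ corresponds to $D=(V'[\ldots])^{G_K}$, an invertible $K_0\otimes A$-module with a semilinear Frobenius and a filtration concentrated in degree $0$. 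Then $\varphi^{f}$, with $f=[K_0:\Qbb_p]$, is $K_0\otimes A$-linear and acts on $D$ by a unit. Descending along $K_0\otimes A\to A$ via $\varphi$, we write $D\cong (K_0\otimes M)$ with $\varphi^f=\lambda$ for some $\lambda\in A^\times$ and some rank one projective $A$-module $M$. The object $\Ocal(\delta_\lambda)$, with $\delta_\lambda(p)$ chosen so that $\delta_\lambda(\varpi)=\lambda$ on a uniformizer, realizes the same filtered $\varphi$-module. This gives $V\cong\Ocal(\delta)\otimes M$, where $\delta$ combines $\delta_0$, the finite order twist and $\delta_\lambda$.

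Uniqueness follows by computing $\Hom$: if $\Ocal(\delta)\otimes M\cong\Ocal(\delta')\otimes M'$, then $\Ocal(\delta/\delta')$ admits a nowhere-vanishing $G_K$-invariant global section Zariski-locally. We check, again via the Sen weight and then filtered $\varphi$-modules, that this forces $\delta=\delta'$ and $M\cong M'$.

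The main obstacle is the first step. There we must construct $\delta_0$ over a non-Banach $A$ such as $\Qbb_p[T]$, where the exponential does not converge globally. The statement therefore has to be that the Sen weight of any $G_K$-line bundle over an algebraic-affinoid $\Acal$ is automatically of the form $\mathrm{wt}(\delta_0)$ for a continuous character of $\Ocal_K^\times$ valued in $A$. To prove this, we first treat $R$ and then use the fact that the Sen weight of the restriction to $R_f$ is forced, by the continuity of the $G_K$-action on the family, to lie in a bounded part where the exponential converges.
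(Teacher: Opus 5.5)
Your outline (twist to Sen weight $0$, deduce de Rham, apply Theorem~\ref{thm:p-adic monodromy}, read off a rank one filtered $\varphi$-module, and prove uniqueness via Sen weights and $D_{\st}$) matches the paper's. However, the first step has a genuine gap.

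\textbf{The character $\delta_0$ cannot be built Zariski-locally.} The boundedness you invoke is Theorem~\ref{thm:HTS wts are analytic}: the Sen polynomial is $T-a$ with $a\in K\otimes A^b$. It only gives the character $x\mapsto\exp(a(\log x))$ on a small open subgroup $U_m\subset\Ocal_K^{\times}$. Extending it to $\Ocal_K^{\times}$ requires $p^m$-th roots of the units $u_i=\exp(a(\log\gamma_i^{p^m}))$, and these need not exist on any Zariski cover.

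For example, take $p$ odd, $K=\Qbb_p$, $A=\Qbb_p\langle T\rangle$ and $a=T/p$. Suppose a character of $\Zbb_p^{\times}$ of weight $a$ exists over some $A_g$. By \cite[Proposition 5.2]{Mikami24} it takes values in an affinoid algebra of definition, hence in $A_g^b=\Qbb_p\langle T\rangle$. Its value $u$ at $1+p$ satisfies $u^{p^m}=\exp(p^{m-1}T\log(1+p))$ for $m\gg0$. Hence $u'/u=\log(1+p)/p$, a $p$-adic unit, which forces $u=u(0)\exp(T\log(1+p)/p)\notin\Qbb_p\langle T\rangle$.

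So ``extended in a suitable way'' has no content. The claim that the Sen weight of a line bundle is $\wt$ of an $A$-valued character is true only a posteriori, as a consequence of the theorem.

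The missing idea is as follows.
\begin{enumerate}
\item Adjoin the roots in the finite \'etale faithfully flat algebra $B=A[X_1,\ldots,X_d]/(X_i^{p^m}-u_i)$ (Lemma~\ref{lem:wt character}).
\item Prove existence over $B$.
\item Descend along $A\to B$; the paper invokes \cite[Lemma 5.14]{Mikami24}. By uniqueness, the two pullbacks of $\delta_B$ to $B\otimes_AB$ agree, so $\delta_B$ is $A^{\times}$-valued, and $M$ descends by faithfully flat descent.
\end{enumerate}
This is descent along a finite flat cover, not gluing over a Zariski cover. No sheafification of $\Xfrak_{1,K}^{\mathrm{mod}}$ enters, since the statement is about honest line bundles over a fixed $\Acal$. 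Also, the Sen weight lies in $K\otimes A$, not in $A$. To compare it with $\wt(\delta)$ you must first extend scalars to $E$ splitting $K$ and then descend back.

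\textbf{Further steps that need repair.}
\begin{itemize}
\item \emph{Vanishing of $N$.} When $A$ is non-reduced, a nilpotent endomorphism of a rank one module need not vanish: it can be multiplication by a nonzero nilpotent. $N=0$ follows instead from $p\varphi N=N\varphi$. This gives $N=r$ with $p\varphi(r)=r$, hence $r=0$ by Lemma~\ref{lem:nonzero slope}.
\item \emph{Factoring through $\Ocal_K^{\times}$.} Rank one alone only gives a linear character of $I_K$, i.e.\ of $I_K^{\ab}$. This group is much larger than $\Ocal_K^{\times}$, and the $\Gal(L/K)$-action is only semilinear over $L_0\otimes A$. The paper's fix is that $\sigma\mapsto\varphi^{d(\sigma)}\sigma$ is a linear character of the Weil group. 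So $\Gal(\bar K/K^{\ab})$ acts trivially and $L$ can be taken totally ramified abelian (Lemma~\ref{lem:descent to ab ext}); after that, Proposition~\ref{prop:dR character type} is your final step.
\item \emph{Sen weight $0$ implies de Rham.} Your d\'evissage and pointwise check is unnecessary, and no pointwise de Rham criterion over non-reduced $A$ supports it. In rank one, Sen polynomial $T$ means $\Theta_{\Sen}=0$. So the twist is Hodge--Tate of weight $0$ by Proposition~\ref{prop:Sen operator HT}, and de Rham by Corollary~\ref{cor:HT of HT wt 0 implies dR}.
\item \emph{Sign.} With $\wt(\delta_0)=-\theta$ you should twist by $\delta_0$, not by $\delta_0^{-1}$.
\end{itemize}
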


This classification was previously obtained by the author in \cite{Mikami24} (Theorem~\ref{thm:classification intro}) under a certain freeness condition. 
The approach in the present paper does not require this condition. 
Moreover, while the previous classification relied on the results for the affinoid cases proved by Kedlaya--Pottharst--Xiao (\cite{KPX14}), the new approach bypasses them entirely. 
In particular, it also yields an alternative proof of the classification by Kedlaya--Pottharst--Xiao.

Let us explain the outline of the proof of Theorem~\ref{thm:classification of line bundles intro}.
First, we construct a character $\delta_0 \colon K^{\times}\to A^{\times}$ such that $V\otimes \Ocal_{X_{\Cbb_p,\Acal}}(\delta_0^{-1})$ is Hodge-Tate of Hodge-Tate weight $0$. 
By replacing $V$ with $V\otimes \Ocal_{X_{\Cbb_p,\Acal}}(\delta_0^{-1})$, we may assume that $V$ is Hodge-Tate of Hodge-Tate weight $0$. 
Then $V$ is also de Rham, and thus potentially semistable by the $p$-adic monodromy theorem.
Therefore, $V$ can be described using a $(\varphi,G_K)$-module\footnote{Since the Hodge-Tate weight of $V$ is zero, we may ignore the filtration. Moreover, in the case of rank $1$, the monodromy operator $N$ always vanishes.} of rank $1$.
Using this description, we obtain Theorem~\ref{thm:classification of line bundles intro}.

\begin{remark}
	For an affinoid $\Qbb_p$-algebra $A$, the classification of $G_K$-equivariant line bundles over $X_{\Cbb_p,\Acal}$ using the $p$-adic monodromy theorem is also addressed in the forthcoming work \cite{Hellmann-Heuer26} of Hellmann--Heuer.
\end{remark}

\subsection{Outline of the paper}
This paper is organized as follows.
In the former part of Section 1, we define the notion of Hodge-Tate representations and prove basic properties.
In the latter part of Section 1, we prove that coefficients of Sen polynomials are ``bounded''.

In the former part of Section 2, we define the notion of de Rham representations and compare them and filtered modules.
In the latter part of Section 2, we define the notion of strongly de Rham representations and prove that ``strongly de Rham'' is equivalent to ``de Rham''.

In the first part of Section 3, we define the notion of semistable representations and prove basic properties.
In the second part of Section 3, we prove the $p$-adic monodromy theorem.
In the final part of Section 3, we construct an equivalence between the category of semistable representations and filtered $(\varphi,N)$-modules.

In Section 4, we prove the classification of $G_K$-equivariant line bundles.
\subsection{Convention}

\begin{itemize}
\item
All rings, including condensed ones, are assumed unital and commutative.
\item 
In contrast to \cite{And21, Mann22}, we use the term \textit{ring} to refer to an \textit{ordinary ring} (not an animated ring). 
Moreover, we use the symbol $-\otimes-$ to refer to a \textit{non-derived tensor product} and use the symbol $-\otimes^{\Lbb}-$ to refer to a \textit{derived tensor product}.
Similarly, we adopt analogous notation for Hom and limit.
\item
We use the terms \textit{f-adic ring} and \textit{affinoid pair} rather than \textit{Huber ring} and \textit{Huber pair}.
\item For an f-adic ring $A$, we denote the ring of power-bounded elements of $A$ by $A^{\circ}$.
\item For a complete non-archimedean field $K$, we use the term \textit{affinoid $K$-algebra} to refer to a \textit{topological $K$-algebra topologically of finite type over $K$}.
\item Throughout this paper, all radii $r$ and $s$ are assumed to be rational numbers.
\item Unless otherwise stated, all actions are assumed to be continuous (or equivalently, actions as condensed objects).
\end{itemize}

\subsection{Convention and notation about condensed mathematics}
In this paper, we use condensed mathematics.
We summarize the notations and conventions related to condensed mathematics.

\begin{itemize}
\item Throughout this paper, we fix an uncountable solid cutoff cardinal $\kappa$ as in \cite[Definition 2.9.11]{Mann22} and work with $\kappa$-condensed objects.
	Our results do not depend on the choice of $\kappa$.
	If the reader prefers to work with light condensed objects, one may simply replace ``condensed'' with ``light condensed'' throughout the paper without affecting any of the arguments.
\item We often identify a compactly generated topological set, ring, group, etc. $X$ whose points are closed (i.e., $X$ is $T1$) with a condensed set, ring, group, etc. $\underline{X}$ associated to $X$. 
    It is justified by \cite[Proposition 1.7]{CM}.
    If there is no room for confusion, we simply write $X$ for $\underline{X}$.
	For example, there is a fully faithful functor from the category of Banach $\Qbb_p$-modules to the category of condensed $\Qbb_p$-modules (\cite[Lemma 3.24]{RJRC22}), and we regard a (classical) Banach $\Qbb_p$-module $V$ as a condensed $\Qbb_p$-module via this functor.
	Moreover, a condensed $\Qbb_p$-module $V$ is said to be \textit{Banach} if $V$ is isomorphic to $\underline{V_0}$ for some Banach $\Qbb_p$-module $V_0$.
\item For a condensed set $X$, we simply write $x\in X$ to mean $x\in X(\ast)$.
\item 
    In contrast to \cite{Mann22}, we use the term \textit{ring} to refer to an \textit{ordinary ring} (not a condensed animated ring). 
    Sometimes we use the term \textit{discrete ring} (resp.\ \textit{discrete animated ring}) to refer to an ordinary ring (resp.\ animated ring) in order to emphasize that it is not a condensed one. We also use the term \textit{static ring} (resp.\ \textit{static analytic ring})  to refer to an ordinary ring (resp.\ analytic ring) in order to emphasize that it is not an animated one.
    \item We use the terms ``analytic animated ring'' and ``uncompleted analytic animated ring'' according to \cite{Mann22}.
    \item For an uncompleted analytic animated ring $\Acal$, we denote the underlying condensed animated ring of $\Acal$ by $\underline{\Acal}$.
    \item For an uncompleted analytic animated ring $\Acal$, an object $M\in \Dcal(\underline{\Acal})$ is said to be \textit{$\Acal$-complete} if it lies in $\Dcal(\Acal)$.    
    \item Let $\Zbb_{\square}$, $\Zbb_{p,\square}$ denote the analytic rings defined in \cite[Example 7.3]{CM}. 
    Note that the analytic ring structure of $\Zbb_{p,\square}$ is induced from $\Zbb_{\square}$.
	Moreover, for a usual ring $A$, let $A_{\square}=(A,A)_{\square}$ denote the analytic ring defined in \cite[Definition 2.9.1]{Mann22}.
    \item For a condensed animated ring $A$ and for a morphism of usual rings $B\to \pi_0A(\ast)$, let $(A,B)_{\square}$ denote the condensed animated ring $A$ with the induced analytic ring structure from $(B,B)_{\square}$.
    For details, see \cite[Definition 7.1.1]{RC26}.
	For a finite extension $E/\Qbb_p$, we simply write $E_{\square}=(E,\Zbb)_{\square}$.
	\item For a solid $\Qbb_{p,\square}$-algebra $A$, we abbreviate $-\otimes_{(A,\Zbb)_{\square}}-$ by $-\otimes_A-$.
	When $A=\Qbb_p$, we will simply write $-\otimes-$.
	\item For a profinite set $S$ and an object $M\in \Dcal(\Zbb_{\square})$, we write $C(S,M)=R\intHom_{\Zbb}(\Zbb_{\square}[S],M)$. 
    If $M$ is static then $C(S,M)$ is static, since $\Zbb_{\square}[S]$ is a projective $\Zbb_{\square}$-module.
    If $M$ is a $\Zbb_{\square}$-module associated to a compactly generated Hausdorff $\Zbb$-module $N$ (that is, $\underline{N}=M$), then $C(S,M)$ is a $\Zbb_{\square}$-module associated to the module $C(S,N)$ of continuous functions $S\to N$ endowed with the compact-open topology.
\end{itemize}

\subsection{Notation}
\begin{itemize}
\item We fix an algebraic closure $\bar{\Qbb}_p$ and its completion $\Cbb_p$. Unless otherwise stated, all fields considered are regarded as subfields of $\Cbb_p$.
\item Let $K$ be a finite extension field of $\Qbb_p$, and $K_0$ be the maximal unramified extension of $\Qbb_p$ in $K$.
We write $K_n=K(\zeta_{p^n})$ and $K_{\infty}=K(\zeta_{p^{\infty}}).$
Let $K^{\unr}$ denote the maximal unramified extension of $K$, and $\breve{K}$ denote the completion of $K^{\unr}$.
\item We write $G_K=\Gal(\overline{K}/K)$, $H_K=\Gal(\overline{K}/K_{\infty})$, and $\Gamma_K=\Gal(K_{\infty}/K)$.
\item Let $\chi \colon G_K\to \Gamma_K\to \Qbb_p^{\times}$ denote the $p$-adic cyclotomic character.
\item Let $\Acal=(A,A^+)_{\square}$ be an algebraic-affinoid analytic $\Qbb_{p,\square}$-algebra (\cite[Definition 2.4]{Mikami24}) or an analytic $\Qbb_{p,\square}$-algebra associated to a sheafy analytic affinoid pair $(A,A^+)$ over $(\Qbb_p,\Zbb_p)$.
\item Let $Y_{\Cbb_p}=\Spa (W(\Ocal_{\Cbb_p}^{\flat}))\setminus\{p[p^{\flat}]=0\}$, and let $X_{\Cbb_p}=Y_{\Cbb_p}/\varphi^{\Zbb}$ be the Fargues--Fontaine curve\footnote{In \cite{Mikami24}, $X_{\Cbb_p}$ is written as $X_{\bar{K}}$. Since we often vary $K$ in this paper, we adopt this notation.} associated to $\Cbb_p^{\flat}$.
For a closed interval $[r,s]\subset (0,\infty)$ ($r,s \in \Qbb$), we define $Y_{\Cbb_p}^{[r,s]}$ as in  \cite{Mikami24}.
We write 
\begin{align*}
	X_{\Cbb_p,\Acal}&=X_{\Cbb_p}\times_{\AnSpec \Qbb_{p,\square}} \AnSpec \Acal,\\
	Y_{\Cbb_p,\Acal}^{[r,s]}&=Y_{\Cbb_p}^{[r,s]}\times_{\AnSpec \Qbb_{p,\square}} \AnSpec \Acal.
\end{align*}
\item We write $\tilde{B}_{\Cbb_p}^{[r,s]}$ (resp. $\tilde{B}_{\Cbb_p}^{[r,s]+}$) for the ring of analytic functions $\Ocal_{Y_{\Cbb_p}}(Y_{\Cbb_p}^{[r,s]})$ (resp. $\Ocal^+_{Y_{\Cbb_p}}(Y_{\Cbb_p}^{[r,s]})$). 
We write $\tilde{B}^{[r,s]}_{\Cbb_p,A}=\tilde{B}^{[r,s]}_{\Cbb_p}\otimes \Acal$. Note that this does not depend on the choice of $A^+$ since $\tilde{B}_{\Cbb_p}^{[r,s]}$ is a nuclear $\Qbb_{p,\square}$-module (cf. \cite[Lemma 3.2]{Mikami24}).
\item We write $\tilde{B}_{K_{\infty}}^{[r,s]}=(\tilde{B}^{[r,s]}_{\Cbb_p})^{H_K}$ and $B_{K,\infty}^{[r,s]}=(\tilde{B}_{K_{\infty}}^{[r,s]})^{\Gamma_K\mathchar`-\la}.$
We define $B_{K,n}^{[r,s]}$ as in \cite{Mikami24}, which is an affinoid $\Qbb_p$-algebra.
We define $\tilde{B}_{K_{\infty},A}^{[r,s]}$, $B_{K,\infty,A}^{[r,s]}$, and $B_{K,n,A}^{[r,s]}$ by applying $-\otimes \Acal$.
\item We fix a compatible system $\varepsilon=(1,\zeta_p,\zeta_{p^2},\ldots)\in \Cbb_p^{\flat}$ of $p^n$th roots of unity.
\item For a closed interval $[r,s]\subset (0,\infty)$, we write $t=\log [\varepsilon]\in \tilde{B}^{[r,s]}.$ 
\item For a $\Qbb_{p,\square}$-module $V$ with an action of $G_K$ (or $\Gamma_K$) and $n\in \Zbb$, let $V(n)$ denote the $n$th Tate twist.
\item We normalize the Hodge-Tate weight of $\Qbb_p(1)$ to be $1$.
\end{itemize}

\addtocontents{toc}{\protect\setcounter{tocdepth}{2}}
\subsection*{Acknowledgements}
The author is grateful to Yoichi Mieda for his support during the studies of the author.
This work was started during the author's stay at the University of M\"{u}nster under the Mathematics M\"{u}nster programme for Visiting Doctoral Researchers, and the author thanks Eugen Hellmann for his hospitality and support during the stay.
The author thanks Laurent Berger and Gal Porat for answering my questions.
This work was supported by JSPS KAKENHI Grant Number JP23KJ0693.

\section{Hodge-Tate representations}
\subsection{Definitions and basic properties of Hodge-Tate representations}

\begin{definition}
For $V\in \Dcal(X_{\Cbb_p,\Acal})$, let $V^{[r,s]}$ denote the pullback of $V$ under the morphism 
$$\AnSpec (\tilde{B}^{[r,s]}_{\Cbb_p,A},\tilde{B}^{[r,s]+}_{\Cbb_p}\otimes A^+)_{\square}\cong Y_{\Cbb_p,\Acal}^{[r,s]}\to X_{\Cbb_p,\Acal}.$$
We say $V\in \Dcal(X_{\Cbb_p,\Acal})$ is a \textit{vector bundle over $X_{\Cbb_p,\Acal}$}\footnote{This definition differs from \cite[Definition 3.30]{Mikami25}, and it is unclear whether these definitions are equivalent.} if and only if for any $[r,s]\subset (0,\infty)$, $V^{[r,s]}$ is a finite projective $\tilde{B}^{[r,s]}_A$-module.
Let $\Vect(X_{\Cbb_p,\Acal})$ denote the category of vector bundles over $X_{\Cbb_p,\Acal}$.
We note that $\Vect(X_{\Cbb_p,\Acal})$ is equivalent to the category of $\varphi$-modules over $\tilde{B}_{\bar{K},A}$ defined in \cite[Definition 3.4]{Mikami24}. 
Similarly, we define the category $\Vect(X_{\Cbb_p,\Acal}/G_K)$ of $G_K$-equivariant vector bundles over $X_{\Cbb_p,\Acal}$, which is equivalent to the category of $(\varphi,G_K)$-modules over $\tilde{B}_{\bar{K},A}$ defined in \cite[Definition 3.4]{Mikami24}. 
\end{definition}

\begin{remark}
	The categories $\Vect(X_{\Cbb_p,\Acal})$ and $\Vect(X_{\Cbb_p,\Acal}/G_K)$ do not depend on the choice of $A^+$.
\end{remark}

\begin{definition}
We define a $\Cbb_{p,\square}$-algebra $B_{\HT}$ with a semilinear $G_K$-action as
$B_{\HT}\coloneqq \Cbb_p[t,t^{-1}]$ where $t$ is an indeterminate such that $gt=\chi(g)t$ for $g\in G_K$.
We define $B_{\HT,A}=B_{\HT}\otimes\Acal$.
Since $B_{\HT}$ is a nuclear $\Qbb_{p,\square}$-module, the definition of $B_{\HT,A}$ does not depend on the choice of $A^+$ (cf. the proof of \cite[Lemma 3.2]{Mikami24}).
\end{definition}

\begin{lemma}\label{lem:HT fixed vector}
	We have $B_{\HT,A}^{G_K}=K\otimes A$.
\end{lemma}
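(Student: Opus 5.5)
Since $B_{\HT,A}=\bigoplus_{i\in\Zbb}\Cbb_p t^i\otimes A$ as a $G_K$-equivariant direct sum (tensor product commutes with direct sums, and the $G_K$-action preserves each summand), and taking $G_K$-invariants (as an underived fixed-point functor, i.e.\ $\intHom$ from the trivial representation) commutes with direct sums of representations here, I reduce to computing $(\Cbb_p(i)\otimes A)^{G_K}$ for each $i\in\Zbb$. The claim is then that this equals $K\otimes A$ for $i=0$ and $0$ for $i\neq 0$.

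The key step is to move the tensor product with $A$ outside the invariants. I would use the classical Tate–Sen decomposition: $\Cbb_p^{H_K}=\hat{K}_\infty$, and $\hat K_\infty = K_n\oplus X_n$ for $n\gg0$, where $X_n$ is the kernel of the normalized trace $R_n$ and $\gamma-1$ is invertible on $X_n$ with bounded inverse. For the $H_K$-part, I would write $\Cbb_p\otimes A$ and use that $\Cbb_p=\hat K_\infty\oplus (\text{complement})$ compatibly with $H_K$? That is not available directly; instead I would use that $\Cbb_p$ is a nuclear (indeed Banach) $\Qbb_p$-module and that group cohomology of a profinite group with coefficients $C(G_K^{\bullet},-)$ commutes with $-\otimes A$ on nuclear modules: $C(S,\Cbb_p)\otimes A\cong C(S,\Cbb_p\otimes A)$ for profinite $S$ since $\Cbb_p$ is Banach and $A$ is a colimit of Banach modules (this is the standard solid fact, e.g.\ as in \cite[Lemma 3.2]{Mikami24}). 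Hence $R\Gamma(G_K,\Cbb_p(i)\otimes A)\cong R\Gamma(G_K,\Cbb_p(i))\otimes^{\Lbb} A$, where I should check that the continuous cochain complex is computed by $C(G_K^{\bullet},-)$ and that $-\otimes A$ commutes with the totalization, which is a finite-stage computation in degree $0$ (only $C(G_K^0)\to C(G_K^1)$ matters for $H^0$).

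Then taking $H^0$: by Tate's theorem, $R\Gamma(G_K,\Cbb_p(i))$ is $K\oplus K[-1]$ for $i=0$ and $0$ for $i\ne0$, as complexes of Banach (finite-dimensional) $\Qbb_p$-modules with strict differentials. Since $A$ is flat over $\Qbb_p$ for the solid tensor product on such complexes (tensoring a split exact sequence of Banach spaces with the strict maps, or more simply: $H^0$ is the kernel of $C(G_K^0,\Cbb_p(i))\to C(G_K^1,\Cbb_p(i))$, and the image is closed so the complex splits into Banach pieces over $\Qbb_p$), I conclude $H^0(G_K,\Cbb_p(i)\otimes A)=H^0(G_K,\Cbb_p(i))\otimes A$, giving $K\otimes A$ for $i=0$ and $0$ otherwise.

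The main obstacle is the exactness step: justifying that $-\otimes A$ (with $A$ algebraic-affinoid, e.g.\ $\Qbb_p[T]$, not Banach) preserves the kernel of the first cochain differential. I would handle it by noting that over $\Qbb_p$ every Banach space is a direct summand-friendly object: the strict injection $\ker\hookrightarrow C(G_K^0,\Cbb_p(i))$ and the strict map to its closed image in $C(G_K^1,\Cbb_p(i))$ give short exact sequences of Banach $\Qbb_p$-spaces, which remain exact after solid tensoring with $A$ because Banach $\Qbb_p$-modules are flat for $\otimes_{\Qbb_{p,\square}}$ (they are nuclear and $A$ is a filtered colimit of Banach/ finitely generated modules over an affinoid $R$); then the kernel commutes with $\otimes A$.
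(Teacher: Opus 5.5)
Your proposal is correct and is essentially the paper's argument: express the invariants as the kernel of $B_{\HT}\to C(G_K,B_{\HT})$, use nuclearity to identify $C(G_K,B_{\HT})\otimes A\cong C(G_K,B_{\HT,A})$, and tensor the classical left exact sequence $0\to K\to B_{\HT}\to C(G_K,B_{\HT})$ with $A$. The paper uses directly that $A$ itself is flat (and nuclear) over $\Qbb_{p,\square}$, so the kernel commutes with $-\otimes A$ at once; this makes your splitting by Tate twists unnecessary, and likewise the detour through strictness of the cochain differential and flatness of Banach spaces.
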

\begin{proof}
	We have $B_{\HT}^{G_K}=K$, which is well-known.
	In other words, there is a left exact sequence
	$$0\to K\to B_{\HT} \to C(G_K,B_{\HT}),$$
	where $B_{\HT} \to C(G_K,B_{\HT})$ is given by $x\mapsto (g\mapsto gx)$.
	Since $A$ is nuclear and flat over $\Qbb_{p,\square}$ by \cite[Remark 2.3, Example 1.36 (3)]{Mikami24}, we get a left exact sequence
	$$0\to K\otimes A\to B_{\HT,A} \to C(G_K,B_{\HT,A}),$$
	where we note $C(G_K,B_{\HT,A})\cong C(G_K,B_{\HT})\otimes A$ by \cite[Proposition 5.35]{And21}.
\end{proof}

\begin{lemma}
	The ring $B_{\HT,A}$ is faithfully flat over $K\otimes A$.
\end{lemma}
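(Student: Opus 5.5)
The plan is to reduce to the classical statement that $B_{\HT}$ is faithfully flat over $K$, and then base change along $K\to K\otimes A$. Since $K$ is a field, $B_{\HT}$ is free over $K$ as an abstract module, but we need this in the solid setting. First I will show that $\Cbb_p$, and hence $B_{\HT}=\bigoplus_{n\in\Zbb}\Cbb_p t^n$, is a direct sum of copies of $K$, or at least a split extension of $K$, as solid $K$-modules. Concretely, $\Cbb_p$ is a Banach $K$-space of countable type, so it has an orthonormal Schauder basis over $K$. Hence $\Cbb_p\cong \widehat{\bigoplus}_{\Nbb}K$, i.e. $c_0(\Nbb,K)$. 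Moreover $K\hookrightarrow \Cbb_p$ is split, because the basis can be chosen containing $1$.

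Next I base change along $K\to K\otimes A$. The identity
$$B_{\HT,A}=B_{\HT}\otimes_K(K\otimes A)=\bigoplus_{n}\Cbb_p\otimes_K (K\otimes A)\,t^n$$
holds since $K\otimes A$ is a $K$-algebra and $\otimes$ commutes with direct sums. So it suffices to treat $\Cbb_p\otimes_K(K\otimes A)\cong c_0(\Nbb,K)\otimes_K (K\otimes A)$.

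Flatness of $B_{\HT,A}$ over $K\otimes A$ (in the solid sense, i.e.\ $-\otimes_{K\otimes A}B_{\HT,A}$ exact on $(K\otimes A)$-modules) comes from associativity:
$$M\otimes_{K\otimes A}B_{\HT,A}\cong M\otimes_K B_{\HT},$$
together with flatness of $B_{\HT}$ over $K_\square$. Since $B_{\HT}$ is nuclear, and Banach spaces of the form $c_0(I,K)$ are flat over $K_\square$ (as used via \cite[Example 1.36]{Mikami24}), this flatness holds. For faithfulness I use the splitting: $K\otimes A\to B_{\HT,A}$ is the inclusion of the $t^0$-summand, further split by $1\in\Cbb_p$. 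Hence for any $M$, the module $M$ is a direct summand of $M\otimes_{K\otimes A}B_{\HT,A}$, so $M\otimes_{K\otimes A}B_{\HT,A}=0$ forces $M=0$. If the intended notion of faithful flatness is the classical one on underlying rings of $\ast$-points, the same splitting argument works on $\ast$-points, because sequential evaluation commutes with these colimits.

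The main obstacle is the flatness of $\Cbb_p$ over $K_\square$ and the compatibility $c_0(\Nbb,K)\otimes M\simeq$ (completed) direct sum in the solid world, given that solid tensor products with Banach spaces are not naive. I would first try to cite that Banach $\Qbb_p$-spaces are flat (indeed nuclear, and $c_0(\Nbb,\Zbb_p)[1/p]$ is a filtered colimit/limit of free modules) as in \cite[Remark 2.3, Example 1.36]{Mikami24}, exactly as Lemma~\ref{lem:HT fixed vector} used flatness of $A$. If only exactness is needed, then the symmetric argument of Lemma~\ref{lem:HT fixed vector}, using flatness of $A$ and tensoring the split inclusion $K\to B_{\HT}$, should suffice.
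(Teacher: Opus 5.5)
Your proposal is correct and is essentially the paper's argument. The paper reduces to $A=\Qbb_p$ by base change, which is your associativity isomorphism $M\otimes_{K\otimes A}B_{\HT,A}\cong M\otimes_K B_{\HT}$, and then cites \cite[Lemma 3.21]{RJRC22} for the solid faithful flatness of the Banach space $\Cbb_p$, hence of $B_{\HT}=\bigoplus_n\Cbb_p t^n$, over $K$; your splitting of $K\hookrightarrow\Cbb_p$ simply makes the faithfulness explicit.

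Your closing hedges can be dropped. The intended notion is the solid one, which is what is fed into \cite[Theorem 1.35]{Mikami24}. The splitting alone would not give flatness on $\ast$-points anyway.
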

\begin{proof}
	We may assume $A=\Qbb_p$.
	Then the claim easily follows from \cite[Lemma 3.21]{RJRC22}.
\end{proof}

\begin{definition}
\begin{enumerate}
\item Let $V$ be a finite projective $\Cbb_p\otimes A$-module with a semilinear $G_K$-action.
We define a finite projective $B_{\HT,A}$-module $V_{\HT}$ with a semilinear $G_K$-action as 
$$V_{\HT}\coloneqq V \otimes_{\Cbb_p\otimes  A} B_{\HT,A}=\bigoplus_{n\in \Zbb}t^nV=\bigoplus_{n\in \Zbb}V(n),$$
where $-(n)$ means the $n$th Tate twist.
We also define $D_{\HT}^K(V)\coloneqq V^{G_K}$, which admits a grading $\gr^{\bullet}D_{\HT}^K(V)$ defined via $\gr^{n}D_{\HT}^K(V)\coloneqq(t^nV)^{G_K}$.
When $K$ is clear from the context, we omit $K$ from the notation.
We say that $V$ is \textit{Hodge-Tate} if the natural morphism
$$D_{\HT}(V)\otimes_{K\otimes A} B_{\HT,A}\to V_{\HT}$$
is an isomorphism.
Let $\FP_{\Cbb_p\otimes A}(G_K)$ denote the category of finite projective $\Cbb_p\otimes A$-modules with a semilinear $G_K$-action, and $\FP_{\Cbb_p\otimes A}^{\HT}(G_K)$ denote the subcategory of $\FP_{\Cbb_p\otimes A}(G_K)$ consisting of Hodge-Tate representations.
\item Let $V$ be a $G_K$-equivariant vector bundle over $X_{\Cbb_p,\Acal}$.
We define $V_{\infty}$ as the reduction of $V$ at the distinguished point $x_{\infty}$, that is, $V_{\infty}=V^{[1,1]}/t$, which is a finite projective $\Cbb_p\otimes  A$-module with a semilinear $G_K$-action.
We say that $V$ is \textit{Hodge-Tate} if $V_{\infty}$ is Hodge-Tate.
In this case, we simply write $V_{\HT}$ (resp. $D_{\HT}(V)$, $\gr^{n}D_{\HT}(V)$) for $(V_{\infty})_{\HT}$ (resp. $D_{\HT}(V_{\infty})$, $\gr^{n}D_{\HT}(V_{\infty})$).
Let $\Vect^{\HT}(X_{\Cbb_p,\Acal}/G_K)$ denote the category of Hodge-Tate $G_K$-equivariant vector bundles over $X_{\Cbb_p,\Acal}$.
\end{enumerate}
\end{definition}

\begin{remark}\label{rem:A^+ independent}
	The definition of Hodge-Tate is independent of the choice of $A^+$.
\end{remark}

\begin{lemma}\label{lem:DHT fin proj}
For $V\in\FP_{\Cbb_p\otimes A}^{\HT}(G_K)$, $D_{\HT}(V)$ and $\gr^{n}D_{\HT}(V)$ are finite projective $K\otimes A$-modules.
\end{lemma}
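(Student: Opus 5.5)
The plan is to prove this by faithfully flat descent along $K\otimes A\to B_{\HT,A}$, which is faithfully flat by the preceding lemma. Write $D=D_{\HT}(V)$. The $t$-grading on $V_{\HT}=\bigoplus_{n\in\Zbb}t^nV$ is $G_K$-stable, since $G_K$ acts on $t$ by the character $\chi$. Hence taking $G_K$-invariants commutes with the direct sum, and we get
$$D=V_{\HT}^{G_K}=\bigoplus_{n\in\Zbb}(t^nV)^{G_K}=\bigoplus_{n\in\Zbb}\gr^nD.$$
In particular each $\gr^nD$ is a direct summand of $D$. It therefore suffices to show that $D$ is a finite projective $K\otimes A$-module, and that only finitely many $\gr^nD$ are nonzero.

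By the Hodge-Tate hypothesis, $D\otimes_{K\otimes A}B_{\HT,A}\cong V_{\HT}$. The right-hand side is a finite projective $B_{\HT,A}$-module, because $V$ is finite projective over $\Cbb_p\otimes A$. I would then invoke descent of finite projectivity along a faithfully flat map.

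First, $D$ is finitely generated. Choose generators of $V_{\HT}$; each is a finite sum of elements of the form $d\otimes b$ with $d\in D$. The finitely many $d$'s that occur span a submodule $D'\subset D$ with $D'\otimes B_{\HT,A}\to V_{\HT}$ surjective. Since the map $D\otimes B_{\HT,A}\to V_{\HT}$ is injective, faithful flatness gives $D'=D$.

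Next, $D$ is finitely presented and flat. Finite presentation descends the same way from finite presentation of $V_{\HT}$. Flatness descends because $-\otimes_{K\otimes A}D$ followed by the faithfully flat base change is exact. So $D$ is finitely presented and flat, hence projective.

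These arguments concern the underlying discrete modules evaluated at $\ast$. To transfer them to the condensed setting I would use that $K\otimes A$ and $B_{\HT,A}$ are nuclear and that all modules involved are finitely generated, so finite projective condensed modules correspond to finite projective modules over the underlying rings.

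Finiteness of the grading follows from the same generator argument: $D$ is finitely generated, so it lies in $\bigoplus_{|n|\le N}\gr^nD$ for some $N$. Each $\gr^nD$, being a direct summand of the finite projective module $D$, is then finite projective.

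The main obstacle I expect is making the descent step rigorous in the solid/condensed framework. One option is to note that $B_{\HT,A}=\bigoplus_n \Cbb_p t^n\otimes A$ and that $\Cbb_p$ is a free Banach $K$-module; then faithful flatness and descent of projectivity can be verified on underlying rings. The alternative is to cite a descent result for finite projective modules from the condensed literature along the lines of \cite[Lemma 3.21]{RJRC22}. I would try the first approach first.
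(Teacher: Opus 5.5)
Your overall plan matches the paper's: faithful flatness of $B_{\HT,A}$ over $K\otimes A$, descent of finite projectivity to $D=D_{\HT}(V)$, and $\gr^nD$ finite projective as a direct summand. Your decomposition $D=\bigoplus_n\gr^nD$ is correct, and the finiteness of the grading is a harmless extra. The gap is in how you carry out the descent.

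\textbf{The generator argument.} It writes each generator of $V_{\HT}$ as a finite sum of elementary tensors $d\otimes b$. That identifies $(D\otimes_{K\otimes A}B_{\HT,A})(\ast)$ with the discrete tensor product $D(\ast)\otimes_{(K\otimes A)(\ast)}B_{\HT,A}(\ast)$.
\begin{itemize}
\item For the solid tensor product this identification needs $D$ to be relatively discrete over $K\otimes A$. Already for a Banach $K$-space $D$, the points of $D\otimes_{K\otimes A}(\Cbb_p\otimes A)=D\otimes_K\Cbb_p$ are convergent infinite sums of elementary tensors.
\item But $D=V_{\HT}^{G_K}$ is a priori an arbitrary condensed module. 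Your appeal to ``all modules involved are finitely generated'' is circular, because finite generation of $D$ is exactly what the generator argument is supposed to deliver.
\item Similarly, the preceding lemma gives faithful flatness for the solid tensor product. It says nothing about the discrete ring map $(K\otimes A)(\ast)\to B_{\HT,A}(\ast)$.
\item Your suggested decomposition $\Cbb_p\cong\widehat{\bigoplus}_IK$ supplies neither of these two missing ingredients.
\end{itemize}

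\textbf{Comparison with $D_{\dR}$.} The paper does run a discrete descent like yours for $D_{\dR}$ over algebraic-affinoid $A$, but only after proving two things separately:
\begin{itemize}
\item relative discreteness (Lemma~\ref{lem:DdR relatively discrete}), which rests on Sen theory via Lemma~\ref{lem:HT weights range} and Corollary~\ref{cor:DHT relatively discrete};
\item discrete faithful flatness (Lemma~\ref{lem:BdR flatness noncondensed}), which uses noetherianity.
\end{itemize}
Neither input is available for a general sheafy $(A,A^+)$, which the present lemma also covers.

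\textbf{The fix.} Take your second option, with the right reference, and stay in the condensed setting.
\begin{itemize}
\item Since $B_{\HT,A}$ is flat, the Hodge--Tate hypothesis gives $D\otimes^{\Lbb}_{K\otimes A}B_{\HT,A}\cong V_{\HT}$, which is finite projective over $B_{\HT,A}$.
\item Faithful flatness of $(K\otimes A,\Zbb)_{\square}\to(B_{\HT,A},\Zbb)_{\square}$, combined with the condensed descent theorem \cite[Theorem 1.35]{Mikami24}, gives that $D$ is finite projective over $K\otimes A$.
\item The direct summand step then finishes the proof.
\end{itemize}
This is exactly the paper's proof. Note that \cite[Lemma 3.21]{RJRC22} only supplies the faithful flatness, not the descent statement.
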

\begin{proof}
	Since $B_{\HT,A}$ is faithfully flat over $(K\otimes A,\Zbb)_{\square}$, $D_{\HT}(V)$ is a finite projective $K\otimes A$-module by \cite[Theorem 1.35]{Mikami24}.
	Since $\gr^{n}D_{\HT}(V)$ is a direct summand of $D_{\HT}(V)$, it is also a finite projective $K\otimes A$-module.
\end{proof}

\begin{proposition}\label{prop:HT recover}
	For $V\in\FP_{\Cbb_p\otimes A}^{\HT}(G_K)$, there is a natural $G_K$-equivariant isomorphism
	$$V\cong \bigoplus_{n\in \Zbb} \gr^nD_{\HT}(V)\otimes_{K\otimes A} (\Cbb_p\otimes A)(-n).$$
\end{proposition}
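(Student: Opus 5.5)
The plan is to extract the degree-zero part of the Hodge–Tate comparison isomorphism. By assumption $V$ is Hodge–Tate, so the natural map
$$D_{\HT}(V)\otimes_{K\otimes A} B_{\HT,A}\to V_{\HT}=\bigoplus_{m\in\Zbb}t^mV$$
is a $G_K$-equivariant isomorphism of $B_{\HT,A}$-modules. First I will record that $D_{\HT}(V)=V_{\HT}^{G_K}=\bigoplus_{n}(t^nV)^{G_K}=\bigoplus_n \gr^nD_{\HT}(V)$. This holds because the $G_K$-action preserves the grading of $V_{\HT}$, and taking $G_K$-invariants commutes with this direct sum. The latter point can be checked as in Lemma~\ref{lem:HT fixed vector}, via the left exact sequence $0\to (-)^{G_K}\to(-)\to C(G_K,-)$ together with the fact that $C(G_K,-)$ commutes with the direct sum. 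Here each summand $t^nV$ is a finite projective $\Cbb_p\otimes A$-module, and $C(G_K,-)$ is computed by $R\intHom(\Zbb_\square[G_K],-)$ with $\Zbb_{\square}[G_K]$ compact projective, so it commutes with arbitrary direct sums.

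Next I will use that $B_{\HT,A}=\bigoplus_{k}t^k(\Cbb_p\otimes A)$ is graded, and that the comparison map is homogeneous: an element $x\in \gr^nD_{\HT}(V)\subset t^nV$ tensored with $t^k c$ lands in $t^{n+k}V$. Taking the degree-$0$ component on both sides then gives a $G_K$-equivariant isomorphism
$$\bigoplus_{n\in\Zbb}\gr^nD_{\HT}(V)\otimes_{K\otimes A} t^{-n}(\Cbb_p\otimes A)\xrightarrow{\sim} t^0V=V .$$
Finally I will identify $t^{-n}(\Cbb_p\otimes A)$ with $(\Cbb_p\otimes A)(-n)$ as $G_K$-modules. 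Naturality in $V$ is clear from the construction.

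I do not expect a serious obstacle. The only point needing care is the condensed bookkeeping: that invariants commute with the infinite direct sum, and that the tensor product over $K\otimes A$ is compatible with the gradings. Both follow from the flatness and nuclearity facts already used in Lemma~\ref{lem:HT fixed vector} and Lemma~\ref{lem:DHT fin proj}. In addition, since $D_{\HT}(V)$ is finite projective by Lemma~\ref{lem:DHT fin proj}, only finitely many $\gr^n$ are nonzero, so the direct sum is in fact finite.
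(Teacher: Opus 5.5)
Your proposal is correct and follows the paper's own proof, which simply takes the degree-$0$ component of the graded comparison isomorphism $D_{\HT}(V)\otimes_{K\otimes A}B_{\HT,A}\to V_{\HT}$. Your extra steps spell out what the paper leaves implicit: that $G_K$-invariants commute with the direct sum, that the comparison map is homogeneous, and that $t^{-n}(\Cbb_p\otimes A)$ is the Tate twist $(\Cbb_p\otimes A)(-n)$.
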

\begin{proof}
	By taking the $0$th grading of the isomorphism of graded modules
	$$D_{\HT}(V)\otimes_{K\otimes A} B_{\HT,A}\to V_{\HT},$$
	we get the claim.
\end{proof}

\begin{definition}
	For $V\in\FP_{\Cbb_p\otimes A}^{\HT}(G_K)$, we define (a family of multisets of) Hodge-Tate weights of $V$ as usual which is a family of multisets indexed by $\pi_0(\Spec (K\otimes A)(\ast))$.
	We normalize the Hodge-Tate weight so that the Hodge-Tate weight of the $p$-adic cyclotomic character $\chi$ is equal to $1$. 
\end{definition}

\begin{example}\label{ex:Ht wt 0}
	For $V\in\FP_{\Cbb_p\otimes A}(G_K)$, $V$ is Hodge-Tate of Hodge-Tate weight $0$ if and only if the natural morphism
	$$V^{G_K}\otimes_{K\otimes A} (\Cbb_p\otimes A) \to V$$
	is an isomorphism.
\end{example}

\begin{lemma}\label{lem:HT dual direct sum tensor product}
	The subcategory $\FP_{\Cbb_p\otimes A}^{\HT}(G_K)\subset \FP_{\Cbb_p\otimes A}(G_K)$ is stable under taking duals, direct sums, and tensor products.
\end{lemma}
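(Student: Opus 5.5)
The natural tool is the structural description of Hodge-Tate objects in Proposition~\ref{prop:HT recover}. It says that $V\in\FP_{\Cbb_p\otimes A}(G_K)$ is Hodge-Tate if and only if $V$ is $G_K$-equivariantly isomorphic to
$$\bigoplus_{n} D_n\otimes_{K\otimes A}(\Cbb_p\otimes A)(-n),$$
where each $D_n$ is a finite projective $K\otimes A$-module and only finitely many are nonzero.

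One direction is Proposition~\ref{prop:HT recover} together with Lemma~\ref{lem:DHT fin proj}. For the converse, I would compute $D_{\HT}$ of such a module directly. Lemma~\ref{lem:HT fixed vector}, applied after twisting, gives $((\Cbb_p\otimes A)(m))^{G_K}=K\otimes A$ if $m=0$ and $0$ otherwise. Indeed, $B_{\HT,A}=\bigoplus_m(\Cbb_p\otimes A)(m)$ as $G_K$-modules, and taking invariants commutes with this direct sum, since the grading is $G_K$-stable and the invariants sit in a left exact sequence compatible with the grading. Since $D_n$ is finite projective, hence a direct summand of a finite free module, invariants commute with $D_n\otimes_{K\otimes A}-$. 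Therefore $\gr^m D_{\HT}$ of this model is $D_m$, and the comparison map
$$D_{\HT}(V)\otimes_{K\otimes A}B_{\HT,A}\to V_{\HT}$$
becomes the identity of $\bigoplus_{n,m}D_n\otimes(\Cbb_p\otimes A)(m-n)$. So the model is Hodge-Tate, and the notion is invariant under isomorphism.

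With this characterization in hand, the stability statements are formal.

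\textbf{Direct sums.} Sum the decompositions of the two summands.

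\textbf{Tensor products.} Using $(\Cbb_p\otimes A)(-n)\otimes_{\Cbb_p\otimes A}(\Cbb_p\otimes A)(-m)\cong(\Cbb_p\otimes A)(-n-m)$, we get
$$V\otimes W\cong\bigoplus_k\Bigl(\bigoplus_{n+m=k}D_n\otimes_{K\otimes A}D'_m\Bigr)\otimes_{K\otimes A}(\Cbb_p\otimes A)(-k),$$
and each $\bigoplus_{n+m=k}D_n\otimes_{K\otimes A}D'_m$ is finite projective over $K\otimes A$.

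\textbf{Duals.} The dual $\Hom_{\Cbb_p\otimes A}(V,\Cbb_p\otimes A)$ of $D_n\otimes(\Cbb_p\otimes A)(-n)$ is $D_n^\vee\otimes(\Cbb_p\otimes A)(n)$, and duality commutes with finite direct sums.

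Alternatively, one could argue with the comparison isomorphism directly: tensor and dualize the two isomorphisms over $B_{\HT,A}$, then use the faithful flatness of $B_{\HT,A}$ over $K\otimes A$. That route, however, requires identifying $D_{\HT}(V\otimes W)$ with $D_{\HT}(V)\otimes D_{\HT}(W)$, which the explicit model makes transparent.

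The only point needing care is the claim that invariants commute with the infinite direct sum and with tensoring by $D_n$ in the condensed setting. To handle this, I would use the left exact sequence from the proof of Lemma~\ref{lem:HT fixed vector}: the map $x\mapsto(g\mapsto gx)$ respects the $t$-grading, and $C(G_K,-)$ commutes with tensoring by finite projective modules, so the invariants can be computed degreewise. I expect this step to be the main (mild) obstacle; everything else is bookkeeping.
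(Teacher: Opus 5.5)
Your argument is correct, but it is organized differently from the paper's.

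\textbf{Your route.} You first upgrade Proposition~\ref{prop:HT recover} to a characterization: $V$ is Hodge-Tate if and only if it is $G_K$-isomorphic to a finite sum $\bigoplus_n D_n\otimes_{K\otimes A}(\Cbb_p\otimes A)(-n)$ with each $D_n$ finite projective. Proving the converse forces you to compute the invariants of each twist $(\Cbb_p\otimes A)(m)$ degreewise. The stability statements are then bookkeeping on the model.

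\textbf{The paper's route.} The paper never leaves the level of $B_{\HT,A}$-modules. It dualizes the comparison isomorphism and rewrites it by adjunction as $(V^*)_{\HT}\cong\intHom_{K\otimes A}(D_{\HT}(V),B_{\HT,A})\cong D_{\HT}(V)^*\otimes_{K\otimes A}B_{\HT,A}$. It then takes $G_K$-invariants, using only that $D_{\HT}(V)^*$ is finite projective and that $B_{\HT,A}^{G_K}=K\otimes A$. Tensor products and direct sums are handled the same way.

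\textbf{Faithful flatness is not needed.} Your remark that the direct route needs faithful flatness of $B_{\HT,A}$ is not accurate. Because $D_{\HT}(V)\otimes_{K\otimes A}D_{\HT}(W)$ is finite projective, one gets at once
\[
\bigl(D_{\HT}(V)\otimes_{K\otimes A}D_{\HT}(W)\otimes_{K\otimes A}B_{\HT,A}\bigr)^{G_K}=D_{\HT}(V)\otimes_{K\otimes A}D_{\HT}(W).
\]
This is exactly the identification you thought required extra input.

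\textbf{Comparison.} Both proofs rest on the same two facts:
\begin{enumerate}
\item invariants commute with $D\otimes_{K\otimes A}-$ for finite projective $D$;
\item $B_{\HT,A}^{G_K}=K\otimes A$.
\end{enumerate}
The paper's proof is shorter. Yours gives for free the graded statement $\gr^{\bullet}D_{\HT}(V\otimes W)\cong\gr^{\bullet}D_{\HT}(V)\otimes\gr^{\bullet}D_{\HT}(W)$ and the Hodge-Tate weights of duals. The paper proves the graded statement separately afterwards, in Lemma~\ref{lem:HT to graded tensor}, via Proposition~\ref{prop:HT recover}.

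The step you flag as the main obstacle is harmless. $C(G_K,-)$ commutes with direct sums because $\Qbb_{p,\square}[G_K]$ is compact, as the paper itself uses in the proof of Lemma~\ref{lem:stable dense}. Hence invariants can be computed degreewise.
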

\begin{proof}
	For $V\in \FP_{\Cbb_p\otimes A}^{\HT}(G_K)$, there are isomorphisms
	\begin{align*}
		(V^*)_{\HT}&\cong (V_{\HT})^*\\
		&\cong \intHom_{B_{\HT,A}}(D_{\HT}(V)\otimes_{K\otimes A}B_{\HT,A},B_{\HT,A})\\
		&\cong \intHom_{K\otimes A}(D_{\HT}(V),B_{\HT,A})\\
		&\cong D_{\HT}(V)^*\otimes_{K\otimes A}B_{\HT,A},
	\end{align*}
	where $(-)^*$ denotes the dual.
	Since $D_{\HT}(V)^*$ is a finite projective $K\otimes A$-module, we have 
	\begin{align*}
	(D_{\HT}(V)^*\otimes_{K\otimes A}B_{\HT,A})^{G_K}\cong D_{\HT}(V)^*\otimes_{K\otimes A}B_{\HT,A}^{G_K}\cong D_{\HT}(V)^*.
	\end{align*}
	Therefore, $V^*$ is also Hodge-Tate.
	The other cases can be proved in a similar way.
\end{proof}

\begin{lemma}\label{lem:HT to graded tensor}
	For $V,W \in \FP_{\Cbb_p\otimes A}^{\HT}(G_K)$, there is a natural isomorphism of graded modules
	$$\gr^{\bullet}D_{\HT}(V) \otimes_{K\otimes A} \gr^{\bullet}D_{\HT}(W) \cong \gr^{\bullet}D_{\HT}(V\otimes_{B_{\HT,A}}W).$$
\end{lemma}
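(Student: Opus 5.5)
Write $D_V=D_{\HT}(V)$ and $D_W=D_{\HT}(W)$. The plan is to build the comparison map at the level of $G_K$-invariants of $B_{\HT,A}$-modules and then check it is an isomorphism using the Hodge--Tate isomorphisms for $V$ and $W$.

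First we construct the map. The tensor product $V\otimes_{\Cbb_p\otimes A}W$ satisfies
$$(V\otimes_{\Cbb_p\otimes A} W)_{\HT}\cong V_{\HT}\otimes_{B_{\HT,A}}W_{\HT}.$$
This is how the statement's $V\otimes_{B_{\HT,A}}W$ is to be read. It is Hodge--Tate by Lemma~\ref{lem:HT dual direct sum tensor product}. Multiplication gives a $G_K$-equivariant map of graded modules
$$V_{\HT}\otimes_{K\otimes A}W_{\HT}\to V_{\HT}\otimes_{B_{\HT,A}}W_{\HT},$$
sending $t^nV\otimes t^mW$ into $t^{n+m}(V\otimes W)$. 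Restricting to invariants gives a natural graded map
$$\gr^{\bullet}D_V\otimes_{K\otimes A}\gr^{\bullet}D_W\to \gr^{\bullet}D_{\HT}(V\otimes W).$$

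Next we show it is an isomorphism. Since $V$ and $W$ are Hodge--Tate, we have
$$V_{\HT}\otimes_{B_{\HT,A}}W_{\HT}\cong (D_V\otimes_{K\otimes A}B_{\HT,A})\otimes_{B_{\HT,A}}(D_W\otimes_{K\otimes A}B_{\HT,A})\cong (D_V\otimes_{K\otimes A}D_W)\otimes_{K\otimes A}B_{\HT,A}.$$
This is an isomorphism of graded modules once $D_V\otimes D_W$ carries the tensor-product grading and $B_{\HT,A}$ is graded by powers of $t$. By Lemma~\ref{lem:DHT fin proj}, $D_V\otimes_{K\otimes A}D_W$ is finite projective over $K\otimes A$. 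Therefore $G_K$-invariants commute with $(D_V\otimes D_W)\otimes_{K\otimes A}(-)$, exactly as in the proof of Lemma~\ref{lem:HT dual direct sum tensor product}. Combined with Lemma~\ref{lem:HT fixed vector}, this gives
$$D_{\HT}(V\otimes W)\cong D_V\otimes_{K\otimes A}D_W.$$
The isomorphism is compatible with gradings because every map involved is graded. Taking $k$-th graded pieces then yields
$$\gr^kD_{\HT}(V\otimes W)\cong \bigoplus_{n+m=k}\gr^nD_V\otimes\gr^mD_W.$$

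The only point needing care is that the identification of invariants respects the grading. To handle it, we decompose $D_V=\bigoplus_n\gr^nD_V$ and $D_W=\bigoplus_m \gr^mD_W$, both finite sums of finite projective $K\otimes A$-modules. We then compute $G_K$-invariants summand by summand: the degree-$k$ part is the sum over $n,m$ of $\gr^nD_V\otimes\gr^mD_W\otimes t^{k-n-m}(\Cbb_p\otimes A)$. Its invariants are $\gr^nD_V\otimes\gr^mD_W\otimes (K\otimes A)$ when $n+m=k$ and zero otherwise, by Lemma~\ref{lem:HT fixed vector} applied degreewise. Finally we check that this identification agrees with the multiplication map from the first step, which holds by construction.
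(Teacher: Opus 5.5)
Your proof is correct and is essentially the paper's argument: the paper just cites the decomposition $V\cong\bigoplus_n\gr^nD_{\HT}(V)\otimes_{K\otimes A}(\Cbb_p\otimes A)(-n)$ from Proposition~\ref{prop:HT recover} and computes the $G_K$-invariants of each Tate twist degree by degree, which is exactly what your final paragraph does. Routing through the full isomorphism $V_{\HT}\cong D_{\HT}(V)\otimes_{K\otimes A}B_{\HT,A}$ first is harmless but not needed.
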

\begin{proof}
	It easily follows from the description in Proposition~\ref{prop:HT recover}
\end{proof}

\begin{lemma}\label{lem:potentially HT}
Let $L/K$ be a finite extension.
Then for $V\in\FP_{\Cbb_p\otimes A}(G_K)$, $V$ is Hodge-Tate as a $G_K$-representation if and only if it is Hodge-Tate as a $G_L$-representation.
\end{lemma}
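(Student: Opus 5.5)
We reduce to a Galois-descent statement for the finite group $\Gal(L'/K)$, where $L'$ is the Galois closure of $L$ over $K$. First we reduce to the case $L/K$ Galois. If $V$ is Hodge-Tate for $G_{L'}$, it is Hodge-Tate for $G_L$ by the argument below applied to $L'/L$. So the real content is: for $L/K$ finite Galois, Hodge-Tate for $G_K$ is equivalent to Hodge-Tate for $G_L$. We fix such $L$ and write $H=\Gal(L/K)$.

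\emph{$G_K$-Hodge-Tate implies $G_L$-Hodge-Tate.} We show $D_{\HT}^L(V)=D_{\HT}^K(V)\otimes_{K\otimes A}(L\otimes A)$. If $V$ is $G_K$-Hodge-Tate, then $V_{\HT}\cong D_{\HT}^K(V)\otimes_{K\otimes A}B_{\HT,A}$, and $D_{\HT}^K(V)$ is finite projective by Lemma~\ref{lem:DHT fin proj}. Taking $G_L$-invariants commutes with $-\otimes_{K\otimes A}D_{\HT}^K(V)$, since a finite projective module is a direct summand of a finite free one. By Lemma~\ref{lem:HT fixed vector} applied to $L$ (so $B_{\HT,A}^{G_L}=L\otimes A$), we get
$$D_{\HT}^L(V)=D_{\HT}^K(V)\otimes_{K\otimes A}(L\otimes A).$$
Hence $D_{\HT}^L(V)\otimes_{L\otimes A}B_{\HT,A}\to V_{\HT}$ is identified with the isomorphism for $K$, and $V$ is $G_L$-Hodge-Tate.

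\emph{$G_L$-Hodge-Tate implies $G_K$-Hodge-Tate.} Put $D_L=D_{\HT}^L(V)$. It is a finite projective $L\otimes A$-module by Lemma~\ref{lem:DHT fin proj}, and it carries a semilinear action of the finite group $H$; moreover $D_L^H=D_{\HT}^K(V)$. We need Galois descent along $K\otimes A\to L\otimes A$: the map
$$D_L^H\otimes_{K\otimes A}(L\otimes A)\to D_L$$
is an isomorphism and $D_L^H$ is finite projective. This holds because $L\otimes_K L\cong\prod_{h\in H}L$, so $K\otimes A\to L\otimes A$ is a finite étale Galois cover with group $H$. Concretely, $L\otimes A$ is a finite free $K\otimes A$-module, and classical Galois descent for $K\to L$ (Hilbert 90 / Speiser) gives $D\cong D^H\otimes_K L$ for any $L$-vector space $D$ with semilinear $H$-action. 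This extends to modules over $L\otimes A$ by an averaging argument: choose $c\in L$ with $\sum_h h(c)=1$; then $x\mapsto\sum_h h(cx)$ is a $K\otimes A$-linear projector $D_L\to D_L^H$, and the usual argument with a $K$-basis of $L$ gives the isomorphism. Since $D_L^H$ is a direct summand (via this projector) of the finite projective $K\otimes A$-module $D_L$ (note $L\otimes A$ is finite free over $K\otimes A$), it is finite projective over $K\otimes A$. Composing,
$$D_{\HT}^K(V)\otimes_{K\otimes A}B_{\HT,A}\cong D_L\otimes_{L\otimes A}B_{\HT,A}\cong V_{\HT}.$$
One checks this composite is the natural map, so $V$ is $G_K$-Hodge-Tate.

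The main obstacle is carrying out the descent step in the condensed setting. The projector and basis arguments are algebraic, so they should go through verbatim on underlying condensed modules, because all the maps involved are $K\otimes A$-linear finite sums. We also need that $G_L$-invariants on $B_{\HT,A}$-modules, followed by $H$-invariants, compute $G_K$-invariants. That is formal, since $H$ is finite and $G_K$-invariants are given by the left exact sequence with $C(G_K,-)$ as in the proof of Lemma~\ref{lem:HT fixed vector}.
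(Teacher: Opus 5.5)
Your proof is correct and follows the same route as the paper, whose entire proof is the remark that the lemma follows from Galois descent. You supply the details: the reduction to $L/K$ Galois via the Galois closure; the ascent using $B_{\HT,A}^{G_L}=L\otimes A$ and finite projectivity of $D_{\HT}^K(V)$; and descent along the finite Galois extension $K\otimes A\to L\otimes A$, which is unproblematic in the condensed setting because $L\otimes A$ is finite free over $K\otimes A$.
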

\begin{proof}
	It easily follows from the Galois descent.
\end{proof}

\begin{lemma}\label{lem:HT basechange}
For $V\in\FP_{\Cbb_p\otimes A}^{\HT}(G_K)$ and a morphism $\Acal\to \Bcal=(B,B^+)_{\square}$, $V_B=V\otimes_{\Acal}\Bcal$ is Hodge-Tate (as an object of $\FP_{\Cbb_p\otimes B}(G_K)$).
Moreover, in this case, there is an isomorphism of graded $K\otimes B$-modules 
$$D_{\HT}(V_B)\cong D_{\HT}(V)\otimes_{K\otimes A} (K\otimes B).$$
\end{lemma}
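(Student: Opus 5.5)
The plan is to base change the Hodge--Tate isomorphism and then show that taking $G_K$-invariants commutes with this base change. First, $V_B = V\otimes_{\Acal}\Bcal$ is a finite projective $\Cbb_p\otimes B$-module with semilinear $G_K$-action. Also $(V_B)_{\HT}\cong V_{\HT}\otimes_{B_{\HT,A}}B_{\HT,B}$, because
$$B_{\HT,A}\otimes_{\Acal}\Bcal\cong B_{\HT}\otimes\Bcal = B_{\HT,B}.$$
This uses associativity of solid tensor products and the fact that $B_{\HT}$ is nuclear, so the analytic structure $B^+$ plays no role. Base changing the Hodge--Tate isomorphism for $V$ along $B_{\HT,A}\to B_{\HT,B}$ then gives a $G_K$-equivariant graded isomorphism
$$\bigl(D_{\HT}(V)\otimes_{K\otimes A}(K\otimes B)\bigr)\otimes_{K\otimes B}B_{\HT,B}\xrightarrow{\sim}(V_B)_{\HT}.$$
Here $G_K$ acts trivially on $D':=D_{\HT}(V)\otimes_{K\otimes A}(K\otimes B)$, and $D'$ is finite projective over $K\otimes B$ by Lemma~\ref{lem:DHT fin proj}.

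Next, take $G_K$-invariants of both sides. Since $D'$ is finite projective over $K\otimes B$, it is a direct summand of a finite free module. Invariants commute with finite direct sums and summands, so
$$\bigl(D'\otimes_{K\otimes B}B_{\HT,B}\bigr)^{G_K}\cong D'\otimes_{K\otimes B}B_{\HT,B}^{G_K}\cong D',$$
where the last step is Lemma~\ref{lem:HT fixed vector} applied to $\Bcal$. For this step one needs Lemma~\ref{lem:HT fixed vector} for $\Bcal$; if $\Bcal$ is of the same type as $\Acal$ (algebraic-affinoid, or a sheafy affinoid pair), its proof applies verbatim, using nuclearity and flatness of $B$ over $\Qbb_{p,\square}$. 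Therefore $D_{\HT}(V_B)\cong D'$, compatibly with gradings, since the isomorphism is graded.

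Finally, substitute this back into the isomorphism from the first step. This identifies the natural map $D_{\HT}(V_B)\otimes_{K\otimes B}B_{\HT,B}\to(V_B)_{\HT}$ with the base-changed isomorphism, so $V_B$ is Hodge--Tate. One should check that the natural map really is the one obtained by base change. This holds because $D'\to D_{\HT}(V_B)$ is the evident map induced by $D_{\HT}(V)\to V\to V_B$.

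The only real subtlety I expect is the tensor-product bookkeeping in the condensed setting. The points to verify are that $B_{\HT,A}\otimes_{\Acal}\Bcal\cong B_{\HT,B}$ holds with no completion issues, and that the invariants computation uses only finite projectivity, not flatness of $B$ over $A$. Both are formal: $B_{\HT}=\bigoplus_n \Cbb_p t^n$ is a direct sum of Banach spaces, hence nuclear, and the solid tensor product commutes with direct sums.
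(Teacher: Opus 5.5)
Your proposal is correct and follows the same route as the paper. Base change the Hodge--Tate isomorphism along $B_{\HT,A}\to B_{\HT,B}$ to obtain $M_B\otimes_{K\otimes B}B_{\HT,B}\cong (V_B)_{\HT}$, where $M_B=D_{\HT}(V)\otimes_{K\otimes A}(K\otimes B)$, and then read off $D_{\HT}(V_B)\cong M_B$. The paper leaves the invariants computation $(M_B\otimes_{K\otimes B}B_{\HT,B})^{G_K}\cong M_B$ implicit; your direct-summand argument, combined with Lemma~\ref{lem:HT fixed vector} applied to $\Bcal$, is exactly what fills it in.
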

\begin{proof}
	We write $M=(V_{\HT})^{G_K}$, which is a finite projective $K\otimes A$-module.
	We write $M_B=M\otimes_{A} B$, which is a finite projective $K\otimes B$-module.
	Then we have an isomorphism
	$$M_B\otimes_{K\otimes B} B_{\HT,A}\cong V_{\HT,B}.$$
	Therefore, $V_B$ is Hodge-Tate with $D_{\HT}(V_B)=D_{\HT}(V)\otimes_{K\otimes A} (K\otimes B).$
\end{proof}

\begin{proposition}\label{prop:HT analytic local}
Let 
$\{\AnSpec\Acal_i \to \AnSpec\Acal\}_{i=1}^n$
be an affinoid covering of $\AnSpec \Acal$. 
Then for $V\in\FP_{\Cbb_p\otimes A}(G_K)$, $V$ is Hodge-Tate if and only if $V_i=V\otimes_{\Acal}\Acal_i$ is Hodge-Tate for each $i$.
\end{proposition}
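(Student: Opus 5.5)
The ``only if'' direction is Lemma~\ref{lem:HT basechange} applied to each $\Acal\to\Acal_i$. For the converse, I would use descent along the cover, comparing the Čech complex of $A$ with the $G_K$-invariants of $V_{\HT}$.

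Suppose each $V_i$ is Hodge-Tate. Put $M_i=D_{\HT}(V_i)$; by Lemma~\ref{lem:DHT fin proj} it is a finite projective $K\otimes A_i$-module. By Lemma~\ref{lem:HT basechange}, for each pair $i,j$ we have canonical isomorphisms
\[
M_i\otimes_{K\otimes A_i}(K\otimes A_{ij})\cong D_{\HT}(V_{ij})\cong M_j\otimes_{K\otimes A_j}(K\otimes A_{ij}),
\]
where $A_{ij}$ corresponds to $\AnSpec\Acal_i\times_{\AnSpec\Acal}\AnSpec\Acal_j$. These isomorphisms satisfy the cocycle condition, because they are all induced by the inclusions into $V_{\HT,ij}$. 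So $\{M_i\}$ is a descent datum of finite projective modules along the cover $\{\AnSpec(K\otimes\Acal_i)\to\AnSpec(K\otimes\Acal)\}$. The cover is a finite affinoid covering and the modules are finite projective, so descent for finite projective modules along analytic covers of algebraic-affinoid rings (from \cite{Mikami24}) gives a finite projective $K\otimes A$-module $M$ with $M\otimes_{K\otimes A}(K\otimes A_i)\cong M_i$. The main obstacle is making sure this descent is available in the algebraic-affinoid setting. If it is not directly citable, I would instead take $M$ to be the equalizer of $\prod_i M_i\rightrightarrows\prod_{i,j}M_{ij}$ and show it is finite projective using the exactness of the Čech complex below.

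Next I would identify $M$ with $D_{\HT}(V)$. The Čech sequence
\[
0\to A\to \prod_i A_i\to \prod_{i,j}A_{ij}
\]
is left exact. Tensoring over $A$ with the finite projective module $V_{\HT}$ keeps it left exact, and so does taking $G_K$-invariants. This gives
\[
D_{\HT}(V)=\mathrm{eq}\Bigl(\prod_i D_{\HT}(V_i)\rightrightarrows\prod_{i,j}D_{\HT}(V_{ij})\Bigr)\cong M.
\]
It is here that we use the identification $(V_{\HT}\otimes_A A_i)^{G_K}=D_{\HT}(V_i)$, which holds because $V_{\HT}\otimes_A A_i=(V_i)_{\HT}$.

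Finally, consider the natural map $\alpha\colon D_{\HT}(V)\otimes_{K\otimes A}B_{\HT,A}\to V_{\HT}$. Both sides are finite projective $B_{\HT,A}$-modules; the source is finite projective because $M$ is. After base change to each $A_i$, $\alpha$ becomes the comparison map for $V_i$, which is an isomorphism by assumption. A morphism of finite projective modules that becomes an isomorphism on every member of an analytic cover is an isomorphism: its kernel and cokernel vanish by the exactness of the Čech complex tensored with these modules, and $A\to\prod_i A_i$ is injective. Hence $\alpha$ is an isomorphism and $V$ is Hodge-Tate.
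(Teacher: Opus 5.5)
Your overall shape (glue the $D_{\HT}(V_i)$, compare with $V_{\HT}$, check the comparison locally) matches the paper's. However, the step where you produce a finite projective $K\otimes A$-module $M$ is a genuine gap.

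The paper explicitly warns that algebraic-affinoid algebras need not be Fredholm, and that descent of finite projective modules along analytic covers is therefore a subtle issue. So this descent cannot simply be cited from \cite{Mikami24}; the difficulty is specific to the algebraic-affinoid case. Your fallback does not repair it. The equalizer of $\prod_i M_i\rightrightarrows\prod_{i,j}M_{ij}$ is just $D_{\HT}(V)$ itself, and left exactness of the \v{C}ech complex gives neither finite generation nor projectivity. Nor does it give the base-change identification $D_{\HT}(V)\otimes_{K\otimes A}(K\otimes A_i)\cong M_i$. Your final step needs exactly that identification, since you verify that $\alpha$ is an isomorphism by base changing to each $A_i$.

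The missing idea is to get projectivity from $B_{\HT,A}$ rather than from the cover. The paper works as follows.
\begin{enumerate}
\item It glues the $M_i$ only as a dualizable object $M\in\Dcal(\Acal)$ (a $K\otimes A$-module), which needs descent only at the derived level. Together with $M$ it glues a map $M\to V_{\HT}$ from the maps $M_i\to (V_i)_{\HT}$.
\item The map $M\otimes^{\Lbb}_{K\otimes A}B_{\HT,A}\to V_{\HT}$ is an isomorphism, because it becomes one after each $-\otimes_{\Acal}\Acal_i$.
\item $V_{\HT}$ is finite projective over $B_{\HT,A}$, and $(K\otimes A,\Zbb)_{\square}\to(B_{\HT,A},\Zbb)_{\square}$ is faithfully flat. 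Hence \cite[Theorem 1.35]{Mikami24} forces $M$ to be a static finite projective $K\otimes A$-module.
\item Using $B_{\HT,A}^{G_K}=K\otimes A$, one gets $M\cong D_{\HT}(V)$, and $V$ is Hodge-Tate.
\end{enumerate}
With this replacement, your equalizer description of $D_{\HT}(V)$ and your local check of $\alpha$ are correct but no longer needed.
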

\begin{remark}
	If $\Acal=(A,A^+)_{\square}$ is an algebraic-affinoid analytic $\Qbb_{p,\square}$-algebra, the definition of an affinoid covering is given in \cite[Definition 2.26 (5)]{Mikami24}.
	If $(A,A^+)$ is a sheafy analytic affinoid pair over $(\Qbb_p,\Zbb_p)$, then an affinoid covering of $\AnSpec \Acal$ means a covering induced from an affinoid open covering $\{\Spa(A_i,A_i^+)\to \Spa(A,A^+)\}_{i=1}^n$ in Huber's sense.
	When $(A,A^+)$ is an affinoid pair over $(\Qbb_p,\Zbb_p)$ of weakly finite type, these notions coincide by \cite[Example 2.31]{Mikami24}.
\end{remark}

\begin{proof}
	The only if part follows from Lemma~\ref{lem:HT basechange}.
	Let us prove the if part.
	We write $M_i=V_{i,\HT}^{G_K}$, which is a finite projective $K\otimes A_i$-module.
	By the proof of Lemma~\ref{lem:HT basechange}, there are natural isomorphisms
	$M_i\otimes_{\Acal_i} \Acal_{ij} \cong M_j\otimes_{\Acal_j} \Acal_{ij}$, which satisfy the cocycle condition.
	Therefore, we obtain a dualizable object $M\in \Dcal(\Acal)$ and a morphism $M\to V_{\HT}$ by gluing $M_i\to V_{i,\HT}$.
	The morphism 
	$$M\otimes_{K\otimes A}^{\Lbb} B_{\HT,A}\to V_{\HT}$$ is an isomorphism since it becomes an isomorphism after applying $-\otimes_{\Acal} \Acal_i$.
	Since the morphism $(K\otimes A,\Zbb)_{\square} \to (B_{\HT,A},\Zbb)_{\square}$ is faithfully flat, $M$ is a finite projective $K\otimes A$-module by \cite[Theorem 1.35]{Mikami24}, which proves the claim.
\end{proof}

\begin{lemma}\label{lem:prod tensor inj}
	Let $I$ and $J$ be countable sets, and let $\{M_i\}_{i\in I}$ and $\{N_j\}_{j\in J}$ be families of $\Qbb_{p,\square}$-modules.
	Assume that each $M_i$ and $N_j$ can be written as a filtered colimit of Banach $\Qbb_p$-modules with injective transition morphisms. 
	Then the natural morphism
	$$\prod_{i\in I} M_i \otimes \prod_{j\in J}N_j\to \prod_{(i,j)\in I\times J}M_i\otimes N_j$$
	is injective.
\end{lemma}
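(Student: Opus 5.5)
The plan is to reduce to the case where every $M_i$ and $N_j$ is a Banach $\Qbb_p$-module, and then handle that case directly using Smith-space duality.

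\emph{Reduction to Banach modules.} Write $M_i=\varinjlim_{\alpha} M_{i,\alpha}$ and $N_j=\varinjlim_\beta N_{j,\beta}$ as filtered colimits of Banach modules along injective transition maps. Products do not commute with filtered colimits, so the argument has to be done at the level of points. In the light/$\kappa$-condensed setting, a section of $\prod_i M_i$ over an extremally disconnected (or profinite) $S$ is a family $(f_i)$ with $f_i\in M_i(S)$. Since $S$ is compact, each $f_i$ factors through some $M_{i,\alpha_i}$, and injectivity of the transition maps makes this factorization unique. Hence
$$\prod_i M_i=\varinjlim_{(\alpha_i)_i}\prod_i M_{i,\alpha_i},$$
where the colimit is filtered, indexed by choices of an index for each $i$, and has injective transition maps. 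The analogous description holds for $\prod_j N_j$.

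The solid tensor product commutes with filtered colimits. Filtered colimits are exact, and for Banach (hence flat, nuclear) pieces the injectivity of the transition maps is preserved after tensoring. It therefore suffices to show that
$$\prod_i M_{i,\alpha_i}\otimes\prod_j N_{j,\beta_j}\to\prod_{i,j}M_{i,\alpha_i}\otimes N_{j,\beta_j}\to\prod_{i,j}M_i\otimes N_j$$
is injective. The second map is a product of maps $M_{i,\alpha}\otimes N_{j,\beta}\to M_i\otimes N_j$, which are filtered colimits of injections. Each such injection is a tensor product of injections of Banach modules, and it stays injective because Banach spaces over $\Qbb_p$ are flat for $\otimes_{\Qbb_{p,\square}}$.

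\emph{Banach case.} This is the key step. Assume every $M_i$ and $N_j$ is Banach. A countable product of Banach spaces is a Fréchet space, so $P=\prod_i M_i$ and $Q=\prod_j N_j$ are Fréchet, and $P\otimes Q$ is the completed projective tensor product $P\hat\otimes Q$. This uses the known result that the solid tensor product of Fréchet spaces agrees with the completed projective one, and here we use countability of $I$ and $J$. Each factor $M_i\otimes N_j=M_i\hat\otimes N_j$ is Banach. So the map becomes
$$P\hat\otimes Q\to\prod_{i,j}M_i\hat\otimes N_j.$$

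Write $P=\varprojlim_n P_n$ with $P_n=\prod_{i\le n}M_i$, and similarly $Q=\varprojlim_m Q_m$. Then $P\hat\otimes Q=\varprojlim_{n,m}P_n\hat\otimes Q_m$, because the completed projective tensor product of Fréchet spaces is the limit of the tensor products of the Banach quotients in a countable defining system. Moreover $P_n\hat\otimes Q_m=\prod_{i\le n,\,j\le m}M_i\hat\otimes N_j$, since finite products are finite direct sums. Passing to the limit identifies $P\hat\otimes Q$ with $\prod_{i,j}M_i\hat\otimes N_j$, so the map is even an isomorphism in the Banach case.

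\emph{Main obstacle.} The hard part will be justifying the identification of the solid tensor product of countable products of Banach spaces with $\varprojlim_{n,m}$ of the finite ones. Concretely, one needs that $-\otimes-$ commutes with these countable limits of Fréchet type. I would try to cite the known compatibility of the solid tensor product with Fréchet spaces (for example from Rodríguez Camargo–Rodrigues Jacinto on nuclear and Fréchet modules), or prove it by writing each $M_i$ as $\hat\oplus_{S_i}\Qbb_p$ via Smith duality. If that fails, injectivity alone (not an isomorphism) suffices: I would test elements against the projections to $P_n\hat\otimes Q_m$ and use that these projections jointly detect zero in $P\hat\otimes Q$.
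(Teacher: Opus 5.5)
Your proposal is correct and follows essentially the same route as the paper. You reduce to Banach $M_i$, $N_j$ by writing each product as the filtered colimit $\varinjlim_{(\lambda_i)}\prod_i M_{i,\lambda_i}$ with injective transition maps, and you settle the Banach case through the compatibility of the solid tensor product of Fréchet spaces with countable limits, which the paper cites as \cite[Lemma 3.28]{RJRC22}; your explicit use of flatness of Banach spaces to show that $M_{i,\lambda}\otimes N_{j,\sigma}\to M_i\otimes N_j$ is injective fills in a step the paper leaves implicit.
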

\begin{proof}
	We write $M_i$ and $N_j$ as filtered colimits of Banach $\Qbb_p$-modules with injective transition morphisms 
	$$M_i=\varinjlim_{\lambda_i \in \Lambda_i} M_{i,\lambda_i},\quad N_i=\varinjlim_{\sigma_j \in \Sigma_j} N_{j,\sigma_j}.$$
	Then there are isomorphisms
	\begin{align*}
		\prod_{i\in I} M_i &\cong \varinjlim_{(\lambda_i)\in \prod \Lambda_i} \prod_{i\in I} M_{i,\lambda_i},\\
		\prod_{j\in J} N_j &\cong \varinjlim_{(\sigma_j)\in \prod \Sigma_j} \prod_{j\in J} N_{j,\sigma_j},\\
		\prod_{(i,j)\in I\times J}M_i\otimes N_j &\cong \varinjlim_{(\lambda_{i,j},\sigma_{i,j})_{i,j}\in \prod_{i,j} (\Lambda_i \times \Sigma_j)} \prod_{(i,j)\in I\times J}M_{i,\lambda_i} \otimes  N_{j,\sigma_j},
	\end{align*}
	where each transition morphism is injective.
	Therefore, we may assume that $M_i$ and $N_j$ are Banach $\Qbb_p$-modules.
	Then the claim follows from \cite[Lemma 3.28]{RJRC22}.
\end{proof}

\begin{lemma}\label{lem:HT inj}
	Assume $\Acal \in \AlgAff_{\Qbb_p}$.
	Then for $V\in\FP_{\Cbb_p\otimes A}(G_K)$, the natural morphism
	\begin{align*}
		&D_{\HT}(V)\otimes_{K\otimes A} B_{\HT,A}\to V_{\HT}
	\end{align*}
	is injective.
\end{lemma}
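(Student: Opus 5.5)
The plan is to reduce to a statement about continuous functions, where the injectivity can be checked on products of Banach spaces via Lemma~\ref{lem:prod tensor inj}. Decompose by grading. The map
$$D_{\HT}(V)\otimes_{K\otimes A}B_{\HT,A}\to V_{\HT}$$
is graded. In degree $0$ it is the $\Cbb_p\otimes A$-linear map
$$\bigoplus_n \gr^nD_{\HT}(V)\otimes_{K\otimes A}(\Cbb_p\otimes A)(-n)\to V,$$
and the other degrees are its Tate twists. So it suffices to show this degree $0$ map is injective.

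Fix $n$ and put $W=V(n)$. Then $\gr^nD_{\HT}(V)=W^{G_K}$, and the map in question is a sum of maps
$$W^{G_K}\otimes_{K\otimes A}(\Cbb_p\otimes A)\to W.$$
Injectivity of the direct sum over $n$ then amounts to a linear independence statement for invariants in different twists.

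The classical route for $A=\Qbb_p$ is the Sen-type argument: if $\sum x_i\otimes c_i\mapsto 0$ with a minimal number of terms, apply $g-1$ to get a shorter relation. This does not transfer directly to the condensed setting. Instead, I would encode the invariants via the left exact sequence used in Lemma~\ref{lem:HT fixed vector},
$$0\to W^{G_K}\to W\to C(G_K,W).$$
The key step is to prove that the map
$$W^{G_K}\otimes_{K\otimes A}(\Cbb_p\otimes A)\to W$$
is injective by embedding everything into products.

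Write $A$ as a filtered colimit of Banach $\Qbb_p$-modules with injective transitions. This is possible because $A$ is relatively discrete and finitely generated over an affinoid $R$: choose an $R$-basis filtration of $A$ by finite $R$-submodules. The finite projective modules involved are then also such colimits.

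Next, choose a finite Galois extension $L/K$ and use $\Cbb_p=\widehat{\bar K}$. After passing to a finite étale cover of $K\otimes A$, the multiplication map is related to
$$\Cbb_p\otimes_K\Cbb_p\hookrightarrow C(G_K,\Cbb_p),\qquad x\otimes y\mapsto (g\mapsto g(x)y).$$
This is the Tate--Sen/Ax type injectivity. With coefficients in $A$, it becomes injectivity of
$$(\Cbb_p\otimes A)\otimes_{K\otimes A}(\Cbb_p\otimes A)\to C(G_K,\Cbb_p\otimes A).$$
I would deduce this from the case $A=\Qbb_p$ via flatness and nuclearity of $A$, together with \cite[Proposition 5.35]{And21} identifying $C(G_K,-\otimes A)$ with $C(G_K,-)\otimes A$.

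Then apply Lemma~\ref{lem:prod tensor inj}, writing $C(G_K,-)$ as countable limits of products of Banach modules over the open normal subgroups. This shows that tensoring the known injection with the finite projective module keeps it injective.

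Given a relation $\sum_i x_i\otimes c_i\mapsto 0$ with $x_i\in W^{G_K}$, consider the $G_K$-translates $g\mapsto \sum_i x_i\, g(c_i)$. These all vanish, and the injectivity above then forces the element to be zero.

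The main obstacle I expect is the combination over distinct $n$. For this I would use the Sen operator or the $\Gamma_K$-eigencharacter argument: the summands for different $n$ lie in distinct $\chi^{-n}$-isotypic parts, and a difference of characters $\chi^{a}-\chi^{b}$ is invertible on a suitable open subgroup after inverting $p$. This separates the graded pieces, using that a non-zero $(\chi^{a}(g)-\chi^{b}(g))\in\Qbb_p^\times$ is a unit.

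A second difficulty is making the "minimal relation" style argument work in condensed modules without choosing elements. Here I would rely on the product–tensor injectivity of Lemma~\ref{lem:prod tensor inj}, which is presumably why it was proved just before this statement.
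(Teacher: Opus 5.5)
\textbf{Verdict: genuine gap.} Your approach differs from the paper's, and as written it has two real gaps.

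\textbf{First gap: the completed Ax-type injectivity.} Everything rests on the injectivity of $(\Cbb_p\otimes A)\otimes_{K\otimes A}(\Cbb_p\otimes A)\to C(G_K,\Cbb_p\otimes A)$, which you deduce from the case $A=\Qbb_p$. But for $A=\Qbb_p$ the source is the solid tensor product, i.e.\ the completed tensor product $\Cbb_p\hotimes_K\Cbb_p$. The classical Ax--Sen--Tate/minimal-relation argument only controls finite sums $\sum x_i\otimes c_i$; for instance, it identifies $\bar{K}\otimes_K\Cbb_p$ with the locally constant functions on $G_K$.

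Passing to the completion is not formal. On $L\otimes_K\Cbb_p\cong\prod_{\Hom_K(L,\bar K)}\Cbb_p$, the tensor norm and the sup norm are not uniformly equivalent as $L$ grows; the comparison constant is governed by the different of $L/K$. So an element of $\Cbb_p\hotimes_K\Cbb_p$ could a priori map to zero in $C(G_K,\Cbb_p)$. Ruling this out needs a genuine Tate--Sen type argument, which you do not supply; flatness and nuclearity of $A$ only transport the statement from $\Qbb_p$ to $A$.

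Even granting that injectivity, ``the injectivity above forces the element to be zero'' does not follow as stated. At this stage nothing is known about $W^{G_K}$ as a $K\otimes A$-module; its finite generation, Corollary~\ref{cor:DHT relatively discrete}, is deduced from this very lemma. You would have to embed $W^{G_K}\otimes_{K\otimes A}(\Cbb_p\otimes A)$ into $W\otimes_{K\otimes A}(\Cbb_p\otimes A)$ by flatness, and then into $C(G_K,W)$ via a linearization of the semilinear action. None of this is in the proposal.

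\textbf{Second gap: separating the twists.} The step combining different $n$ fails as stated. $G_K$ acts on $x\otimes c\in\gr^nD_{\HT}(V)\otimes_{K\otimes A}(\Cbb_p\otimes A)(-n)$ by $x\otimes\chi(g)^{-n}g(c)$. So these summands, and their images in $V$, are not $\chi^{-n}$-isotypic, and invertibility of $\chi^a(g)-\chi^b(g)$ separates nothing. Classically, this is exactly where the minimal-relation argument for the regular ring $B_{\HT}$ is used.

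\textbf{The missing idea: reduction to Artinian coefficients.} This is what the paper does, and it is what Lemma~\ref{lem:prod tensor inj} is for: commuting $\otimes$ with the countable product over $\Spm A\times\Zbb_{\geq1}$, not analyzing $C(G_K,-)$.
\begin{itemize}
\item By flatness of $B_{\HT,A}$ over $K\otimes A$, the source embeds into $V_{\HT}\otimes_{K\otimes A}B_{\HT,A}$.
\item By Krull's intersection theorem, $A\hookrightarrow\prod_{\mfrak\in\Spm A,\,r\geq1}A/\mfrak^r$.
\item Lemma~\ref{lem:prod tensor inj} then gives $(B_{\HT}\otimes_KB_{\HT})\otimes A\hookrightarrow\prod_{\mfrak,r}(B_{\HT}\otimes_KB_{\HT})\otimes A/\mfrak^r$.
\item Hence $D_{\HT}(V)\otimes_{K\otimes A}B_{\HT,A}$ injects into $\prod_{\mfrak,r}(V_{\HT,A/\mfrak^r})^{G_K}\otimes_{K\otimes A/\mfrak^r}B_{\HT,A/\mfrak^r}$, compatibly with $V_{\HT}\hookrightarrow\prod_{\mfrak,r}V_{\HT,A/\mfrak^r}$.
\item For $A/\mfrak^r$, finite over $\Qbb_p$, everything is finite-dimensional and the classical finite-sum argument applies.
\end{itemize}
No completed Ax-type injectivity is ever needed.

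\textbf{A possible repair of your own route.} Your eigenvalue idea should work one level down, in Sen theory. For $m\geq n(V)$, the $G_K$-eigenvectors of $V$ lie in $D^{K_m}_{\Sen}(V)$. There, finite Galois descent along $K\otimes A\to K_m\otimes A$ replaces the completed injectivity, and the $K_m\otimes A$-linear operator $\Theta_{\Sen}$ genuinely separates the different $n$. One then base-changes flatly along $K_m\otimes A\to\Cbb_p\otimes A$, using $D^{K_m}_{\Sen}(V)\otimes_{K_m\otimes A}(\Cbb_p\otimes A)\cong V$.
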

\begin{proof}
	Since $B_{\HT,A}$ is flat over $K\otimes A$, we get an injection
	$$V_{\HT}^{G_K}\otimes_{K\otimes A} B_{\HT,A}\to V_{\HT}\otimes_{K\otimes A} B_{\HT,A}.$$
	Let us prove that 
	$$V_{\HT}\otimes_{K\otimes A} B_{\HT,A} \to \prod_{\mfrak\in \Spm A, n\geq 1} V_{\HT,A/\mfrak^n}\otimes_{K\otimes A/\mfrak^n} B_{\HT,A/\mfrak^n}$$
	is injective.
	Since $V_{\HT}$ is a finite projective $B_{\HT,A}$-module, we may assume $V_{\HT}=B_{\HT,A}$.
	We need to show that
	$$(B_{\HT}\otimes_{K} B_{\HT})\otimes  A \to \prod_{\mfrak\in \Spm A, n\geq 1} (B_{\HT}\otimes_{K} B_{\HT})\otimes  A/\mfrak^n$$
	is injective.
	It easily follows from Lemma~\ref{lem:prod tensor inj}.
	Therefore, we get an injective morphism 
	$$V_{\HT}^{G_K}\otimes_{K\otimes A} B_{\HT,A}\to \prod_{\mfrak\in \Spm A, n\geq 1} V_{\HT, A/\mfrak^n}\otimes_{K\otimes A/\mfrak^n} B_{\HT,A/\mfrak^n}.$$
	This morphism factor through $\prod_{\mfrak\in \Spm A, n\geq 1} (V_{A/\mfrak^n,\HT})^{G_K}\otimes_{K\otimes A/\mfrak^n} B_{\HT,A/\mfrak^n}$, so we get an injection 
	$$V_{\HT}^{G_K}\otimes_{K\otimes A} B_{\HT,A}\to \prod_{\mfrak\in \Spm A, n\geq 1} (V_{A/\mfrak^n,\HT})^{G_K}\otimes_{K\otimes A/\mfrak^n} B_{\HT,A/\mfrak^n}.$$
	On the other hand, there is an injection
	$$V_{\HT}\to \prod_{\mfrak\in \Spm A, n\geq 1} V_{\HT,A/\mfrak^n}.$$
	Therefore, we may assume that $A$ is finite over $\Qbb_p$.
	Then the claim is well-known.
\end{proof}

\begin{corollary}\label{cor:HT sub quot}
Assume $\Acal \in \AlgAff_{\Qbb_p}$.
Let $V\in \FP^{\HT}_{\Cbb_p\otimes A}(G_K)$ and let $W\subset V$ be a $G_K$-stable submodule such that $W$ and $V/W$ are also finite projective.
Then $W$ and $V/W$ are also Hodge-Tate. 
\end{corollary}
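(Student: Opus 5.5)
We follow the classical argument: the injectivity of Lemma~\ref{lem:HT inj} applied to $W$, $V/W$ and $V$ simultaneously, together with a rank count. Write $d_X(V)=D_{\HT}(V)$ and apply the left exact functor $(-)_{\HT}^{G_K}$ to the exact sequence $0\to W\to V\to V/W\to 0$. Since $(-)_{\HT}=(-)\otimes_{\Cbb_p\otimes A}B_{\HT,A}$ is exact on finite projective modules, we get a left exact sequence
$$0\to D_{\HT}(W)\to D_{\HT}(V)\to D_{\HT}(V/W).$$
Consider the commutative diagram whose rows are $D_{\HT}(-)\otimes_{K\otimes A}B_{\HT,A}$ and $(-)_{\HT}$ for $W$, $V$, $V/W$. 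By Lemma~\ref{lem:HT inj} all vertical maps are injective, and the middle one is an isomorphism because $V$ is Hodge-Tate. The top row is left exact by flatness of $B_{\HT,A}$ over $K\otimes A$, and the bottom row is exact. A diagram chase shows the right vertical map $D_{\HT}(V/W)\otimes B_{\HT,A}\to (V/W)_{\HT}$ is surjective onto the image of $V_{\HT}$, i.e.\ surjective. Being injective, it is an isomorphism, so $V/W$ is Hodge-Tate.

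For $W$, the cleanest route is to dualize. Since $V/W$ is projective, the sequence splits as modules, so $(V/W)^*\subset V^*$ is a $G_K$-stable submodule with finite projective quotient $W^*$. By Lemma~\ref{lem:HT dual direct sum tensor product}, $V^*$ is Hodge-Tate, and the argument above shows that $W^*=V^*/(V/W)^*$ is Hodge-Tate. Applying Lemma~\ref{lem:HT dual direct sum tensor product} again, $W\cong W^{**}$ is Hodge-Tate.

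The only subtle point is the diagram chase in the condensed setting. We need that the image of $D_{\HT}(V)\otimes B_{\HT,A}$ in $(V/W)_{\HT}$ factors through $D_{\HT}(V/W)\otimes B_{\HT,A}$; this is naturality. The surjectivity of $V_{\HT}\to(V/W)_{\HT}$ then forces the right vertical map to be an epimorphism of condensed modules, which is elementary in the abelian category of solid modules. The real input is Lemma~\ref{lem:HT inj}, and this is where the hypothesis $\Acal\in\AlgAff_{\Qbb_p}$ is used. No finite projectivity of $D_{\HT}(V/W)$ is needed beforehand, since the definition of Hodge-Tate only asks that the comparison map be an isomorphism; Lemma~\ref{lem:DHT fin proj} then gives finite projectivity afterwards.
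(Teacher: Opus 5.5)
Your proof is correct and is essentially the paper's argument: the same diagram, with Lemma~\ref{lem:HT inj} making the outer vertical maps injective and the middle one an isomorphism, and the same chase showing the right vertical map is surjective. The only difference is how you handle $W$. The paper finishes with the other half of the chase (a four-lemma argument that uses injectivity of the right vertical map and left exactness of the top row). You instead apply the quotient case to $(V/W)^*\subset V^*$ and invoke Lemma~\ref{lem:HT dual direct sum tensor product} twice, which is equally valid.
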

\begin{proof}
	There is a diagram 
	$$
	\xymatrix{
	0\ar[r]& D_{\HT}(W)\otimes B_{\HT,A} \ar[r]\ar[d] & D_{\HT}(V)\otimes B_{\HT,A} \ar[r]\ar[d] & D_{\HT}(V/W)\otimes B_{\HT,A}\ar[d] &\\
	0 \ar[r] & W_{\HT}\ar[r] & V_{\HT}\ar[r] & (V/W)_{\HT} \ar[r] & 0,
	}$$
	where $-\otimes -$ abbreviates $-\otimes_{K\otimes A}-$, and where the upper sequence is left exact and the lower sequence is exact.
	The middle vertical morphism is an isomorphism, and by Lemma~\ref{lem:HT inj}, the right and left vertical morphism are injective.
	Therefore, by diagram chasing, we find that the right and left vertical morphism are also isomorphisms.
\end{proof}

\begin{remark}\label{rem:HT sub quot easy}
	We can show without the assumption $\Acal \in \AlgAff_{\Qbb_p}$ that if $W$ or $V/W$ is Hodge-Tate, then the other one is also Hodge-Tate.
	First, we assume that $W$ is Hodge-Tate.
	Then there is an isomorphism
	\begin{align*}
	V/W\cong &\Coker(D_{\HT}(W)\otimes_{K\otimes A} B_{\HT,A}\to D_{\HT}(V)\otimes_{K\otimes A} B_{\HT,A})\\
	\cong &(D_{\HT}(V)/D_{\HT}(W))\otimes_{K\otimes A} B_{\HT,A}.
	\end{align*}
	From this, we get $D_{\HT}(V/W)\cong D_{\HT}(V)/D_{\HT}(W)$ and $V/W$ is Hodge-Tate.
	Similarly, we can show that if $V/W$ is Hodge-Tate, then $W$ is also Hodge-Tate.
\end{remark}

Let us recall the following result on Galois descent. 
\begin{lemma}\label{lem:Galois descent}
	Let $\Rep_{\Cbb_{p}}(H_K)^{\nuc}$ denote the category of (static) nuclear $(\Cbb_p,\Zbb)_{\square}$-modules with a semilinear $H_K$-action, and let $\Mod(\hat{K}_{\infty})^{\nuc}$ denote the category of (static) nuclear $(\hat{K}_{\infty},\Zbb)_{\square}$-modules.
	Then the natural functors
	\begin{align*}
	&\Mod(\hat{K}_{\infty})^{\nuc}\to \Rep_{\Cbb_{p}}(H_K)^{\nuc};\; N\mapsto N\otimes_{\hat{K}_{\infty}} \Cbb_p,\\
	&\Rep_{\Cbb_{p}}(H_K)^{\nuc}\to \Mod(\hat{K}_{\infty})^{\nuc} ;\; M \to M^{H_K}
    \end{align*}
	are quasi-inverse to each other.
	Moreover, for $M\in \Rep_{\Cbb_{p}}(H_K)^{\nuc}$, we have
	$$R\Gamma(H_K,M)\cong M^{H_K},$$
	in other word, for any $i>0$, $H^i(H_K,M)=0$.
\end{lemma}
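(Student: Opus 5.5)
The plan is to reduce both assertions to the classical almost étale (Tate--Sen) computation and then use the good behaviour of nuclear modules in condensed mathematics. I would start with the cohomology vanishing, since it drives everything else.

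The base case is $M=\Cbb_p$. Here Tate's normalized traces and the almost étaleness of $\Ocal_{\Cbb_p}$ over $\Ocal_{\hat{K}_{\infty}}$ give $R\Gamma(H_K,\Cbb_p)\cong\hat{K}_{\infty}$ as condensed modules. Concretely, continuous cochains give $R\Gamma(H_K,-)$, and $C^\bullet(H_K,\Ocal_{\Cbb_p})$ has cohomology in positive degrees killed by $\mfrak$. Using the condensed description of continuous cohomology via $C(H_K^{n},-)$, I would extend this to $M=\Cbb_p\otimes N$ for $N$ a nuclear $\Qbb_{p,\square}$-module with trivial action. The point is that $C(S,\Cbb_p)\otimes N\cong C(S,\Cbb_p\otimes N)$ for nuclear $N$ (\cite[Proposition 5.35]{And21}), so the cochain complex for $\Cbb_p\otimes N$ is the cochain complex for $\Cbb_p$ tensored with $N$. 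This gives $R\Gamma(H_K,\Cbb_p\otimes N)\cong \hat{K}_{\infty}\otimes N$. Everything is static here because $N$ is flat over $\Qbb_{p,\square}$, being nuclear.

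For a general semilinear $M$, I would first treat modules of the form $M=N\otimes_{\hat K_\infty}\Cbb_p$; by the previous step these satisfy both claims. To handle an arbitrary nuclear $M$, I would use the standard Galois descent trick. Choose a finite Galois extension $L/K$ and work with $\Cbb_p\otimes_{\hat K_\infty}\Cbb_p$. This is an almost-product of copies of $\Cbb_p$, namely continuous functions on $H_K$, so $C(H_K,\Cbb_p)\cong \Cbb_p\otimes_{\hat K_\infty}\Cbb_p$. Then the action map $M\to C(H_K,M)$ becomes a descent datum for the faithfully flat map $\hat K_\infty\to\Cbb_p$. More precisely, $M\otimes_{\hat K_\infty}\Cbb_p\cong C(H_K,M)$ is equivariantly isomorphic to $C(H_K,\Cbb_p)\otimes_{\Cbb_p}M$ with the action on the first factor only. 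That module is induced-type, so the base case applies and gives $R\Gamma(H_K,M\otimes_{\hat K_\infty}\Cbb_p)\cong M$. Comparing the Čech/Amitsur complex for $\hat K_\infty\to\Cbb_p$, which is exact after tensoring with nuclear modules, with the $H_K$-cochain complex then identifies $M^{H_K}\otimes_{\hat K_\infty}\Cbb_p\cong M$ and kills the higher cohomology. Full faithfulness in the other direction, $(N\otimes_{\hat K_\infty}\Cbb_p)^{H_K}=N$, is the base case.

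The main obstacle is the identification $\Cbb_p\otimes_{\hat K_\infty}\Cbb_p\cong C(H_K,\Cbb_p)$ as solid modules. Classically this holds only after completion and only up to almost-mathematics. I would try to prove it integrally up to bounded $p$-torsion, working with the approximation by finite Galois levels $\varinjlim_L \Ocal_{\hat L_\infty}$ and their almost étaleness, and then invert $p$, where bounded almost-zero discrepancies vanish. Alternatively, since the statement is recalled as known, the most economical route is to cite the condensed Galois descent of \cite{RJRC22}/\cite{Mann22} for the pro-étale cover $\Spa\Cbb_p\to\Spa\hat K_\infty$ and only verify that nuclearity is preserved, which follows from stability of nuclear modules under tensor products and under the fixed-point functor, the latter being a finite limit of nuclear objects.
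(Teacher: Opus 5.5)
Your outline is correct and is essentially the argument the paper has in mind: the paper's proof only refers to \cite[Theorem 3.12]{Mikami24}, namely the standard Galois descent combining the almost-purity vanishing $R\Gamma(H_K,\Cbb_p)\cong\hat{K}_{\infty}$, the identification $\Cbb_p\otimes_{\hat{K}_{\infty}}\Cbb_p\cong C(H_K,\Cbb_p)$ of the \v{C}ech nerve, and nuclear base change for continuous cochains. When you write it out, state explicitly that $-\otimes_{\hat{K}_{\infty}}\Cbb_p$ is faithfully flat on solid modules, so that $H^i(H_K,M\otimes_{\hat{K}_{\infty}}\Cbb_p)\cong H^i(H_K,M)\otimes_{\hat{K}_{\infty}}\Cbb_p$ and the vanishing descends to $M$; also note that the finite extension $L/K$ you introduce plays no role in the argument.
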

\begin{proof}
	It can be proved by the same argument as in \cite[Theorem 3.12]{Mikami24}.
\end{proof}

\begin{construction}\label{construction:Sen}
Let $V\in \FP_{\Cbb_p\otimes A}(G_K)$.
We define $V_{K_{\infty}}\coloneqq V^{H_K}$.
By Lemma~\ref{lem:Galois descent}, we have
\begin{align*}
V^{H_K}\otimes_{\hat{K}_{\infty} \otimes A} (\Cbb_p \otimes A)\cong V^{H_K}\otimes_{\hat{K}_{\infty}} \Cbb_p \cong V.
\end{align*}
By \cite[Theorem 1.35]{Mikami24}, $V_{K_{\infty}}=V^{H_K}$ is a finite projective $\hat{K}_{\infty} \otimes A$-module with a semilinear $\Gamma_K$-action.

We define $D_{\Sen}^{K_{\infty}}(V)\coloneqq V_{K_{\infty}}^{\Gamma_K\mathchar`-\la}$.
By the Tate-Sen method, 
$D_{\Sen}^{K_{\infty}}(V)$ is finite projective $K_{\infty} \otimes A$-module with a semilinear locally analytic $\Gamma_K$-action, and the natural morphism
$$D_{\Sen}^{K_{\infty}}(V) \otimes_{K_{\infty} \otimes A}(\hat{K}_{\infty}\otimes A)$$
is an isomorphism (cf. \cite[Section 3]{Mikami24}).

For an integer $n>0$, we define $D_{\Sen}^{K_n}(V)=D_{\Sen}^{K_{\infty}}(V)^{\Gamma_{K_n}\mathchar`-\an}.$
Then there exists $n(V)>0$ such that for any $n\geq n(V)$, the natural morphism 
$$D_{\Sen}^{K_n}(V)\otimes_{K_n\otimes A}(K_{\infty}\otimes A)\to D_{\Sen}^{K_{\infty}}(V)$$
is an isomorphism. 
In this case, $D_{\Sen}^{K_n}(V)$ a finite projective $K_n\otimes A$-module with a semilinear locally analytic $\Gamma_K$-action.
Let $n\geq n(V)$ or $n=\infty$.
Since the action of $\Gamma_K$ on $D_{\Sen}^{K_n}(V)$ is locally analytic, we get an action of $\Lie \Gamma_K$ on $D_{\Sen}^{K_n}(V)$.
The morphism $\Gamma_K\xrightarrow{\chi} \Zbb_p^{\times} \xrightarrow{\log} \Zbb_p$ defines a generator $\Theta \in \Lie \Gamma_K$.
Therefore, we get a $K_n\otimes A$-linear operator
$$\Theta_{\Sen}\colon D_{\Sen}^{K_n}(V) \to D_{\Sen}^{K_n}(V),$$
and we call this operator the \textit{Sen operator} of $V$.
\end{construction}

\begin{remark}
	Let $V$ be a $G_K$-equivariant vector bundle over $X_{\Cbb_p,\Acal}$.
	Then we simply write $D_{\Sen}^{K_n}(V)$ for $D_{\Sen}^{K_n}(V_{\infty})$.
	For $n$ sufficiently large (including $n=\infty$) the finite projective $\tilde{B}^{[1,1]}_{\Cbb_p,A}$-module $V^{[1,1]}$ with a locally analytic $G_K$-action descends to the finite projective $B_{K,n,A}^{[1,1]}$-module $V_n$ with the $\Gamma_K$-action by \cite{Mikami24}.
	We write $K_m=B_{K,n}^{[1,1]}/t$.
	Then we have $D_{\Sen}^{K_m}(V)=V_n/t$.
\end{remark}
\begin{remark}\label{rem:Sen operator compatible}
	For $(\infty\geq)m>n$, the natural morphism 
	$$D_{\Sen}^{K_n}(V) \otimes_{K_n\otimes A} (K_m\otimes A)\to D_{\Sen}^{K_m}(V)$$ 
	is an isomorphism.
	Moreover, the following diagram is commutative:
	$$
	\xymatrix{
		D_{\Sen}^{K_n}(V) \ar[d]\ar[r]^-{\Theta_{\Sen}} & D_{\Sen}^{K_n}(V)\ar[d] \\
		D_{\Sen}^{K_m}(V) \ar[r]^-{\Theta_{\Sen}} & D_{\Sen}^{K_m}(V).
	}
	$$
\end{remark}

\begin{example}\label{ex:Sen operator}
	Let $V=(\Cbb_p\otimes A)(i)$.
	Then there is an isomorphism $D_{\Sen}^{K_n}(V)\cong (K_n\otimes A)(i)$, and the Sen operator $\Theta_{\Sen}\colon D_{\Sen}^{K_n}(V)\to D_{\Sen}^{K_n}(V)$ is the multiplication by $i$.
\end{example}

\begin{lemma}\label{lem:HT Sen}
	Let $V\in \FP_{\Cbb_p\otimes A}(G_K)$, and let $n\geq n(V)$.
	Then the natural morphism $D_{\Sen}^{K_{\infty}}(V)[t,t^{-1}]^{\Gamma_K}\to V_{\HT}^{G_K}=D_{\HT}(V)$ is an isomorphism.
	Moreover, the following are equivalent:
	\begin{enumerate}
	\item $V$ is Hodge-Tate.
	\item The natural morphism
	$$D_{\Sen}^{K_{\infty}}(V)[t,t^{-1}]^{\Gamma_K}\otimes_{K\otimes A} (K_{\infty}\otimes A)[t,t^{-1}]\to D_{\Sen}^{K_{\infty}}(V)[t,t^{-1}]$$
	is an isomorphism.
	\end{enumerate}
\end{lemma}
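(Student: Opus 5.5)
The plan is to compute $G_K$-invariants of $V_{\HT}$ in two steps, first over $H_K$ and then over $\Gamma_K$, and afterwards to transport the Hodge-Tate comparison map along the Sen descent.

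\emph{First step: $H_K$-invariants.} Write $V_{\HT}=\bigoplus_{k\in\Zbb}V(k)$. Each summand $V(k)$ lies in $\FP_{\Cbb_p\otimes A}(G_K)$ and is nuclear. Lemma~\ref{lem:Galois descent} gives $V(k)^{H_K}=V_{K_\infty}(k)$, using that $t$ is $H_K$-invariant. Taking invariants commutes with this direct sum because the $G_K$-action preserves the grading, so
$$V_{\HT}^{H_K}=V_{K_\infty}[t,t^{-1}].$$

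\emph{Second step: $\Gamma_K$-invariants.} It remains to show that $(V_{K_\infty}(k))^{\Gamma_K}=(D_{\Sen}^{K_\infty}(V)(k))^{\Gamma_K}$ for each $k$. Any $\Gamma_K$-fixed vector is in particular locally analytic, so it lies in $(V_{K_\infty}(k))^{\Gamma_K\text{-}\la}$. Twisting by the analytic character $\chi^k$ preserves local analyticity, hence this space equals $D_{\Sen}^{K_\infty}(V)(k)$. To make this rigorous in the condensed setting, I will use that the $\Gamma_K$-invariants of a representation equal the $\Gamma_K$-invariants of its locally analytic vectors; this holds because $\Gamma_K$-invariants are computed as a kernel inside $C(\Gamma_K,-)$ and factor through the analytic functions. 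This proves the first assertion: the natural map $D_{\Sen}^{K_{\infty}}(V)[t,t^{-1}]^{\Gamma_K}\to D_{\HT}(V)$ is an isomorphism.

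\emph{Equivalence of (1) and (2).} Set $D=D_{\HT}(V)$. Consider the comparison map
$$\alpha\colon D\otimes_{K\otimes A}(K_\infty\otimes A)[t,t^{-1}]\to D_{\Sen}^{K_\infty}(V)[t,t^{-1}].$$
The Hodge-Tate map $D\otimes_{K\otimes A}B_{\HT,A}\to V_{\HT}$ is obtained from $\alpha$ by base change along $(K_\infty\otimes A)[t,t^{-1}]\to B_{\HT,A}$, degree by degree. This uses the isomorphisms $D_{\Sen}^{K_\infty}(V)\otimes_{K_\infty\otimes A}(\hat K_\infty\otimes A)\cong V_{K_\infty}$ and $V_{K_\infty}\otimes_{\hat K_\infty\otimes A}(\Cbb_p\otimes A)\cong V$.

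For (2)$\Rightarrow$(1), base change carries the isomorphism $\alpha$ to an isomorphism, so $V$ is Hodge-Tate.

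For (1)$\Rightarrow$(2), first apply $H_K$-invariants degree-wise to the Hodge-Tate isomorphism. Because $D$ is finite projective (Lemma~\ref{lem:DHT fin proj}), invariants commute with $D\otimes-$, and we obtain
$$D\otimes_{K\otimes A}(\hat K_\infty\otimes A)[t,t^{-1}]\cong V_{K_\infty}[t,t^{-1}].$$
Next take $\Gamma_K$-locally analytic vectors degree-wise. Using $(\hat K_\infty\otimes A)^{\Gamma_K\text{-}\la}=K_\infty\otimes A$, which again commutes with $D\otimes-$ since $D$ is finite projective, this isomorphism becomes $\alpha$.

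\emph{Expected main obstacle.} The hardest point is justifying that locally analytic vectors commute with $-\otimes A$ and with twisting, namely $(\hat K_\infty\otimes A)^{\la}=K_\infty\otimes A$ for algebraic-affinoid $A$. I would cite the corresponding statements from \cite[Section 3]{Mikami24}, which underlie the Tate-Sen decompletion in Construction~\ref{construction:Sen}, rather than reprove them.
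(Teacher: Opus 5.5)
Your proposal is correct and follows essentially the paper's route. The paper's proof is the one-line observation that $t^kD_{\Sen}^{K_\infty}(V)$ is the space of $\Gamma_K$-locally analytic vectors of $(t^kV)^{H_K}$, followed by ``the claim easily follows''; your argument (taking $H_K$- then $\Gamma_K$-invariants degree-wise, base change for (2)$\Rightarrow$(1), and $H_K$-invariants plus locally analytic vectors of the Hodge--Tate isomorphism for (1)$\Rightarrow$(2), using that $D_{\HT}(V)$ is finite projective) is a faithful expansion of that sketch.
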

\begin{proof}
	By Construction, for every $n\in \Zbb$, there is an isomorphism
	$$t^nD_{\Sen}^{K_{\infty}}(V)=((t^nV)^{H_K})^{\Gamma_K\mathchar`-\la}.$$
	The claim easily follows from this.
\end{proof}

\begin{corollary}\label{cor:DHT relatively discrete}
	Assume $\Acal \in \AlgAff_{\Qbb_p}$.
	Then for $V\in \FP_{\Cbb_p\otimes A}(G_K)$, $D_{\HT}(V)$ is a relatively discrete finitely generated $K\otimes A$-module.
\end{corollary}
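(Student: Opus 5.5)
By Lemma~\ref{lem:HT Sen}, $D_{\HT}(V)\cong D_{\Sen}^{K_{\infty}}(V)[t,t^{-1}]^{\Gamma_K}=\bigoplus_{i\in\Zbb}\bigl(t^iD_{\Sen}^{K_{\infty}}(V)\bigr)^{\Gamma_K}$. So it suffices to prove two things. First, each graded piece $(t^iD_{\Sen}^{K_\infty}(V))^{\Gamma_K}$ is a relatively discrete finitely generated $K\otimes A$-module. Second, only finitely many of these pieces are nonzero.

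For the first point, fix $n\geq n(V)$, so that $D:=D_{\Sen}^{K_n}(V)$ is finite projective over $K_n\otimes A$ and $D_{\Sen}^{K_\infty}(V)=D\otimes_{K_n\otimes A}(K_\infty\otimes A)$. Any $\Gamma_K$-invariant vector is in particular $\Gamma_{K_n}$-analytic, so
$$\bigl(t^iD_{\Sen}^{K_\infty}(V)\bigr)^{\Gamma_K}=\bigl(t^iD\bigr)^{\Gamma_K}.$$
The right-hand side is the kernel of finitely many continuous $K\otimes A$-linear endomorphisms of $t^iD$, namely $\gamma-1$ for topological generators $\gamma$ of $\Gamma_K$, together with a finite-group averaging step over $\Gal(K_n/K)$. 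The module $K_n\otimes A$ is finite free over $K\otimes A$, and $A$ is algebraic-affinoid, hence relatively discrete and finitely generated over an affinoid $R$. Therefore $t^iD$ is a relatively discrete finitely generated $K\otimes A$-module. Because the ring is Noetherian (a finitely generated algebra over a Noetherian affinoid), the kernel of a morphism of such modules is again relatively discrete and finitely generated. I expect a lemma from \cite{Mikami24} to show that relatively discrete finitely generated modules over an algebraic-affinoid algebra form an abelian subcategory closed under kernels.

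For the second point, use the Sen operator. On $(t^iD)^{\Gamma_K}$ the operator $\Theta_{\Sen}$ on $t^iD$ acts by $0$. By Example~\ref{ex:Sen operator}, this means $\Theta_{\Sen}$ acts on the corresponding vectors of $D$ by $-i$. Let $P(X)\in (K_n\otimes A)[X]$ be the characteristic polynomial of $\Theta_{\Sen}$ on $D$; working locally, we may assume $D$ is free. By Cayley--Hamilton, $P(-i)$ annihilates $(t^iD)^{\Gamma_K}$.

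This finiteness step is the main obstacle. If $P(-i)$ is a unit for all but finitely many $i$, the pieces vanish for those $i$ and we are done. To control this, I would use the ``boundedness'' of the coefficients of Sen polynomials, which the outline places in the latter part of Section~1; I assume it is established before this point. Concretely, the coefficients of $P$ lie in a bounded subring, namely the image of $R^\circ$-type elements. Then for $|i|$ large, $P(-i)=(-i)^d\bigl(1+\sum_j c_j(-i)^{-j}\bigr)$ in the Laurent-polynomial sense.

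The difficulty is that $1/i$ is not small $p$-adically. I would therefore instead argue pointwise and use the Noetherian structure. The ideal $I_i$ generated by $P(-i)$, or its norm down to $K\otimes A$, must contain the annihilator of the piece, and distinct $i,j$ give comaximal-ish ideals up to $(i-j)$, which is invertible in characteristic $0$. Hence the supports $V(I_i)$ are pairwise disjoint. If infinitely many pieces were nonzero, their direct sum would sit inside $D_{\HT}(V)$. On the other hand, Lemma~\ref{lem:HT inj} embeds $D_{\HT}(V)\otimes B_{\HT,A}$ into the finite projective module $V_{\HT}$, and restricting to degree $0$ embeds $\bigoplus_i \gr^iD_{\HT}(V)\otimes \Cbb_p(-i)$ into $V$. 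I would then bound the number of $i$ with $\gr^i\neq 0$ using the boundedness of Sen polynomial coefficients, which forces the roots of $P$ to lie in a bounded, hence finite, set of integers over each connected component. Combining the two points gives that $D_{\HT}(V)$ is a finite direct sum of relatively discrete finitely generated modules, as claimed.
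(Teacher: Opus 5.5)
There is a genuine gap in your second step, showing that only finitely many graded pieces are nonzero. Your first step is fine: each $(t^iD)^{\Gamma_K}$ is a $K\otimes A$-submodule of the finitely generated module $t^iD$ over the Noetherian ring $K\otimes A$, and its relative discreteness is what \cite[Lemma 1.24]{Mikami24} supplies. The paper gets relative discreteness the same way, applied to all of $D_{\Sen}^{K_\infty}(V)[t,t^{-1}]$ at once.

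The mechanism you settle on at the end of the second step is false. Boundedness of the Sen polynomial coefficients (Theorem~\ref{thm:HTS wts are analytic}) cannot give finiteness, because $\Zbb$ is itself $p$-adically bounded. For $A=\Qbb_p\langle T\rangle$ and the bounded polynomial $P(X)=X-T$, the element $P(-i)=-(T+i)$ vanishes at the point $T=-i$ of the closed unit disc for every $i\in\Zbb$. So $P(-i)$ is never a unit, and ``bounded, hence finite, set of integers'' fails. Your claim that the supports $V(I_i)$ are pairwise disjoint also fails once $\deg P\geq 2$. For $P(X)=(X-T)(X-T-1)$, the elements $P(-i)$ and $P(-i-1)$ share the factor $T+i+1$. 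In any case, disjoint supports alone would not bound the number of nonzero pieces.

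The missing idea is Noetherianity, not boundedness: a finitely generated module over a Noetherian ring cannot contain an infinite direct sum of nonzero submodules. You can apply this in either of two ways.
\begin{enumerate}
\item The eigenspaces $\Ker(\Theta_{\Sen}+i)\subset D$ form a direct sum, by a Vandermonde argument, since differences of distinct integers are invertible in $K_n\otimes A$. Moreover $(t^iD)^{\Gamma_K}\subset t^i\Ker(\Theta_{\Sen}+i)$. This is the argument of Lemma~\ref{lem:HT weights range}.
\item You can use the embedding you already wrote down, $\bigoplus_i\gr^iD_{\HT}(V)\otimes_{K\otimes A}(\Cbb_p\otimes A)(-i)\hookrightarrow V$. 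Here $V$ is finite projective over the Noetherian ring $\Cbb_p\otimes A$, and each summand is nonzero whenever $\gr^iD_{\HT}(V)\neq 0$, by faithful flatness.
\end{enumerate}
The paper's proof is more direct still. Lemma~\ref{lem:HT inj} makes $D_{\HT}(V)\otimes_{K\otimes A}B_{\HT,A}$ a submodule of the finitely generated $B_{\HT,A}$-module $V_{\HT}$. Finite generation then descends to $D_{\HT}(V)$ along the faithfully flat map $K\otimes A\to B_{\HT,A}$. This needs neither the graded decomposition nor Sen polynomials.
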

\begin{proof}
	Since $D_{\Sen}^{K_{\infty}}(V)[t,t^{-1}]$ is relatively discrete over $K\otimes A$, $D_{\Sen}^{K_{\infty}}(V)[t,t^{-1}]^{\Gamma_K}\cong V_{\HT}^{G_K}$ is also relatively discrete over $K\otimes A$ by \cite[Lemma 1.24]{Mikami24}.
	The finiteness follows from Lemma~\ref{lem:HT inj} and faithfully flatness of the $K\otimes A$-module $B_{\HT,A}$.
\end{proof}

\begin{proposition}\label{prop:Sen operator HT}
	Let $V\in \FP_{\Cbb_p\otimes A}(G_K)$, and let $n\geq n(V)$.
	Then $V$ is Hodge-Tate if and only if the natural morphism
	$$\bigoplus_{i\in \Zbb}\Ker(\Theta_{\Sen}-i\colon D_{\Sen}^{K_n}(V)\to D_{\Sen}^{K_n}(V))\to D_{\Sen}^{K_n}(V)$$
	is an isomorphism.
\end{proposition}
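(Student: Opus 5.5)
By Lemma~\ref{lem:HT Sen}, $V$ is Hodge-Tate exactly when the map
$$D_{\Sen}^{K_{\infty}}(V)[t,t^{-1}]^{\Gamma_K}\otimes_{K\otimes A}(K_{\infty}\otimes A)[t,t^{-1}]\to D_{\Sen}^{K_{\infty}}(V)[t,t^{-1}]$$
is an isomorphism. So the plan is to translate both sides into eigenspace data for $\Theta_{\Sen}$ on $D:=D_{\Sen}^{K_n}(V)$. By Remark~\ref{rem:Sen operator compatible}, $D_{\Sen}^{K_{\infty}}(V)=D\otimes_{K_n\otimes A}(K_{\infty}\otimes A)$ compatibly with $\Theta_{\Sen}$. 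Fix the degree $-i$ component $t^{-i}D_{\Sen}^{K_{\infty}}(V)$. Since $\Theta(t)=t$ (Example~\ref{ex:Sen operator}), the Sen operator acts on $t^{-i}x$ by $t^{-i}(\Theta_{\Sen}-i)x$.

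First I identify $\Gamma_K$-invariants with kernels. For a finite projective $K_{\infty}\otimes A$-module with semilinear locally analytic $\Gamma_K$-action (here $W=t^{-i}D_{\Sen}^{K_{\infty}}(V)$), I claim
$$W^{\Gamma_K}\otimes_{K\otimes A}(K_{\infty}\otimes A)\cong W^{\Theta=0},$$
at least after enlarging $n$.
\begin{itemize}
\item An element killed by $\Theta$ is fixed by an open subgroup $\Gamma_{K_m}$ for $m\gg0$, since the action is locally analytic and $\Theta$ generates $\Lie\Gamma_K$.
\item The Galois descent for the finite Galois extension $K_m/K$ with group $\Gamma_K/\Gamma_{K_m}$ then gives $(W^{\Gamma_{K_m}})^{\Gamma_K}\otimes_{K\otimes A}(K_m\otimes A)\cong W^{\Gamma_{K_m}}$.
\item Letting $m\to\infty$ gives the claim.
\end{itemize}

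For the forward direction, assume $V$ is Hodge-Tate. Then the map in Lemma~\ref{lem:HT Sen} is an isomorphism, so $D_{\Sen}^{K_{\infty}}(V)[t,t^{-1}]$ is generated by $\Gamma_K$-invariants. Comparing degree $0$ parts, $D_{\Sen}^{K_{\infty}}(V)=\bigoplus_i t^{i}\,(t^{-i}D_{\Sen}^{K_\infty}(V))^{\Gamma_K}\otimes(K_\infty\otimes A)$. Each summand lies in $\Ker(\Theta_{\Sen}-i)$, so the map $\bigoplus_i\Ker(\Theta_{\Sen}-i)\to D_{\Sen}^{K_{\infty}}(V)$ is surjective. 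It is also injective: on $\Ker(\Theta-i)$ the operator $\prod_{j\ne i}(\Theta-j)$ acts by the unit $\prod_{j\neq i}(i-j)\in\Qbb^{\times}$, and only finitely many $j$ appear. This is the standard argument that eigenvector sums for distinct eigenvalues in $\Qbb$ are direct, and it works over any $\Qbb$-algebra. Finally, descend from $K_{\infty}$ to $K_n$. Since $\Theta_{\Sen}$ is $K_n\otimes A$-linear, $\Ker(\Theta_{\Sen}-i)$ commutes with the flat base change $K_n\otimes A\to K_{\infty}\otimes A$. That base change is faithfully flat (a filtered colimit of finite free extensions), so the isomorphism descends to $D$.

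For the converse, assume $\bigoplus_i\Ker(\Theta_{\Sen}-i)\cong D$. The sum is finite, because each kernel is a direct summand of a finite projective module, so only finitely many can be nonzero locally on $\Spec$. Then $t^{-i}D_{\Sen}^{K_{\infty}}(V)^{\Theta=0}=t^{-i}\Ker(\Theta-i)\otimes(K_{\infty}\otimes A)$, and by the first step this equals $(t^{-i}D_{\Sen}^{K_{\infty}}(V))^{\Gamma_K}\otimes(K_{\infty}\otimes A)$. Summing over $i$ and over the degree shifts by $t$ shows that the map in Lemma~\ref{lem:HT Sen}(2) is surjective.

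Injectivity of that map is the main obstacle. I would derive it from the first step: the map factors as the direct sum of the isomorphisms $W^{\Gamma_K}\otimes(K_{\infty}\otimes A)\cong W^{\Theta=0}$, and these kernels sit disjointly across degrees, so the composite is injective. The delicate point is the Galois descent along $K_m/K$ for condensed $K\otimes A$-modules; this is the step I expect to need real care. I would handle it with the averaging idempotent for the finite group $\Gamma_K/\Gamma_{K_m}$ together with the normal basis theorem for $K_m/K$, and then the argument needs no hypothesis on $\Acal$.
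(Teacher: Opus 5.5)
Your proof is correct. It uses the same two ingredients as the paper ("$\Theta_{\Sen}=0$ implies smooth" and finite Galois descent), but organizes them differently.

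\emph{Only if.} The paper does not go through Lemma~\ref{lem:HT Sen} here. It takes the explicit decomposition $V\cong\bigoplus_n \gr^nD_{\HT}(V)\otimes_{K\otimes A}(\Cbb_p\otimes A)(-n)$ of Proposition~\ref{prop:HT recover} and reads off $D_{\Sen}^{K_n}$ of each twist from Example~\ref{ex:Sen operator}. This needs no descent from $K_\infty$ to $K_n$.

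\emph{If.} The paper twists by $\Qbb_p(-i)$ to reduce to $\Theta_{\Sen}=0$, implicitly splitting $V$ along the $\Gamma_K$-stable eigenspaces. It then uses finite generation of $D_{\Sen}^{K_n}(V)$ to choose a single $m$ with $\Gamma_{K_m}$ acting trivially. Finally it applies Lemma~\ref{lem:HT Sen} over $K_m$ and Lemma~\ref{lem:potentially HT}. You instead prove the general comparison $W^{\Gamma_K}\otimes_{K\otimes A}(K_\infty\otimes A)\cong W^{\Theta=0}$. You apply it to every graded piece of $D_{\Sen}^{K_\infty}(V)[t,t^{-1}]$, using $\Theta(t^{-i}x)=t^{-i}(\Theta-i)x$ to match the $t$-grading with the eigenspace decomposition.

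\emph{Comparison.} Your version never splits $V$ itself and never needs a uniform $m$. As a by-product, your first step shows that the map in Lemma~\ref{lem:HT Sen}(2) is injective in general. The paper's version is shorter because it reuses Lemma~\ref{lem:potentially HT}.

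\emph{Minor points.} The step you flag as delicate is routine. $K_m$ is finite free over $K$ with group $\Gamma_K/\Gamma_{K_m}$, so $W^{\Gamma_K}\otimes_{K\otimes A}(K_m\otimes A)\cong W^{\Gamma_{K_m}}$ is purely algebraic and holds for arbitrary $K_m\otimes A$-modules with semilinear action. It is the same finite descent behind Lemma~\ref{lem:potentially HT}. Two small justifications should be tightened, though neither affects the argument:
\begin{itemize}
\item Only finitely many eigenspaces are nonzero because $D_{\Sen}^{K_n}(V)$ is finitely generated, not because each summand is projective (and you never need this finiteness).
\item Faithful flatness of $K_n\otimes A\to K_\infty\otimes A$ holds because $K_\infty$ is free over $K_n$, not merely because it is a filtered colimit of finite free extensions.
\end{itemize}
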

\begin{proof}
	The only if part follows from Proposition~\ref{prop:HT recover} and Example~\ref{ex:Sen operator}. Let us prove the if part.
	By Remark~\ref{rem:Sen operator compatible}, we may assume $n<\infty$.
	By Lemma~\ref{lem:potentially HT}, it is enough to show that $V$ is Hodge-Tate as a $G_{K_m}$-equivariant vector bundle for some $\infty>m\geq n$.
	Since there is an isomorphism of morphisms
	$$(\Theta_{\Sen}-i\colon D_{\Sen}^{K_n}(V)\to D_{\Sen}^{K_n}(V)) \cong (\Theta_{\Sen}\colon D_{\Sen}^{K_n}(V(-i))\to D_{\Sen}^{K_n}(V(-i)))$$
	by Example~\ref{ex:Sen operator}, it suffices to show that if $\Theta_{\Sen}=0$ then $V$ is a Hodge-Tate $G_{K_m}$-equivariant vector bundle of Hodge-Tate weight $0$.
	Since $\Theta_{\Sen}=0$, the action of $\Gamma_{K_n}$ on $D_{\Sen}^{K_n}(V)$ is smooth.
	Since $D_{\Sen}^{K_n}(V)$ is a finite projective $K_n\otimes A$-module, there exists $m\geq n$ such that the action of $\Gamma_{K_m}$ on $D_{\Sen}^{K_n}(V)$ is trivial.
	By Remark~\ref{rem:Sen operator compatible}, the action of $\Gamma_{K_m}$ on $D_{\Sen}^{K_m}(V)$ is also trivial.
	Then the claim easily follows from Lemma~\ref{lem:HT Sen}.
\end{proof}

\begin{corollary}\label{cor:Sen op HT}
	Let $V\in \FP_{\Cbb_p\otimes A}(G_K)$, let $n\geq n(V)$, and let $a\geq b$ be integers.
	Then $V$ is Hodge-Tate with Hodge-Tate weights in $[a,b]$ if and only if the operator $\prod_{i=a}^b(\Theta_{\Sen}-i)$ on $D_{\Sen}^{K_n}(V)$ is $0$.
\end{corollary}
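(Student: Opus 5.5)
We derive the corollary from Proposition~\ref{prop:Sen operator HT}, with the index range read as the integers $i$ between $a$ and $b$ (the hypothesis $a\geq b$ is presumably a typo for $a\leq b$). Write $D=D_{\Sen}^{K_n}(V)$ and $P(\Theta)=\prod_{i=a}^b(\Theta_{\Sen}-i)$, an operator on $D$ that is linear over $K_n\otimes A$.

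For the ``only if'' direction, assume $V$ is Hodge-Tate with weights in $[a,b]$. By Proposition~\ref{prop:HT recover}, $V$ is a finite direct sum of the pieces $\gr^jD_{\HT}(V)\otimes_{K\otimes A}(\Cbb_p\otimes A)(-j)$. The weight condition means that only indices $j$ with $-j\in[a,b]$ (under the paper's sign normalization) contribute nontrivially; this holds componentwise on $\pi_0\Spec(K\otimes A)$. The functor $D_{\Sen}^{K_n}$ commutes with finite direct sums and with tensoring by a finite projective $K\otimes A$-module carrying the trivial action. Example~\ref{ex:Sen operator} then shows that $\Theta_{\Sen}$ acts on each summand as multiplication by an integer in $[a,b]$. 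Consequently $P(\Theta)$ kills every summand, so $P(\Theta)=0$.

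For the ``if'' direction, assume $P(\Theta)=0$. The linear factors $\Theta-i$ for distinct integers $i\in[a,b]$ are pairwise coprime in $\Qbb[\Theta]$, since any two differ by a nonzero rational, which is a unit in $K_n\otimes A$. Bezout/Lagrange interpolation gives rational polynomials $e_i(\Theta)=\prod_{j\neq i}(\Theta-j)/(i-j)$. These satisfy $\sum_i e_i=1$ modulo $P$, $e_ie_j\equiv 0$ for $i\neq j$, and $(\Theta-i)e_i\equiv 0$ modulo $P$. Since $P(\Theta)=0$ on $D$, the $e_i(\Theta)$ form a complete family of orthogonal idempotents on $D$, with image of $e_i$ equal to $\Ker(\Theta_{\Sen}-i)$. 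Therefore
$$\bigoplus_{i=a}^b\Ker(\Theta_{\Sen}-i)\to D$$
is an isomorphism. For integers $i\notin[a,b]$, the operator $\Theta-i$ is invertible on $D$: on each summand it acts as multiplication by the nonzero rational $k-i$, where $k\in[a,b]$ is the eigenvalue there. Hence those kernels vanish, and the full map $\bigoplus_{i\in\Zbb}\Ker(\Theta_{\Sen}-i)\to D$ is an isomorphism. Proposition~\ref{prop:Sen operator HT} then shows $V$ is Hodge-Tate.

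It remains to see that the weights lie in $[a,b]$. By the only-if part of Proposition~\ref{prop:HT recover} together with Example~\ref{ex:Sen operator}, the graded piece of weight $k$ maps onto the eigenspace $\Ker(\Theta_{\Sen}-k)$. Under the decomposition above, that eigenspace is zero unless $k\in[a,b]$, so $\gr D_{\HT}(V)$ vanishes in the other degrees.

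These are all formal steps. The main point needing care is bookkeeping: keeping the sign convention between Hodge-Tate weight $k$ and the Tate twist $(\Cbb_p\otimes A)(\pm k)$ consistent with Example~\ref{ex:Sen operator} and the chosen normalization, and checking that $D_{\Sen}$ is compatible with the direct sum decomposition of Proposition~\ref{prop:HT recover}.
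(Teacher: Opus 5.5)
Your proposal is correct and follows the paper's route: the paper gives no separate proof of this corollary, treating it as an immediate consequence of Proposition~\ref{prop:Sen operator HT}, with Proposition~\ref{prop:HT recover} and Example~\ref{ex:Sen operator} supplying the only-if direction and the match between Hodge--Tate weights and Sen eigenvalues, and your Lagrange-idempotent splitting of $D_{\Sen}^{K_n}(V)$ gives the eigenspace decomposition needed for the if direction (you are also right that $a\geq b$ should read $a\leq b$). One step needs tightening: in your last step, ``maps onto'' must be ``is identified with'', since a surjection onto a zero eigenspace does not force the source to vanish. The decomposition from Proposition~\ref{prop:HT recover} gives $\gr^{-k}D_{\HT}(V)\otimes_{K\otimes A}(K_n\otimes A)(k)\cong\Ker(\Theta_{\Sen}-k)$, and faithful flatness of $K\otimes A\to K_n\otimes A$ then yields $\gr^{-k}D_{\HT}(V)=0$ for $k\notin[a,b]$.
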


\begin{corollary}\label{cor:pointwise criterion of HT}
	Assume $\Acal \in \AlgAff_{\Qbb_p}$.
	Let $V\in \FP_{\Cbb_p\otimes A}(G_K)$, and let $a\leq b$ be integers.
	Then $V$ is Hodge-Tate with Hodge-Tate weights in $[a,b]$ if and only if for any $\mfrak \in \Spm A$ and $r\geq 1$, $V/\mfrak^r$ is Hodge-Tate with Hodge-Tate weights in $[a,b]$.
	If $A$ is reduced, it suffices to consider the case $r=1$.
\end{corollary}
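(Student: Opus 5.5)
The plan is to translate the Hodge--Tate condition into the vanishing of the single operator $P(\Theta_{\Sen})=\prod_{i=a}^b(\Theta_{\Sen}-i)$ on $D_{\Sen}^{K_n}(V)$, using Corollary~\ref{cor:Sen op HT}, and then to test this vanishing pointwise. The ``only if'' direction follows directly from Lemma~\ref{lem:HT basechange}. That lemma says $V/\mfrak^r$ is Hodge--Tate, with $D_{\HT}(V/\mfrak^r)\cong D_{\HT}(V)\otimes_{K\otimes A}(K\otimes A/\mfrak^r)$ as graded modules. By Proposition~\ref{prop:HT recover}, the Hodge--Tate weights are read off from the nonzero graded pieces, so the weights of $V/\mfrak^r$ still lie in $[a,b]$.

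For the ``if'' direction, fix $n\geq n(V)$ with $n<\infty$. I first check that the formation of $D_{\Sen}^{K_n}$ together with $\Theta_{\Sen}$ commutes with the base change $A\to A/\mfrak^r$. Concretely, I want $D_{\Sen}^{K_n}(V/\mfrak^r)\cong D_{\Sen}^{K_n}(V)\otimes_A A/\mfrak^r$, compatibly with the Sen operators, with the same $n$ still admissible. This should follow from the construction: the descent $V^{H_K}$ commutes with base change by Lemma~\ref{lem:Galois descent}, and the $\Gamma_K$-locally analytic and analytic vectors of a module already descended to $K_n\otimes A$ are obtained by tensoring. Granting this, the hypothesis together with Corollary~\ref{cor:Sen op HT}, applied over $A/\mfrak^r$, says that the image of $P(\Theta_{\Sen})$ in $\mathrm{End}_{K_n\otimes A}(D_{\Sen}^{K_n}(V))\otimes_A A/\mfrak^r$ vanishes for every $\mfrak$ and every $r$.

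It then remains to show that an element $x$ of the finite projective $A$-module $E=\mathrm{End}_{K_n\otimes A}(D_{\Sen}^{K_n}(V))$ which dies in every $E/\mfrak^rE$ is zero. Since $E$ is a direct summand of a free module, this reduces to $A\to\prod_{\mfrak,r}A/\mfrak^r$ being injective. That is Krull's intersection theorem on the Noetherian ring $A$, combined with the fact that every maximal ideal of $A$ lies in $\Spm A$. For an algebraic-affinoid $A$ (finitely generated over an affinoid $R$), Noetherianity holds and maximal ideals have residue fields finite over $\Qbb_p$ (a Jacobson-type property). The injectivity is essentially what was already used in the proof of Lemma~\ref{lem:HT inj}, so I would cite that argument. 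Corollary~\ref{cor:Sen op HT} then gives that $V$ is Hodge--Tate with weights in $[a,b]$.

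If $A$ is reduced, it suffices to use $r=1$, because $A$ is then Jacobson and reduced, so $A\to\prod_{\mfrak}A/\mfrak$ is injective; the same argument goes through. I expect the main obstacle to be the base-change compatibility of $D_{\Sen}^{K_n}$ and $\Theta_{\Sen}$. If this is delicate for analytic vectors, I would instead work with $n=\infty$, or choose $n$ large for $V$ and observe that this $n$ works for all quotients, since $D_{\Sen}^{K_n}(V)\otimes_A A/\mfrak^r$ visibly satisfies the defining property of $D_{\Sen}^{K_n}(V/\mfrak^r)$ by Remark~\ref{rem:Sen operator compatible}.
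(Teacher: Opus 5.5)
Your proposal is correct and follows the paper's argument. The paper also reduces, via Corollary~\ref{cor:Sen op HT}, to the vanishing of $\prod_{i=a}^b(\Theta_{\Sen}-i)$, and concludes from the injectivity of $D_{\Sen}^{K_n}(V)\to\prod_{\mfrak,r}D_{\Sen}^{K_n}(V/\mfrak^r)$ (resp.\ $\prod_{\mfrak}D_{\Sen}^{K_n}(V/\mfrak)$ when $A$ is reduced); this is the same Krull/Jacobson input you apply to the endomorphism module instead, and the paper likewise uses the base-change compatibility of $D_{\Sen}^{K_n}$ that you flag without spelling it out.
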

\begin{proof}
	The natural morphism
	$$D_{\Sen}^{K_n}(V)\to \prod_{\mfrak,r}D_{\Sen}^{K_n}(V/\mfrak^r)$$
	is injective, we get the claim.
	If $A$ is reduced, then the natural morphism
	$$D_{\Sen}^{K_n}(V)\to \prod_{\mfrak}D_{\Sen}^{K_n}(V/\mfrak)$$
	is also injective.
	Therefore, it suffices to consider the case $r=1$.
\end{proof}

For later use, we prove the following lemma.
\begin{lemma}\label{lem:HT weights range}
	Assume $\Acal \in \AlgAff_{\Qbb_p}$.
	Then for $V \in \FP_{\Cbb_p\otimes A}(G_K)$, there exist integers $a\leq b$ such that for any integer $i\notin [a,b]$, $V(i)^{G_K}=0$.
\end{lemma}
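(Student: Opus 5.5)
We reduce the statement to a property of the Sen operator. Fix $n\geq n(V)$ and put $D=D_{\Sen}^{K_n}(V)$, a finite projective $K_n\otimes A$-module carrying the $K_n\otimes A$-linear operator $\Theta_{\Sen}$.

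\textbf{Step 1: invariants become kernels of $\Theta_{\Sen}-j$.} For an integer $i$, Lemma~\ref{lem:HT Sen}, applied in each graded degree, identifies
$$V(i)^{G_K}=\bigl((t^iV)^{H_K}\bigr)^{\Gamma_K\mathchar`-\la,\Gamma_K}\cong \bigl(t^iD_{\Sen}^{K_\infty}(V)\bigr)^{\Gamma_K}.$$
Any $\Gamma_K$-fixed vector is killed by $\Theta$. By Example~\ref{ex:Sen operator}, $\Theta$ acts on $t^iD_{\Sen}^{K_\infty}(V)$ as $\Theta_{\Sen}+i$. Hence
$$V(i)^{G_K}\hookrightarrow \Ker\bigl(\Theta_{\Sen}+i\colon D_{\Sen}^{K_\infty}(V)\to D_{\Sen}^{K_\infty}(V)\bigr),$$
after the twist by $t^{-i}$. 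It therefore suffices to show that $\Theta_{\Sen}-j$ is injective on $D_{\Sen}^{K_\infty}(V)$ for all integers $j$ outside a bounded interval. Since $D_{\Sen}^{K_\infty}(V)=D\otimes_{K_n\otimes A}(K_\infty\otimes A)$ and this base change is flat, it is enough to prove the injectivity on $D$ itself.

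\textbf{Step 2: reduction to the characteristic polynomial.} Let $P(X)=\det(X-\Theta_{\Sen}\mid D)\in (K_n\otimes A)[X]$. This makes sense because $D$ is finite projective; if needed, we first localize so that $D$ is free, or embed $D$ as a direct summand of a free module and extend $\Theta_{\Sen}$ by $0$. By Cayley--Hamilton, $P(\Theta_{\Sen})=0$. Suppose $P(j)$ is a non-zero-divisor in $K_n\otimes A$. Writing $P(X)-P(j)=(X-j)Q(X)$, we get $P(j)\cdot x=-Q(\Theta_{\Sen})(\Theta_{\Sen}-j)x$ for every $x\in D$. So any $x$ with $(\Theta_{\Sen}-j)x=0$ satisfies $P(j)x=0$, and since $D$ is projective this forces $x=0$. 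The claim is thus reduced to the following: $P(j)$ is a non-zero-divisor for all integers $j$ outside some bounded interval.

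\textbf{Step 3: the main obstacle, boundedness of the coefficients.} Write $P(X)=X^d+c_1X^{d-1}+\dots+c_d$. The key input is that the coefficients $c_k$ are bounded, in the sense of the latter half of Section 1 as announced in the introduction. Concretely, $K_n\otimes A$ is a finite product of finitely generated algebras over affinoids, and we want the $c_k$ to lie in the image of a power-bounded subring such as $(K_n\otimes R)^\circ$. The route we would take is pointwise. At each $\mfrak\in\Spm A$, the fibre $V/\mfrak$ is a $\Cbb_p$-semilinear representation with coefficients in a finite extension $E_\mfrak$ of $\Qbb_p$. Its Sen weights are bounded uniformly in $\mfrak$, because $V$ descends over some $A^+$-lattice, which by Tate--Sen gives a uniform radius of analyticity and hence a uniform bound on $\Theta_{\Sen}$ with respect to a lattice. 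The resulting pointwise bounds then have to be glued using the presentation of $A$ as a relatively discrete finitely generated $R$-algebra. This step is where the real work lies.

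\textbf{Step 4: conclusion.} Granting the boundedness, $|c_k|\leq C$ uniformly. For an integer $j$ with $|j|_p$ large, or more precisely for large integers $j$ in the archimedean sense, we argue pointwise at every $\mfrak$ and every $\mfrak^r$. Over each residue field, $P(j)$ is a unit unless $j$ is a root of the reduced polynomial, and the roots have norm at most $C$. Since only finitely many integers have bounded archimedean size, we use instead the algebraic fact that integer roots of monic polynomials with bounded coefficients are bounded; this holds after reduction to $\Qbb_p$ via the norm from $E_\mfrak$ to $\Qbb_p$. Consequently $P(j)\notin\mfrak$ for every $\mfrak$ when $j\notin[a,b]$, so $P(j)$ is a unit and in particular a non-zero-divisor. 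Equivalently, by Corollary~\ref{cor:pointwise criterion of HT}-style injectivity of $D\to\prod_{\mfrak,r}D/\mfrak^r$, it suffices to check injectivity modulo each $\mfrak^r$, which reduces to the finite case with a uniform bound.
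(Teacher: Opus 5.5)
Steps 1 and 2 are sound. $V(i)^{G_K}$ embeds into $\Ker(\Theta_{\Sen}+i)$, and if $P(j)$ is a non-zero-divisor then $\Theta_{\Sen}-j$ is injective on $D$. The gap is in Steps 3--4. Step 3 is only sketched. The bigger problem is that even granting it, Step 4 does not follow. $p$-adic boundedness of the coefficients of $P$ gives no control over integer roots, because $\Zbb$ is itself $p$-adically bounded; the ``algebraic fact'' you invoke holds only for archimedean bounds.

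In fact your target conclusion is false: for $j$ outside a bounded interval, $P(j)$ need not be a unit, and $\Theta_{\Sen}-j$ need not be injective modulo every $\mfrak^r$. Take $K=\Qbb_p$, $A=\Qbb_p\langle T\rangle$ and $V=(\Cbb_p\otimes A)(\eta)$ with $\eta(g)=\exp(pT\log\chi(g))$. Then $P(X)=X-pT$ has coefficients in $A^{\circ}$. Yet for every $j\in p\Zbb$ the element $P(j)=j-pT$ vanishes at the point $T=j/p$ of the closed unit disc, and on that fibre $\Theta_{\Sen}-j$ is zero. The lemma still holds here, because $j-pT$ is a non-zero-divisor for every $j$. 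The phenomenon is generic on each component rather than pointwise, so no argument uniform in $\mfrak$ can capture it.

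The missing ingredient is simply that $K_n\otimes A$ is noetherian, which makes the whole boundedness discussion unnecessary. The paper argues directly. Differences of distinct integers are invertible in the $\Qbb$-algebra $K_n\otimes A$, so the kernels $M_j=\Ker(\Theta_{\Sen}-j)$ on $D$ form a direct sum $\bigoplus_j M_j\hookrightarrow D$. Since $D$ is finitely generated over a noetherian ring, only finitely many $M_j$ are nonzero, and your Step 1 (the paper's Claim) then concludes.

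If you want to keep your Step 2, finish it with associated primes instead. $P(j)$ is a zero-divisor only if it lies in one of the finitely many associated primes $\mathfrak{p}_1,\dots,\mathfrak{p}_s$ of $K_n\otimes A$. Modulo each $\mathfrak{p}_k$, the monic polynomial $P$ has at most $\deg P$ roots in the domain $(K_n\otimes A)/\mathfrak{p}_k$, in which distinct integers remain distinct. Hence at most $s\cdot\deg P$ integers $j$ are excluded.
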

\begin{proof}
	We take an integer $n\geq n(V)$. 
	First, we prove the following claim.

	\noindent\textbf{Claim:} For $i\in \Zbb$, if $\Theta_{\Sen}-i \colon D_{\Sen}^{K_n}(V)\to D_{\Sen}^{K_n}(V)$ is injective, then $V(i)^{G_K}=0$.

	\begin{proof}[Proof of the claim]
	In the same way as in the proof of Proposition~\ref{prop:Sen operator HT}, we may assume $i=0$. 
	By Remark~\ref{rem:Sen operator compatible}, $\Theta \colon D_{\Sen}^{K_{\infty}}(V)\to D_{\Sen}^{K_{\infty}}(V)$ is also injective.
	Assume that $V^{G_K}\neq 0$ and take $x\in V^{G_K}\setminus \{0\}$.
	Then $x\in D_{\Sen}^{K_{\infty}}(V)$, and the action of $\Gamma_K$ on $x$ is trivial.
	Therefore, $\Theta_{\Sen}(x)=0$, which is a contradiction.
	\end{proof}

	We write a (relatively discrete) $K_n\otimes A$-module $M_i=\Ker (\Theta_{\Sen}-i \colon D_{\Sen}^{K_n}(V)\to D_{\Sen}^{K_n}(V))$.
	Then the natural morphism $\bigoplus_{i\in \Zbb}M_i\to D_{\Sen}^{K_n}(V)$ is injective.
	Since $K_n\otimes A$ is noetherian and $D_{\Sen}^{K_n}(V)$ is (relatively discrete and) finitely generated over $K_n\otimes A$, for $i$ sufficiently large or small, we get $M_i=0$.
	By the claim, for $i$ sufficiently large or small, we obtain $V(i)^{G_K}=0$.
\end{proof}


\subsection{Sen polynomials}
\begin{lemma and definition}
Let $V\in \FP_{\Cbb_p\otimes A}(G_K)$, and let $\Theta_{\Sen}\colon D_{\Sen}^{K_n}(V)\to D_{\Sen}^{K_n}(V)$ be the Sen operator.
Assume that $V$ is of constant rank $r$.
Then the characteristic polynomial $P_V(T) \in (K_n\otimes A)[T]$ of $\Theta_{\Sen}$ lies in $(K\otimes A)[T]$ and it is independent of the choice of $n$.
We call it the \textit{Sen polynomial} of $V$.
\end{lemma and definition}
\begin{proof}
	The independence of $n$ follows from Remark~\ref{rem:Sen operator compatible}.
	Since the actions of $\Lie \Gamma_K$ and $\Gamma_K$ on $D_{\Sen}^{K_n}(V)$ commute, the coefficients of $P_V(T)$ are $\Gamma_K$-invariant.
	Thus, we have $P_V(T)\in (K\otimes A)[T].$
\end{proof}

Since the Sen polynomial comes from the action of $\Zbb_p=\Lie \Gamma_K$ on $D_{\Sen}^{K_n}(V)$, it should have an ``analytic nature''.
We make this observation precise.
Let $\Acal=(A,A^+)_{\square}\in \AlgAff_{\Qbb_p}.$ 
First, we recall some notions defined in \cite{Mikami24}.

\begin{definition}[{\cite[Definition 2.7]{Mikami24}}]
    Let $A$ be an algebraic-affinoid $\Qbb_{p,\square}$-algebra.
    We define $A^{\circ}(\ast)\subset A(\ast)$ as the subring of $f\in A(\ast)$ such that $A$ is $(\Zbb[T],\Zbb[T])_{\square}$-complete when viewed as a $\Zbb[T]$-module via $T\mapsto f$.
    We set $A^{b}(\ast)=A^{\circ}(\ast)[1/p]\subset A(\ast)$.
    Then we define the condensed subring $A^{b}\subset A$ to be the largest one whose underlying discrete subring is $A^{b}(\ast)$.
\end{definition}

\begin{definition}[{\cite[Definition 2.15]{Mikami24}}]
    Let $A$ be an algebraic affinoid $\Qbb_{p,\square}$-algebra.
    An \textit{affinoid $\Qbb_p$-algebra of definition of $A$} is a subring $A'\subset A$ such that $A'$ is an affinoid $\Qbb_p$-algebra and $A$ is relatively discrete and finitely generated over $A'$.
\end{definition}

\begin{lemma}[{\cite[Lemma 2.11, Proposition 2.13]{Mikami24}}]\label{lemma:bounded reduced}
    Let $A$ be an algebraic affinoid $\Qbb_{p,\square}$-algebra.
    \begin{enumerate}
        \item Let $A'$ be an affinoid $\Qbb_p$-algebra of definition of $A$.
		Then $A'\subset A^b$ and $A^b$ is relatively discrete over $A'$.
		Moreover, $A^b$ is the integral closure of $A'$ in $A$.
		\item The nilradical $\sqrt{(0)}$ of $A$ is contained in $A^b$ and $A^b/\sqrt{(0)}=(A/\sqrt{(0)})^b.$
    \end{enumerate}
\end{lemma}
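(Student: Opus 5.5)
The plan is to reduce everything to one characterization: an element $f\in A(\ast)$ lies in $A^b(\ast)$ if and only if $f$ is integral over $A'$, for a fixed affinoid $\Qbb_p$-algebra of definition $A'$. Once this is known, (1) follows directly and (2) becomes a short algebraic argument.

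\emph{Step 1: $A'\subset A^b$.} Take $a\in A'$. Choose $n$ with $p^na\in A'^{\circ}$. Since $A$ is relatively discrete and finitely generated over $A'$, it is $(A',A'^{\circ})_{\square}$-complete. Hence it is complete as a $\Zbb[T]$-module via $T\mapsto p^na$, because $\Zbb[T]\to A'^{\circ}$ induces a morphism of analytic rings $(\Zbb[T],\Zbb[T])_{\square}\to (A',A'^{\circ})_{\square}$. So $p^na\in A^{\circ}(\ast)$, and therefore $a\in A^b(\ast)$.

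\emph{Step 2: integral elements are bounded.} Let $f$ satisfy a monic equation over $A'$. After multiplying $f$ by a power of $p$, I may assume the coefficients $c_i$ lie in $A'^{\circ}$. Then $\Zbb[T]\to A$, $T\mapsto f$, factors through $\Zbb[T]\to \Zbb[c_0,\dots,c_{d-1}][T]/(P(T))$, which is finite over $\Zbb[X_0,\dots,X_{d-1}]$. I will use that solid completeness is insensitive to finite ring maps: the induced analytic structure on a finite $\Zbb[X]$-algebra is the $\square$-structure on that algebra. Combined with the fact that $A$ is complete over $\Zbb[X_i\mapsto c_i]$ by Step 1, this gives $f\in A^{\circ}(\ast)$.

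\emph{Step 3: bounded elements are integral.} This is the step I expect to be the main obstacle. Let $f\in A^{\circ}(\ast)$. Then $(A',A'^{\circ})_{\square}\to\Acal$ extends along $T\mapsto f$ to a map from $A'\langle T\rangle$: $A$ is complete for $\Zbb[T]$ and for $A'^{\circ}$, hence for $A'^{\circ}[T]_{\square}$, and $A'\langle T\rangle=A'^{\circ}[T]_{\square}$-completion of $A'[T]$ after inverting $p$. The image $B$ of $A'\langle T\rangle\to A$ is then a quotient Banach (compact-type) $A'$-module sitting inside a relatively discrete finitely generated $A'$-module. I would argue, via \cite[Lemma 1.24]{Mikami24}-type statements and noetherianity of $A'$, that such a submodule is finitely generated over $A'$. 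Then $A'[f]\subset B$ is a finite $A'$-module, so $f$ is integral.

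\emph{Consequences.} Steps 2 and 3 show that $A^b(\ast)$ is the integral closure of $A'$ in $A$. Relative discreteness of $A^b$ over $A'$ follows because the largest condensed subring with these points is a submodule of the relatively discrete module $A$.

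For (2), a nilpotent $f$ satisfies $T^n=0$, so it lies in $A^b$ by Step 2. For the quotient, let $\bar A'$ be the image of $A'$ in $A/\sqrt{(0)}$. This is an affinoid algebra of definition of $A/\sqrt{(0)}$. The integral closure is compatible with reduction: if $\bar g$ is integral over $\bar A'$, lift it to $g$, so that $P(g)$ is nilpotent for some monic $P$. Then $P(g)^m=0$, which is a monic equation, so $g$ is integral over $A'$. Hence $A^b\to(A/\sqrt{(0)})^b$ is surjective with kernel $\sqrt{(0)}$.
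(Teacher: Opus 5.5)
The paper gives no proof of this lemma; it is imported from \cite[Lemma 2.11, Proposition 2.13]{Mikami24}, so your argument has to stand on its own. Steps 1 and 2 are fine. Step 2 uses the standard fact that $(C,\Zbb[X])_{\square}=C_{\square}$ for $C$ finite over $\Zbb[X]$. Your deduction of (2) from (1) is also fine: the lift-a-monic-polynomial-and-raise-to-a-power argument works. The gap is in Step 3, which is the real content of the lemma.

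\textbf{The gap in Step 3.} You claim that the image of $A'\langle T\rangle$ lies in ``a relatively discrete finitely generated $A'$-module.'' But $A$ is finitely generated over $A'$ only as an \emph{algebra}, not as a module. If $A$ were a finite $A'$-module, every element would be integral and there would be nothing to prove. Nor do \cite[Lemma 1.24]{Mikami24}-type statements help: they say submodules and kernels of relatively discrete modules are relatively discrete, which does not give finite generation. For example, $\Qbb_p[T]$ is relatively discrete over $\Qbb_p$ but not finite. As written, nothing in Step 3 separates an integral element from $T\in\Qbb_p[T]$. So the direction ``bounded $\Rightarrow$ integral'' is not established.

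\textbf{What is missing.} You need a factorization statement that genuinely uses the condensed structure. Write $A(\ast)=\bigcup_n M_n$ as a countable increasing union of finitely generated $A'(\ast)$-submodules; countability holds because $A(\ast)$ is a finitely generated algebra. Then $A=\varinjlim_n \underline{M_n}$, where each $\underline{M_n}$ is Banach and closed in $\underline{M_{n+1}}$, by noetherianity. Now suppose $A'\langle T\rangle\to A$ did not factor through any $M_n$.
\begin{itemize}
\item Choose $w_n\in A'\langle T\rangle$ whose image lies outside $M_n$, and rescale by powers of $p$ so that $w_n\to 0$.
\item The convergent sequence gives a map from the profinite set $\Nbb\cup\{\infty\}$ to $A'\langle T\rangle$, hence an element of $A(\Nbb\cup\{\infty\})=\varinjlim_n C(\Nbb\cup\{\infty\},M_n)$. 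This element lies in a single $M_N$, which is a contradiction.
\item Closedness of $M_N\subset M_n$ then upgrades this to a condensed factorization.
\end{itemize}
It follows that $A'[f]\subset M_N$ is finitely generated over the noetherian ring $A'(\ast)$, so $f$ is integral. This is exactly where non-discreteness enters: $T\in\Qbb_p[T]$ is not bounded because $\Qbb_{p,\square}\otimes_{\Zbb_\square}\Zbb[T]_\square$ has underlying ring $\Qbb_p\langle T\rangle$, and no map $\Qbb_p\langle T\rangle\to\Qbb_p[T]$ can restrict to the identity on $\Qbb_p[T]$. You should also justify that identification of the completion of $A'[T]$ with $A'\langle T\rangle$, but that part is standard.
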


\begin{corollary}\label{cor:bounded injective}
	Let $A$ be an algebraic affinoid $\Qbb_{p,\square}$-algebra, and let $B$ be a relatively discrete and finitely generated $B$-algebra.
	Assume that $A\to B$ is injective.
	Then we have $A^b=A\cap B^b.$
\end{corollary}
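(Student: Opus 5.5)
Fix an affinoid $\Qbb_p$-algebra of definition $A'$ of $A$. Since $A\to B$ is injective and $B$ is relatively discrete and finitely generated over $A$, it is also relatively discrete and finitely generated over $A'$. Hence $A'$, viewed as a subring of $B$, is an affinoid $\Qbb_p$-algebra of definition of $B$. By Lemma~\ref{lemma:bounded reduced}~(1), applied once to $A$ and once to $B$, we get the descriptions
\begin{itemize}
\item $A^b$ is the integral closure of $A'$ in $A$;
\item $B^b$ is the integral closure of $A'$ in $B$.
\end{itemize}

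With these descriptions the equality $A^b=A\cap B^b$ on underlying discrete rings is formal. An element $f\in A(\ast)$ is integral over $A'(\ast)$ inside $A$ if and only if it is integral over $A'(\ast)$ inside $B$. This is because an integral equation is an identity in $A$, and $A\to B$ is injective, so the identity holds in $A$ exactly when it holds in $B$. Therefore $A^b(\ast)=A(\ast)\cap B^b(\ast)$.

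It remains to upgrade this to an equality of condensed subrings. Here I would use that $A^b$ is, by definition, the largest condensed subring of $A$ with underlying ring $A^b(\ast)$. Lemma~\ref{lemma:bounded reduced}~(1) also says $A^b$ and $B^b$ are relatively discrete over $A'$, so each is the colimit of its finitely generated $A'$-submodules with the induced structure. The fiber product $A\cap B^b=A\times_B B^b$ is a condensed subring of $A$ whose underlying ring is $A^b(\ast)$, so by maximality it is contained in $A^b$. Conversely, $A^b\subset A$ maps into $B^b$, since its image is a condensed subring of $B$ with underlying ring contained in $B^b(\ast)$. Indeed, any finitely generated $A'$-submodule of $A^b$ maps to a finitely generated $A'$-submodule of $B$ lying in $B^b(\ast)$, and relative discreteness lets us identify condensed structures with the finitely generated-submodule filtrations. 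Hence $A^b\subset A\cap B^b$.

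The main obstacle I expect is exactly this condensed-structure bookkeeping: checking that intersections and images of relatively discrete $A'$-modules behave as their underlying sets do. I would handle it by reducing to finitely generated $A'$-submodules, which are quotients of $A'^n$ and hence carry their canonical Banach structure, where injectivity of $A\to B$ makes the comparison immediate.
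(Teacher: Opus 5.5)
Your proof is correct and matches the paper's argument. Both take an affinoid algebra of definition $A'$ of $A$, observe that it is also one for $B$, identify $A^b$ and $B^b$ with the integral closures of $A'$ in $A$ and in $B$ via Lemma~\ref{lemma:bounded reduced}~(1), and conclude from the injectivity of $A\to B$. Your extra paragraph on condensed structures is sound but longer than needed: since $A^b$ and $B^b$ are by definition the largest condensed subrings with the given underlying rings, the equality $A^b(\ast)=A(\ast)\cap B^b(\ast)$ already forces $A^b=A\times_B B^b$ formally, with no appeal to relative discreteness or finitely generated submodules.
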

\begin{proof}
	Let $A'$ be an affinoid $\Qbb_p$-algebra of definition of $A$.
	Then $B$ is also relatively discrete $A'$-algebra.
	Thus, $A'$ is an affinoid $\Qbb_p$-algebra of definition of $B$.
	Therefore, $A^b$ (resp. $B^b$) is the integral closure of $A'$ in $A$ (resp. in $B$).
	Since $A\to B$ is injective, the claim easily follows from this.
\end{proof}

\begin{corollary}\label{cor:bounded scalar extension}
	Let $A$ be an algebraic affinoid $\Qbb_{p,\square}$-algebra, and let $L/\Qbb_p$ be a finite extension.
	Then the natural morphism
	$$L\otimes A^b \to (L\otimes A)^b$$
	is an isomorphism.
\end{corollary}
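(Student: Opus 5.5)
Choose an affinoid $\Qbb_p$-algebra of definition $A'$ of $A$, and compare integral closures using Lemma~\ref{lemma:bounded reduced}~(1).

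First I will check that $L\otimes A'$ is an affinoid $\Qbb_p$-algebra of definition of $L\otimes A$. Since $L$ is finite free over $\Qbb_p$, $L\otimes A'$ is finite over $A'$, hence an affinoid $\Qbb_p$-algebra. It embeds into $L\otimes A$, because $-\otimes L$ is exact. Moreover, $L\otimes A$ is relatively discrete and finitely generated over $L\otimes A'$: both statements are obtained from those for $A$ over $A'$ by base change along $A'\to L\otimes A'$, since $L\otimes A=(L\otimes A')\otimes_{A'}A$.

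By Lemma~\ref{lemma:bounded reduced}~(1), $A^b$ is the integral closure of $A'$ in $A$, and $(L\otimes A)^b$ is the integral closure of $L\otimes A'$ in $L\otimes A$. It therefore suffices to show that the integral closure of $L\otimes A'$ in $L\otimes A$ equals $L\otimes A^b$, viewed as a subring via the injection $L\otimes A^b\to L\otimes A$ (injective by flatness of $L$). Fix a $\Qbb_p$-basis $e_1,\dots,e_d$ of $L$, so that $L\otimes A=\bigoplus_i e_iA$.

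For the inclusion $L\otimes A^b\subset(L\otimes A)^b$: $L$ is integral over $\Qbb_p$, $A^b$ is integral over $A'$, and integral elements form a ring. For the reverse inclusion, I expect the main point to be an integrality descent, for which I use the trace form. Let $x=\sum_i e_i a_i$ be integral over $L\otimes A'$. Then $x$ is integral over $A'$ too, since $L\otimes A'$ is finite over $A'$. For each $\ell\in L$, the element $\ell x$ is also integral over $A'$, and hence so is every $L$-conjugate of it. The trace $\mathrm{Tr}_{L/\Qbb_p}\otimes\mathrm{id}\colon L\otimes A\to A$ is $A$-linear, and after base change to the Galois closure $\tilde L$ it becomes the sum over the embeddings $\sigma\colon L\to\tilde L$. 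This gives
$$(\mathrm{Tr}\otimes\mathrm{id})(\ell x)\in (\tilde L\otimes A)^{\text{int}/A'}\cap A = A^b,$$
where the equality holds because an element of $A$ integral over $A'$ lies in $A^b$. Here I use that integral closure is computed inside $\tilde L\otimes A$, and that $A\subset \tilde L\otimes A$ with integrality of an element of $A$ over $A'$ being checked in $A$ itself.

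Finally, nondegeneracy of the trace pairing lets me recover the coefficients: choosing the dual basis $e_j^\vee$ with $\mathrm{Tr}(e_j^\vee e_i)=\delta_{ij}$ gives $a_j=(\mathrm{Tr}\otimes\mathrm{id})(e_j^\vee x)\in A^b$. Hence $x\in L\otimes A^b$.

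The condensed structure causes no difficulty. $A^b$ is by definition the largest condensed subring with the given underlying ring, and $L\otimes-$ is just a finite direct sum. So the identification of underlying discrete rings, together with the fact that $L\otimes A^b\subset L\otimes A$ is the maximal condensed subring with its underlying set (a finite direct sum of maximal ones), yields the isomorphism of condensed rings.
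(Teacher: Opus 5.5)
Your proof is correct and follows the paper's argument. You choose an affinoid algebra of definition $A'$, observe that $L\otimes A'$ is an affinoid algebra of definition of $L\otimes A$, and identify both sides as integral closures via Lemma~\ref{lemma:bounded reduced}~(1). The paper simply asserts that the integral closure of $L\otimes A'$ in $L\otimes A$ is $L\otimes A^b$; your trace-duality argument supplies this omitted step, which is equivalently the fact that integral closure commutes with the finite \'etale base change $A'\to L\otimes A'$.
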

\begin{proof}
	Let $A'$ be an affinoid $\Qbb_p$-algebra of definition of $A$.
	Then $L\otimes A'$ is an affinoid $\Qbb_p$-algebra of definition of $L\otimes A$, and the integral closure of $L\otimes A'$ in $L\otimes A$ is equal to $L\otimes A^b$.
	The claim follows from this.
\end{proof}

\begin{lemma}\label{lem:stable dense}
	Let $G$ be a profinite group, and $H\subset G$ be a dense subgroup.
	Let $A$ be an algebraic affinoid $\Qbb_{p,\square}$-algebra, and $M$ be a finitely generated $A$-module with a (continuous) $G$-action.
	If a submodule $N\subset M$ is stable under the action of $H$, then it is stable under the action of $G$.
\end{lemma}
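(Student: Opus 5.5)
The plan is to show that $N$ is closed in $M$ in the condensed sense; continuity of the action then finishes the argument. Consider the map
$$\alpha\colon G\times N\to M,\quad (g,x)\mapsto gx,$$
and the composite
$$G\times N\to M\to M/N.$$
This composite vanishes on $H\times N$, and we want it to vanish on all of $G\times N$.

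The first step is to show that $M/N$ is ``separated''. Since $A$ is an algebraic affinoid algebra, it is noetherian as a ring. So $N$ is finitely generated, and $M/N$ is again a finitely generated $A$-module. Finitely generated modules over $A$ are relatively discrete, because they are quotients of $A^m$, which is relatively discrete. Writing $A$ as relatively discrete over an affinoid algebra of definition $A'$, we get that $M/N$ is a filtered colimit (a union) of Banach $A'$-modules along injective transition maps. Concretely, fix generators and write $M/N=\varinjlim_F F$ over finitely generated $A'$-submodules $F$; each such $F$ is finite over the noetherian Banach ring $A'$, hence Banach and in particular Hausdorff.

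Now fix $x\in N$ and look at the orbit map $G\to M/N$, $g\mapsto g\bar{x}$. This is a map of condensed sets from the profinite set $G$. A map from a compact source into such a filtered colimit with injective transitions factors through one stage $F$. So it comes from a continuous map $G\to F$ into a Hausdorff Banach space.

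This continuous map vanishes on the dense subgroup $H$, so it vanishes on $G$. Hence $gx\in N$ for every $g\in G$, and $N$ is $G$-stable. The condensed statement follows, because the same argument applies to $S$-points for profinite sets $S$: replace $x$ by an element of $N(S)$ and $G$ by $G\times S$, noting that $H\times S$ is dense in $G\times S$.

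The main obstacle is the factorization step. We need the quotient $M/N$ to be ``quasi-separated'', meaning that compact maps into it factor through a Hausdorff Banach stage. This in turn requires that relatively discrete finitely generated $A$-modules are unions of Banach $A'$-modules with injective transition maps. I would justify this from the description of $A$ as a relatively discrete finitely generated $A'$-algebra, since then $A$ is a union of finite $A'$-modules, using results of \cite{Mikami24}.
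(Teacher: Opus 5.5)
Your proof is correct and is essentially the paper's proof. The paper shows that $N\to C(G,M)\to C(G,M/N)$ vanishes by proving that $C(G,M/N)\to\prod_{h\in H}M/N$ is injective: it writes $M/N$ as a filtered colimit of Banach submodules with injective transitions, commutes $C(G,-)$ with this colimit using compactness of $\Qbb_{p,\square}[G]$, and concludes by density in the Banach case. Your version, which factors $G\times S\to M/N$ through a Banach stage and uses density of $H\times S$, is the same argument evaluated on $S$-points, since $C(G,M/N)(S)=(M/N)(G\times S)$.
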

\begin{proof}
	Since $A$ and $M$ are nuclear $\Qbb_{p,\square}$-modules, by \cite[Lemma 1.48]{Mikami24}, the action of $G$ on $M$ corresponds to the morphism $\rho\colon M\to C(G,M);\; m\mapsto (g\mapsto gm).$
	We need to show that the image of $N\subset M$ under $\rho$ is contained in $C(G,N)$.
	We set $L=M/N$.
	Since $\Qbb_{p,\square}[G]$ is a projective $\Qbb_{p,\square}$-module, there is an exact sequence
	$$0\to C(G,N) \to C(G,M)\to C(G,L)\to 0.$$
	Therefore, it is enough to show that the composition
	$$N\xrightarrow{\rho} C(G,M)\to C(G,L)$$
	is zero.
	By assumption, it becomes zero after composing with $C(G,L) \to \prod_{h\in H}L;\; f\mapsto (f(h))_h$, where $C(G,L)\to L;\; f\mapsto f(h)$ is the morphism induced from $\Qbb_p=\Qbb_{p,\square}[\{h\}]\to \Qbb_{p,\square}[G].$
	Therefore, it suffices to show that $C(G,L) \to \prod_{h\in H}L$ is injective.
	Since $L$ is a finitely generated $A$-module and $A$ is an algebraic affinoid $\Qbb_{p,\square}$-algebra, $L$ can be written as the filtered colimit $L=\varinjlim_{\lambda}L_{\lambda}$ of Banach $\Qbb_p$-submodules $L_{\lambda}$.
	Since $\Qbb_{p,\square}[G]$ is a compact $\Qbb_{p,\square}$-module, we have 
	$$C(G,L)=\varinjlim_{\lambda} C(G,L_\lambda).$$
	Moreover, the natural morphism $\prod_{h\in H}L_{\lambda}\to \prod_{h\in H}L$ is injective.
	Thus, we may assume that $L$ is a Banach $\Qbb_p$-module.
	In this case, $C(G,L)$ is a Banach $\Qbb_p$-module of continuous functions on $G$ with values in $L$. 
	Since $H$ is dense in $G$, $C(G,L) \to \prod_{h\in H}L$ is injective as a morphism of topological $\Qbb_p$-vector spaces.
	By noting that the condensification functor preserves injectivity, we get the claim.
\end{proof}

\begin{theorem}\label{thm:HTS wts are analytic}
	Let $A$ be an algebraic affinoid $\Qbb_{p,\square}$-algebra.
	Let $V\in \FP_{\Cbb_p\otimes A}(G_K)$ be a finite projective $\Cbb_p\otimes A$-module of constant rank with a semilinear $G_K$-action, and let $P_V(T)\in (K\otimes A)[T]$ be the Sen polynomial.
	Then we have $P_V(T)\in (K\otimes A^b)[T]$.
\end{theorem}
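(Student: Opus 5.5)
The plan is to reduce the statement to the case where $A$ is an affinoid $\Qbb_p$-algebra, where the Sen operator is known to be bounded. We then transport boundedness to $A$ using the integral-closure description of $A^b$ in Lemma~\ref{lemma:bounded reduced}.

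First, the coefficients of $P_V(T)$ lie in $K\otimes A$. By Corollary~\ref{cor:bounded scalar extension}, $(K\otimes A)^b=K\otimes A^b$, so it suffices to show that each coefficient $c$ lies in $(K\otimes A)^b$. By Lemma~\ref{lemma:bounded reduced}(1), this is the integral closure in $K\otimes A$ of $K\otimes A'$, for an affinoid $\Qbb_p$-algebra of definition $A'$. By Lemma~\ref{lemma:bounded reduced}(2), membership in $A^b$ can be tested modulo the nilradical. Passing to $A/\sqrt{(0)}$ commutes with forming Sen polynomials (base change of $D_{\Sen}$, as in Lemma~\ref{lem:HT basechange}). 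So we may assume $A$ is reduced.

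\textbf{Key step: an integral equation via the Sen operator.} The most direct route avoids reduction to points. Pick $n\geq n(V)$. The operator $\Theta_{\Sen}$ is $\frac{1}{\log\chi(\gamma)}\log(\gamma)$ for $\gamma\in\Gamma_{K_m}$ close to $1$, computed on the finite projective $K_n\otimes A$-module $D=D_{\Sen}^{K_n}(V)$. The action of $\Gamma_K$ on $D$ is continuous. Choose generators of $D$ and a finitely generated $A'$-lattice, or rather a bounded $K_n\otimes A'$-submodule $D_0$, such that $\gamma$ preserves $D_0$ for $\gamma$ in a small open subgroup. To obtain $D_0$, write $D$ as a filtered colimit of Banach submodules as in the proof of Lemma~\ref{lem:stable dense}. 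The image of the compact set $\Gamma_{K_m}\times\{\text{generators}\}$ lands in one Banach piece by compactness, and continuity of the action makes $\gamma-1$ topologically nilpotent on a suitable finitely generated $K_n\otimes A'$-module $D_0\subset D$ spanning $D$. Then $\Theta_{\Sen}$ preserves $D_0\otimes\Qbb_p$, since the $\log$ series converges there. By Cayley--Hamilton for the finite $K_n\otimes A'$-module $D_0$, $\Theta_{\Sen}$ satisfies a monic polynomial $Q$ over $K_n\otimes A'$.

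\textbf{Integrality of the coefficients.} Since $D_0$ spans $D$, the operator $\Theta_{\Sen}$ is integral over $K_n\otimes A'$. Hence its eigenvalues are integral over $K_n\otimes A'$ in a splitting algebra. The coefficients of $P_V$ are elementary symmetric functions in these eigenvalues, so they are integral over $K_n\otimes A'$. Concretely, $P_V$ divides a power of $Q$ up to the usual norm argument, or one applies the standard fact that the characteristic polynomial of an endomorphism integral over a normal-enough base has integral coefficients. Working locally on $A$, where $D$ is free, one can make this precise. Integral closure then gives $c\in (K_n\otimes A)^b\cap (K\otimes A)$. Corollary~\ref{cor:bounded injective} and Corollary~\ref{cor:bounded scalar extension} then yield $c\in K\otimes A^b$.

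\textbf{Main obstacle.} The hardest step is constructing the bounded $\gamma$-stable finitely generated submodule $D_0$ over the affinoid $A'$. $D$ is only relatively discrete over $K_n\otimes A'$, and continuity must be converted into genuine boundedness. If this fails directly, I would fall back on an alternative: embed reduced $A$ into $\prod_{\mfrak}A/\mfrak$ and use uniform boundedness of pointwise Sen weights. That route seems harder, so I would try the lattice construction first, using compactness of $\Qbb_{p,\square}[\Gamma_K]$.
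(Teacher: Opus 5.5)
Your proposal is correct in outline, but it takes a genuinely different route from the paper. Both start with the same reduction: by Corollaries~\ref{cor:bounded injective} and~\ref{cor:bounded scalar extension} it suffices to show the coefficients lie in $(K_n\otimes A)^b$.

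The paper then proves a general statement for a locally analytic $\Zbb_p$-action on a finite projective module over an algebraic-affinoid algebra, by reducing to rank one:
\begin{enumerate}
\item pass to reduced $A$ and Zariski-localize so the module is free;
\item adjoin finitely many elements of a finite extension of the total ring of fractions to triangularize the matrix of the generator $\gamma$;
\item upgrade the $\gamma$-stable flag to a $\Gamma$-stable one by Lemma~\ref{lem:stable dense};
\item use \cite[Proposition 5.2]{Mikami24}: a continuous character $\Gamma\to A^{\times}$ factors through $A'^{\times}$ for an affinoid algebra of definition $A'$.
\end{enumerate}
You instead prove boundedness of $\Theta_{\Sen}$ directly in arbitrary rank. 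Your tools are a $\Gamma_K$-stable finitely generated $K_n\otimes A'$-module, Cayley--Hamilton, and integrality of characteristic-polynomial coefficients. This avoids the triangularization and in effect reproves the rank-one input. With the argument below, the reduction to reduced $A$ also becomes unnecessary. The paper's version is shorter because the analytic input is outsourced to \cite{Mikami24}.

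Two of your steps need sharpening.

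\textbf{Construction of $D_0$.} Take generators $e_1,\dots,e_s$ of $D$. Since $D$ is relatively discrete over $K_n\otimes A'$ and $\Qbb_{p,\square}[\Gamma_K]$ is compact, all orbit maps lie in $C(\Gamma_K,D_\lambda)$ for a single finitely generated $K_n\otimes A'$-submodule $D_\lambda$. Let $D_0$ be the $K_n\otimes A'$-span of $\Gamma_K\cdot\{e_i\}$ inside $D_\lambda$. It is finitely generated \emph{because $K_n\otimes A'$ is noetherian}; this is the point your sketch leaves implicit. It is also $\Gamma_K$-stable and closed in $D_\lambda$. Since $\Gamma_{K_n}$ acts $K_n\otimes A'$-linearly, the operator norm of $\gamma-1$ on $D_0$ is controlled by its values on finitely many generators. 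Hence $\log\gamma$ converges and equals $\log\chi(\gamma)\,\Theta_{\Sen}|_{D_0}$ for $\gamma$ near $1$.

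\textbf{Integrality of the coefficients.} The claim ``$P_V$ divides a power of $Q$'' holds only over a field, and no normality is needed. Write $Q=\prod_{j=1}^{d}(T-y_j)$ formally, let $r$ be the rank, and let $e_i$ denote the elementary symmetric polynomials. Set $F_k(X)=\prod_{\mathbf m}\bigl(X-(-1)^{r-k}e_{r-k}(y_{\mathbf m})\bigr)$, the product running over all size-$r$ multisets $\mathbf m\subset\{1,\dots,d\}$. By symmetry, $F_k$ is monic with coefficients in $K_n\otimes A'$. At every geometric point of $\Spec(K_n\otimes A)$, the eigenvalues of $\Theta_{\Sen}$ are roots of $Q$, so $F_k(c_k)$ lies in every prime and is nilpotent. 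Some power of $F_k$ therefore kills $c_k$, so $c_k$ is integral over $K_n\otimes A'$.
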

\begin{proof}
	We take $D_{\Sen}^{K_n}(V)$ with the semilinear $\Gamma_K$-action as in Construction~\ref{construction:Sen}.
	By Corollary~\ref{cor:bounded injective} and Corollary~\ref{cor:bounded scalar extension}, it is enough to show $P_V(T)\in (K_n\otimes A)^b[T].$
	To define the Sen operator, it is enough to consider the action of $\Gamma_{K_n}$.
	Therefore, the claim reduces to the following theorem.
\end{proof}

\begin{theorem}
	Let $\Gamma=\Zbb_p$ (to avoid confusion).
	Let $A$ be an algebraic-affinoid $\Qbb_{p,\square}$-algebra, and $V$ be a finite projective $A$-module of constant rank with a locally analytic $\Gamma$-action.
	Then the characteristic polynomial of the action of $1\in \Zbb_p=\Lie \Gamma$ on $V$ lies in $A^b[T].$
\end{theorem}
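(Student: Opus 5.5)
We prove that the characteristic polynomial of $\Theta$, the action of $1\in\Lie\Gamma$, has coefficients in $A^b$. Via the exponential this is a statement about eigenvalues of $\Theta$ being bounded. Since $A^b$ is the integral closure of an affinoid algebra of definition $A'$ (Lemma~\ref{lemma:bounded reduced}), it suffices to show that each coefficient $c_k$ of $P(T)=\det(T-\Theta)$ is integral over $A'$. Equivalently, by the definition of $A^\circ$, we need $A$ to be $(\Zbb[T],\Zbb[T])_\square$-complete via $T\mapsto p^Nc_k$ for some $N$.

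\textbf{Step 1 (reductions).} By Lemma~\ref{lemma:bounded reduced}(2), $A^b$ contains the nilradical and is compatible with passing to $A_{\red}$. The coefficients are elements of $A$, and the characteristic polynomial commutes with base change. So we may replace $A$ by $A_{\red}$ and $V$ by $V\otimes_A A_{\red}$. Next, choose a finite morphism to a normal (or reduced) algebra, or localize on $\Spec A$, so that $V$ becomes free. Free-ness after a finite injective base change $A\to B$ is harmless by Corollary~\ref{cor:bounded injective}, since $A^b=A\cap B^b$. Zariski localizations $A\to A_{f}$ are harmless as well, provided we check boundedness of elements of $A$ through an injection $A\to\prod B_i$ with each $B_i$ relatively discrete and finitely generated over $A$. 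We also replace $\Gamma$ by $p^m\Zbb_p$, which only rescales $\Theta$ by $p^m$ and does not affect membership in $A^b[T]$.

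\textbf{Step 2 (analytic matrix).} Pick a basis $e_1,\dots,e_r$ of $V$. The action gives a continuous map $\Gamma\to GL_r(A)$, $\gamma\mapsto M(\gamma)$. Since $\Gamma$ is profinite, hence compact, and $A$ is a filtered colimit of Banach modules with compact maps as in the proof of Lemma~\ref{lem:stable dense}, the image lands in $GL_r$ of a single Banach submodule. More precisely, $M(\gamma)$ factors through $C(\Gamma,A_\lambda)$ for some Banach $A'$-submodule $A_\lambda\subset A$, and local analyticity lets us shrink $\Gamma$ so that $M(\gamma)=\exp(\gamma\Theta)$ with convergence. The key point is then that the entries of $\Theta=\lim (M(p^k)-1)/p^k$ and of $M(1)$ are $A'$-power-bounded in a suitable sense. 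I would instead use the following cleaner route: $M(1)\in GL_r(A)$ generates a compact group, so its entries together with those of $M(1)^{-1}$ satisfy the condition that $\{M(1)^n\}$ is bounded. Hence the characteristic polynomial of $M(1)$ has coefficients $x$ such that all powers $x^n$ stay in a compact subset. By the definition of $A^\circ$, this is the criterion for $\Zbb[T]$-completeness, giving $\det(T-M(1))\in A^\circ[T]$, and similarly for $M(p^k)$.

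\textbf{Step 3 (from group to Lie algebra).} For $k\gg0$, we have $\Theta=p^{-k}\log M(p^k)$, with $M(p^k)-1$ having topologically nilpotent entries. This requires the power series $\log$ to converge inside a Banach $A'$-submodule, which is the local analyticity. The eigenvalues of $\Theta$ are then $p^{-k}\log$ of the eigenvalues of $M(p^k)$. Symmetric functions of these are given by converging power series in the coefficients of $\det(T-M(p^k))$. Those coefficients lie in $A^\circ$ and are congruent to those of $(T-1)^r$ modulo topologically nilpotent elements, so the resulting coefficients lie in $p^{-N}A^\circ\subset A^b$.

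\textbf{Main obstacle.} I expect Steps 2 and 3 to be the main obstacle. The work there is making rigorous, in the algebraic-affinoid (non-Banach) setting, that ``bounded orbit implies integral characteristic polynomial'' and that the logarithm series converges. My fallback is to reduce to points, exploiting that $A^b$ is the integral closure of $A'$. Take a finite injection $A\to\prod B_i$ with $B_i$ domains that are finite over localizations of $A'$. An element of $A$ is integral over $A'$ iff it is so in each $B_i$. For a domain, integrality can be tested by boundedness of the element's value on all points of the affinoid $\Spa$ of the integral closure of $A'$ in $\Frac$. At points, the Sen-type eigenvalues of a locally analytic $\Zbb_p$-action on a finite-dimensional vector space over a $p$-adic field are bounded by a constant depending only on the radius of analyticity. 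Because that radius is uniform over $A$ by the compactness argument, this uniform bound forces boundedness, and hence integrality.
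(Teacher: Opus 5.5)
Your Step 1 matches the paper's reductions. Steps 2--3, however, contain a genuine gap, and the fallback does not repair it.

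In Step 2, boundedness of $\{M(1)^n\}_{n}$ does not imply that the powers of the coefficients of $\det(T-M(1))$ stay bounded, since those coefficients are not multiplicatively related to $M(1)^n$. The criterion ``powers in a compact set $\Rightarrow$ $(\Zbb[T],\Zbb[T])_{\square}$-complete'' is also not available in this non-Banach setting. What compactness honestly gives is that all $M(n)$ have entries in one finitely generated $A'$-submodule of $A$. So $M(1)$ is integral over $A'$, and the criterion you should then use is Lemma~\ref{lemma:bounded reduced}(1).

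Step 3 is the real problem. You want to write the eigenvalues of $\Theta$ as $p^{-k}\log$ of those of $M(p^k)$, and sum power series in the coefficients of $\det(T-M(p^k))$. But $A$ (think of $\Qbb_p[T]$ or $R_f$) has no complete topology in which such series converge. The phrase ``congruent to $(T-1)^r$ modulo topologically nilpotent elements'' has no meaning until you know these coefficients lie in an affinoid subalgebra where they are small. Finally, the eigenvalue identity presupposes that $\exp(p^k\Theta)$ makes sense in a Banach ring containing the entries of $\Theta$, which is essentially what you are trying to prove.

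In the fallback, the integrality test is misstated. Elements of $A$ are not functions on $\Spa$ of the integral closure of $A'$ in $\Frac A$, and the $B_i$ need not be finite over localizations of $A'$ (e.g.\ $A=\Qbb_p[T]$). The correct statement is that an element of reduced $A$ bounded on $\Spm A$ is integral over $A'$. This is a nontrivial rigid-analytic finiteness result that you neither prove nor cite.

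The missing idea is a reduction to rank one, which removes all estimates. First reduce to $A$ reduced and $V=A^d$ free, via $A\hookrightarrow\prod_i A[f_i^{-1}]$ and Corollary~\ref{cor:bounded injective}. The paper then triangularizes the matrix of $1\in\Gamma$ over a finitely generated $A$-subalgebra $B$ of a finite extension of $\Frac A$; this is harmless since $A^b=A\cap B^b$. Lemma~\ref{lem:stable dense}, applied to the dense subgroup $\Zbb\subset\Zbb_p$, shows that the resulting full flag is $\Gamma$-stable. This reduces to a continuous character $\Gamma\to A^{\times}$. By \cite[Proposition 5.2]{Mikami24}, such a character factors through $A'^{\times}$ for an affinoid algebra of definition $A'$, so its derivative lies in $A'\subset A^b$.

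If you prefer to keep your compactness idea, use it at the Lie algebra level instead of passing through logarithms. Local analyticity, together with the same colimit argument as in your Step 2, should put the Taylor coefficients $\Theta^n e_i/n!$ of the orbit maps of a basis into one finitely generated $A'$-submodule of $V$. Then all powers $\Theta^n$ have entries there, and $A'[\Theta]$ is a finite $A'$-module because $A'$ is noetherian. Hence $\Theta$, its eigenvalues, and the coefficients of its characteristic polynomial are integral over $A'$, and they lie in $A^b$ by Lemma~\ref{lemma:bounded reduced}(1).
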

\begin{proof}
By Lemma~\ref{lemma:bounded reduced}, we may assume that $A$ is reduced.
There is an injective morphism $A\to \prod_{i=1}^n A[f_i^{-1}]$ such that the scalar extension of $V$ along this morphism is free.
Therefore, by Lemma~\ref{cor:bounded injective}, we may assume that $V$ is free.
We set $V=A^d$.
Let $\gamma=1\in \Gamma=\Zbb_p$, and let $\alpha\in GL_d(A)$ be a matrix corresponding to $\gamma\colon A^d\to A^d.$
Then there is a finite extension $B'$ of $\Frac A$ (the total ring of fractions of $A$) such that $\alpha$ is triangularizable over $B'$.
Let $B\subset B'$ be an $A$-subalgebra generated by the finite number of elements necessary to triangularize $\alpha$.
By Lemma~\ref{cor:bounded injective}, we may assume that $\alpha$ is triangularizable.
Then by Lemma~\ref{lem:stable dense}, there is a $\Gamma$-stable full flag of $V=A^d$.
Therefore, we may assume that $V=A$.
Then the action of $\Gamma$ on $A$ corresponds to a character $\Gamma\to A^{\times}.$
By \cite[Proposition 5.2]{Mikami24}, this character factors through $A'^{\times}\subset A^{\times}$ for some affinoid $\Qbb_p$-algebra $A'$ of definition of $A$.
Then we have $P_V(T)\in A'[T]\subset A^b[T].$
\end{proof}


\section{De Rham representations}
\subsection{Definitions and basic properties of de Rham representations}

\begin{definition}
We define $B_{\dR,A}^+\coloneqq \varprojlim_{n}\tilde{B}_{\Cbb_p,A}^{[1,1]}/t^n$ and $B_{\dR,A}\coloneqq B_{\dR,A}^+[1/t]$.
As usual, $B_{\dR,A}$ has a filtration $\Fil^{\bullet} B_{\dR,A}$ defined via $\Fil^n B_{\dR,A}=t^nB_{\dR,A}^+$.
\end{definition}

\begin{remark}\label{rem:de Rham complete}
	By \cite[Lemma 3.28]{RJRC22}, if $A$ is a Banach $\Qbb_p$-algebra, there is an isomorphism
	\begin{align*}
	B_{\dR,A}^+ &\cong \varprojlim_{n} (B_{\dR}^+/t^n\otimes A)\\
	&\cong B_{\dR}^+\otimes A.
	\end{align*}
\end{remark}

\begin{lemma}\label{lem:dR flat}
If $A$ is a Banach $\Qbb_p$-algebra, then $B_{\dR,A}$ is faithfully flat over $(K\otimes A,\Zbb)_{\square}$.
\end{lemma}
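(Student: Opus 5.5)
The plan is to reduce to the case $A=\Qbb_p$ by a base change argument, as in the proof of Lemma~\ref{lem:HT fixed vector} and the faithful flatness of $B_{\HT,A}$. By Remark~\ref{rem:de Rham complete}, since $A$ is Banach we have $B_{\dR,A}^+\cong B_{\dR}^+\otimes A$, and hence $B_{\dR,A}\cong B_{\dR}\otimes A$. Inverting $t$ commutes with $-\otimes A$ because it is a filtered colimit along multiplication by $t$. So $B_{\dR,A}\cong B_{\dR}\otimes_K (K\otimes A)$ as a $(K\otimes A,\Zbb)_{\square}$-algebra, and faithful flatness of $(K\otimes A,\Zbb)_\square\to (B_{\dR,A},\Zbb)_\square$ follows from faithful flatness of $(K,\Zbb)_{\square}\to (B_{\dR},\Zbb)_{\square}$, since (faithful) flatness is stable under base change in the solid setting. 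For the base change step I would use that $B_{\dR}$ is a nuclear $K_\square$-module and that $A$ is nuclear and flat over $\Qbb_{p,\square}$, as cited from \cite{Mikami24}, so that all tensor products involved are underived.

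It then remains to treat $A=\Qbb_p$: show that $B_{\dR}$ is faithfully flat over $(K,\Zbb)_\square$. For flatness, write $B_{\dR}=\varinjlim_n t^{-n}B_{\dR}^+$ and $B_{\dR}^+=\varprojlim_m B_{\dR}^+/t^m$. Each $B_{\dR}^+/t^m$ is a successive extension of copies of $\Cbb_p$, hence a Banach $K$-space. Banach $K$-spaces are flat over $K_\square$; I would cite \cite[Lemma 3.21]{RJRC22} or the analogous statement used for $B_{\HT}$. The point needing care is that $B_{\dR}^+$ is a countable inverse limit of Banach spaces with surjective transitions, i.e.\ a Fréchet space. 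To see flatness here I would use \cite[Lemma 3.28]{RJRC22}, which gives $(\varprojlim_m B_{\dR}^+/t^m)\otimes M\cong \varprojlim_m (B_{\dR}^+/t^m\otimes M)$ for Banach (and, via colimits, general nuclear) $M$. The Mittag-Leffler property of the system then keeps exactness, giving flatness of $B_{\dR}^+$ (equivalently, that $B_{\dR}^+$ is a projective $K_\square$-module up to Fréchet subtleties). Flatness of $B_{\dR}$ follows by the filtered colimit.

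For faithfulness I would check that $M\otimes_K B_{\dR}=0$ implies $M=0$ for a solid $K$-module $M$. Since $K\to B_{\dR}$ admits a $K$-linear retraction in the solid sense — $K$ is a field, and $K\to \Cbb_p$ splits as Banach spaces by choosing a complement — $M$ is a direct summand of $M\otimes_K B_{\dR}$, hence zero. A splitting of $K\hookrightarrow B_{\dR}^+$ can be built by composing $B_{\dR}^+\to \Cbb_p$ with a splitting of $K\hookrightarrow\Cbb_p$, which exists because $K$ is finite-dimensional.

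I expect the main obstacle to be flatness of the Fréchet module $B_{\dR}^+$ over $K_\square$, namely the commutation of $\otimes$ with the countable limit for arbitrary (possibly non-nuclear) solid $K$-modules. If \cite[Lemma 3.28]{RJRC22} only covers nuclear or Banach inputs, I would instead show directly that $B_{\dR}^+$ is isomorphic to a countable product $\prod_{\Nbb}\Cbb_p$ as a $K_\square$-module, by choosing a splitting of the $t$-adic filtration. Countable products of Banach spaces are flat, being compact projective–like objects of the form $\prod \Qbb_p$-completions.
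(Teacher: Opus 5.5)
Your reduction is exactly the paper's proof. Remark~\ref{rem:de Rham complete} gives $B_{\dR,A}\cong B_{\dR}\otimes A$, so by base change everything comes down to faithful flatness of $B_{\dR}$ over $(K,\Zbb)_{\square}$, which the paper does not reprove but takes directly from \cite[Lemma 3.21]{RJRC22}. You should cite it as well, because your own sketch of that base case is loose in three places: testing only against nuclear $M$ via \cite[Lemma 3.28]{RJRC22} does not prove flatness; $\prod_{\Nbb}\Cbb_p$ is a Fr\'echet space rather than a compact projective object, so its flatness is a genuine input and not a formality; and faithfulness of $B_{\dR}$ needs a retraction of $K\to B_{\dR}$, not merely of $K\to B_{\dR}^+$ (one does exist, via the splittings $t^{-n}B_{\dR}^+\cong\prod_{m\geq -n}\Cbb_p\,t^m$ induced by a continuous $K$-linear section of $\theta$).
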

\begin{proof}
	By Remark~\ref{rem:de Rham complete}, we may assume $A=\Qbb_p$.
	Then the claim follows from \cite[Lemma 3.21]{RJRC22}.
\end{proof}

\begin{lemma}
	We have $B_{\dR,A}^{G_K}=K\otimes A$.
\end{lemma}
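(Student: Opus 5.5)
We will reduce to the classical statement $B_{\dR}^{G_K}=K$ (and the graded version $(\Cbb_p(n))^{G_K}=0$ for $n\neq 0$) by working with the $t$-adic filtration, as in Lemma~\ref{lem:HT fixed vector}.

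\textbf{Graded pieces.} First we show $(\Fil^nB_{\dR,A}/\Fil^{n+1}B_{\dR,A})^{G_K}$ equals $K\otimes A$ for $n=0$ and vanishes for $n\neq 0$. By construction $B^+_{\dR,A}/t^m\cong \tilde{B}^{[1,1]}_{\Cbb_p,A}/t^m\cong (B_{\dR}^+/t^m)\otimes A$, since $-\otimes\Acal$ is exact on the relevant (nuclear) modules and $t$ is a non-zero divisor. Hence $t^nB^+_{\dR,A}/t^{n+1}\cong \Cbb_p(n)\otimes A$. The classical exact sequence
$$0\to (\Cbb_p(n))^{G_K}\to \Cbb_p(n)\to C(G_K,\Cbb_p(n))$$
stays left exact after $-\otimes A$, because $A$ is nuclear and flat over $\Qbb_{p,\square}$. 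Moreover $C(G_K,\Cbb_p(n))\otimes A\cong C(G_K,\Cbb_p(n)\otimes A)$ by \cite[Proposition 5.35]{And21}. This gives the claim on graded pieces. Induction on $m$ using the left exactness of $G_K$-invariants then yields $(B^+_{\dR,A}/t^m)^{G_K}=K\otimes A$ for all $m\geq 1$, and $(t^{-k}B^+_{\dR,A}/t^{m})^{G_K}=K\otimes A$ for $k\geq0$.

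\textbf{Passing to the limit and inverting $t$.} Invariants commute with limits, so
$$(B^+_{\dR,A})^{G_K}=\varprojlim_m (B^+_{\dR,A}/t^m)^{G_K}=K\otimes A.$$
The transition maps are isomorphisms on $K\otimes A$: this holds because $K\to B^+_{\dR}/t^m$ lands compatibly, and we need $K\otimes A\to \tilde{B}^{[1,1]}_{\Cbb_p,A}/t$ to be injective, which follows again by flatness of $A$. Similarly,
$$(t^{-k}B^+_{\dR,A})^{G_K}=\varprojlim_m (t^{-k}B^+_{\dR,A}/t^m)^{G_K}=K\otimes A.$$
Finally $B_{\dR,A}=\varinjlim_k t^{-k}B^+_{\dR,A}$ is a sequential colimit along injections. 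Since $G_K$-invariants are computed as the kernel of $x\mapsto (g\mapsto gx-x)$ into $C(G_K,-)$, and $C(G_K,-)$ commutes with filtered colimits (the module $\Qbb_{p,\square}[G_K]$ is compact), invariants commute with this colimit. Hence $B_{\dR,A}^{G_K}=K\otimes A$.

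\textbf{Main obstacle.} The delicate step is the condensed bookkeeping. We must make sure that the identifications $t^nB^+_{\dR,A}/t^{n+1}\cong \Cbb_p(n)\otimes A$ hold for a general algebraic-affinoid $\Acal$, which is not necessarily Banach, so Remark~\ref{rem:de Rham complete} is not available. We also need that $C(G_K,-)$ commutes with the tensor product by $A$ and with the colimit in $k$. For the first point we will use that $\tilde{B}^{[1,1]}_{\Cbb_p}$ is nuclear and that $t$ stays a non-zero divisor after base change to the flat module $A$; the rest is formal.
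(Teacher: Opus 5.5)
Your proposal is correct and follows essentially the same route as the paper: the paper writes $B_{\dR,A}^{G_K}\cong\varinjlim_m\varprojlim_{n}(t^mB_{\dR,A}/t^nB_{\dR,A})^{G_K}$ and computes the invariants of these finite-length pieces by the flatness/nuclearity argument of Lemma~\ref{lem:HT fixed vector}. That is exactly your graded-piece computation, followed by induction, the limit in $m$, and the colimit in $k$. Your version spells out bookkeeping the paper leaves implicit: the identification $B^+_{\dR,A}/t^m\cong (B^+_{\dR}/t^m)\otimes A$, surjectivity onto $K\otimes A$ via the $G_K$-fixed copy of $K$, and the commutation of $C(G_K,-)$ with the colimit in $k$.
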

\begin{proof}
	We have the following isomorphism
	\begin{align*}
		B_{\dR,A}^{G_K}\cong\varinjlim_m \varprojlim_{m<n} (t^mB_{\dR,A}/t^nB_{\dR,A})^{G_K}.
	\end{align*}
	Therefore, it is enough to show 
	\begin{align*}
	(t^mB_{\dR,A}/t^nB_{\dR,A})^{G_K}=
	\begin{cases*}
		A & if $0\in [m,n]$,\\
		0 & if $0\notin [m,n].$
	\end{cases*}
	\end{align*}
	It can be proved in the same way as in Lemma~\ref{lem:HT fixed vector}.
\end{proof}

\begin{definition}
	\begin{enumerate}
\item Let $V^+$ be a finite projective $B_{\dR,A}^+$-module with a semilinear $G_K$-action.
We define $D_{\dR}^K(V^+)\coloneqq V^+[1/t]^{G_K}$, which admits a filtration $\Fil^{\bullet}D_{\dR}^K(V^+)$ defined via $\Fil^{n}D_{\dR}^K(V^+)\coloneqq(t^nV^+)^{G_K}$.
When $K$ is clear from the context, we omit $K$ from the notation.
We say that $V$ is \textit{de Rham} if the natural morphism
$$D_{\dR}(V^+)\otimes_{K\otimes A} B_{\dR,A}\to V^+[1/t]$$
is an isomorphism.
Let $\FP_{B_{\dR,A}^+}(G_K)$ denote the category of finite projective $B_{\dR,A}^+$-modules with a semilinear $G_K$-action, and $\FP_{B_{\dR,A}^+}^{\dR}(G_K)$ denote the subcategory of $\FP_{B_{\dR,A}^+}(G_K)$ consisting of de Rham representations.
\item Let $V$ be a $G_K$-equivariant vector bundle over $X_{\Cbb_p,\Acal}$.
We define $V^+_{\dR},V_{\dR}$ as
\begin{align*}
	&V^+_{\dR}\coloneqq V^{[1,1]}\otimes_{\tilde{B}^{[1,1]}_{\Cbb_p,A}}B_{\dR,A}^+,\\
	&V_{\dR}\coloneqq V^{[1,1]}\otimes_{\tilde{B}^{[1,1]}_{\Cbb_p,A}}B_{\dR,A}=V^+_{\dR}[1/t]
\end{align*}
which is a finite projective $B_{\dR,A}^+$-module (resp. $B_{\dR,A}$-module) with $G_K$-action.
We say that $V$ is \textit{de Rham} if $V_{\dR}$ is de Rham.
In this case, we simply write $D_{\dR}(V)$ (resp. $\Fil^{n}D_{\dR}(V)$) for $D_{\dR}(V_{\dR}^+)$ (resp. $\Fil^{n}D_{\dR}(V_{\dR}^+)$).
Let $\Vect^{\dR}(X_{\Cbb_p,\Acal}/G_K)$ denote the category of de Rham $G_K$-equivariant vector bundles over $X_{\Cbb_p,\Acal}$.
\end{enumerate}
\end{definition}

\begin{remark}
	For $V\in \Vect(X_{\Cbb_p,\Acal}/G_K)$ and a closed interval $1\in [r,s]\subset (0,\infty)$, there is a natural isomorphism
	$$V^{[r,s]}\otimes_{\tilde{B}^{[r,s]}_{\Cbb_p,A}}B_{\dR,A}^+\cong V^{[1,1]}\otimes_{\tilde{B}^{[1,1]}_{\Cbb_p,A}}B_{\dR,A}^+=V^+_{\dR}.$$
\end{remark}

\begin{lemma}
Assume that $A$ is a Banach $\Qbb_p$-algebra.
Let $V^+\in \FP_{B_{\dR,A}^+}^{\dR}(G_K)$.
Then $D_{\dR}(V^+)$ is a finite projective $K\otimes A$-modules.
In particular, for $n$ sufficiently large, $\Fil^n D_{\dR}(V^+)=D_{\dR}(V^+).$
\end{lemma}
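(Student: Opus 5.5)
The plan mirrors the proof of Lemma~\ref{lem:DHT fin proj}, with $B_{\HT,A}$ replaced by $B_{\dR,A}$. Since $V^+$ is de Rham, the natural morphism
$$D_{\dR}(V^+)\otimes_{K\otimes A} B_{\dR,A}\to V^+[1/t]$$
is an isomorphism, and its target is a finite projective $B_{\dR,A}$-module. By Lemma~\ref{lem:dR flat}, since $A$ is Banach, $(K\otimes A,\Zbb)_{\square}\to (B_{\dR,A},\Zbb)_{\square}$ is faithfully flat. Faithfully flat descent of finite projectivity, \cite[Theorem 1.35]{Mikami24}, therefore shows that $D_{\dR}(V^+)$ is a finite projective $K\otimes A$-module. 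If one wants to apply the descent statement verbatim to the derived tensor product, note that flatness of $B_{\dR,A}$ makes the derived and underived tensor products agree.

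The main point to check is that the hypotheses of \cite[Theorem 1.35]{Mikami24} are satisfied. Here $D_{\dR}(V^+)$ is a static solid $(K\otimes A,\Zbb)_{\square}$-module, and it becomes finite projective after base change along a faithfully flat map. The argument should go through just as in Lemma~\ref{lem:DHT fin proj} and Proposition~\ref{prop:HT analytic local}.

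For the second assertion, the filtration on $D_{\dR}(V^+)$ is exhaustive by definition:
$$D_{\dR}(V^+)=V^+[1/t]^{G_K}=\varinjlim_n (t^{-n}V^+)^{G_K}.$$
Here I would use the convention that $\Fil^{n}$ decreases in $n$, so the union is over $n\to-\infty$. Since $D_{\dR}(V^+)$ is finite projective, it is in particular finitely generated over $K\otimes A$. Choose finitely many generators as elements of the underlying discrete module. Each generator lies in some $(t^{m}V^+)^{G_K}$, and so all of them lie in a single step of the filtration. Hence that step equals all of $D_{\dR}(V^+)$, and so do all steps further along in the direction of exhaustion.

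To pass from generators to the condensed submodule, I would use that $(t^mV^+)^{G_K}\to D_{\dR}(V^+)$ is a map of $K\otimes A$-modules whose image contains a generating set. It is therefore surjective, and it is injective since $V^+$ is $t$-torsion free. So the two agree as condensed modules, not only on underlying sets.

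There is one indexing point to handle. The statement says $\Fil^nD_{\dR}=D_{\dR}$ for $n$ sufficiently large, whereas with $\Fil^n=(t^nV^+)^{G_K}$ exhaustion naturally occurs for $n$ sufficiently small (very negative). I would write the argument with the paper's convention and note that the statement should be read in the direction in which the filtration exhausts.
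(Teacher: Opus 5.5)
Your proposal is correct and follows the paper's proof: faithful flatness of $B_{\dR,A}$ over $(K\otimes A,\Zbb)_{\square}$ for Banach $A$ (Lemma~\ref{lem:dR flat}) combined with the descent result \cite[Theorem 1.35]{Mikami24}, exactly as in Lemma~\ref{lem:DHT fin proj}. Your argument for the ``in particular'' part, via finite generation and exhaustiveness, is also sound. You are right about the indexing: with $\Fil^nD_{\dR}(V^+)=(t^nV^+)^{G_K}$ the filtration exhausts as $n\to-\infty$, so ``$n$ sufficiently large'' should be read as ``$n$ sufficiently small''.
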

\begin{proof}
	By using Lemma~\ref{lem:dR flat}, we can prove the lemma in the same way as in Lemma~\ref{lem:DHT fin proj}.
\end{proof}

In the case where $\Acal\in \AlgAff_{\Qbb_p}$, the proof of the above lemma is more technical because we do not know whether Lemma~\ref{lem:dR flat} holds in this case.

\begin{lemma}\label{lem:DdR relatively discrete}
	Assume $\Acal \in \AlgAff_{\Qbb_p}$.
	Then for $V^+\in \FP_{B_{\dR,A}^+}(G_K)$, $D_{\dR}(V^+)$ is a relatively discrete finitely generated $K\otimes A$-module.
\end{lemma}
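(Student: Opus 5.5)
Throughout, write $D=D_{\dR}(V^+)$. The plan is to imitate the Hodge--Tate case (Corollary~\ref{cor:DHT relatively discrete}), using the filtration by powers of $t$ to reduce to graded pieces, which are Hodge--Tate-type objects. The argument has four steps.

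\textbf{Step 1 (description of $D$).} We have
$$D=\varinjlim_m (t^{-m}V^+)^{G_K}=\varinjlim_m \varprojlim_n (t^{-m}V^+/t^nV^+)^{G_K},$$
since $V^+$ is $t$-adically complete ($B^+_{\dR,A}$ is, by definition) and invariants commute with limits.

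\textbf{Step 2 (graded pieces and vanishing).} Each graded piece $t^iV^+/t^{i+1}V^+$ is $W(i)$, where $W=V^+/t$ is a finite projective $\Cbb_p\otimes A$-module with semilinear $G_K$-action, i.e.\ $W\in\FP_{\Cbb_p\otimes A}(G_K)$. By Lemma~\ref{lem:HT weights range} there are integers $a\le b$ with $W(i)^{G_K}=0$ for $i\notin[a,b]$.

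Using left exactness of invariants and dévissage on $t^iV^+/t^nV^+$, we get two consequences:
\begin{itemize}
\item $(t^iV^+/t^nV^+)^{G_K}=0$ whenever $[i,n-1]\cap[a,b]=\emptyset$;
\item for $n>b+1$ the maps $(t^{-m}V^+/t^nV^+)^{G_K}\to(t^{-m}V^+/t^{b+1}V^+)^{G_K}$ are injective, since their kernels are invariants of modules filtered by $W(i)$ with $i>b$.
\end{itemize}

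\textbf{Step 3 (reduction to a finite-length quotient).} To avoid $\lim^1$ issues, I would argue directly with the limit rather than through a Mittag-Leffler statement. The kernel of $D^{(m)}:=(t^{-m}V^+)^{G_K}\to (t^{-m}V^+/t^{b+1}V^+)^{G_K}$ is $(t^{b+1}V^+)^{G_K}$. By completeness this equals $\varprojlim_n(t^{b+1}V^+/t^nV^+)^{G_K}$, and each term vanishes by the first bullet of Step 2, so the kernel is $0$.

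Similarly, for $m\ge -a$ the transition map $D^{(m)}\to D^{(m+1)}$ is an isomorphism, because the cokernel injects into $W(-m-1)^{G_K}=0$. Hence, for some fixed $m\ge -a$, $D$ is a $K\otimes A$-submodule of
$$Q:=(t^{-m}V^+/t^{b+1}V^+)^{G_K}.$$

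\textbf{Step 4 (finiteness and relative discreteness).} The module $M:=t^{-m}V^+/t^{b+1}V^+$ is a finite successive extension of the $W(i)$, hence a finitely generated $\Cbb_p\otimes A$-module. Its invariants are computed as in Corollary~\ref{cor:DHT relatively discrete}: pass to $M^{H_K}$ by Lemma~\ref{lem:Galois descent}, then take locally analytic vectors. The resulting Sen-type module $M^{H_K,\Gamma_K\text{-}\la}$ is a finitely generated relatively discrete $K_\infty\otimes A$-module, obtained by dévissage from the $D_{\Sen}^{K_\infty}(W(i))$. Its $\Gamma_K$-invariants are then relatively discrete by \cite[Lemma 1.24]{Mikami24}.

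Finite generation is the main obstacle. I would first try to show that $Q$ injects into $\bigoplus_i \gr$-pieces after base change to $B_{\HT,A}$-type rings, as in Lemma~\ref{lem:HT inj}. A possibly cleaner route is to descend to $D_{\Sen}^{K_n}$-level: the invariants are the kernel of $\gamma-1$ on a finitely generated module over the noetherian ring $K_n\otimes A$ (restricted to $\Gamma_K$-fixed coefficients), hence finitely generated over $K_n\otimes A$, and then finite over $K\otimes A$ since $K_n/K$ is finite. Finally, a $K\otimes A$-submodule of a relatively discrete finitely generated module over the noetherian ring $K\otimes A$ is again relatively discrete and finitely generated, which gives the claim for $D$.
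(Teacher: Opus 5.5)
\textbf{There is a gap in Step 4.} Your Steps 1--3 are correct and match the first two steps of the paper's proof: $D_{\dR}(V^+)$ stabilizes to a single $(t^{j}V^+)^{G_K}$, and $(t^{b+1}V^+)^{G_K}=0$.

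Step 4 is where the actual content of the lemma lies, and it is not established.
\begin{itemize}
\item The module $M=t^{-m}V^+/t^{b+1}V^+$ is a module over $B^+_{\dR,A}/t^{m+b+1}$, not over $\Cbb_p\otimes A$. So Lemma~\ref{lem:Galois descent}, Construction~\ref{construction:Sen} and Corollary~\ref{cor:DHT relatively discrete} do not apply to it as stated.
\item Pushing the d\'{e}vissage through $(-)^{H_K}$ is harmless, since the higher $H_K$-cohomology of each $W(i)$ vanishes. But your claim that $M^{H_K,\Gamma_K\text{-}\la}$ is a finitely generated relatively discrete $K_\infty\otimes A$-module ``obtained by d\'{e}vissage from the $D^{K_\infty}_{\Sen}(W(i))$'' needs two things you neither prove nor cite: exactness of $\Gamma_K$-locally analytic vectors on these extensions, and a $K_n$-level decompletion of $M^{H_K}$.
\item Your finiteness argument is wrong as written. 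The map $\gamma-1$ is only $\gamma$-semilinear, so its kernel is not a $K_n\otimes A$-module. What is true is that the kernel is a $K\otimes A$-submodule of a module finite over $K\otimes A$, but that again presupposes the missing decompletion. You also leave finite generation explicitly open.
\end{itemize}

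\textbf{The missing idea.} No Sen theory for $M$ is needed: run the d\'{e}vissage directly on the $G_K$-invariants. Set $F^j=(t^jV^+)^{G_K}$. The exact sequence $0\to t^{j+1}V^+\to t^jV^+\to W(j)\to 0$ gives
\[
0\to F^{j+1}\to F^j\to W(j)^{G_K}\xrightarrow{\ \partial\ } H^1(G_K,t^{j+1}V^+).
\]
\begin{itemize}
\item $W(j)^{G_K}=\gr^jD_{\HT}(W)$ is a direct summand of $D_{\HT}(W)$, so it is relatively discrete and finitely generated by Corollary~\ref{cor:DHT relatively discrete}.
\item $\Ker\partial$ is relatively discrete by \cite[Lemma 1.24]{Mikami24}, and finitely generated because $K\otimes A$ is noetherian.
\item Both properties are stable under extensions.
\end{itemize}
Inducting downward from $F^{b+1}=0$ to $F^{a}=D_{\dR}(V^+)$ proves the lemma. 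This is exactly the paper's argument. You already had every ingredient in Step 2, where you used these same left exact sequences only to get vanishing and injectivity.
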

\begin{proof}
	By Lemma~\ref{lem:HT weights range}, there are integers $a \leq b$ such that for any integer $i\notin [a,b]$, $(V^+/tV^+)(i)^{G_K}=0$.
	First, let us prove $(t^{-b}V^+)^{G_K}\cong D_{\dR}(V^+).$
	By applying $(-)^{G_K}$ to an exact sequence
	$$0\to t^{-b}V^+ \to t^{-b-1}V^+ \to t^{-b-1}V^+/t^{-b}V^+\to 0,$$
	we get an isomorphism $(t^{-b}V^+)^{G_K} \cong (t^{-b-1}V^+)^{G_K}.$
	By repeating this argument, we get an isomorphism $(t^{-b}V^+)^{G_K}\cong (t^{-c}V^+)^{G_K}$ for any $c\geq b$.
	Since $D_{\dR}(V^+)=\varinjlim_{c\geq b} (t^{-c}V^+)^{G_K}$, we get an isomorphism 
	$(t^{-b}V^+)^{G_K}\cong D_{\dR}(V^+).$
	
	Next, let us prove $(t^{-a+1}V^+)^{G_K}=0$.
	By applying $(-)^{G_K}$ to an exact sequence
	$$0\to t^{-a+2}V^+/t^{-a+3}V^+ \to t^{-a+1}V^+/t^{-a+3} \to t^{-a+1}V^+/t^{-a+2}V^+\to 0,$$
	we get $(t^{-a+1}V^+/t^{-a+3})^{G_K}=0$.
	By repeating this argument, we get $$(t^{-a+1}V^+/t^{-c})^{G_K}=0$$ for any $c<a$.
	Therefore, we get $$(t^{-a+1}V^+)^{G_K} \cong \varprojlim_{c<a}(t^{-a+1}V^+/t^{-c})^{G_K}=0.$$
	
	Finally, let us prove $(t^{-a-i}V^+)^{G_K}$ is relatively discrete for any $i\geq -1$.
	We proceed by induction on $i$.
	When $i=-1$, then $(t^{-a-i}V^+)^{G_K}=0$ is relatively discrete.
	In general, from an exact sequence 
	$$0\to t^{-a-i}V^+ \to t^{-a-i-1}V^+ \to t^{-a-i-1}V^+/t^{-a-i}V^+\to 0,$$
	we obtain the long exact sequence in cohomology
	\begin{align*}
			0\to &(t^{-a-i}V^+)^{G_K} \to (t^{-a-i-1}V^+)^{G_K} \to (t^{-a-i-1}V^+/t^{-a-i}V^+)^{G_K}\\
			\to &H^1(G_K,t^{-a-i}V^+).
	\end{align*}
	By Corollary~\ref{cor:DHT relatively discrete}, $(t^{-a-i-1}V^+/t^{-a-i}V^+)^{G_K}$ is relatively discrete.
	By \cite[Lemma 1.24]{Mikami24}, the kernel of $(t^{-a-i-1}V^+/t^{-a-i}V^+)^{G_K}\to H^1(G_K,t^{-a-i}V^+)$ is also relatively discrete.
	Since an extension of relatively discrete modules is also relatively discrete, we find that $(t^{-a-i-1}V^+)^{G_K}$ is also relatively discrete.
\end{proof}

\begin{lemma}\label{lem:BdR flatness noncondensed}
Assume $\Acal \in \AlgAff_{\Qbb_p}$.
Then $(K\otimes A)(\ast)\to B_{\dR,A}(\ast)$ is faithfully flat as a morphism of usual (i.e., non-condensed) rings.
\end{lemma}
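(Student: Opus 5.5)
Write $R=(K\otimes A)(\ast)$; it is noetherian, being a finitely generated algebra over an affinoid algebra. Since $B_{\dR,A}(\ast)=B^+_{\dR,A}(\ast)[1/t]$ is a localization, flatness of $B_{\dR,A}(\ast)$ over $R$ follows from flatness of $B^+_{\dR,A}(\ast)$. The plan is to obtain the latter from the local flatness criterion for the $t$-adic filtration, and then check faithfulness on closed points.

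\textbf{Step 1: the graded pieces are flat.} By definition $B^+_{\dR,A}=\varprojlim_n \tilde B^{[1,1]}_{\Cbb_p,A}/t^n$. Because $A$ is flat over $\Qbb_{p,\square}$ (cf. the proof of Lemma~\ref{lem:HT fixed vector}) and $t$ is a nonzerodivisor on $\tilde B^{[1,1]}_{\Cbb_p}$, we get
$$t^nB^+_{\dR,A}/t^{n+1}B^+_{\dR,A}\cong \Cbb_p\otimes A .$$
Likewise $B^+_{\dR,A}/t^n\cong (B^+_{\dR}/t^n)\otimes A$. On underlying sets, $\Cbb_p\otimes A(\ast)$ equals $\Cbb_p\otimes_{\Qbb_p}A'$ tensored over $A'$ with the finitely generated algebra $A$, where $A'$ is an affinoid algebra of definition. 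The input here is that $A'\to \Cbb_p\hat\otimes A'$ is flat, as a base change of a flat map of affinoid-type algebras. So $\Cbb_p\otimes A(\ast)$ is flat over $A(\ast)$, and therefore over $R$ after the finite étale base change $A\to K\otimes A$.

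\textbf{Step 2: apply the local flatness criterion for a non-noetherian target.} Each truncation $B^+_{\dR,A}(\ast)/t^n$ is flat over $R$, by dévissage from Step 1 using the exact sequences $0\to t^{n}/t^{n+1}\to B^+/t^{n+1}\to B^+/t^n\to 0$. Next, $B^+_{\dR,A}(\ast)$ is $t$-adically complete and $t$-torsion free. The plan is to show that for a finitely generated ideal $I\subset R$ the map $I\otimes_R B^+\to B^+$ is injective. Using the Artin--Rees lemma in the noetherian ring $R$, I would pass to the limit over $n$: the map
$$\varprojlim_n \bigl(I\otimes_R B^+/t^n\bigr)\to B^+$$
is injective, because it is a limit of injections. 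It then remains to identify $I\otimes_R B^+$ with this limit, i.e. to show that $I\otimes_R B^+$ is $t$-adically complete. Since $I$ is finitely presented, $I\otimes_R B^+$ is a cokernel of a map of finite free $B^+$-modules. Completeness of such a cokernel needs the image to be closed. I would try to obtain closedness from the Mittag-Leffler property of the system $\{I\otimes_R B^+/t^n\}_n$, whose transition maps are surjective.

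\textbf{Step 3: faithfulness.} It suffices that $\mfrak B_{\dR,A}(\ast)\neq B_{\dR,A}(\ast)$ for every maximal ideal $\mfrak$ of $R$. Such an $\mfrak$ has residue field $L$ finite over $K$, and base change to $R/\mfrak$ gives a ring map $B_{\dR,A}(\ast)\to B_{\dR}\otimes_K L\neq 0$. This uses $B_{\dR,A}/\mfrak\cong B_{\dR}\otimes_{K} R/\mfrak$, which follows from finite presentation of $R/\mfrak$ and the completeness argument of Step 2. Then $\mfrak B_{\dR,A}\neq B_{\dR,A}$.

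The main obstacle is Step 2, namely that the $t$-adic completion commutes with $-\otimes_R(\text{finite }R\text{-module})$ even though $R\to B^+_{\dR,A}$ is not a noetherian map. If the Mittag-Leffler/closed-image argument fails, the fallback is to embed $B^+_{\dR,A}$ into $\prod_{\mfrak,r}B^+_{\dR}\otimes A/\mfrak^r$, in the style of Lemma~\ref{lem:HT inj}, and check injectivity of $I\otimes_R B^+\to B^+$ through that product.
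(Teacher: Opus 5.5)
Your architecture (flat graded pieces, passage to the $t$-adic limit, then a check at closed points) can be made to work. However, Step 2 is left open, and the route you propose for closing it does not work.

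Write $B^+=B^+_{\dR,A}(\ast)$ and $M=I\otimes_R B^+$. Since each $B^+/t^n$ is flat over $R$, the map $M\to B^+$ factors as $M\to\varprojlim_n M/t^nM\hookrightarrow B^+$. Its kernel is therefore $\bigcap_n t^nM$. What you need is that $M$ is $t$-adically \emph{separated}, i.e.\ that the image of a presentation $(B^+)^m\to(B^+)^k$ is $t$-adically closed. The Mittag-Leffler property of $\{M/t^nM\}_n$ holds automatically, because the transition maps are surjective. It only yields surjectivity of $M\to\varprojlim_n M/t^nM$ and gives no control over this kernel. Your fallback via $\prod_{\mfrak,r}B^+_{\dR}\otimes A/\mfrak^r$ runs into an analogous separatedness (Krull intersection) question for $M$.

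The missing observation is that the target is noetherian, contrary to your remark. $B^+$ is the $t$-adic completion of the noetherian ring $\tilde{B}^{[1,1]}_{\Cbb_p,A}(\ast)$, hence is itself noetherian, and $t$ lies in its Jacobson radical. So the finitely generated $B^+$-module $M$ is $t$-adically separated and complete (Krull intersection / Artin--Rees), and your Step 2 closes at once.

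This is exactly what the paper uses. It factors $K\otimes A\to(K\otimes A)[[t]]\to B^+_{\dR,A}$; the first map is flat because $K\otimes A$ is noetherian. It then applies the local flatness criterion \cite[Proposition 8.3.8]{FK18} to the second map (both rings noetherian and $t$-adically complete, $t$ regular). This reduces everything to flatness of $K\otimes A\to\Cbb_p\otimes A$.

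Two smaller points on Step 1:
\begin{itemize}
\item The real input is flatness of $K\langle T_1,\dots,T_n\rangle\to\Cbb_p\langle T_1,\dots,T_n\rangle$, which you only assert. The paper proves it by passing to $\Ocal_K\langle T\rangle\to\Ocal_{\Cbb_p}\langle T\rangle$ and applying the same criterion modulo $\pi_K$.
\item Flatness over $A$ does not by itself give flatness over $K\otimes A$. You need, e.g., that $\Cbb_p\otimes A$ is a direct factor of $(\Cbb_p\otimes_{\Qbb_p}K)\otimes A$, or you can simply work over $K$ from the start.
\end{itemize}

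Step 3 is fine and does not need Step 2. The functorial map $B_{\dR,A}\to B_{\dR,A/(\mfrak\cap A)}\cong B_{\dR}\otimes A/(\mfrak\cap A)$ already reduces faithfulness to the case of a finite extension of $\Qbb_p$, as in the paper.
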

\begin{proof}
	To simplify the notation, we omit $(\ast)$.
	First, we prove that $K\otimes A \to  B_{\dR,A}^+$ is flat.
	There is a factorization 
	$$K\otimes A \to (K\otimes A)[[t]] \to B_{\dR,A}^+.$$
	Since $K\otimes A$ is noetherian, $K\otimes A \to (K\otimes A)[[t]]$ is flat.
	Let us prove that $(K\otimes A)[[t]] \to B_{\dR,A}^+$ is also flat.
	Since $B_{\dR,A}^+$ is a $t$-adic completion of the noetherian ring $\tilde{B}_{\Cbb_p,A}^{[1,1]}$, $B_{\dR,A}^+$ is also noetherian. 
	Moreover $t$ is a non-zero divisor in $B_{\dR,A}^+$.
	Therefore, by \cite[Proposition 8.3.8]{FK18}, it suffices to show that $K\otimes A=(K\otimes A)[[t]]/t \to B_{\dR,A}^+/t=\Cbb_p\otimes_{\Qbb_{p,\square}}A$ is flat.
	Let us take an affinoid $\Qbb_p$-algebra $R$ and a morphism $R\to A$ such that $A$ is relatively discrete over $R$.
	Then $K\otimes A\to \Cbb_p\otimes_{\Qbb_{p,\square}}A$ is a base change (as a morphism of usual rings) of $K\otimes R\to \Cbb_p\otimes_{\Qbb_{p,\square}}R$, so we may assume that $A$ is already an affinoid $\Qbb_p$-algebra.
	Let us take a surjection $\Qbb_p\langle T_1,\ldots, T_n\rangle \to A$.
	Then $K\otimes A\to \Cbb_p\otimes_{\Qbb_{p,\square}}A$ is a base change (as a morphism of usual rings) of 
	\begin{align}\label{eq:flat}
		K\langle T_1,\ldots, T_n\rangle\to \Cbb_p\langle T_1,\ldots, T_n\rangle,
	\end{align} 
	so it suffices to show that \eqref{eq:flat} is flat.
	It reduces to showing that 
	$$\Ocal_K\langle T_1,\ldots, T_n\rangle\to \Ocal_{\Cbb_p}\langle T_1,\ldots, T_n\rangle$$
	is flat.
	By using \cite[Proposition 8.3.8]{FK18} again, it further reduces to showing that 
	$$\Ocal_K/\pi_K[T_1,\ldots, T_n]\to \Ocal_{\Cbb_p}/\pi_K[T_1,\ldots, T_n],$$
	where $\pi_K$ is a uniformizer of $K$, is flat, which is clear.

	From the above argument, we find that $K\otimes A \to  B_{\dR,A}$ is flat.
	We prove that the image of $\Spec B_{\dR,A} \to \Spec K\otimes A$ contains $\Spm K\otimes A$.
	For any maximal ideal $\mfrak \subset A$, we get a diagram
	\begin{align*}
		\xymatrix{
		K\otimes A \ar[r]\ar[d]&  B_{\dR,A}\ar[d]\\
		K\otimes A/\mfrak \ar[r] &  B_{\dR,A/\mfrak}.}
	\end{align*}
	Therefore, it is enough to show that under the assumption that $A$ is a finite extension field of $\Qbb_p$, $\Spec B_{\dR,A} \to \Spec K\otimes A$ is surjective.
	Since $K\otimes A\to B_{\dR,A}$ is a base change (as a morphism of usual rings) of $K\to B_{\dR}$, the claim is clear.
\end{proof}

\begin{proposition}\label{prop:dR fil exhaustive}
Let $V^+\in \FP_{B_{\dR,A}^+}^{\dR}(G_K)$.
Then $D_{\dR}(V^+)$ is a finite projective $K\otimes A$-modules.
In particular, for $n$ sufficiently large, $\Fil^n D_{\dR}(V^+)=D_{\dR}(V^+).$
\end{proposition}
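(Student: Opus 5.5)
The Banach case was proved just above via Lemma~\ref{lem:dR flat} and \cite[Theorem 1.35]{Mikami24}. So I will treat the case $\Acal\in\AlgAff_{\Qbb_p}$, where that flatness is unavailable in the condensed sense. The plan is to replace condensed faithful flatness by the two ingredients just proved: relative discreteness and finite generation (Lemma~\ref{lem:DdR relatively discrete}), and faithful flatness at the level of underlying discrete rings (Lemma~\ref{lem:BdR flatness noncondensed}).

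Set $D=D_{\dR}(V^+)$. By Lemma~\ref{lem:DdR relatively discrete}, $D$ is a relatively discrete, finitely generated $K\otimes A$-module, so it is determined by the discrete module $D(\ast)$ over the noetherian ring $(K\otimes A)(\ast)$. Because $D$ is relatively discrete and finitely generated, the condensed tensor product $D\otimes_{K\otimes A}B_{\dR,A}$ should agree with the discrete tensor product after evaluating at $\ast$. I would justify this by choosing a finite presentation of $D$ and using right exactness of both tensor products.

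Evaluating the de Rham isomorphism at $\ast$ then gives an isomorphism of discrete $B_{\dR,A}(\ast)$-modules
$$D(\ast)\otimes_{(K\otimes A)(\ast)}B_{\dR,A}(\ast)\cong V^+[1/t](\ast).$$
The right-hand side is finite projective over $B_{\dR,A}(\ast)$, since $V^+$ is finite projective and evaluation at $\ast$ preserves direct summands of finite free modules. By Lemma~\ref{lem:BdR flatness noncondensed}, the map $(K\otimes A)(\ast)\to B_{\dR,A}(\ast)$ is faithfully flat, and flatness descends along faithfully flat maps of ordinary rings. Hence $D(\ast)$ is a flat, finitely generated module over a noetherian ring, hence finitely presented and projective. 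Since $D$ is relatively discrete, it is $D(\ast)\otimes_{(K\otimes A)(\ast)}(K\otimes A)$, so $D$ is a finite projective $K\otimes A$-module.

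For the second claim, in the proof of Lemma~\ref{lem:DdR relatively discrete} we already showed $(t^{-b}V^+)^{G_K}\cong D$. Therefore $\Fil^{-b}D=D$, and so $\Fil^nD=D$ for all $n\le -b$. This is exhaustiveness; the statement's ``$n$ sufficiently large'' should be read as $-n$ large under the convention $\Fil^n=(t^nV^+)^{G_K}$. I expect the main obstacle to be the identification of the condensed and discrete tensor products in the first step, that is, checking that relative discreteness makes the base change to $B_{\dR,A}$ computable on underlying rings. I would handle it using the finite presentation of $D$ together with the fact that finitely presented relatively discrete modules over $K\otimes A$ are obtained by base change from discrete modules.
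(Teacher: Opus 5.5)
Your proposal is correct and follows the paper's proof essentially step for step. You use relative discreteness and finite generation (Lemma~\ref{lem:DdR relatively discrete}) to identify $D_{\dR}(V^+)\otimes_{K\otimes A}B_{\dR,A}$ with the condensification of the discrete tensor product, descend finite projectivity along the faithfully flat map of Lemma~\ref{lem:BdR flatness noncondensed}, and then transport it back to the condensed module. Your reading of the exhaustiveness claim as $\Fil^{n}D_{\dR}(V^+)=D_{\dR}(V^+)$ for $n$ sufficiently \emph{negative} is also correct, since the filtration is decreasing; deducing it from $(t^{-b}V^+)^{G_K}\cong D_{\dR}(V^+)$, or from finite generation together with $D_{\dR}(V^+)=\varinjlim_n\Fil^{-n}D_{\dR}(V^+)$, fills in a step that the paper leaves implicit.
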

\begin{proof}
	Let $\Cond_{-}(-)$ be the condensification functor defined in \cite[Definition 1.2]{Mikami24}.
	In the case where $A$ is a Banach $\Qbb_p$-algebra, the lemma is already proved, so we may assume $\Acal \in \AlgAff_{\Qbb_p}$.
	By Lemma~\ref{lem:DdR relatively discrete}, we get an isomorphism 
	$$D_{\dR}(V^+)\otimes_{K\otimes A} B_{\dR,A}\cong \Cond_{B_{\dR,A}}(D_{\dR}(V^+)(\ast)\otimes_{(K\otimes A)(\ast)} B_{\dR,A}(\ast)).$$
	Since $V^+[1/t]=D_{\dR}(V^+)\otimes_{K\otimes A} B_{\dR,A}$ is a finite projective $B_{\dR,A}$-module, the $B_{\dR,A}(\ast)$-module $D_{\dR}(V^+)(\ast)\otimes_{(K\otimes A)(\ast)} B_{\dR,A}(\ast)$ is also finite projective.
	By Lemma~\ref{lem:BdR flatness noncondensed}, $D_{\dR}(V^+)(\ast)$ is also a finite projective $(K\otimes A)(\ast)$-module.
	By Lemma~\ref{lem:DdR relatively discrete}, we have $D_{\dR}(V^+)\cong \Cond_{K\otimes A}(D_{\dR}(V^+)(\ast))$, and it is also a finite projective $K\otimes A$-module.
\end{proof}

Thanks to the above proposition, we can prove the following four claims in the same way as in the Hodge-Tate case.

\begin{lemma}\label{lem:dR dual direct sum tensor product}
The subcategory $\FP_{B_{\dR,A}^+}^{\dR}(G_K)\subset \FP_{B_{\dR,A}^+}(G_K)$ is stable under taking duals, direct sums, and tensor products.
\end{lemma}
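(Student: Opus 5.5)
We follow the proof of Lemma~\ref{lem:HT dual direct sum tensor product} verbatim, with $B_{\HT,A}$ replaced by $B_{\dR,A}$. The only inputs needed are Proposition~\ref{prop:dR fil exhaustive}, which says $D_{\dR}(V^+)$ is finite projective over $K\otimes A$ whenever $V^+$ is de Rham, and the identity $B_{\dR,A}^{G_K}=K\otimes A$. Write $V=V^+[1/t]$ and $D=D_{\dR}(V^+)$, so de Rhamness means $D\otimes_{K\otimes A}B_{\dR,A}\to V$ is an isomorphism.

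\textbf{Duals.} Let $(V^+)^*=\intHom_{B_{\dR,A}^+}(V^+,B_{\dR,A}^+)$. This is finite projective and $((V^+)^*)[1/t]\cong V^*$ as a $B_{\dR,A}$-module. Then
\begin{align*}
V^* &\cong \intHom_{B_{\dR,A}}(D\otimes_{K\otimes A}B_{\dR,A},B_{\dR,A})\\
&\cong \intHom_{K\otimes A}(D,B_{\dR,A})\\
&\cong D^*\otimes_{K\otimes A}B_{\dR,A}.
\end{align*}
The last isomorphism uses that $D$ is finite projective. Since $D^*$ is also finite projective, taking $G_K$-invariants commutes with $D^*\otimes_{K\otimes A}-$, which gives $(V^*)^{G_K}\cong D^*\otimes_{K\otimes A}B_{\dR,A}^{G_K}=D^*$. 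Hence the natural map $(V^*)^{G_K}\otimes_{K\otimes A}B_{\dR,A}\to V^*$ is identified with the isomorphism above, so $(V^+)^*$ is de Rham.

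\textbf{Direct sums and tensor products.} For $V^+,W^+$ de Rham, $(V^+\oplus W^+)[1/t]\cong (D_V\oplus D_W)\otimes_{K\otimes A}B_{\dR,A}$. Similarly,
$$(V^+\otimes_{B_{\dR,A}^+}W^+)[1/t]\cong (D_V\otimes_{K\otimes A}D_W)\otimes_{K\otimes A}B_{\dR,A}.$$
Both $D_V\oplus D_W$ and $D_V\otimes_{K\otimes A} D_W$ are finite projective over $K\otimes A$. The same invariants computation therefore identifies the $G_K$-invariants with these modules and shows the comparison map is an isomorphism.

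\textbf{Main point to check.} The only delicate step is commuting $G_K$-invariants with $P\otimes_{K\otimes A}-$ for a finite projective $K\otimes A$-module $P$. For this we write $P$ as a direct summand of a free module $(K\otimes A)^n$. For free $P$ the claim is clear, since invariants commute with finite direct sums. For a summand, the invariants functor preserves the splitting, so the claim passes to $P$. Everything else is formal, so no genuine obstacle is expected; Proposition~\ref{prop:dR fil exhaustive} supplies exactly the projectivity needed to replace the faithful-flatness argument of the Hodge–Tate case.
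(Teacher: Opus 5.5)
Your proposal is correct and matches the paper's proof: the paper obtains this lemma by repeating the proof of Lemma~\ref{lem:HT dual direct sum tensor product}. There, Proposition~\ref{prop:dR fil exhaustive} supplies finite projectivity of $D_{\dR}(V^+)$ in place of the faithful-flatness argument, and the identity $B_{\dR,A}^{G_K}=K\otimes A$ gives the invariants computation, just as you set it up.
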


\begin{lemma}\label{lem:dR basechange}
For $V^+\in \FP_{B_{\dR,A}^+}^{\dR}(G_K)$ and a morphism $\Acal\to \Bcal=(B,B^+)_{\square}$, $V^+_B=V^+\otimes_{B_{\dR,A}^+}B_{\dR,B}^+$ is de Rham.
Moreover, in this case, there is an isomorphism of $K\otimes B$-modules
$$D_{\dR}(V^+_B)\cong D_{\dR}(V^+)\otimes_{K\otimes A} (K\otimes B).$$
\end{lemma}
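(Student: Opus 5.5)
I will follow the proof of Lemma~\ref{lem:HT basechange}, using Proposition~\ref{prop:dR fil exhaustive} where that proof used Lemma~\ref{lem:DHT fin proj}. Set $M=D_{\dR}(V^+)=V^+[1/t]^{G_K}$. By Proposition~\ref{prop:dR fil exhaustive} it is a finite projective $K\otimes A$-module, and $M_B=M\otimes_{K\otimes A}(K\otimes B)$ is then a finite projective $K\otimes B$-module. Since $V^+$ is de Rham, there is a $G_K$-equivariant isomorphism $M\otimes_{K\otimes A}B_{\dR,A}\cong V^+[1/t]$. Base changing it along $B_{\dR,A}\to B_{\dR,B}$ gives
$$M_B\otimes_{K\otimes B}B_{\dR,B}\cong V^+[1/t]\otimes_{B_{\dR,A}}B_{\dR,B}\cong V_B^+[1/t].$$
The last isomorphism holds because $V^+$ is finite projective, so inverting $t$ commutes with the base change.

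Next I take $G_K$-invariants. Since $M_B$ is finite projective with trivial $G_K$-action, we have $(M_B\otimes_{K\otimes B}B_{\dR,B})^{G_K}\cong M_B\otimes_{K\otimes B}B_{\dR,B}^{G_K}$. This reduces to the free case by passing to a direct summand, and invariants commute with finite direct sums. The lemma computing $B_{\dR,A}^{G_K}$, applied to $B$ (it holds for any $\Bcal$ of the allowed type), gives $B_{\dR,B}^{G_K}=K\otimes B$. Hence $D_{\dR}(V^+_B)\cong M_B$. The natural map $D_{\dR}(V^+_B)\otimes_{K\otimes B}B_{\dR,B}\to V^+_B[1/t]$ is then identified with the isomorphism above, so $V^+_B$ is de Rham, with the stated formula for $D_{\dR}$. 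The same bookkeeping, with $t^n$ inserted, shows $\Fil^n$ is compatible with base change. This uses that the filtration on $M\otimes B_{\dR,A}$ is the tensor product filtration, which I would check from $\Fil^n B_{\dR,A}=t^nB_{\dR,A}^+$ degree by degree. The statement itself only concerns the underlying modules, so this is optional.

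The main point I expect to need care with is the meaning of $V^+_B=V^+\otimes_{B^+_{\dR,A}}B^+_{\dR,B}$. This requires a ring map $B^+_{\dR,A}\to B^+_{\dR,B}$, which I obtain by applying the $t$-adic completion functor to $\tilde{B}^{[1,1]}_{\Cbb_p,A}\to\tilde{B}^{[1,1]}_{\Cbb_p,B}$. One must check that this map is $G_K$-equivariant and compatible with inverting $t$. Both follow from functoriality of $\varprojlim_n(-)/t^n$. No flatness of $B_{\dR,B}$ over $K\otimes B$ is needed, since everything is an explicit base change of a finite projective module.
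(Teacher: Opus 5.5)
Your proof is correct and follows the paper's route: the paper proves this lemma ``in the same way as'' Lemma~\ref{lem:HT basechange}, using Proposition~\ref{prop:dR fil exhaustive} for the finite projectivity of $D_{\dR}(V^+)$ and $B_{\dR,B}^{G_K}=K\otimes B$ to compute the invariants of the base-changed isomorphism, which is exactly what you wrote out. One caution on your optional aside about filtrations: identifying the filtration $t^nV^+$ with the tensor-product filtration is not a degree-by-degree check from $\Fil^nB_{\dR,A}=t^nB_{\dR,A}^+$; it is Proposition~\ref{prop:cdR to filtered}(2), proved by induction on the range of Hodge--Tate weights, and the paper obtains filtered base change (Lemma~\ref{lem:dR basechange filtered}) only through the equivalence of Theorem~\ref{thm:cdR filtered equivalence}.
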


\begin{lemma}\label{lem:dR sub quot easy}
	Let $V\in \FP^{\dR}_{B_{\dR,A}^+}(G_K)$ and let $W\subset V$ be a $G_K$-stable submodule such that $W$ and $V/W$ are also finite projective.
	If $W$ or $V/W$ is de Rham, then the other one is also de Rham.
\end{lemma}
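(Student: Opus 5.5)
We follow the argument of Remark~\ref{rem:HT sub quot easy}, using Proposition~\ref{prop:dR fil exhaustive} in place of Lemma~\ref{lem:DHT fin proj}. Write $V=V^+$ and set $D=D_{\dR}(V)$, a finite projective $K\otimes A$-module by Proposition~\ref{prop:dR fil exhaustive}, with $D\otimes_{K\otimes A}B_{\dR,A}\cong V[1/t]$. The short exact sequence $0\to W\to V\to V/W\to 0$ of finite projective $B^+_{\dR,A}$-modules is split, so it stays exact after inverting $t$. Taking $G_K$-invariants is left exact, which gives a left exact sequence $0\to D_{\dR}(W)\to D\to D_{\dR}(V/W)$.

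\textbf{Case where $W$ is de Rham.} Then $D_{\dR}(W)$ is finite projective and $D_{\dR}(W)\otimes B_{\dR,A}\to W[1/t]$ is an isomorphism. The map $D_{\dR}(W)\otimes B_{\dR,A}\to D\otimes B_{\dR,A}$ is identified with the split injection $W[1/t]\to V[1/t]$. Next we need $D_{\dR}(W)\to D$ to be a split injection of $K\otimes A$-modules. To show this, take $G_K$-invariants of the splitting $W[1/t]\cong D_{\dR}(W)\otimes B_{\dR,A}$. More directly, $Q:=D/D_{\dR}(W)$ satisfies $Q\otimes_{K\otimes A}B_{\dR,A}\cong (V/W)[1/t]$ by right exactness of $\otimes$, and this module is finite projective over $B_{\dR,A}$. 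By faithful flatness (Lemma~\ref{lem:dR flat} in the Banach case, and Lemma~\ref{lem:BdR flatness noncondensed} together with relative discreteness, Lemma~\ref{lem:DdR relatively discrete}, in the algebraic-affinoid case, exactly as in the proof of Proposition~\ref{prop:dR fil exhaustive}), $Q$ is finite projective over $K\otimes A$.

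We then compute $((V/W)[1/t])^{G_K}\cong (Q\otimes B_{\dR,A})^{G_K}\cong Q\otimes B_{\dR,A}^{G_K}=Q$. This uses that $Q$ is finite projective and that $B_{\dR,A}^{G_K}=K\otimes A$. Hence $D_{\dR}(V/W)\cong Q$, and the map $D_{\dR}(V/W)\otimes B_{\dR,A}\to (V/W)[1/t]$ is the isomorphism above, so $V/W$ is de Rham.

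\textbf{Case where $V/W$ is de Rham.} Apply the previous case to duals. The dual $V^*$ is de Rham by Lemma~\ref{lem:dR dual direct sum tensor product}. The module $(V/W)^*\subset V^*$ is a $G_K$-stable submodule with finite projective quotient $W^*$, and it is de Rham by the same lemma. The first case then shows $W^*$ is de Rham, hence so is $W\cong W^{**}$.

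\textbf{Main obstacle.} The delicate step is deducing that $Q$ is finite projective from $Q\otimes B_{\dR,A}$ being finite projective when $\Acal\in\AlgAff_{\Qbb_p}$, since condensed faithful flatness of $B_{\dR,A}$ is not available there. We handle this as in Proposition~\ref{prop:dR fil exhaustive}. First, $Q$ is relatively discrete and finitely generated, being a quotient of $D$. Then pass to underlying rings via $\Cond$ and apply non-condensed faithfully flat descent using Lemma~\ref{lem:BdR flatness noncondensed}.
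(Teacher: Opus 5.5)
Your proposal is correct and follows essentially the paper's approach. The paper proves this lemma ``in the same way as'' Remark~\ref{rem:HT sub quot easy}, with Proposition~\ref{prop:dR fil exhaustive} substituting for Lemma~\ref{lem:DHT fin proj}, and your first case is exactly that argument. There are two minor differences. You handle the case where $V/W$ is de Rham by dualizing via Lemma~\ref{lem:dR dual direct sum tensor product}, whereas the paper just says ``similarly''. You also spell out the descent showing that $D_{\dR}(V)/D_{\dR}(W)$ is finite projective, via relative discreteness and Lemma~\ref{lem:BdR flatness noncondensed}, a step the paper leaves implicit.
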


\begin{lemma}\label{lem:potentially dR}
Let $L/K$ be a finite extension.
Then for $V^+\in \FP_{B_{\dR,A}^+}^{\dR}(G_K)$, $V^+$ is de Rham as a $G_K$-representation if and only if it is de Rham as a $G_L$-representation.
\end{lemma}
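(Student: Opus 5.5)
The statement is Lemma~\ref{lem:potentially dR}: for a finite extension $L/K$, $V^+$ is de Rham as a $G_K$-representation if and only if it is de Rham as a $G_L$-representation. The plan is Galois descent along $L/K$, exactly parallel to Lemma~\ref{lem:potentially HT}. Reduce first to $L/K$ Galois by passing to the Galois closure $L'$ of $L$ over $K$: if both equivalences for $L'/K$ and $L'/L$ are known, the one for $L/K$ follows.

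For $L/K$ Galois with group $H=\Gal(L/K)=G_K/G_L$, set
\[
D_L=(V^+[1/t])^{G_L}=D_{\dR}^L(V^+).
\]
This is a $L\otimes A$-module with a semilinear action of $H$, and $D_L^{H}=D_{\dR}^K(V^+)$.

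For the direction ``de Rham over $K$ implies de Rham over $L$'', we have
\[
D_{\dR}^K(V^+)\otimes_{K\otimes A}B_{\dR,A}\cong V^+[1/t].
\]
Taking $G_L$-invariants and using that $D_{\dR}^K(V^+)$ is finite projective over $K\otimes A$ (Proposition~\ref{prop:dR fil exhaustive}) gives
\[
D_L\cong D_{\dR}^K(V^+)\otimes_{K\otimes A}B_{\dR,A}^{G_L}=D_{\dR}^K(V^+)\otimes_{K\otimes A}(L\otimes A),
\]
where the last equality is the lemma computing $B_{\dR,A}^{G_L}$, applied with $L$ in place of $K$. Hence $D_L\otimes_{L\otimes A}B_{\dR,A}\to V^+[1/t]$ is again an isomorphism.

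For the converse, suppose $D_L\otimes_{L\otimes A}B_{\dR,A}\cong V^+[1/t]$. By Proposition~\ref{prop:dR fil exhaustive} applied to $G_L$, the module $D_L$ is finite projective over $L\otimes A$. Classical Galois descent for the finite étale Galois extension $K\otimes A\to L\otimes A$ with group $H$ then gives an isomorphism
\[
D_L^H\otimes_{K\otimes A}(L\otimes A)\cong D_L,
\]
with $D_L^H$ finite projective over $K\otimes A$. This step requires only the discrete ring-theoretic statement, because $H$ is finite and taking $H$-invariants is a finite limit, so it commutes with the condensed structure. Combining the two isomorphisms,
\[
D_{\dR}^K(V^+)\otimes_{K\otimes A}B_{\dR,A}=D_L^H\otimes_{K\otimes A}(L\otimes A)\otimes_{L\otimes A}B_{\dR,A}\cong V^+[1/t],
\]
which is the de Rham condition over $K$.

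The only point needing care is the descent step, namely checking that $L\otimes A$ is a finite étale $H$-Galois extension of $K\otimes A$ in the condensed setting. This holds because $L\otimes A\cong A^{[L:\Qbb_p]}$ as $A$-modules and the discrete statement $L\otimes_K L\cong\prod_{H}L$ base changes along $K\to K\otimes A$.
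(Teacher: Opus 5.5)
Your proposal is correct and follows the paper's route. The paper proves this lemma ``in the same way as in the Hodge-Tate case'', namely by Galois descent, with Proposition~\ref{prop:dR fil exhaustive} supplying the finite projectivity of $D_{\dR}$ that lets invariants commute with the tensor product. Your reduction to the Galois case and your two directions spell out exactly that argument.
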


We can show that ``de Rham'' implies ``Hodge-Tate'' as follows.

\begin{proposition}\label{prop:dR implies HT}
	We assume $\Acal \in \AlgAff_{\Qbb_p}$.
	Then for $V^+\in \FP_{B_{\dR,A}^+}^{\dR}(G_K)$, the $B_{\dR,A}^+$-module $V^+\otimes_{B_{\dR,A}^+} (\Cbb_p\otimes A)=V^+/t$ is Hodge-Tate.
\end{proposition}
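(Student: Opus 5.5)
The plan is to follow the classical argument: pass from the filtered module $D_{\dR}(V^+)$ to its associated graded, and compare with $V^+/t$ through $\gr^0$ of $V^+[1/t]$. Write $D=D_{\dR}(V^+)$. By Proposition~\ref{prop:dR fil exhaustive}, $D$ is a finite projective $K\otimes A$-module, and the filtration $\Fil^nD=(t^nV^+)^{G_K}$ is exhaustive and separated: $\Fil^nD=D$ for $n\ll 0$, and $\Fil^nD=0$ for $n\gg 0$ by the argument of Lemma~\ref{lem:DdR relatively discrete}. The de Rham isomorphism $D\otimes_{K\otimes A}B_{\dR,A}\cong V^+[1/t]$ identifies $V^+$ with a $G_K$-stable lattice inside $D\otimes B_{\dR,A}$.

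First, I will show that each $\gr^nD$ is finite projective over $K\otimes A$ and that the filtration is locally split. I would try to get this by comparing with $V^+/tV^+$. Consider the injections
$$\gr^n D\hookrightarrow (t^nV^+/t^{n+1}V^+)^{G_K}=\big((V^+/t)(n)\big)^{G_K}\subset D_{\HT}(V^+/t).$$
The key claim is that
$$V^+=\sum_n \Fil^{n}D\otimes_{K\otimes A} t^{-n}B^+_{\dR,A}$$
inside $V^+[1/t]$. In other words, $V^+$ equals $\Fil^0(D\otimes B_{\dR,A})$ for the tensor-product filtration.

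To prove the claim, I would argue pointwise and then pass to the family. For $A$ finite over $\Qbb_p$ this is the classical statement, where de Rham implies Hodge-Tate. For general $\Acal\in\AlgAff_{\Qbb_p}$, I would use Lemma~\ref{lem:dR basechange} together with the injectivity of $M\to\prod_{\mfrak,r}M/\mfrak^r$ for the relevant modules, in the spirit of Lemma~\ref{lem:HT inj} and Corollary~\ref{cor:pointwise criterion of HT}. This gives that both lattices agree after every base change to $A/\mfrak^r$. Since both are finitely generated submodules of $V^+[1/t]$, I expect them to agree globally. One inclusion, $\supseteq$, is clear by definition of $\Fil^nD$. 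For the other, the quotient of the two lattices is a finitely generated module that vanishes modulo every $\mfrak^r$, hence vanishes.

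Granting the claim, I take $\gr^0$ of the $t$-adic filtration on both sides. This yields a $G_K$-equivariant isomorphism
$$V^+/tV^+\cong \bigoplus_n \gr^nD\otimes_{K\otimes A}(\Cbb_p\otimes A)(-n).$$
The right-hand side is visibly Hodge-Tate, so $V^+/t$ is Hodge-Tate; this conclusion is essentially Proposition~\ref{prop:HT recover} run in reverse.

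An alternative route is via the Sen operator, using Corollary~\ref{cor:pointwise criterion of HT}. The Hodge-Tate property of $V^+/t$, with weights in some $[a,b]$, can be checked modulo $\mfrak^r$ for all $\mfrak\in\Spm A$ and $r\geq1$. By Lemma~\ref{lem:dR basechange}, $V^+\otimes B^+_{\dR,A/\mfrak^r}$ is de Rham over the finite $\Qbb_p$-algebra $A/\mfrak^r$, which is Banach. There the classical argument applies, since $B_{\dR}\otimes A/\mfrak^r$ is faithfully flat (Lemma~\ref{lem:dR flat}) and the filtration is split. Uniform bounds $[a,b]$ on the weights come from $\Fil^nD$ being $D$ for $n\le a'$ and $0$ for $n> b'$, with $a',b'$ independent of $\mfrak$ because they are defined globally. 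I would in fact prefer this route, since it avoids the lattice comparison.

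The main obstacle is uniformity in the pointwise argument. One must ensure that the Hodge-Tate weights of every $V^+/(t,\mfrak^r)$ lie in a fixed interval $[a,b]$ independent of $\mfrak$ and $r$. I would get this from the global bounds on $\Fil^\bullet D$ via base change (Lemma~\ref{lem:dR basechange}). Over a finite $\Qbb_p$-algebra, the weights of $V^+/t$ are exactly the jumps of the filtration on $D$, so they are controlled by those global bounds.
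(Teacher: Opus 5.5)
Your preferred second route is the paper's route. You reduce via Corollary~\ref{cor:pointwise criterion of HT} to the quotients $A/\mfrak^r$, where $V^+/\mfrak^r$ is de Rham by Lemma~\ref{lem:dR basechange} and the classical theory applies. This works provided the weights stay in an interval independent of $\mfrak$ and $r$, and that uniformity is where the gap is, on one side.

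Lemma~\ref{lem:dR basechange} gives $D\otimes_{K\otimes A}(K\otimes A/\mfrak^r)\cong D_{\dR}(V^+/\mfrak^r)$ only as unfiltered modules. For the filtrations you only have maps $\Fil^nD\otimes_A A/\mfrak^r\to\Fil^nD_{\dR}(V^+/\mfrak^r)$, with no surjectivity. That is enough to transport exhaustiveness: if $\Fil^{a'}D=D$, then $\Fil^{a'}D_{\dR}(V^+/\mfrak^r)$ contains the image of $D\otimes_A A/\mfrak^r$, i.e.\ everything. This is exactly the square the paper uses.

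It transports nothing from $\Fil^{b'+1}D=0$, however. Global vanishing of $(t^{b'+1}V^+)^{G_K}$ says nothing about $(t^{b'+1}V^+/\mfrak^r)^{G_K}$. Compatibility of the filtration with base change is Lemma~\ref{lem:dR basechange filtered}, which is deduced from Theorem~\ref{thm:cdR filtered equivalence}, and hence ultimately from the proposition you are proving. So as written you bound the fibrewise jumps, equivalently the fibrewise Hodge--Tate weights (their negatives), from one side only.

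The missing idea is to use exhaustiveness a second time, for the dual. By Lemma~\ref{lem:dR dual direct sum tensor product}, $(V^+)^*$ is de Rham, so Proposition~\ref{prop:dR fil exhaustive} gives $b$ with $\Fil^{-b}D_{\dR}((V^+)^*)=D_{\dR}((V^+)^*)$. Since $(V^+/\mfrak^r)^*=(V^+)^*/\mfrak^r$, the same argument gives $\Fil^{-b}D_{\dR}((V^+/\mfrak^r)^*)=D_{\dR}((V^+/\mfrak^r)^*)$. Classically this bounds the weights of $V^+/(t,\mfrak^r)$ from the other side, and Corollary~\ref{cor:pointwise criterion of HT} then applies. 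This is precisely the paper's proof.

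Your first route should be discarded, for three reasons:
\begin{enumerate}
\item The identity $V^+=\Fil^0(D\otimes B_{\dR,A})$ is Proposition~\ref{prop:cdR to filtered}(2), whose proof uses the present proposition.
\item Checking it fibrewise again needs filtered base change.
\item The closing step, that a finitely generated $B^+_{\dR,A}$-module vanishing modulo every $\mfrak^r$ must vanish, is false as a general principle. For $A=\Qbb_p[T]$ and $c\in\Cbb_p$ transcendental over $\Qbb_p$, the module $\Cbb_p[T]/(T-c)$ is nonzero but becomes zero after base change to $A/\mfrak^r$ for every $\mfrak$ and $r$.
\end{enumerate}
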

\begin{proof}
	By Proposition~\ref{prop:dR fil exhaustive} and Lemma~\ref{lem:dR dual direct sum tensor product}, there exist integers $a\leq b$ such that 
	$$D_{\dR}(V^+)=\Fil^a D_{\dR}(V^+),\quad D_{\dR}((V^+)^*)=\Fil^{-b} D_{\dR}((V^+)^*),$$ where $(V^+)^*$ is the dual of $V^+.$
	Let $\mfrak$ be a maximal ideal of $A$, and let $r\geq 1$.
	Since there is a commutative diagram
	$$
	\xymatrix{
		\Fil^a D_{\dR}(V^+)\otimes_{A} A/\mfrak^r \ar[r]^-{\cong}\ar[d] & D_{\dR}(V^+)\otimes_{A} A/\mfrak^r\ar[d]^-{\cong}\\
		\Fil^a D_{\dR}(V^+/\mfrak^r) \ar[r] & D_{\dR}(V^+/\mfrak^r),\\
	}
	$$
	we obtain $$\Fil^a D_{\dR}(V^+/\mfrak^r)=D_{\dR}(V^+/\mfrak^r).$$
	Similarly, we have $$\Fil^{-b} D_{\dR}((V^+/\mfrak^r)^*)=D_{\dR}((V^+/\mfrak^r)^*).$$
	Since $A/\mfrak^r$ is a finite $\Qbb_p$-algebra, $V^+/\mfrak^r\otimes_{B_{\dR,A/\mfrak^r}^+}(\Cbb_p\otimes A/\mfrak^r)$ is Hodge-Tate with Hodge-Tate weights in $[a,b]$ by the classical theory.
	Then the claim follows from Corollary~\ref{cor:pointwise criterion of HT}.
\end{proof}

\begin{remark}
	We expect that this proposition holds without the assumption $\Acal \in \AlgAff_{\Qbb_p}$, but we do not know how to prove it.
\end{remark}

Next, we consider the correspondence between de Rham representations and filtered modules.
\begin{lemma}\label{lem:Tate twsit cohomology}
	Let $N$ be a nuclear (static) $K\otimes A$-module.
	Then for $n\neq 0$, we have
	\begin{align*}
	R\Gamma(G_K, N\otimes_{K\otimes A}(\Cbb_p\otimes A)(n))=
	\begin{cases*}
	N\oplus N[-1] & if $n=0,$\\
	0 & if $n\neq 0$.
	\end{cases*}
    \end{align*}
\end{lemma}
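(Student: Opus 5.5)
The plan is to reduce to the classical case $A=\Qbb_p$ (Tate's computation) by tensoring with the nuclear module $N$. Tate's theorem gives, for the $G_K$-representation $\Cbb_p(n)$,
$$R\Gamma(G_K,\Cbb_p(n))\cong\begin{cases*} K\oplus K[-1] & if $n=0$,\\ 0 & if $n\neq 0,\end{cases*}$$
and I will use it in its condensed form. The condensed version follows from Lemma~\ref{lem:Galois descent}, which gives $R\Gamma(H_K,\Cbb_p(n))=\hat{K}_\infty(n)$. Then the Tate--Sen decomposition $\hat{K}_\infty=K\oplus X$, with $\gamma-1$ invertible on $X$ (a Banach complement), computes $R\Gamma(\Gamma_K,\hat K_\infty(n))$. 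Here $\Gamma_K$ is treated as a (product of a finite group with a) procyclic group, so continuous cohomology is the two-term complex $[\,\cdot\,\xrightarrow{\gamma-1}\,\cdot\,]$ after taking invariants under the torsion part. For $n\neq0$ the operator $\chi(\gamma)^n\gamma-1$ is invertible on all of $\hat K_\infty$, while for $n=0$ it is zero on $K$ and invertible on $X$.

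Next I note that $N\otimes_{K\otimes A}(\Cbb_p\otimes A)(n)\cong N\otimes_K \Cbb_p(n)$, where $K$ acts on $N$ through $K\to K\otimes A$. The key step is to commute $R\Gamma(G_K,-)$ with $N\otimes_K-$. Continuous group cohomology is computed by the complex $C(G_K^{\bullet},M)=R\intHom(\Zbb_\square[G_K^\bullet],M)$. Since $\Qbb_{p,\square}[G_K^{m}]$ is compact projective and $N$ is nuclear, we have $C(G_K^m,\Cbb_p(n))\otimes N\cong C(G_K^m,\Cbb_p(n)\otimes N)$ by \cite[Proposition 5.35]{And21}, as in the proof of Lemma~\ref{lem:HT fixed vector}. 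Nuclear modules over $K$ are flat for the solid tensor product (as invoked there), so the tensor product is exact and commutes with taking cohomology of the cochain complex. Replacing $\otimes$ over $\Qbb_p$ by $\otimes_K$ is harmless, since $K/\Qbb_p$ is finite étale.

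The combination then gives
$$R\Gamma(G_K,N\otimes_K\Cbb_p(n))\cong N\otimes_K R\Gamma(G_K,\Cbb_p(n)),$$
which is $N\oplus N[-1]$ for $n=0$ and $0$ for $n\neq0$.

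The main obstacle is justifying the interchange of the tensor product with the infinite cochain complex and its limits, i.e. that $-\otimes N$ commutes with $C(G_K^m,-)$. I will settle this with the nuclearity and compact-projectivity argument above, possibly by first writing $N$ as a colimit of Banach-type pieces if the flatness statement needs it. An alternative route is to do the decomposition explicitly at the level of $H_K$-invariants, using $N\otimes_K\hat K_\infty=N\oplus N\otimes_K X$ with $\gamma-1$ invertible on the second summand. This reduces everything to the invertibility of a bounded operator, which is preserved by $N\otimes_K-$.
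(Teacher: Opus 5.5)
Your argument works in outline, and the fallback you sketch at the end is essentially the paper's proof. Your main route, however, pulls $N$ out of the cohomology at a different stage, and that changes what has to be justified.

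The paper writes $R\Gamma(G_K,-)\cong R\Gamma(\Gamma_K,R\Gamma(H_K,-))$ and applies Lemma~\ref{lem:Galois descent} directly to the nuclear $\Cbb_p$-module $N\otimes_K\Cbb_p(n)$. That lemma already allows arbitrary nuclear coefficients, so $N$ costs nothing at the $H_K$-stage, and one gets $R\Gamma(\Gamma_K,N\otimes_K\hat{K}_{\infty}(n))$. Only then is $N$ extracted, via \cite[Lemma 1.44]{Mikami24}, as $R\Gamma(\Gamma_K,\hat{K}_{\infty}(n))\otimes^{\Lbb}_K N$, and the classical computation for $\Gamma_K$ finishes. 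So the interchange with $N$ only happens for $\Gamma_K$, which is virtually $\Zbb_p$ and whose cohomology is given by a finite complex.

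Your main route instead passes $N\otimes_K-$ through the unbounded continuous cochain complex of the profinite group $G_K$.
\begin{itemize}
\item The termwise identification $C(G_K^m,\Cbb_p(n))\otimes N\cong C(G_K^m,\Cbb_p(n)\otimes N)$ is fine by nuclearity.
\item Identifying the resulting complex with $N\otimes^{\Lbb}_K R\Gamma(G_K,\Cbb_p(n))$ needs flatness, or bounded Tor-amplitude, of $N$ itself. Flatness of the Banach terms $C(G_K^m,\Cbb_p)$ is not enough, because a complex of flat objects in degrees $\geq 0$ need not be K-flat.
\end{itemize}
Lemma~\ref{lem:HT fixed vector} only cites flatness for the algebraic-affinoid $A$, not for an arbitrary nuclear module. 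So this is an additional input you must supply. Options include a general flatness result for nuclear $\Qbb_{p,\square}$-modules, or your proposed reduction to Banach $N$ via filtered colimits; $R\Gamma(G_K,-)$ does commute with filtered colimits of static modules, since the $\Zbb_{\square}[G_K^m]$ are compact projective.

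Once that input is granted, your route proves the more general statement $R\Gamma(G_K,M\otimes N)\cong R\Gamma(G_K,M)\otimes^{\Lbb}N$ for Banach $M$, using Tate's theorem as a black box. The paper's ordering is more economical: it never has to push $N$ through an infinite cochain complex.
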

\begin{proof}
	We have 
	\begin{align*}
	R\Gamma(G_K, N\otimes_{K\otimes A}(\Cbb_p\otimes A)(n))&\cong R\Gamma(\Gamma_K,R\Gamma(H_K,N\otimes_{K}\Cbb_p(n)))\\
	&\cong R\Gamma(\Gamma_K,N\otimes_{K}\hat{K}_{\infty}(n))\\
	&\cong R\Gamma(\Gamma_K,\hat{K}_{\infty}(n))\otimes_{K}^{\Lbb} N,
	\end{align*}
	where the second isomorphism follows from Lemma~\ref{lem:Galois descent}, and the third isomorphism follows from \cite[Lemma 1.44]{Mikami24}.
	Since 
	\begin{align*}
	R\Gamma(\Gamma_K, \hat{K}_{\infty}(n))=
	\begin{cases*}
	K\oplus K[-1] & if $n=0,$\\
	0 & if $n\neq 0$,
	\end{cases*}
    \end{align*}
	we obtain the claim.
\end{proof}

\begin{proposition}\label{prop:filtered module BdR+}
Let $(M,\Fil^{\bullet}M)$ be a filtered $K\otimes A$-module satisfying
\begin{itemize}
	\item For $a$ sufficiently large and $b$ sufficiently small, $\Fil^a M=0$ and $\Fil^b M=M$.
	\item For every $n\in \Zbb$, $\gr^n M=\Fil^{n}M/\Fil^{n+1}M$ is a finite projective $K\otimes A$-module.
\end{itemize}
Then $\Fil^0 (M\otimes_{K\otimes A}^{\Lbb} B_{\dR,A})$, where $-\otimes_{K\otimes A}^{\Lbb}$ is the Day convolution in the $\infty$-category $\Fun(\Zbb^{\op},\Dcal(K\otimes A))$ (\cite[2.2.6]{HA}), is a finite projective $B_{\dR,A}^+$-module, and for every $n\in \Zbb$, we have $$\Fil^n(M\otimes_{K\otimes A}^{\Lbb} B_{\dR,A})=t^n\Fil^0(M\otimes_{K\otimes A}^{\Lbb} B_{\dR,A}).$$
\end{proposition}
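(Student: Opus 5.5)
The plan is to reduce, via the splitting of the filtration, to the case of a filtered module concentrated in a single degree, where everything can be computed explicitly. The key steps, in order, are as follows.

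First, I will show that the filtration on $M$ splits. Since $\Fil^a M=0$ for $a\gg 0$ and each $\gr^n M$ is finite projective over $K\otimes A$, descending induction on $n$ shows that each $\Fil^n M$ is finite projective. Each short exact sequence
$$0\to \Fil^{n+1}M\to \Fil^n M\to \gr^n M\to 0$$
splits because $\gr^n M$ is projective. Hence there is an isomorphism of filtered modules
$$M\cong \bigoplus_{n=b}^{a}\gr^n M\langle n\rangle,$$
where $N\langle n\rangle$ denotes $N$ with $\Fil^k=N$ for $k\le n$ and $\Fil^k=0$ for $k>n$. This sum is finite. The Day convolution $-\otimes^{\Lbb}_{K\otimes A}B_{\dR,A}$ commutes with finite direct sums, and both assertions of the statement are compatible with finite direct sums. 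Therefore it suffices to treat $M=N\langle n\rangle$ with $N$ finite projective over $K\otimes A$.

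Second, I will compute the Day convolution for a single piece. For $M=N\langle n\rangle$ I claim
$$\Fil^k(M\otimes^{\Lbb}B_{\dR,A})\cong N\otimes_{K\otimes A}\Fil^{k-n}B_{\dR,A}=N\otimes_{K\otimes A}t^{k-n}B^+_{\dR,A}.$$
The Day convolution is computed by the left Kan extension
$$\Fil^k(X\otimes Y)=\varinjlim_{i+j\ge k}\Fil^iX\otimes^{\Lbb}\Fil^jY.$$
Write $N\langle n\rangle=N\otimes_{K\otimes A}(K\otimes A)\langle n\rangle$, and use that $(K\otimes A)\langle n\rangle$ is the shifted unit of the Day convolution, i.e. the image of the unit under the shift functor $\Fil^\bullet\mapsto\Fil^{\bullet-n}$. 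Then the colimit reduces to $N\otimes^{\Lbb}_{K\otimes A}\Fil^{k-n}B_{\dR,A}$. Since $N$ is finite projective, this derived tensor product agrees with the underived one and is static.

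The main point to verify is this identification of $\Fil^k$ of the Day convolution with a filtered colimit, together with the claim that the shifted unit acts by reindexing. I expect this to be routine from \cite[2.2.6]{HA}, since $(K\otimes A)\langle 0\rangle$ is the monoidal unit of $\Fun(\Zbb^{\op},\Dcal(K\otimes A))$.

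Third, I will conclude. For $M=N\langle n\rangle$ we get $\Fil^0=t^{-n}N\otimes_{K\otimes A}B^+_{\dR,A}$. This is finite projective over $B^+_{\dR,A}$, being a base change of the finite projective $K\otimes A$-module $N$ along $K\otimes A\to B^+_{\dR,A}$. Moreover $\Fil^k=t^{k}\cdot t^{-n}N\otimes B^+_{\dR,A}=t^k\Fil^0$. Summing over the finitely many graded pieces gives both assertions for general $M$.

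One subtle point remains: multiplication by $t$ must be an isomorphism $t^jB^+_{\dR,A}\cong t^{j+1}B^+_{\dR,A}$, i.e. $t$ must be a non-zero divisor on $B^+_{\dR,A}$. This holds because $B^+_{\dR,A}$ is a $t$-adic completion on which $t$ is a non-zero divisor, as used in Lemma~\ref{lem:BdR flatness noncondensed}.
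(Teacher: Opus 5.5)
Your proof is correct, but it takes a different route from the paper's.

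\textbf{Your route.} You split the finite filtration, which is legitimate. Each $\gr^n M$ is a direct summand of some $(K\otimes A)^m$, and $\Hom_{K\otimes A}(K\otimes A,-)$ is evaluation at $\ast$, which is exact on condensed modules. So finite projective modules are projective objects, and the sequences $0\to\Fil^{n+1}M\to\Fil^nM\to\gr^nM\to 0$ split. You then reduce to a single shifted piece $N\langle n\rangle$ and use that Day convolution with the free filtered object on $N$ in degree $n$ is ``tensor with $N$ and shift by $n$''. This follows by composing left Kan extensions along the poset isomorphism $j\mapsto n+j$. Note that the colimit over $\{i+j\ge k\}$ is not filtered, but your argument never needs filteredness. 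You obtain the explicit, non-canonical description $\Fil^0(M\otimes^{\Lbb}B_{\dR,A})\cong\bigoplus_n \gr^nM\otimes_{K\otimes A}t^{-n}B^+_{\dR,A}$, and everything follows once $t$ is a non-zero divisor on $B^+_{\dR,A}$. Here you need this in the condensed sense, not just on underlying rings as in Lemma~\ref{lem:BdR flatness noncondensed}. It does hold, by passing to the limit over the injections $t\colon\tilde{B}^{[1,1]}_{\Cbb_p,A}/t^n\to\tilde{B}^{[1,1]}_{\Cbb_p,A}/t^{n+1}$, and the paper uses it implicitly too.

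\textbf{The paper's route.} The paper never chooses a splitting. It first proves that the filtered object $M\otimes^{\Lbb}B_{\dR,A}$ is complete, by induction on graded pieces and $t$-adic completeness of $B^+_{\dR,A}$. It then computes the graded pieces $\bigoplus_{i+j=n}\gr^iM\otimes t^j(\Cbb_p\otimes A)$ and sees that they are static. From completeness it gets $F^0\in\Dcal(K\otimes A)^{[0,1]}$, and from the colimit presentation it gets $F^0\in\Dcal(K\otimes A)^{\le 0}$, so $F^0$ is static. It identifies $F^n$ with $t^nF^0$ via the $t$-periodicity of $\Fil^\bullet B_{\dR,A}$. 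Finally it lifts finite projectivity from $F^0/t$ to $F^0$ using $t$-adic completeness.

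\textbf{Comparison.} Your route is shorter and more elementary: it avoids the completeness and $R\varprojlim$ arguments and the deformation step from $F^0/t$ to $F^0$. The paper's argument works directly with the canonical object and uses only exactness properties, so it would survive in settings where the filtration cannot be split. Here, both conclusions are invariant under isomorphism of filtered modules, so your non-canonical choice of splitting costs nothing.
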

\begin{proof}
	To simplify the notation, we write $F^n=\Fil^n(M\otimes_{K\otimes A}^{\Lbb} B_{\dR,A})$.
	First, let us prove that the filtered module $M\otimes_{K\otimes A}^{\Lbb} B_{\dR,A}$ is complete.
	By the usual induction, we may assume $0=\Fil^1 M\subset \Fil^0 M=M$ and $M$ is a finite projective $K\otimes A$-module.
	In this case, we need to show 
	$$R\varprojlim_{n} (M\otimes_{K\otimes A}^{\Lbb} t^nB_{\dR,A}^+)\cong 0.$$
	Since $M$ is a finite projective $K\otimes A$-module, it reduces to showing that
	$$R\varprojlim_{n} t^nB_{\dR,A}^+\cong 0,$$
	which follows from the $t$-adic completeness of $B_{\dR,A}^+$.

	Next, we have
	\begin{align*}
	\gr^n(M\otimes_{K\otimes A}^{\Lbb} B_{\dR,A})\cong &\bigoplus_{i+j=n} (\gr^i M)\otimes_{K\otimes A}^{\Lbb} \gr^j B_{\dR,A}\\
	\cong &\bigoplus_{i+j=n} (\gr^i M)\otimes_{K\otimes A}^{\Lbb} t^j(\Cbb_p\otimes A),
    \end{align*}
	which is static.
	Therefore, for $n\geq 0$, $\cofib(F^n\to F^0)$ is also static.
	By the completeness of $M\otimes_{K\otimes A}^{\Lbb} B_{\dR,A}$, we have
	\begin{align*}
	F^0\cong R\varprojlim_n \cofib(F^n\to F^0).
    \end{align*}
	Therefore, we get $F^0 \in \Dcal(K\otimes A)^{[0,1]}$.
	By the definition of the Day convolution, $F^0$ can be written as a colimit of $\Fil^i M\otimes_{K\otimes A}^{\Lbb} \Fil^{-i}B_{\dR,A}\in \Dcal(K\otimes A)^{\leq 0}$, so we get $F^0 \in \Dcal(K\otimes A)^{\leq 0}$.
	By combining them, we get 
	$$F^0 \in \Dcal(K\otimes A)^{\heart}.$$
	Moreover, since $\Fil^i M\otimes_{K\otimes A}^{\Lbb} \Fil^{-i}B_{\dR,A}$ is also relatively discrete over $B_{\dR,A}^+$, $F^0$ is relatively discrete over $B_{\dR,A}^+$.
	Since 
	\begin{align*}
	&(\cdots \to tB_{\dR,A}^+ \to B_{\dR,A}^+ \to t^{-1}B_{\dR,A}^+\to \cdots)\\
	\cong &(\cdots \xrightarrow{t} B_{\dR,A}^+ \xrightarrow{t} B_{\dR,A}^+ \xrightarrow{t} t^{-1}B_{\dR,A}^+\xrightarrow{t} \cdots),
    \end{align*}
	we have
	$$(\cdots \to F^1\to F^0 \to F^{-1}\to \cdots)\cong (\cdots \xrightarrow{t} F^0 \xrightarrow{t} F^0 \xrightarrow{t} F^0 \xrightarrow{t}\cdots).$$
	Since $\cofib (F^1\to F^0)\cong \cofib(t\colon F^0\to F^0)$ is static, $t$ is a non-zero divisor on $F^0$ and we can identify $F^n$ with $t^n F^0$.
	By the completeness of $M\otimes_{K\otimes A}^{\Lbb} B_{\dR,A}$, we find that $F^0$ is $t$-adically complete.
	Since $F^0/t\cong \bigoplus_{i} (\gr^i M)\otimes_{K\otimes A}^{\Lbb} t^{-i}(\Cbb_p\otimes A)$ is a finite projective $B_{\dR,A}^+/t=\Cbb_p \otimes A$-module, $F^0$ is also a finite projective $B_{\dR,A}^+$-module.
\end{proof}

\begin{definition}\label{def:Fil}
\begin{enumerate}
\item Let $\Fil_{K,A}$ denote the category of filtered $K\otimes A$-modules satisfying the conditions in Proposition~\ref{prop:filtered module BdR+}.
Since $\Fil_{K,A}$ is stable under the Day convolution $-\otimes_{K\otimes A}^{\Lbb}$, it becomes a symmetric monoidal category.
We denote this tensor product by $-\otimes_{K\otimes A}-$.
\item For $M\in \Fil_{K,A}$, we write $\Fil^n (M\otimes B_{\dR,A})=\Fil^n (M\otimes_{K\otimes A}^{\Lbb} B_{\dR,A})$, which is a finite projective $B_{\dR,A}^+$-module.
Moreover, from the construction, it has a natural semilinear $G_K$-action.
Therefore, we get a functor 
$$\Fil_{K,A} \to \FP_{B_{\dR,A}^+}(G_K);\; M\mapsto \Fil^0 (M\otimes B_{\dR,A}).$$
\end{enumerate}
\end{definition}
\begin{remark}
	By unwinding the definition, $\Fil^0 (M\otimes B_{\dR,A})$ is equal to the submodule 
	$$\sum_{i\in \Zbb} \Fil^i M \otimes_{K\otimes A} t^{-n}B_{\dR,A}^+\subset M\otimes_{K\otimes A}B_{\dR,A}.$$
	Moreover, we have $\Fil^0 (M\otimes B_{\dR,A})[1/t]=M\otimes_{K\otimes A}B_{\dR,A}.$
\end{remark}

\begin{proposition}\label{prop:filtered to cdR}
For $M\in \Fil_{K,A}$, $\Fil^0 (M\otimes B_{\dR,A}^+)$ is de Rham.
Moreover, there is a natural isomorphism 
$$M\cong D_{\dR}(\Fil^0 (M\otimes B_{\dR,A}))$$
of filtered $K\otimes A$-modules.
\end{proposition}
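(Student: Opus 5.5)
Write $V^+=\Fil^0(M\otimes B_{\dR,A})$; by Proposition~\ref{prop:filtered module BdR+} this is a finite projective $B_{\dR,A}^+$-module with $t^nV^+=\Fil^n(M\otimes B_{\dR,A})$ and $V^+[1/t]=M\otimes_{K\otimes A}B_{\dR,A}$. The plan is to compute $(t^nV^+)^{G_K}$ directly for each $n$ and show it equals $\Fil^nM$; the de Rham property then follows formally.

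\textbf{Step 1.} Since $M$ is a finite projective $K\otimes A$-module with trivial $G_K$-action, the result $B_{\dR,A}^{G_K}=K\otimes A$ gives
$$D_{\dR}(V^+)=(M\otimes_{K\otimes A}B_{\dR,A})^{G_K}\cong M\otimes_{K\otimes A}B_{\dR,A}^{G_K}=M.$$
Here $(-)^{G_K}$ commutes with $M\otimes_{K\otimes A}-$ because $M$ is a direct summand of a finite free module. Consequently the natural map $D_{\dR}(V^+)\otimes_{K\otimes A}B_{\dR,A}\to V^+[1/t]$ is the identity of $M\otimes B_{\dR,A}$, so $V^+$ is de Rham.

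\textbf{Step 2.} It remains to identify the filtration, i.e.\ to show $(t^nV^+)^{G_K}=\Fil^nM$ inside $M$. Replacing $M$ by the shifted filtration $\Fil^{\bullet+n}M$, it suffices to treat $n=0$. The inclusion $\Fil^0M\subset (V^+)^{G_K}$ is clear from the description $V^+=\sum_i\Fil^iM\otimes t^{-i}B_{\dR,A}^+$. For the reverse inclusion, use the complete filtration $V^+=F^0\supset F^1\supset\cdots$ from the proof of Proposition~\ref{prop:filtered module BdR+}. Its graded pieces are
$$\gr^m F=\bigoplus_{i}\gr^iM\otimes_{K\otimes A}(\Cbb_p\otimes A)(m-i).$$
By Lemma~\ref{lem:Tate twsit cohomology}, $R\Gamma(G_K,\gr^mF)\cong \gr^mM\oplus\gr^mM[-1]$, coming from the summand $i=m$.

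\textbf{Step 3 (main obstacle).} This cohomological input must be turned into an identification of invariants. The $H^1$ terms are nonzero, so one cannot simply conclude that invariants are exact along the filtration. Instead argue by descending induction using the exact sequences $0\to F^{m+1}\to F^m\to\gr^mF\to 0$. For $m$ large, $F^m=t^{m-c}F^c$ with all weights negative, so $(F^m)^{G_K}=0$. This vanishing follows exactly as in the proof of Lemma~\ref{lem:DdR relatively discrete}, by passing to the limit over the finite quotients $F^m/F^{m'}$ via completeness. Next, the map $(F^m)^{G_K}\to(\gr^mF)^{G_K}=\gr^mM$ is surjective, because the subspace $\Fil^mM\subset(F^m)^{G_K}$ already surjects onto $\gr^mM$. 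Hence, by induction,
$$(F^m)^{G_K}=(F^{m+1})^{G_K}+\Fil^mM=\Fil^mM.$$
The induction starts from the vanishing for large $m$ and ends at the level $b$ below which $\Fil^bM=M$. The care needed is in justifying that the invariants of the complete limit vanish and in handling the condensed limits with $R\varprojlim$. Finally, naturality of the resulting isomorphism $M\cong D_{\dR}(V^+)$ of filtered modules is immediate from the construction.
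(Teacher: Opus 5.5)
Your argument is correct and is essentially the paper's: both take $G_K$-invariants of $0\to F^{m+1}\to F^m\to\gr^mF\to 0$, identify $(\gr^mF)^{G_K}=\gr^mM$ via Lemma~\ref{lem:Tate twsit cohomology}, and get surjectivity from $\Fil^mM\subset(F^m)^{G_K}$. Your descending induction from $(F^m)^{G_K}=0$ for $m\gg0$ just makes explicit the passage from graded to filtered isomorphism that the paper leaves implicit; one cosmetic slip is that, with the paper's normalization, the Tate twists in $\gr^kF$ for $k\gg0$ are positive rather than negative, though only their nonvanishing is used.
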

\begin{proof}
	From the isomorphism $\Fil^0 (M\otimes B_{\dR,A})[1/t]=M\otimes_{K\otimes A}B_{\dR,A}$, it follows that $\Fil^0 (M\otimes B_{\dR,A}^+)$ is de Rham, and we get a natural isomorphism
	$$M\cong D_{\dR}(\Fil^0 (M\otimes B_{\dR,A}))$$
	of $K\otimes A$-modules.
	Let us prove that the above is an isomorphism of filtered $K\otimes A$-modules.
	Since $\Fil^{n}M \otimes_{K\otimes A}  B_{\dR,A}^+ \subset \Fil^n (M\otimes B_{\dR,A})$, we get 
	\begin{align*}
		\Fil^{n}M&=(\Fil^{n}M \otimes_{K\otimes A}  B_{\dR,A}^+)^{G_K}\\
		&\subset \Fil^n (M\otimes B_{\dR,A})^{G_K}=\Fil^nD_{\dR}(\Fil^0 (M\otimes B_{\dR,A})).\end{align*}
	Therefore, $M\cong D_{\dR}(\Fil^0 (M\otimes B_{\dR,A}^+))$ defines a morphism 
	\begin{align}\label{eq:cdR filtered}
	M\to D_{\dR}(\Fil^0 (M\otimes B_{\dR,A}))
	\end{align}
	of filtered $K\otimes A$-modules.
	It suffices to show that 
	$$\gr^n M\to \gr^n D_{\dR}(\Fil^0 (M\otimes B_{\dR,A}))$$ is an isomorphism for every $n\in \Zbb$.
	We may assume $n=0$ (for simplicity of the notation).
	There is an exact sequence
	$$0\to \Fil^1(M\otimes B_{\dR,A}) \to \Fil^0(M\otimes B_{\dR,A}) \to \bigoplus_{i\in \Zbb} (\gr^i M)\otimes_{K\otimes A} t^{-i}(\Cbb_p\otimes A)\to 0.$$
	By taking $G_K$-invariant vectors, we obtain an exact sequence
	$$0\to \Fil^1D_{\dR}(\Fil^0 (M\otimes B_{\dR,A})) \to \Fil^0 D_{\dR}(\Fil^0 (M\otimes B_{\dR,A}))\to \gr^0M.$$
	Since $\Fil^0 M\subset \Fil^0 D_{\dR}(\Fil^0 (M\otimes B_{\dR,A}))$, the morphism 
	$$\Fil^0 D_{\dR}(\Fil^0 (M\otimes B_{\dR,A}))\to \gr^0M$$ is surjective.
	Therefore, we get an isomorphism $\gr^0D_{\dR}(\Fil^0 (M\otimes B_{\dR,A}))\cong \gr^0 M.$ 
	It is easy to show that the above isomorphism is induced from \eqref{eq:cdR filtered}.
\end{proof}

For later use, we prove the following lemma in a slightly more general setting. 
\begin{lemma}\label{lem:fixed vector modulo t^n}
Let $V^+$ be a (static) $t$-torsion free and $t$-adically complete $(B_{\dR,A}^+,\Zbb)_{\square}$-module with an action of $G_K$.
Assume that there exist integers $a\leq b$ and nuclear $K\otimes A$-modules $N_i$ for $a\leq i\leq b$ such that there is a $G_K$-equivariant decomposition
$$V^+/tV^+\cong \bigoplus_{i=a}^b N_i\otimes_{K\otimes A}(\Cbb_p\otimes A)(i).$$
Then $R\Gamma(G_K, t^nV^+)=0$ for every $n>-a$ and the natural morphisms
\begin{align*}
&R\Gamma(G_K, t^mV^+)\to R\Gamma(G_K, V^+[1/t]),\\
&R\Gamma(G_K, t^nV^+)\to R\Gamma(G_K, t^nV^+/t^{-a+1}V^+)
\end{align*}
are isomorphisms for any $m\leq -b$ and $n\leq -a$.
\end{lemma}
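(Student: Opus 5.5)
The plan is to work one graded piece at a time and then pass to limits and colimits. First I record the cohomology of the graded pieces. Since $V^+$ is $t$-torsion free, multiplication by $t^k$ gives a $G_K$-equivariant isomorphism $V^+/tV^+\otimes(\text{twist by }k)\cong t^kV^+/t^{k+1}V^+$, that is,
$$t^kV^+/t^{k+1}V^+\cong \bigoplus_{i=a}^b N_i\otimes_{K\otimes A}(\Cbb_p\otimes A)(i+k).$$
By Lemma~\ref{lem:Tate twsit cohomology}, $R\Gamma(G_K,t^kV^+/t^{k+1}V^+)$ vanishes unless $i+k=0$ for some $a\le i\le b$, i.e.\ unless $-b\le k\le -a$. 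The lemma applies because each $N_i$ is nuclear, and $R\Gamma$ commutes with the finite direct sum.

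Next I handle the finite quotients. For $n\le m'$, the module $t^nV^+/t^{m'}V^+$ has a finite filtration with graded pieces $t^kV^+/t^{k+1}V^+$ for $n\le k<m'$. Induction on $m'-n$ using the fibre sequences of $R\Gamma$ gives the following.
\begin{itemize}
\item If $n>-a$, then $R\Gamma(G_K,t^nV^+/t^{m'}V^+)=0$, because all graded pieces involved have $k>-a$.
\item If $n\le -a$ and $m'\ge -a+1$, then $R\Gamma(G_K,t^nV^+/t^{m'}V^+)\to R\Gamma(G_K,t^nV^+/t^{-a+1}V^+)$ is an isomorphism. Its fibre is $R\Gamma(G_K,t^{-a+1}V^+/t^{m'}V^+)$, which vanishes by the previous case.
\end{itemize}

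Now pass to the limit. Since $V^+$ is $t$-adically complete and $t$-torsion free, $t^nV^+\cong R\varprojlim_{m'}t^nV^+/t^{m'}V^+$; the transition maps are surjective, so $R\varprojlim=\varprojlim$. The functor $R\Gamma(G_K,-)=R\intHom$ from the trivial representation (continuous cohomology in the condensed sense) commutes with $R\varprojlim$. Hence $R\Gamma(G_K,t^nV^+)=R\varprojlim_{m'}R\Gamma(G_K,t^nV^+/t^{m'}V^+)$. This is $0$ for $n>-a$, which gives the first claim. For $n\le -a$ it is the limit of an eventually constant system, namely $R\Gamma(G_K,t^nV^+/t^{-a+1}V^+)$, which gives the second isomorphism. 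I expect this limit interchange to be the main technical point: one must use the condensed formalism, where $R\Gamma(G_K,-)$ is a right adjoint, rather than naive continuous cochains, and check that completeness is derived completeness. The torsion-freeness plus classical completeness assumed in the statement suffices for this.

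Finally, for the colimit statement, write $V^+[1/t]=\varinjlim_{c}t^{-c}V^+$. Group cohomology of the profinite group $G_K$ commutes with filtered colimits here, since $C(G_K^{j},-)$ commutes with filtered colimits because $\Zbb_{\square}[G_K^j]$ is compact projective, as used earlier in Lemma~\ref{lem:stable dense}. So it suffices to show $R\Gamma(G_K,t^mV^+)\to R\Gamma(G_K,t^{m-1}V^+)$ is an isomorphism for $m\le -b$. Its cofibre is $R\Gamma(G_K,t^{m-1}V^+/t^mV^+)$, which vanishes by the first step because $k=m-1<-b$. Composing these isomorphisms yields the claim for $m\le -b$.
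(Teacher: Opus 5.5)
Your proposal is correct and follows essentially the same route as the paper. Both arguments show that $R\Gamma$ of the graded pieces $t^kV^+/t^{k+1}V^+$ vanishes for $k\notin[-b,-a]$ by Lemma~\ref{lem:Tate twsit cohomology}, induct over the finite quotients, pass to $R\varprojlim$ using $t$-adic completeness, and pass to $\varinjlim_c t^{-c}V^+$ for $V^+[1/t]$. Your justifications for why $R\Gamma(G_K,-)$ commutes with the inverse limit and with the filtered colimit are exactly the steps the paper uses without comment.
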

\begin{proof}
	By Lemma~\ref{lem:Tate twsit cohomology}, we have 
	$$R\Gamma(G_K,t^iV^+/t^{i+1}V^+)=0$$
	for $i>-a$ or $i<-b$.
	Therefore, by induction, the natural morphism
	$$R\Gamma(G_K, t^{n}V^+/t^iV^+)\to R\Gamma(G_K, t^{n}V^+/t^{-a+1}V^+)$$
	is an isomorphism for any $n\leq -a$ and $i>-a$.
	Since there is an isomorphism
	$$R\Gamma(G_K, t^{n}V^+)\cong R\varprojlim_{i} R\Gamma(G_K, t^{n}V^+/t^iV^+),$$
	the natural morphism
	$$R\Gamma(G_K, t^{n}V^+)\to R\Gamma(G_K, t^{n}V^+/t^{-a+1}V^+)$$
	is an isomorphism.
	Similarly, we can show $R\Gamma(G_K, t^nV^+)=0$ for every $n>-a$.
	By noting $R\Gamma(G_K, V^+[1/t])\cong \varinjlim_m R\Gamma(G_K, t^mV^+)$, we can show that 
	$$R\Gamma(G_K, t^mV^+)\to R\Gamma(G_K, V^+[1/t])$$ 
	is an isomorphism for any $m\leq -b$.
\end{proof}

\begin{example}
	If $V^+/t$ is Hodge-Tate with Hodge-Tate weights in $[a,b]$, then the condition in Lemma~\ref{lem:fixed vector modulo t^n} is satisfied.
\end{example}

\begin{proposition}\label{prop:cdR to filtered}
	Assume $\Acal \in \AlgAff_{\Qbb_p}$.
	Then for $V^+\in \FP_{B_{\dR,A}^+}^{\dR}(G_K)$, the following hold:
	\begin{enumerate}
		\item The filtered $K\otimes A$-module $D_{\dR}(V^+)$ lies in $\Fil_{K,A}$.
		\item The isomorphism
		$$D_{\dR}(V^+)\otimes_{K\otimes A}B_{\dR,A}\cong V$$
		induces an isomorphism
		$$\Fil^0(D_{\dR}(V^+)\otimes B_{\dR,A}) \xrightarrow{\sim} V^+.$$
		\item The exact sequence
	          $$0\to t^{n+1}V^+ \to t^nV^+\to t^n(V^+/t)\to 0$$
			  induces an exact sequence
			  $$0\to \Fil^{n+1}D_{\dR}(V^+)\to \Fil^{n}D_{\dR}(V^+) \to \gr^n D_{\HT}(V^+/t)\to 0.$$
			  In particular, there is a natural isomorphism $$\gr^{\bullet}D_{\dR}(V^+)\cong \gr^{\bullet}D_{\HT}(V^+/t).$$
	\end{enumerate}
\end{proposition}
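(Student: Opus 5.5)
We start from Proposition~\ref{prop:dR implies HT}: since $\Acal\in\AlgAff_{\Qbb_p}$ and $V^+$ is de Rham, $V^+/t$ is Hodge-Tate. We choose integers $a\le b$ such that its Hodge-Tate weights lie in $[a,b]$. Proposition~\ref{prop:HT recover} then gives a decomposition
$$V^+/t\cong\bigoplus_{i=a}^b \gr^{-i}D_{\HT}(V^+/t)\otimes_{K\otimes A}(\Cbb_p\otimes A)(i)$$
with finite projective (hence nuclear) coefficients. So the hypotheses of Lemma~\ref{lem:fixed vector modulo t^n} hold for $V^+$ and for every twist $t^nV^+$.

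\textbf{Part (3).} Take $G_K$-invariants of $0\to t^{n+1}V^+\to t^nV^+\to t^n(V^+/t)\to0$. This gives left exactness at once, and identifies $(t^n(V^+/t))^{G_K}$ with $\gr^nD_{\HT}(V^+/t)$. The real work is surjectivity onto $\gr^nD_{\HT}$, i.e.\ that $H^1(G_K,t^{n+1}V^+)\to H^1(G_K,t^nV^+)$ is injective (equivalently, the connecting map vanishes).

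We use Lemma~\ref{lem:Tate twsit cohomology}: each graded piece $t^i(V^+/t)$ has $G_K$-cohomology $\gr^iD_{\HT}\oplus\gr^iD_{\HT}[-1]$, concentrated in degrees $0,1$. Because the filtration is de Rham, the dimension count should force the maps $H^0(t^n(V^+/t))\to H^1(t^{n+1}V^+)$ to vanish. Concretely, the plan is as follows.

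1. Use Lemma~\ref{lem:fixed vector modulo t^n} to replace $V^+$ by the finite-length quotient $t^{-b}V^+/t^{-a+1}V^+$. This quotient computes both $D_{\dR}=H^0(G_K,V^+[1/t])$ and the $H^0$ of the twists.
2. Use the isomorphism $D_{\dR}(V^+)\otimes_{K\otimes A}B_{\dR,A}\cong V^+[1/t]$ to write $t^{-b}V^+/t^{-a+1}V^+$ inside $D_{\dR}\otimes(t^{-b}B_{\dR,A}^+/t^{-a+1}B_{\dR,A}^+)$. The $G_K$-cohomology of the latter is $D_{\dR}\oplus D_{\dR}[-1]$, again by Lemma~\ref{lem:Tate twsit cohomology} and dévissage.
3. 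Conclude that the total $H^0$ of the quotient is $D_{\dR}$, of the same "size" as $\bigoplus_i\gr^iD_{\HT}$, so all connecting maps vanish.

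The rank comparison is not available pointwise over a general $A$, so the plan is to reduce to the points $A/\mfrak^r$ for $\mfrak\in\Spm A$. Over these finite $\Qbb_p$-algebras the classical theory gives $\sum_i\dim\gr^iD_{\HT}=\dim D_{\dR}$, which forces the connecting maps to vanish there. Lemma~\ref{lem:dR basechange} and Lemma~\ref{lem:HT basechange} ensure that $D_{\dR}$ and $\gr D_{\HT}$ commute with this base change. To pass back to $A$, we use injectivity into $\prod_{\mfrak,r}$, arguing as in Lemma~\ref{lem:HT inj} and Corollary~\ref{cor:pointwise criterion of HT}. Showing that the cokernel of $\Fil^nD_{\dR}\to\gr^nD_{\HT}$ vanishes this way requires controlling it as a finitely generated module; Lemma~\ref{lem:DdR relatively discrete} provides that control. \textbf{This step is the main obstacle.}

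\textbf{Part (1).} Once (3) holds, each $\gr^nD_{\dR}\cong\gr^nD_{\HT}(V^+/t)$ is finite projective by Lemma~\ref{lem:DHT fin proj}. Lemma~\ref{lem:fixed vector modulo t^n} gives $\Fil^nD_{\dR}=0$ for $n>-a$ and $\Fil^nD_{\dR}=D_{\dR}$ for $n\le-b$. Hence $D_{\dR}(V^+)\in\Fil_{K,A}$.

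\textbf{Part (2).} The comparison map $\Fil^0(D_{\dR}\otimes B_{\dR,A})\to V^+$ is well defined because $\Fil^iD_{\dR}\otimes t^{-i}B_{\dR,A}^+\subset V^+$. It is injective, being the restriction of the isomorphism after inverting $t$. Both sides are $t$-adically complete and $t$-torsion free (Proposition~\ref{prop:filtered module BdR+}), so it suffices to check that the map is an isomorphism modulo $t$. Modulo $t$ the map becomes
$$\bigoplus_i\gr^iD_{\dR}\otimes t^{-i}(\Cbb_p\otimes A)\to V^+/t.$$
By (3), this is exactly the Hodge-Tate comparison isomorphism of Proposition~\ref{prop:HT recover}.
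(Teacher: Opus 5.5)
Your reductions of (1) and (2) to (3) are fine; the $t$-adic Nakayama argument for (2) is a clean alternative to the paper's five-lemma step. However, (3) itself is not proved, and the pointwise mechanism you propose cannot work as stated.

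The only nontrivial point in (3) is surjectivity of $\Fil^nD_{\dR}(V^+)\to\gr^nD_{\HT}(V^+/t)$. Testing the cokernel $C_n$ of this map at the points $A/\mfrak^r$ means showing that $\Fil^nD_{\dR}(V^+)\otimes_AA/\mfrak^r\to\gr^nD_{\HT}(V^+/(t,\mfrak^r))$ is surjective. The classical theory only tells you that $\Fil^nD_{\dR}(V^+/\mfrak^r)$ surjects onto this target. To bridge the two you need $\Fil^nD_{\dR}(V^+)\otimes_AA/\mfrak^r\to\Fil^nD_{\dR}(V^+/\mfrak^r)$ to be surjective, i.e.\ base change for the \emph{filtration}.
\begin{itemize}
\item Lemma~\ref{lem:dR basechange} gives base change only for the unfiltered $D_{\dR}$.
\item The filtered version, Lemma~\ref{lem:dR basechange filtered}, is deduced in the paper from Theorem~\ref{thm:cdR filtered equivalence}, hence from the very proposition you are proving, so using it here would be circular.
\item Finite generation from Lemma~\ref{lem:DdR relatively discrete} says nothing about the fibres of $C_n$, so it does not close the gap.
\item The dimension count in your steps 1--3 is not an argument over $A$. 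Moreover, $t^{-b}V^+/t^{-a+1}V^+$ does not sit inside $D_{\dR}\otimes t^{-b}B^+_{\dR,A}/t^{-a+1}B^+_{\dR,A}$, because the two lattices differ.
\end{itemize}

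A pointwise route could be repaired by working with the connecting map $\gr^nD_{\HT}(V^+/t)\to H^1(G_K,t^{n+1}V^+)$ instead of $C_n$. By Lemmas~\ref{lem:fixed vector modulo t^n} and~\ref{lem:Tate twsit cohomology}, $R\Gamma(G_K,t^{n+1}V^+)$ is a perfect complex concentrated in degrees $[0,1]$. Its $H^1$ is therefore finitely generated and commutes with $-\otimes_AA/\mfrak^r$. Krull's intersection theorem would then let you conclude from the classical vanishing at points. None of this is in your proposal.

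The paper avoids points altogether and inducts on $b-a$.
\begin{itemize}
\item After twisting to $a=0$, Lemma~\ref{lem:fixed vector modulo t^n} gives $(V^+)^{G_K}\xrightarrow{\sim}(V^+/t)^{G_K}=\bar V_0$.
\item Hence $V_0^+=\bar V_0\otimes_{K\otimes A}B^+_{\dR,A}\to V^+$ lifts the weight-$0$ summand of $V^+/t$.
\item The quotient $W^+$ is finite projective, de Rham by Lemma~\ref{lem:dR sub quot easy}, and has weights in $[1,b]$.
\item One checks that $0\to D_{\dR}(V_0^+)\to D_{\dR}(V^+)\to D_{\dR}(W^+)\to0$ is exact on every $\Fil^n$. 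This is immediate for $n\ge1$ and $n\le-b$.
\item In between, it reduces to injectivity of $\gr^nD_{\dR}(V^+)\to\gr^nD_{\dR}(W^+)$ for $-b\le n<0$. This holds because both embed compatibly into $\bar V_{-n}$.
\end{itemize}
The surjectivity you were missing is thus produced directly: by the lift in the lowest weight, and by induction plus strictness in the others. Statements (1) and (3) then follow by induction, and (2) by the five lemma.
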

\begin{proof}
	By Proposition~\ref{prop:dR implies HT}, $V^+/t$ is Hodge-Tate with Hodge-Tate weights in $[a,b]$ for some integers $a\leq b$. 
	We proceed by induction on $b-a$.
	By replacing $V^+$ with $V^+(-a)$, we may assume $a=0$.
	We write $\gr^{-n} D_{\HT}(V^+/t)=\bar{V}_n$.
	Then we have a decomposition
	$$V^+/t=\bigoplus_{i=0}^b\bar{V}_i \otimes_{K\otimes A}(\Cbb_p\otimes A)(i)=\bigoplus_{i=0}^b\bar{V}_i \otimes_{K}\Cbb_p(i).$$
	By Lemma~\ref{lem:fixed vector modulo t^n}, the natural morphism
	$$R\Gamma(G_K,V^+)\to R\Gamma(G_K,V^+/tV^+)$$
	is an isomorphism.
	From this, we get a morphism
	$$\bar{V}_0=(V^+/tV^+)^{G_K}\cong (V^+)^{G_K}\to V^+,$$
	and it induces 
	$$\bar{V}_0\otimes_{K\otimes A}B_{\dR,A}^+\to V^+.$$
	We write $V_0^+=\bar{V}_0\otimes_{K\otimes A}B_{\dR,A}^+$.
	Then $V_0^+$ is de Rham, and we have $D_{\dR}(V_0^+)=\bar{V}_0$ and $0=\Fil^1 D_{\dR}(V_0^+) \subset \Fil^0 D_{\dR}(V_0^+)=\bar{V}_0$. 
	We write $W^+=V^+/V^+_0$.
	By construction, $W^+$ is a finite projective $B_{\dR,A}^+$-module and there is an isomorphism
	$$W^+/t=\bigoplus_{i=1}^b\bar{V}_i \otimes_{K\otimes A}(\Cbb_p\otimes A)(i).$$
	By Lemma~\ref{lem:dR sub quot easy}, $W^+$ is also de Rham, and the Hodge-Tate weights of $W^+/t$ is in $[1,b]$.
	Let us prove that
	$$0\to D_{\dR}(V_0^+)\to D_{\dR}(V^+) \to D_{\dR}(W^+)\to 0$$
	is an exact sequence of filtered $K\otimes A$-modules, that is, for every $n\in \Zbb$, the sequence
	\begin{align}\label{eq:gr exact}
	0\to \Fil^n D_{\dR}(V_0^+)\to \Fil^nD_{\dR}(V^+) \to \Fil^nD_{\dR}(W^+)\to 0
	\end{align}
	is exact.
	By Lemma~\ref{lem:fixed vector modulo t^n}, all three terms in \eqref{eq:gr exact} are zero for $n\geq 1$, so \eqref{eq:gr exact} is exact when $n\geq 1$.
	Similarly, by Lemma~\ref{lem:fixed vector modulo t^n}, for $n\leq -b$, then \eqref{eq:gr exact} is isomorphic to the exact sequence 
	$$0\to D_{\dR}(V_0^+)\to D_{\dR}(V^+) \to D_{\dR}(W^+)\to 0$$
	of $K\otimes A$-modules.
	To prove the exactness of \eqref{eq:gr exact} for $-b< n\leq 0$, it suffices to show that
	$$\gr^n D_{\dR}(V^+)\to \gr^n D_{\dR}(W^+)$$
	is injective for $-b\leq n<0$.
	By taking the $G_K$-invariant vectors of the exact sequence 
	$$0\to t^n V^+\to t^{n+1}V^+ \to t^nV^+/t^{n+1}V^+\to 0,$$
	we get an injection 
	$$\gr^n D_{\dR}(V^+) \to (t^nV^+/t^{n+1}V^+)^{G_K}=\bar{V}_{-n}.$$
	Similarly, we get an injection
	$$\gr^n D_{\dR}(W^+) \to (t^nW^+/t^{n+1}W^+)^{G_K}=\bar{V}_{-n}.$$
	Therefore, $\gr^n D_{\dR}(V^+)\to \gr^n D_{\dR}(W^+)$ is injective.

	From the exact sequence $$0\to D_{\dR}(V_0^+)\to D_{\dR}(V^+) \to D_{\dR}(W^+)\to 0$$ 
	of filtered $K\otimes A$-modules and the induction hypothesis, we get (1) and (3).
	Moreover this exact sequence induces an exact sequence
	\begin{align*}
		0 \to &\Fil^0(D_{\dR}(V_0^+)\otimes B_{\dR,A})\to \Fil^0(D_{\dR}(V^+)\otimes B_{\dR,A})\\
		\to &\Fil^0(D_{\dR}(W^+)\otimes B_{\dR,A}) \to 0
	\end{align*}
	of $B_{\dR,A}^+$-modules, and there is a natural morphism from the above exact sequence to the exact sequence
	$$0\to V_0^+\to V^+\to W^+\to 0.$$
	By the induction hypothesis, the morphisms in the left terms and the right terms are isomorphisms.
	Therefore, the morphism in the middle term 
	$$\Fil^0(D_{\dR}(V^+)\otimes B_{\dR,A}) \to V^+$$
	is also an isomorphism.
\end{proof}

By the proof, we get the following corollary.

\begin{corollary}\label{cor:HT of HT wt 0 implies dR}
For $V^+\in \FP_{B_{\dR,A}^+}^{\dR}(G_K)$, if $V^+/t$ is Hodge-Tate of Hodge-Tate weight $0$, then $V$ is de Rham.
\end{corollary}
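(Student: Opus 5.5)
The hypothesis ``$V^+\in \FP_{B_{\dR,A}^+}^{\dR}(G_K)$'' makes the conclusion trivial as written, so I read the statement as: for $V^+\in \FP_{B_{\dR,A}^+}(G_K)$, if $V^+/t$ is Hodge-Tate of Hodge-Tate weight $0$, then $V^+$ is de Rham. I will prove that version by repeating the first step of the proof of Proposition~\ref{prop:cdR to filtered} with $a=b=0$. At that step the de Rham hypothesis is not used, so neither is Proposition~\ref{prop:dR implies HT}; consequently no assumption $\Acal\in\AlgAff_{\Qbb_p}$ should be needed.

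\textbf{Step 1: the invariants.} Set $\bar{V}_0=(V^+/t)^{G_K}=D_{\HT}(V^+/t)$. By Lemma~\ref{lem:DHT fin proj} it is a finite projective $K\otimes A$-module. By Example~\ref{ex:Ht wt 0} we have a $G_K$-equivariant isomorphism $V^+/t\cong \bar{V}_0\otimes_{K\otimes A}(\Cbb_p\otimes A)$.

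\textbf{Step 2: lifting.} The module $V^+$ is finite projective over the $t$-adically complete ring $B_{\dR,A}^+$, on which $t$ is a non-zero divisor. Hence $V^+$ is $t$-torsion free and $t$-adically complete, so Lemma~\ref{lem:fixed vector modulo t^n} applies with $a=b=0$ and $N_0=\bar{V}_0$. Taking $n=0\le -a$, it says that $R\Gamma(G_K,V^+)\to R\Gamma(G_K,V^+/tV^+)$ is an isomorphism. In particular $(V^+)^{G_K}\cong \bar{V}_0$. Extending scalars gives a $G_K$-equivariant $B_{\dR,A}^+$-linear map
$$\iota\colon \bar{V}_0\otimes_{K\otimes A}B_{\dR,A}^+\to V^+.$$

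\textbf{Step 3: $\iota$ is an isomorphism.} The map $\iota$ is an isomorphism modulo $t$ by Step 1. Both sides are finite projective $B_{\dR,A}^+$-modules, hence $t$-torsion free and $t$-adically complete. By induction on $n$, using the five lemma on $0\to t^{n}(-)/t^{n+1}\to (-)/t^{n+1}\to (-)/t^n\to 0$, the map $\iota$ is an isomorphism modulo $t^n$ for every $n$. Passing to the limit, $\iota$ is an isomorphism. I expect this to be the only step needing care: one must check that the completeness and torsion-freeness statements hold in the condensed setting, which they do since both modules are direct summands of free modules over $B_{\dR,A}^+$.

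\textbf{Step 4: conclusion.} Inverting $t$ gives $V^+[1/t]\cong \bar{V}_0\otimes_{K\otimes A}B_{\dR,A}$. Since $\bar{V}_0$ is finite projective, the lemma $B_{\dR,A}^{G_K}=K\otimes A$ gives
$$D_{\dR}(V^+)=\bigl(\bar{V}_0\otimes_{K\otimes A}B_{\dR,A}\bigr)^{G_K}=\bar{V}_0.$$
The comparison map $D_{\dR}(V^+)\otimes_{K\otimes A}B_{\dR,A}\to V^+[1/t]$ is therefore $\iota[1/t]$, which is an isomorphism, so $V^+$ is de Rham. Moreover its filtration satisfies $\Fil^0=D_{\dR}(V^+)$ and $\Fil^1=0$.
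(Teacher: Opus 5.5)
Your proposal is correct and follows the paper's proof. The paper likewise constructs $V_0^+=\bar V_0\otimes_{K\otimes A}B_{\dR,A}^+\to V^+$ via Lemma~\ref{lem:fixed vector modulo t^n}, as in the proof of Proposition~\ref{prop:cdR to filtered}, and concludes that $V_0^+=V^+$; your Step 3 spells this out. The paper also notes that the assumption $\Acal\in\AlgAff_{\Qbb_p}$ is unnecessary here. Your reading of the hypothesis as $V^+\in\FP_{B_{\dR,A}^+}(G_K)$ is the intended one, as its use in the proof of Theorem~\ref{thm:classification of line bundles} confirms.
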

\begin{proof}
	We can define $V_0^+\in \FP^{\dR}_{B_{\dR,A}}(G_K)$ as in the proof of Proposition~\ref{prop:cdR to filtered}, then we have $V_0^+=V^+$, which proves the claim.
	We note that in this setting, the assumption $\Acal \in \AlgAff_{\Qbb_p}$ is not necessary, because this assumption is used only to deduce that $V^+/t$ is Hodge-Tate in the proof of Proposition~\ref{prop:cdR to filtered}. 
\end{proof}

\begin{theorem}\label{thm:cdR filtered equivalence}
	Assume $\Acal \in \AlgAff_{\Qbb_p}$.
	Then the functors
	\begin{align*}
		&\Fil_{K,A}\to \FP_{B_{\dR,A}^+}^{\dR}(G_K);\; M\mapsto \Fil^0 (M\otimes B_{\dR,A}), \\
		&\FP_{B_{\dR,A}^+}^{\dR}(G_K)\to \Fil_{K,A} ;\; V^+ \mapsto D_{\dR}(V^+)
	\end{align*}
	are symmetric monoidal and quasi-inverse to each other.
\end{theorem}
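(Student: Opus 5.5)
The plan is to assemble the theorem from Propositions~\ref{prop:filtered to cdR} and \ref{prop:cdR to filtered}, and then check monoidality separately.

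\textbf{Well-definedness.} Both functors land where claimed.
\begin{itemize}
\item For $M\in\Fil_{K,A}$, Proposition~\ref{prop:filtered module BdR+} shows that $\Fil^0(M\otimes B_{\dR,A})$ is a finite projective $B_{\dR,A}^+$-module with semilinear $G_K$-action.
\item By Proposition~\ref{prop:filtered to cdR}, this module is de Rham.
\item Conversely, for $V^+\in\FP^{\dR}_{B_{\dR,A}^+}(G_K)$, Proposition~\ref{prop:cdR to filtered}(1) puts $D_{\dR}(V^+)$ in $\Fil_{K,A}$.
\end{itemize}
Both constructions are functorial: $D_{\dR}$ is defined by taking invariants of $t^nV^+$, and $\Fil^0(-\otimes B_{\dR,A})$ by the Day convolution. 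A morphism of filtered modules therefore induces a $G_K$-equivariant morphism of lattices, and conversely.

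\textbf{Quasi-inverse.} One composite is handled by Proposition~\ref{prop:filtered to cdR}, which gives a natural isomorphism of filtered modules $M\cong D_{\dR}(\Fil^0(M\otimes B_{\dR,A}))$. The other is Proposition~\ref{prop:cdR to filtered}(2), which gives a natural isomorphism $\Fil^0(D_{\dR}(V^+)\otimes B_{\dR,A})\xrightarrow{\sim}V^+$.

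To get an equivalence, these isomorphisms must be natural transformations. This holds because both arise from the canonical comparison map $D_{\dR}(V^+)\otimes_{K\otimes A}B_{\dR,A}\to V^+[1/t]$, which is natural in $V^+$. Likewise, the inclusion $M\hookrightarrow (M\otimes B_{\dR,A})^{G_K}$ is natural in $M$.

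\textbf{Symmetric monoidality.} It suffices to show that $M\mapsto \Fil^0(M\otimes B_{\dR,A})$ is symmetric monoidal; the quasi-inverse $D_{\dR}$ then inherits a symmetric monoidal structure.
\begin{itemize}
\item The Day convolution is symmetric monoidal, and $B_{\dR,A}$ is a commutative algebra in filtered objects. Hence there is a natural symmetric isomorphism of filtered $B_{\dR,A}$-modules
\[(M\otimes_{K\otimes A}N)\otimes^{\Lbb}_{K\otimes A}B_{\dR,A}\cong (M\otimes^{\Lbb}_{K\otimes A}B_{\dR,A})\otimes^{\Lbb}_{B_{\dR,A}}(N\otimes^{\Lbb}_{K\otimes A}B_{\dR,A}).\]
\item Proposition~\ref{prop:filtered module BdR+} gives $\Fil^n=t^n\Fil^0$ for each factor, and each $\Fil^0$ is finite projective over $B_{\dR,A}^+$. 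So the right-hand filtered object is the $t$-adic filtration on $\Fil^0(M\otimes B_{\dR,A})\otimes_{B_{\dR,A}^+}\Fil^0(N\otimes B_{\dR,A})$.
\item Taking $\Fil^0$ then identifies $\Fil^0((M\otimes N)\otimes B_{\dR,A})$ with the tensor product of the two lattices.
\item The unit $K\otimes A$ with its trivial filtration goes to $B_{\dR,A}^+$.
\end{itemize}

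The main obstacle is the middle identification. The Day convolution of two filtered objects each of the form $(t^nF)_n$ must be computed as $(t^n(F\otimes_{B_{\dR,A}^+}G))_n$, rather than only up to completion. I would handle this by checking on associated gradeds, where both sides are $\bigoplus (\gr M\otimes \gr N)\otimes t^j(\Cbb_p\otimes A)$, and then using completeness as in the proof of Proposition~\ref{prop:filtered module BdR+}.
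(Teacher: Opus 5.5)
Your well-definedness and quasi-inverse steps match the paper, which just cites Propositions~\ref{prop:filtered to cdR} and~\ref{prop:cdR to filtered}. Your monoidality argument is correct but runs on the other side of the equivalence.

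\textbf{The paper's route.} The paper proves that $D_{\dR}$ is symmetric monoidal. It takes the evident filtered map $D_{\dR}(V^+)\otimes_{K\otimes A}D_{\dR}(W^+)\to D_{\dR}(V^+\otimes_{B_{\dR,A}^+}W^+)$, which is an isomorphism of underlying modules. Since the filtrations are finite, it then checks the map on graded pieces, using $\gr^{\bullet}D_{\dR}(V^+)\cong\gr^{\bullet}D_{\HT}(V^+/t)$ (Proposition~\ref{prop:cdR to filtered}(3)) and the multiplicativity of $\gr^{\bullet}D_{\HT}$ (Lemma~\ref{lem:HT to graded tensor}). So the paper pays with Hodge--Tate input but works only with finite filtrations, where completeness is trivial.

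\textbf{Your route.} You show that $M\mapsto\Fil^0(M\otimes B_{\dR,A})$ is monoidal. This needs no Hodge--Tate input, only the lattice description of Proposition~\ref{prop:filtered module BdR+}, at the cost of the identification you single out.

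That step is sound, and easier than your graded-plus-completeness plan. The filtered $\Fil^{\bullet}B_{\dR,A}$-module $(t^nF)_n$ is the base change of $F$, with its trivial filtration, along $B_{\dR,A}^+\to\Fil^{\bullet}B_{\dR,A}$; check this for a free module and pass to a direct summand. Base change and the trivial-filtration functor are both symmetric monoidal, so the relative convolution is exactly $(t^n(F\otimes_{B_{\dR,A}^+}G))_n$.

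More concretely, by the remark after Definition~\ref{def:Fil} everything sits inside $(M\otimes N)\otimes_{K\otimes A}B_{\dR,A}$, and
\[\sum_k \Fil^k(M\otimes N)\otimes t^{-k}B_{\dR,A}^+=\sum_{i,j}\bigl(\Fil^iM\otimes t^{-i}B_{\dR,A}^+\bigr)\cdot\bigl(\Fil^jN\otimes t^{-j}B_{\dR,A}^+\bigr).\]
The right-hand side is the image of $F\otimes_{B_{\dR,A}^+}G$, and this map is injective because $F\otimes_{B_{\dR,A}^+}G$ is $t$-torsion free.

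If you keep the graded-pieces approach, be aware that a relative Day convolution need not be complete for these unbounded filtrations. You get completeness only by identifying the left side with $(M\otimes N)\otimes^{\Lbb}_{K\otimes A}B_{\dR,A}$ and applying Proposition~\ref{prop:filtered module BdR+} to $M\otimes N\in\Fil_{K,A}$. You also still have to write down the comparison map explicitly.
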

\begin{proof}
	Quasi-invertibility follows from Proposition~\ref{prop:filtered to cdR} and Proposition~\ref{prop:cdR to filtered}.
	For symmetric monoidality, it suffices to show it for the functor
	$$\FP_{B_{\dR,A}^+}^{\dR}(G_K)\to \Fil_{K,A} ;\; V^+ \mapsto D_{\dR}(V^+).$$
	For $V^+, W^+\in \FP_{B_{\dR,A}^+}^{\dR}(G_K)$, we have
	$$\Fil^n(D_{\dR}(V^+)\otimes_{K\otimes A}D_{\dR}(W^+))=\sum_{i+j=n} (\Fil^i D_{\dR}(V^+)\otimes_{K\otimes A} \Fil^j D_{\dR}(W^+)).$$
	Therefore, we get a morphism of filtered modules
	\begin{align}\label{dR tensor monoidal}
			D_{\dR}(V^+)\otimes_{K\otimes A}D_{\dR}(W^+) \to D_{\dR}(V^+\otimes_{B_{\dR,A}^+}W^+),
	\end{align}
	which is an isomorphism after forgetting filtrations.
	We want to show that \eqref{dR tensor monoidal} is an isomorphism of filtered modules.
	Since both filtrations are complete, it suffices to show that \eqref{dR tensor monoidal} induces an isomorphism of the associated graded modules.
	This follows from Proposition~\ref{prop:cdR to filtered} (3) and Lemma~\ref{lem:HT to graded tensor}.
\end{proof}

\begin{remark}
	We expect that it holds without the assumption $\Acal \in \AlgAff_{\Qbb_p}$, but we do not know how to prove it.
\end{remark}

\begin{lemma}\label{lem:dR basechange filtered}
For $V^+\in \FP_{B_{\dR,A}^+}^{\dR}(G_K)$ and a morphism $\Acal\to \Bcal=(B,B^+)_{\square}$, there is an isomorphism of filtered $K\otimes B$-modules
$$D_{\dR}(V^+\otimes_{B_{\dR,A}^+}B_{\dR,B}^+)\cong D_{\dR}(V^+)\otimes_{K\otimes A} (K\otimes B).$$
\end{lemma}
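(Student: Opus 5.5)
By Lemma~\ref{lem:dR basechange}, $V^+_B:=V^+\otimes_{B_{\dR,A}^+}B_{\dR,B}^+$ is de Rham, and we already have an isomorphism of underlying $K\otimes B$-modules
$$D_{\dR}(V^+)\otimes_{K\otimes A}(K\otimes B)\xrightarrow{\sim} D_{\dR}(V^+_B).$$
So the only thing left is to compare filtrations. We equip the left side with the base-change filtration
$$\Fil^n\bigl(D_{\dR}(V^+)\otimes_{K\otimes A}(K\otimes B)\bigr)=\Fil^nD_{\dR}(V^+)\otimes_{K\otimes A}(K\otimes B).$$
This is a subobject because each $\gr^n D_{\dR}(V^+)$ is finite projective, by Proposition~\ref{prop:cdR to filtered} in the algebraic-affinoid case. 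The map $\Fil^nD_{\dR}(V^+)\otimes(K\otimes B)\to (t^nV^+_B)^{G_K}$ is induced by $(t^nV^+)^{G_K}\to t^nV^+_B$, so it respects filtrations. It remains to show that it is a filtered isomorphism.

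We argue via the equivalence with $B_{\dR}^+$-lattices, avoiding any direct computation of $\Fil^nD_{\dR}(V^+_B)$. Set $M=D_{\dR}(V^+)$ and $M_B=M\otimes_{K\otimes A}(K\otimes B)$ with the base-change filtration; then $M_B\in\Fil_{K,B}$. By Proposition~\ref{prop:cdR to filtered}(2), $V^+\cong\Fil^0(M\otimes B_{\dR,A})$. The Day-convolution description, or the explicit sum $\sum_i\Fil^iM\otimes t^{-i}B_{\dR,A}^+$, commutes with base change along $B_{\dR,A}^+\to B_{\dR,B}^+$. Since all graded pieces are finite projective, this gives
$$\Fil^0(M\otimes B_{\dR,A})\otimes_{B_{\dR,A}^+}B_{\dR,B}^+\cong\Fil^0(M_B\otimes B_{\dR,B}),$$
and hence $V^+_B\cong\Fil^0(M_B\otimes B_{\dR,B})$. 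Proposition~\ref{prop:filtered to cdR} over $B$ then gives a filtered isomorphism $M_B\cong D_{\dR}(V^+_B)$, and one checks that it agrees with the map above.

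The main obstacle is the base-change compatibility of $\Fil^0(M\otimes B_{\dR,A})$. To handle it, we first reduce by induction on the length of the filtration to the case of a single graded piece, using the short exact sequences of filtered modules with projective graded pieces. In that case $\Fil^0$ is just $\gr^iM\otimes t^{-i}B_{\dR,A}^+$, and base change is immediate because $(B_{\dR,A}^+/t^k)\otimes_A B\cong B_{\dR,B}^+/t^k$ and the $t$-adic completions match.

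If $B$ is not algebraic-affinoid or Banach, Proposition~\ref{prop:filtered to cdR} still applies over $B$, since it needs no such hypothesis. The only input required over $A$ is that $M\in\Fil_{K,A}$. Outside the algebraic-affinoid case this holds by the Banach analogue: use the faithful flatness of Lemma~\ref{lem:dR flat} in place of Proposition~\ref{prop:cdR to filtered}, with the same argument.
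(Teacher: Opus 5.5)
Your core argument for $\Acal\in\AlgAff_{\Qbb_p}$ is correct and is essentially the paper's proof. The paper deduces the lemma from the square formed by the equivalences of Theorem~\ref{thm:cdR filtered equivalence} over $A$ and over $B$ together with the base-change functors. The ``by construction'' commutativity of that square is exactly the base-change compatibility of $M\mapsto\Fil^0(M\otimes B_{\dR,A})$ that you verify by d\'evissage.

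Your version is slightly more economical than the paper's: over $B$ you only invoke Proposition~\ref{prop:filtered to cdR}, which needs no hypothesis on $\Bcal$. Two small simplifications are available. The d\'evissage can be replaced by a non-canonical splitting of the filtration, since the graded pieces are projective. The single-piece case needs only associativity of $\otimes$, not a comparison of $t$-adic completions.

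The final paragraph, however, is not justified. For Banach $\Acal$ you need more than $M=D_{\dR}(V^+)\in\Fil_{K,A}$; you also need $V^+\cong\Fil^0(M\otimes B_{\dR,A})$. These are parts (1) and (2) of Proposition~\ref{prop:cdR to filtered}, so your claim that ``the only input required over $A$ is $M\in\Fil_{K,A}$'' undercounts.

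Faithful flatness (Lemma~\ref{lem:dR flat}) gives only that $D_{\dR}(V^+)$ is finite projective. It says nothing about the graded pieces $\gr^nD_{\dR}(V^+)$ or about recovering the lattice $V^+$. The proof of Proposition~\ref{prop:cdR to filtered} starts from the fact that $V^+/t$ is Hodge--Tate (Proposition~\ref{prop:dR implies HT}). That in turn rests on the pointwise criterion Corollary~\ref{cor:pointwise criterion of HT}, which uses that $A$ is noetherian with finite residue fields at maximal ideals. The paper explicitly says it does not know how to prove Proposition~\ref{prop:dR implies HT} or Theorem~\ref{thm:cdR filtered equivalence} outside $\AlgAff_{\Qbb_p}$.

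So either drop that paragraph or state the result for $\Acal\in\AlgAff_{\Qbb_p}$, with no extra hypothesis on $\Bcal$. That is what your argument, and the paper's, actually proves.
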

\begin{proof}
	By construction, there is a commutative diagram
	$$
	\xymatrix{
		\Fil_{K,A}\ar[r]^-{\cong}\ar[d] & \FP_{B_{\dR,A}^+}^{\dR}(G_K)\ar[d] \\
		\Fil_{K,B}\ar[r]^-{\cong} & \FP_{B_{\dR,B}^+}^{\dR}(G_K),
	}
	$$
	where the vertical morphisms are given by $-\otimes_{K\otimes A} (K\otimes B)$ and $-\otimes_{B_{\dR,A}^+}B_{\dR,B}^+$, respectively.
	The claim easily follows from this commutative diagram.
\end{proof}

\subsection{Strongly de Rham representations}

For the proof of the $p$-adic monodromy theorem, we introduce a variant of de Rham representations.

\begin{definition}
We define $B_{\sdR,A}^+\coloneqq B_{\dR}^+\otimes \Acal$ and $B_{\sdR,A}\coloneqq B_{\dR}\otimes \Acal= B_{\dR,A}^+[1/t]$\footnote{$\sdR$ stands for ``strongly de Rham''.}.
Since $B_{\dR}^+$ and $B_{\dR}$ are nuclear $\Qbb_{p,\square}$-modules by \cite[Proposition 3.29]{RJRC22}, these definitions are independent of the choice of $A^+$.
\end{definition}

\begin{remark}
	By Remark~\ref{rem:de Rham complete}, if $A$ is a Banach $\Qbb_p$-algebra, then $B_{\sdR,A}^+\cong B_{\dR,A}^+$ and $B_{\sdR,A}\cong B_{\dR,A}$.
	In this case, there is no difference between ``de Rham'' and ``strongly de Rham''.
\end{remark}

\begin{lemma}
	We have $B_{\sdR,A}^{G_K}=K\otimes A$.
\end{lemma}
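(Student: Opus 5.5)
The plan is to argue exactly as in Lemma~\ref{lem:HT fixed vector}, using the flatness and nuclearity of $\Acal$ to pull the classical computation $B_{\dR}^{G_K}=K$ through the tensor product. First I would record the classical facts in the form of left exact sequences of solid $\Qbb_p$-modules. Because $\Qbb_{p,\square}[G_K]$ is compact projective, the invariants of a module $M$ carrying a continuous $G_K$-action are computed as the kernel of $M\to C(G_K,M)$, $x\mapsto (g\mapsto gx-x)$. Hence $B_{\dR}^{G_K}=K$ and $(B_{\dR}^+)^{G_K}=K$ amount to the left exact sequence
$$0\to K\to B_{\dR}\to C(G_K,B_{\dR}).$$

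Next I would tensor this sequence with $\Acal$ over $\Qbb_{p,\square}$. As in the proof of Lemma~\ref{lem:HT fixed vector}, $A$ is nuclear and flat over $\Qbb_{p,\square}$ by \cite[Remark 2.3, Example 1.36 (3)]{Mikami24}, so $-\otimes A$ preserves the left exact sequence. This yields
$$0\to K\otimes A\to B_{\dR}\otimes A\to C(G_K,B_{\dR})\otimes A.$$
By definition $B_{\sdR,A}=B_{\dR}\otimes\Acal$. Since $B_{\dR}$ is nuclear \cite[Proposition 3.29]{RJRC22}, we also have $C(G_K,B_{\dR})\otimes A\cong C(G_K,B_{\dR}\otimes A)$ by \cite[Proposition 5.35]{And21}, and I would check that the last map is the usual orbit map for the diagonal $G_K$-action on $B_{\sdR,A}$. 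This identifies $B_{\sdR,A}^{G_K}$ with $K\otimes A$.

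The one point I expect to need care is that $B_{\dR}=\varinjlim_n t^{-n}B_{\dR}^+$ is a colimit rather than a Banach space, so I must verify that the two cited results apply to it. Nuclearity of $B_{\dR}$ is exactly what \cite[Proposition 3.29]{RJRC22} supplies, and \cite[Proposition 5.35]{And21} requires only that one factor be nuclear. If this causes difficulty, I would instead run the argument for each $t^{-n}B_{\dR}^+$, a Fréchet nuclear module, and then pass to the filtered colimit, which is exact and commutes with $-\otimes A$, with invariants (again because $\Qbb_{p,\square}[G_K]$ is compact), and with $C(G_K,-)$.
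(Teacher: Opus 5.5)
Your proposal is correct and is the paper's own argument: the paper proves this lemma by saying it goes as in Lemma~\ref{lem:HT fixed vector}. That is, one tensors the left exact sequence $0\to K\to B_{\dR}\to C(G_K,B_{\dR})$ with the flat nuclear $A$ and identifies $C(G_K,B_{\dR})\otimes A\cong C(G_K,B_{\sdR,A})$ via \cite[Proposition 5.35]{And21}, so your fallback through the pieces $t^{-n}B_{\dR}^+$ is not needed.
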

\begin{proof}
	It can be proved in the same way as in Lemma~\ref{lem:HT fixed vector}.
\end{proof}

\begin{lemma}\label{lem:BsdR flat}
	The ring $B_{\sdR,A}$ is faithfully flat over $K\otimes A$.
\end{lemma}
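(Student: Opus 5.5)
The plan is to mirror the proofs of Lemma~\ref{lem:dR flat} and of the faithful flatness of $B_{\HT,A}$: show that the statement is stable under base change along $\Qbb_{p,\square}\to \Acal$, and so reduce to the case $A=\Qbb_p$.

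\textbf{Flatness.} Since $B_{\sdR,A}=B_{\dR}\otimes\Acal$ and $K\otimes A=K\otimes\Acal$, for any $M\in\Dcal(K\otimes A)$ we have
$$M\otimes^{\Lbb}_{K\otimes A}B_{\sdR,A}\cong M\otimes^{\Lbb}_{(K,\Zbb)_\square}B_{\dR}.$$
Here the right-hand side means $M$ regarded as a $K$-module via $K\to K\otimes A$, tensored over $K_\square$. This identification is the usual base change formula. It uses that $B_{\dR}$ is nuclear over $\Qbb_{p,\square}$ (\cite[Proposition 3.29]{RJRC22}) and that $A$ is nuclear and flat over $\Qbb_{p,\square}$ (\cite[Remark 2.3, Example 1.36 (3)]{Mikami24}), so that the relevant tensor products over $(K\otimes A,\Zbb)_\square$ and over $(K,\Zbb)_\square$ agree. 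Hence flatness of $B_{\sdR,A}$ over $K\otimes A$ follows from flatness of $B_{\dR}$ over $(K,\Zbb)_\square$, which is \cite[Lemma 3.21]{RJRC22}.

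\textbf{Faithfulness.} Suppose $M\in\Dcal(K\otimes A)$ satisfies $M\otimes^{\Lbb}_{K\otimes A}B_{\sdR,A}=0$. By the identification above, $M\otimes^{\Lbb}_{K_\square}B_{\dR}=0$ as a $K$-module. Faithful flatness of $K\to B_{\dR}$ from \cite[Lemma 3.21]{RJRC22} then forces $M=0$.

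\textbf{Main obstacle.} The delicate point is justifying the base change identification in the solid setting. Concretely, one must know that $-\otimes_{K\otimes A}(B_{\dR}\otimes A)$ coincides with $-\otimes_K B_{\dR}$ on $(K\otimes A)$-complete modules. I would deduce this from $B_{\dR}\otimes\Acal$ being the pushout of analytic rings $(B_{\dR},\Zbb)_\square\otimes_{K_\square}(K\otimes\Acal)$, together with independence of $A^+$ (nuclearity of $B_{\dR}$). If needed, the notion of faithful flatness used in \cite[Theorem 1.35]{Mikami24} can be checked on its definition directly: flatness in the heart and conservativity.
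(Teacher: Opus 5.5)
Your proposal is correct and follows the paper's proof. The paper simply says ``We may assume $A=\Qbb_p$'' and then cites \cite[Lemma 3.21]{RJRC22}, and your identification $M\otimes^{\Lbb}_{K\otimes A}B_{\sdR,A}\cong M\otimes^{\Lbb}_{K}B_{\dR}$, followed by deducing flatness and conservativity from the case $K\to B_{\dR}$, is exactly the reduction behind that sentence. The step you flag as the main obstacle is just associativity of the solid tensor product, since $(K\otimes A,\Zbb)_\square$ carries the analytic structure induced from $\Zbb_\square$; nuclearity of $B_{\dR}$ and flatness of $A$ are only needed to identify $B_{\dR}\otimes\Acal$ with the derived solid tensor product over $(A,\Zbb)_\square$, independently of $A^+$.
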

\begin{proof}
	We may assume $A=\Qbb_p$.
	Then the claim easily follows from \cite[Lemma 3.21]{RJRC22}.
\end{proof}

\begin{definition}
	\begin{enumerate}
\item Let $V^+$ be a finite projective $B_{\sdR,A}^+$-module with a semilinear $G_K$-action.
We also define $D_{\sdR}^K(V^+)\coloneqq V^+[1/t]^{G_K}$.
When $K$ is clear from the context, we omit $K$ from the notation.
We say that $V$ is \textit{strongly de Rham} if the natural morphism
$$D_{\sdR}(V^+)\otimes_{K\otimes A} B_{\sdR,A}\to V^+[1/t]$$
is an isomorphism.
Let $\FP_{B_{\sdR,A}^+}(G_K)$ denote the category of finite projective $B_{\sdR,A}^+$-modules with a semilinear $G_K$-action, and $\FP_{B_{\sdR,A}^+}^{\sdR}(G_K)$ denote the subcategory of $\FP_{B_{\sdR,A}^+}(G_K)$ consisting of strongly de Rham representations.
\item Let $V$ be a $G_K$-equivariant vector bundle over $X_{\Cbb_p,\Acal}$.
We define $V^+_{\sdR},V_{\sdR}$ as
\begin{align*}
&V^+_{\sdR}\coloneqq V^{[1,1]}\otimes_{\tilde{B}^{[1,1]}_{\Cbb_p,A}}B_{\sdR,A}^+,\\
&V_{\sdR}\coloneqq V^{[1,1]}\otimes_{\tilde{B}^{[1,1]}_{\Cbb_p,A}}B_{\sdR,A}=V^+_{\dR}[1/t]
\end{align*}
which is a finite projective $B_{\sdR,A}^+$-module (resp. $B_{\sdR,A}$-module) with $G_K$-action.
We say that $V$ is \textit{strongly de Rham} if $V_{\sdR}$ is strongly de Rham.
In this case, we simply write $D_{\sdR}(V)$ for $D_{\sdR}(V_{\sdR}^+)$.
Let $\Vect^{\sdR}(X_{\Cbb_p,\Acal}/G_K)$ denote the category of strongly de Rham $G_K$-equivariant vector bundles over $X_{\Cbb_p,\Acal}$.
\end{enumerate}
\end{definition}

The following seven claims can be proved in the same way as in the Hodge-Tate case.

\begin{lemma}\label{lem:sdR fil exhaustive}
Let $V^+\in \FP_{B_{\sdR,A}^+}^{\sdR}(G_K)$.
Then $D_{\sdR}(V^+)$ is a finite projective $K\otimes A$-modules.
\end{lemma}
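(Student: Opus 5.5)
The plan is to copy the proof of Lemma~\ref{lem:DHT fin proj}, with $B_{\HT,A}$ replaced by $B_{\sdR,A}$. By Lemma~\ref{lem:BsdR flat}, $B_{\sdR,A}$ is faithfully flat over $(K\otimes A,\Zbb)_{\square}$, so faithfully flat descent for finite projective modules, \cite[Theorem 1.35]{Mikami24}, is available. Set $M=D_{\sdR}(V^+)=V^+[1/t]^{G_K}$, a (static) $K\otimes A$-module. Because $V^+$ is strongly de Rham, the natural map
$$M\otimes_{K\otimes A}B_{\sdR,A}\to V^+[1/t]$$
is an isomorphism. The target is a finite projective $B_{\sdR,A}$-module, being the localization of the finite projective $B_{\sdR,A}^+$-module $V^+$.

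The one point to check is that the derived and non-derived base changes agree, since \cite[Theorem 1.35]{Mikami24} is formulated for an object $M\in\Dcal(K\otimes A)$ whose derived base change $M\otimes^{\Lbb}_{K\otimes A}B_{\sdR,A}$ is finite projective. This follows from the flatness part of Lemma~\ref{lem:BsdR flat}: for the static module $M$, the derived base change $M\otimes^{\Lbb}_{K\otimes A}B_{\sdR,A}$ is concentrated in degree $0$ and equals the static tensor product. Hence the derived base change of $M$ is finite projective over $B_{\sdR,A}$.

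Faithfully flat descent then shows that $M$ itself is a finite projective $K\otimes A$-module, which is the claim.

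I expect no real obstacle. Unlike the situation for $B_{\dR,A}$ treated in Lemma~\ref{lem:DdR relatively discrete} and Proposition~\ref{prop:dR fil exhaustive}, faithful flatness here comes for free from the nuclear tensor product description $B_{\sdR,A}=B_{\dR}\otimes\Acal$, which reduces to the case $A=\Qbb_p$ in Lemma~\ref{lem:BsdR flat}. So the lack of $t$-adic completeness needs no separate handling, and no hypothesis $\Acal\in\AlgAff_{\Qbb_p}$ is needed.
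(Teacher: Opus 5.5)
Your proposal is correct and takes the paper's approach. The paper proves this lemma, along with the neighboring claims, ``in the same way as in the Hodge-Tate case'': it combines the faithful flatness of Lemma~\ref{lem:BsdR flat} with the descent result \cite[Theorem 1.35]{Mikami24}, exactly as in Lemma~\ref{lem:DHT fin proj}. Your extra remark that flatness makes the derived and non-derived base changes agree is a harmless detail the paper leaves implicit.
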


\begin{lemma}\label{lem:sdR dual direct sum tensor product}
The subcategory $\FP_{B_{\sdR,A}^+}^{\sdR}(G_K)\subset \FP_{B_{\sdR,A}^+}(G_K)$ is stable under taking duals, direct sums, and tensor products.
\end{lemma}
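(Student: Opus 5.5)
The plan is to copy the proof of Lemma~\ref{lem:HT dual direct sum tensor product} almost verbatim, replacing $B_{\HT,A}$ by $B_{\sdR,A}$. The two inputs are Lemma~\ref{lem:sdR fil exhaustive}, which says that $D_{\sdR}(V^+)$ is finite projective over $K\otimes A$ for strongly de Rham $V^+$, and the identity $B_{\sdR,A}^{G_K}=K\otimes A$.

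\emph{Duals.} Let $V^+$ be strongly de Rham and write $V=V^+[1/t]$. Localization at $t$ commutes with taking duals of finite projective modules, so $(V^+)^*[1/t]\cong V^*$, where $V^*$ is the $B_{\sdR,A}$-dual. Using the isomorphism $D_{\sdR}(V^+)\otimes_{K\otimes A}B_{\sdR,A}\cong V$, I would compute
$$V^*\cong \intHom_{B_{\sdR,A}}(D_{\sdR}(V^+)\otimes_{K\otimes A}B_{\sdR,A},B_{\sdR,A})\cong \intHom_{K\otimes A}(D_{\sdR}(V^+),B_{\sdR,A})\cong D_{\sdR}(V^+)^*\otimes_{K\otimes A}B_{\sdR,A}.$$
The last step uses that $D_{\sdR}(V^+)$ is finite projective. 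These isomorphisms are $G_K$-equivariant.

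Taking $G_K$-invariants of a finite projective $K\otimes A$-module tensored with $B_{\sdR,A}$ commutes with the tensor product, because the module is a direct summand of a finite free one. Hence
$$D_{\sdR}((V^+)^*)\cong D_{\sdR}(V^+)^*\otimes_{K\otimes A}B_{\sdR,A}^{G_K}=D_{\sdR}(V^+)^*.$$
The comparison map $D_{\sdR}((V^+)^*)\otimes_{K\otimes A}B_{\sdR,A}\to (V^+)^*[1/t]$ is therefore identified with the isomorphism above, so $(V^+)^*$ is strongly de Rham.

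\emph{Direct sums.} Both $D_{\sdR}$ and the comparison map commute with finite direct sums, so direct sums of strongly de Rham modules are strongly de Rham.

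\emph{Tensor products.} For strongly de Rham $V^+$ and $W^+$ we have
$$(V^+\otimes_{B_{\sdR,A}^+}W^+)[1/t]\cong V\otimes_{B_{\sdR,A}}W\cong \bigl(D_{\sdR}(V^+)\otimes_{K\otimes A}D_{\sdR}(W^+)\bigr)\otimes_{K\otimes A}B_{\sdR,A}.$$
The module $D_{\sdR}(V^+)\otimes_{K\otimes A}D_{\sdR}(W^+)$ is finite projective, so the same invariants computation identifies it with $D_{\sdR}(V^+\otimes W^+)$. The comparison map is then the displayed isomorphism.

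The only point needing care is the commutation of $(-)^{G_K}$ with $P\otimes_{K\otimes A}B_{\sdR,A}$ for finite projective $P$ in the condensed setting. This follows from additivity of $(-)^{G_K}$ together with the retract argument, so I do not expect a genuine obstacle.
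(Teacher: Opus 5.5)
Your proposal is correct and follows essentially the same route as the paper, which proves this lemma by transporting the argument of Lemma~\ref{lem:HT dual direct sum tensor product} with $B_{\HT,A}$ replaced by $B_{\sdR,A}$. Like the paper, you use only the finite projectivity of $D_{\sdR}(V^+)$ (Lemma~\ref{lem:sdR fil exhaustive}) and $B_{\sdR,A}^{G_K}=K\otimes A$ to commute $G_K$-invariants past $D_{\sdR}(V^+)^*\otimes_{K\otimes A}B_{\sdR,A}$.
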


\begin{lemma}\label{lem:potentially sdR}
Let $L/K$ be a finite extension.
Then for $V^+\in \FP_{B_{\sdR,A}^+}(G_K)$, $V^+$ is strongly de Rham as a $G_K$-representation if and only if it is strongly de Rham as a $G_L$-representation.
\end{lemma}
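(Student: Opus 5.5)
The plan is to mimic Galois descent exactly as in Lemma~\ref{lem:potentially HT} and Lemma~\ref{lem:potentially dR}; I expect no real obstacle. We may replace $L$ by its Galois closure over $K$. Strongly de Rham for $G_L$ then implies strongly de Rham for the intermediate field as well, provided the reverse implication is established for Galois extensions. So I assume $L/K$ is finite Galois.

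\emph{$G_K$-sdR implies $G_L$-sdR.} Suppose $V^+$ is strongly de Rham for $G_K$. Then
$$D^L_{\sdR}(V^+)=\bigl(D^K_{\sdR}(V^+)\otimes_{K\otimes A}B_{\sdR,A}\bigr)^{G_L}\cong D^K_{\sdR}(V^+)\otimes_{K\otimes A}B_{\sdR,A}^{G_L}=D^K_{\sdR}(V^+)\otimes_{K\otimes A}(L\otimes A).$$
Here we use that $D^K_{\sdR}(V^+)$ is finite projective (Lemma~\ref{lem:sdR fil exhaustive}), so it commutes with invariants, and that $B_{\sdR,A}^{G_L}=L\otimes A$. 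Substituting this into the comparison map identifies it with the $G_K$ comparison isomorphism.

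\emph{$G_L$-sdR implies $G_K$-sdR.} Set $D_L=D^L_{\sdR}(V^+)$. By Lemma~\ref{lem:sdR fil exhaustive}, applied over $L$, it is a finite projective $L\otimes A$-module. It carries a semilinear action of the finite group $\Gal(L/K)=G_K/G_L$, and $D_L^{\Gal(L/K)}=D^K_{\sdR}(V^+)$. Since $L\otimes A$ is a finite Galois extension of $K\otimes A$ with group $\Gal(L/K)$ (base change of $L/K$), classical finite Galois descent applies. It works verbatim in the condensed setting: invariants under a finite group are a finite limit, $|\Gal(L/K)|$ is invertible, and $L\otimes_K L\cong\prod_{\Gal(L/K)}L$. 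It gives $D^K_{\sdR}(V^+)\otimes_{K\otimes A}(L\otimes A)\cong D_L$, with $D^K_{\sdR}(V^+)$ finite projective as a direct summand. Therefore
$$D^K_{\sdR}(V^+)\otimes_{K\otimes A}B_{\sdR,A}\cong D_L\otimes_{L\otimes A}B_{\sdR,A}\cong V^+[1/t],$$
and one checks that this composite is the natural comparison map.

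The only point needing any care is the descent step. There I would verify the isomorphism $D_L^{\Gal(L/K)}\otimes_K L\cong D_L$ using the standard trace and averaging argument, which is legitimate for nuclear modules over $\Qbb_{p,\square}$.
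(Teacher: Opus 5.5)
Your proposal is correct and takes the same approach as the paper. The paper proves this lemma ``in the same way as in the Hodge-Tate case,'' i.e.\ by Galois descent, which is what you carry out: reduce to $L/K$ Galois, use $B_{\sdR,A}^{G_L}=L\otimes A$ and the finite projectivity of $D_{\sdR}$ (Lemma~\ref{lem:sdR fil exhaustive}) for the forward direction, and use finite Galois descent of $D^L_{\sdR}(V^+)$ along $K\otimes A\to L\otimes A$ for the converse.
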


\begin{lemma}\label{lem:sdR basechange}
For $V^+\in \FP_{B_{\sdR,A}^+}^{\sdR}(G_K)$ and a morphism $\Acal\to \Bcal=(B,B^+)_{\square}$, $V^+_B=V^+\otimes_{\Acal}\Bcal$ is strongly de Rham.
Moreover, in this case, there is an isomorphism of $K\otimes B$-modules
$$D_{\sdR}(V^+_B)\cong D_{\sdR}(V^+)\otimes_{(K\otimes A)} (K\otimes B).$$
\end{lemma}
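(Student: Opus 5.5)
We follow the proof of Lemma~\ref{lem:HT basechange}, with $B_{\sdR}$ in place of $B_{\HT}$. Set $M=D_{\sdR}(V^+)$. By Lemma~\ref{lem:sdR fil exhaustive}, $M$ is a finite projective $K\otimes A$-module, and by assumption the natural map
$$M\otimes_{K\otimes A}B_{\sdR,A}\to V^+[1/t]$$
is a $G_K$-equivariant isomorphism. We then base change along $\Acal\to\Bcal$. The first point to record is that
$$B_{\sdR,A}\otimes_{\Acal}\Bcal=B_{\dR}\otimes\Acal\otimes_{\Acal}\Bcal\cong B_{\dR}\otimes\Bcal=B_{\sdR,B},$$
and likewise $B^+_{\sdR,A}\otimes_{\Acal}\Bcal\cong B^+_{\sdR,B}$. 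This holds because everything is defined by the solid tensor product with $B_{\dR}^{(+)}$. Since $B_{\dR}$ and $B_{\dR}^+$ are nuclear, the result does not depend on the choice of $A^+$ or $B^+$. Consequently $V^+_B$ is a finite projective $B^+_{\sdR,B}$-module, and $V^+_B[1/t]=V^+[1/t]\otimes_{\Acal}\Bcal$. Localization at $t$ commutes with the tensor product because it is a filtered colimit.

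Set $M_B=M\otimes_{K\otimes A}(K\otimes B)=M\otimes_{\Acal}\Bcal$. Base changing the isomorphism above gives a $G_K$-equivariant isomorphism
$$M_B\otimes_{K\otimes B}B_{\sdR,B}\xrightarrow{\sim}V^+_B[1/t],$$
where $G_K$ acts trivially on $M_B$. Taking $G_K$-invariants and using that $M_B$ is finite projective, hence a direct summand of a finite free module, we get
$$(M_B\otimes_{K\otimes B}B_{\sdR,B})^{G_K}\cong M_B\otimes_{K\otimes B}B_{\sdR,B}^{G_K}.$$
By the lemma $B_{\sdR,B}^{G_K}=K\otimes B$ (applied to $\Bcal$), the right-hand side is $M_B$. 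Hence $D_{\sdR}(V^+_B)\cong M_B$. Substituting this back into the displayed isomorphism shows that the comparison map $D_{\sdR}(V^+_B)\otimes_{K\otimes B}B_{\sdR,B}\to V^+_B[1/t]$ is an isomorphism, so $V^+_B$ is strongly de Rham.

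The only step needing care is that the identification $D_{\sdR}(V^+_B)=M_B$ is compatible with the natural comparison map. Since $M\to V^+[1/t]$ factors through the invariants, its base change $M_B\to V^+_B[1/t]$ lands in $D_{\sdR}(V^+_B)$. We then have to check that this inclusion is the isomorphism computed above. This follows by functoriality, since the invariant computation was carried out on precisely this map.

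The fact that $B_{\sdR,B}^{G_K}=K\otimes B$ for a general $\Bcal$ was established in the lemma preceding Lemma~\ref{lem:BsdR flat}, via the flatness and nuclearity argument of Lemma~\ref{lem:HT fixed vector}. If $\Bcal$ lies outside the class for which that argument works, one would instead need flatness of $B$ over $\Qbb_{p,\square}$. The paper's standing assumptions cover this, so I expect no genuine obstacle there.
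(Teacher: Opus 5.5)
Your proposal is correct and is essentially the paper's argument. The paper proves this ``in the same way as'' Lemma~\ref{lem:HT basechange}: it base changes the isomorphism $D_{\sdR}(V^+)\otimes_{K\otimes A}B_{\sdR,A}\cong V^+[1/t]$ along $\Acal\to\Bcal$ and reads off the invariants using that $D_{\sdR}(V^+)$ is finite projective and that $B_{\sdR,B}^{G_K}=K\otimes B$, which is what you do, with the invariant computation and the compatibility with the comparison map spelled out.
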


\begin{proposition}\label{prop:sdR analytic local}
Let $\{\AnSpec\Acal_i \to \AnSpec\Acal\}_{i=1}^n$ 
be an affinoid covering of $\AnSpec \Acal$.
Then for $V^+\in \FP_{B_{\sdR,A}^+}(G_K)$, $V^+$ is strongly de Rham if and only if $V^+_i=V^+\otimes_{\Acal}\Acal_i$ is strongly de Rham for each $i$.
\end{proposition}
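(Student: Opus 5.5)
The argument will copy the proof of Proposition~\ref{prop:HT analytic local}, with $B_{\sdR,A}$ in place of $B_{\HT,A}$, and will use Lemma~\ref{lem:BsdR flat} for faithful flatness.

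The ``only if'' direction is Lemma~\ref{lem:sdR basechange}. Applied to $\Acal\to\Acal_i$, it shows that each $V_i^+$ is strongly de Rham, with
$$D_{\sdR}(V_i^+)\cong D_{\sdR}(V^+)\otimes_{K\otimes A}(K\otimes A_i).$$

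For the ``if'' direction, put $M_i=D_{\sdR}(V_i^+)$. By Lemma~\ref{lem:sdR fil exhaustive}, each $M_i$ is a finite projective $K\otimes A_i$-module, and it comes with the comparison isomorphism
$$M_i\otimes_{K\otimes A_i}B_{\sdR,A_i}\xrightarrow{\sim}V_i^+[1/t].$$
Lemma~\ref{lem:sdR basechange}, applied to $\Acal_i\to\Acal_{ij}$ and $\Acal_j\to\Acal_{ij}$, identifies both $M_i\otimes_{\Acal_i}\Acal_{ij}$ and $M_j\otimes_{\Acal_j}\Acal_{ij}$ with $D_{\sdR}(V^+_{ij})$. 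These identifications are canonical, because they are induced by the inclusions into $V^+_{ij}[1/t]$, so they satisfy the cocycle condition. Descent for dualizable objects along the affinoid covering then glues the $M_i$ into a dualizable $M\in\Dcal(K\otimes A)$. The same descent glues the compatible maps $M_i\to V_i^+[1/t]$ into a map $M\to V^+[1/t]$; this uses that $V^+[1/t]$ is a finite projective $B_{\sdR,A}$-module, so it satisfies descent along $\Acal\to\prod\Acal_i$.

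Next consider the linearization
$$M\otimes^{\Lbb}_{K\otimes A}B_{\sdR,A}\to V^+[1/t].$$
After applying $-\otimes_{\Acal}\Acal_i$ it becomes the comparison isomorphism for $V_i^+$. Since the covering is conservative, the linearization is an isomorphism. By Lemma~\ref{lem:BsdR flat}, $(K\otimes A,\Zbb)_\square\to(B_{\sdR,A},\Zbb)_\square$ is faithfully flat, so \cite[Theorem 1.35]{Mikami24} shows that $M$ is a static finite projective $K\otimes A$-module.

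It remains to identify $M$ with $D_{\sdR}(V^+)$. Since $M$ is finite projective and $B_{\sdR,A}^{G_K}=K\otimes A$, taking $G_K$-invariants of the isomorphism gives
$$M\cong(M\otimes_{K\otimes A}B_{\sdR,A})^{G_K}\cong V^+[1/t]^{G_K}=D_{\sdR}(V^+).$$
Hence the comparison map for $V^+$ is an isomorphism, and $V^+$ is strongly de Rham.

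The step I expect to be the main obstacle is the gluing. One must check that the descended map $M\to V^+[1/t]$ exists and that conservativity applies in this solid setting. The relevant inputs are the descent statements for affinoid coverings in \cite{Mikami24}, used exactly as in Proposition~\ref{prop:HT analytic local}.
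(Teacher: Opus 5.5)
Your proposal is correct and follows the paper's route: the paper proves this ``in the same way as in the Hodge-Tate case'', i.e.\ exactly as in Proposition~\ref{prop:HT analytic local}, by gluing the $D_{\sdR}(V_i^+)$ into a dualizable $M$, checking the comparison map locally, and using faithful flatness of $B_{\sdR,A}$ (Lemma~\ref{lem:BsdR flat}) with \cite[Theorem 1.35]{Mikami24} to conclude that $M$ is finite projective. Your final step, identifying $M$ with $D_{\sdR}(V^+)$ via $B_{\sdR,A}^{G_K}=K\otimes A$, makes explicit what the paper leaves implicit.
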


\begin{lemma}\label{lem:sdR inj}
	Assume $\Acal \in \AlgAff_{\Qbb_p}$.
	Then for $V^+\in \FP_{B_{\sdR,A}^+}(G_K)$, the natural morphism
	\begin{align*}
		&D_{\sdR}(V^+)\otimes_{K\otimes A} B_{\sdR,A}\to V^+[1/t]
	\end{align*}
	is injective.
\end{lemma}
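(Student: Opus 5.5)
We follow the proof of Lemma~\ref{lem:HT inj} verbatim, with $B_{\HT,A}$ replaced by $B_{\sdR,A}$. The plan has four steps.

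First, by Lemma~\ref{lem:BsdR flat}, $B_{\sdR,A}$ is flat over $K\otimes A$. Tensoring the inclusion $D_{\sdR}(V^+)\hookrightarrow V^+[1/t]$ with $B_{\sdR,A}$ over $K\otimes A$ therefore gives an injection
$$D_{\sdR}(V^+)\otimes_{K\otimes A} B_{\sdR,A}\to V^+[1/t]\otimes_{K\otimes A} B_{\sdR,A}.$$

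Second, we show that
$$V^+[1/t]\otimes_{K\otimes A} B_{\sdR,A}\to \prod_{\mfrak\in\Spm A,\, n\geq 1} V^+_{A/\mfrak^n}[1/t]\otimes_{K\otimes A/\mfrak^n} B_{\sdR,A/\mfrak^n}$$
is injective. Since $V^+[1/t]$ is finite projective over $B_{\sdR,A}$, we may replace it by a direct summand of a free module and reduce to $V^+[1/t]=B_{\sdR,A}$. The map then becomes
$$(B_{\dR}\otimes_K B_{\dR})\otimes A\to \prod_{\mfrak,n}(B_{\dR}\otimes_K B_{\dR})\otimes A/\mfrak^n.$$
This is exactly where the ``strong'' version helps: $B_{\sdR,A}$ is a plain solid tensor product with $A$, with no $t$-adic completion. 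We apply Lemma~\ref{lem:prod tensor inj} with $I$ a one-point set, $M=B_{\dR}\otimes_K B_{\dR}$, and $\prod_{\mfrak,n}A/\mfrak^n$ as the second factor, then compose with the injection $A\hookrightarrow\prod_{\mfrak,n}A/\mfrak^n$. The latter is injective by Krull's intersection theorem, which applies since $A$ is noetherian and finitely generated over an affinoid algebra; it stays injective after tensoring with $M$ because $M$ is flat and nuclear.

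Third, the composite map factors through
$$\prod_{\mfrak,n} D_{\sdR}(V^+_{A/\mfrak^n})\otimes_{K\otimes A/\mfrak^n} B_{\sdR,A/\mfrak^n}.$$
Using the injection $V^+[1/t]\hookrightarrow \prod_{\mfrak,n} V^+_{A/\mfrak^n}[1/t]$ (again from Lemma~\ref{lem:prod tensor inj} and Krull), the injectivity for $A$ reduces to injectivity for each $A/\mfrak^n$. Since $A/\mfrak^n$ is a finite $\Qbb_p$-algebra, it is Banach, so $B_{\sdR,A/\mfrak^n}=B_{\dR,A/\mfrak^n}$. Finally, $B_{\dR}\otimes A/\mfrak^n$ is a finite $B_{\dR}$-algebra, and the injectivity of $D_{\dR}\otimes B_{\dR}\to V$ over it is the classical statement. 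One can see it by restricting scalars to $\Qbb_p$, where it becomes the usual injectivity for finite-dimensional $B_{\dR}$-representations.

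The main obstacle is the verification in the second step that Lemma~\ref{lem:prod tensor inj} applies. Its hypotheses require $B_{\dR}\otimes_K B_{\dR}$ and $A/\mfrak^n$ to be filtered colimits of Banach spaces with injective transition maps. For $A/\mfrak^n$ this is clear since it is finite-dimensional. For $B_{\dR}\otimes_K B_{\dR}=\varinjlim_m t^{-m}(B_{\dR}^+\otimes_K B_{\dR})$ it is only an expected property: $B_{\dR}^+$ is Fréchet rather than Banach, so the argument must handle it, for instance by writing $B_{\dR}^+=\varprojlim_k B_{\dR}^+/t^k$ and commuting the countable product with the tensor product via \cite[Lemma 3.28]{RJRC22}.
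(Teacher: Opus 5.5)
Your proposal is correct and follows the paper's route exactly: the paper proves this lemma ``in the same way as the Hodge--Tate case'', i.e.\ by transposing the proof of Lemma~\ref{lem:HT inj} with $B_{\HT,A}$ replaced by $B_{\sdR,A}$. You also rightly point out that the hypothesis of Lemma~\ref{lem:prod tensor inj} has to be checked for $B_{\dR}\otimes_K B_{\dR}=\varinjlim_m t^{-m}B_{\dR}^+\otimes_K t^{-m}B_{\dR}^+$, a countable colimit of Fr\'echet spaces; handling these pieces via \cite[Lemma 3.28]{RJRC22} supplies a detail the paper leaves implicit. One minor point, which the paper shares: Lemma~\ref{lem:prod tensor inj} is stated for countable index sets, while $\Spm A$ is usually uncountable. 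This is harmless, because already finitely many maximal ideals (one containing each associated prime of $A$) make $A\to\prod_{\mfrak,n}A/\mfrak^n$ injective by Krull's intersection theorem.
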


\begin{corollary}\label{cor:sdR sub quot}
Assume $\Acal \in \AlgAff_{\Qbb_p}$.
Let $V^+\in \FP_{B_{\sdR,A}^+}^{\dR}(G_K)$, and let $W^+\subset V^+$ be a $G_K$-stable submodule such that $W^+$ and $V^+/W^+$ are also finite projective.
Then $W^+$ and $V^+/W^+$ are also strongly de Rham. 
\end{corollary}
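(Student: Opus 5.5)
This is the strongly de Rham analogue of Corollary~\ref{cor:HT sub quot}, and I would copy that proof. (The hypothesis ``$V^+\in \FP_{B_{\sdR,A}^+}^{\dR}(G_K)$'' should be read as $V^+$ being strongly de Rham.) Set
\[
D(-)=D_{\sdR}(-),\qquad B=B_{\sdR,A},\qquad \otimes=\otimes_{K\otimes A}.
\]
Inverting $t$ in the short exact sequence $0\to W^+\to V^+\to V^+/W^+\to 0$ of finite projective $B_{\sdR,A}^+$-modules gives an exact sequence
\[
0\to W^+[1/t]\to V^+[1/t]\to (V^+/W^+)[1/t]\to 0 .
\]
Applying $(-)^{G_K}$ gives a left exact sequence
\[
0\to D(W^+)\to D(V^+)\to D(V^+/W^+).
\]
By Lemma~\ref{lem:BsdR flat}, $B$ is flat over $K\otimes A$, so this remains left exact after $-\otimes B$. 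We thus get a commutative diagram whose top row is the left exact sequence $D(-)\otimes B$, whose bottom row is the exact sequence above, and whose vertical maps are the comparison morphisms.

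The middle vertical arrow is an isomorphism because $V^+$ is strongly de Rham. The outer vertical arrows are injective by Lemma~\ref{lem:sdR inj}; this is the only place the hypothesis $\Acal\in\AlgAff_{\Qbb_p}$ enters. Now chase the diagram. Take $y\in (V^+/W^+)[1/t]$ and lift it to $V^+[1/t]\cong D(V^+)\otimes B$. Its image in $D(V^+/W^+)\otimes B$ maps to $y$, so the right vertical arrow is surjective, hence an isomorphism. For the left arrow, let $w\in W^+[1/t]$. Its image in $V^+[1/t]$ comes from some $x\in D(V^+)\otimes B$. The image of $x$ in $D(V^+/W^+)\otimes B$ maps to zero under the injective right vertical map, so it is zero. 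By left exactness, $x$ comes from $D(W^+)\otimes B$. Hence the left arrow is surjective as well.

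The one point that needs care is that the chase takes place in condensed modules. Surjectivity should be checked as an epimorphism of condensed objects rather than on underlying sets. This is harmless: the same chase works on $S$-valued points for extremally disconnected $S$, where all the relevant functors are exact. Equivalently, one can argue abstractly in the abelian category, as in Corollary~\ref{cor:HT sub quot}. Beyond this, the nontrivial input is the injectivity statement, Lemma~\ref{lem:sdR inj}, which is taken as given. Finally, $D(W^+)$ and $D(V^+/W^+)$ are then finite projective by Lemma~\ref{lem:sdR fil exhaustive}, so $W^+$ and $V^+/W^+$ are strongly de Rham.
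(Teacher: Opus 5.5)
Your proof is correct and is the argument the paper intends. The paper says this corollary is proved "in the same way as" Corollary~\ref{cor:HT sub quot}: a diagram chase with a left exact top row (kept exact after tensoring by the flatness of $B_{\sdR,A}$, Lemma~\ref{lem:BsdR flat}), a bijective middle arrow, and outer arrows injective by Lemma~\ref{lem:sdR inj}. You were right to read the hypothesis as ``$V^+$ strongly de Rham''; note only that strong de Rhamness of $W^+$ and $V^+/W^+$ is already the output of the chase, so the final appeal to Lemma~\ref{lem:sdR fil exhaustive} is a consequence, not a needed step.
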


We have also the following lemma. 

\begin{lemma}\label{lem:sdR implies dR}
	For $V^+\in \FP_{B_{\sdR,A}^+}^{\sdR}(G_K)$, $V^+\otimes_{B_{\sdR,A}^+} B_{\dR,A}^+$ is de Rham. 
\end{lemma}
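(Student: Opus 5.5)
The plan is to base change the strongly de Rham isomorphism along $B_{\sdR,A}\to B_{\dR,A}$ and compare invariants. Write $V^+_{\dR}=V^+\otimes_{B_{\sdR,A}^+}B_{\dR,A}^+$ and $M=D_{\sdR}(V^+)$. By Lemma~\ref{lem:sdR fil exhaustive}, $M$ is a finite projective $K\otimes A$-module. Since $B_{\sdR,A}^+\to B_{\dR,A}^+$ is $G_K$-equivariant and sends $t$ to $t$, the assumed isomorphism $M\otimes_{K\otimes A}B_{\sdR,A}\xrightarrow{\sim}V^+[1/t]$ base changes to a $G_K$-equivariant isomorphism
$$M\otimes_{K\otimes A}B_{\dR,A}\xrightarrow{\sim}V^+_{\dR}[1/t].$$
The base change is valid because $B_{\sdR,A}\otimes_{B_{\sdR,A}^+}B_{\dR,A}^+=B_{\dR,A}$, which follows from inverting $t$.

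Next take $G_K$-invariants. Because $M$ is finite projective over $K\otimes A$, it is a direct summand of a finite free module, so invariants commute with $M\otimes_{K\otimes A}-$. Using the lemma $B_{\dR,A}^{G_K}=K\otimes A$ proved earlier, we get
$$D_{\dR}(V^+_{\dR})=V^+_{\dR}[1/t]^{G_K}\cong M\otimes_{K\otimes A}B_{\dR,A}^{G_K}=M.$$

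Finally, check that the natural map $D_{\dR}(V^+_{\dR})\otimes_{K\otimes A}B_{\dR,A}\to V^+_{\dR}[1/t]$ is the isomorphism constructed above. Under the identification $D_{\dR}(V^+_{\dR})=M$, the map sends $m\otimes b$ to $b\cdot m$, which is exactly the base-changed isomorphism. Hence $V^+_{\dR}$ is de Rham.

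I expect no real obstacle. The only point needing care is the compatibility of invariants with tensoring by a finite projective module in the condensed setting. This holds because $(-)^{G_K}$ is additive and commutes with finite direct sums and retracts, and $M$ is a retract of $(K\otimes A)^n$ with $K\otimes A$ acting through $G_K$-invariant scalars.
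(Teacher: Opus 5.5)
Your proposal is correct and follows essentially the same route as the paper. Both arguments base change the strongly de Rham isomorphism along $B_{\sdR,A}\to B_{\dR,A}$, then use that $D_{\sdR}(V^+)$ is finite projective (Lemma~\ref{lem:sdR fil exhaustive}) together with $B_{\dR,A}^{G_K}=K\otimes A$ to identify $D_{\dR}$ with $D_{\sdR}(V^+)$, and conclude that the natural map is the base-changed isomorphism.
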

\begin{proof}
	There is an isomorphism
	$$D_{\sdR}(V^+)\otimes_{K\otimes A} B_{\sdR,A}\cong V^+[1/t].$$
	From this isomorphism, we get an isomorphism
	$$D_{\sdR}(V^+)\otimes_{K\otimes A} B_{\dR,A} \cong V^+\otimes_{B_{\sdR,A}^+} B_{\dR,A}.$$
	Since $D_{\sdR}(V)$ is a finite projective $K\otimes A$-module, we get an isomorphism
	$$D_{\dR}(V^+)=(D_{\sdR}(V^+)\otimes_{K\otimes A} B_{\dR,A})^{G_K}\cong D_{\sdR}(V^+)\otimes_{K\otimes A} B_{\dR,A}^{G_K}\cong D_{\sdR}(V^+).$$
	Therefore, the natural morphism
	$$D_{\dR}(V^+\otimes_{B_{\dR,A}^+} B_{\dR,A}^+)\otimes_{K\otimes A} B_{\dR,A}\to V^+\otimes_{B_{\dR,A}^+} B_{\dR,A}$$
	is an isomorphism.
\end{proof}

Finally, we prove that de Rham $G_K$-equivariant vector bundles over $X_{\Cbb_p,\Acal}$ are strongly de Rham.
For the proof, we establish several lemmas.

\begin{lemma}\label{lem:dr extension}
	Assume $\Acal \in \AlgAff_{\Qbb_p}$.
	Then for $V^+\in \FP_{B_{\dR,A}^+}^{\dR}(G_K)$ and for any integers $l\leq m \leq n$, 
	$$0\to (t^mV^+/t^nV^+)^{G_K} \to (t^lV^+/t^nV^+)^{G_K} \to (t^lV^+/t^mV^+)^{G_K}\to 0$$
	is exact.
\end{lemma}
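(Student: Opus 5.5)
Since taking $G_K$-invariants is left exact, the only nontrivial claim is the surjectivity of $(t^lV^+/t^nV^+)^{G_K}\to (t^lV^+/t^mV^+)^{G_K}$. The plan is to deduce this from the structure theorem Proposition~\ref{prop:cdR to filtered}, which identifies $V^+$ with $\Fil^0(M\otimes B_{\dR,A})$ for $M=D_{\dR}(V^+)\in \Fil_{K,A}$. The point is that all the relevant quotients then become explicit in terms of $\Fil^\bullet M$, and their invariants can be computed directly.

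First, I use Proposition~\ref{prop:cdR to filtered}(2) to get $t^jV^+=\Fil^j(M\otimes B_{\dR,A})$ for every $j$. Here $\Fil^j(M\otimes B_{\dR,A})=\sum_i \Fil^iM\otimes_{K\otimes A} t^{j-i}B_{\dR,A}^+$. The hypotheses on $\Fil_{K,A}$ say that each $\gr^iM$ is finite projective and that the filtration is finite. Hence, locally on $\Spec(K\otimes A)$, or even globally after choosing splittings, which exist because the graded pieces are projective, we may write $M\cong\bigoplus_i \gr^iM$ as filtered modules. Under this splitting,
$$t^jV^+\cong \bigoplus_i \gr^iM\otimes_{K\otimes A} t^{j-i}B_{\dR,A}^+,$$
compatibly with the $G_K$-action, since $G_K$ acts trivially on $M$. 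The splitting need not be canonical; it only has to respect the filtration, and the inclusions $t^nV^+\subset t^mV^+\subset t^lV^+$ are compatible with it.

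Second, this reduces the problem to the rank-one situation with $P=\gr^iM$ finite projective and $k=-i$. The sequence becomes
$$0\to P\otimes t^{m+k}B^+_{\dR,A}/t^{n+k}\to P\otimes t^{l+k}B^+_{\dR,A}/t^{n+k}\to P\otimes t^{l+k}B^+_{\dR,A}/t^{m+k}\to 0.$$
Because $P$ is finite projective and therefore a direct summand of a free module, it suffices to treat $P=K\otimes A$. I then need $(t^aB^+_{\dR,A}/t^bB^+_{\dR,A})^{G_K}=K\otimes A$ when $0\in[a,b)$ and $=0$ otherwise, as in the proof that $B_{\dR,A}^{G_K}=K\otimes A$. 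Given this, surjectivity is immediate. If $0\in[l+k,m+k)$, the element $1$ lies in $t^{l+k}B^+_{\dR,A}/t^{n+k}$ and is invariant, so it lifts the generator. Otherwise the target is zero.

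The step I expect to be the main obstacle is making the filtered splitting $M\cong\bigoplus\gr^iM$ rigorous, in a way compatible with the identification $V^+\cong\Fil^0(M\otimes B_{\dR,A})$. Choosing sections of $\Fil^iM\to\gr^iM$ inductively, using projectivity, gives a filtered isomorphism, and the Day convolution description of Proposition~\ref{prop:filtered module BdR+} is functorial in filtered isomorphisms, so this should go through. If it does not, the fallback is a dévissage on the length of the filtration. One would use the exact sequences $0\to V_0^+\to V^+\to W^+\to0$ from the proof of Proposition~\ref{prop:cdR to filtered}, together with the long exact cohomology sequence and Lemma~\ref{lem:fixed vector modulo t^n} to control the connecting maps.
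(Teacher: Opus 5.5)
Your proof is correct, but it is organized differently from the paper's.

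\textbf{The paper's route.} It first proves that
$0\to(t^mV^+)^{G_K}\to(t^lV^+)^{G_K}\to(t^lV^+/t^mV^+)^{G_K}\to0$
is exact, by induction on $m-l$. The base case is the one-step surjectivity $\Fil^nD_{\dR}(V^+)\to\gr^nD_{\HT}(V^+/t)$ from Proposition~\ref{prop:cdR to filtered}(3). It then obtains the three-term statement for $l\le m\le n$ by a $3\times 3$ diagram chase, and never chooses a splitting of the filtration.

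\textbf{Your route.} You use part (2) of the same proposition together with a non-canonical filtered splitting $D_{\dR}(V^+)\cong\bigoplus_i\gr^iD_{\dR}(V^+)$. This gives a $G_K$-equivariant isomorphism $t^lV^+/t^nV^+\cong\bigoplus_i\gr^iD_{\dR}(V^+)\otimes_{K\otimes A}t^{l-i}B_{\dR,A}^+/t^{n-i}B_{\dR,A}^+$. Combined with the computation of $(t^aB_{\dR,A}^+/t^bB_{\dR,A}^+)^{G_K}$, it yields the closed formula $(t^lV^+/t^nV^+)^{G_K}\cong\Fil^lD_{\dR}(V^+)/\Fil^nD_{\dR}(V^+)$, from which exactness is obvious.

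\textbf{The step you worried about is routine.} Descending induction from $\Fil^aM=0$ shows that every $\Fil^iM$ is finite projective and that each inclusion $\Fil^{i+1}M\hookrightarrow\Fil^iM$ splits. The description $\Fil^j(M\otimes B_{\dR,A})=\sum_i\Fil^iM\otimes_{K\otimes A}t^{j-i}B_{\dR,A}^+$ is additive in filtered direct sums. So your dévissage fallback is unnecessary.

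\textbf{Comparison.} The paper's argument is more intrinsic: it needs only graded-level input and formal diagram chasing. Yours is more computational but gives an explicit description of all the invariants at once. Both ultimately rest on Proposition~\ref{prop:cdR to filtered}, and hence on the hypothesis $\Acal\in\AlgAff_{\Qbb_p}$ through Proposition~\ref{prop:dR implies HT}.
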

\begin{proof}
	First, let us prove 
	$$0\to (t^mV^+)^{G_K} \to (t^lV^+)^{G_K} \to (t^lV^+/t^mV^+)^{G_K}\to 0$$
	is exact for $l\leq m$.
	We proceed by induction on $m-l$.
	When $m-l=1$, then the claim follows from Proposition~\ref{prop:cdR to filtered}.
	When $m-l>1$, there is a diagram
	$$
	\xymatrix{
		& 0\ar[d] & 0\ar[d] & 0 \ar[d]& \\
	0\ar[r]   & (t^mV^+)^{G_K}\ar[r]\ar[d] &(t^{l+1}V^+)^{G_K}\ar[r]\ar[d] & (t^{l+1}V^+/t^mV^+)^{G_K}\ar[r]\ar[d] &0\\
	0 \ar[r]  & (t^mV^+)^{G_K}\ar[r]\ar[d] &(t^lV^+)^{G_K}\ar[r]\ar[d] & (t^lV^+/t^mV^+)^{G_K}\ar[r]\ar[d] &0\\
	0 \ar[r] & 0\ar[r]\ar[d] & (t^lV^+/t^{l+1}V^+)^{G_K}\ar[r]\ar[d] & (t^lV^+/t^{l+1}V^+)^{G_K}\ar[r]\ar[d] &0\\
	 & 0 & 0 & 0 & ,
	}
	$$
	where the left and middle vertical sequences and the upper and lower horizontal sequences are exact.
	Therefore, the middle horizontal sequence is also exact.

	Next, for any integers $l\leq m \leq n$, there is a diagram 
	$$
	\xymatrix{
		& 0\ar[d] & 0\ar[d] & 0 \ar[d]& \\
	0\ar[r]   & (t^nV^+)^{G_K}\ar[r]\ar[d] &(t^nV^+)^{G_K}\ar[r]\ar[d] & 0 \ar[r]\ar[d] &0\\
	0 \ar[r]  & (t^mV^+)^{G_K}\ar[r]\ar[d] &(t^lV^+)^{G_K}\ar[r]\ar[d] & (t^lV^+/t^mV^+)^{G_K}\ar[r]\ar[d] &0\\
	0 \ar[r] & (t^mV^+/t^nV^+)^{G_K} \ar[r]\ar[d] & (t^lV^+/t^nV^+)^{G_K} \ar[r]\ar[d] & (t^lV^+/t^mV^+)^{G_K} \ar[r]\ar[d] &0\\
	 & 0 & 0 & 0 & ,
	}
	$$
	where all three vertical sequences and the upper and middle horizontal sequences are exact.
	Therefore, the lower exact sequence is also exact.
\end{proof}

\begin{lemma}\label{lem:dr extension class}
Let $A$ be an affinoid $\Qbb_p$-algebra, and let $f\in A$.
Let $V^+$ be a finitely generated $t$-torsion free $B_{\dR,A}^+$-module with a $G_K$-action, and we write $V=V[1/t]$.
Assume that there exist integers $a\leq b$ and a finitely generated $K\otimes A$-module $N_i$ for $a\leq i\leq b$ such that there is a $G_K$-equivariant decomposition
$$V^+/tV^+\cong \bigoplus_{i=a}^b N_i\otimes_{K\otimes A}(\Cbb_p\otimes A)(i),$$
and for any integers $l\leq m \leq n$, 
$$0\to t^mV^+/t^nV^+[1/f]^{G_K} \to t^lV^+/t^nV^+[1/f]^{G_K} \to t^lV^+/t^mV^+[1/f]^{G_K}\to 0$$
is exact.
Then the natural morphism 
$$V[1/f]^{G_K} \otimes_{K\otimes A_f} B_{\sdR,A_f}\to V[1/f]$$
is an isomorphism.
\end{lemma}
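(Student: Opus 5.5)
The plan is to exploit that $A$ is an affinoid (hence Banach) $\Qbb_p$-algebra, so by Remark~\ref{rem:de Rham complete} we have $B_{\dR,A}^+\cong B_{\dR}^+\otimes A$. Consequently $V^+[1/f]$ is a finitely generated module over $B_{\dR,A}^+[1/f]=B_{\sdR,A_f}^+$, and $V[1/f]=V^+[1/f][1/t]$ is a $B_{\sdR,A_f}$-module. Set $D\coloneqq V[1/f]^{G_K}$ with the filtration $\Fil^nD\coloneqq (t^nV^+[1/f])^{G_K}$.

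\textbf{Step 1: $D$ is a finite-length filtered module.} Since $V^+[1/f]=\varinjlim(V^+\xrightarrow{f}V^+\xrightarrow{f}\cdots)$ and continuous cohomology of $G_K$ commutes with this filtered colimit of nuclear modules (\cite[Lemma 1.44]{Mikami24}), the computation of Lemma~\ref{lem:fixed vector modulo t^n} goes through after inverting $f$. Its inputs are the $t$-adic completeness of $V^+$ and Lemma~\ref{lem:Tate twsit cohomology} applied to the pieces $N_i[1/f]\otimes(\Cbb_p\otimes A_f)(i)$. This should give
$$\Fil^{-a+1}D=0,\qquad \Fil^{-b}D=D,\qquad D\cong \bigl(t^{-b}V^+/t^{-a+1}V^+[1/f]\bigr)^{G_K}.$$
Feeding the assumed short exact sequences into the same computation, the graded pieces become
$$\gr^nD\cong\bigl(t^nV^+/t^{n+1}V^+[1/f]\bigr)^{G_K}\cong N_{-n}[1/f].$$
The last isomorphism uses Lemma~\ref{lem:Tate twsit cohomology}: after the twist by $t^n$, only the summand of weight $0$ has invariants. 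In particular $D$ is a finitely generated $K\otimes A_f$-module, being an iterated extension of finitely many finitely generated modules.

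\textbf{Step 2: surjectivity of $D\otimes_{K\otimes A_f}B_{\sdR,A_f}\to V[1/f]$.} Consider the map $\bigoplus_n\Fil^nD\otimes t^{-n}B_{\sdR,A_f}^+\to V^+[1/f]$. By Step 1, reduced modulo $t$ its image is $\bigoplus_i N_i[1/f]\otimes(\Cbb_p\otimes A_f)(i)=V^+/t[1/f]$, because every element of $\gr^nD$ lifts to $\Fil^nD$ by exactness. To pass from ``surjective mod $t$'' to surjectivity, I will clear denominators. Choose finitely many generators of each $\Fil^nD$ and multiply them by a power of $f$ so that they lie in $t^nV^+$. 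They then generate a $B_{\dR,A}^+$-submodule $M'\subset V^+$ with $M'+tV^+\supset f^kV^+$ for some $k$. Since $B_{\dR,A}^+$ is noetherian and $t$-adically complete, and $V^+$ is finitely generated, an Artin--Rees/Nakayama argument gives $M'\supset f^{k'}V^+$ for some $k'$. Inverting $f$ and then $t$ yields the desired surjectivity. I expect this to be the main technical point, because $B_{\sdR,A_f}^+$ itself need not be $t$-adically complete; this is why the argument has to be carried out over $B_{\dR,A}^+$ before localizing.

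\textbf{Step 3: injectivity.} Here I would imitate Lemma~\ref{lem:HT inj} (equivalently, the argument behind Lemma~\ref{lem:sdR inj}, since $A_f$ is algebraic-affinoid). Using the flatness in Lemma~\ref{lem:BsdR flat} and Lemma~\ref{lem:prod tensor inj}, embed
$$D\otimes_{K\otimes A_f}B_{\sdR,A_f}\hookrightarrow\prod_{\mfrak,r}(\,\cdot\,)\otimes A_f/\mfrak^r.$$
This reduces the claim to the case where the coefficient ring is finite over $\Qbb_p$, where injectivity of $(W)^{G_K}\otimes B_{\dR}\to W$ is classical. The map factors through the invariants modulo $\mfrak^r$, which handles the compatibility.

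Combining Steps 2 and 3 gives the isomorphism. If a cleaner route is needed for injectivity, an alternative is to run the minimal-length-relation argument directly with the fixed-vector decomposition from Step 1.
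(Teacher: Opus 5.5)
Your Step 1 is sound, but Step 2 has a genuine gap: the inference from $f^kV^+\subset M'+tV^+$ to $f^{k'}V^+\subset M'$ is false in general. Nakayama over the complete ring $B^+_{\dR,A}$ needs $M'+tV^+=V^+$ exactly. Iterating your inclusion only gives $f^{jk}V^+\subset M'+t^jV^+$, and the exponent grows with $j$.

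For a concrete failure, take $A=\Qbb_p\langle T\rangle$, $f=T$, $V^+=B^+_{\dR,A}$ and $M'=(T+t)$. Then $M'+tV^+=(T,t)\ni T$. However, the $B^+_{\dR}$-linear evaluation $B^+_{\dR,A}\to B^+_{\dR}$, $T\mapsto -t$, kills $T+t$ but not $T^{k'}$. It also extends to $B_{\sdR,A_f}\to B_{\dR}$, so even $M'[1/f,1/t]\neq V^+[1/f,1/t]$. Hence surjectivity modulo $t$ (or even modulo every $t^j$) after inverting $f$ does not give surjectivity. The $f$-denominators needed to lift the invariants of the higher-weight pieces are exactly what breaks your argument.

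Step 3 has a related problem. Lemma~\ref{lem:sdR inj} and the proof of Lemma~\ref{lem:HT inj} concern finite projective modules. There, the key injectivity into $\prod_{\mfrak,r}$ is obtained by reducing to the free module, which is unavailable when $V^+$ is only finitely generated. In the same example, $B_{\sdR,A_f}/(T+t)\neq 0$, yet all its reductions modulo $\mfrak^r$ vanish.

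The missing idea is to carry out the d\'evissage integrally, before inverting $f$; the paper does this by induction on $b-a$.
\begin{itemize}
\item After normalizing $a=0$, Lemma~\ref{lem:fixed vector modulo t^n} gives $(V^+)^{G_K}\cong (V^+/t)^{G_K}=N_0$ with no $f$ inverted.
\item So $V_0^+=N_0\otimes_{K\otimes A}B^+_{\dR,A}$ embeds into $V^+$ with $t$-torsion free quotient $W^+$, whose reduction has weights in $[1,b]$. Completeness is used here with an honest isomorphism modulo $t$, not one up to $f^k$.
\item A diagram chase passes the exactness hypothesis to $W^+$.
\item Lemma~\ref{lem:fixed vector modulo t^n} then gives $0\to V_0[1/f]^{G_K}\to V[1/f]^{G_K}\to W[1/f]^{G_K}\to 0$.
\item Flatness of $B_{\sdR,A_f}$ (Lemma~\ref{lem:BsdR flat}), the induction hypothesis for $W$, and the five lemma give surjectivity and injectivity together, with no pointwise argument.
\end{itemize}
The exactness hypothesis after inverting $f$ is precisely what controls the extension of $W^+$ by $V_0^+$, and this is the information your mod-$t$ Nakayama argument discards.
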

\begin{proof}
	We proceed by induction on $b-a$.
	We may assume $a=0$.
	By the same argument as in the proof of Proposition~\ref{prop:cdR to filtered}, we get a finitely generated $t$-torsion free $B_{\dR,A}^+$-submodule $V_0^+\subset V^+$ with a $G_K$-action satisfying
	\begin{itemize}
		\item $V_0^+/t=N_0\otimes_{K\otimes A}(\Cbb_p\otimes A).$
		\item the natural morphism 
		$$V_0^{G_K} \otimes_{K\otimes A} B_{\dR,A}\to V_0,$$
		where $V_0=V_0^+[1/t]$, is an isomorphism
		\item $V^+/V_0^+$ is also $t$-torsion free and $(V^+/V_0^+)/t\cong \bigoplus_{i=1}^b N_i\otimes_{K\otimes A}(\Cbb_p\otimes A)(i).$
	\end{itemize}
	We write $W^+=V^+/V_0^+$ and $W=W^+[1/t]$.
	By the construction, for any $n$, the morphism
	$$(t^nV^+/t^{n+1}V^+)^{G_K} \to (t^nW^+/t^{n+1}W^+)^{G_K}$$ 
	is surjective.
	From a diagram
	$$
	\xymatrix@C=0.4em{
		0\ar[r] & t^{n+1}V^+/t^{n+2}V^+\left[\frac{1}{f}\right]^{G_K}\ar[r]\ar@{->>}[d] & t^nV^+/t^{n+2}V^+\left[\frac{1}{f}\right]^{G_K}\ar[r]\ar[d] & t^nV^+/t^{n+1}V^+\left[\frac{1}{f}\right]^{G_K}\ar[r]\ar@{->>}[d] &0\\
		0\ar[r] & t^{n+1}W^+/t^{n+2}W^+\left[\frac{1}{f}\right]^{G_K}\ar[r] & t^nW^+/t^{n+2}W^+\left[\frac{1}{f}\right]^{G_K}\ar[r] & t^nW^+/t^{n+1}W^+\left[\frac{1}{f}\right]^{G_K}, &
	}
	$$
	where the upper sequence is exact and the lower sequence is left exact, the morphism
	$$t^nV^+/t^{n+2}V^+[1/f]^{G_K} \to t^nW^+/t^{n+2}W^+[1/f]^{G_K}$$ 
	is also surjective and 
	\begin{align*}
		0\to &t^{n+1}W^+/t^{n+2}W^+[1/f]^{G_K} \to t^nW^+/t^{n+2}W^+[1/f]^{G_K} \\
		\to &t^nW^+/t^{n+1}W^+[1/f]^{G_K}\to 0
     \end{align*}
    is also exact.
	By repeating this argument, for any $m\leq n$, the morphism
	$$t^mV^+/t^nV^+[1/f]^{G_K} \to t^mW^+/t^nW^+[1/f]^{G_K}$$ 
	is surjective and for any integers $l\leq m \leq n$, 
	\begin{align*}
	0\to &t^mW^+/t^nW^+[1/f]^{G_K} \to t^lW^+/t^nW^+[1/f]^{G_K}\\ 
	\to &t^lW^+/t^mW^+[1/f]^{G_K}\to 0
	\end{align*}
	is exact.
	In particular, by Lemma~\ref{lem:fixed vector modulo t^n}, 
	$$0\to V_0[1/f]^{G_K}\to V[1/f]^{G_K}\to W[1/f]^{G_K}\to 0$$
	is exact, and therefore, 
	\begin{align*}
	0\to &V_0[1/f]^{G_K}\otimes_{K\otimes A_f} B_{\sdR,A_f}\to V[1/f]^{G_K}\otimes_{K\otimes A_f} B_{\sdR,A_f}\\
	\to &W[1/f]^{G_K}\otimes_{K\otimes A_f} B_{\sdR,A_f}\to 0
	\end{align*}
	is also exact, where we note that $B_{\dR,A}$ is flat over $K\otimes A$ by Lemma~\ref{lem:BsdR flat}.
	By the induction hypothesis, the natural morphism
	$$W[1/f]^{G_K} \otimes_{K\otimes A_f} B_{\sdR,A_f}\to W[1/f]$$
	is an isomorphism.
	Therefore, the natural morphism 
	$$V[1/f]^{G_K} \otimes_{K\otimes A_f} B_{\sdR,A_f}\to V[1/f]$$
	is also an isomorphism.
\end{proof}

\begin{theorem}\label{thm:cdR implies dR}
	A de Rham $G_K$-equivariant vector bundle $V$ over $X_{\Cbb_p,\Acal}$ is strongly de Rham. 
\end{theorem}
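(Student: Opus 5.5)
The plan is to reduce to the situation of Lemma~\ref{lem:dr extension class}, namely $A_f=R[1/f]$ with $R$ an affinoid algebra, and to feed it the exactness supplied by Lemma~\ref{lem:dr extension}.

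\textbf{Reduction to the case $A=R_f$.} Strong de Rhamness is analytic-local on $\AnSpec\Acal$ by Proposition~\ref{prop:sdR analytic local}. De Rhamness is preserved by base change by Lemma~\ref{lem:dR basechange}. So we may pass to a suitable affinoid covering. We first use the structure of algebraic-affinoid algebras from \cite{Mikami24}: locally, $A$ is a localization $R_f=R[1/f]$ of an affinoid $\Qbb_p$-algebra $R$, or at least a finite relatively discrete extension of one. If only a finite extension is available, we would first reduce to a localization. This step may need care, and I would rely on the covering results of \cite{Mikami24} to obtain $A=R_f$.

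\textbf{A lattice over $R$.} Next we want a lattice. By the descent results of \cite{Mikami24}, after shrinking we may choose a finitely generated $t$-torsion free $B^+_{\dR,R}$-module $V^+_R$ with $G_K$-action such that $V^+_R[1/f]$ recovers $V^+_{\dR}$. One candidate is the image of a $G_K$-stable finitely generated $\tilde B^{[1,1]}_{\Cbb_p,R}$-submodule, completed $t$-adically. We also need $V^+_R/t$ to decompose as $\bigoplus_{i=a}^b N_i\otimes_{K\otimes R}(\Cbb_p\otimes R)(i)$. This holds after $[1/f]$ by Proposition~\ref{prop:dR implies HT} and Proposition~\ref{prop:HT recover}. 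Over $R$ itself, I would obtain it by using the Sen operator on $V_R^+/t$. Its characteristic polynomial has coefficients in the bounded part by Theorem~\ref{thm:HTS wts are analytic}, and it equals $\prod(T-i)$ after inverting $f$, hence also on $R$ when $f$ is a non-zero divisor. The kernels of $\Theta_{\Sen}-i$ then give the decomposition, up to possibly modifying the lattice to kill the non-semisimple part.

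\textbf{Applying the lemmas.} The exactness hypothesis of Lemma~\ref{lem:dr extension class} concerns the sequences $t^mV^+/t^nV^+[1/f]^{G_K}$. These are exactly the sequences for $V^+_{\dR}$ over $A_f$, which is de Rham, so Lemma~\ref{lem:dr extension} gives this exactness directly. Lemma~\ref{lem:dr extension class} then says that $V[1/f]^{G_K}\otimes_{K\otimes A_f}B_{\sdR,A_f}\to V[1/f]$ is an isomorphism. Two identifications remain:
\begin{itemize}
\item the image of $V_{\sdR}$ in $V^+_R[1/f,1/t]$ equals the $B_{\sdR,A_f}$-module $V_{\sdR}$;
\item $V[1/f]^{G_K}=D_{\sdR}(V)$.
\end{itemize}
Both follow because $B^+_{\sdR,A_f}=B^+_{\dR,R}[1/f]$ when $R$ is Banach, by Remark~\ref{rem:de Rham complete}. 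This is precisely why we localize from an affinoid.

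\textbf{Main obstacle.} The hardest step is the reduction together with building the lattice $V^+_R$ with the Hodge--Tate decomposition mod $t$ over $R$ rather than $R_f$. I would first try to avoid it by choosing a finite cover and a presentation in which $V^{[1,1]}$ is already defined over $\tilde B^{[1,1]}_{\Cbb_p,R}$, and then pass to the saturation of the lattice.
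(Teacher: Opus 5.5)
Your outer structure matches the paper's. You reduce to $A=R_f$ (Proposition~\ref{prop:sdR analytic local} with \cite[Proposition 2.37]{Mikami24}, then replace $R$ by its image in $R_f$ so that $f$ is a non-zero divisor). You build a $t$-torsion free lattice over $B^+_{\dR,R}$ whose reduction mod $t$ is a direct sum of Tate twists of finitely generated $K\otimes R$-modules. You take the exactness hypothesis from Lemma~\ref{lem:dr extension} over $A_f$, and conclude by Lemma~\ref{lem:dr extension class} using $B^+_{\dR,R}[1/f]=B^+_{\sdR,A_f}$. One small slip: after inverting $f$ the lattice recovers $V^+_{\sdR}$, not $V^+_{\dR}$.

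The genuine gap is the lattice step, and the route you sketch does not close it.
\begin{itemize}
\item For an arbitrary $G_K$-stable finitely generated lattice $V_R^+$, the module $V_R^+/t$ contains the $f$-power torsion submodule $(tV^+_{\sdR}\cap V^+_R)/tV^+_R$. Knowing the picture after inverting $f$ says nothing about this submodule, so the Sen operator there need not be semisimple.
\item The characteristic-polynomial argument also fails: $V_R^+/t$ need not be projective, and Cayley--Hamilton would only give $\prod(\Theta-i)^{m_i}=0$ anyway. Theorem~\ref{thm:HTS wts are analytic} is beside the point, since the roots are integers.
\item At the completed $\Cbb_p$-level you have neither a Sen operator nor a descent of eigenspaces to finitely generated $K\otimes R$-modules for non-projective modules.
\end{itemize}
So ``modify the lattice to kill the non-semisimple part'' is not an executable step. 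Your closing ``pass to the saturation'' also leaves open which saturation you mean and why it stays finitely generated.

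The missing idea is to build the lattice on the decompleted side and make it $t$-saturated inside $V$.
\begin{itemize}
\item Let $V_n$ be the decompletion of $V^{[1,1]}$ over $B^{[1,1]}_{K,n,A}=B_n[1/f]$, with $B_n=B^{[1,1]}_{K,n,R}$. Take a $\Gamma_K$-stable finitely generated $W\subset V_n$ with $W[1/f]=V_n$ \cite[Proposition 1.51]{Mikami24}.
\item Embed $W$ into $B_n^{\oplus m}$ and use that $t,f$ is a regular sequence in $B_n$. Artin--Rees then gives $k$ with $t^lV_n\cap W=t^{l-k}(t^kV_n\cap W)$ for $l\geq k$.
\item Replacing $W$ by $t^kV_n\cap W$ makes $W/tW\to V_n/tV_n=D^{K_m}_{\Sen}(V)$ injective.
\item The operator $\prod_{i=a}^b(\Theta_{\Sen}-i)$ kills $D^{K_m}_{\Sen}(V)$ by Proposition~\ref{prop:dR implies HT} and Corollary~\ref{cor:Sen op HT}. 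Hence it kills the $\Gamma_K$-stable submodule $W/tW$, which therefore splits into eigenspaces $\bar W_i$ with no non-semisimple part.
\item Each $\bar W_i(-i)$ carries a smooth $\Gamma_K$-action, trivial on $\Gamma_{K_m}$ after enlarging $n$. Galois descent from $K_m$ to $K$ then produces the finitely generated $N_i$.
\end{itemize}
Only after this do you base change $W$ to $B^+_{\dR,R}$ and invoke the two lemmas.
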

\begin{proof}
	If $A$ is a Banach $\Qbb_p$-algebra, then the claim is clear.
	Therefore, we may assume $\Acal \in \AlgAff_{\Qbb_p}$.
	By Proposition~\ref{prop:sdR analytic local} and \cite[Proposition 2.37]{Mikami24}, we can reduce to the case where $A=R_f$ for some affinoid $\Qbb_p$-algebra $R$ and $f\in R$.
	By replacing $R$ with the image of $R$ in $A=R_f$, we may assume that $f$ is a non-zero divisor of $R$.
	Let $I=[1,1]$.
	For $n$ sufficiently large, we get a finite projective $B_{K,n,A}^I$-module $V_{K,n}^I$ with the action of $\Gamma_K$ as in \cite{Mikami24}.
	We have $B_{K,n,A}^I=B_{K,n,R}^I[1/f]$.
	To simplify the notation, we write $B_n=B_{K,n,R}^I$ and $V_n=V_{K,n}^I$.
	We note that $t,f$ is a regular sequence in $B_n$.
	By \cite[Proposition 1.51]{Mikami24}, there is a finitely generated $B_n$-submodule $W \subset V_n$ stable under the action of $\Gamma_K$ such that $W[1/f]=V_n$.
	
	\noindent\textbf{Claim:} There exists an integer $k\geq 0$ such that for any $l\geq k$, $$t^lV_n\cap W=t^{l-k}(t^kV_n\cap W).$$
	\begin{proof}[Proof of the claim]
	Let us take a $B_n[1/f]$-module $U$ such that $U\oplus V_n\cong B_n[1/f]^{\oplus m}$ for some integer $m\geq 0$. Then for $i$ sufficiently large, we have $W \subset U\oplus V_n\cong B_n[1/f]^{\oplus m}$ is contained in $f^{-i}B_n^{\oplus m}$.
	By replacing the basis, we may assume $W\subset B_n^{\oplus m}\subset B_n[1/f]^{\oplus m}$.
	By construction, we have $t^lW=t^lB_n[1/f]^{\oplus m}\cap W$ and $t^lB_n^{\oplus m}=t^lB_n[1/f]^{\oplus m}\cap B_n^{\oplus m}$ for any $l\geq 0$. 
	Therefore, we have $t^lV_n\cap W=t^lB_n^{\oplus m} \cap W$ for any $l\geq 0$.
	Then the claim follows from the Artin-Rees Lemma.
	\end{proof}

	We may replace $W$ with $t^kV_n\cap W$ which is also stable under the action of $\Gamma_K$.
	Then for any $l\geq 0$, the natural morphism
	$$t^lW/t^{l+1}W\to t^lV_n/t^{l+1}V_n$$
	is injective.
	We write $K_m=B_{K,n}^I/t$.
	Then we have $D_{\Sen}^{K_m}(V_n)=V_n/tV_n$.
	Since $W/tW\subset V_n/tV_n=D_{\Sen}^{K_m}(V_n)$ is stable under the action of $\Gamma_K$, the Sen operator on $D_{\Sen}^{K_m}(V_n)$ defines an operator on $W/tW$.
	By Proposition~\ref{prop:dR implies HT} and Corollary~\ref{cor:Sen op HT}, we have $\prod_{i=a}^b(\Theta_{\Sen}-i)$ on $D_{\Sen}^{K_m}(V)$ is $0$ for some integers $a\leq b$.
	Therefore, $\prod_{i=a}^b(\Theta_{\Sen}-i)$ on $W/tW$ is also $0$.
	We write $\bar{W}_i=\Ker (\Theta_{\Sen}-i\colon W/tW\to W/tW)$.
	Then we have $W/tW=\bigoplus_{i=a}^b \bar{W}_i.$
	By the same argument as in the proof of Proposition~\ref{prop:Sen operator HT}, the action of $\Gamma_K$ on $\bar{W}_i(-i)$ is smooth for $a\leq i \leq b$.
	By increasing $n$ if necessary, we may assume that the action of $\Gamma_{K_m}$ on $\bar{W}_i(-i)$ is trivial for $a\leq i \leq b$.
	By the Galois descent, there is a finitely generated $K\otimes A$-module $N_i$ such that there is a $\Gamma_K$-equivariant isomorphism $\bar{W}_i(-i) \cong K_m\otimes_{K} N_i(-i)$.
	Therefore, we get a $\Gamma_K$-equivariant isomorphism
	$$W/tW\cong \bigoplus_{i=a}^b (K_m\otimes_{K} N_i)(-i).$$
	We write $W_{\dR}^+=W\otimes_{B_n} B_{\dR,R}^+$ which is a $t$-adically complete finitely generated $B_{\dR,R}^+$-module with the action of $G_K$, where we note $B_{\dR,R}^+$ is noetherian.
	Moreover, we have 
	\begin{align*}
	W_{\dR}^+/t\cong &W/tW\otimes_{K_m\otimes A} (\Cbb_p\otimes A)\\
	\cong &\bigoplus_{i=a}^b N_i \otimes_{K_m\otimes A} (\Cbb_p\otimes A)(-i).
    \end{align*} 
	We write $W_{\dR}=W_{\dR}^+[1/t].$
	Then we have $V_{\sdR}=W_{\dR}[1/f].$
	By Lemma~\ref{lem:dr extension} and Lemma~\ref{lem:dr extension class}, the morphism
	$$V_{\sdR}^{G_K}\otimes_{K\otimes A} B_{\sdR,A}\to V_{\sdR}$$
	is an isomorphism.
\end{proof}
\begin{remark}
	We expect that for $V^+\in \FP_{B_{\sdR,A}^+}(G_K)$, if $V^+\otimes_{B_{\sdR,A}^+} B_{\dR,A}^+$ is de Rham, then $V^+$ is strongly de Rham.
	We also believe that a similar argument could be carried out if the ``deperfection'' of $(V^+)^{H_K}$ were appropriately defined, but we shall not pursue this direction in the present paper.
\end{remark}

\section{Semistable representations and p-adic monodromy theorem}
We fix compatible embeddings $\overline{\Qbb_p}\hookrightarrow B_{\dR}^+$ and $\breve{\Qbb}_p\hookrightarrow \tilde{B}^{I}_{\Cbb_p}$ for closed intervals $I \subset (0,\infty)$.
\subsection{Definition and basic properties of semistable representations}

\begin{definition}
	Let $I \subset (0,\infty)$ be a closed interval.
	\begin{enumerate}
	\item Let us consider the group homomorphism
	$$\Lcal\colon 1+\mfrak_{\Cbb_p^{\flat}} \to \tilde{B}^{I}_{\Cbb_p};\; a \mapsto \log[a]=\sum_{n=0}^{\infty}(-1)^{n+1}\frac{([a]-1)^n}{n},$$
	where we note that $[a]-1$ is topologically nilpotent in $\tilde{B}^{I}_{\Cbb_p}$.
	Let $\mu$ be the group of roots of unity in $\Cbb_p^{\flat}$, then $\Ocal_{\Cbb_p^{\flat}}^{\times}=\mu \times(1+\mfrak_{\Cbb_p^{\flat}})$.
	We extend $\Lcal$ to $\Lcal\colon \Ocal_{\Cbb_p^{\flat}}^{\times} \to \tilde{B}^{I}_{\Cbb_p}$ so that $\Lcal|_{\mu}=0$.
	This induces a ring homomorphism 
	$$\Sym_{\Zbb}\Ocal_{\Cbb_p^{\flat}}^{\times}\to \tilde{B}^{I}_{\Cbb_p}.$$
	We define $\tilde{B}_{\log,\Cbb_p}^{I}=\tilde{B}^{I}_{\Cbb_p}\otimes_{\Sym_{\Zbb}\Ocal_{\Cbb_p^{\flat}}^{\times}}\Sym_{\Zbb}\Cbb_p^{\flat \times}.$
	For $x \in \Cbb_p^{\flat\times}$, let $\log [x] \in \tilde{B}_{\log,\Cbb_p}^I$ denote the image of $x\in \Sym_{\Zbb}\Cbb_p^{\flat \times}$.
	For $x\in \Cbb_p^{\flat\times}\setminus \Ocal_{\Cbb_p^{\flat}}^{\times}$, the natural morphism $\tilde{B}^{I}_{\Cbb_p}[X]\to \tilde{B}_{\log,\Cbb_p}^{I};\; X\mapsto \log[x]$ is an isomorphism.
	We also define $\tilde{B}_{\log,\Cbb_p,A}^{I}=\tilde{B}_{\log,\Cbb_p}^{I}\otimes \Acal$, which is independent of the choice of $A^+$ since $\tilde{B}_{\log,\Cbb_p}^{I}$ is a nuclear $\Qbb_{p,\square}$-module.
	\item The Frobenius on $\Cbb_p^{\flat\times}$ induces a Frobenius morphism 
	$$\varphi\colon \tilde{B}_{\log,\Cbb_p}^{I} \to \tilde{B}_{\log,\Cbb_p}^{p^{-1}I}.$$
	It is characterized by the condition that $\varphi|_{\tilde{B}_{\Cbb_p}^{I}}$ is the usual Frobenius and $\varphi(\log[x])=p\log[x]$ for every $x \in \Cbb_p^{\flat\times}$.
	By applying $-\otimes \Acal$, we also define
	$$\varphi\colon \tilde{B}_{\log,\Cbb_p,A}^{I} \to \tilde{B}_{\log,\Cbb_p,A}^{p^{-1}I}.$$
	\item We define $p^{\flat} \in \Cbb_p^{\flat\times}$ as $p^{\flat}=(p,p^{1/p},p^{1/p^2},\ldots)$.
	There is a $\tilde{B}_{\Cbb_p}^I$-derivation $N\colon \tilde{B}_{\log,\Cbb_p}^{I} \to \tilde{B}_{\log,\Cbb_p}^{I}$ defined via $N(\log[p^{\flat}])=-1$, which is called the \textit{monodromy operator}.
	The monodromy operator $N$ is $G_K$-equivariant, and it satisfies $p\varphi N=N\varphi\colon \tilde{B}_{\log,\Cbb_p}^{I} \to \tilde{B}_{\log,\Cbb_p}^{p^{-1}I}$.
	By applying $-\otimes \Acal$, we also define
	$$N\colon \tilde{B}_{\log,\Cbb_p,A}^{I} \to \tilde{B}_{\log,\Cbb_p,A}^{I}.$$
	\end{enumerate}
\end{definition}

\begin{lemma}
	Let $I\subset (0,\infty)$ be a closed interval such that $1\in I$.
	Then the natural morphism 
	$$K\otimes_{K_0}\tilde{B}_{\Cbb_p}^I\to B_{\dR}^+$$
	is injective.
	In particular, we have $(\tilde{B}_{\Cbb_p}^I)^{G_K}=K_0$.
\end{lemma}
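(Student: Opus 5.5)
The plan is to follow the classical argument (Fontaine, Berger) that $B_{\mathrm{cris}}\otimes_{K_0}K\to B_{\dR}$ is injective, adapted to the rings $\tilde{B}^I_{\Cbb_p}$.

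First we reduce to $K=K_0$ linear disjointness. The map $\tilde{B}^I_{\Cbb_p}\to B_{\dR}^+$ comes from the inclusion $Y^{[1,1]}\subset Y^I$ together with completion at the distinguished point $x_\infty$ (the zero of $\xi$, equivalently of $t$). Its injectivity is the first step. Since $Y^I_{\Cbb_p}$ is the adic spectrum of a principal ideal domain (Fargues--Fontaine: $\tilde{B}^I$ is a PID, or at least an integral domain whose nonzero elements have finitely many zeros in $Y^I$ counted with multiplicity), an element in the kernel would lie in $\bigcap_n \xi^n\tilde{B}^I$. This intersection vanishes because the $\xi$-adic valuation at $x_\infty$ of a nonzero element is finite.

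Next we extend to $K\otimes_{K_0}\tilde{B}^I$. Choose a basis $e_1,\dots,e_d$ of $K/K_0$ and suppose $\sum e_i b_i=0$ in $B_{\dR}^+$ with $b_i\in\tilde{B}^I$ not all zero. Apply $\sigma\in G_{K_0}$, which preserves $\tilde{B}^I$ and fixes the embedding of $\breve{\Qbb}_p$ only up to Galois action on the residue part; since $K/K_0$ is totally ramified and the fixed embeddings are compatible, $\sigma$ permutes the $K_0$-embeddings $\tau_j$ of $K$. We obtain the linear system $\sum_i\tau_j(e_i)\,\sigma(b_i)=0$. Because the matrix $(\tau_j(e_i))$ is invertible, this gives the standard contradiction: take a relation of minimal length, normalize one coefficient, and apply $g-1$ for $g\in G_{K_0}$ acting nontrivially on $K$. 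This is the usual proof that $\mathrm{Frac}$ of a $G_{K_0}$-stable subring $R$ with $R^{G_{\breve K_0}}$ unramified is linearly disjoint from $\bar{\Qbb}_p$. More cleanly, one uses that $\mathrm{Frac}(\tilde{B}^I)\cap \bar{\Qbb}_p\subset B_{\dR}$ equals $\breve{\Qbb}_p\cap\bar{\Qbb}_p=\Qbb_p^{\unr}$. This intersection claim holds because algebraic elements over $\Qbb_p$ in $\mathrm{Frac}\,\tilde{B}^I$ are fixed by an open subgroup of $G_{K_0^{\unr}}$ of inertia, and inertia acts on $\tilde{B}^I$ with fixed field $\breve{\Qbb}_p$ (Ax--Sen--Tate for $\tilde{B}^I$). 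Since $K\cap\Qbb_p^{\unr}=K_0$, linear disjointness gives the required injectivity.

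Finally, $(\tilde{B}^I)^{G_K}\subset (B_{\dR}^+)^{G_K}=K$ by injectivity. An element of $\tilde{B}^I$ fixed by $G_K$ then lies in $K\cap \mathrm{image}(\tilde{B}^I)$, which by the disjointness equals $K_0$; conversely $K_0\subset\tilde{B}^I$ is fixed.

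The main obstacle is the Ax--Sen--Tate type input, $(\tilde{B}^I_{\Cbb_p})^{G_{K}}$ being computable, used in the disjointness step. I would first try to deduce it from the known $(\tilde{B}^{I})^{G_K}=K_0$ in Berger's or Fargues--Fontaine's work (via $\tilde{B}^I\subset \tilde{B}_{\mathrm{rig}}^{\dagger}$-type embeddings and $\varphi$-gluing to $B_{\mathrm{cris}}^+$, whose invariants are $K_0$), and then run the minimal-relation argument.
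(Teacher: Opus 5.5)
\textbf{There is a genuine gap in Step 2.} Your Step 1 is fine: $\tilde B^I_{\Cbb_p}$ is a PID, $\xi$ generates the ideal of $x_\infty$, and Krull's intersection theorem gives injectivity. Deducing $(\tilde B^I_{\Cbb_p})^{G_K}=K_0$ from the injectivity is also fine. But the whole content of the lemma lies in Step 2.

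\emph{The linear-system argument fails.} The system $\sum_i\tau_j(e_i)\sigma(b_i)=0$ does not have fixed unknowns, because the $\sigma(b_i)$ change with $\sigma$. So invertibility of $(\tau_j(e_i))$ yields nothing.

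\emph{The minimal-relation trick only reduces the problem.} It works only for $g\in G_K$, which fixes the $e_i$. There it merely reduces the claim to $(\Frac\tilde B^I_{\Cbb_p})^{G_K}=K_0$, which is the statement itself for the fraction field.

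\emph{The cleaner route assumes its key input.} It does reduce correctly to $L:=\Frac(\tilde B^I_{\Cbb_p})\cap\bar{\Qbb}_p=\Qbb_p^{\unr}$ inside $B_{\dR}$. Indeed, a monic factor over $\Frac(\tilde B^I_{\Cbb_p})$ of the Eisenstein polynomial of a uniformizer of $K$ over $K_0$ would have coefficients in $L$, and that polynomial is irreducible over $\Qbb_p^{\unr}$. However, your justification for $L=\Qbb_p^{\unr}$ is that open subgroups of inertia have invariants $\breve{\Qbb}_p$ on $\tilde B^I_{\Cbb_p}$. That is not an auxiliary input: it is the ``in particular'' of this very lemma, applied to all finite extensions of $\Qbb_p$. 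In this paper, and in the classical $B_{\mathrm{cris}}$ theory you propose to import it from, such invariant computations are typically \emph{deduced} from the injectivity into $B_{\dR}$. So as written the key input is assumed rather than proved, and the suggested sources make the argument circular.

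You also state that input for the ring but use it for the fraction field. That part is repairable: $\tilde B^I_{\Cbb_p}$ is a PID, hence normal, so an element of $\bar{\Qbb}_p$ in its fraction field already lies in $\tilde B^I_{\Cbb_p}$.

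\textbf{How to close the gap.} The paper simply cites [FF18, Prop.~10.2.7]. The missing ingredient is a structural property of $\tilde B^I_{\Cbb_p}$: that $K\otimes_{K_0}\tilde B^I_{\Cbb_p}$ is a domain, or equivalently that no ramified algebraic element lies in $\Frac(\tilde B^I_{\Cbb_p})$. The minimal-relation manipulation cannot produce this.

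One route, closest to your Step 1, goes through ramified Witt vectors. Let $k$ be the residue field of $K$. Then $W_{\Ocal_K}(\Ocal_{\Cbb_p^\flat})=W(\Ocal_{\Cbb_p^\flat})\otimes_{W(k)}\Ocal_K$. Hence $K\otimes_{K_0}\tilde B^I_{\Cbb_p}$ is the ring of functions on the corresponding annulus of the Fargues--Fontaine space with coefficient field $K$, which is a principal ideal domain by their theory, so in particular a noetherian domain.

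This ring is finite \'etale over $\tilde B^I_{\Cbb_p}$. So at the maximal ideal $\mathfrak m=\ker(K\otimes_{K_0}\tilde B^I_{\Cbb_p}\to\Cbb_p)$, the element $\xi$ is still a uniformizer. The induced maps $\mathfrak m^n/\mathfrak m^{n+1}\to \Fil^nB_{\dR}/\Fil^{n+1}B_{\dR}$ are isomorphisms, and Krull's intersection theorem then gives injectivity exactly as in your Step 1.
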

\begin{proof}
	It follows from \cite[Proposition 10.2.7]{FF18}.
\end{proof}

Next, we prove that $(\tilde{B}_{\log,\Cbb_p}^I)^{G_K}=K_0$.
The following arguments are essentially the same as those in \cite[2.4]{Ber02}.

\begin{definition}
	We define $u\coloneqq \log ([p^{\flat}]/p)=\sum_{n=0}^{\infty}(-1)^{n+1}\frac{([p^{\flat}]/p-1)^n}{n}\in B_{\dR}^+$. 
	It is well-defined since $[p^{\flat}]/p-1\in \Fil^1 B_{\dR}$.
	We also define $\xi\coloneqq p-[p^{\flat}]\in A_{\inf}=W(\Ocal_{\Cbb_p^{\flat}})$, whose image in $B_{\dR}^+$ is a generator of the ideal $\Fil^1 B_{\dR}\subset B_{\dR}^+$.
\end{definition}

\begin{lemma}\label{lem:image S}
Let $I\subset (0,\infty)$ be a closed interval such that $1\in I$.
Then there is a (well-defined) morphism of $A_{\inf}$-algebras
$$\tilde{B}_{\Cbb_p}^{I}\to (A_{\inf}[[T]]/(pT-\xi))[1/p].$$
\end{lemma}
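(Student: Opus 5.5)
The plan is to follow Berger's argument in \cite[2.4]{Ber02}, working with explicit integral models. The approach is to describe $\tilde{B}_{\Cbb_p}^{I}$ as a completion of $A_{\inf}[1/p]$-type rings generated by $[p^{\flat}]/p^{s}$ and $p/[p^{\flat}]^{1/r}$, and then map each such generator into $S\coloneqq (A_{\inf}[[T]]/(pT-\xi))[1/p]$.

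\textbf{Step 1 (integral model).} Write $I=[r,s]$ with $r\le 1\le s$. I would use the standard description of $\tilde{B}_{\Cbb_p}^{I}$ as
$$\tilde{B}_{\Cbb_p}^{I}=\tilde{A}^{I}[1/p],$$
where $\tilde{A}^{I}$ is the $p$-adic completion of $A_{\inf}[[p^{\flat}]^{a}/p,\ p/[p^{\flat}]^{b}]$. Here the exponents $a,b$ are determined by $r,s$ and the normalization for which the distinguished point lies in $Y^{[1,1]}$. Since $1\in I$, these exponents satisfy $a\ge 1\ge b$, up to the normalization convention in \cite{Mikami24}.

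\textbf{Step 2 (images of the generators).} Next I would check that the generators have images in $S$ lying in a $p$-adically complete subring $S_0\coloneqq A_{\inf}[[T]]/(pT-\xi)$.
\begin{itemize}
\item For $[p^{\flat}]/p$: in $S$ we have $[p^{\flat}]=p-\xi=p(1-T)$, so $[p^{\flat}]/p\mapsto 1-T$, and the same holds for its powers.
\item For $p/[p^{\flat}]^{b}$ with $b\le 1$: write $[p^{\flat}]^{b}=[p^{\flat}]\cdot[p^{\flat}]^{b-1}$, or reduce to $b=1$. Then $p/[p^{\flat}]=(1-T)^{-1}=\sum_{n\ge0} T^n$, which exists because $T$ is topologically nilpotent for the $T$-adic topology in $A_{\inf}[[T]]$.
\item Fractional powers $[p^{\flat}]^{a}$ with $a\ge1$ factor as $[p^{\flat}]^{a-1}\cdot[p^{\flat}]$, so they cause no trouble.
\end{itemize}

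\textbf{Step 3 (extension to the completion).} The resulting morphism $A_{\inf}[[p^{\flat}]^{a}/p,\ p/[p^{\flat}]^{b}]\to S_0$ must extend to the $p$-adic completion. For this I need $S_0$ to be $p$-adically complete and separated. This follows because $S_0$ is a quotient of the $(p,T)$-adically complete ring $A_{\inf}[[T]]$ by a closed ideal; in fact $pT-\xi$ is a nonzerodivisor and $S_0$ is $(p,[p^{\flat}])$-adically complete. Inverting $p$ then gives the morphism of $A_{\inf}$-algebras. Well-definedness, meaning independence of the presentation, is automatic because $A_{\inf}[1/p]\to \tilde{B}^{I}_{\Cbb_p}$ has dense image.

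\textbf{Main obstacle.} I expect the hardest step to be the completeness and convergence bookkeeping in Step 3. Specifically, I must verify that $(1-T)^{-1}$ genuinely lies in $S_0$, and that $p$-adic limits of elements of the polynomial model converge in $S_0$. This needs the topology on $S_0$ to be fine enough that $p^n$ and $T^n$ both tend to $0$, i.e.\ that the $(p,T)$-adic topology is the relevant one. I would handle this by proving that $A_{\inf}[[T]]/(pT-\xi)$ is $(p,T)$-adically complete and that $p$-adic Cauchy sequences coming from $\tilde{A}^{I}$ are $(p,T)$-adically Cauchy. The latter holds since $[p^{\flat}]=p(1-T)$.
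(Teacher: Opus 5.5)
Your key computation is the paper's: in the target $[p^{\flat}]=p-\xi=p(1-T)$, so $[p^{\flat}]/p\mapsto 1-T$ and $p/[p^{\flat}]\mapsto (1-T)^{-1}$. The gap is in turning this into a ring homomorphism.

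Prescribing images of the generators of the subring $R_0=A_{\inf}[[p^{\flat}]^a/p,\,p/[p^{\flat}]^b]$ does not define a map on $R_0$ until you account for the relations among those generators. Your stated reason, that $A_{\inf}[1/p]$ has dense image in $\tilde{B}^{I}_{\Cbb_p}$, cannot supply this, for two reasons:
\begin{itemize}
\item Density only gives \emph{uniqueness} of a continuous extension, never existence.
\item The density claim is false. Already for $I=[1,1]$, the Teichm\"uller expansion shows that an element of $A_{\inf}[1/p]$ which is integral on $Y^{[1,1]}_{\Cbb_p}$ lies in $A_{\inf}[[p^{\flat}]/p]$. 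Hence $p/[p^{\flat}]=([p^{\flat}]/p)^{-1}$ is not a limit of such elements, just as $1/z$ is not a uniform limit of polynomials on an annulus.
\end{itemize}

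The correct argument runs as follows.
\begin{itemize}
\item $p$ and $[p^{\flat}]=p(1-T)$ are units in $S=S_0[1/p]$, so $A_{\inf}\to S$ extends to $A_{\inf}[1/p,1/[p^{\flat}]]\to S$. Restrict this map to $R_0\subset A_{\inf}[1/p,1/[p^{\flat}]]$.
\item The image lies in $S_0$ because the generators' images do and because $S_0\to S$ is injective.
\item Injectivity holds because $S_0$ is $p$-torsion free: if $pf=(pT-\xi)g$, reducing mod $p$ gives $p^{\flat}\bar g=0$ in $\Ocal_{\Cbb_p^{\flat}}[[T]]$, so $g\in pA_{\inf}[[T]]$.
\end{itemize}

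Your Step 3 is asserted rather than proved. ``Quotient by a closed ideal'' would give $(p,T)$-adic completeness, which is strictly weaker than $p$-adic completeness here since $S_0/p=(\Ocal_{\Cbb_p}/p)[[T]]$. It would still suffice, but closedness of $(pT-\xi)$ is precisely what needs proof. It is easier to prove $p$-adic completeness directly:
\begin{itemize}
\item Surjectivity of $S_0\to\varprojlim S_0/p^n$ follows by lifting Cauchy sequences to the complete ring $A_{\inf}[[T]]$.
\item Separatedness follows from the torsion-freeness above, which gives $(pT-\xi)c\in p^nA_{\inf}[[T]]\Rightarrow c\in p^nA_{\inf}[[T]]$.
\end{itemize}

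The paper sidesteps both issues.
\begin{itemize}
\item It reduces to $I=[1,1]$ via the restriction $\tilde{B}^{I}_{\Cbb_p}\to\tilde{B}^{[1,1]}_{\Cbb_p}$.
\item It uses the explicit presentation $\tilde{B}^{[1,1]}_{\Cbb_p}=(A_{\inf}\langle X,Y\rangle/(pX-[p^{\flat}],XY-1))[1/p]$.
\item It maps the \emph{free} algebra $A_{\inf}\langle X,Y\rangle$ into $A_{\inf}[[T]]$ itself, which is obviously $p$-adically complete, via $X\mapsto 1-T$ and $Y\mapsto(1-T)^{-1}$.
\item Only then does it pass to the quotient, checking $pX-[p^{\flat}]\mapsto\xi-pT$ and $XY-1\mapsto 0$.
\end{itemize}
Completing before quotienting, and working with generators and relations, removes both your well-definedness problem and your ``main obstacle'' at once.
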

\begin{proof}
	Since there is a morphism $\tilde{B}_{\Cbb_p}^{I}\to \tilde{B}_{\Cbb_p}^{[1,1]}$, we may assume $I=[1,1]$.
	We note $\tilde{B}_{\Cbb_p}^{I}=A_{\inf}\left\langle\frac{[p^{\flat}]}{p},\frac{p}{[p^{\flat}]}\right\rangle[1/p]=(A_{\inf}\langle X,Y\rangle/(pX-[p^{\flat}],XY-1))[1/p]$.
	Let us consider the morphism 
	$$A_{\inf}[X,Y]\to A_{\inf}[[T]];\; X\mapsto 1-T, Y\mapsto (1-T)^{-1}.$$
	By taking the $p$-adic completions, we obtain a morphism 
	$$A_{\inf}\langle X,Y\rangle \to A_{\inf}[[T]].$$
	It induces a morphism 
	$$A_{\inf}\langle X,Y\rangle/(pX-[p^{\flat}],XY-1)\to A_{\inf}[[T]]/(pT-\xi).$$
	By adding $1/p$, we obtain the desired morphism.
\end{proof}

\begin{proposition}\label{prop:log fixed point}
Let $I\subset (0,\infty)$ be a closed interval such that $1\in I$.
Then the natural morphism
$$K\otimes_{K_0} \tilde{B}^I_{\log,\Cbb_p}\to B_{\dR}^+;\; \log[p^{\flat}]\mapsto u$$
is a $G_K$-equivariant injection.
In particular, there is an injection
$$K\otimes_{K_0} \tilde{B}^I_{\log,\Cbb_p}[1/t]\to B_{\dR},$$
and we have $(\tilde{B}^I_{\log,\Cbb_p}[1/t])^{G_K}=K_0$.
\end{proposition}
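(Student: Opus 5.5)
We follow Berger's argument \cite[2.4]{Ber02}. First we build the map. By the preceding lemma there is an injection $K\otimes_{K_0}\tilde{B}^I_{\Cbb_p}\to B_{\dR}^+$. Since $\tilde{B}^I_{\log,\Cbb_p}\cong \tilde{B}^I_{\Cbb_p}[X]$ with $X=\log[p^{\flat}]$, we extend it by sending $X\mapsto u$. This extension must be compatible with the other logarithms. For $x\in \Ocal_{\Cbb_p^{\flat}}^{\times}$ the element $\log[x]$ already lies in $\tilde{B}^I_{\Cbb_p}$. For general $x$ we write $x=(p^{\flat})^{q}x_0$ with $q\in\Qbb$; the symmetric-algebra relation then forces $\log[x]=q\log[p^{\flat}]+\log[x_0]$. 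The assignment is therefore forced and well defined.

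Next we check $G_K$-equivariance on $X$. For $g\in G_K$ we have $g(p^{\flat})=p^{\flat}\varepsilon^{c(g)}$ for some $c(g)\in\Zbb_p$. Hence $g\log[p^{\flat}]=\log[p^{\flat}]+c(g)t$ in $\tilde{B}_{\log}$. On the other side, $g(u)=\log([g p^{\flat}]/p)=u+c(g)t$, using additivity of $\log$ on $1+\Fil^1$. So the map is equivariant.

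The main point is injectivity, and we prove it by Berger's transcendence argument: $u$ is transcendental over the fraction field of the image of $K\otimes_{K_0}\tilde{B}^I_{\Cbb_p}$ in $B_{\dR}^+$. Suppose $P(u)=0$ for a nonzero polynomial $P(X)=\sum_{i=0}^d a_iX^i$ with $a_i$ in $K\otimes_{K_0}\tilde{B}^I_{\Cbb_p}$ and $d$ minimal. Apply $g\in G_{K}$ and subtract the original relation. Taking $g$ with $c(g)\neq 0$ — such $g$ exist, e.g.\ in $G_{K_\infty}$, since $p^{\flat}$ is not fixed — the difference has degree $<d$ in $u$ and leading coefficient $g(a_d)-a_d$. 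Using this together with the relations obtained for all $g$, minimality yields an identity of the form $a_{d-1}/a_d$ being a \emph{$G_{K_\infty}$-cocycle-twisted} element: $g(a_{d-1}/a_d)-a_{d-1}/a_d=-d\,c(g)t$ for $g\in G_{K_\infty}$ (after normalizing $a_d=1$ inside a fraction field where $a_d$ is invertible).

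Here the map of Lemma~\ref{lem:image S} enters. The relation $u=\log(1-T)$ with $pT=\xi$ shows that $u$ is, in a precise sense, a genuinely new ``$\xi/p$-adic'' function not defined over $\tilde{B}^I$. Concretely, we pass through $\tilde{B}^I_{\Cbb_p}\to (A_{\inf}[[T]]/(pT-\xi))[1/p]$ and compare growth: images of $\tilde{B}^I$ have bounded denominators in $T$, whereas $u=-\sum T^n/n$ does not. This rules out $u\in\Frac$ of the image, and iterating gives transcendence. We expect the most delicate step to be making this final comparison rigorous: one must show that $a_{d-1}/a_d+du$ would be $G_{K_\infty}$-invariant with values in a ring whose invariants are too small, and derive a contradiction from the denominators.

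The ``in particular'' statements are then formal. Inverting $t$ (which maps to $t$) gives an injection $K\otimes_{K_0}\tilde{B}^I_{\log,\Cbb_p}[1/t]\to B_{\dR}$, since $B_{\dR}^+$ is $t$-torsion free. For the invariants, an element fixed by $G_K$ maps into $B_{\dR}^{G_K}=K$. Writing it as $\sum b_iX^i$ with $b_i\in\tilde{B}^I[1/t]$, equivariance and $gX=X+c(g)t$ force the top coefficient to satisfy $b_d\in(\tilde{B}^I[1/t])^{G_K}$ and $d\,c(g)t\,b_d=g(b_{d-1})-b_{d-1}$. This makes $t$ cohomologous to zero unless $d=0$. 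So the element lies in $(\tilde{B}^I_{\Cbb_p}[1/t])^{G_K}$, which maps injectively into $K$ and meets $K\otimes_{K_0}$-trivially. It therefore equals $K_0$, by the preceding lemma applied after multiplying by powers of $t$, together with the Hodge–Tate weight-$0$ argument showing $(t^{-n}\tilde{B}^I)^{G_K}=K_0$.
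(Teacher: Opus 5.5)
Your construction of the map and your check of $G_K$-equivariance are fine. Your overall strategy is also the paper's: show that $u$ is not in the fraction field of the image, then upgrade this to transcendence by a Galois/minimal-polynomial argument. But neither half of the injectivity argument, which is the whole content of the proposition, is carried out correctly.

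\textbf{The Galois step uses the wrong group.} Normalize the minimal polynomial $P(X)=X^d+a_{d-1}X^{d-1}+\cdots+a_0$ of $u$ over the fraction field $F$ of the image. Uniqueness of the minimal polynomial gives $g(P)(X+c(g)t)=P(X)$, hence $g(a_{d-1}+du)=a_{d-1}+du$, for every $g$ preserving $F$.
\begin{itemize}
\item Restricting to $g\in G_{K_\infty}$ gains nothing. $B_{\dR}^{G_{K_\infty}}$ contains $K_\infty$, which is not contained in $F$, so there is no ``ring whose invariants are too small''.
\item The argument works only if you use all of $G_{K_0}$. Then $a_{d-1}+du\in B_{\dR}^{G_{K_0}}=K_0\subset F$, so $u\in F$.
\item The paper does this after reducing to injectivity of $\tilde{B}^I_{\log,\Cbb_p}=\tilde{B}^I_{\Cbb_p}[X]\to B_{\dR}^+$. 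This reduction is legitimate because $K\otimes_{K_0}\tilde{B}^I_{\Cbb_p}[X]$ is a domain that is integral over $\tilde{B}^I_{\Cbb_p}[X]$.
\item With your coefficients in $K\otimes_{K_0}\tilde{B}^I_{\Cbb_p}$ and the group $G_K$, you would only get $u\in K\cdot F$. You would then still have to run the $G_{K_0}$-argument over $F$ to reach $u\in F$.
\end{itemize}

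\textbf{The key non-membership is not proved.} This is the real missing ingredient. You need $u\notin\Frac(S)$, where $S\subset B_{\dR}^+$ is the image of $(A_{\inf}[[T]]/(pT-\xi))[1/p]$; by Lemma~\ref{lem:image S}, $S$ contains the image of $\tilde{B}^I_{\Cbb_p}$. Your ``bounded denominators'' comparison does not give this, because quotients of elements of $S$ can have $T$-expansions with unbounded, even exponentially growing, denominators. For example, $1-T/p=1-\xi/p^2\in S$ is a unit of $B_{\dR}^+$, and its inverse $\sum_{n\geq0}p^{-n}T^n$ lies in $\Frac(S)$. So the unbounded denominators of $u=\log(1-T)=-\sum_{n\geq1}T^n/n$ do not by themselves exclude $u\in\Frac(S)$. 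A finer argument is needed, and this is exactly the input the paper takes from \cite[4.3.2]{Fontaine94} (or \cite[Lemma 7.15]{Fontaine-Ouyang}). You flag this step as delicate but leave it undone, so injectivity is not established.

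\textbf{The last claim.} Once injectivity is known, it is formal. $K\otimes_{K_0}(\tilde{B}^I_{\log,\Cbb_p}[1/t])^{G_K}$ embeds $K$-linearly into $B_{\dR}^{G_K}=K$ and contains $K$, so $(\tilde{B}^I_{\log,\Cbb_p}[1/t])^{G_K}=K_0$. Your degree-reduction detour is therefore unnecessary. As phrased it is also circular: ``$t$ cohomologous to zero'' is contradictory only because $u\notin F$.
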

\begin{proof}
	The $G_K$-equivariance follows from the direct computation.
	Let us prove the injectivity.
	Since $K\otimes_{K_0} \tilde{B}^I_{\log,\Cbb_p}\cong (K\otimes_{K_0} \tilde{B}_{\Cbb_p}^I)[X]$ is an integral domain, it is enough to show that $\tilde{B}^I_{\log,\Cbb_p}\to B_{\dR}^+$ is injective.
	We note that this morphism is $G_{K_0}$-equivariant.
	Let $S\subset B_{\dR}^+$ be the image of $(A_{\inf}[[T]]/(pT-\xi))[1/p]\to B_{\dR}^+;\; T\mapsto \xi/p$.
	By Lemma~\ref{lem:image S}, the image of $\tilde{B}_{\Cbb_p}^I$ in $B_{\dR}^+$ is contained in $S$.
	By the proof \cite[4.3.2]{Fontaine94} (or \cite[Lemma 7.15]{Fontaine-Ouyang}), $u$ is not contained in $\Frac(S)\subset B_{\dR}.$
	In particular, $u$ is not contained in $\Frac(\tilde{B}_{\Cbb_p}^I)\subset B_{\dR}$.
	Let us prove that $u$ is transcendental over $\Frac(\tilde{B}_{\Cbb_p}^I)$.
	Assume that $u$ is algebraic over $\Frac(\tilde{B}_{\Cbb_p}^I)$ and let $f(X)=X^n+a_{n-1}X^{n-1}+a_0$ be the minimal polynomial of $u$.
	Let $c\colon G_K\to \Qbb_p$ be the cocycle such that for any $g\in G_{K_0}$, $gu=u+c(g)t$.
	Then for $g\in G_{K_0}$, we have 
	$$(u+c(g)t)^n+g(a_{n-1})(u+c(g)t)^{n-1}+\cdots+g(a_0)=0$$.
	Since $f$ is a minimal polynomial, we get 
	$$g(a_{n-1})+nc(g)t=a_{n-1}.$$
	Let $a=a_{n-1}+nu$, then $g(a)=a$ for $g\in G_{K_0}$.
	Therefore, we obtain $a\in (B_{\dR})^{G_{K_0}}=K_0\in \tilde{B}_{\Cbb_p}^I$.
	Thus, we get $u=(a-a_{n-1})/n \in \tilde{B}_{\Cbb_p}^I$, which is a contradiction.
\end{proof}

\begin{corollary}
	Let $I\subset (0,\infty)$ be a closed interval such that $p^n\in I$ for some $n\in \Zbb$.
	Then we have $(\tilde{B}^I_{\log,\Cbb_p,A}[1/t])^{G_K}=K_0\otimes A$.
\end{corollary}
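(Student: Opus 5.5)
The plan is to reduce to the case $A=\Qbb_p$, which is Proposition~\ref{prop:log fixed point} once the interval is moved to contain $1$, and then to transport that statement to general $A$ by the same flatness/nuclearity argument as in Lemma~\ref{lem:HT fixed vector}.

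\textbf{Step 1 (reduce to an interval containing $1$).} Suppose $p^n\in I$. The Frobenius $\varphi^{n}$ gives a $G_K$-equivariant isomorphism
$$\tilde{B}^I_{\log,\Cbb_p}\cong\tilde{B}^{p^{-n}I}_{\log,\Cbb_p},$$
and $1\in p^{-n}I$. This isomorphism multiplies $t$ by $p^n$, so it extends after inverting $t$. It is also $K_0$-semilinear via $\varphi^n$ on $K_0$, which is bijective. Hence $\varphi^n$ maps the $G_K$-invariants of the two rings bijectively onto each other, and it carries $K_0\otimes A$ to itself. So I may assume $1\in I$.

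\textbf{Step 2 (the case $A=\Qbb_p$).} With $1\in I$, Proposition~\ref{prop:log fixed point} gives
$$(\tilde{B}^I_{\log,\Cbb_p}[1/t])^{G_K}=K_0.$$
Equivalently, with $B=\tilde{B}^I_{\log,\Cbb_p}[1/t]$, the sequence
$$0\to K_0\to B\to C(G_K,B),\qquad x\mapsto (g\mapsto gx),$$
is left exact as condensed modules. This sequence is left exact at the level of underlying sets. As condensed modules, $K_0$ is a closed subspace of a countable colimit of Fréchet spaces, so the identification of the invariants with $K_0$ is also condensed.

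\textbf{Step 3 (base change to $A$).} I apply $-\otimes\Acal$. Since $A$ is nuclear and flat over $\Qbb_{p,\square}$ (\cite[Remark 2.3, Example 1.36 (3)]{Mikami24}), the sequence stays left exact. Next I identify each term:
\begin{itemize}
\item $B\otimes A=\tilde{B}^I_{\log,\Cbb_p,A}[1/t]$, because localization and the polynomial variable $\log[p^\flat]$ commute with the tensor product;
\item $C(G_K,B)\otimes A\cong C(G_K,B\otimes A)$ by \cite[Proposition 5.35]{And21}.
\end{itemize}
For the second identification I need $B$ to be a nuclear $\Qbb_{p,\square}$-module. This holds because $B$ is a countable colimit of the nuclear modules $t^{-k}\tilde{B}^I_{\Cbb_p}\cdot\log[p^\flat]^{j}$. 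The resulting left exact sequence then identifies the $G_K$-invariants of $\tilde{B}^I_{\log,\Cbb_p,A}[1/t]$ with $K_0\otimes A$.

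The main obstacle I expect is the justification in Step 3: nuclearity of $\tilde{B}^I_{\log,\Cbb_p}[1/t]$, and compatibility of $C(G_K,-)$ with $-\otimes A$ for this colimit. I would handle it by writing
$$\tilde{B}^I_{\log,\Cbb_p}[1/t]=\varinjlim_{k,m}\ \bigoplus_{j\le m} t^{-k}\tilde{B}^I_{\Cbb_p}\log[p^\flat]^j.$$
On the piece of degree at most $m$ in $\log[p^\flat]$, the $G_K$-action is through Banach modules, because $g\log[p^\flat]=\log[p^\flat]+c(g)t$ preserves that piece. Both $C(G_K,-)$ and $-\otimes A$ commute with these filtered colimits, since $\Qbb_{p,\square}[G_K]$ is compact. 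This reduces the identification to Banach modules, where \cite[Proposition 5.35]{And21} applies.
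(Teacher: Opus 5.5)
Your proof is correct and is essentially the paper's argument: the paper also reduces to $A=\Qbb_p$ by the flatness/nuclearity argument of Lemma~\ref{lem:HT fixed vector}, and uses the isomorphism $\varphi^n\colon \tilde{B}^I_{\log,\Cbb_p}[1/t]\xrightarrow{\sim}\tilde{B}^{p^{-n}I}_{\log,\Cbb_p}[1/t]$ to invoke Proposition~\ref{prop:log fixed point}; you apply these steps in the opposite order and spell out the nuclearity of $\tilde{B}^I_{\log,\Cbb_p}[1/t]$, which the paper leaves implicit. One small slip, which the paper shares: the map in your left exact sequence should be $x\mapsto(g\mapsto gx-x)$, not $x\mapsto(g\mapsto gx)$, since the latter is injective.
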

\begin{proof}
	By the same argument as in Lemma~\ref{lem:HT fixed vector}, we can reduce to the case where $A=\Qbb_p$.
	Since there is an isomorphism
	$$\varphi^{n}\colon \tilde{B}^I_{\log,\Cbb_p}[1/t]\to \tilde{B}^{p^{-n}I}_{\log,\Cbb_p}[1/t],$$
	the claim follows from Proposition~\ref{prop:log fixed point}.
\end{proof}

By the usual argument, we get the following corollary.
\begin{corollary}\label{cor:log regular}
	Let $I\subset (0,\infty)$ be a closed interval such that $p^n\in I$ for some $n\in \Zbb$.
	Then $\tilde{B}^I_{\log,\Cbb_p}[1/t]$ is a $(\Qbb_p,G_K)$-regular ring in the sense of Fontaine.
\end{corollary}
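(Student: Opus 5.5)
To show $\tilde{B}^I_{\log,\Cbb_p}[1/t]$ is $(\Qbb_p,G_K)$-regular, I would verify Fontaine's three conditions in turn. These are: (i) the ring $B=\tilde{B}^I_{\log,\Cbb_p}[1/t]$ is a domain; (ii) $B^{G_K}=\Frac(B)^{G_K}$; (iii) every nonzero $b\in B$ spanning a $G_K$-stable $\Qbb_p$-line is a unit in $B$.

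First, by the isomorphism $\varphi^{n}\colon \tilde{B}^I_{\log,\Cbb_p}[1/t]\to \tilde{B}^{p^{-n}I}_{\log,\Cbb_p}[1/t]$, which is $G_K$-equivariant, we may assume $1\in I$. Then Proposition~\ref{prop:log fixed point} gives a $G_K$-equivariant injection $B\hookrightarrow B_{\dR}$. This already proves (i), since $B_{\dR}$ is a field, and it also gives $\Frac(B)\hookrightarrow B_{\dR}$, still $G_K$-equivariantly.

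For (ii), I would take the chain of inclusions
$$K_0=B^{G_K}\subset \Frac(B)^{G_K}\subset B_{\dR}^{G_K}=K.$$
I expect this to be the only subtle point, since $K\neq K_0$ in general. To handle it, note that $K_0\subset \Frac(B)^{G_K}\subset K$, so $\Frac(B)^{G_K}$ is a field between $K_0$ and $K$. The injectivity of $K\otimes_{K_0}B\to B_{\dR}$ implies that $K$ and $\Frac(B)$ are linearly disjoint over $K_0$ inside $B_{\dR}$. Hence $K\cap\Frac(B)=K_0$, because any element of $K$ lying in $\Frac(B)$ gives a $K_0$-linear relation with $1$. This yields $\Frac(B)^{G_K}=K_0=B^{G_K}$.

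For (iii), let $b\in B\setminus\{0\}$ with $gb=\eta(g)b$ for a character $\eta\colon G_K\to\Qbb_p^\times$. The standard argument writes $b=t^{-m}b_0$ with $b_0\in \tilde{B}^I_{\log,\Cbb_p}$ and reduces to showing that $b$ is a unit. Concretely:
\begin{enumerate}
\item Set $b':=\varphi^{k}(b)/b$ for a suitable $k$, or alternatively use $N$. Since $N$ commutes with $G_K$, the element $N(b)$ is also an $\eta$-eigenvector, and $N$ lowers the degree in $\log[p^\flat]$. So $N(b)/b\in\Frac(B)^{G_K}=K_0$. Comparing degrees in the polynomial variable $\log[p^\flat]$ forces $N(b)=0$, i.e.\ $b\in\tilde{B}^I_{\Cbb_p}[1/t]$.
\item Then $b$ is a $G_K$-eigenvector in $\tilde{B}^I_{\Cbb_p}[1/t]$. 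Its zero locus on $Y^I_{\Cbb_p}$ is a $G_K$-stable divisor, and $G_K$-orbits of closed classical points other than the $\varphi$-translates of the distinguished point $x_\infty$ are infinite (Fargues–Fontaine). Hence the divisor is supported on the points where $t$ vanishes. Within $I$ these zeros have bounded multiplicity, so $b=t^{j}\cdot u$ with $u$ a $G_K$-eigen unit of $\tilde{B}^I_{\Cbb_p}$, and therefore $b$ is invertible in $B$.
\end{enumerate}
Alternatively, the usual argument ``by the usual argument'' may just use (ii): $b$ and $\bar b$ (via $b\cdot gb^{-1}$ tricks) together with $B_{\dR}$ to see that $1/b\in B$. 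I would present step (2) as the main technical input.
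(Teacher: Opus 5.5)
Your argument is correct and is essentially the ``usual argument'' the paper appeals to. Conditions (i) and (ii) come from the $G_K$-equivariant embedding $K\otimes_{K_0}\tilde{B}^I_{\log,\Cbb_p}[1/t]\hookrightarrow B_{\dR}$ of Proposition~\ref{prop:log fixed point}, after the $\varphi^n$-shift, with linear disjointness of $K$ and $\Frac(B)$ over $K_0$ giving $\Frac(B)^{G_K}=K_0$. For (iii), you reduce via $N$ (equivalently, via the leading coefficient in $\log[p^{\flat}]$) to the Fargues--Fontaine fact that nonzero $G_K$-stable ideals of $\tilde{B}^I_{\Cbb_p}[1/t]$ are the unit ideal, which is the same input the paper uses in Theorem~\ref{thm:Gamma-stable ideal}.

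Two small corrections. Bounded multiplicities only give $b_0\mid t^M$ in the PID $\tilde{B}^I_{\Cbb_p}$, not $b_0=t^j u$, although $b_0\mid t^M$ already makes $b$ a unit in $B$. The closing ``alternative'' via $b\cdot gb^{-1}$ is not an argument and should be dropped.
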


\begin{definition}
	Let $V$ be a $G_K$-equivariant vector bundle over $X_{\Cbb_p,\Acal}$, and let $I\subset (0,\infty)$ be a closed interval.
	\begin{enumerate}
	\item 
	We define $V_{\log}^{I}=V^{I}\otimes_{\tilde{B}^{I}_{\Cbb_p,A}}\tilde{B}^{I}_{\log,\Cbb_p,A},$ which is a finite projective $\tilde{B}^{I}_{\log,\Cbb_p,A}$-module with a $G_K$-action.
	We also define $$D_{\st}^{K,I}(V)\coloneqq(V_{\log}^{I}[1/t])^{G_K},$$ which admits a canonical \textit{monodromy operator} $N\colon D_{\st}^{K,I}(V)\to D_{\st}^{K,I}(V)$.
	When $K$ is clear from the context, we simply write $D_{\st}^I(V)$.
	\item Assume that $p^n,p^{n+1}\in I$ for some $n\in \Zbb$.
	Then we say that $V$ is \textit{$I$-semistable}\footnote{By replacing $\tilde{B}_{\log,\Cbb_p,A}^{I}[1/t]$ with $\tilde{B}_{\Cbb_p,A}^{I}[1/t]$, we obtain the definition of crystalline $G_K$-equivariant vector bundles. The subsequent arguments work similarly if we replace ``semistable'' with ``crystalline'' and set the monodromy operator $N$ to $0$, but we omit the details here.} if the morphism
	$$D_{\st}^I(V)\otimes_{K_0\otimes A} \tilde{B}_{\log,\Cbb_p,A}^{I}[1/t]\to V_{\log}^{I}[1/t]$$
	is an isomorphism.
	Moreover, we say that $V$ is \textit{$I$-potentially semistable} if there is a finite extension $L/K$ such that $V$ is $I$-semistable as a $G_L$-equivariant vector bundle.
	\end{enumerate}
\end{definition}

\begin{remark}\label{rem:numerical criterion}
	By Corollary~\ref{cor:log regular}, a $G_K$-equivariant vector bundle $V$ over $X_{\Cbb_p}$ is $I$-semistable if and only if $\rk V=\dim_{K_0} D_{\st}^I(V)$.
\end{remark}
\begin{remark}
	Let $V$ be a $G_K$-equivariant \'{e}tale vector bundle $V$ over $X_{\Cbb_p}$, and $W$ be a corresponding $p$-adic representation of $G_K$.
	Then we can define $D_{\st}(W)$ as usual.
	Then $D_{\st}^I(V)$ is not necessarily isomorphic to $D_{\st}(W)$.
	If $W$ is semistable, then $D_{\st}^I(V)\cong D_{\st}(W)$, see the proof of Theorem~\ref{thm:comparison with the classical definition}.
\end{remark}

The following lemma can be proved in the same way as in the Hodge-Tate case.

\begin{lemma}\label{lem:Dst fin proj}
Let $I\subset (0,\infty)$ be a closed interval such that $p^n, p^{n+1}\in I$ for some $n\in \Zbb$, and let $V$ be a $I$-semistable $G_K$-equivariant vector bundle over $X_{\Cbb_p,\Acal}$.
Then $D_{\st}^I(V)$ is a finite projective $K_0\otimes A$-module.
\end{lemma}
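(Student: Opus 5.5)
We follow the proof of Lemma~\ref{lem:DHT fin proj}: the plan is to realize $D_{\st}^I(V)$ as a descent datum along a faithfully flat morphism and then apply \cite[Theorem 1.35]{Mikami24}. Write $B=\tilde{B}^{I}_{\log,\Cbb_p,A}[1/t]$. By hypothesis the natural map
$$D_{\st}^I(V)\otimes_{K_0\otimes A} B\to V^I_{\log}[1/t]$$
is an isomorphism, and the target is a finite projective $B$-module. So it suffices to show that $(K_0\otimes A,\Zbb)_{\square}\to (B,\Zbb)_{\square}$ is faithfully flat. Granting this, \cite[Theorem 1.35]{Mikami24} (descent of finite projectivity along faithfully flat maps) gives that $D_{\st}^I(V)$ is a finite projective $K_0\otimes A$-module.

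For faithful flatness, we first reduce to $A=\Qbb_p$. Since $\tilde{B}^{I}_{\log,\Cbb_p}[1/t]$ is nuclear and $B=\tilde{B}^{I}_{\log,\Cbb_p}[1/t]\otimes\Acal$, faithful flatness of $K_0\to \tilde{B}^{I}_{\log,\Cbb_p}[1/t]$ over $(K_0,\Zbb)_\square$ should base change along $K_0\to K_0\otimes A$. This is the same reduction used in Lemma~\ref{lem:BsdR flat} and in the Hodge--Tate lemma.

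For $A=\Qbb_p$, note that $\tilde{B}^{I}_{\log,\Cbb_p}\cong \tilde{B}^I_{\Cbb_p}[X]$ is a polynomial ring over $\tilde{B}^I_{\Cbb_p}$. Inverting $t$ is a filtered colimit, which preserves flatness. Hence it is enough that $\tilde{B}^I_{\Cbb_p}$, and then its localization at $t$, is faithfully flat over $(K_0,\Zbb)_\square$. Over a field, a nonzero nuclear module of this type (a Banach $K_0$-space, or a localization of one) should be flat and faithfully flat by \cite[Lemma 3.21]{RJRC22}. Faithfulness is immediate because the module is nonzero over the field $K_0$ and $K_0\subset \tilde{B}^I_{\Cbb_p}[1/t]$.

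The step most likely to need care is applying \cite[Lemma 3.21]{RJRC22} to the non-Banach module $\tilde{B}^{I}_{\log,\Cbb_p}[1/t]$. To handle it, we write this module as a countable filtered colimit $\varinjlim_{n,m} t^{-m}\bigoplus_{i\le n}\tilde{B}^I_{\Cbb_p}X^i$ of Banach $K_0$-spaces. Flatness of each term, together with the fact that filtered colimits are exact in solid modules, then gives flatness of the colimit.
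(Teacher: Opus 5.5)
Your proposal is correct and is essentially the paper's argument: the paper proves this lemma ``in the same way as in the Hodge--Tate case'', i.e.\ by faithfully flat descent via \cite[Theorem 1.35]{Mikami24} along $(K_0\otimes A,\Zbb)_{\square}\to \tilde{B}^{I}_{\log,\Cbb_p,A}[1/t]$, with faithful flatness reduced to $A=\Qbb_p$ and deduced from \cite[Lemma 3.21]{RJRC22}, exactly as you do. The one point to state more carefully is faithfulness, which in the solid setting does not follow just from being nonzero over a field; it holds because the cokernel of $K_0\to \tilde{B}^{I}_{\log,\Cbb_p}[1/t]$ is again a countable colimit of Banach spaces, hence flat, so $N\to N\otimes \tilde{B}^{I}_{\log,\Cbb_p}[1/t]$ is injective for every solid $K_0$-module $N$.
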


\begin{proposition}
Let $V$ be a $G_K$-equivariant vector bundle over $X_{\Cbb_p,\Acal}$, and let $I,I'\subset (0,\infty)$ be closed intervals such that $p^n, p^{n+1}\in I$, $p^{n'},p^{n'+1}\in I'$ for some $n, n'\in \Zbb$.
Then $V$ is $I$-semistable (resp. $I$-potentially semistable) if and only if it is $I'$-semistable (resp. $I'$-potentially semistable).
\end{proposition}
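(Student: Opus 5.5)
Because potentially semistable is semistable for some $G_L$, and the definitions for $G_L$ use the same rings, the potentially semistable statement follows from the semistable one applied to $G_L$. It therefore suffices to prove: $V$ is $I$-semistable if and only if it is $I'$-semistable. The plan is to compare both with a common intermediate interval, using Frobenius to move intervals and restriction maps to pass between nested intervals.

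\emph{Step 1 (Frobenius transport).} The isomorphisms of $G_K$-equivariant modules
$$\varphi^{m}\colon V_{\log}^{I}[1/t]\xrightarrow{\sim} V_{\log}^{p^{-m}I}[1/t]$$
are semilinear over $\varphi^m$ on $\tilde{B}_{\log,\Cbb_p,A}^{I}[1/t]$, which preserves $K_0\otimes A$. They come from the $\varphi$-structure of the vector bundle $V$, which identifies $\varphi^{*}V^{p^{-1}I}\cong V^{I}$, together with $\varphi(\log[p^\flat])=p\log[p^\flat]$ and $\varphi(t)=pt$. Hence they identify $D_{\st}^{I}(V)$ with $D_{\st}^{p^{-m}I}(V)$ and carry the comparison morphism for $I$ to the one for $p^{-m}I$. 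So $I$-semistability is invariant under $I\mapsto p^{m}I$. After such a shift we may assume $I\cap I'$ contains $p^{k},p^{k+1}$ for some $k$; for instance, replace both by intervals containing $[1,p]$. It then suffices to treat nested intervals $J\subset I$, each containing some $p^{k},p^{k+1}$, and to compare $I$ with $J$.

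\emph{Step 2 (nested intervals).} Restriction gives a $G_K$-equivariant map $V_{\log}^{I}[1/t]\to V_{\log}^{J}[1/t]=V_{\log}^{I}[1/t]\otimes_{\tilde{B}^{I}_{\log,A}}\tilde{B}^{J}_{\log,A}$, and hence $D_{\st}^{I}(V)\to D_{\st}^{J}(V)$. First, if $V$ is $I$-semistable, base change of the isomorphism shows $D_{\st}^{I}(V)\otimes_{K_0\otimes A}\tilde{B}^{J}_{\log,A}[1/t]\cong V_{\log}^{J}[1/t]$. Taking $G_K$-invariants, using that $D_{\st}^I$ is finite projective (Lemma~\ref{lem:Dst fin proj}) and that the $G_K$-invariants of the period ring are $K_0\otimes A$, gives $D_{\st}^I\cong D_{\st}^J$, so $V$ is $J$-semistable.

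For the converse, assume $V$ is $J$-semistable and set $D=D_{\st}^{J}(V)$, which is finite projective. The key point is to show that $D$ already lies in $V_{\log}^{I}[1/t]$. Here I would use Frobenius gluing: $D$ is stable under $\varphi$, since $\varphi$ carries $J$-invariants to $p^{-1}J$-invariants and $p^{\pm1}J\cap J$ still allows the comparison. Because $J$ contains $p^{k},p^{k+1}$, the intervals $p^{m}J$ cover $(0,\infty)$, so elements of $D$ propagate to sections over every $p^{m}J$. These sections agree on overlaps, since the restriction maps are injective and the invariants are equal there. Gluing them over the finite cover of $I$ by the sets $p^{m}J\cap I$ produces sections over $I$; for this one uses that $V$ is a vector bundle and that, for a rational cover, $\tilde{B}^{I}$ is the equalizer of the $\tilde{B}^{p^mJ\cap I}$. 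Thus $D\subset V_{\log}^{I}[1/t]^{G_K}$, and the map $D\otimes\tilde{B}^{I}_{\log,A}[1/t]\to V_{\log}^{I}[1/t]$ becomes an isomorphism after restriction to each piece of the cover, hence is an isomorphism.

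The main obstacle is this gluing step. It involves poles in $t$, which are allowed at different points, namely the $\varphi$-translates of $x_\infty$, and the denominators of $1/t$ must be bounded uniformly in the glued elements. I would handle this by writing $D\subset t^{-N}V_{\log}^{J}$ for a fixed $N$, which is possible since $D$ is finitely generated, and using $\varphi(t)=pt$ so that the bound is preserved under Frobenius. The gluing can then be done inside $t^{-N}V_{\log}$, which is again a finite projective module over the sheaf, and descent for finite projective modules along the analytic cover of $Y^{I}_{\Cbb_p,\Acal}$ applies.
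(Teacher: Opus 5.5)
Your proposal is correct and follows essentially the same route as the paper. The direction from the larger to the smaller interval goes by base change plus $(\tilde{B}^{J}_{\log,\Cbb_p,A}[1/t])^{G_K}=K_0\otimes A$ and finite projectivity of $D_{\st}$; the converse goes by Frobenius-translating a minimal interval $[p^k,p^{k+1}]$ and gluing along overlaps that contain a power of $p$, where the invariants are identified. Your uniform bound $D\subset t^{-N}V_{\log}^{J}$, which is preserved under $\varphi$ because $\varphi(t)=pt$, makes explicit a point that the paper's one-line ``gluing argument'' leaves implicit here. The paper only handles that point carefully later, in the proof of Theorem~\ref{thm:comparison with the classical definition}.
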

\begin{proof}
	It suffices to show only the claim regarding semistability.
	We may assume $I\subset I'$.
	By the same argument as in the proof of Lemma~\ref{lem:sdR implies dR}, we find that $I'$-semistability implies $I$-semistability.
	We want to prove the converse. 
	We may assume $I=[p^n,p^{n+1}]$ and $I'=[p^{-l},p^m]$.
	By applying the Frobenius $\varphi$, we may assume $I=[p^{-1},1]$.
	Let $V$ be a $I$-semistable $G_K$-equivariant vector bundle over $X_{\Cbb_p,\Acal}$.
	Then for $r=p^{-1},1$, we have an isomorphism 
	$$V_{\log}^{[p^{-1},1]}[1/t]^{G_K}\otimes_{K_0\otimes A} \tilde{B}_{\log,\Cbb_p,A}^{[r,r]}[1/t]\cong V_{\log}^{[r,r]}[1/t].$$
	Therefore, we obtain 
	\begin{align*}
		V_{\log}^{[r,r]}[1/t]^{G_K}&\cong (V_{\log}^{[p^{-1},1]}[1/t]^{G_K}\otimes_{K_0\otimes A} \tilde{B}_{\log,\Cbb_p,A}^{[r,r]}[1/t])^{G_K}\\
		&\cong V_{\log}^{[p^{-1},1]}[1/t]^{G_K}\otimes_{K_0\otimes A} \tilde{B}_{\log,\Cbb_p,A}^{[r,r]}[1/t]^{G_K}\\
		&\cong V_{\log}^{[p^{-1},1]}[1/t]^{G_K}.
	\end{align*}
	In other words, the natural morphism 
	$$V_{\log}^{[p^{-1},1]}\to V_{\log}^{[r,r]}$$
	induces an isomorphism 
	$$V_{\log}^{[p^{-1},1]}[1/t]^{G_K}\cong V_{\log}^{[r,r]}[1/t]^{G_K}.$$
	The same holds for the interval of the form $[p^{-1-l},p^{-l}]$ by applying the Frobenius $\varphi^l$.
	By repeatedly applying a gluing argument, we find that for any $l\geq 1$, $m\geq 0$, the natural morphisms
	$$V_{\log}^{[p^{-l},p^m]}[1/t]^{G_K}\to V_{\log}^{[p^{-1},1]}[1/t]^{G_K}$$
	and
	$$V_{\log}^{[p^{-l},p^m]}[1/t]^{G_K} \otimes_{K_0\otimes A}\tilde{B}_{\log,\Cbb_p,A}^{[p^{-l},p^m]} \to V_{\log}^{[p^{-l},p^m]}[1/t]$$
	are isomorphisms.
\end{proof}

Since $I$-semistability (resp. $I$-potentially semistability) is independent of the choice of $I$, we henceforth omit $I$ from the terminology.
In practice, we usually take $I=[p^{-1},1]$.
By the proof, we get the following corollary.
\begin{corollary}
Let $V$ be a semistable $G_K$-equivariant vector bundle over $X_{\Cbb_p,\Acal}$, and
let $I, I'\subset (0,\infty)$ be a closed interval such that $p^n \in I$, $p^{n'}\in I'$ for some $n,n'\in \Zbb$.
Then for an integer $a\in \Zbb$ such that $p^a I\subset I'$, the natural morphism
$$D_{\st}^{I'}(V)\xrightarrow{\varphi^a} D_{\st}^{p^{-a}I'}(V)\to D_{\st}^I(V)$$
is a $\varphi^a$-semilinear isomorphism of $K_0\otimes A$-modules.
\end{corollary}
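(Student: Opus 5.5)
The plan is to split the composite into the Frobenius step and the restriction step, and to show that each one is an isomorphism on $G_K$-invariants.

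\emph{Frobenius step.} The $\varphi$-structure of the vector bundle $V$ on $X_{\Cbb_p,\Acal}$ gives a $G_K$-equivariant $\varphi^a$-semilinear isomorphism $V^{I'}\xrightarrow{\sim} V^{p^{-a}I'}$ over $\varphi^a\colon \tilde{B}^{I'}_{\Cbb_p,A}\to \tilde{B}^{p^{-a}I'}_{\Cbb_p,A}$. I will tensor it with the isomorphism $\varphi^a\colon \tilde{B}^{I'}_{\log,\Cbb_p,A}\to\tilde{B}^{p^{-a}I'}_{\log,\Cbb_p,A}$. This map is an isomorphism on the $\tilde{B}$-part by the Frobenius on the curve, and it sends $\log[p^\flat]\mapsto p^a\log[p^\flat]$. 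The result is a $G_K$-equivariant $\varphi^a$-semilinear bijection $V^{I'}_{\log}\to V^{p^{-a}I'}_{\log}$. Since $\varphi(t)=pt$, it extends after inverting $t$. Taking $G_K$-invariants then gives a $\varphi^a$-semilinear isomorphism $D_{\st}^{I'}(V)\to D_{\st}^{p^{-a}I'}(V)$. This step uses no semistability.

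\emph{Restriction step.} It remains to show that for closed intervals $I\subset J$ each containing some power of $p$ (here $J=p^{-a}I'$, which contains $p^{n'-a}$), the restriction $D_{\st}^J(V)\to D_{\st}^I(V)$ is an isomorphism. The main point is that $I$ and $J$ may contain only one power of $p$, so the preceding proposition does not directly apply. I will proceed as follows.
\begin{enumerate}
\item Choose a larger interval $J'\supset J$ of the form $[p^{-l},p^m]$ containing two consecutive powers of $p$.
\item By the preceding proposition, $V$ is $J'$-semistable. Hence for any closed $I''\subset J'$ there is an isomorphism
$$V^{I''}_{\log}[1/t]\cong D_{\st}^{J'}(V)\otimes_{K_0\otimes A}\tilde{B}^{I''}_{\log,\Cbb_p,A}[1/t],$$
obtained by base change along $\tilde{B}^{J'}_{\log,\Cbb_p,A}\to\tilde{B}^{I''}_{\log,\Cbb_p,A}$.
\item If $I''$ contains a power of $p$, take $G_K$-invariants. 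Since $D_{\st}^{J'}(V)$ is finite projective by Lemma~\ref{lem:Dst fin proj}, invariants commute with the tensor product (as in the proof of Lemma~\ref{lem:sdR implies dR}). The corollary computing $(\tilde{B}^{I''}_{\log,\Cbb_p,A}[1/t])^{G_K}=K_0\otimes A$ needs only one power of $p$ in $I''$. Together these give $D_{\st}^{J'}(V)\xrightarrow{\sim}D_{\st}^{I''}(V)$.
\item Apply this with $I''=J$ and with $I''=I$. The restrictions $J'\to J\to I$ are compatible, so two-out-of-three shows that $D_{\st}^J(V)\to D_{\st}^I(V)$ is an isomorphism.
\end{enumerate}

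Composing the two steps gives the claim, and semilinearity comes entirely from the Frobenius step. I expect the only delicate point to be making sure the invariant computation in the restriction step requires just one power of $p$ in the interval. This is exactly what the earlier corollary on $(\tilde{B}^{I}_{\log,\Cbb_p,A}[1/t])^{G_K}$ provides, via $\varphi^n$ and Proposition~\ref{prop:log fixed point}.
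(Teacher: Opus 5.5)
Your proposal is correct and is essentially the paper's argument. The paper derives this corollary ``by the proof'' of the preceding proposition: there it base-changes the semistability isomorphism to a subinterval containing a power of $p$, takes $G_K$-invariants using that $D_{\st}$ is finite projective and that $(\tilde{B}^{I}_{\log,\Cbb_p,A}[1/t])^{G_K}=K_0\otimes A$, and transports along Frobenius. Your enlargement to an interval $J'$ containing two consecutive powers of $p$, followed by the two-out-of-three step, just makes explicit the case of general intervals that the paper leaves implicit.
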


\begin{definition}\label{def:phi,N}
Let $V$ be a semistable $G_K$-equivariant vector bundle over $X_{\Cbb_p,\Acal}$.
Then we write $D_{\st}(V)=D_{\st}^{[p^{-1},1]}(V)$.
By the restriction, there are isomorphisms $D_{\st}(V)\simeq D_{\st}^{[p^{-1},p^{-1}]}(V)$ and $D_{\st}(V)\simeq D_{\st}^{[1,1]}(V)$.
Moreover, there is a $\varphi$-semilinear isomorphism $\varphi\colon D_{\st}^{[1,1]}(V)\xrightarrow{\sim} D_{\st}^{[p^{-1},p^{-1}]}(V)$.
Therefore, we obtain a $\varphi$-semilinear automorphism $\varphi$ on $D_{\st}(V)$, and we call it the \textit{Frobenius automorphism} on $D_{\st}(V)$
\end{definition}

By construction, we get the following lemma.
\begin{lemma}
	Let $V$ be a semistable $G_K$-equivariant vector bundle over $X_{\Cbb_p,\Acal}$.
	Then the Frobenius automorphism $\varphi$ and the monodromy operator $N$ on $D_{\st}(V)$ satisfy $p\varphi N=N\varphi.$
\end{lemma}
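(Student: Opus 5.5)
The plan is to reduce the relation $p\varphi N=N\varphi$ on $D_{\st}(V)$ to the corresponding relation on the period rings, which is recorded in the definition of the monodromy operator. Recall the relevant structures. By definition, $N$ on $D_{\st}^{I}(V)=(V_{\log}^I[1/t])^{G_K}$ is induced by the operator $1\otimes N$ on $V_{\log}^I=V^I\otimes_{\tilde{B}^I_{\Cbb_p,A}}\tilde{B}^I_{\log,\Cbb_p,A}$. This operator is well defined because $N$ on $\tilde{B}^I_{\log,\Cbb_p,A}$ is a $\tilde{B}^I_{\Cbb_p,A}$-derivation, and it commutes with $G_K$. It also satisfies $N(t)=0$, since $t\in\tilde{B}^I_{\Cbb_p}$, so it extends to $V_{\log}^I[1/t]$. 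The Frobenius on $D_{\st}(V)$ is the composite
$$D_{\st}(V)\cong D_{\st}^{[1,1]}(V)\xrightarrow{\varphi} D_{\st}^{[p^{-1},p^{-1}]}(V)\cong D_{\st}(V).$$
Here the middle arrow is induced by the Frobenius structure $\varphi^*V^{[p^{-1},p^{-1}]}\cong V^{[1,1]}$ of the vector bundle $V$ on $X_{\Cbb_p,\Acal}$, tensored with $\varphi\colon\tilde{B}^{[1,1]}_{\log,\Cbb_p,A}\to\tilde{B}^{[p^{-1},p^{-1}]}_{\log,\Cbb_p,A}$.

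First I check that the restriction isomorphisms $D_{\st}^{[p^{-1},1]}(V)\to D_{\st}^{[r,r]}(V)$ for $r=1,p^{-1}$ commute with $N$. This holds because restriction $\tilde{B}^{[p^{-1},1]}_{\log}\to\tilde{B}^{[r,r]}_{\log}$ is a $\tilde{B}$-algebra map sending $\log[p^{\flat}]$ to $\log[p^{\flat}]$, and $N$ is determined by $N(\log[p^{\flat}])=-1$ together with $\tilde{B}$-linearity. So it suffices to prove the identity $p\,\varphi\circ N=N\circ\varphi$ for the middle map $\varphi\colon V_{\log}^{[1,1]}[1/t]\to V_{\log}^{[p^{-1},p^{-1}]}[1/t]$, and then take $G_K$-invariants.

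To prove that identity, write an element of $V_{\log}^{[1,1]}$ as a sum of $v\otimes b$ with $v\in V^{[1,1]}$ and $b\in\tilde{B}^{[1,1]}_{\log,\Cbb_p,A}$. The map sends $v\otimes b$ to $\varphi(v)\otimes\varphi(b)$, where $\varphi(v)$ lies in $V^{[p^{-1},p^{-1}]}$ and is therefore killed by $1\otimes N$. Hence
$$N\varphi(v\otimes b)=\varphi(v)\otimes N\varphi(b)=\varphi(v)\otimes p\varphi N(b)=p\,\varphi\bigl(N(v\otimes b)\bigr),$$
using the ring-level relation $p\varphi N=N\varphi$ from the definition. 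The ring-level relation itself follows from $\varphi(\log[p^{\flat}])=p\log[p^{\flat}]$, so that $N\varphi(\log[p^{\flat}])=-p$ while $p\varphi N(\log[p^{\flat}])=p\varphi(-1)=-p$. Both sides are $\varphi$-semilinear derivations over $\tilde{B}$, and $\tilde{B}_{\log}=\tilde{B}[\log[p^{\flat}]]$, so they agree everywhere. The identity then extends to $[1/t]$ because $\varphi(t)=pt$ and $N(t)=0$.

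The only step needing real care is the semilinear Frobenius on $V_{\log}$: one must confirm that $N$ kills the image of $V^{[1,1]}$. This is clear because the Frobenius structure is $\tilde{B}$-semilinear and does not involve $\log[p^{\flat}]$. Everything else is bookkeeping of the identifications in Definition~\ref{def:phi,N}.
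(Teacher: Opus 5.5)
Your argument is correct and is essentially the paper's own: the paper proves the lemma only with ``By construction'', and you spell that out. Concretely, you use the ring-level identity $p\varphi N=N\varphi$ on $\tilde{B}_{\log,\Cbb_p,A}^{I}$, then that $N$ commutes with restriction, extends over $1/t$, and commutes with $G_K$, which transfers the relation to $D_{\st}(V)$. One notational slip: with the paper's convention the Frobenius structure linearizes as $V^{[1,1]}\otimes_{\tilde{B}^{[1,1]}_{\Cbb_p,A},\varphi}\tilde{B}^{[p^{-1},p^{-1}]}_{\Cbb_p,A}\cong V^{[p^{-1},p^{-1}]}$, not $\varphi^*V^{[p^{-1},p^{-1}]}\cong V^{[1,1]}$ as you wrote, though the former is what your computation actually uses.
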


We will explore the correspondence between filtered $(\varphi,N)$-modules and semistable representations further in a later subsection.

The following five claims can be proved in the same way as in the Hodge-Tate case.

\begin{lemma}\label{lem:log-crys dual direct sum tensor product}
	The class of semistable (resp. potentially semistable) $G_K$-equivariant vector bundles is stable under taking duals, direct sums, and tensor products.
\end{lemma}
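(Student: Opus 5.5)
We follow the proof of Lemma~\ref{lem:HT dual direct sum tensor product} with $B_{\HT,A}$ replaced by $\Bcal\coloneqq\tilde{B}^{I}_{\log,\Cbb_p,A}[1/t]$, where $I=[p^{-1},1]$. The two inputs are the following. First, Lemma~\ref{lem:Dst fin proj} shows that $D_{\st}^I(V)$ is finite projective over $K_0\otimes A$ whenever $V$ is semistable. Second, the corollary above gives $\Bcal^{G_K}=K_0\otimes A$. Recall also that the constructions $V\mapsto V^{I}_{\log}[1/t]$ are compatible with duals, direct sums and tensor products, since they are base changes of finite projective modules.

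\textbf{Duals.} Let $V$ be semistable and write $D=D^I_{\st}(V)$. We compute
\begin{align*}
(V^*)^{I}_{\log}[1/t]&\cong \intHom_{\Bcal}(D\otimes_{K_0\otimes A}\Bcal,\Bcal)\\
&\cong \intHom_{K_0\otimes A}(D,\Bcal)\\
&\cong D^*\otimes_{K_0\otimes A}\Bcal.
\end{align*}
Since $D^*$ is finite projective, taking $G_K$-invariants commutes with $D^*\otimes_{K_0\otimes A}-$, so $D^I_{\st}(V^*)\cong D^*\otimes_{K_0\otimes A}\Bcal^{G_K}=D^*$. Under this identification the comparison map for $V^*$ is the isomorphism above, hence $V^*$ is semistable.

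\textbf{Direct sums and tensor products.} For semistable $V$ and $W$ we have
\[
(V\otimes W)^I_{\log}[1/t]\cong \bigl(D^I_{\st}(V)\otimes_{K_0\otimes A}D^I_{\st}(W)\bigr)\otimes_{K_0\otimes A}\Bcal .
\]
Invariants of this module are $D^I_{\st}(V)\otimes D^I_{\st}(W)$, again because this tensor product is finite projective over $K_0\otimes A$. The comparison map is then an isomorphism, and the direct sum case is identical.

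\textbf{Potentially semistable case.} If $V$ is semistable as a $G_{L}$-bundle and $W$ as a $G_{L'}$-bundle, take a finite extension $L''$ containing both $L$ and $L'$. Restricting from $G_L$ (resp.\ $G_{L'}$) to $G_{L''}$ preserves semistability, by the same argument as Lemma~\ref{lem:sdR implies dR}:
\[
D^{I,L''}_{\st}(V)=\bigl(D^{I,L}_{\st}(V)\otimes_{L_0\otimes A}\Bcal\bigr)^{G_{L''}}=D^{I,L}_{\st}(V)\otimes_{L_0\otimes A}(L''_0\otimes A).
\]
The comparison map is again an isomorphism. We then apply the semistable case over $L''$.

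The only point requiring any care is that invariants commute with $P\otimes_{K_0\otimes A}-$ for finite projective $P$. This holds because such a $P$ is a direct summand of a free module, so no step is a real obstacle.
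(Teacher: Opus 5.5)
Your proposal is correct and matches the paper's approach. The paper states that this lemma is proved ``in the same way as in the Hodge-Tate case'', i.e.\ by the dual/tensor computation of Lemma~\ref{lem:HT dual direct sum tensor product} with $B_{\HT,A}$ replaced by $\tilde{B}^I_{\log,\Cbb_p,A}[1/t]$, using Lemma~\ref{lem:Dst fin proj} and $(\tilde{B}^I_{\log,\Cbb_p,A}[1/t])^{G_K}=K_0\otimes A$. Your handling of the potentially semistable case is the natural completion of that argument: you pass to a common finite extension $L''$, using only the valid direction that semistability over $G_L$ persists over $G_{L''}$.
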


\begin{lemma}\label{lem:log-crys basechange}
Let $V$ be a semistable (resp. potentially semistable) $G_K$-equivariant vector bundle over $X_{\Cbb_p,\Acal}$.
Then for a morphism $\Acal\to \Bcal=(B,B^+)_{\square}$, $V_B=V\otimes_{\Acal}\Bcal$ is a semistable (resp. potentially semistable) $G_K$-equivariant vector bundle over $X_{\Cbb_p,\Bcal}$.
If $V$ is semistable as a $G_L$-equivariant vector bundle, then there is a natural isomorphism
$$D_{\st}^L(V)\otimes_{L_0\otimes A}(L_0\otimes B) \simeq D_{\st}^L(V_{B}),$$
which is compatible with the monodromy operators and Frobenius automorphisms.
\end{lemma}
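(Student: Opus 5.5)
The argument will follow Lemma~\ref{lem:HT basechange} and Lemma~\ref{lem:sdR basechange}. First treat the semistable case; the potentially semistable case then follows by applying it to $G_L$, since semistability over $L$ only restricts the group. Put $I=[p^{-1},1]$ and $M=D_{\st}^{L,I}(V)$, which is a finite projective $L_0\otimes A$-module by Lemma~\ref{lem:Dst fin proj}.

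Since $V$ is semistable, we have an isomorphism $M\otimes_{L_0\otimes A}\tilde{B}^{I}_{\log,\Cbb_p,A}[1/t]\xrightarrow{\sim} V_{\log}^{I}[1/t]$. The first step is to base change this isomorphism along $\Acal\to\Bcal$. We use $V_B^{I}=V^{I}\otimes_{\tilde{B}^I_{\Cbb_p,A}}\tilde{B}^I_{\Cbb_p,B}$, which holds because pullback of vector bundles commutes with the base change $Y^{I}_{\Cbb_p,\Bcal}\to Y^{I}_{\Cbb_p,\Acal}$. We also use $\tilde{B}^I_{\log,\Cbb_p,A}\otimes_{\Acal}\Bcal=\tilde{B}^I_{\log,\Cbb_p,B}$, which follows from associativity of solid tensor products together with the nuclearity of $\tilde{B}^I_{\log,\Cbb_p}$. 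Inverting $t$ commutes with these tensor products. The result is a $G_L$-equivariant isomorphism
$$M_B\otimes_{L_0\otimes B}\tilde{B}^{I}_{\log,\Cbb_p,B}[1/t]\xrightarrow{\sim}(V_B)^{I}_{\log}[1/t],\qquad M_B:=M\otimes_{L_0\otimes A}(L_0\otimes B).$$

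The second step is to take $G_L$-invariants of this isomorphism. Because $M_B$ is finite projective over $L_0\otimes B$ with trivial $G_L$-action, invariants commute with $M_B\otimes-$, since they commute with finite direct sums and direct summands. The corollary preceding Corollary~\ref{cor:log regular}, applied with $\Bcal$ and $L$, gives $(\tilde{B}^I_{\log,\Cbb_p,B}[1/t])^{G_L}=L_0\otimes B$. Hence $D_{\st}^{L}(V_B)\cong M_B$, and under this identification the natural map $D_{\st}^L(V_B)\otimes\tilde{B}^I_{\log,\Cbb_p,B}[1/t]\to(V_B)^I_{\log}[1/t]$ is the isomorphism from the first step. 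So $V_B$ is semistable over $L$, with the stated formula for $D_{\st}$.

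It remains to check the compatibilities. The monodromy operator $N$ is defined by $N\otimes\id$ on the period ring, and this is preserved by $-\otimes\Acal\to-\otimes\Bcal$. The Frobenius on $D_{\st}$ comes from the restriction isomorphisms to $[1,1]$ and $[p^{-1},p^{-1}]$ together with $\varphi$, as in Definition~\ref{def:phi,N}. Each of these is natural in the coefficient ring, so the same invariant computation on the intervals $[1,1]$ and $[p^{-1},p^{-1}]$ gives compatibility.

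The main obstacle is the identification $(V^I_{\log}[1/t])\otimes_{\Acal}\Bcal\cong(V_B)^I_{\log}[1/t]$ as condensed modules, because $\Bcal$ need not be nuclear in the same sense. I would settle this by using that $V^I_{\log}[1/t]$ is finite projective over $\tilde{B}^I_{\log,\Cbb_p,A}[1/t]$, which reduces the claim to the free rank-one case. That case is exactly the nuclearity statement already used to define $\tilde{B}^I_{\log,\Cbb_p,A}$.
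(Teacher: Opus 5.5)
Your proposal is correct and follows essentially the paper's route: the paper proves this lemma ``in the same way as in the Hodge-Tate case'' (Lemma~\ref{lem:HT basechange}). That argument base changes the comparison isomorphism for $M=D_{\st}^L(V)$ and takes $G_L$-invariants, using that $M_B$ is finite projective and that $(\tilde{B}^I_{\log,\Cbb_p,B}[1/t])^{G_L}=L_0\otimes B$, which is exactly what you do. The step you flag as the main obstacle is in fact just associativity of the solid tensor product ($\tilde{B}^I_{\log,\Cbb_p}\otimes\Acal\otimes_{\Acal}\Bcal\cong\tilde{B}^I_{\log,\Cbb_p}\otimes\Bcal$, with $[1/t]$ a filtered colimit), so it needs no separate nuclearity argument.
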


\begin{proposition}\label{prop:log-crys analytic local}
Let $V$ be a $G_K$-equivariant vector bundle over $X_{\Cbb_p,\Acal}$, and 
$$\{\AnSpec\Acal_i \to \AnSpec\Acal\}_{i=1}^n$$ 
be an affinoid covering of $\AnSpec \Acal$.
Then $V$ is semistable (resp. potentially semistable) if and only if $V_i=V\otimes_{\Acal}\Acal_i$ is a semistable (resp. potentially semistable) $G_K$-equivariant vector bundle over $X_{\Cbb_p,\Acal_i}$ for each $i$.
\end{proposition}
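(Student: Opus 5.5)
The plan is to follow the proof of Proposition~\ref{prop:HT analytic local} (and its strongly de Rham analogue, Proposition~\ref{prop:sdR analytic local}), with $B_{\HT,A}$ replaced by $\tilde{B}^{I}_{\log,\Cbb_p,A}[1/t]$ for $I=[p^{-1},1]$. The \emph{only if} direction is Lemma~\ref{lem:log-crys basechange}. For the converse in the semistable case, I set $M_i=D_{\st}(V_i)$. By Lemma~\ref{lem:Dst fin proj} each $M_i$ is a finite projective $K_0\otimes A_i$-module.

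The first step is to construct gluing data. The argument of Lemma~\ref{lem:HT basechange}, i.e.\ Lemma~\ref{lem:log-crys basechange}, gives natural isomorphisms
$$M_i\otimes_{\Acal_i}\Acal_{ij}\cong D_{\st}(V_{ij})\cong M_j\otimes_{\Acal_j}\Acal_{ij},$$
compatible with $\varphi$ and $N$. These isomorphisms satisfy the cocycle condition because all of them are identified with invariants inside $V_{\log,A_{ijk}}^{I}[1/t]$. Descent for dualizable objects along the affinoid covering then produces a dualizable $M\in\Dcal(K_0\otimes\Acal)$. This is the same descent used in Proposition~\ref{prop:HT analytic local}; for algebraic-affinoid $\Acal$ it is \cite{Mikami24}.

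Next I construct the comparison map. The morphisms $M_i\to V_{\log,A_i}^{I}[1/t]$ glue to a morphism $M\to V^{I}_{\log}[1/t]$. For this I use that $V^{I}_{\log}[1/t]$ satisfies descent along the covering, since it is obtained from the finite projective $V^I$ by tensoring with the nuclear ring $\tilde{B}^I_{\log,\Cbb_p}[1/t]$. The $\tilde{B}$-linearization
$$M\otimes^{\Lbb}_{K_0\otimes A}\tilde{B}^I_{\log,\Cbb_p,A}[1/t]\to V^I_{\log}[1/t]$$
becomes an isomorphism after $-\otimes_{\Acal}\Acal_i$ for every $i$, so it is an isomorphism.

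It remains to show that $M$ is a static finite projective $K_0\otimes A$-module and that $M=D_{\st}(V)$. I expect this to be the main obstacle, because faithful flatness of $K_0\otimes A\to\tilde{B}^I_{\log,\Cbb_p,A}[1/t]$ is not stated earlier. I would first try to reduce to $A=\Qbb_p$, as in Lemma~\ref{lem:BsdR flat}. Then $\tilde{B}^I_{\log,\Cbb_p}[1/t]=\tilde{B}^I_{\Cbb_p}[1/t][X]$ with $\tilde{B}^I_{\Cbb_p}$ a Banach PID-like algebra over $K_0$, and faithful flatness of this over $K_0$ should follow from \cite[Lemma 3.21]{RJRC22}-type arguments for nuclear Banach modules over a field. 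With faithful flatness in hand, \cite[Theorem 1.35]{Mikami24} shows that $M$ is finite projective.

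Finally I identify $M$ with $D_{\st}(V)$. Taking $G_K$-invariants of the isomorphism, and using that $M$ is finite projective and $(\tilde{B}^I_{\log,\Cbb_p,A}[1/t])^{G_K}=K_0\otimes A$, gives $M\cong D_{\st}(V)$. Hence $V$ is semistable.

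For the potentially semistable case, each $V_i$ is $G_{L_i}$-semistable for some finite extension $L_i/K$. I take a finite extension $L$ containing all the $L_i$. Then each $V_i$ is $G_L$-semistable: the argument of Lemma~\ref{lem:sdR implies dR} shows that semistability persists under shrinking the group, since $(\tilde{B}_{\log}[1/t])^{G_L}=L_0\otimes A$. Applying the semistable case over $L$ shows that $V$ is $G_L$-semistable.
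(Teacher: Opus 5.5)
Your proposal is correct and follows the paper's route. The paper only says this is proved as in the Hodge--Tate case (Proposition~\ref{prop:HT analytic local}): glue the $D_{\st}(V_i)$ to a dualizable $M$, check that $M\otimes^{\Lbb}_{K_0\otimes A}\tilde{B}^{I}_{\log,\Cbb_p,A}[1/t]\to V^{I}_{\log}[1/t]$ is an isomorphism locally, and conclude via faithful flatness and \cite[Theorem 1.35]{Mikami24}. Your explicit treatment of that faithful flatness (reducing to $A=\Qbb_p$ as for $B_{\HT}$ and $B_{\sdR}$), of the identification $M\cong D_{\st}(V)$, and of the potentially semistable case via a common finite extension $L\supset L_i$ fills in details the paper leaves implicit.
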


\begin{lemma}\label{lem:log-crys inj}
	Assume $\Acal \in \AlgAff_{\Qbb_p}$.
	Let $I\subset (0,\infty)$ be a closed interval such that $p^n\in I$ for some $n\in \Zbb$, and let $V$ be a $G_K$-equivariant vector bundle over $X_{\Cbb_p,\Acal}$.
	The the natural morphism
	\begin{align*}
		&(V_{\log}^I[1/t])^{G_K}\otimes_{K_0\otimes A} \tilde{B}^I_{\log,\Cbb_p,A}\to V^I_{\log}[1/t]
	\end{align*}
	is injective.
\end{lemma}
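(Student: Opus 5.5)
We follow the proof of Lemma~\ref{lem:HT inj} closely, replacing $B_{\HT}$ by $B:=\tilde{B}^I_{\log,\Cbb_p}[1/t]$ and $K$ by $K_0$. Write $D=(V_{\log}^I[1/t])^{G_K}$ and $B_A=B\otimes A=\tilde{B}^I_{\log,\Cbb_p,A}[1/t]$; it suffices to treat the morphism $D\otimes_{K_0\otimes A}B_A\to V^I_{\log}[1/t]$, since the displayed map (without $1/t$) differs from this one only by a localization of $t$-torsion-free modules.

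\textbf{Step 1 (embedding into a product).} Since $B_A$ is flat over $K_0\otimes A$ (as $B$ is flat over $K_0$ and we base change), we get an injection
$$D\otimes_{K_0\otimes A}B_A\hookrightarrow V^I_{\log}[1/t]\otimes_{K_0\otimes A}B_A.$$
Since $V^I_{\log}[1/t]$ is finite projective over $B_A$, to show that
$$V^I_{\log}[1/t]\otimes_{K_0\otimes A}B_A\to \prod_{\mfrak\in\Spm A,\,n\geq1}V^I_{\log,A/\mfrak^n}[1/t]\otimes_{K_0\otimes A/\mfrak^n}B_{A/\mfrak^n}$$
is injective, we may assume $V^I_{\log}[1/t]=B_A$. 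We must then show that
$$(B\otimes_{K_0}B)\otimes A\to\prod_{\mfrak,n}(B\otimes_{K_0}B)\otimes A/\mfrak^n$$
is injective. For this we use Lemma~\ref{lem:prod tensor inj}. The ring $B\otimes_{K_0}B$ is a countable colimit (over powers of $t^{-1}$ and degrees in $\log[p^\flat]$) of Banach modules with injective transition maps, and $A$ is likewise such a colimit over the affinoid of definition. Injectivity of $A\to\prod_{\mfrak,n}A/\mfrak^n$ holds by Krull's intersection theorem, applied compatibly on the Banach pieces. These inputs give the claim, exactly as in Lemma~\ref{lem:HT inj}.

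\textbf{Step 2 (reduction to finite coefficients).} The composite map factors through $\prod_{\mfrak,n}(V_{A/\mfrak^n})^{G_K}\otimes B_{A/\mfrak^n}$, where $(V_{A/\mfrak^n})^{G_K}$ means $(V^I_{\log,A/\mfrak^n}[1/t])^{G_K}$. The target $V^I_{\log}[1/t]$ injects into $\prod_{\mfrak,n}V^I_{\log,A/\mfrak^n}[1/t]$. It therefore suffices to treat $A$ finite over $\Qbb_p$, where $V$ becomes a $G_K$-equivariant vector bundle over $X_{\Cbb_p}$ with an $A$-action. Forgetting the $A$-structure (via $K_0\otimes A\to B_A$ and the finiteness of $A$ over $\Qbb_p$), the statement becomes Fontaine's standard injectivity for a $(\Qbb_p,G_K)$-regular ring, which holds here by Corollary~\ref{cor:log regular}.

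\textbf{Main obstacle.} The delicate point is Step 1: checking that $\tilde{B}^I_{\log,\Cbb_p}[1/t]$, and its tensor square over $K_0$, can be written as countable filtered colimits of Banach spaces with injective transitions, in the form required by Lemma~\ref{lem:prod tensor inj}. Concretely this means $\bigcup_{k,d}t^{-k}\bigoplus_{j\leq d}\tilde{B}^I\log[p^\flat]^j$, and one must verify that these inclusions are injective and Banach. The regularity input for finite $A$ also needs some care, namely passing from a $\Qbb_p$-linear statement to a $K_0\otimes A$-linear one.
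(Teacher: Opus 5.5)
Your proposal is correct and follows the paper's route. The paper proves this lemma ``in the same way as in the Hodge--Tate case,'' which means transcribing the proof of Lemma~\ref{lem:HT inj}: flatness, embedding into $\prod_{\mfrak,n}$ via Lemma~\ref{lem:prod tensor inj}, and reduction to finite coefficients, where Corollary~\ref{cor:log regular} handles the finite case exactly as you do. Your reduction to the version with $\tilde{B}^I_{\log,\Cbb_p,A}[1/t]$ also matches how the paper actually applies the lemma.
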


\begin{corollary}\label{cor:log-crys sub quot}
Assume $\Acal \in \AlgAff_{\Qbb_p}$.
Let $V$ be a semistable (resp. potentially semistable) $G_K$-equivariant vector bundle over $X_{\Cbb_p,\Acal}$, and let $W\subset V$ be a $G_K$-stable subbundle.
Then $W$ and $V/W$ are also semistable (resp. potentially semistable). 
\end{corollary}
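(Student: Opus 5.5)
We follow the proof of Corollary~\ref{cor:HT sub quot}, with Lemma~\ref{lem:log-crys inj} playing the role of Lemma~\ref{lem:HT inj}. First assume $V$ is semistable, and take $I=[p^{-1},1]$. We write $B=\tilde{B}^I_{\log,\Cbb_p,A}[1/t]$ and abbreviate $-\otimes_{K_0\otimes A}-$ by $-\otimes-$. Tensoring the exact sequence of vector bundles $0\to W\to V\to V/W\to 0$ with $B$ gives an exact sequence
$$0\to W^I_{\log}[1/t]\to V^I_{\log}[1/t]\to (V/W)^I_{\log}[1/t]\to 0,$$
since all three terms come from finite projective modules and the sequence is locally split. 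Taking $G_K$-invariants gives a left exact sequence
$$0\to D_{\st}^I(W)\to D_{\st}^I(V)\to D_{\st}^I(V/W).$$
We then base change this sequence along $K_0\otimes A\to B$. For this we need $B$ to be flat over $K_0\otimes A$. We expect to obtain flatness as in Lemma~\ref{lem:BsdR flat}, by reducing to $A=\Qbb_p$ via \cite[Lemma 3.21]{RJRC22}, since $B$ is $\tilde{B}^I_{\log,\Cbb_p}[1/t]\otimes A$. Granting flatness, we obtain a commutative diagram with left exact top row
$$D_{\st}^I(W)\otimes B\to D_{\st}^I(V)\otimes B\to D_{\st}^I(V/W)\otimes B$$
and exact bottom row given by the $\log$-modules above. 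The vertical maps are the comparison morphisms.

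The middle vertical map is an isomorphism because $V$ is semistable. By Lemma~\ref{lem:log-crys inj}, the left and right vertical maps are injective; this is where the hypothesis $\Acal\in\AlgAff_{\Qbb_p}$ enters. A diagram chase then finishes the argument. Surjectivity of the bottom right map, together with the middle isomorphism, shows that the right vertical map is surjective, hence an isomorphism. Then the four lemma on the left square shows that the left vertical map is surjective. So both $W$ and $V/W$ are $I$-semistable.

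For the potentially semistable case, we choose a finite extension $L/K$ such that $V$ is semistable as a $G_L$-equivariant bundle. The same argument applied with $G_L$ in place of $G_K$ shows that $W$ and $V/W$ are semistable over $L$, hence potentially semistable over $K$.

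The main obstacle we expect is the flatness of $B$ over $K_0\otimes A$ in the condensed solid sense, needed so that the top row stays left exact after base change. If that flatness is unavailable, we would instead argue as in Lemma~\ref{lem:log-crys inj}. We would embed into $\prod_{\mfrak,n}$ of the reductions modulo $\mfrak^n$, using Lemma~\ref{lem:prod tensor inj}, and so reduce the injectivity and exactness statements to the case where $A$ is finite over $\Qbb_p$. In that case flatness follows from the field case.
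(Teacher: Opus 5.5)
Your proposal is correct and matches the paper's approach. The paper says this corollary is proved in the same way as Corollary~\ref{cor:HT sub quot}: the same diagram, with the middle map an isomorphism and the outer maps injective by Lemma~\ref{lem:log-crys inj}, then a diagram chase, and the potentially semistable case handled by passing to $G_L$. The flatness of $\tilde{B}^{I}_{\log,\Cbb_p,A}[1/t]$ over $K_0\otimes A$ that you flag is the same input the paper already relies on implicitly in Lemma~\ref{lem:log-crys inj} (modelled on Lemma~\ref{lem:HT inj}), so your fallback is not needed.
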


We can also prove the following standard lemma.

\begin{lemma}\label{lem:log-crys implies dR}
	A semistable $G_K$-equivariant vector bundle $V$ over $X_{\Cbb_p,\Acal}$ is de Rham. 
	In this case, there is a natural isomorphism
	$$D_{\st}(V)\otimes_{(K_0\otimes A)}(K\otimes A)\cong D_{\dR}(V).$$
\end{lemma}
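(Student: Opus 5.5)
The plan is to compare both sides through the embedding of Proposition~\ref{prop:log fixed point}, base-changed to $A$, and then reduce de Rhamness to the semistable comparison isomorphism. Take $I=[p^{-1},1]$, so $1\in I$. Tensoring the injection $K\otimes_{K_0}\tilde{B}^I_{\log,\Cbb_p}\to B_{\dR}^+$ (with $\log[p^{\flat}]\mapsto u$) with $\Acal$ gives a $G_K$-equivariant ring map
$$K\otimes_{K_0}\tilde{B}^I_{\log,\Cbb_p,A}[1/t]\to B_{\sdR,A}\to B_{\dR,A}.$$
Injectivity is not needed here, only the map. Since $V^I$ restricts along $\tilde{B}^I_{\Cbb_p,A}\to \tilde{B}^{[1,1]}_{\Cbb_p,A}\to B_{\dR,A}^+$ to $V_{\dR}^+$ (by the remark after the definition of $V_{\dR}^+$), we get a natural $G_K$-equivariant map
$$V_{\log}^I[1/t]\otimes_{\tilde{B}^I_{\log,\Cbb_p,A}[1/t]} B_{\dR,A}\xrightarrow{\ \sim\ } V_{\dR},$$
where $\tilde{B}^I_{\log,\Cbb_p,A}[1/t]$ acts on $B_{\dR,A}$ via $\log[p^{\flat}]\mapsto u$. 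This map is an isomorphism because it is just the base change of $V^I$ along the composite $\tilde{B}^I_{\Cbb_p,A}\to B_{\dR,A}$, with the $\log$ variable absorbed.

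Next, use semistability. Base-change the isomorphism
$$D_{\st}(V)\otimes_{K_0\otimes A}\tilde{B}^I_{\log,\Cbb_p,A}[1/t]\cong V^I_{\log}[1/t]$$
along the map above. This gives a $G_K$-equivariant isomorphism
$$D_{\st}(V)\otimes_{K_0\otimes A} B_{\dR,A}\cong V_{\dR},$$
which can be rewritten as
$$(D_{\st}(V)\otimes_{K_0\otimes A}(K\otimes A))\otimes_{K\otimes A}B_{\dR,A}\cong V_{\dR}.$$

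Then take $G_K$-invariants. By Lemma~\ref{lem:Dst fin proj}, $D_K:=D_{\st}(V)\otimes_{K_0\otimes A}(K\otimes A)$ is finite projective over $K\otimes A$. Invariants therefore commute with $D_K\otimes_{K\otimes A}(-)$, because $D_K$ is a direct summand of a finite free module. Using $B_{\dR,A}^{G_K}=K\otimes A$, we obtain $D_{\dR}(V)=V_{\dR}^{G_K}\cong D_K$. The natural map $D_{\dR}(V)\otimes_{K\otimes A}B_{\dR,A}\to V_{\dR}$ is then identified with the isomorphism above, so $V$ is de Rham and we get the claimed natural isomorphism. This is the same argument as in Lemma~\ref{lem:sdR implies dR}. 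Naturality is clear because every map is functorial in $V$.

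The only delicate point is checking that the map $\tilde{B}^I_{\log,\Cbb_p,A}\to B_{\dR,A}$ is compatible with the $G_K$-actions and is well defined after $-\otimes\Acal$ and $t$-adic completion. For this I would verify that $gu-u=c(g)t$ matches the action on $\log[p^{\flat}]$, which is the $G_K$-equivariance asserted in Proposition~\ref{prop:log fixed point}. The map into $B_{\dR,A}$ then factors through $B_{\sdR,A}=B_{\dR}\otimes\Acal$, so it is simply the base change of the classical map along $\Qbb_p\to\Acal$.
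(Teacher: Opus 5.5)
Your proposal is correct and follows essentially the same route as the paper's proof. You base-change the semistable comparison isomorphism over $[p^{-1},1]$ along the $G_K$-equivariant map $K\otimes_{K_0}\tilde{B}^{[p^{-1},1]}_{\log,\Cbb_p,A}[1/t]\to B_{\dR,A}$ to obtain $D_{\st}(V)\otimes_{K_0\otimes A}B_{\dR,A}\cong V_{\dR}$, and then take $G_K$-invariants using finite projectivity of $D_{\st}(V)$ and $B_{\dR,A}^{G_K}=K\otimes A$. Your remarks on factoring this map through $B_{\sdR,A}$ and on its $G_K$-equivariance spell out details the paper leaves implicit.
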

\begin{proof}
	There is an isomorphism
	$$D_{\st}(V)\otimes_{K_0\otimes A} (K\otimes_{K_0}\tilde{B}_{\log,\Cbb_p,A}^{[p^{-1},1]}[1/t])\cong (K\otimes_{K_0}V_{\log}^{[p^{-1},1]}[1/t]).$$
	From this isomorphism, we obtain an isomorphism
	$$D_{\st}(V)\otimes_{K_0\otimes A} B_{\dR,A} \cong V_{\dR}.$$
	Since $D_{\st}(V)$ is a finite projective $K_0\otimes A$-module, we obtain an isomorphism
	$$D_{\dR}(V)=V_{\dR}^{G_K}\cong D_{\st}(V)\otimes_{K_0\otimes A} B_{\dR,A}^{G_K}\cong D_{\st}(V)\otimes_{K_0\otimes A} (K\otimes A).$$
	Therefore, the natural morphism
	$$D_{\dR}(V)\otimes_{K\otimes A} B_{\dR,A}\to V_{\dR}$$
	is an isomorphism.
\end{proof}

\begin{corollary}\label{cor:pot log-crys implies dR}
	A potentially semistable $G_K$-equivariant vector bundle over $X_{\Cbb_p,\Acal}$ is de Rham. 
\end{corollary}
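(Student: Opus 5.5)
The plan is to reduce to the semistable case and then use Galois invariance of the de Rham condition. By definition there is a finite extension $L/K$ such that $V$ is semistable as a $G_L$-equivariant vector bundle over $X_{\Cbb_p,\Acal}$. Lemma~\ref{lem:log-crys implies dR}, applied with $K$ replaced by $L$, then shows that $V$ is de Rham as a $G_L$-equivariant vector bundle. Concretely, $V_{\dR}^+$ is an object of $\FP^{\dR}_{B_{\dR,A}^+}(G_L)$, and
$$D_{\dR}^L(V)\cong D_{\st}^L(V)\otimes_{L_0\otimes A}(L\otimes A)$$
is finite projective over $L\otimes A$. Nothing new is needed here: the proof of that lemma uses only that $D_{\st}^L(V)$ is finite projective (Lemma~\ref{lem:Dst fin proj}), the equality $B_{\dR,A}^{G_L}=L\otimes A$, and base change of the semistable comparison isomorphism along $L\otimes_{L_0}\tilde{B}^{[p^{-1},1]}_{\log,\Cbb_p,A}[1/t]\to B_{\dR,A}$.

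The second step is to descend from $G_L$ to $G_K$. This is Lemma~\ref{lem:potentially dR}, which states that $V^+$ is de Rham as a $G_K$-representation if and only if it is de Rham as a $G_L$-representation. As stated, that lemma assumes $V^+\in\FP^{\dR}_{B_{\dR,A}^+}(G_K)$, which looks circular, but it is meant for arbitrary $V^+\in\FP_{B_{\dR,A}^+}(G_K)$, in parallel with Lemmas~\ref{lem:potentially HT} and~\ref{lem:potentially sdR}.

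If one prefers to spell the descent out, the steps are these. Replace $L$ by its Galois closure over $K$; semistability over $L$ implies semistability over any finite extension of $L$, since the invariants only enlarge and the comparison map stays surjective by base change of the isomorphism. Set $D_L=D_{\dR}^L(V)$. It is finite projective over $L\otimes A$, carries a semilinear action of $\Gal(L/K)$, and satisfies $D_L\otimes_{L\otimes A}B_{\dR,A}\cong V_{\dR}$. Finite étale Galois descent along $K\otimes A\to L\otimes A$ gives $D_{\dR}^K(V)=D_L^{\Gal(L/K)}$, which is finite projective over $K\otimes A$, together with $D_{\dR}^K(V)\otimes_{K\otimes A}(L\otimes A)\cong D_L$. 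Composing the two isomorphisms yields $D_{\dR}^K(V)\otimes_{K\otimes A}B_{\dR,A}\cong V_{\dR}$, so $V$ is de Rham over $K$.

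The main point to check is this descent step in the condensed setting. One has to verify that taking $\Gal(L/K)$-invariants commutes with the relevant tensor products. This holds because $\Gal(L/K)$ is finite and we work over $\Qbb_p$, so invariants are a direct summand cut out by the idempotent $\frac{1}{[L:K]}\sum_g g$, and this is compatible with all the functors involved.
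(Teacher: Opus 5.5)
Your proposal is correct and matches the paper's proof, which likewise applies Lemma~\ref{lem:log-crys implies dR} over $L$ and then uses Lemma~\ref{lem:potentially dR} to pass from $G_L$ to $G_K$. You are also right that the hypothesis of Lemma~\ref{lem:potentially dR} should read $V^+\in\FP_{B_{\dR,A}^+}(G_K)$, and your explicit finite Galois descent along $K\otimes A\to L\otimes A$ correctly unpacks that lemma.
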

\begin{proof}
	It follows from the above lemma and Lemma~\ref{lem:potentially dR}.
\end{proof}

Next, we give other definitions of semistable vector bundles.

\begin{construction}
Let $I=[r,s]\subset (0,\infty)$ be a closed interval.
To simplify the notation, we write $u=\log[p^{\flat}]\in \tilde{B}^I_{\log,\Cbb_p}.$
We define the $G_K$-stable filtration of $\tilde{B}_{\log,\Cbb_p}^{I}$ as 
$$\Fil^n\tilde{B}_{\log,\Cbb_p}^{I}=\bigoplus_{i=0}^n\tilde{B}_{\Cbb_p}^Iu^i.$$
By taking $H_K$-invariant vectors, we obtain a filtered ring
$(\tilde{B}_{\log,K_{\infty}}^{I}, \Fil^{\bullet}\tilde{B}_{\log,K_{\infty}}^{I})$.
By \cite[Theorem 3.12]{Mikami24}, each $\Fil^n\tilde{B}_{\log,K_{\infty}}^{I}$ is a finite projective $\tilde{B}_{K_{\infty}}^{I}$-module of rank $n+1$ with a semilinear $\Gamma_K$-action, and the natural morphism
$$\tilde{B}_{\log,K_{\infty}}^{I}\otimes_{\tilde{B}_{K_{\infty}}^{I}}\tilde{B}_{\Cbb_p}^{I}\to \tilde{B}_{\log,\Cbb_p}^{I}$$
is an isomorphism.
By taking $\Gamma_K$-locally analytic vectors of $(\tilde{B}_{\log,K_{\infty}}^{I}, \Fil^{\bullet}\tilde{B}_{\log,K_{\infty}}^{I})$, we obtain a filtered ring
$(B_{\log,K,\infty}^{I}, \Fil^{\bullet} B_{\log,K,\infty}^{I})$.
By \cite[Theorem 3.37]{Mikami24}, each $\Fil^n B_{\log,K,\infty}^{I}$ is a finite projective $B_{K,\infty}^{I}$-module of rank $n+1$ with a semilinear $\Gamma_K$-action, and the natural morphism
$$B_{\log,K,\infty}^{I}\otimes_{B_{K,\infty}^{I}}\tilde{B}_{K_{\infty}}^{I}\to \tilde{B}_{\log,K_{\infty}}^{I}$$
is an isomorphism.
By applying $-\otimes  \Acal$, we obtain filtered rings $(\tilde{B}_{\log,K_{\infty},A}^{I}, \Fil^{\bullet}\tilde{B}_{\log,K_{\infty},A}^{I})$ and $(B_{\log,K,\infty,A}^{I}, \Fil^{\bullet} B_{\log,K,\infty,A}^{I})$.
\end{construction}

We give more explicit descriptions of the above rings.
\begin{construction}
Let $I=[r,s]\subset (0,1)$ be a closed interval.
We write $=[\varepsilon]-1\in A_{\inf}$.
We set 
$$\varpi=[\varepsilon-1](1-\sum_{n=1}^{\infty}p^n[x_n])$$
in $W(\Cbb_p^{\flat})$.
Then $z=\sum_{n=1}^{\infty}p^n[x_n]$ is a topologically nilpotent element in $\tilde{B}_{\Cbb_p}^I$.
Therefore, we can define $\log(1-z)=-\sum_{n=1}^{\infty}z^n/n$ in $\tilde{B}_{\Cbb_p}^I$.
We define $\log \varpi=\log[\varepsilon-1] + \log(1-z)$ in $\tilde{B}_{\log,\Cbb_p}^I$.
Since $\tilde{B}_{\log,\Cbb_p}^I=\tilde{B}_{\Cbb_p}^I[\log[\varepsilon-1]]$, we also have $\tilde{B}_{\log,\Cbb_p}^I=\tilde{B}_{\Cbb_p}^I[\log\varpi]$.
Moreover, $[\varepsilon-1]$ and $z$ are fixed by the action of $H_K$, so $\log\varpi$ is also fixed by $H_K$.
Therefore, we get $\tilde{B}_{\log,K_{\infty}}^{I}=\tilde{B}_{K_{\infty}}^{I}[\log\varpi].$

For $\gamma \in \Gamma_K$, we take $a_{\gamma}\in \mu_{p-1}$ such that $\chi(\gamma)/a_{\gamma}\in 1+p\Zbb_p$.
Then we have 
\begin{equation}\label{eq:gamma action}
	\begin{split}
\gamma(\log \varpi)-\log \varpi&=\log \left[\frac{\gamma(\varepsilon-1)}{\varepsilon-1}\right]+\log \frac{\gamma(1-z)}{1-z}\\
&=\log \left[\frac{\gamma(\varepsilon-1)}{a_{\gamma}(\varepsilon-1)}\right]+\log \frac{\gamma(1-z)}{1-z}\\
&=\log \left(\left[\frac{\gamma(\varepsilon-1)}{a_{\gamma}(\varepsilon-1)}\right]\frac{\gamma(1-z)}{1-z}\right)\\
&=\log \frac{\gamma(\varpi)}{a_{\gamma}\varpi},
	\end{split}
\end{equation}
where we note that $\left[\frac{\gamma(\varepsilon-1)}{a_{\gamma}(\varepsilon-1)}\right]-1$ and $\frac{\gamma(1-z)}{1-z}-1$ are topologically nilpotent in $\tilde{B}_{\Cbb_p}^{I}$.
Since $\frac{\gamma(\varpi)}{a_{\gamma}\varpi}-1=\frac{(1+\varpi)^{\chi(\gamma)}-1}{a_{\gamma}\varpi}-1$ is a topologically nilpotent element in $B_{\Qbb_p,0}^I=\Qbb_p\left\langle\frac{\varpi}{p^{pr/(p-1)}},\frac{p^{ps/(p-1)}}{\varpi} \right\rangle$ (cf. \cite[Example 3.19]{Mikami24}), we get $\log \frac{\gamma(\varpi)}{a_{\gamma}\varpi}\in B_{\Qbb_p,0}^I\subset B_{K,\infty}^I$.
Therefore $B_{K,\infty}^I\oplus B_{K,\infty}^I \log\varpi\subset \tilde{B}_{K_{\infty}}^{I}[\log\varpi]$ is stable under the $\Gamma_K$-action. 
Since the $\Gamma_K$-representation $(B_{K,\infty}^I\oplus B_{K,\infty}^I \log\varpi)/B_{K,\infty}^I$ is isomorphic to $B_{K,\infty}^I$ by \eqref{eq:gamma action}, it is $\Gamma_K$-locally analytic.
Since locally analytic representations are stable under extensions, $B_{K,\infty}^I\oplus B_{K,\infty}^I \log\varpi$ is also $\Gamma_K$-locally analytic.
In particular, $\log\varpi$ is a $\Gamma_K$-locally analytic vector. 
Hence $B_{K,\infty}^I[\log\varpi]$ is also $\Gamma_K$-locally analytic.
Therefore, we obtain $B_{\log,K,\infty}^{I}=B_{K,\infty}^I[\log\varpi]$.

For $s$ sufficiently small so that $B_{K,0}^I$ is defined, we have $\log \frac{\gamma(\varpi)}{a_{\gamma}\varpi}\in B_{\Qbb_p,0}^I\subset B_{K,0}^I$.
Therefore, $B_{K,0}^I[\log\varpi]\subset B_{K,\infty}^I[\log\varpi]$ is stable under the $\Gamma_K$-action.
For $n\geq 0$, we define $B_{\log,K,n}^{I}=B_{K,n}^I[\log\varpi]$ and $B_{\log,K,n,A}^{I}=B_{\log,K,n}^{I}\otimes\Acal=B_{K,n,A}^I[\log\varpi]$.
\end{construction}

By using this period ring, we get other definitions of semistable vector bundles.
\begin{proposition}\label{prop:log-crys decompletion}
Let $V$ be a $G_K$-equivariant vector bundle over $X_{\Cbb_p,\Acal}$.
It descends to a $\Gamma_K$-equivariant vector bundle $V_{K_{\infty}}$ (resp. $V_{K,\infty}$) over $X_{K_{\infty},\Acal}$ (resp. $X_{K,\Acal}^{\la}$) by \cite{Mikami24}.
We write 
$$V_{\log,K_{\infty}}^{[p^{-1},1]}=V_{K_{\infty}}^{[p^{-1},1]}\otimes_{\tilde{B}^{[p^{-1},1]}_{K_{\infty},A}}\tilde{B}^{[p^{-1},1]}_{\log,K_{\infty},A},$$
and
$$V_{\log,K,\infty}^{[p^{-1},1]}=V_{K,\infty}^{[p^{-1},1]}\otimes_{B^{[p^{-1},1]}_{K,\infty,A}}B^{[p^{-1},1]}_{\log,K,\infty,A}.$$
\begin{enumerate}
	\item Both $(V_{\log,K_{\infty}}^{[p^{-1},1]}[1/t])^{\Gamma_K}$ and $(V_{\log,K,\infty}^{[p^{-1},1]}[1/t])^{\Gamma_K}$ are isomorphic to $D_{\st}(V)$.
	\item The natural morphisms
	$$(V_{\log,K_{\infty}}^{[p^{-1},1]}[1/t])^{\Gamma_K}\otimes_{K_0\otimes A} B_{\log,K_{\infty},A}^{[p^{-1},1]}[1/t]\to V_{\log,K_{\infty}}^{[p^{-1},1]}[1/t]$$
	and
	$$(V_{\log,K,\infty}^{[p^{-1},1]}[1/t])^{\Gamma_K}\otimes_{K_0\otimes A} B_{\log,K,\infty,A}^{[p^{-1},1]}[1/t]\to V_{\log,K,\infty}^{[p^{-1},1]}[1/t]$$
	are injective.
	\item 
	The following are equivalent:
	\begin{enumerate}
		\item $V$ is semistable.
		\item The natural morphism
		$$(V_{\log,K_{\infty}}^{[p^{-1},1]}[1/t])^{\Gamma_K}\otimes_{K_0\otimes A} B_{\log,K_{\infty},A}^{[p^{-1},1]}[1/t]\to V_{\log,K_{\infty}}^{[p^{-1},1]}[1/t]$$
		is an isomorphism.
		\item The natural morphism
		$$(V_{\log,K,\infty}^{[p^{-1},1]}[1/t])^{\Gamma_K}\otimes_{K_0\otimes A} B_{\log,K,\infty,A}^{[p^{-1},1]}[1/t]\to V_{\log,K,\infty}^{[p^{-1},1]}[1/t]$$
		is an isomorphism.
	\end{enumerate}
\end{enumerate}
\end{proposition}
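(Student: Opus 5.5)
The plan is to treat this as a two-step Galois descent, first along $H_K$ and then along $\Gamma_K$-locally analytic vectors, in the style of \cite{Ber02}. The logarithmic variable is controlled by the explicit description $\tilde{B}_{\log,\Cbb_p}^I=\tilde{B}_{\Cbb_p}^I[\log\varpi]$ with $\log\varpi$ fixed by $H_K$. The filtration $\Fil^n$ by degree in $\log\varpi$ reduces every statement to finite projective modules, to which the decompletion results of \cite{Mikami24} apply.

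For (1), write $V_{\log}^{[p^{-1},1]}=\varinjlim_n \Fil^n V_{\log}^{[p^{-1},1]}$, where $\Fil^n V_{\log}=V\otimes \Fil^n\tilde{B}_{\log,\Cbb_p,A}$. Each $\Fil^n V_{\log}$ is a finite projective $\tilde{B}^{[p^{-1},1]}_{\Cbb_p,A}$-module with semilinear $G_K$-action. By \cite[Theorem 3.12]{Mikami24} (Galois descent along $H_K$, as in Lemma~\ref{lem:Galois descent}, with vanishing higher $H_K$-cohomology), we get $(\Fil^nV_{\log})^{H_K}=\Fil^nV_{\log,K_\infty}$, and this identification commutes with inverting $t$ and with the filtered colimit. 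Taking $\Gamma_K$-invariants gives $(V_{\log,K_\infty}[1/t])^{\Gamma_K}=(V_{\log}[1/t])^{G_K}=D_{\st}(V)$. For the locally analytic version, \cite[Theorem 3.37]{Mikami24} gives $\Fil^nV_{\log,K,\infty}=(\Fil^nV_{\log,K_\infty})^{\Gamma_K\text{-}\la}$. Any $\Gamma_K$-invariant vector is locally analytic, so the $\Gamma_K$-invariants of $V_{\log,K,\infty}[1/t]$ and of $V_{\log,K_\infty}[1/t]$ coincide. Here I use that $t$ is itself locally analytic, so that $(t^{-k}\Fil^n)^{\la}=t^{-k}(\Fil^n)^{\la}$.

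For (2), I would base change along the faithfully flat extensions $B_{\log,K,\infty,A}\to\tilde{B}_{\log,K_\infty,A}\to\tilde{B}_{\log,\Cbb_p,A}$, which come from the isomorphisms $B_{\log}\otimes\tilde{B}\cong\tilde{B}_{\log}$ in the construction. The morphisms in (2) then become the morphism of Lemma~\ref{lem:log-crys inj}. So the injectivity of the base-changed map implies injectivity before base change, provided these maps are faithfully flat, or at least injective after tensoring with the relevant modules. I expect this flatness to follow from the finite projectivity of the layers $\Fil^n$ together with the faithful flatness statements in \cite{Mikami24}. In the case where $\Acal$ is a Banach algebra rather than algebraic-affinoid, I would redo the argument of Lemma~\ref{lem:log-crys inj} directly.

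For (3), (a) is equivalent to (b) by applying $-\otimes_{\tilde{B}_{K_\infty,A}}\tilde{B}_{\Cbb_p,A}$ and, conversely, taking $H_K$-invariants. Using (1), the natural maps correspond under these operations, and Galois descent is an equivalence on the relevant nuclear modules. Similarly, (b) is equivalent to (c) by base change along $B_{K,\infty,A}\to\tilde{B}_{K_\infty,A}$ and, conversely, taking locally analytic vectors, using \cite[Theorem 3.37]{Mikami24}. For the converse directions, an isomorphism upstairs gives an isomorphism after taking invariants or locally analytic vectors of each $t^{-k}\Fil^n$ piece. Here one needs $(D\otimes \tilde{B}_{\log})^{H_K}=D\otimes \tilde{B}_{\log,K_\infty}$ for $D$ finite projective over $K_0\otimes A$, which holds because $D$ is a direct summand of a free module.

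The main obstacle is the bookkeeping with $[1/t]$ and the colimit over $n$. Locally analytic vectors and $H_K$-invariants commute with filtered colimits of the relevant type, and with $t$-localization, only because $t$ and $\log\varpi$ are themselves locally analytic and the transition maps are split injections. I would check this compatibility first, degree by degree in $\log\varpi$ and $t^{-1}$.
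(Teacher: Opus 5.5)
Your proposal is correct and takes essentially the paper's route. The paper simply notes that, by construction, $V_{\log,K_\infty}^{[p^{-1},1]}[1/t]\cong (V_{\log}^{[p^{-1},1]}[1/t])^{H_K}$ and that $V_{\log,K,\infty}^{[p^{-1},1]}[1/t]$ is its subspace of $\Gamma_K$-locally analytic vectors, then reads off (1)--(3) from this together with Lemma~\ref{lem:log-crys inj}. Your degree-by-degree bookkeeping in $\log\varpi$ and $t^{-1}$, and your universal-injectivity reduction for (2), spell out what the paper leaves implicit; note that, as in the paper, (2) relies on the hypothesis $\Acal\in\AlgAff_{\Qbb_p}$ of Lemma~\ref{lem:log-crys inj}.
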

\begin{proof}
	By construction, we have natural isomorphisms
	$$V_{\log,K_{\infty}}^{[p^{-1},1]}[1/t] \cong (V_{\log}^{[p^{-1},1]}[1/t])^{H_K}$$
	and
	$$V_{\log,K,\infty}^{[p^{-1},1]}[1/t] \cong ((V_{\log}^{[p^{-1},1]}[1/t])^{H_K})^{\Gamma_K\mathchar`-\la}.$$
	The claims immediately follow from this and Lemma~\ref{lem:log-crys inj}.
\end{proof}

We compare the above definition and Berger's definition in \cite{Ber02} in the case where $(A,A^+)$ is a strongly noetherian Tate affinoid pair.

\begin{definition}
Let $(A,A^+)$ be a strongly noetherian Tate affinoid pair over $(\Qbb_p,\Zbb_p)$. 
We define 
\begin{align*}
	&B^{\dagger}_{\rig,K,A}=\varinjlim_{0<s}\varprojlim_{0<r<s}B_{K,0,A}^{[r,s]},\quad \tilde{B}^{\dagger}_{\rig,K_{\infty},A}=\varinjlim_{0<s}\varprojlim_{0<r<s}\tilde{B}_{K_{\infty},A}^{[r,s]},\\
	&B^{\dagger}_{\log,K,A}=B^{\dagger}_{\rig,K,A}[\log \varpi],\quad \tilde{B}^{\dagger}_{\log,K,A}=\tilde{B}^{\dagger}_{\rig,K,A}[\log \varpi].
\end{align*}
If $A=\Qbb_p$, then we omit $A$ from the notation.
We note that the ring $\tilde{B}^{\dagger}_{\rig,K_{\infty}}$ is written as $\tilde{B}^{\dagger}_{\rig,K}$ in \cite{Ber02}.
\end{definition}

\begin{definition}\label{def:comparison with the Robba}
Let $(A,A^+)$ be a strongly noetherian Tate affinoid pair over $\Qbb_p$. 
We write $\Acal=(A,A^+)_{\square}$.
Then there is a natural categorical equivalence
\begin{align*}
	&\{\text{$G_K$-equivariant vector bundles over $X_{\Cbb_p,\Acal}$}\}\\
	\cong &\{\text{$(\varphi,\Gamma_K)$-modules over the Robba ring $B^{\dagger}_{\rig,K,A}$}\}
\end{align*}
by \cite[Theorem 0.6]{Mikami24} and \cite[Proposition 2.2.7]{KPX14}.
For a $G_K$-equivariant vector bundle $V$ over $X_{\Cbb_p,\Acal}$, Let $D_{\rig}(V)$ denote the $(\varphi,\Gamma_K)$-module over $B^{\dagger}_{\rig,K,A}$ corresponding to $V$ under the above equivalence, and we write 
$$D_{\log}(V)=D_{\rig}(V)\otimes_{B^{\dagger}_{\rig,K,A}}B^{\dagger}_{\log,K,A}=D_{\rig}(V)[\log \varpi].$$
A $(\varphi,\Gamma_K)$-module $D_{\rig}(V)$ over $B^{\dagger}_{\rig,K,A}$ is said to be \textit{semistable} if $D_{\log}(V)[1/t]^{\Gamma_K}$ is a finite projective $K_0\otimes A$-module and the natural morphism 
$$D_{\log}(V)[1/t]^{\Gamma_K}\otimes_{K_0\otimes A} B^{\dagger}_{\log,K,A}[1/t]\to D_{\log}(V)[1/t]$$
is an isomorphism.
\end{definition}

\begin{remark}
	We note that \cite[Proposition 2.2.7]{KPX14} only treats the case where $A$ is an affinoid $\Qbb_p$-algebra, but the same argument works for a general strongly noetherian Tate $\Qbb_p$-algebra $A$.
	We also use \cite[Corollary 2.1.5]{KPX14} in the proof of the next proposition, and it also holds for a general strongly noetherian Tate $\Qbb_p$-algebra $A$.
\end{remark}

\begin{theorem}\label{thm:comparison with the classical definition}
	Let $(A,A^+)$ be a strongly noetherian Tate affinoid pair over $\Qbb_p$. 
	We write $\Acal=(A,A^+)_{\square}$.
	Let $V$ be a $G_K$-equivariant vector bundle over $X_{\Cbb_p,\Acal}$.
	Then $V$ is semistable if and only if $D_{\rig}(V)$ is semistable.
\end{theorem}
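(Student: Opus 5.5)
Via Proposition~\ref{prop:log-crys decompletion}, which handles the passage from $G_K$-invariants to $\Gamma_K$-invariants of locally analytic vectors, I will reduce the comparison to rings over the half-open annulus. The idea is to link the two descriptions by gluing along Frobenius. Write $D=D_{\rig}(V)$. By \cite[Theorem 0.6]{Mikami24} and \cite[Proposition 2.2.7]{KPX14}, for $s$ small there is a model $D^{[r,s]}$ over $B_{K,0,A}^{[r,s]}$ for every $0<r<s$, compatible with restriction. Moreover, after base change to $B^{[r,s]}_{K,\infty,A}$ and twisting by $\varphi^{k}$, this model identifies with $V^{p^{k}[r,s]}_{K,\infty}$ for a suitable integer $k$. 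Since $\varphi^k$ commutes with $\Gamma_K$ and sends $\log\varpi$ to $\log\varphi^k(\varpi)=p^k\log\varpi+(\text{element of }B_{K,0}^{[r,s]})$, it identifies the $\log$-rings as well. By the proposition on independence of $I$, I may therefore use any interval $[r,s]$ containing two consecutive powers of $p$ in place of $[p^{-1},1]$.

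First I prove ($\Leftarrow$). Suppose $D$ is semistable with $M=D_{\log}[1/t]^{\Gamma_K}$ finite projective over $K_0\otimes A$ and $M\otimes B^\dagger_{\log,K,A}[1/t]\cong D_{\log}[1/t]$. Base change along $B^\dagger_{\log,K,A}\to B^{[r,s]}_{\log,K,0,A}\to B^{[r,s]}_{\log,K,\infty,A}$ gives an isomorphism $M\otimes B^{[r,s]}_{\log,K,\infty,A}[1/t]\cong V^{[r,s]}_{\log,K,\infty}[1/t]$, after transport by $\varphi^k$. Taking $\Gamma_K$-invariants and using $(B^{[r,s]}_{\log,K,\infty,A}[1/t])^{\Gamma_K}=K_0\otimes A$ (which follows from the corollary on $(\tilde B^I_{\log,\Cbb_p,A}[1/t])^{G_K}$ together with the descent isomorphisms), I get $M\cong (V^{[r,s]}_{\log,K,\infty}[1/t])^{\Gamma_K}$. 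Condition (3)(c) of Proposition~\ref{prop:log-crys decompletion} then holds, so $V$ is semistable.

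Next I prove ($\Rightarrow$). If $V$ is semistable, set $M=D_{\st}(V)$, which is finite projective by Lemma~\ref{lem:Dst fin proj}, with $\varphi$ and $N$. The key step, and the one I expect to be the main obstacle, is to produce from $M$ elements of $D_{\log}[1/t]$, i.e.\ sections over the whole half-open annulus $(0,s]$ rather than over a single closed interval, while controlling the powers of $t$. I will do this as in the proof of the independence of $I$. For each $[r',s]$ with $r'\to0$, gluing along $\varphi$ gives $\Gamma_K$-equivariant isomorphisms $M\cong(V^{[r',s]}_{\log,K,\infty}[1/t])^{\Gamma_K}$ compatible with restriction and with $M\otimes B^{[r',s]}_{\log}[1/t]\cong V^{[r',s]}_{\log}[1/t]$. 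Two issues remain. One must bound the denominators in $t$ uniformly in $r'$; since $t$ has zeros only at the points $\varphi^{-n}(x_\infty)$, I would fix a lattice $t^{-h}\cdot(\text{image of a basis of }M)$ on $[p^{-1},1]$ and propagate it by Frobenius, using $\varphi(t)=pt$. One must also descend from $B_{K,\infty}$ to $B_{K,0}$, using that $\Gamma_K$-invariants of a locally analytic $B_{K,\infty}^{[r',s]}$-module already lie in the $B_{K,n}$-model for $n$ fixed; after shrinking $s$ via $\varphi^{-n}$ this becomes $n=0$, as in \cite{Mikami24}. Passing to $\varprojlim_{r'}$, I obtain a map $M\to D_{\log}[1/t]$. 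Its linearization is an isomorphism because it is one on each $[r',s]$ and $D^{[r',s]}$ are compatible; this is where \cite[Corollary 2.1.5]{KPX14} enters, identifying $D$ with $\varprojlim_{r'}D^{[r',s]}$ and preserving finite projectivity. Finally, taking $\Gamma_K$-invariants shows $D_{\log}[1/t]^{\Gamma_K}\cong M$, since the invariants of $B^\dagger_{\log,K,A}[1/t]$ are $K_0\otimes A$. This proves that $D$ is semistable.
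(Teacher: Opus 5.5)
Your direction ($\Leftarrow$) is fine and is essentially the paper's argument. You only need to first descend the isomorphism over $B^{\dagger}_{\log,K,A}[1/t]$ to a fixed level $(0,s]$ before restricting to a closed interval, which is harmless by finite generation.

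The gap is in ($\Rightarrow$), at the sentence ``its linearization is an isomorphism because it is one on each $[r',s]$''. That inference is invalid, because inverting $t$ does not commute with $\varprojlim_{r'}$. The function $t$ has infinitely many zeros in $(0,s]$, accumulating towards $0$, so $B^{(0,s]}_{\log}[1/t]\to\varprojlim_{r'}B^{[r',s]}_{\log}[1/t]$ is not surjective; this is exactly what the footnote in the paper's proof warns about. Concretely, take $x\in D^{(0,s]}_{\log}$. The levelwise isomorphisms write $x=\sum_i m_i b_i^{(r')}$ with compatible $b_i^{(r')}\in B^{[r',s]}_{\log}[1/t]$. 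Nothing in your argument prevents the pole orders of the $b_i^{(r')}$ along the zeros of $t$ from growing as $r'\to 0$, and in that case the $b_i$ do not come from $B^{(0,s]}_{\log}[1/t]$. Your Frobenius-propagated bound $M\subset t^{-h}V$ is one-sided. It makes the map $M\to D_{\log}[1/t]$ well defined, but it gives no control over the $t$-torsion cokernel.

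What is needed, and what the paper proves, is a uniform two-sided bound. One takes integers $M_0,N$ such that, for $I=[p^{-1},1]$, $[p^{-1},p^{-1}]$, $[1,1]$:
\begin{itemize}
\item $(t^{-m}V^{I}_{\log,K_\infty})^{\Gamma_K}=(V^{I}_{\log,K_\infty}[1/t])^{\Gamma_K}$ for all $m\ge M_0$;
\item the cokernel of $(t^{-M_0}V^{I}_{\log,K_\infty})^{\Gamma_K}\otimes_{K_0\otimes A}\tilde{B}^{I}_{\log,K_\infty,A}\to t^{-M_0}V^{I}_{\log,K_\infty}$ is killed by $t^N$.
\end{itemize}
These statements are transported to $[p^{-1-l},p^{-l}]$ by $\varphi^l$ (using $\varphi(t)=pt$), glued to $[p^{-l},1]$, and then survive $\varprojlim_l$. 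Only this yields the isomorphism over $(0,1]$ after inverting $t$. Your lattice-propagation idea could supply this if you also propagated an inclusion of $t^{N}V$ into the image of $M\otimes\tilde{B}^{I}_{\log}$, but that step is missing.

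Two smaller points:
\begin{itemize}
\item \cite[Corollary 2.1.5]{KPX14} is not what identifies $D$ with $\varprojlim_{r'}D^{[r',s]}$. The paper uses it for faithful flatness of $K_0\otimes A\to B^{\dagger}_{\rig,K,A}$, in the other direction.
\item Descending the $\Gamma_K$-invariant elements from $B_{K,\infty}$ to $B_{K,0}$ does not by itself give the levelwise isomorphism over $B_{K,0}$. You also need that an isomorphism after base change to the larger ring descends.
\end{itemize}
The paper sidesteps both issues by working over $\tilde{B}_{K_\infty}$ throughout. It uses \cite[Theorem 4.8]{Mikami24} to get $D_{\log}(V)[1/t]^{\Gamma_K}=\tilde{D}_{\log}(V)[1/t]^{\Gamma_K}$. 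It then descends the isomorphism along $B^{\dagger}_{\log,K,A}[1/t]\to\tilde{B}^{\dagger}_{\log,K,A}[1/t]$ using a $B^{\dagger}_{\log,K,A}[1/t]$-linear section.
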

\begin{proof}
	We write 
	$$\tilde{D}_{\rig}(V)=D_{\rig}(V)\otimes_{B^{\dagger}_{\rig,K,A}}\tilde{B}^{\dagger}_{\rig,K_{\infty},A}$$ and 
	$$\tilde{D}_{\log}(V)=D_{\rig}(V)\otimes_{B^{\dagger}_{\rig,K,A}}\tilde{B}^{\dagger}_{\log,K_{\infty},A}=\tilde{D}_{\rig}(V)[\log\varpi].$$
	Moreover, we write
	$$V^{(0,s]}_{K_{\infty}}=\varprojlim_{0<r\leq s} V^{[r,s]}_{K_{\infty}}, \quad V^{(0,s]}_{\log,K_{\infty}}=V^{(0,s]}_{K_{\infty}}[\log\varpi].$$
	Since the transition morphisms $V^{[r,s]}_{K_{\infty}} \to V^{[r',s]}_{K_{\infty}}$ are injective, we have an isomorphism\footnote{Note that the natural morphism $B^{(0,s]}_{\log,K_{\infty}}[1/t]\to \varprojlim_{0<r\leq s} (B^{[r,s]}_{\log,K_{\infty}}[1/t])$ is not an isomorphism. To avoid this problem, the argument in the proof becomes somewhat roundabout.}
	$$V^{(0,s]}_{\log,K_{\infty}}=\varprojlim_{0<r\leq s} V^{[r,s]}_{\log,K_{\infty}}.$$
	We note that it is not true in the derived sense.

	By \cite[Theorem 4.8]{Mikami24}, we have
	$$\left(\frac{1}{t^m}\bigoplus_{i=0}^n D_{\rig}(V)(\log\varpi)^i\right)^{\Gamma_K}=\left(\frac{1}{t^m}\bigoplus_{i=0}^n \tilde{D}_{\rig}(V)(\log\varpi)^i\right)^{\Gamma_K}$$
	for any integers $m,n\geq 0$.
	Therefore, we get $D_{\log}(V)[1/t]^{\Gamma_K}=\tilde{D}_{\log}(V)[1/t]^{\Gamma_K}.$

	First, we assume that $D_{\rig}(V)$ is semistable.
	By \cite[Corollary 2.1.5]{KPX14}, $K_0\otimes A \to B^{\dagger}_{\rig,K,A}$ is faithfully flat. 
	Therefore, $D_{\log}(V)[1/t]^{\Gamma_K}$ is a finite projective $K_0\otimes A$-module.
	From the isomorphism 
	$$D_{\log}(V)[1/t]^{\Gamma_K}\otimes_{K_0\otimes A} B^{\dagger}_{\log,K,A}[1/t]\xrightarrow{\sim} D_{\log}(V)[1/t],$$
	and $D_{\log}(V)[1/t]^{\Gamma_K}=\tilde{D}_{\log}(V)[1/t]^{\Gamma_K},$ we obtain the isomorphism
	$$\tilde{D}_{\log}(V)[1/t]^{\Gamma_K}\otimes_{K_0\otimes A} \tilde{B}^{\dagger}_{\log,K,A}[1/t]\xrightarrow{\sim} \tilde{D}_{\log}(V)[1/t].$$
	Since $\tilde{D}_{\log}(V)[1/t]=\varinjlim_{l}V_{\log,K_{\infty}}^{(0,p^{-l}]}[1/t]$, for $l$ sufficiently large, the natural morphisms
	$$V_{\log,K_{\infty}}^{(0,p^{-l}]}[1/t]^{\Gamma_K} \otimes_{K_0\otimes A} \tilde{B}_{\log,K_{\infty},A}^{(0,p^{-l}]}[1/t]\to V_{\log,K_{\infty}}^{(0,p^{-l}]}[1/t]$$
	and 
	$$V_{\log,K_{\infty}}^{(0,p^{-l}]}[1/t]^{\Gamma_K} \to \tilde{D}_{\log}(V)[1/t]^{\Gamma_K}\cong D_{\log}(V)[1/t]^{\Gamma_K}$$
	are isomorphisms.
	In particular, $(V_{\log,K_{\infty}}^{(0,p^{-l}]}[1/t])^{\Gamma_K}$ is a finite projective $K_0\otimes A$-module.
	By applying $\varphi^{-l}$, we may assume $l=0$.
	Since we have
	\begin{align*}
	V^{[p^{-1},1]}_{\log,K_{\infty}}[1/t]&\cong V^{(0,1]}_{\log,K_{\infty}}[1/t]\otimes_{\tilde{B}_{\log,K_{\infty},A}^{(0,1]}[1/t]}\tilde{B}_{\log,K_{\infty},A}^{[p^{-1},1]}[1/t]\\
	&\cong V_{\log,K_{\infty}}^{(0,1]}[1/t]^{\Gamma_K}\otimes_{K_0\otimes A} \tilde{B}_{\log,K_{\infty},A}^{[p^{-1},1]}[1/t],
    \end{align*}
	the natural morphisms
	$$V_{\log,K_{\infty}}^{(0,1]}[1/t]^{\Gamma_K}\to V_{\log,K_{\infty}}^{[p^{-1},1]}[1/t]^{\Gamma_K}$$
	and 
	$$V_{\log,K_{\infty}}^{[p^{-1},1]}[1/t]^{\Gamma_K}\otimes_{K_0\otimes A} \tilde{B}_{\log,K_{\infty},A}^{[p^{-1},1]}[1/t]\to V_{\log,K_{\infty}}^{[p^{-1},1]}[1/t]$$
	are isomorphisms.
	Therefore, $V$ is semistable by Proposition~\ref{prop:log-crys decompletion}.
	
	Next, we assume $V$ is semistable.
	Then for $r=p^{-1},1$, we have an isomorphism 
	$$V_{\log,K_{\infty}}^{[p^{-1},1]}[1/t]^{\Gamma_K}\otimes_{K_0\otimes A} \tilde{B}_{\log,K_{\infty},A}^{[r,r]}[1/t]\cong V_{\log,K_{\infty}}^{[r,r]}[1/t].$$
	Therefore, the natural morphism 
	$$V_{\log,K_{\infty}}^{[p^{-1},1]}\to V_{\log,K_{\infty}}^{[r,r]}$$
	induces an isomorphism 
	$$V_{\log,K_{\infty}}^{[p^{-1},1]}[1/t]^{\Gamma_K}\cong V_{\log,K_{\infty}}^{[r,r]}[1/t]^{\Gamma_K}.$$
	Since $V_{\log,K_{\infty}}^{[p^{-1},1]}[1/t]^{\Gamma_K}$ is a finite projective $K_0\otimes A$-module by Lemma~\ref{lem:Dst fin proj}, there exists an integer $M\geq 0$ such that for any $m\geq M$,
	$$V_{\log,K_{\infty}}^{[p^{-1},1]}[1/t]^{\Gamma_K}=\left(\frac{1}{t^m}V_{\log,K_{\infty}}^{[p^{-1},1]}\right)^{\Gamma_K}.$$
	Moreover, the cokernel $W$ of the natural morphism
	$$\left(\frac{1}{t^M}V_{\log,K_{\infty}}^{[p^{-1},1]}\right)^{\Gamma_K} \otimes_{K_0\otimes A}\tilde{B}_{\log,K_{\infty},A}^{[p^{-1},1]} \to \frac{1}{t^M}V_{\log,K_{\infty}}^{[p^{-1},1]}$$
	is $t$-power torsion since the above morphism becomes an isomorphism after inverting $t$.
	Since $W$ is a finitely generated $\tilde{B}_{\log,K_{\infty},A}^{[p^{-1},1]}$-module, there exists an integer $N\geq 0$ such that $t^NW=0$.
	For $r=p^{-1},1$, we have also 
	$$V_{\log,K_{\infty}}^{[r,r]}[1/t]^{\Gamma_K}=\left(\frac{1}{t^m} V_{\log,K_{\infty}}^{[r,r]}\right)^{\Gamma_K},$$
	and the cokernel of the natural morphism 
	$$\left(\frac{1}{t^M}V_{\log,K_{\infty}}^{[r,r]}\right)^{\Gamma_K} \otimes_{K_0\otimes A}\tilde{B}_{\log,K_{\infty},A}^{[r,r]} \to \frac{1}{t^M}V_{\log,K_{\infty}}^{[r,r]}$$
	is killed by $t^N$.
	The same holds for the interval of the form $[p^{-1-l},p^{-l}]$ by applying the Frobenius $\varphi^l$.
	By repeatedly applying a gluing argument, we obtain an isomorphism  
	$$\left(\frac{1}{t^m} V_{\log,K_{\infty}}^{[p^{-l},1]}\right)^{\Gamma_K}\cong \left(\frac{1}{t^m} V_{\log,K_{\infty}}^{[p^{-1},1]}\right)^{\Gamma_K}$$
	for any $l\geq 1$, $m\geq M$, and the cokernel of the natural morphism 
	$$\left(\frac{1}{t^M}V_{\log,K_{\infty}}^{[p^{-l},1]}\right)^{\Gamma_K} \otimes_{K_0\otimes A}\tilde{B}_{\log,K_{\infty},A}^{[p^{-l},1]} \to \frac{1}{t^M}V_{\log,K_{\infty}}^{[p^{-l},1]}$$
	is killed by $t^N$.
	By taking the projective limit, we get 
	$$\left(\frac{1}{t^m} V_{\log,K_{\infty}}^{(0,1]}\right)^{\Gamma_K}\cong \left(\frac{1}{t^m} V_{\log,K_{\infty}}^{[p^{-1},1]}\right)^{\Gamma_K}$$
	for any $m\geq M$, and therefore, we obtain an isomorphism 
	$$V_{\log,K_{\infty}}^{(0,1]}[1/t]^{\Gamma_K} \cong V_{\log,K_{\infty}}^{[p^{-1},1]}[1/t]^{\Gamma_K}.$$
	Moreover, the cokernel of the natural morphism 
	$$\left(\frac{1}{t^M}V_{\log,K_{\infty}}^{(0,1]}\right)^{\Gamma_K} \otimes_{K_0\otimes A}\tilde{B}_{\log,K_{\infty},A}^{(0,1]} \to \frac{1}{t^M}V_{\log,K_{\infty}}^{(0,1]}$$
	is killed by $t^N$.
	Therefore we obtain an isomorphism
	$$V_{\log,K_{\infty}}^{(0,1]}[1/t]^{\Gamma_K} \otimes_{K_0\otimes A}\tilde{B}_{\log,K_{\infty},A}^{(0,1]} \xrightarrow{\sim} V_{\log,K_{\infty}}^{(0,1]}[1/t].$$
	The same holds true if we replace the intervals $(0,1]$ with $(0,p^{-l}]$.
	Therefore, we get isomorphisms
	$$\tilde{D}^{\dagger}_{\log}(V)[1/t]^{\Gamma_K}\cong V_{\log,K_{\infty}}^{[p^{-1},1]}[1/t]^{\Gamma_K}$$
	and 
	$$\tilde{D}^{\dagger}_{\log}(V)[1/t]^{\Gamma_K}\otimes_{K_0 \otimes A}\tilde{B}^{\dagger}_{\log,K,A}[1/t]\xrightarrow{\sim} \tilde{D}_{\log}(V)[1/t].$$
	Since $D_{\log}(V)[1/t]^{\Gamma_K}=\tilde{D}_{\log}(V)[1/t]^{\Gamma_K}$, the natural morphism
	\begin{align}\label{eq:comparison}
	D_{\log}(V)[1/t]^{\Gamma_K}\otimes_{K_0 \otimes A} B^{\dagger}_{\log,K,A}[1/t]\to D_{\log}(V)[1/t]
	\end{align}
	becomes an isomorphism after applying $-\otimes_{B^{\dagger}_{\log,K,A}[1/t]}\tilde{B}^{\dagger}_{\log,K,A}[1/t]$.
	Since the morphism $B^{\dagger}_{\log,K,A}[1/t]\to \tilde{B}^{\dagger}_{\log,K,A}[1/t]$ has a section as a morphism of $B^{\dagger}_{\log,K,A}[1/t]$-modules, we find that the morphism \eqref{eq:comparison} is already an isomorphism.
	In other words, $D_{\rig}(V)$ is semistable.
\end{proof}

Next, we construct a pointwise criterion of semistable representations as Corollary~\ref{cor:pointwise criterion of HT}.

\begin{lemma}\label{lem:ideal pointwise}
	Let $R$ be a noetherian ring, and $I$ be an ideal of $R$.
	Then we have
	$$I=\bigcap_{\mfrak \in \Spm R, r\geq 1} (I+\mfrak^r).$$
\end{lemma}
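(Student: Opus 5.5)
Pass to the quotient ring $\bar R=R/I$: the claim becomes that in a noetherian ring $\bar R$ we have $\bigcap_{\mfrak,r}\bar{\mfrak}^r=0$, where $\bar{\mfrak}$ runs over the maximal ideals of $\bar R$. This reduction is legitimate because a maximal ideal $\mfrak$ of $R$ with $I\not\subset\mfrak$ satisfies $I+\mfrak^r=R$ for every $r$, so it contributes nothing to the intersection. For maximal ideals $\mfrak\supset I$, the image of $I+\mfrak^r$ in $\bar R$ is $\bar{\mfrak}^r$. Hence $I=\bigcap(I+\mfrak^r)$ holds if and only if $\bigcap_{\bar{\mfrak}\in\Spm\bar R,\,r\ge1}\bar{\mfrak}^r=0$ in $\bar R$.

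Now let $x\in\bar R$ lie in every $\bar{\mfrak}^r$; we show $x=0$ by showing that its annihilator $J=\mathrm{Ann}(x)$ is the unit ideal. Suppose not, and pick a maximal ideal $\mfrak\supset J$. In the local ring $\bar R_{\mfrak}$, which is noetherian, the image of $x$ lies in $\bigcap_r\mfrak^r\bar R_{\mfrak}$. By the Krull intersection theorem this intersection is $0$, so $x/1=0$ in $\bar R_{\mfrak}$. Thus some $s\notin\mfrak$ kills $x$. Then $s\in J\subset\mfrak$, a contradiction. Therefore $J=\bar R$ and $x=0$.

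The only substantive input is the Krull intersection theorem for noetherian local rings, and everything else is formal. The one place that needs care is the first step: we have to check that maximal ideals not containing $I$ contribute the whole ring, and that we really do range over all $r\ge1$, so that the reduction to $\bar R$ is exact.
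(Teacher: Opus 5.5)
Your proof is correct and takes the same route as the paper: pass to $R/I$ to reduce to $I=0$, apply Krull's intersection theorem in each localization $R_{\mfrak}$, and conclude that an element vanishing in every $R_{\mfrak}$ is zero. You also spell out two details the paper leaves implicit, namely why maximal ideals not containing $I$ contribute all of $R$, and the annihilator argument for the local-to-global step.
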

\begin{proof}
	By replacing $R$ with $R/I$, we may assume $I=0$.
	We write $J=\bigcap_{\mfrak, r\geq 1} \mfrak^r.$
	By Krull's intersection theorem, for every maximal ideal $\mfrak$, we have $J\cdot R_{\mfrak}=0$.
	Therefore, we get $J=0$.
\end{proof}

\begin{proposition}\label{prop:ideal ext intersection}
	Assume that $A$ is an affinoid $\Qbb_p$-algebra.
	Let $\{I_{\lambda}\}_{\lambda \in \Lambda}$ be a family of ideals in $K_0\otimes A$, and we set $I=\bigcap_{\lambda}I_{\lambda}$.
	Then we have
	$$I\cdot B_{\log,K,n,A}^{[p^{-1},1]}[1/t]=\bigcap_{\lambda}(I_{\lambda}\cdot B_{\log,K,n,A}^{[p^{-1},1]}[1/t]).$$
\end{proposition}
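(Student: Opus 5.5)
The inclusion $\subset$ is clear, so the content is the reverse inclusion. The idea is to express each ideal $I_\lambda$ through a description of $B:=B_{\log,K,n,A}^{[p^{-1},1]}[1/t]$ as a module over $K_0\otimes A$ that commutes with arbitrary intersections. I will use the fact that $B$ has a ``Schauder-type'' decomposition. Concretely, $B=B_{K,n,A}^{[p^{-1},1]}[\log\varpi][1/t]$ and $B_{K,n,A}^{[p^{-1},1]}=B_{K,n}^{[p^{-1},1]}\hotimes A$. Since $B_{K,n}^{[p^{-1},1]}$ is an affinoid algebra over $K_0$ (after fixing the embedding $K_0\subset B_{K,n}^{[p^{-1},1]}$), it is a Banach $K_0$-space. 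Hence it admits an orthonormalizable structure, i.e.\ it is isomorphic as a Banach $K_0$-space to $\hat{\bigoplus}_{j\in J}K_0 e_j$ (Serre: Banach spaces over discretely valued fields are orthonormalizable).

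Step 1: I reduce to the statement for $B_0:=B_{K,n}^{[p^{-1},1]}\hotimes_{K_0}(K_0\otimes A)$ without $\log\varpi$ and $1/t$. The ring $B$ is a free module over $B_{K,n,A}^{[p^{-1},1]}[1/t]$ on the basis $(\log\varpi)^i$, so intersections of extended ideals can be checked coefficientwise. For $1/t$, $B_0[1/t]=\varinjlim t^{-m}B_0$, and $t$ is a non-zero-divisor on each $B_0/I_\lambda B_0$: by flatness of $B_0$ over $K_0\otimes A$, together with $t$ being regular on $B_{K,n}^{[p^{-1},1]}\hotimes M$ for finite $M$. This regularity lets me write $I_\lambda B_0[1/t]\cap t^{-m}B_0=t^{-m}I_\lambda B_0$. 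It therefore suffices to show $\bigcap_\lambda I_\lambda B_0=IB_0$.

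Step 2: using the orthonormal basis, $B_0\cong\{(a_j)\in\prod_j K_0\otimes A: a_j\to0\}$, the completed sum $c_0(J,K_0\otimes A)$. Here I use that $K_0\otimes A$ is finite over $A$, hence Banach. For any ideal $I'$ of the noetherian affinoid algebra $K_0\otimes A$, the ideal $I'$ is closed, and $I'B_0=B_{K,n}^{[p^{-1},1]}\hotimes_{K_0}I'=c_0(J,I')$. The latter equality holds because $I'$ is finitely generated and closed, and the open mapping theorem lets me lift null sequences in $I'$ to null sequences of coefficient vectors. Then $\bigcap_\lambda c_0(J,I_\lambda)=c_0(J,\bigcap_\lambda I_\lambda)=c_0(J,I)=IB_0$, since membership is checked coordinatewise and $I$ is again closed, being an ideal of a noetherian Banach algebra.

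The main obstacle is the identification $I'B_0=c_0(J,I')$ with the correct topology. For this I will present $I'$ as the image of $(K_0\otimes A)^k$, apply the exact functor $c_0(J,-)$ on Banach $K_0$-modules with strict maps, and use that maps between finite $K_0\otimes A$-modules are strict. I also need to check that the algebraic extended ideal coincides with $c_0(J,I')$. This follows because $I'\otimes_{K_0\otimes A}B_0\to B_0$ has image $c_0(J,I')$, as $c_0(J,-)$ commutes with finite presentations.
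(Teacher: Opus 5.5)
Your proposal is correct and is essentially the paper's argument: you split off $\log\varpi$, write the Banach pieces as $\hat{\bigoplus}_{\Sigma}K_0$, identify the extended ideal with $\hat{\bigoplus}_{\Sigma}J=\hat{\bigoplus}_{\Sigma}(K_0\otimes A)\cap\prod_{\Sigma}J$ (you prove this by hand via the open mapping theorem, where the paper cites \cite[Lemma 3.13]{RJRC22}), and intersect coordinatewise. The one difference is how $1/t$ is handled: the paper uses a $K_0$-linear splitting $B_{K,n}^{[p^{-1},1]}[1/t]\cong B_{K,n}^{[p^{-1},1]}\oplus\bigoplus_{\Nbb}B_{K,n}^{[p^{-1},1]}/t$ and so never needs your regularity of $t$ on $B_0/I_{\lambda}B_0$; that regularity does hold, because $B_{K,n}^{[p^{-1},1]}\hotimes_{K_0}M\cong\hat{\bigoplus}_{S}B_{K,n}^{[p^{-1},1]}$ for the orthonormalizable $K_0$-Banach space $M=(K_0\otimes A)/I_{\lambda}$, and $t$ is a non-zero-divisor in $B_{K,n}^{[p^{-1},1]}$.
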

\begin{proof}
	We have an isomorphism $B_{\log,K,n}^{[p^{-1},1]}[1/t]=\bigoplus_{\Nbb}B_{K,n}^{[p^{-1},1]}[1/t].$
	As $K_0$-modules, there is a decomposition 
	$$B_{K,n}^{[p^{-1},1]}[1/t]\cong B_{K,n}^{[p^{-1},1]}\oplus \bigoplus_{\Nbb} B_{K,n}^{[p^{-1},1]}/t.$$
	Therefore, $B_{\log,K,n}^{[p^{-1},1]}[1/t]$ can be written as a direct sum 
	$$B_{\log,K,n}^{[p^{-1},1]}[1/t]\cong\bigoplus_{i} M_i$$ 
	of Banach $K_0$-submodules.
	We also take an isomorphism 
	$$M_i \cong \hat{\bigoplus_{\Sigma_i}}K_0=(\bigoplus_{\Sigma_i} \Ocal_{K_0})^{\wedge}[1/p],$$ 
	where $(-)^{\wedge}$ denotes the $p$-adic completion.
	Then we obtain an isomorphism 
	\begin{align}\label{eq:decomposition}
	B_{\log,K,n,A}^{[p^{-1},1]}[1/t]\cong\bigoplus_{i} \left(\hat{\bigoplus_{\Sigma_i}}K_0\otimes A\right).	
	\end{align}
	For an ideal $J\subset K_0\otimes A$, we set $\hat{\bigoplus_{\Sigma_i}}J=J\otimes_{K_0\otimes A} \left(\hat{\bigoplus_{\Sigma_i}}K_0\otimes A\right)$.
	By \cite[Lemma 3.13]{RJRC22}, $\hat{\bigoplus_{\Sigma_i}}J$ is isomorphic to the usual completed tensor product $J\hotimes_{K_0\otimes A} \left(\hat{\bigoplus_{\Sigma_i}}K_0\otimes A\right)$.
	In particular, we have 
	\begin{align}\label{eq:intersection}
		\hat{\bigoplus_{\Sigma_i}}J= \hat{\bigoplus_{\Sigma_i}}(K_0\otimes A)\cap \prod_{\Sigma_i}J.
	\end{align}
	Under the isomorphism \eqref{eq:decomposition}, $J\cdot B_{\log,K,n,A}^{[p^{-1},1]}[1/t] \subset B_{\log,K,n,A}^{[p^{-1},1]}[1/t]$ corresponds to 
	$$\bigoplus_{i} \left(\hat{\bigoplus_{\Sigma_i}}J\right)\subset \bigoplus_{i} \left(\hat{\bigoplus_{\Sigma_i}}K_0\otimes A\right).$$
	Therefore, it is enough to show 
	$$\bigoplus_{i} \left(\hat{\bigoplus_{\Sigma_i}}I\right)=\bigcap_{\lambda}\bigoplus_{i} \left(\hat{\bigoplus_{\Sigma_i}}I_{\lambda}\right).$$
	This reduces to showing
	$$\hat{\bigoplus_{\Sigma_i}}I=\bigcap_{\lambda}\left(\hat{\bigoplus_{\Sigma_i}}I_{\lambda}\right).$$
	This follows immediately from \eqref{eq:intersection}.
\end{proof}

\begin{theorem}\label{thm:Gamma-stable ideal}
	Assume that $A$ is an affinoid $\Qbb_p$-algebra.
	Let $I\subset B_{\log,K,\infty,A}^{[p^{-1},1]}[1/t]$ be a $\Gamma_K$-stable ideal.
	Then there exists an ideal $J\subset K_0\otimes_{\Qbb_p}A$ such that $I=J\cdot B_{\log,K,\infty,A}^{[p^{-1},1]}[1/t]$.
\end{theorem}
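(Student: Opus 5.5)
The natural candidate is $J=I\cap(K_0\otimes A)$, and the plan is to show $I=J\cdot B_{\log,K,\infty,A}^{[p^{-1},1]}[1/t]$. The inclusion $\supset$ is clear. For the reverse inclusion I would use a pointwise reduction, combining Lemma~\ref{lem:ideal pointwise} with Proposition~\ref{prop:ideal ext intersection}. For each $\mfrak\in\Spm A$ and $r\geq 1$, let $I_{\mfrak,r}$ be the image of $I$ in
$$B_{\log,K,\infty,A/\mfrak^r}^{[p^{-1},1]}[1/t]=B_{\log,K,\infty,A}^{[p^{-1},1]}[1/t]/\mfrak^r.$$
This image is again a $\Gamma_K$-stable ideal. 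If the theorem holds for the finite $\Qbb_p$-algebras $A/\mfrak^r$, then $I+\mfrak^r B=J_{\mfrak,r}B+\mfrak^rB$ for some ideal $J_{\mfrak,r}\supset\mfrak^r(K_0\otimes A)$ of $K_0\otimes A$. Here I write $B$ for $B_{\log,K,\infty,A}^{[p^{-1},1]}[1/t]$, and I would work at finite level $n$ (with $B_{\log,K,n,A}$) wherever needed.

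To conclude, I would need $I=\bigcap_{\mfrak,r}(I+\mfrak^rB)$, which is a Krull-type intersection statement for submodules of $B$. Then Proposition~\ref{prop:ideal ext intersection} gives
$$\bigcap_{\mfrak,r}J_{\mfrak,r}B=\Bigl(\bigcap J_{\mfrak,r}\Bigr)B.$$
For the intersection statement, I would first pass to a finite level. Since $I$ is $\Gamma_K$-stable, it should be generated by its elements in $B_{\log,K,n,A}^{[p^{-1},1]}[1/t]$ for $n$ large. Noetherianity of $B_{K,n,A}^{[p^{-1},1]}$, together with a degree bound in $\log\varpi$ and a bound on $t$-denominators for finitely many generators, would make $I$ finitely generated over a noetherian ring $B_{K,n,A}^{[p^{-1},1]}$, up to localization at $t$. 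The Krull intersection could then be handled much as in Lemma~\ref{lem:ideal pointwise}, by checking on the Banach pieces of the decomposition \eqref{eq:decomposition}. Finally, $\bigcap J_{\mfrak,r}$ should equal $J$: an element of $K_0\otimes A$ lying in $J_{\mfrak,r}+\mfrak^r$ for all $\mfrak,r$ lies in $I+\mfrak^rB$ for all of them, hence in $I$.

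The remaining case is $A$ artinian, in particular a finite $\Qbb_p$-algebra, where I would argue by dévissage on the length. First take $A=E$ a field. A $\Gamma_K$-stable ideal of $B_{\log,K,\infty,E}^{[p^{-1},1]}[1/t]$ that is nonzero should be the unit ideal. To see this, use $N=d/d\log\varpi$, which is $\Gamma_K$-compatible up to scalar, to lower the $\log\varpi$-degree of elements of $I$; this yields a $\Gamma_K$-stable ideal of $B_{K,\infty,E}^{[p^{-1},1]}[1/t]$. By the classical description of $\Gamma_K$-stable ideals of the annulus ring, that ideal is generated by products of $\varphi^{-k}(q)$, and these become units once $t$ is inverted, because $t$ is divisible by all such factors on $[p^{-1},1]$. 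Also $K_0\otimes E$ is a product of fields, and its idempotents account for the possible $J$. For general artinian $A$, I would filter by powers of the maximal ideal and use flatness of $B$ over $K_0\otimes A$ (Lemma~\ref{lem:BsdR flat}-style).

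I expect the main obstacle to be the non-reduced artinian step. There the field case does not immediately control ideals such as $(x+\epsilon y)$ with $\epsilon^2=0$. I would try to show that $\Gamma_K$-stability together with $N$-stability forces $y$ to be determined modulo $J$, using $H^0$ computations like $\bigl(B_{\log,K,\infty}^{[p^{-1},1]}[1/t]\bigr)^{\Gamma_K}=K_0$.
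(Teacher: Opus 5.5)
Your outer reduction agrees with the paper's. You work at finite level $n$, obtain ideals $J_{\mfrak,r}$ from the quotients $A/\mfrak^r$, and combine Proposition~\ref{prop:ideal ext intersection} with Lemma~\ref{lem:ideal pointwise}. However, the finite case, which is the heart of the theorem, is left open, and the d\'{e}vissage you sketch cannot close it as stated. If $A$ is local with $\mfrak^{k+1}=0\neq\mfrak^k$, then $I\cap\mfrak^kB$ is a $\Gamma_K$-stable submodule of $\mfrak^kB\cong\mfrak^k\otimes_{A/\mfrak}(B/\mfrak B)$. That is a free $B/\mfrak B$-module of possibly large rank, so you need a classification of stable \emph{submodules of free modules}, which a statement about ideals over a field does not give. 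Also, $N$-stability of $I$ is not a hypothesis, so it cannot be used to control the $\epsilon$-part.

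The idea you are missing, which the paper uses, is to forget the $A$-structure. Put $\tilde{B}=\tilde{B}^{[p^{-1},1]}_{\log,\Cbb_p}[1/t]$, a noetherian domain with $\tilde{B}^{G_K}=K_0$. For $A$ finite over $\Qbb_p$ you have $\tilde{B}\otimes A\cong\tilde{B}\otimes_{K_0}(K_0\otimes A)\cong\tilde{B}^{\oplus r}$, with $G_K$ acting only on $\tilde{B}$. So the extended ideal is just a $G_K$-stable $\tilde{B}$-submodule $M\subset\tilde{B}^{\oplus r}$. If $M$ has generic rank $d$, then after permuting coordinates the projection $M\to\tilde{B}^{\oplus d}$ is injective. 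Its cokernel is finitely generated torsion, so its annihilator is a nonzero $G_K$-stable ideal, hence all of $\tilde{B}$ by the rank-one case. Thus $M$ has a basis $x_i=e_i+\sum_{j>d}a_{ji}e_j$ ($1\leq i\leq d$). Writing $gx_i$ in this basis and comparing $e_1,\ldots,e_d$-coefficients gives $gx_i=x_i$. Hence $a_{ji}\in K_0$, and $M=W\otimes_{K_0}\tilde{B}$ with $W=M^{G_K}=M\cap(K_0\otimes A)$, which is automatically an ideal. This treats fields (with their idempotents) and non-reduced rings uniformly, with no d\'{e}vissage.

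Two smaller steps also need repair.

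First, in the rank-one case, $d/d\log\varpi$ commutes with $\Gamma_K$ but need not preserve $I$, so it cannot lower degrees \emph{inside} $I$. What is true is the following. Let $n$ be minimal with $I\cap\Fil^n\neq 0$ (elements of $\log\varpi$-degree at most $n$). Applying $(d/d\log\varpi)^n$ to $I\cap\Fil^n$ yields, up to a scalar, the ideal of leading coefficients, which is stable and hence the unit ideal. This gives a monic $a\in I$ of degree $n$. You still need the step that $\gamma(a)-a\in I$ has degree $<n$ and hence vanishes, so $a$ is invariant, lies in $K_0$, and forces $n=0$.

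Second, the equality $I=\bigcap_{\mfrak,r}(I+\mfrak^rB_{A,n})$, with $B_{A,n}=B^{[p^{-1},1]}_{\log,K,n,A}[1/t]$, cannot be checked on the Banach pieces of \eqref{eq:decomposition}. Those pieces are compatible only with extended ideals $J\cdot B_{A,n}$, not with an arbitrary $I$. The input you need is that $B_{A,n}$ is algebraic-affinoid. Then every maximal ideal $\nfrak$ has residue field finite over $\Qbb_p$ and contracts to some $\mfrak\in\Spm A$. So $I+\mfrak^rB_{A,n}\subset I+\nfrak^r$, and Lemma~\ref{lem:ideal pointwise} applied to the noetherian ring $B_{A,n}$ gives the equality.
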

\begin{proof}
	First, we consider the case where $A=\Qbb_p$.
	It is enough to show that a non-zero $G_K$-stable ideal $I$ of $\tilde{B}_{\log,\Cbb_p}^{[p^{-1},1]}[1/t]$ is the unit ideal.
	We take the smallest integer $n$ such that $I_n=I\cap \Fil^n\tilde{B}_{\log,\Cbb_p}^{[p^{-1},1]}[1/t]\neq 0$.
	Since $\tilde{B}_{\Cbb_p}^{[p^{-1},1]}[1/t]$ is a principal ideal domain, $I\cap \Fil^n\tilde{B}_{\log,\Cbb_p}^{[p^{-1},1]}[1/t]$ is a free $\tilde{B}_{\Cbb_p}^{[p^{-1},1]}[1/t]$-module of rank $1$.
	Moreover, by \cite[Proposition 10.1.1]{FF18}, the image of $I_n$ in $\tilde{B}_{\Cbb_p}^{[p^{-1},1]}[1/t]\cong \Fil^{n}/\Fil^{n-1}$ is a $G_K$-stable $\tilde{B}_{\Cbb_p}^{[p^{-1},1]}[1/t]$-submodule of $\tilde{B}_{\Cbb_p}^{[p^{-1},1]}[1/t]$, and hence is equal to $\tilde{B}_{\Cbb_p}^{[p^{-1},1]}[1/t]$.
	Therefore, we can take a generator of $I_n$ of the form $a=a_0+\cdots+a_{n-1}X^{n-1}+X^n.$
	Since $I_n$ is stable under the $G_K$-action, for every $g\in G_K$, there exists $b_g \in \tilde{B}_{\Cbb_p}^{[p^{-1},1]}$ such that $ga=b_ga$.
	By comparing the coefficient of $X^n$, we get $b_g=1$.
	Therefore, we get $0\neq a\in (\tilde{B}_{\log,\Cbb_p}^{[p^{-1},1]}[1/t])^{G_K}=K_0$, which implies the claim.

	Next, we assume that $A$ is finite over $\Qbb_p$. 
	It is enough to show that any $\Gamma_K$-stable submodule $N \subset \tilde{B}_{\log,\Cbb_p}^{[p^{-1},1]}[1/t]^{\oplus r}$ is induced from a submodule of $K_0^{\oplus r}$.
	Let $d$ denote the generic rank of $N$.
	By changing the basis of $\tilde{B}_{\log,\Cbb_p}^{[p^{-1},1]}[1/t]^{\oplus r}$, we may assume that the morphism 
	$$f\colon N \subset \tilde{B}_{\log,\Cbb_p}^{[p^{-1},1]}[1/t]^{\oplus r} \to \tilde{B}_{\log,\Cbb_p}^{[p^{-1},1]}[1/t]^{\oplus d}$$
	is injective, where the second morphism is the projection from the first component to the $d$th component.
	Then $\Coker(f)$ is a torsion $\tilde{B}_{\log,\Cbb_p}^{[p^{-1},1]}[1/t]$-module whose annihilator is a $G_K$-stable ideal.
	From the case where $A=\Qbb_p$, this annihilator is equal to $\tilde{B}_{\log,\Cbb_p}^{[p^{-1},1]}[1/t]$.
	In other words, $f$ is an isomorphism.
	Therefore, we can take a basis $x_1,\ldots,x_d$ of $N$ of the form 
	\begin{align*}
		x_1&=e_1+a_{d+1,1}e_{d+1}+\cdots+a_{r,1}e_r\\
		x_2&=e_2+a_{d+1,2}e_{d+1}+\cdots+a_{r,2}e_r\\
		\vdots & \\
		x_d&=e_d+a_{d+1,d}e_{d+1}+\cdots+a_{r,d}e_r,
	\end{align*}
	where $e_1,\ldots,e_r$ denote the standard basis of $\tilde{B}_{\log,\Cbb_p}^{[p^{-1},1]}[1/t]^{\oplus r}$.
	For $g\in G_K$, we can uniquely write $gx_1=b_1x_1+\cdots+b_dx_d$ for $b_1,\ldots,b_d\in \tilde{B}_{\log,\Cbb_p}^{[p^{-1},1]}[1/t]$.
	By considering the coefficients of $e_1,\ldots,e_d$ on both sides, we find $b_1=1$ and $b_2=\cdots=b_d=0$.
	In other words, $x_1$ is fixed by $G_K$, and therefore, $a_{d+1,1},\ldots,a_{r,1}\in K_0$.
	The same holds true for $x_2,\ldots,x_d$, so we get the claim.
	
	Finally, we consider the general case.
	Since $I$ is generated by the $\Gamma_K$-stable ideals $I\cap B_{\log,K,n,A}^{[p^{-1},1]}$ of $B_{\log,K,n,A}^{[p^{-1},1]}$ for all $n\geq 0$, it is enough to show that any $\Gamma_K$-stable ideal of $B_{\log,K,n,A}^{[p^{-1},1]}$ is induced from an ideal of $K_0\otimes A$.
	To simplify the notation, we write $B_{\log,K,n,A}^{[p^{-1},1]}[1/t]=B_{A,n}$.
	Let $I$ be a $\Gamma_K$-stable ideal of $B_{A,n}$.
	For a maximal ideal $\mfrak \subset A$ and $r\geq 1$, there is a (unique) ideal $\mfrak^r \subset J_{\mfrak,r}\subset K_0\otimes A$ such that $\bar{I}=J_{\mfrak,r}\cdot B_{A/\mfrak^r,n}$, where $\bar{I}$ denotes the image of $I$ in $B_{A/\mfrak^r,n}$.
	We set $J=\bigcap_{\mfrak,r} J_{\mfrak,r}$, and we prove $J\cdot B_{A,n}=I$.
	By Proposition~\ref{prop:ideal ext intersection}, it is enough to show 
	$$\bigcap_{\mfrak,r}(J_{\mfrak,r}\cdot B_{A,n})=I.$$
	By construction, we have 
	$$\bigcap_{\mfrak,r}(J_{\mfrak,r}\cdot B_{A,n})=\bigcap_{\mfrak,r}(I+\mfrak^r \cdot B_{A,n}).$$
	Let $\nfrak$ be a maximal ideal of $B_{A,n}$.
	Since $B_{A,n}$ is an algebraic-affinoid $\Qbb_{p,\square}$-algebra, $B_{A,n}/\nfrak$ is a finite extension of $\Qbb_p$.
	In particular, $\mfrak=\nfrak\cap A$ is a maximal ideal of $A$.
	Therefore, we have $I+\mfrak^r\cdot B_{A,n} \subset I+\nfrak^r.$
	By Lemma~\ref{lem:ideal pointwise}, we obtain
	$$I\subset \bigcap_{\mfrak,r}(I+\mfrak^r\cdot B_{A,n})\subset \bigcap_{\nfrak\in \Spm B_{A,n},r\geq 1}(I+\nfrak^r)=I,$$
	which proves the claim.
\end{proof}

\begin{corollary}\label{cor:Gamma-stable ideal}
	Let $A$ be an algebraic-affinoid $\Qbb_{p,\square}$-algebra of the form $B_f$ where $B$ is an affinoid $\Qbb_p$-algebra and $f\in B$.
	Let $I\subset B_{\log,K,\infty,A}^{[p^{-1},1]}[1/t]$ be a $\Gamma_K$-stable ideal.
	Then there exists an ideal $J\subset K_0\otimes A$ such that $I=J\cdot B_{\log,K,\infty,A}^{[p^{-1},1]}[1/t]$.
\end{corollary}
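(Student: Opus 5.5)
We deduce the corollary from Theorem~\ref{thm:Gamma-stable ideal} applied to the affinoid algebra $B$, using that $B_{\log,K,\infty,A}^{[p^{-1},1]}[1/t]$ is the localization of $B_{\log,K,\infty,B}^{[p^{-1},1]}[1/t]$ at $f$. Since $\Acal=B_f$ is relatively discrete and finitely generated over $B$, tensoring with $B_{\log,K,\infty}^{[p^{-1},1]}[1/t]$ commutes with inverting $f$. This gives
$$B_{\log,K,\infty,A}^{[p^{-1},1]}[1/t]\cong B_{\log,K,\infty,B}^{[p^{-1},1]}[1/t][1/f],$$
and likewise $K_0\otimes A\cong (K_0\otimes B)[1/f]$.

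Let $\iota\colon B_{\log,K,\infty,B}^{[p^{-1},1]}[1/t]\to B_{\log,K,\infty,A}^{[p^{-1},1]}[1/t]$ be the localization map. Given a $\Gamma_K$-stable ideal $I$ of the target, set $I'=\iota^{-1}(I)$. This is an ideal of $B_{\log,K,\infty,B}^{[p^{-1},1]}[1/t]$. It is $\Gamma_K$-stable because $\iota$ is $\Gamma_K$-equivariant and $f\in B$ is $\Gamma_K$-invariant. By Theorem~\ref{thm:Gamma-stable ideal} there is an ideal $J'\subset K_0\otimes B$ with
$$I'=J'\cdot B_{\log,K,\infty,B}^{[p^{-1},1]}[1/t].$$

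Because localization is exact, $I=I'[1/f]$. Inverting $f$ in the displayed equality then gives $I=J\cdot B_{\log,K,\infty,A}^{[p^{-1},1]}[1/t]$ with $J=J'[1/f]\subset K_0\otimes A$, which is the claim.

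The only point needing care is the first identification. We treat the rings here as discrete (non-condensed) objects, which is how ideals are meant in Theorem~\ref{thm:Gamma-stable ideal}. We must check that the solid tensor product $(-)\otimes \Acal$ agrees with algebraic localization at $f$ on underlying rings. This holds because $B_f=\varinjlim(B\xrightarrow{f}B\xrightarrow{f}\cdots)$ is a filtered colimit of copies of $B$. The solid tensor product commutes with this colimit, and the colimit is computed on $(\ast)$-points.

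With this, the general case is a formal consequence of the affinoid case. No new input about $\Gamma_K$-actions is needed beyond the $\Gamma_K$-invariance of $f$.
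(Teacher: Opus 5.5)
Your proposal is correct and follows the same route as the paper: pull $I$ back along $B_{\log,K,\infty,B}^{[p^{-1},1]}[1/t]\to B_{\log,K,\infty,B}^{[p^{-1},1]}[1/t][1/f]=B_{\log,K,\infty,A}^{[p^{-1},1]}[1/t]$, apply Theorem~\ref{thm:Gamma-stable ideal} to the $\Gamma_K$-stable preimage, and extend the resulting ideal $J'$ to $K_0\otimes A$. You also justify two points the paper leaves implicit: that solid tensoring commutes with localization at $f$, and that an ideal of a localization is the extension of its preimage.
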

\begin{proof}
	Let $I_0$ be a preimage of $I$ under the morphism 
	$$B_{\log,K,\infty,B}^{[p^{-1},1]}[1/t]\to B_{\log,K,\infty,B}^{[p^{-1},1]}[1/t][1/f]= B_{\log,K,\infty,A}^{[p^{-1},1]}[1/t],$$
	which is a $\Gamma_K$-stable ideal.
	By Theorem~\ref{thm:Gamma-stable ideal}, there exists an ideal $J_0\subset K_0\otimes B$ such that $I_0=J_0\cdot B_{\log,K,\infty,B}^{[p^{-1},1]}[1/t]$.
	Then the ideal $J=J_0\cdot (K_0\otimes A)$ of $K_0\otimes A$ satisfies $I=J\cdot B_{\log,K,\infty,A}^{[p^{-1},1]}[1/t]$.
\end{proof}

\begin{theorem}\label{thm:log-crys criterion}
	Assume $\Acal \in \AlgAff_{\Qbb_p}$.
	Let $V$ be a $G_K$-equivariant vector bundle over $X_{\Cbb_p,\Acal}$.
	Then $V$ is semistable if and only if for any $\mfrak\in \Spm A$, $V/\mfrak$ is a semistable vector bundle over $X_{\Cbb_p,\Acal/\mfrak}$ and the natural morphism
	$$D_{\st}(V)\otimes_{A}A/\mfrak \to D_{\st}(V/\mfrak)$$
	is surjective.
\end{theorem}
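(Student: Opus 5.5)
The ``only if'' direction is formal. If $V$ is semistable, then $D_{\st}(V)$ is finite projective over $K_0\otimes A$ by Lemma~\ref{lem:Dst fin proj}. Lemma~\ref{lem:log-crys basechange} applied to $\Acal\to\Acal/\mfrak$ shows that $V/\mfrak$ is semistable and that $D_{\st}(V)\otimes_A A/\mfrak\to D_{\st}(V/\mfrak)$ is an isomorphism, so in particular it is surjective. All the work is in the ``if'' direction, which I would do on the decompleted level using the criterion of Proposition~\ref{prop:log-crys decompletion}(3)(c).

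Set $B=B^{[p^{-1},1]}_{\log,K,\infty,A}[1/t]$, $M=V_{\log,K,\infty}^{[p^{-1},1]}[1/t]$ and $D=D_{\st}(V)=M^{\Gamma_K}$. By Proposition~\ref{prop:log-crys decompletion}(2) the map $\iota\colon D\otimes_{K_0\otimes A}B\to M$ is injective, so only surjectivity must be shown. First reduce, via Proposition~\ref{prop:log-crys analytic local} and \cite[Proposition 2.37]{Mikami24}, to the case $A=R_f$ with $R$ an affinoid algebra and $f$ a non-zero divisor, so that Corollary~\ref{cor:Gamma-stable ideal} is available. Shrinking further, we may assume that $V_{K,\infty}^{[p^{-1},1]}$, hence $M$, is free of rank $d$ over $B$. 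Also, $D$ is relatively discrete and finitely generated over $K_0\otimes A$, by the same argument as Corollary~\ref{cor:DHT relatively discrete} combined with Lemma~\ref{lem:log-crys inj}.

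The key step is a determinant/Fitting ideal argument. The image $\iota(D\otimes B)\subset M$ is $\Gamma_K$-stable. Consider the ideal $I\subset B$ generated by the determinants $\det(x_1,\dots,x_d)$ for $x_i\in D$, computed in a basis of $M$. Changing the basis only multiplies these determinants by units, and $\gamma(\det(x_1,\ldots,x_d))=\det(\gamma)\cdot\det(x_1,\ldots,x_d)$ with $\det(\gamma)\in B^\times$. Hence $I$ is a $\Gamma_K$-stable ideal, and by Corollary~\ref{cor:Gamma-stable ideal} we have $I=J\cdot B$ for some ideal $J\subset K_0\otimes A$.

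Now use the pointwise hypotheses. For $\mfrak\in\Spm A$, the fibre $V/\mfrak$ is semistable, and $D\otimes A/\mfrak\to D_{\st}(V/\mfrak)$ is surjective. Therefore the images of the $x_i$ span $D_{\st}(V/\mfrak)$, and their determinants generate the unit ideal of $B_{A/\mfrak}$. Consequently $J+\mfrak(K_0\otimes A)$ is the unit ideal for every $\mfrak$. Since $K_0\otimes A$ is noetherian and every maximal ideal of it lies over some $\mfrak\in\Spm A$ (it is finite over $A$), this forces $J=K_0\otimes A$, i.e.\ $I=B$. Now choose finitely many tuples in $D$ and elements $b_k\in B$ with $\sum_k b_k\det(x^{(k)})=1$. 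By Cramer's rule on each $B[1/\det(x^{(k)})]$, together with the fact that these localizations cover $\Spec B$, the map $\iota$ is surjective. Thus $V$ is semistable by Proposition~\ref{prop:log-crys decompletion}(3).

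I expect the main obstacle to be the reduction to $A=R_f$ with $M$ free over $B$. Freeness of $V_{K,\infty}$ after a cover of $\AnSpec\Acal$ may require Zariski localization inside $B$ rather than in $A$, in which case the stable-ideal argument must be run on the non-free module directly. In that case I would replace determinants by the $0$th Fitting ideal of $\operatorname{Coker}(\iota)$. This Fitting ideal is $\Gamma_K$-stable because $\iota$ is $\Gamma_K$-equivariant, so Corollary~\ref{cor:Gamma-stable ideal} still applies. Moreover, Fitting ideals commute with base change to $A/\mfrak$, and under the hypotheses the cokernel vanishes after that base change, which gives the same conclusion.
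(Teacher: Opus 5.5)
Your argument is essentially the paper's proof. The paper skips the reduction to free $M$ entirely and works directly with $\Fit_0(\Coker\iota)$, exactly as in your fallback: this ideal is $\Gamma_K$-stable, hence of the form $J\cdot B^{[p^{-1},1]}_{\log,K,\infty,A}[1/t]$ by Corollary~\ref{cor:Gamma-stable ideal}, and the pointwise hypotheses force the image of $J$ in each $K_0\otimes A/\mfrak$ to be the unit ideal. The freeness reduction is not clearly available and is not needed, so keep the Fitting-ideal version. Also state explicitly, as the paper does, that the pointwise hypothesis is preserved under the scalar extensions used in the reduction to $A=R_f$.
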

\begin{proof}
	The only if part is clear. Let us prove the if part.
	The above condition is stable under scalar extensions.
	Therefore, by Proposition~\ref{prop:log-crys analytic local} and \cite[Proposition 2.37]{Mikami24}, we may assume that $A$ is of the form $B_f$ where $B$ is an affinoid $\Qbb_p$-algebra and $f\in B$.
	We want to show that 
	$$D_{\st}(V)\otimes_{K_0\otimes A} B_{\log,K,\infty,A}^{[p^{-1},1]}[1/t]\to V_{\log,K,\infty}^{[p^{-1},1]}[1/t]$$
	is an isomorphism.
	To simplify the notation, we denote the above morphism by $\iota\colon M_A\to N_A$.
	By Proposition~\ref{prop:log-crys decompletion} (2), it is injective.
	Note that $M_A$ is a relatively discrete $B_{\log,K,\infty,A}^{[p^{-1},1]}[1/t]$-module by \cite[Lemma 1.24]{Mikami24}.
	It suffices to show 
	$$\Fit_0(\Coker \iota)=B_{\log,K,\infty,A}^{[p^{-1},1]}[1/t],$$
	where $\Fit_0$ denotes the $0$th Fitting ideal.
	It is a $\Gamma_K$-stable ideal of $B_{\log,K,\infty,A}^{[p^{-1},1]}[1/t]$, so we can take an ideal $J\subset K_0\otimes A$ such that $J\cdot B_{\log,K,\infty,A}^{[p^{-1},1]}[1/t]=\Fit_0(\Coker \iota)$ by Corollary~\ref{cor:Gamma-stable ideal}.
	By the assumption, $\iota \colon M_{A/\mfrak}\to N_{A/\mfrak}$ is surjective where $M_{A/\mfrak}=M\otimes_A A/\mfrak$ and $N_{A/\mfrak}=N\otimes_A A/\mfrak$.
	Therefore, the image of $\Fit_0(\Coker \iota)$ in $B_{\log,K,\infty,A/\mfrak}^{[p^{-1},1]}[1/t]$ is the unit ideal.
	In other words, the image of $J$ in $K_0\otimes A/\mfrak$ is also the unit ideal for any $\mfrak \in \Spm A$. 
	Therefore, $J$ is also the unit ideal.
\end{proof}

\subsection{The $p$-adic monodromy theorem}
In this subsection, we prove that ``de Rham'' implies ``potentially semistable''.

First, we consider the reduced case.
Let $(A,A^+)$ be a reduced affinoid pair over $(\Qbb_p,\Zbb_p)$ of weakly finite type and we set $\Acal=(A,A^+)_{\square}$.
We consider $\Acal_f=(A_f,A^+)_{\square}$ for $f\in A$.
We assume that $A\to A_f$ is injective.
In this case, we can take a Banach $\Qbb_p$-algebra $E$ which is a finite product of complete discretely valued fields with algebraically closed residue field and an injection $A_f \to E$ such that $A\to E$ is a closed embedding (\cite[Corollaire 2.1.4]{BC08}).

\begin{lemma}
	The morphism $A\to E$ has a retraction as a morphism of Banach $\Qbb_p$-vector spaces.
\end{lemma}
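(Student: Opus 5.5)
The plan is to exploit that $\Qbb_p$ is discretely valued, so every Banach $\Qbb_p$-space has an orthonormalizable (equivalent) norm. Once the quotient $E/A$ is known to be Banach, a bounded section $E/A\to E$ will split the inclusion $A\to E$, and the complementary projection will be the retraction.

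First, since $A\to E$ is a closed embedding, we identify $A$ (with its Banach topology) with a closed subspace of $E$. Up to an equivalent norm this is the subspace topology, by the open mapping theorem. Hence $Q\coloneqq E/A$ with the quotient norm is a Banach $\Qbb_p$-space.

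Next, because the value group of $\Qbb_p$ is discrete, $Q$ admits an equivalent norm for which it is isometric to $c_0(J)$ for some index set $J$. Concretely, $Q^{\circ}/p$ is an $\Fbb_p$-vector space; choosing a basis and lifting gives an orthonormal basis $(\bar e_j)_{j\in J}$ (Serre's classical result). For each $j$ we choose a lift $e_j\in E$ of $\bar e_j$ with $\|e_j\|\le C$ for a fixed constant $C$. This is possible because the quotient norm is an infimum, and one may even take $C=1$ after rescaling, since norms take values in $p^{\Zbb}$. Then
$$s\colon Q\to E,\quad \sum_j a_j\bar e_j\mapsto \sum_j a_je_j \qquad (a_j\to 0)$$
is well defined, because the series converges in $E$. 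It is $\Qbb_p$-linear and bounded by $C$, hence continuous, and it is a section of the projection $E\to Q$.

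Finally, we define the retraction $\rho\colon E\to A$ by $\rho(x)=x-s(\bar x)$. This lands in $A$ because $x-s(\bar x)$ maps to $0$ in $Q$. It is continuous as a composite of continuous maps, and it is the identity on $A$ because $s$ vanishes on the image of $A$. Thus $E\cong A\oplus Q$ as Banach spaces, and $\rho$ is the required retraction. Passing to condensed $\Qbb_p$-modules is automatic, because the condensification functor is fully faithful on Banach spaces.

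The only step that needs care is the existence of an orthonormal basis of $Q$ and of lifts with uniformly bounded norm. Both rely on the discreteness of the valuation on $\Qbb_p$; if needed, one can quote that every Banach space over a discretely valued field is of countable type or $c_0(J)$-type.
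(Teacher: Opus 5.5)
Your proof is correct and follows essentially the paper's route. The paper also reduces to constructing a section of $E\to C=E/A$: by the open mapping theorem the image $C^{\circ}$ of $E^{\circ}$ is a $p$-torsion-free, $p$-adically complete module of definition, hence of the form $\hat{\oplus}\Zbb_p$, and a basis is lifted to $E^{\circ}$. This is your orthonormal-basis-plus-bounded-lifts argument, phrased with lattices instead of norms. One minor correction to your closing remark: the fact to quote is that every Banach $\Qbb_p$-space is isomorphic to some $c_0(J)$; the alternative ``of countable type'' is neither needed nor true in general.
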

\begin{proof}
	It is enough to construct a section of $E\to C=E/A$.
	By the open mapping theorem, the image $C^{\circ}$ of $E^{\circ}$ in $C$ is a module of definition.
	Since $C^{\circ}$ is $p$-torsion free and $p$-adically complete, it is of the form $\hat{\oplus}\Zbb_p$.
	Therefore, we can take a section of $E^{\circ}\to C^{\circ}$.
\end{proof}

\begin{theorem}\label{thm:intersetion}
	The intersection of $B_{\dR,A}$ and $K\otimes_{K_0}\tilde{B}^{[p^{-1},1]}_{\log,\Cbb_p,E}[1/t]$ in $B_{\dR,E}$ is equal to $K\otimes_{K_0}\tilde{B}_{\log,\Cbb_p,R}^{[p^{-1},1]}[1/t]$
\end{theorem}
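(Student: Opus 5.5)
Note first that the statement writes $R$ where it means $A$; we read the conclusion as the equality
$$B_{\dR,A}\cap \bigl(K\otimes_{K_0}\tilde{B}^{[p^{-1},1]}_{\log,\Cbb_p,E}[1/t]\bigr)=K\otimes_{K_0}\tilde{B}^{[p^{-1},1]}_{\log,\Cbb_p,A}[1/t]$$
inside $B_{\dR,E}$. Everything is linear algebra over the Banach retraction $\rho\colon E\to A$ from the preceding lemma. Since $A$ and $E$ are Banach, Remark~\ref{rem:de Rham complete} gives $B_{\dR,A}=B_{\dR}\otimes A$ and $B_{\dR,E}=B_{\dR}\otimes E$. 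Similarly $K\otimes_{K_0}\tilde{B}^{[p^{-1},1]}_{\log,\Cbb_p,?}[1/t]=\bigl(K\otimes_{K_0}\tilde{B}^{[p^{-1},1]}_{\log,\Cbb_p}[1/t]\bigr)\otimes ?$ for $?=A,E$. Write $P=K\otimes_{K_0}\tilde{B}^{[p^{-1},1]}_{\log,\Cbb_p}[1/t]$, which is a nuclear $\Qbb_p$-module. By Proposition~\ref{prop:log fixed point} (after restricting to $[1,1]$), $P$ comes with an injection $\iota\colon P\hookrightarrow B_{\dR}$.

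The inclusion $\supseteq$ is clear. For $\subseteq$, the retraction $\rho$ gives a splitting $E\cong A\oplus C$ of Banach spaces, with $C=E/A$. Tensoring with nuclear modules preserves this splitting, so
$$P\otimes E=(P\otimes A)\oplus(P\otimes C),\qquad B_{\dR}\otimes E=(B_{\dR}\otimes A)\oplus(B_{\dR}\otimes C).$$
The map $\iota\otimes\id_E$ respects these decompositions, because it is $\iota\otimes\id_A\oplus\iota\otimes\id_C$. Now take $x\in P\otimes E$ whose image lies in $B_{\dR}\otimes A$. Then its $C$-component $x_C\in P\otimes C$ maps to $0$ in $B_{\dR}\otimes C$. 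It therefore suffices to show that $\iota\otimes\id_C\colon P\otimes C\to B_{\dR}\otimes C$ is injective; this forces $x_C=0$, hence $x\in P\otimes A$.

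The main point is this injectivity of $\iota\otimes\id_C$ for a Banach space $C$. By the decomposition used in the proof of Proposition~\ref{prop:ideal ext intersection}, $P$ is a countable direct sum of Banach spaces. Likewise $B_{\dR}$ is a colimit, with injective transition maps, of the Fréchet spaces $t^{-n}B_{\dR}^+=\varprojlim_m t^{-n}B_{\dR}^+/t^m$, each of whose graded pieces is Banach. Writing $C\cong\hat{\bigoplus}_{\Sigma}\Qbb_p$, we get that $M\otimes C$ is the module of null sequences in $M$ when $M$ is Banach. The same holds for Fréchet $M$ via \cite[Lemma 3.28]{RJRC22}. Both tensor products are compatible with these filtered colimits, since tensoring with $C$ commutes with colimits. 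Hence $\iota\otimes\id_C$ is identified with the map on null sequences (indexed by $\Sigma$) induced by $\iota$, and this is injective because $\iota$ is injective coordinatewise. The same argument as in Lemma~\ref{lem:prod tensor inj}, namely embedding into $\prod_\Sigma$, makes this precise.

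The expected obstacle is controlling the condensed tensor products with the non-Banach modules $P$ and $B_{\dR}$. The plan is to reduce each step to Banach or Fréchet pieces with injective transitions, exactly as in Lemma~\ref{lem:prod tensor inj}.
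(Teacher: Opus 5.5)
Your proposal is correct and follows the same route as the paper: you split $E=A\oplus C$ using the Banach retraction and compare the resulting decompositions of $B_{\dR,E}$ and $K\otimes_{K_0}\tilde{B}^{[p^{-1},1]}_{\log,\Cbb_p,E}[1/t]$, and you are right that $R$ in the statement should read $A$. The paper leaves the injectivity of $\iota\otimes\id_C$ implicit in ``the claim easily follows''; your justification via the natural embedding $P\otimes C\hookrightarrow\prod_\Sigma P$ is the right one, and in fact that embedding together with injectivity of $\prod_\Sigma\iota$ already suffices, so you can drop the Fr\'echet analysis of $B_{\dR}\otimes C$.
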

\begin{proof}
	We take a splitting $E=A\oplus C$ as Banach $\Qbb_p$-vector spaces.
	Then we have the following isomorphisms:
	\begin{align*}
		&B_{\dR,A}\cong B_{\dR}\otimes A,\\
		&B_{\dR,E}\cong (B_{\dR}\otimes A) \oplus (B_{\dR}\otimes C),\\
		&K\otimes_{K_0}\tilde{B}_{\log,\Cbb_p,E}^{[p^{-1},1]}[1/t]\cong ((K\otimes_{K_0}\tilde{B}_{\log,\Cbb_p}^{[p^{-1},1]}[1/t])\otimes A)\oplus((K\otimes_{K_0}\tilde{B}_{\log,\Cbb_p}^{[p^{-1},1]}[1/t])\otimes C).
	\end{align*}
	The claim easily follows from these isomorphisms.
\end{proof}

\begin{corollary}\label{cor:intersection}
 	The intersection of $B_{\sdR,A_f}$ and $K\otimes_{K_0}\tilde{B}^{[p^{-1},1]}_{\log,\Cbb_p,E}[1/t]$ in $B_{\dR,E}$ is equal to $K\otimes_{K_0}\tilde{B}^{[p^{-1},1]}_{\log,\Cbb_p,A_f}[1/t]$
\end{corollary}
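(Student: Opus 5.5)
We deduce the corollary from Theorem~\ref{thm:intersetion} by inverting $f$. Abbreviate
$$\Bcal=K\otimes_{K_0}\tilde{B}^{[p^{-1},1]}_{\log,\Cbb_p}[1/t],$$
a nuclear $\Qbb_{p,\square}$-module. Then $K\otimes_{K_0}\tilde{B}^{[p^{-1},1]}_{\log,\Cbb_p,E}[1/t]=\Bcal\otimes E$ and $K\otimes_{K_0}\tilde{B}^{[p^{-1},1]}_{\log,\Cbb_p,A_f}[1/t]=\Bcal\otimes A_f$. Since $A_f=\varinjlim(A\xrightarrow{f}A\xrightarrow{f}\cdots)$, we get $B_{\sdR,A_f}=B_{\dR}\otimes A_f=\varinjlim_k f^{-k}(B_{\dR}\otimes A)$. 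By Remark~\ref{rem:de Rham complete}, $B_{\dR}\otimes A=B_{\dR,A}$, as $A$ is Banach. Likewise $\Bcal\otimes A_f=\bigcup_k f^{-k}(\Bcal\otimes A)$. Both unions sit inside $B_{\dR,E}$, and these inclusions hold because $A\to E$ has a Banach retraction, so everything is a direct summand after tensoring, and because $f$ is invertible in $E$.

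The inclusion $\supseteq$ is clear. For $\subseteq$, take $x$ in the intersection. Then $x\in f^{-k}B_{\dR,A}$ for some $k$, so $f^kx\in B_{\dR,A}$. Since $f\in A\subset E$ and $x\in\Bcal\otimes E$, we also have $f^kx\in\Bcal\otimes E$. Theorem~\ref{thm:intersetion} then gives $f^kx\in\Bcal\otimes A$, whose ambient ring $R$ in the theorem's statement should be read as $A$. Hence $x\in f^{-k}(\Bcal\otimes A)\subset\Bcal\otimes A_f$, since $f$ is a unit in $E$ and multiplication by $f^{-k}$ on $B_{\dR,E}$ is well defined.

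The main obstacle is justifying the identifications above. First, $B_{\dR}\otimes A_f$ and $\Bcal\otimes A_f$ must be the filtered colimits $\bigcup_k f^{-k}(-)$, which holds because $-\otimes-$ commutes with colimits. Second, the natural maps into $B_{\dR,E}$ must be injective. Injectivity on each stage follows from the splitting $E=A\oplus C$: it makes $B_{\dR}\otimes A\to B_{\dR}\otimes E$ and $\Bcal\otimes A\to\Bcal\otimes E$ split injections. Multiplication by $f$ is injective on $B_{\dR}\otimes A$, since $A\to A_f\to E$ is injective and $f$ is a unit in $E$, so each transition map is injective. Filtered colimits of injections in condensed modules remain injective, so the colimit maps into $B_{\dR,E}$ are injective as well. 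Once this is settled, the intersection can be computed on underlying sets, and the argument above finishes the proof.
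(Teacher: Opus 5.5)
Your proposal is correct and follows the paper's route. The paper likewise deduces the corollary from Theorem~\ref{thm:intersetion} via $B_{\sdR,A_f}=B_{\dR,A}[1/f]$ and $K\otimes_{K_0}\tilde{B}^{[p^{-1},1]}_{\log,\Cbb_p,A_f}[1/t]=(K\otimes_{K_0}\tilde{B}^{[p^{-1},1]}_{\log,\Cbb_p,A}[1/t])[1/f]$; you supply the details the paper leaves as ``easily follows'', namely the split injections coming from $E=A\oplus C$, the injectivity of the colimits, and the $f^kx$ element chase. One point to tighten: run the element chase on $S$-valued points for profinite $S$, not just on underlying sets, which works verbatim because sequential colimits commute with evaluation at such $S$.
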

\begin{proof}
	It easily follows from Theorem~\ref{thm:intersetion}, 
	$$B_{\sdR,A_f}=B_{\dR,A}[1/f],$$ 
	and 
	$$K\otimes_{K_0}\tilde{B}^{[p^{-1},1]}_{\log,\Cbb_p,A_f}[1/t]=(K\otimes_{K_0}\tilde{B}^{[p^{-1},1]}_{\log,\Cbb_p,A}[1/t])[1/f].$$
\end{proof}

\begin{theorem}\label{thm:p-adic monodromy reduced}
	Let $V$ be a de Rham $G_K$-equivariant vector bundle over $X_{\Cbb_p,\Acal_f}$.
	Then $V$ is potentially semistable.
\end{theorem}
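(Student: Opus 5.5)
We follow Berger--Colmez: realize $D_{\st}$ inside $D_{\sdR}$ by intersecting with the corresponding objects over the field product $E$, where the classical $p$-adic monodromy theorem is available. Since $V$ is de Rham over $\Acal_f$, Theorem~\ref{thm:cdR implies dR} shows it is strongly de Rham. By Lemma~\ref{lem:sdR fil exhaustive}, $D=D_{\sdR}(V)$ is then a finite projective $K\otimes A_f$-module with $D\otimes_{K\otimes A_f}B_{\sdR,A_f}\cong V_{\sdR}$.

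\textbf{Step 1: pass to $E$.} Base change along $A_f\to E$. By Lemma~\ref{lem:sdR basechange}, $V_E$ is de Rham, with $D_{\dR}(V_E)=D\otimes_{A_f}E$. Write $E=\prod_i E_i$, where each $E_i$ is complete discretely valued with algebraically closed residue field. Over each $E_i$ we invoke the $p$-adic monodromy theorem of Andr\'e, Mebkhout and Kedlaya (for $E_i$-coefficients, via Theorem~\ref{thm:comparison with the classical definition} to translate to our definition). This gives a finite extension $L_i/K$ with $V_{E_i}$ semistable over $G_{L_i}$. Take $L$ to be the compositum of the $L_i$ and replace $K$ by $L$ using Lemma~\ref{lem:potentially sdR}. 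Then $V_E$ is semistable, so $D_{\st}(V_E)$ is finite projective over $K_0\otimes E$ (Lemma~\ref{lem:Dst fin proj}). By Lemma~\ref{lem:log-crys implies dR}, $K\otimes_{K_0}D_{\st}(V_E)\cong D_{\dR}(V_E)$ inside $V_{\dR,E}$.

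\textbf{Step 2: descend.} Consider everything inside $V_{\dR,E}$. Set
\[
D'=V_{\log,A_f}^{[p^{-1},1]}[1/t]^{G_K}\subset D_{\st}(V).
\]
The goal is to show that $K\otimes_{K_0}\tilde{B}^{[p^{-1},1]}_{\log,\Cbb_p,A_f}[1/t]$-span of $D'$ is all of $V_{\log}^{[p^{-1},1]}[1/t]$. Choose bases locally (or work with projective modules directly). Then $V_{\log}[1/t]$ embeds in $V_{\dR,E}$ as the intersection of $V_{\sdR}$ and $V_{\log,E}[1/t]$. This is Corollary~\ref{cor:intersection} applied coordinatewise to a free module; a projective module is a direct summand of a free one.

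Elements of $D=D_{\sdR}(V)$ lie in $V_{\sdR}$. After applying $K\otimes_{K_0}-$, they also lie in $K\otimes_{K_0}V_{\log,E}[1/t]$, because over $E$ we have $D_{\dR}(V_E)=K\otimes_{K_0}D_{\st}(V_E)$. By the intersection statement, they therefore lie in $K\otimes_{K_0}V_{\log}^{[p^{-1},1]}[1/t]$ and are $G_K$-invariant. Decompose $K\otimes_{K_0}(-)$ via $K\otimes_{K_0}K=\prod_{\sigma}K$, or take $G_K$-invariants using $(K\otimes_{K_0}\tilde B_{\log}[1/t])^{G_K}$ together with a trace from $K$ to $K_0$. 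This yields $K\otimes_{K_0}D'\supseteq D$, so
\[
D'\otimes_{K_0\otimes A_f}(K\otimes A_f)=D .
\]
Hence $D'$ is finite projective (it is a direct summand of $D$ viewed over $K_0\otimes A_f$). Moreover,
\[
K\otimes_{K_0}\bigl(D'\otimes\tilde{B}_{\log,A_f}[1/t]\bigr)\to K\otimes_{K_0}V_{\log}[1/t]
\]
is injective by Lemma~\ref{lem:log-crys inj}.

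\textbf{Step 3: surjectivity.} This is the expected main obstacle. The image of $D'\otimes\tilde{B}_{\log,A_f}[1/t]$ and $V_{\log}[1/t]$ are both finite projective of the same rank. They agree after base change to $B_{\sdR,A_f}$, since both give $V_{\sdR}$, and after base change to $E$, since there $D'\otimes E=D_{\st}(V_E)$. One could try to show directly that $x\in V_{\log}[1/t]$ lies in the span: write $x$ in the span of $D'$ with coefficients in $K\otimes_{K_0}\tilde B_{\log,E}[1/t]$, because $V_E$ is semistable. The same coefficients also lie in $B_{\sdR,A_f}$, obtained by expanding in $D$. Corollary~\ref{cor:intersection} then puts them in $K\otimes_{K_0}\tilde{B}_{\log,A_f}[1/t]$. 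Stripping the factor $K$, by projecting along a $K_0$-basis of $K$, shows that $V$ is semistable over $G_L$.
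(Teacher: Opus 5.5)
Your proposal is correct. Through Step 2 it coincides with the paper's proof: base change to $E$, the classical monodromy theorem on the factors $E_i$, and the Cartesian square from Corollary~\ref{cor:intersection}, read on $G_L$-invariants, to get $L\otimes_{L_0}D^L_{\st}(V)\cong D^L_{\sdR}(V)=D^L_{\dR}(V)$.

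The two routes split at surjectivity. The paper never proves directly that $D^L_{\st}(V)\otimes\tilde{B}^{[p^{-1},1]}_{\log,\Cbb_p,A_f}[1/t]\to V^{[p^{-1},1]}_{\log}[1/t]$ is onto. It specializes at each $\mfrak\in\Spm A_f$ and shows that $V/\mfrak$ is semistable over $G_L$ with $D^L_{\st}(V)\otimes A_f/\mfrak\cong D^L_{\st}(V/\mfrak)$. It then invokes the pointwise criterion Theorem~\ref{thm:log-crys criterion}. That criterion rests on the decompletion Proposition~\ref{prop:log-crys decompletion}, the classification of $\Gamma_K$-stable ideals (Theorem~\ref{thm:Gamma-stable ideal}), and a Fitting-ideal argument.

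You instead apply the same module-level Cartesian square before taking invariants, to the coordinates of an arbitrary $x\in V^{[p^{-1},1]}_{\log}[1/t]$:
\begin{itemize}
\item The coordinates of $x$ with respect to $D^L_{\st}(V)$ in $V_{\log,E}[1/t]$ agree, in $B_{\dR,E}$, with its coordinates in $V_{\sdR}$ with respect to $D^L_{\sdR}(V)=L\otimes_{L_0}D^L_{\st}(V)$.
\item So they lie in $L\otimes_{L_0}\tilde{B}^{[p^{-1},1]}_{\log,\Cbb_p,A_f}[1/t]$.
\item Taking the component along $1$ in an $L_0$-basis of $L$ puts them in $\tilde{B}^{[p^{-1},1]}_{\log,\Cbb_p,A_f}[1/t]$.
\end{itemize}
This works. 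It makes Theorem~\ref{thm:log-crys criterion} and all of its ideal-theoretic machinery unnecessary for this theorem. You could even drop Lemma~\ref{lem:log-crys inj}, since injectivity also follows by base change to $E$. What the paper's detour buys in exchange is a pointwise criterion for semistability that is of independent interest. Your route buys a much shorter and more self-contained proof.

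When you write it up, make the following points explicit:
\begin{itemize}
\item $D^L_{\st}(V)\otimes_{A_f}E\cong D^L_{\st}(V_E)$. This follows from $D^L_{\sdR}(V)\otimes_{A_f}E=D^L_{\dR}(V_E)=L\otimes_{L_0}D^L_{\st}(V_E)$ and faithful flatness of $L$ over $L_0$. It is what lets you expand over $E$ using $D^L_{\st}(V)$ itself.
\item $B_{\sdR,A_f}\to B_{\dR,E}$ and $V^{[p^{-1},1]}_{\log}[1/t]\to V^{[p^{-1},1]}_{\log,E}[1/t]$ are injective. You need this to compare coordinates and to conclude that $x$ equals the resulting combination.
\item How you pass from free to merely projective $D^L_{\st}(V)$. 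Either use a complement, or apply the functoriality of the module-level Cartesian square to both source and target.
\item In the condensed setting, surjectivity on underlying points suffices because the target is finitely generated.
\end{itemize}
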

\begin{proof}
	We write $V_E=V\otimes_{A_f} E$, which is a $G_K$-equivariant vector bundle over $X_{\Cbb_p,(E,\Ocal_E)_{\square}}$.
	By \cite[6.2]{BC08}, \cite{Bergererrata}, and Theorem~\ref{thm:comparison with the classical definition}, there is a finite extension $L/K$ such that the natural morphism
	\begin{align*}
		(V_{\log,\Cbb_p,E}^{[p^{-1},1]}[1/t])^{G_L}\otimes_{L_0\otimes E} \tilde{B}_{\log,\Cbb_p,E}^{[p^{-1},1]}[1/t]\to V_{\log,\Cbb_p,E}^{[p^{-1},1]}[1/t]
	\end{align*}
	is an isomorphism. 
	By Corollary~\ref{thm:intersetion}, there is a Cartesian diagram
	\begin{align*}
		\xymatrix{
			V_{\dR} \ar[r] & V_{\dR,E}\\
			L\otimes_{L_0} V_{\log}^{[p^{-1},1]}[1/t]\ar[r]\ar[u] & L\otimes_{L_0} V_{\log,E}^{[p^{-1},1]}[1/t].\ar[u]
	}
	\end{align*}
	By taking the $G_L$-invariant vectors, we obtain a Cartesian diagram
	\begin{align*}
		\xymatrix{
			V_{\dR}^{G_L} \ar[r] & V_{\dR,E}^{G_L}\\
			L\otimes_{L_0} (V_{\log}^{[p^{-1},1]}[1/t])^{G_L}\ar[r]\ar[u] & L\otimes_{L_0} (V_{\log,E}^{[p^{-1},1]}[1/t])^{G_L},\ar[u]
	}
	\end{align*}
	and the right vertical morphism is an isomorphism.
	Therefore, the morphism
	$$L\otimes_{L_0} D_{\st}^L(V)=L\otimes_{L_0} (V_{\log}^{[p^{-1},1]}[1/t])^{G_L} \to V_{\dR}^{G_L}=D_{\dR}^L(V)$$
	is an isomorphism.
	For $\mfrak \in \Spm A_f$, there is a natural isomorphism
	$$L\otimes_{L_0} D_{\st}^L(V)\otimes_{A_f}A_f/\mfrak \cong D_{\dR}^L(V) \otimes_{A_f} A_f/\mfrak\cong D_{dR}^L(V/\mfrak),$$
	where the second isomorphism follows since $V$ is de Rham.
	On the other hand, there is a natural injection
	$$L\otimes_{L_0} D_{\st}^L(V/\mfrak)\to D_{dR}^L(V/\mfrak).$$
	Since the composition
	$$L\otimes_{L_0} D_{\st}^L(V)\otimes_{A_f}A_f/\mfrak\to L\otimes_{L_0} D_{\st}^L(V/\mfrak)\to D_{dR}^L(V/\mfrak)$$
	is an isomorphism, we find that the natural morphism   
	$$D_{\st}^L(V)\otimes_{A_f}A_f/\mfrak\to D_{\st}^L(V/\mfrak)$$
	is an isomorphism, and that $V/\mfrak$ is semistable as a $G_L$-equivariant vector bundle over $X_{\Cbb_p,\Acal_f/\mfrak}$.
	Therefore, $V$ is a semistable $G_L$-equivariant vector bundle over $X_{\Cbb_p,\Acal_f}$ by Theorem~\ref{thm:log-crys criterion}. 
\end{proof}

Next, we aim to extend the $p$-adic monodromy theorem by removing the reducedness assumption.

\begin{lemma}\label{lem:log-crys zariski dense criterion}
	Let $\Acal=(A,A^+)_{\square}$ be an algebraic-affinoid analytic $\Qbb_{p,\square}$-algebra, and let $f\in A$.
	We take an integer $n\geq 1$ such that $A[f^{\infty}]=A[f^n]$, where $A[f^n]=\Ker(f^n\colon A\to A)$ and $A[f^{\infty}]=\bigcup_m A[f^m]$.
	(Since $A$ is noetherian, such $n$ exists.)
	Then a $G_K$-equivariant vector bundle $V$ over $X_{\Cbb_p,\Acal}$ is semistable if and only if $V_f=V\otimes_{\Acal}\Acal_f$ over $X_{\Cbb_p,\Acal_f}$ and $V/f^n=V\otimes_{\Acal}\Acal/f^n$ over $X_{\Cbb_p,\Acal/f^n}$ are semistable.
\end{lemma}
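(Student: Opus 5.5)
The ``only if'' direction is immediate from Lemma~\ref{lem:log-crys basechange}, applied to $\Acal\to\Acal_f$ and $\Acal\to\Acal/f^n$. For the converse, the plan is to use the fibre-product decomposition of $A$. Since $A[f^\infty]=A[f^n]$, the ideal $f^nA$ meets the $f$-torsion trivially: if $f^na\in A[f^\infty]=A[f^n]$, then $f^{2n}a=0$, so $a\in A[f^n]$ and $f^na=0$. Hence $f^nA\cap\Ker(A\to A_f)=0$, and the sequence
$$0\to A\to A_f\oplus A/f^n\to A_f/f^n\to 0$$
is exact. That is, $A\cong A_f\times_{A_f/f^n}A/f^n$. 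All of $A_f$, $A/f^n$ and $A_f/f^n$ are again algebraic-affinoid.

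Since $\tilde{B}^{I}_{\log,\Cbb_p}[1/t]$ is nuclear and flat over $\Qbb_{p,\square}$, the sequence stays exact after tensoring with $V^I_{\log}[1/t]$, which is finite projective over $\tilde{B}^I_{\log,\Cbb_p,A}[1/t]$. Taking $G_K$-invariants is left exact, so we get
$$0\to D_{\st}(V)\to D_{\st}(V_f)\oplus D_{\st}(V/f^n)\to D_{\st}(V_f/f^n).$$
Write $D_1=D_{\st}(V_f)$, $D_2=D_{\st}(V/f^n)$ and $D_{12}=D_{\st}(V_f/f^n)$. By Lemma~\ref{lem:Dst fin proj}, $D_1$ and $D_2$ are finite projective over $K_0\otimes A_f$ and $K_0\otimes A/f^n$ respectively. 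By Lemma~\ref{lem:log-crys basechange}, both $D_1\otimes(K_0\otimes A_f/f^n)$ and $D_2\otimes(K_0\otimes A_f/f^n)$ are isomorphic to $D_{12}$, compatibly with the comparison maps to $V_f/f^n$. So $D_{\st}(V)=D_1\times_{D_{12}}D_2$ is a Milnor patching of finite projective modules along the surjection $A/f^n\to A_f/f^n$. By Milnor's theorem (with care about the condensed structure, using relative discreteness via \cite[Lemma 1.24]{Mikami24}), $D_{\st}(V)$ is finite projective over $K_0\otimes A$, and the maps $D_{\st}(V)\otimes A_f\to D_1$ and $D_{\st}(V)\otimes A/f^n\to D_2$ are isomorphisms.

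Next consider the comparison map
$$\iota\colon D_{\st}(V)\otimes_{K_0\otimes A}\tilde{B}^I_{\log,\Cbb_p,A}[1/t]\to V^I_{\log}[1/t].$$
Both sides are finite projective over $\tilde{B}^I_{\log,\Cbb_p,A}[1/t]$. After base change to $A_f$ and to $A/f^n$, $\iota$ becomes the semistability isomorphisms for $V_f$ and $V/f^n$. The fibre-product description applies to both sides: tensor the exact sequence above with each of them, using flatness over $A$ of the source and target. The five lemma then shows that $\iota$ is an isomorphism.

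The main obstacle is the patching step. The question is whether $D_1\times_{D_{12}}D_2$ really recovers a finite projective module and agrees with the invariants computed in the condensed category. Milnor patching requires one of the maps to the common base ring to be surjective, which holds here since $A/f^n\to A_f/f^n$ is surjective. However, exactness of the fibre-product sequence after tensoring with the period rings uses flatness and the Banach-colimit structure, as in Lemma~\ref{lem:prod tensor inj}. I would first verify everything on underlying discrete rings, where $K_0\otimes A$ is noetherian, and then pass back to the condensed setting via relative discreteness.
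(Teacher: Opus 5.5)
There is a genuine gap, and it sits in the patching step. Your ``only if'' direction and the injectivity of $A\to A_f\oplus A/f^n$ are correct. The trouble is the third term. Since $f$ is a unit in $A_f$, the ring $A_f/f^nA_f$ is zero, and the ring-theoretic pushout $A_f\otimes_A A/f^n$ vanishes. The actual cokernel of $A\to A_f\oplus A/f^n$ is the $A$-module $A_f/f^nA'$, where $A'$ is the image of $A$ in $A_f$. This module has three defects:
\begin{enumerate}
\item It is not a quotient ring of $A_f$, because $f^nA'$ is not an ideal of $A_f$.
\item It is not algebraic-affinoid.
\item The map $A/f^n\to A_f/f^nA'$ is not surjective: its image is only $A'/f^nA'$ (for $A=\Qbb_p\langle T\rangle$, $f=T$, the class of $T^{-1}$ is missed).
\end{enumerate}
So there is no Milnor square. ``$V_f/f^n$'' is not a vector bundle over any algebraic-affinoid ring, and $D_{\st}(V_f/f^n)$ and the appeal to Lemma~\ref{lem:log-crys basechange} have no meaning. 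The patching claim cannot hold in any form: a pair $(D_1,D_2)$ over $K_0\otimes A_f$ and $K_0\otimes A/f^n$ satisfies no compatibility condition at all. For $A=\Qbb_p\langle T\rangle$, $f=T$, ranks $1$ and $2$ are not realized by any finite projective $K_0\otimes A$-module. Hence finite projectivity of $D_{\st}(V)$ cannot be established before knowing the comparison map is an isomorphism; it only follows afterwards, from Lemma~\ref{lem:Dst fin proj}. Your closing five-lemma step has the same defect. With third term $A_f/f^nA'$, the snake lemma only gives $\Coker\iota\cong\Ker(\iota\otimes_A A_f/f^nA')$, so you still need injectivity on the right.

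The missing idea is to apply Lemma~\ref{lem:log-crys inj} to the third term directly; this is the paper's route.
\begin{enumerate}
\item Keep your left exact sequence of invariants, but with third term $(V^{[p^{-1},1]}_{\log}[1/t]\otimes_A A_f/f^nA')^{G_K}$.
\item Tensor it with $\tilde{B}^{[p^{-1},1]}_{\log,\Cbb_p,A}[1/t]$, which is flat over $K_0\otimes A$.
\item Map it to the exact sequence obtained by tensoring $V^{[p^{-1},1]}_{\log}[1/t]$ with $0\to A\to A_f\times A/f^n\to A_f/f^nA'\to 0$.
\end{enumerate}
In the resulting diagram:
\begin{enumerate}
\item The middle vertical arrow is an isomorphism by hypothesis.
\item The left arrow is injective by Lemma~\ref{lem:log-crys inj}.
\item For the right arrow, write $A_f/f^nA'=\varinjlim_m f^{-m}A'/f^nA'$. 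The third term becomes $\varinjlim_m W_m$ with $W_m\cong V^{[p^{-1},1]}_{\log}[1/t]\otimes_A A'/f^{m+n}A'$, using that $f$ is a nonzerodivisor on $A'\subset A_f$. Each $W_m$ is a genuine vector bundle over an algebraic-affinoid ring. Lemma~\ref{lem:log-crys inj} applied to each $W_m$, together with the fact that invariants and tensor products commute with this filtered colimit, gives injectivity.
\end{enumerate}
A diagram chase then shows the left arrow is surjective, so $V$ is semistable, with no patching required.
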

\begin{proof}
	The only if part is clear. Let us prove the if part.
	Let $A'$ be the image of $A$ in $A_f$.
	There is an exact sequence of $A$-modules
	$$0\to A\to A_f\times A/f^nA\to A_f/f^nA'\to 0.$$
	Therefore, we obtain an exact sequence
	\begin{align}\label{eq1}
	0\to V_{\log}^{[p^{-1},1]}[1/t] &\to (V_f)_{\log}^{[p^{-1},1]}[1/t]\times (V/f^n)_{\log}^{[p^{-1},1]}[1/t] \\
	&\to V_{\log}^{[p^{-1},1]}[1/t]\otimes_{A}A_f/f^nA'\to 0\notag.
	\end{align}
	We note that there is a natural isomorphism
	$$V_{\log}^{[p^{-1},1]}[1/t]\otimes_{A}A_f/f^nA'\cong \varinjlim_m V_{\log}^{[p^{-1},1]}[1/t]\otimes_{A}f^{-m}A'/f^nA'.$$
	To simplify the notation, we write $V_{\log}^{[p^{-1},1]}[1/t]\otimes_{A}f^{-m}A'/f^nA'=W_m$, which is a finite projective $\tilde{B}_{\log,\Cbb_p,A'/f^{m+n}}^{[p^{-1},1]}[1/t]$-module.
	By taking $G_K$-invariant vectors, we obtain an left exact sequence
	$$0\to D_{\st}(V) \to D_{\st}(V_f)\times D_{\st}(V/f^n) \to (\varinjlim_{m}W_m)^{G_K}.$$
	By applying $-\otimes_{K_0\otimes A}\tilde{B}_{\log,\Cbb_p,A}^{[p^{-1},1]}[1/t]$, we get a left exact sequence
	\begin{align}\label{eq2}
		0\to D_{\st}(V)\otimes\tilde{B}_{\log,\Cbb_p,A}^{[p^{-1},1]}[1/t] &\to (D_{\st}(V_f)\times D_{\st}(V/f^n)) \otimes\tilde{B}_{\log,\Cbb_p,A}^{[p^{-1},1]}[1/t] \\ 
		&\to(\varinjlim_{m}W_m)^{G_K}\otimes\tilde{B}_{\log,\Cbb_p,A}^{[p^{-1},1]}[1/t] \notag.
	\end{align}
	There is a natural morphism from the left exact sequence \eqref{eq2} to the left exact sequence \eqref{eq1}.
	By the assumption, the morphism in the middle term
	$$(D_{\st}(V_f)\times D_{\st}(V/f^n)) \otimes\tilde{B}_{\log,\Cbb_p,A}^{[p^{-1},1]}[1/t] \to 
	(V_f)_{\log}^{[p^{-1},1]}[1/t]\times (V/f^n)_{\log}^{[p^{-1},1]}[1/t]$$
	is an isomorphism.
	By Lemma~\ref{lem:log-crys inj}, the morphisms in the left and right terms
	\begin{align*}
		D_{\st}(V)\otimes\tilde{B}_{\log,\Cbb_p,A}^{[p^{-1},1]}[1/t]\to V_{\log}^{[p^{-1},1]}[1/t],\\
		(\varinjlim_{m}W_m)^{G_K}\otimes\tilde{B}_{\log,\Cbb_p,A}^{[p^{-1},1]}[1/t] \to \varinjlim_{m}W_m
	\end{align*}
	are injective.
	Therefore, the morphism in the left terms
	$$D_{\st}(V)\otimes\tilde{B}_{\log,\Cbb_p,A}^{[p^{-1},1]}[1/t]\to V_{\log}^{[p^{-1},1]}[1/t]$$
	is also an isomorphism.
	In other words, $V$ is semistable.
\end{proof}

\begin{theorem}[$p$-adic monodromy theorem]\label{thm:p-adic monodromy}
	Let $\Acal=(A,A^+)_{\square}$ be an algebraic-affinoid analytic $\Qbb_{p,\square}$-algebra.
	Then de Rham $G_K$-equivariant vector bundles over $X_{\Cbb_p,\Acal}$ are potentially semistable.
\end{theorem}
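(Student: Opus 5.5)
We first reduce to a local situation. Potential semistability is local on $\AnSpec\Acal$ (Proposition~\ref{prop:log-crys analytic local}), and a finite covering only requires finitely many finite extensions, whose compositum serves for all pieces. The de Rham property is preserved by base change (Lemma~\ref{lem:dR basechange}). Hence by \cite[Proposition 2.37]{Mikami24} we may assume $A=R_f$, where $R$ is an affinoid $\Qbb_p$-algebra of weakly finite type. Replacing $R$ by its image in $R_f$, we may further assume $f$ is a non-zero divisor in $R$. If $A$ is reduced, then $R$ is reduced, since $R\subset R_f$, and Theorem~\ref{thm:p-adic monodromy reduced} gives the claim directly. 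For the general case we argue by noetherian induction on $A$, or equivalently by induction on the length of a suitable filtration by nilpotent ideals.

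Let $\mathfrak{n}=\sqrt{(0)}\subset A$ and let $N\geq 1$ satisfy $\mathfrak{n}^N=0$. The plan is to use Lemma~\ref{lem:log-crys zariski dense criterion} as the gluing tool. Choose $g\in A$ that is a non-zero divisor on the reduced part but kills part of the nilradical. More precisely, work with the minimal primes. Pick $g$ lying in all minimal primes except possibly some, or lying in the embedded associated primes, so that $A_g$ has strictly smaller "non-reducedness" and $A/g^n$ has strictly smaller dimension or fewer components. Here $n$ is chosen with $A[g^\infty]=A[g^n]$. Each of $A_g$ and $A/g^n$ is again algebraic-affinoid; note $A/g^n$ is of the form $R'_f$ for $R'=R/g^n$, and $A_g=R_{fg}$. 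The restrictions $V_g$ and $V/g^n$ are de Rham, so by the induction hypothesis they are potentially semistable, say over $L_1$ and $L_2$. Taking $L=L_1L_2$, both restrictions are semistable as $G_L$-bundles by Lemma~\ref{lem:log-crys basechange}. Lemma~\ref{lem:log-crys zariski dense criterion} then yields semistability of $V$ over $G_L$.

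The main obstacle is setting up a well-founded induction so that both pieces are genuinely "smaller" and we eventually land in the reduced case. The first choice I would try is induction on $\dim A$, then on the number of associated primes, then on the length of $A_{\mathfrak p}$ at generic points. If $A$ is not reduced, there is an associated prime $\mathfrak p$ at which $A_{\mathfrak p}$ is not reduced. There are two cases.

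In the first case $\mathfrak p$ is embedded. Choose $g\in\mathfrak p$ avoiding all minimal primes. Then $A_g$ has fewer associated primes, and $\dim A/g^n<\dim A$.

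In the second case every associated prime is minimal but some localization is non-reduced. Here I would instead use the nilpotent ideal directly. Filter by $\mathfrak n^k$ and try to show semistability of $V$ from that of $V/\mathfrak n^{k}$ by a d\'evissage similar to the proof of Lemma~\ref{lem:log-crys zariski dense criterion}. Alternatively, apply Theorem~\ref{thm:log-crys criterion}, which reduces everything to surjectivity of $D_{\st}(V)\otimes A/\mathfrak m\to D_{\st}(V/\mathfrak m)$. This surjectivity can be obtained by comparing with $D_{\dR}$ as in the reduced case: one would show $L\otimes_{L_0}D^L_{\st}(V)\to D_{\dR}^L(V)$ is surjective using the injectivity lemmas (Lemma~\ref{lem:log-crys inj}, Lemma~\ref{lem:sdR inj}) together with finite projectivity of $D_{\dR}$ (Proposition~\ref{prop:dR fil exhaustive}), after lifting from $A_{\mathrm{red}}$ through square-zero extensions.

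I expect this square-zero lifting step to be the hardest. It requires producing enough $G_L$-invariants in $V_{\log}^{[p^{-1},1]}[1/t]$ over a nilpotent thickening. I would attack it via the short exact sequence $0\to \mathfrak n^kV\to \mathfrak n^{k-1}V\to \mathfrak n^{k-1}V/\mathfrak n^kV\to 0$. I would combine this with the vanishing of the relevant $H^1$ obstruction, which should follow because semistable modules are $\tilde{B}_{\log}[1/t]\otimes D_{\st}$, whose $G_L$-cohomology can be controlled in the same spirit as Lemma~\ref{lem:Tate twsit cohomology}.
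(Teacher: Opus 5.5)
Your reduction to $A=R_f$ and your treatment of embedded associated primes via Lemma~\ref{lem:log-crys zariski dense criterion} are fine. The gap is your second case, which is where all the difficulty lies, and it is not proved. Take a generically non-reduced ring such as $A=\Qbb_p\langle T\rangle[\epsilon]/(\epsilon^2)$, or already $\Qbb_p[\epsilon]/(\epsilon^2)$ in dimension $0$. Every $A_g$ with $g\notin(\epsilon)$ has the same dimension, the same associated prime and the same generic length. So Lemma~\ref{lem:log-crys zariski dense criterion} never reaches the reduced case, and your induction has no base there.

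The square-zero lifting you offer instead rests on a false expectation. The $G_L$-cohomology of $\tilde{B}^{[p^{-1},1]}_{\log,\Cbb_p}[1/t]\otimes D$ does not vanish in degree $1$. The lemma you cite as a model, Lemma~\ref{lem:Tate twsit cohomology}, itself shows $H^1(G_K,\Cbb_p)\neq 0$. Concretely, the extension of the trivial representation by itself given by $\log\chi$ is not Hodge--Tate, hence not potentially semistable. Its class in $H^1(G_L,\tilde{B}^{[p^{-1},1]}_{\log,\Cbb_p}[1/t])$ is therefore nonzero for every $L$.

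To kill the connecting map $D^L_{\st}(V/I)\to H^1(G_L,(IV)^{[p^{-1},1]}_{\log}[1/t])$ you would have to use the de Rham property of $V$ itself. That amounts to a family version of $H^1_g=H^1_{\st}$, which classically is deduced from the monodromy theorem. You also cannot copy the $D_{\dR}$-comparison from the reduced case. It depends on the closed embedding of $A$ into a finite product $E$ of discretely valued fields and on Theorem~\ref{thm:intersetion}, and neither has a counterpart when $A$ has nilpotents.

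The missing idea is to remove nilpotents by restriction of scalars rather than by deformation theory.
\begin{enumerate}
\item Choose a finite injection $\Qbb_p\langle T_1,\ldots,T_d\rangle\to R$ (Noether normalization). By generic flatness, choose $g$ such that $R_{fg}$ is finite projective over the domain $\Qbb_p\langle T_1,\ldots,T_d\rangle_g$.
\item All period rings with coefficients in $R_{fg}$ are base changes of those with coefficients in $\Qbb_p\langle T_1,\ldots,T_d\rangle_g$. So a de Rham bundle over $X_{\Cbb_p,R_{fg}}$ is also a de Rham bundle over the reduced base, with the same $D_{\dR}$.
\item Theorem~\ref{thm:p-adic monodromy reduced} applies over the reduced base. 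Semistability over $G_L$ transfers back, since $D^L_{\st}$ is the same module of invariants.
\item Take $m$ with $R_f[g^\infty]=R_f[g^m]$. Then $R_f/g^m$ is quasi-finite over $\Qbb_p\langle T_1,\ldots,T_d\rangle/g^m$, so it has dimension $<d$.
\item Lemma~\ref{lem:log-crys zariski dense criterion} glues the two pieces.
\end{enumerate}
This is a plain induction on $\dim A$. Dimension $0$ is handled by the same restriction of scalars down to $\Qbb_p$. It treats embedded and generic nilpotents uniformly, with no case analysis on associated primes and no lifting argument.
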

\begin{proof}
	By \cite[Proposition 2.37]{Mikami24}, we may assume that $A$ is of the form $R_f$ where $R$ is an affinoid $\Qbb_p$-algebra and $f\in R$.
	We proceed by induction on $\dim A$. 
	If $\dim A=0$, then $A$ is a finite $\Qbb_p$-algebra.
	Therefore, the claim follows from the case where $A=\Qbb_p$.
	We assume $d=\dim A>0$.
	By Noether normalization theorem, there is a finite injection 
	$$\Qbb_p\langle T_1,\ldots, T_d\rangle \to R.$$
	Then by \cite[02ML,051T]{stacks-project}, there is an element $g\in \Qbb_p\langle T_1,\ldots, T_d\rangle$ such that the morphism $$\Qbb_p\langle T_1,\ldots, T_d\rangle_g \to R_{fg}$$ 
	is finite flat, that is, $R_{fg}$ is finite projective as a $\Qbb_p\langle T_1,\ldots, T_d\rangle_g$-module.
	The $p$-adic monodromy theorem for $R_{fg}$ follows from that for $\Qbb_p\langle T_1,\ldots, T_d\rangle_g$, which is proved in Theorem~\ref{thm:p-adic monodromy reduced}.
	Let $m$ be a positive integer such that $R_f[g^{\infty}]=R_f[g^m]$.
	Since $R_f/g^m$ is quasi-finite over $\Qbb_p\langle T_1,\ldots, T_d\rangle/g^m$ whose dimension is smaller than $d$, we get $\dim R_f/g^m<d$.
	Therefore, the $p$-adic monodromy theorem for $R_f/g^m=(R/g^m)_f$ follows from the induction hypothesis.
	Finally, the $p$-adic monodromy theorem for $A=R_f$ follows from Lemma~\ref{lem:log-crys zariski dense criterion}.
\end{proof}

\subsection{Semistable representations and filtered $(\varphi,N)$-modules}

In this subsection, we consider the relationship between semistable representations and filtered $(\varphi,N)$-modules.
Let $\Acal=(A,A^+)_{\square}$ be an algebraic-affinoid analytic $\Qbb_{p,\square}$-algebra.

\begin{definition}
	\begin{enumerate}
	\item A \textit{filtered $(\varphi,N)$-module over $K_0\otimes A$} is a pair $(D_0, \Fil^{\bullet}D)$, where
	\begin{itemize}
		\item $D_0$ is a finite projective $K_0\otimes A$-module with 
		\begin{itemize}
			\item a $K_0\otimes A$-linear endomorphism $N$,
			\item a $\varphi$-semilinear automorphism $\varphi$
		\end{itemize}
		satisfying $p\varphi N=N\varphi$.
		\item $\Fil^{\bullet}D$ is a filtration of $D=K\otimes_{K_0}D_0$ satisfying the condition in Proposition~\ref{prop:filtered module BdR+}.
	\end{itemize}
	Let $\Fil_{K,A}^{\varphi,N}$ denote the category of filtered $(\varphi,N)$-modules over $K_0\otimes A$.
	For $(D_0, \Fil^{\bullet}D), (D'_0, \Fil^{\bullet}D') \in \Fil_{K,A}^{\varphi,N}$, we define their tensor product as 
	$$(D_0, \Fil^{\bullet}D)\otimes(D'_0, \Fil^{\bullet}D')=(D_0\otimes D'_0, \Fil^{\bullet}(D\otimes D')),$$ 
	where $\Fil^{\bullet}(D\otimes D')$ is the tensor product defined in Definition~\ref{def:Fil}.
	This defines a symmetric monoidal structure on $\Fil_{K,A}^{\varphi,N}$.
	\item Let $L/K$ be a finite Galois extension.
	A \textit{filtered $(\varphi,N,\Gal(L/K))$-module over $L_0\otimes A$} is a pair $(D_{L_0}, \Fil^{\bullet}D)$, where
	\begin{itemize}
		\item $D_{L_0}$ is a finite projective $L_0\otimes A$-module with 
		\begin{itemize}
			\item a semilinear action of $\Gal(L/K)$,
			\item a $\Gal(L/K)$-equivariant $L_0\otimes A$-linear endomorphism $N$,
			\item a $\Gal(L/K)$-equivariant $\varphi$-semilinear automorphism $\varphi$
		\end{itemize}
		satisfying $p\varphi N=N\varphi$.
		\item $\Fil^{\bullet}D$ is a filtration of the finite projective $K\otimes A$-module $D=(L\otimes_{L_0}D_{L_0})^{\Gal(L/K)}$ satisfying the condition in Proposition~\ref{prop:filtered module BdR+}.
	\end{itemize}
	Let $\Fil_{L/K,A}^{\varphi,N,\Gal(L/K)}$ denote the category of filtered $(\varphi,N,\Gal(L/K))$-modules over $L_0\otimes A$.
	As in (1), we define a symmetric monoidal structure on $\Fil_{L/K,A}^{\varphi,N,\Gal(L/K)}$.
	\item A \textit{filtered $(\varphi,N,G_K)$-module over $K_0^{\unr}\otimes A$} is a pair $(D^{\unr}, \Fil^{\bullet}D)$, where
	\begin{itemize}
		\item  $D^{\unr}$ is a finite projective $K_0^{\unr}\otimes A$-module with 
		\begin{itemize}
			\item a semilinear smooth action of $G_K$ (i.e., $D^{\unr}=\bigcup_{L/K}(D^{\unr})^{\Gal(L/K)}$ where $L/K$ runs over all finite extensions),
			\item a $G_K$-equivariant $K_0^{\unr}\otimes A$-linear endomorphism $N$,
			\item a $G_K$-equivariant $\varphi$-semilinear automorphism $\varphi$
		\end{itemize}
		satisfying $p\varphi N=N\varphi$.
		\item $\Fil^{\bullet}D$ is a filtration of the finite projective $K\otimes A$-module $D=(\bar{K}\otimes_{K_0^{\unr}}D^{\unr})^{G_K}$ satisfying the condition in Proposition~\ref{prop:filtered module BdR+}.
	\end{itemize}
	Let $\Fil_{K,A}^{\varphi,N,\Gal_K}$ denote the category of filtered $(\varphi,N,G_K)$-modules over $K_0^{\unr}\otimes A$.
	As in (1), we define a symmetric monoidal structure on $\Fil_{K,A}^{\varphi,N,\Gal_K}$.
	\end{enumerate}
\end{definition}

\begin{lemma}\label{lemma:MF transition}
	\begin{enumerate}
	\item For finite Galois extensions $L$ and $L'$ of $K$ such that $L\subset L'$, the functor 
	$$\iota_{L,L'}\colon \Fil_{L/K,A}^{\varphi,N,\Gal(L/K)}\to \Fil_{L'/K,A}^{\varphi,N,\Gal(L'/K)};\; (D_{L_0},\Fil^{\bullet}D)\mapsto (L'_0\otimes_{L_0}D_{L_0},\Fil^{\bullet}D),$$
	where the action of $\Gal(L'/K)$ on $L'_0\otimes_{L_0}D_{L_0}$ is given by the diagonal action, is well-defined, symmetric monoidal, and fully faithful.
	\item For a finite Galois extension $L/K$, the functor
	$$\iota_{L}\colon \Fil_{L/K,A}^{\varphi,N,\Gal(L/K)}\to \Fil_{K,A}^{\varphi,N,\Gal_K};\; (D_{L_0},\Fil^{\bullet}D)\mapsto (K_0^{\unr}\otimes_{L_0} D_{L_0},\Fil^{\bullet}D),$$
	where the action of $G_K$ on $L_0^{\unr}\otimes_{L_0} D_{L_0}$ is given by the diagonal action, is well-defined, symmetric monoidal, and fully faithful.
	Moreover, for a finite Galois extension $L'/K$ such that $L\subset L'$, we have $\iota_{L'}\circ\iota_{L,L'}=\iota_L$.
	\item Any object of $\Fil_{K,A}^{\varphi,N,\Gal_K}$ lies in the essential image of $\iota_L$ for some finite extension $L/K$.
	In other words, there is a categorical equivalence $$\varinjlim_{L/K} \Fil_{L/K,A}^{\varphi,N,\Gal(L/K)} \simeq \Fil_{K,A}^{\varphi,N,\Gal_K},$$
	where $L/K$ runs over all finite Galois extensions.
	\end{enumerate}
\end{lemma}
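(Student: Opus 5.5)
The plan is to treat the three parts in order, since (2) and (3) build on (1). The main tool is Galois descent for finite projective modules along the finite étale extensions $L_0\otimes A\to L'_0\otimes A$ and $L_0\otimes A\to K_0^{\unr}\otimes A$, which are base changes of unramified extensions of $\Qbb_p$.

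For (1), put $D_{L'_0}=L'_0\otimes_{L_0}D_{L_0}$. It is finite projective over $L'_0\otimes A$. We let $\Gal(L'/K)$ act diagonally, where it acts on $D_{L_0}$ through the quotient $\Gal(L/K)$. The operators $\varphi$ and $N$ are extended by $\varphi\otimes\varphi$ and $1\otimes N$, so $p\varphi N=N\varphi$ still holds. The underlying filtered module does not change, because
$$(L'\otimes_{L'_0}D_{L'_0})^{\Gal(L'/K)}=\bigl((L'\otimes_L(L\otimes_{L_0}D_{L_0}))^{\Gal(L'/L)}\bigr)^{\Gal(L/K)}=(L\otimes_{L_0}D_{L_0})^{\Gal(L/K)}.$$
This uses $(L'\otimes_L M)^{\Gal(L'/L)}=M$ for any $L\otimes A$-module $M$. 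That identity holds because $L'/L$ is a finite Galois extension and $\Gal(L'/L)$ has invertible order in characteristic $0$, so averaging gives a projector. Hence the functor is well defined. It is symmetric monoidal because both the base change and the identification of $D$ commute with tensor products, and the filtrations are carried along unchanged.

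For full faithfulness of (1), take a morphism $h\colon L'_0\otimes_{L_0}D_{L_0}\to L'_0\otimes_{L_0}D'_{L_0}$ that is $\Gal(L'/K)$-equivariant and compatible with $\varphi$, $N$ and the filtrations. Restricted to $\Gal(L'/L)$, it is equivariant for the subgroup $\Gal(L'_0/L_0)\cdot$(inertia) acting only through the first factor. Galois descent for the finite étale Galois extension $L_0\otimes A\to L'_0\otimes A$, with group the image of $\Gal(L'/L)$ in $\Gal(L'_0/L_0)$ (which is surjective), gives $h=1\otimes h_0$ for a unique $h_0$. The compatibilities for $h_0$ follow from those for $h$ by taking invariants.

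For (2), we argue the same way. $K_0^{\unr}=\varinjlim L'_0$, and $G_K$ acts on $K_0^{\unr}\otimes_{L_0}D_{L_0}$ smoothly and diagonally, so the invariant computation goes as above. The identity $\iota_{L'}\circ\iota_{L,L'}=\iota_L$ is the transitivity $K_0^{\unr}\otimes_{L'_0}L'_0\otimes_{L_0}=K_0^{\unr}\otimes_{L_0}$. Full faithfulness reduces to (1): a morphism between finite projective modules is determined by finitely many elements, so it is fixed by some open subgroup $G_{L'}$ and therefore descends to level $L'$.

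For (3), the main point is essential surjectivity. Take $D^{\unr}$ and choose finitely many generators. By smoothness they are all fixed by $G_{L}$ for some finite Galois $L/K$. Replacing $L$ by a larger extension, we may also assume that the matrices of $\varphi$ and $N$ in these generators are $G_L$-fixed. Set $D_{L_0}:=(D^{\unr})^{G_L}$. We need the natural map $K_0^{\unr}\otimes_{L_0}D_{L_0}\to D^{\unr}$ to be an isomorphism with $D_{L_0}$ finite projective. This is the step I expect to be the obstacle. It is Galois descent (Hilbert 90 for $GL_n$) along the ind-finite-étale map $L_0\otimes A\to K_0^{\unr}\otimes A$ with group $\hat{\Zbb}$, where $G_L$ acts through $\Gal(L_0^{\unr}/L_0)$ semilinearly and smoothly. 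I would reduce to a finite level: by smoothness, $D^{\unr}$ is already defined with descent datum over some $L''_0\otimes A$ with $L''_0/L_0$ finite, and finite Galois descent (faithfully flat descent) then applies. Here one has to check that the semilinear action of $G_L$ factors through $\Gal(L''_0/L_0)$ after base change, possibly after enlarging $L$. Once this is done, $\varphi$, $N$ and the $\Gal(L/K)$-action restrict to $D_{L_0}$, the module $D$ is unchanged, and the filtration transfers. So the object lies in the image of $\iota_L$, and the colimit equivalence follows from (1)–(3).
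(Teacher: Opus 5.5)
Your proposal is correct and follows essentially the paper's route. Well-definedness and full faithfulness come from recovering $D_{L_0}$ as $G_L$-invariants; the paper does this directly for (2) instead of reducing to a finite level. For (3), your choice of $L$ with $G_L$ fixing a generating set forces $G_{L^{\unr}}$ to act trivially, which is exactly the condition the paper arranges. Then $G_L$ acts through $\Gal(K_0^{\unr}/L_0)$, and Galois descent along $L_0\otimes A\to K_0^{\unr}\otimes A$ (a colimit of finite Galois descents) gives $K_0^{\unr}\otimes_{L_0}(D^{\unr})^{G_L}\cong D^{\unr}$. So the step you flag as the obstacle is automatic. Also, enlarging $L$ to make the ``matrices'' of $\varphi$ and $N$ fixed is unnecessary, since both operators commute with $G_K$ and therefore preserve $G_L$-invariants.
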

\begin{proof}
	First, we prove (1) and (2).
	It is clear that the action of $G_K$ on $L_0^{\unr}\otimes_{L_0} D_{L_0}$ is smooth.
	Moreover, we have $\bar{K}\otimes_{L_0^{\unr}}(L_0^{\unr}\otimes_{L_0} D_{L_0})=\bar{K}\otimes_L (L\otimes_{L_0} D_{L_0})$.
	Since $(\bar{K}\otimes_L (L\otimes_{L_0} D_{L_0}))^{G_L}=L\otimes_{L_0} D_{L_0}$, we find that $(\bar{K}\otimes_{L_0^{\unr}}(L_0^{\unr}\otimes_{L_0} D_{L_0}))^{G_K}=(L\otimes_{L_0} D_{L_0})^{\Gal(L/K)}.$
	Therefore, $\iota_L$ is well-defined.
	The symmetric monoidality of $\iota_L$ directly follows from the definition.
	Moreover, since $(L_0^{\unr}\otimes_{L_0} D_{L_0})^{G_L}=D_{L_0}$, $(D_{L_0},\Fil^{\bullet}D)$ can be canonically recovered from $(L_0^{\unr}\otimes_{L_0} D_{L_0},\Fil^{\bullet}D)$.
	Therefore, $\iota_L$ is fully faithful.
	Similarly, we can prove that $\iota_{L,L'}$ is well-defined and symmetric monoidal, and that $\iota_{L'}\circ\iota_{L,L'}=\iota_L$, which proves (1) and (2).

	Next, we prove (3).
	Let $(D^{\unr}, \Fil^{\bullet}D)\in \Fil_{K,A}^{\varphi,N,\Gal_K}$.
	Since the action of $G_K$ on the finite projective $K_0^{\unr}\otimes A$-module $D^{\unr}$ is smooth, there exists a finite Galois extension $L'/K^{\unr}$ such that the action of $G_{L'}$ on $D^{\unr}$ is trivial.
	We take a finite Galois extension $L/K$ such that $L^{\unr}=L'$.
	Since we have $L^{\unr}=L\otimes_{L_0}K_0^{\unr}$, we get $\Gal(L^{\unr}/L)\cong \Gal(K_0^{\unr}/L_0)$.
	Via this isomorphism, we get the semilinear action of $\Gal(K_0^{\unr}/L_0)$ on the finite projective $K_0^{\unr}\otimes A$-module $D^{\unr}$.
	We set $D_{L_0}=(D^{\unr})^{\Gal(K_0^{\unr}/L_0)}=(D^{\unr})^{\Gal(L^{\unr}/L)}$.
	It admits a natural semilinear $\Gal(L/K)$-action and is stable under the action of $\varphi$ as the actions of $\varphi$ and $G_K$ commute. 
	By the Galois descent, the natural morphism $K_0^{\unr}\otimes_{L_0} D_{L_0}\to D^{\unr}$ is an isomorphism, and by construction, it is $\Gal(L^{\unr}/K)$-equivariant.
	Since 
	$$(L\otimes_{L_0} D_{L_0})^{\Gal(L/K)}=(\bar{K}\otimes_{L_0} D_{L_0})^{G_K}=(\bar{K}\otimes_{K_0^{\unr}} D^{\unr})^{G_K}=D,$$ 
	we obtain $(D_{L_0},\Fil^{\bullet}D)\in \Fil_{K,A}^{\varphi,N,\Gal_K}$ which satisfies $\iota_L(D_{L_0},\Fil^{\bullet}D)=(D^{\unr}, \Fil^{\bullet}D).$
\end{proof}

\begin{definition}
Let $\Vect^{\st}(X_{\Cbb_p,\Acal}/G_K)$ denote the category of semistable $G_K$-equivariant vector bundles over $X_{\Cbb_p,\Acal}$.
For a finite Galois extension $L/K$, let $\Vect^{L/K,\st}(X_{\Cbb_p,\Acal}/G_K)$ denote the category of $G_K$-equivariant vector bundles over $X_{\Cbb_p,\Acal}$ that are semistable as $G_L$-equivariant vector bundles.
\end{definition}

By the $p$-adic monodromy theorem (Theorem~\ref{thm:p-adic monodromy}), we have $\Vect^{\dR}(X_{\Cbb_p,\Acal}/G_K)=\bigcup_{L/K}\Vect^{L/K,\st}(X_{\Cbb_p,\Acal}/G_K),$ where $L/K$ runs over all finite Galois extensions.
The goal of this subsection is to construct the natural categorical equivalences
\begin{align*}
		\Vect^{\st}(X_{\Cbb_p,\Acal}/G_K) &\simeq \Fil_{K,A}^{\varphi,N},\\
		\Vect^{L/K,\st}(X_{\Cbb_p,\Acal}/G_K)&\simeq \Fil_{L/K,A}^{\varphi,N,\Gal(L/K)},\\
		\Vect^{\dR}(X_{\Cbb_p,\Acal}/G_K) &\simeq \Fil_{K,A}^{\varphi,N,\Gal_K}.
\end{align*}

First, we have already constructed the following functors.
\begin{proposition}\label{prop:semist to MF}
	Let $L/K$ be a finite Galois extension.
	Then the following functors are well-defined and symmetric monoidal
	\begin{align*}
		\Vect^{\st}(X_{\Cbb_p,\Acal}/G_K) \to \Fil_{K,A}^{\varphi,N}&;\; V \mapsto (D_{\st}(V), \Fil^{\bullet}D_{\dR}(V)),\\
		\Vect^{L/K,\st}(X_{\Cbb_p,\Acal}/G_K)\to \Fil_{L/K,A}^{\varphi,N,\Gal(L/K)}&;\; V \mapsto (D_{\st}^L(V), \Fil^{\bullet}D_{\dR}(V)),\\
		\Vect^{\dR}(X_{\Cbb_p,\Acal}/G_K) \to \Fil_{K,A}^{\varphi,N,\Gal_K}&;\; V\mapsto (D_{\pst}(V), \Fil^{\bullet}D_{\dR}(V)),
	\end{align*}
	where $D_{\pst}(V)$ denotes the finite projective $K_0^{\unr}\otimes A$-module $\varinjlim_{L/K} D_{\st}^{L}(V).$
	Moreover, for $V\in \Vect^{L/K,\st}(X_{\Cbb_p,\Acal}/G_K)$, there is a natural isomorphism
	$$\iota_L(D_{\st}^L(V), \Fil^{\bullet}D_{\dR}(V))\cong (D_{\pst}(V), \Fil^{\bullet}D_{\dR}(V)).$$
\end{proposition}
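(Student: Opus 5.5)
The plan is to verify well-definedness first, then monoidality, and finally the compatibility with $\iota_L$.

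For the first functor, let $V$ be semistable. By Lemma~\ref{lem:Dst fin proj}, $D_{\st}(V)$ is finite projective over $K_0\otimes A$. Definition~\ref{def:phi,N} and the subsequent lemma supply the Frobenius automorphism $\varphi$ and the operator $N$, and they satisfy $p\varphi N=N\varphi$. By Lemma~\ref{lem:log-crys implies dR}, $V$ is de Rham and $K\otimes_{K_0}D_{\st}(V)\cong D_{\dR}(V)$. Proposition~\ref{prop:cdR to filtered}(1) then says the filtration on $D_{\dR}(V)$ lies in $\Fil_{K,A}$, so the pair is an object of $\Fil_{K,A}^{\varphi,N}$.

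For the second functor, let $V$ be $G_K$-equivariant and semistable as a $G_L$-bundle. Then $D_{\st}^L(V)=(V_{\log}^{[p^{-1},1]}[1/t])^{G_L}$ is finite projective over $L_0\otimes A$. Because $G_L$ is normal in $G_K$, $D_{\st}^L(V)$ carries a residual semilinear $\Gal(L/K)$-action commuting with $\varphi$ and $N$. The operators $\varphi$ and $N$ on $\tilde{B}_{\log}$ are $G_K$-equivariant, so this commutation is automatic.

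To identify $(L\otimes_{L_0}D_{\st}^L(V))^{\Gal(L/K)}$ with $D_{\dR}^K(V)$, I would combine two facts. First, $L\otimes_{L_0}D_{\st}^L(V)\cong D_{\dR}^L(V)$ by Lemma~\ref{lem:log-crys implies dR} applied over $L$. Second, $D_{\dR}^L(V)^{\Gal(L/K)}=D_{\dR}^K(V)$, since taking $G_L$- and then $\Gal(L/K)$-invariants gives $G_K$-invariants. The filtration is the one on $D_{\dR}^K(V)$, which lies in $\Fil_{K,A}$ by Corollary~\ref{cor:pot log-crys implies dR} and Proposition~\ref{prop:cdR to filtered}.

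For the third functor, let $V$ be de Rham. Theorem~\ref{thm:p-adic monodromy} makes $V$ semistable over some finite Galois $L/K$. For $L\subset L'$, Galois descent as in Lemma~\ref{lemma:MF transition} gives $L'_0\otimes_{L_0}D_{\st}^L(V)\cong D_{\st}^{L'}(V)$. To prove this, I would observe that both sides, after tensoring with $\tilde{B}_{\log,A}[1/t]$, recover $V_{\log}[1/t]$. Hence $D_{\st}^{L'}(V)$ is the $G_{L'}$-invariants of $D_{\st}^L(V)\otimes_{L_0\otimes A}\tilde{B}_{\log,A}[1/t]$. Since $D_{\st}^L(V)$ is finite projective, this equals $D_{\st}^L(V)\otimes_{L_0}L'_0$ by the corollary computing $(\tilde{B}_{\log,A}[1/t])^{G_{L'}}=L'_0\otimes A$.

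It follows that $D_{\pst}(V)=\varinjlim_{L'} D_{\st}^{L'}(V)\cong K_0^{\unr}\otimes_{L_0}D_{\st}^L(V)$. This module is finite projective over $K_0^{\unr}\otimes A$, with smooth $G_K$-action and with $\varphi$ and $N$. This is exactly $\iota_L(D_{\st}^L(V),\Fil^\bullet D_{\dR}(V))$, which gives both well-definedness of the third functor and the asserted natural isomorphism. Independence of the choice of $L$ follows from the same base-change isomorphism together with $\iota_{L'}\circ\iota_{L,L'}=\iota_L$.

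For symmetric monoidality, the natural maps $D_{\st}^L(V)\otimes D_{\st}^L(W)\to D_{\st}^L(V\otimes W)$ are compatible with $\varphi$, $N$ and the Galois action. They are isomorphisms because both sides are finite projective and become isomorphic after base change to $\tilde{B}_{\log,A}[1/t]$; taking invariants as above yields the identification. On filtrations, the tensor compatibility is Theorem~\ref{thm:cdR filtered equivalence}. The main obstacle I anticipate is the descent step $L'_0\otimes_{L_0}D_{\st}^L(V)\cong D_{\st}^{L'}(V)$. There I must check carefully that taking $G_{L'}$-invariants commutes with $D_{\st}^L(V)\otimes_{L_0\otimes A}(-)$ in the condensed setting. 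I would handle this via finite projectivity, reducing to free modules and a direct summand argument, as in Lemma~\ref{lem:HT dual direct sum tensor product}.
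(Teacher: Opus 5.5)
Your proposal is correct and follows essentially the same route as the paper: Lemma~\ref{lem:Dst fin proj}, Definition~\ref{def:phi,N}, Lemma~\ref{lem:log-crys implies dR} and Proposition~\ref{prop:cdR to filtered} give well-definedness, the identification $(L\otimes_{L_0}D_{\st}^L(V))^{\Gal(L/K)}\cong D_{\dR}^L(V)^{\Gal(L/K)}=D_{\dR}(V)$ handles the Galois version, and Theorem~\ref{thm:cdR filtered equivalence} gives monoidality on filtrations. The paper only says that the third functor and the comparison with $\iota_L$ follow ``easily'' from Theorem~\ref{thm:p-adic monodromy}; your base-change isomorphism $L'_0\otimes_{L_0}D_{\st}^L(V)\cong D_{\st}^{L'}(V)$ is exactly the detail left implicit there, obtained from semistability over $L$, finite projectivity of $D_{\st}^L(V)$, and $(\tilde{B}^{[p^{-1},1]}_{\log,\Cbb_p,A}[1/t])^{G_{L'}}=L'_0\otimes A$.
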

\begin{proof}
	For $V\in \Vect^{\st}(X_{\Cbb_p,\Acal}/G_K)$, by Definition~\ref{def:phi,N}, $D_{\st}(V)$ has the monodromy operator $N$ and the Frobenius automorphism $\varphi$.
	By Lemma~\ref{lem:log-crys implies dR}, there is a natural isomorphism $K\otimes_{K_0} D_{\st}(V)\cong D_{\dR}(V)$, and it admits a natural filtration satisfying the condition in Proposition~\ref{prop:filtered module BdR+} by Proposition~\ref{prop:cdR to filtered}.
	Therefore, the functor $$\Vect^{\st}(X_{\Cbb_p,\Acal}/G_K) \to \Fil_{K,A}^{\varphi,N};\; V \mapsto (D_{\st}(V), \Fil^{\bullet}D_{\dR}(V))$$
	is well-defined.
	The symmetric monoidality of this functor directly follows from the definition and Theorem~\ref{thm:cdR filtered equivalence}.

	Next, let $V\in\Vect^{L/K,\st}(X_{\Cbb_p,\Acal}/G_K)$.
	Then by definition, $D_{\st}^L(V)=(V_{\log}^{[p^{-1},1]}[1/t])^{G_L}$ has a natural semilinear action of $\Gal(L/K)=G_K/G_L$ and it is compatible with the monodromy operator $N$ and the Frobenius automorphism $\varphi$.
	Moreover, by definition, we have $(L\otimes_{L_0} D_{\st}(V))^{\Gal(L/K)}\cong D_{\dR}^L(V)^{\Gal(L/K)}=D_{\dR}(V)$, so the functor $$\Vect^{L/K,\st}(X_{\Cbb_p,\Acal}/G_K)\to \Fil_{L/K,A}^{\varphi,N,\Gal(L/K)};\; V \mapsto (D_{\st}^L(V), \Fil^{\bullet}D_{\dR}(V))$$
	is also well-defined.
	It is also symmetric monoidal by construction and Theorem~\ref{thm:cdR filtered equivalence}.
	By using the $p$-adic monodromy theorem (Theorem~\ref{thm:p-adic monodromy}), we can easily prove the well-definedness and symmetric monoidality of $$\Vect^{\dR}(X_{\Cbb_p,\Acal}/G_K) \to \Fil_{K,A}^{\varphi,N,\Gal_K};\; V\mapsto (D_{\pst}(V), \Fil^{\bullet}D_{\dR}(V))$$
	and $\iota_L(D_{\st}^L(V), \Fil^{\bullet}D_{\dR}(V))\cong (D_{\pst}(V), \Fil^{\bullet}D_{\dR}(V))$ for $V\in\Vect^{L/K,\st}(X_{\Cbb_p,\Acal}/G_K)$.
\end{proof}

We want to construct quasi-inverse functors of them.
\begin{lemma}\label{lem:nonzero slope}
	Let $L/K$ be a finite Galois extension, and let $n$ be a non-zero integer.
	If $x\in L_0\otimes A$ satisfies $p^nx=\varphi(x)$, then $x=0$.
\end{lemma}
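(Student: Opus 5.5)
Here $\varphi$ on $L_0\otimes A$ means $\varphi\otimes\id_A$, where $\varphi$ is the absolute Frobenius of $L_0$. The plan is to exploit that $\varphi$ has finite order $f=[L_0:\Qbb_p]$ on $L_0\otimes A$, while $p^n$ is not a root of unity.

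Since $L_0/\Qbb_p$ is unramified of degree $f$, we have $\varphi^f=\id$ on $L_0$, and hence on $L_0\otimes A$. Suppose $\varphi(x)=p^nx$. Applying $\varphi$ repeatedly uses only that $\varphi$ is $\Qbb_p$-linear, so $p^n$ commutes with $\varphi$. We get $\varphi^k(x)=p^{nk}x$ for all $k\geq 0$. Taking $k=f$ gives
\[
x=\varphi^f(x)=p^{nf}x,
\qquad\text{hence}\qquad (p^{nf}-1)x=0 .
\]
Because $n\neq 0$ and $f\geq 1$, the element $p^{nf}-1\in\Qbb_p$ is nonzero. It is therefore a unit in $\Qbb_p\subset L_0\otimes A$, and so $x=0$.

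The only point needing care is the claim that $\varphi^f=\id$ on $L_0\otimes A$ as a ring map. This holds because it holds on $L_0$, and the action on $A$ is the identity; the condensed structure plays no role, since we only use the elements $x\in(L_0\otimes A)(\ast)$. I do not expect any real obstacle in this argument.
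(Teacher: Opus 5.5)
Your argument is correct and matches the paper's proof: with $f=[L_0:\Qbb_p]$ you iterate to get $\varphi^f(x)=p^{nf}x$, use $\varphi^f=\id$ on $L_0\otimes A$, and conclude $x=0$ because $1-p^{nf}$ is a unit in $\Qbb_p\subset L_0\otimes A$.
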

\begin{proof}
	We set $f=[L_0:\Qbb_p]$.
	Then we have $\varphi^f=\id$.
	Therefore, we get $p^{nf}x=x$.
	Since $1-p^{nf}$ is a unit in $L_0\otimes A$, we get $x=0$.
\end{proof}

\begin{lemma}\label{lem:exp nilpotent}
	Let $L/K$ be a finite Galois extension.
	For $(D_{L_0}, \Fil^{\bullet}D)\in \Fil_{L/K,A}^{\varphi,N,\Gal(L/K)}$, the monodromy operator $N$ on $D_{L_0}$ is nilpotent.
\end{lemma}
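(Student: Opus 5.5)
The plan is to play the relation $N\varphi = p\varphi N$ off against Lemma~\ref{lem:nonzero slope}, where the nonzero-slope lemma enters through characteristic polynomials. The filtration and the $\Gal(L/K)$-action play no role, so we may forget them and work with the finite projective $L_0\otimes A$-module $D=D_{L_0}$, the linear endomorphism $N$, and the semilinear automorphism $\varphi$.

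\emph{Step 1: conjugating $N$ by $\varphi$.} Put $f=[L_0:\Qbb_p]$. Then $\Phi=\varphi^f$ is $L_0\otimes A$-linear, since $\varphi^f=\id$ on $L_0$, and it is an automorphism. Iterating $N\varphi=p\varphi N$ gives $N\Phi=p^f\Phi N$, i.e.\ $\Phi^{-1}N\Phi=p^{f}N$. Hence $N$ and $p^fN$ are conjugate $L_0\otimes A$-linear endomorphisms of $D$. After localizing $L_0\otimes A$ so that $D$ becomes free of constant rank $r$ (nilpotence may be checked locally), their characteristic polynomials agree. Write $\det(T-N)=T^r+c_1T^{r-1}+\dots+c_r$ with $c_i\in L_0\otimes A$. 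The identity $\det(T-p^fN)=\det(T-N)$ then gives $p^{fi}c_i=c_i$, so $(p^{fi}-1)c_i=0$. Since $p^{fi}-1\in\Qbb_p^\times$ for $i\ge 1$, we get $c_i=0$ for all $i\ge1$. This is essentially Lemma~\ref{lem:nonzero slope} applied to the coefficients. The characteristic polynomial is therefore $T^r$.

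\emph{Step 2: nilpotence.} By Cayley--Hamilton, which holds over any commutative ring for free modules, $N^r=0$ on each localization, hence $N^r=0$ on $D$. For this we cover $\Spec(L_0\otimes A)$ by finitely many standard opens on which $D$ is free, and use that a map which vanishes locally vanishes globally. If $D$ does not have constant rank, we decompose along the finitely many connected components ($L_0\otimes A$ is noetherian) and take the maximal rank.

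The only slightly delicate point is doing this cleanly in the condensed setting. We do it by passing to the underlying discrete objects: the ring $(L_0\otimes A)(\ast)$ and the finite projective module $D(\ast)$. Nilpotence of $N$ is a statement about the underlying endomorphism, and finite projective modules are determined by their underlying modules (they are relatively discrete), so no further subtlety arises.
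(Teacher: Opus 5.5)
Your proof is correct and follows the paper's argument. Both proofs kill the non-leading coefficients of the characteristic polynomial of $N$ using the slope relation, then conclude by Cayley--Hamilton; you work with the linear conjugation $\Phi^{-1}N\Phi=p^fN$ for $\Phi=\varphi^f$ and localize to handle non-constant rank, whereas the paper uses the semilinear identity $p^n\varphi(F)(T)=F(pT)$ with Lemma~\ref{lem:nonzero slope} and reduces to connected $\Spec A$, where $\varphi$ permutes the components of $\Spec(L_0\otimes A)$ transitively.
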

\begin{proof}
	Since $A$ is noetherian, the number of connected components of $\Spec A$ is finite.
	Therefore, we may assume that $\Spec A$ is connected.
	Then $\varphi$ acts on $\Spec L_0\otimes A$ transitively, so the rank of the finite projective $L_0\otimes A$-module $D_{L_0}$ is constant.
	We set $n=\rk D_{L_0}$.
	Let $F(T)=T^n+a_{n-1}T^{n-1}+\cdots+a_0\in (L_0\otimes A)[T]$ be the characteristic polynomial of $N\colon D_{L_0}\to D_{L_0}$.
	The equation $p\varphi N=N\varphi$ implies $p^n\varphi(F)(T)=F(pT)$.
	By Lemma~\ref{lem:nonzero slope}, we get $F(T)=T^n$.
	Therefore, by the Cayley-Hamilton theorem, we get $N^n=0$.
\end{proof}

\begin{lemma}\label{lem:monodromy N=0}
	Let $R$ be a $\Qbb$-algebra.
	We define an $R$-derivation $N\colon R[T]\to R[T]$ by $N(T)=-1$.
	Let $M$ be a finite projective $R$-module with a nilpotent endomorphism $N\colon M\to M$.
	We extend $N$ to an $R$-linear endomorphism of $M[T]$ by setting 
	$$N(mT^r)=N(m)T^r - rmT^{r-1}.$$
	Then the kernel $M[T]^{N=0}=\Ker(N\colon M[T]\to M[T])$ is a finite projective $R$-module, and the natural morphism
	$$M[T]^{N=0}\otimes_R R[T]\to M[T]$$
	is an isomorphism.
\end{lemma}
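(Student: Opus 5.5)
The plan is to write down an explicit inverse to the inclusion of $N$-horizontal elements, using the exponential of the nilpotent operator. Since $N$ is nilpotent on $M$, say $N^k=0$, and $R$ is a $\Qbb$-algebra, the operator
$$\exp(TN)=\sum_{i=0}^{k-1}\frac{T^iN^i}{i!}\colon M\to M[T]$$
is well defined. It extends $R[T]$-linearly to an automorphism of $M[T]$, with inverse $\exp(-TN)$. We write $E\colon M[T]\to M[T]$ for this $R[T]$-linear extension.

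\textbf{Step 1: intertwining.} We show that $E$ intertwines the derivation $\partial=-d/dT$ (acting coefficientwise, i.e.\ $N$ on $R[T]$ tensored with $M$) with the operator $N$ of the statement:
$$N\circ E=E\circ \partial.$$
Write the extended $N$ as $N_M+\partial$, where $N_M$ is $N$ applied to coefficients. This is exactly the formula $N(mT^r)=N(m)T^r-rmT^{r-1}$. For $m\in M$ we have $\partial(\exp(TN)m)=-N\exp(TN)m$, and $N_M$ commutes with $\exp(TN)$. Hence
$$(N_M+\partial)(\exp(TN)m)=0.$$
Extending by $R[T]$-linearity, and using the Leibniz rule for $\partial$, gives the intertwining identity on all of $M[T]$.

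\textbf{Step 2: kernel.} It follows that
$$M[T]^{N=0}=E\bigl(\Ker(\partial\colon M[T]\to M[T])\bigr).$$
Since $R\supset\Qbb$, the kernel of $d/dT$ on $M[T]=\bigoplus_r MT^r$ is precisely $M$, the constants: if $\sum_r m_rT^r$ is killed by $d/dT$, then $rm_r=0$ for all $r\geq 1$, so $m_r=0$ for $r\geq 1$ after dividing by $r$. Thus $M\xrightarrow{\exp(TN)}M[T]^{N=0}$ is an isomorphism of $R$-modules, and $M[T]^{N=0}$ is finite projective.

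\textbf{Step 3: base change map.} The natural map $M[T]^{N=0}\otimes_RR[T]\to M[T]$ is identified, via Step 2, with the composite
$$M\otimes_R R[T]=M[T]\xrightarrow{E}M[T],$$
which is an isomorphism because $E$ is. The only point needing care is checking that $E$ is $R[T]$-linear with inverse $\exp(-TN)$. This holds because $TN$ is a nilpotent $R[T]$-linear endomorphism and $\exp$ of commuting nilpotents is multiplicative.

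I expect no real obstacle here. The main thing to verify carefully is the sign convention $N(T)=-1$ in the Leibniz computation of Step 1.
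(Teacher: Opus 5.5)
Your proposal is correct and is essentially the paper's proof. The paper's $\tau$ is your $E=\exp(TN)$, extended $R[T]$-linearly with inverse given by $T\mapsto -T$; it proves $N\circ\tau=\tau\circ N'$ with $N'$ the coefficientwise derivation, uses that $\Ker N'=M$ because $R\supset\Qbb$, and identifies the base change map with $\tau$, exactly as you do.
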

\begin{proof}
    We define an $R[T]$-linear automorphism $\tau \colon M[T]\to M[T]$ by setting
    $$\tau(m) = \sum_{k=0}^{\infty} \frac{1}{k!} N^k(m) T^k$$
    for $m\in M$, and extending it $R[T]$-linearly to $M[T]$.
    Since $N$ is nilpotent, the sum is finite and $\tau$ is well-defined.
    The inverse of $\tau$ is given by the same formula with $T$ replaced by $-T$, so $\tau$ is an isomorphism.
    For any $m\in M$, we have
    \begin{align*}
        N(\tau(m)) &= N\left(\sum_{k=0}^{\infty} \frac{1}{k!} N^k(m) T^k\right) \\
        &= \sum_{k=0}^{\infty} \frac{1}{k!} N^{k+1}(m) T^k - \sum_{k=1}^{\infty} \frac{1}{k!} N^k(m) k T^{k-1} \\
        &= \sum_{k=0}^{\infty} \frac{1}{k!} N^{k+1}(m) T^k - \sum_{k=1}^{\infty} \frac{1}{(k-1)!} N^k(m) T^{k-1} \\
        &= 0.
    \end{align*}
    Therefore, we get $\tau(m) \in M[T]^{N=0}$.  
    By the Leibniz rule, for any $m\in M$ and $P \in R[T]$, we have
    $$N(\tau(m) P) = N(\tau(m)) P + \tau(m) N(P) = \tau(m) N(P).$$
    This implies that $N(\tau(v)) = \tau(N'(v))$ for all $v \in M[T]$, where $N'$ denotes the endomorphism on $M[T]$ defined by $N'(mP)=mN(P)$.
    Since the kernel of $N'$ on $M[T]$ is exactly $M$ (regarded as degree zero polynomials), $\tau$ induces an $R$-module isomorphism
    $$M \xrightarrow{\sim} M[T]^{N=0}, \quad m \mapsto \tau(m).$$
    Since $M$ is a finite projective $R$-module, $M[T]^{N=0}$ is also a finite projective $R$-module.
    Finally, the natural morphism $M[T]^{N=0}\otimes_R R[T]\to M[T]$ corresponds to
    $$\tau(m)\otimes P \mapsto \tau(m)P=\tau(mP),$$
    which coincides with the isomorphism $\tau$.
\end{proof}

\begin{proposition}\label{prop:outside t construction}
	Let $L/K$ be a finite Galois extension, and let $I\subset (0,\infty)$ be a closed interval such that $p^n\in I$ for some $n\in \Zbb$.
	Then the following categories are equivalent:
	\begin{enumerate}
		\item Finite projective $\tilde{B}^{I}_{\Cbb_p,A}[1/t]$-modules $V^I$ with a semilinear $G_K$-action such that the natural morphism 
		$$(V_{\log}^I)^{G_L}\otimes_{L_0\otimes A} \tilde{B}_{\log,\Cbb_p,A}^{I}[1/t]\to V^{I},$$
		where $V_{\log}^I=V^I\otimes_{\tilde{B}^{I}_{\Cbb_p,A}[1/t]}\tilde{B}^{I}_{\log,\Cbb_p,A}[1/t]$, is an isomorphism.
		\item Finite projective $L_0\otimes A$-modules $D_0$ with a semilinear $\Gal(L/K)$-action and a $\Gal(L/K)$-equivariant nilpotent endomorphism $N$.
	\end{enumerate}
	The equivalence is given by functors
	$$V^I\mapsto (V_{\log}^I)^{G_L}$$ and 
	$$D_0\mapsto (D_0\otimes_{L_0\otimes A}\tilde{B}_{\log,\Cbb_p,A}^{I}[1/t])^{N=0}.$$
\end{proposition}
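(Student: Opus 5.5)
The plan is to reduce the proposition to Lemma~\ref{lem:monodromy N=0}, applied over the base ring $R=\tilde{B}^{I}_{\Cbb_p,A}[1/t]$ with the variable $T=\log[p^{\flat}]$. This works because $\tilde{B}^{I}_{\log,\Cbb_p,A}[1/t]\cong \tilde{B}^{I}_{\Cbb_p,A}[1/t][\log[p^{\flat}]]$, and $N$ is the $R$-derivation with $N(\log[p^{\flat}])=-1$. (Statement (1) should be read with $V^I_{\log}[1/t]$ on the right, i.e.\ the comparison map is required to be an isomorphism onto $V^I_{\log}$.)

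\emph{Functor from (2) to (1).} Given $D_0$, set $M=D_0\otimes_{L_0\otimes A}R$. This is a finite projective $R$-module, and $N\otimes 1$ is nilpotent on it. By Lemma~\ref{lem:monodromy N=0}, the module $V^I\coloneqq (M[T])^{N=0}$ is finite projective over $R$, the map $\tau\colon M\xrightarrow{\sim} V^I$ is an isomorphism, and $V^I\otimes_R R[T]\cong M[T]=D_0\otimes_{L_0\otimes A}\tilde{B}^I_{\log,\Cbb_p,A}[1/t]$. The group $G_K$ acts diagonally on $D_0\otimes \tilde{B}_{\log}$: it acts on $D_0$ through $\Gal(L/K)$ and on the period ring in the natural way. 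This action commutes with $N$, because $N$ on the period ring is $G_K$-equivariant and $N$ on $D_0$ is $\Gal(L/K)$-equivariant. Hence $V^I$ is $G_K$-stable. To show that $V^I$ satisfies condition (1), compute
\[
(V^I_{\log})^{G_L}=(D_0\otimes_{L_0\otimes A}\tilde{B}^I_{\log,\Cbb_p,A}[1/t])^{G_L}=D_0\otimes_{L_0\otimes A}(\tilde{B}^I_{\log,\Cbb_p,A}[1/t])^{G_L}=D_0 .
\]
The middle equality holds because $D_0$ is finite projective and $G_L$ acts trivially on it. The last equality uses the corollary $(\tilde{B}^I_{\log,\Cbb_p,A}[1/t])^{G_L}=L_0\otimes A$, applied with $K$ replaced by $L$. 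The comparison map is then the identity of $D_0\otimes\tilde{B}_{\log}$, so condition (1) holds. This argument also shows that (2)$\to$(1)$\to$(2) returns $D_0$, compatibly with the $\Gal(L/K)$-action and with $N$.

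\emph{Functor from (1) to (2).} Given $V^I$ as in (1), set $D_0=(V^I_{\log})^{G_L}$. The $G_K$-action induces a $\Gal(L/K)$-action on $D_0$, and since $N$ is $G_K$-equivariant it preserves $D_0$. Finite projectivity of $D_0$ follows from faithful flatness of $L_0\otimes A\to\tilde{B}^I_{\log,\Cbb_p,A}[1/t]$ together with \cite[Theorem 1.35]{Mikami24}, exactly as in Lemma~\ref{lem:Dst fin proj}. The faithful flatness itself reduces to $A=\Qbb_p$ and then to \cite[Lemma 3.21]{RJRC22}, using that the log ring is a polynomial ring over $\tilde{B}^I_{\Cbb_p}[1/t]$. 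For nilpotency of $N$, I would not use Lemma~\ref{lem:exp nilpotent}, since no $\varphi$-structure is available here. Instead I would argue via the filtration by $\log$-degree. Write $N_D$ for $N$ on $D_0$ and $N_V=N\otimes 1$ for $N$ on $V^I_{\log}=V^I\otimes_R R[T]$. Because $V^I\otimes 1$ lies in the kernel of $N_V$, the operator $N_V$ lowers $T$-degree, so $N_V$ is locally nilpotent on $V^I_{\log}$. The restriction of $N_V$ to $D_0$ is $N_D$, so $N_D$ is locally nilpotent. Since $D_0$ is finitely generated, $N_D$ is nilpotent. Finally, (1)$\to$(2)$\to$(1) returns $V^I$: under the isomorphism in (1), $D_0\otimes\tilde{B}_{\log}\cong V^I_{\log}$ intertwines $N_D\otimes 1+1\otimes N$ with $N_V$, and $(V^I_{\log})^{N_V=0}=V^I$ because $N_V=1\otimes N$ and $R[T]^{N=0}=R$.

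\emph{Main obstacle.} The delicate step is identifying these kernels of $N$ and checking that the two monodromy operators on $V^I_{\log}$ coincide. This needs $N$ on $D_0\otimes\tilde{B}_{\log}$ to be the total (Leibniz) derivation. That is automatic on the $V^I_{\log}$ side from the definition of the monodromy operator, but it has to be checked against Lemma~\ref{lem:monodromy N=0}'s convention $N(mT^r)=N(m)T^r-rmT^{r-1}$. Everything else is formal once functoriality and the fixed-point computation above are in place.
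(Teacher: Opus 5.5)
Your proof is correct and follows essentially the same route as the paper's: finite projectivity of $(V^I_{\log})^{G_L}$ by faithfully flat descent as in Lemma~\ref{lem:Dst fin proj}, nilpotency of $N$ from its local nilpotency on the finitely generated module, and the two round trips via Lemma~\ref{lem:monodromy N=0}, the identity $(V^I_{\log})^{N=0}=V^I$, and $(\tilde{B}^I_{\log,\Cbb_p,A}[1/t])^{G_L}=L_0\otimes A$. You are also right to read the target in (1) as $V^I_{\log}$, and right to use Lemma~\ref{lem:monodromy N=0} rather than the $\varphi$-dependent Lemma~\ref{lem:exp nilpotent}; the paper's text cites the latter at those steps but evidently means the former.
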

\begin{proof}
	First, we prove the well-definedness of both functors. 
	By the same argument as in Lemma~\ref{lem:Dst fin proj}, we get $(V_{\log}^I)^{G_L}$ is a finite projective $L_0\otimes A$-module (with a $\Gal(L/K)$-action and a $\Gal(L/K)$-equivariant monodromy operator $N$).
	The monodromy operator $N$ on $\tilde{B}_{\log,\Cbb_p,A}^{I}[1/t]$ is locally nilpotent, so the monodromy operator $N$ on $(V_{\log}^I)^{G_L}$ is also locally nilpotent.
	Since $(V_{\log}^I)^{G_L}$ is a finite projective $L_0\otimes A$-module, the monodromy operator $N$ on $(V_{\log}^I)^{G_L}$ is nilpotent.
	Therefore, $V^I\mapsto (V_{\log}^I)^{G_L}$ is well-defined.
	The well-definedness of $D_0\mapsto (D_0\otimes_{L_0\otimes A}\tilde{B}_{\log,\Cbb_p,A}^{I}[1/t])^{N=0}$ easily follows from Lemma~\ref{lem:exp nilpotent}.

	Next, we prove that the natural morphism
	$$((V_{\log}^I)^{G_L}\otimes_{L_0\otimes A}\tilde{B}_{\log,\Cbb_p,A}^{I}[1/t])^{N=0}\to V^I$$
	is an isomorphism.
	It follows from the condition that the natural morphism 
	$$(V_{\log}^I)^{G_L}\otimes_{L_0\otimes A}\tilde{B}_{\log,\Cbb_p,A}^{I}[1/t]\to V_{\log}^I$$
	is an isomorphism and the fact that $(V^I_{\log})^{N=0}=V^I$.

	Finally, we prove that the natural morphism
	$$((D_0\otimes_{L_0\otimes A}\tilde{B}_{\log,\Cbb_p,A}^{I}[1/t])^{N=0}\otimes_{\tilde{B}^{I}_{\Cbb_p,A}[1/t]}\tilde{B}^{I}_{\log,\Cbb_p,A}[1/t])^{G_L}\to D_0$$
	is an isomorphism.
	By Lemma~\ref{lem:exp nilpotent}, the natural morphism
	$$(D_0\otimes_{L_0\otimes A}\tilde{B}_{\log,\Cbb_p,A}^{I}[1/t])^{N=0}\otimes_{\tilde{B}^{I}_{\Cbb_p,A}[1/t]}\tilde{B}^{I}_{\log,\Cbb_p,A}[1/t]\to D_0\otimes_{L_0\otimes A}\tilde{B}_{\log,\Cbb_p,A}^{I}[1/t]$$
	is an isomorphism.
	Since $(D_0\otimes_{L_0\otimes A}\tilde{B}_{\log,\Cbb_p,A}^{I}[1/t])^{G_L}\cong D_0\otimes_{L_0\otimes A}(\tilde{B}_{\log,\Cbb_p,A}^{I}[1/t])^{G_L}\cong D_0$ by Proposition~\ref{prop:log fixed point}, we get the claim.
\end{proof}

\begin{notation}
For an integer $k$, we write $\varphi^k_*B_{\dR,A}^+=\varinjlim_{n} \tilde{B}_{\Cbb_p,A}^{[p^{-k},p^{-k}]}/t^n.$
The Frobenius morphism $\varphi^k\colon \tilde{B}_{\Cbb_p,A}^{[1,1]}\to \tilde{B}_{\Cbb_p,A}^{[p^{-k},p^{-k}]}$ induces an isomorphism $\varphi^k\colon B_{\dR,A}^+\xrightarrow{\sim}\varphi^k_*B_{\dR,A}^+$.
For a closed interval $I\subset (0,\infty)$ of the form $I=[p^{-k},p^{-l}]$ for some integers $l\leq k$, we have
$$\varinjlim_{n}\tilde{B}_{\Cbb_p,A}^{I}/t^n \simeq \prod_{l\leq m\leq k}\varphi^m_*B_{\dR,A}^+.$$
\end{notation}

\begin{lemma}\label{lem:Beauville-Laszlo}
Let $L/K$ be a finite Galois extension, and let $I\subset (0,\infty)$ be a closed interval of the form $I=[p^{-k},p^{-l}]$ for some integers $l\leq k$.
Then the following categories are equivalent:
\begin{enumerate}
	\item Finite projective $\tilde{B}^I_{\Cbb_p,A}$-modules $V$ with a semilinear $G_K$-action.
	\item Triples $(V', V_1,\tau)$, where 
	\begin{itemize}
		\item $V'$ is a finite projective $\tilde{B}^I_{\Cbb_p,A}[1/t]$-module with a semilinear $G_K$-action,
		\item $V_1$ is a finite projective $\prod_{l\leq m\leq k}\varphi^m_*B_{\dR,A}^+$-module with a semilinear $G_K$-action,
		\item $\tau \colon V'\otimes_{\tilde{B}^I_{\Cbb_p,A}[1/t]} (\prod_{k\leq m\leq l}\varphi^m_*B_{\dR,A})\simeq V_1[1/t]$ is a $G_K$-equivariant isomorphism.
	\end{itemize}
\end{enumerate}
The equivalence is given by the functors
$V\mapsto (V[1/t], V\otimes_{\tilde{B}^I_{\Cbb_p,A}}(\prod_{l\leq m\leq k}\varphi^m_*B_{\dR,A}^+),\can)$ and $(V', V_1,\tau)\mapsto \Ker(V'\times V_1 \to V_1[1/t])$.
\end{lemma}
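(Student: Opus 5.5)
This is a Beauville--Laszlo type gluing along the Cartier divisor $t$ in $\tilde{B}^I_{\Cbb_p,A}$. We first establish the ring-level fibre square and then deduce the module-level statement. The $G_K$-actions play no role beyond functoriality, since every construction involved is canonical.

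\textbf{Step 1: the ring-level square.} Write $R=\tilde{B}^I_{\Cbb_p,A}$ and $\hat{R}=\prod_{l\leq m\leq k}\varphi^m_*B_{\dR,A}^+$. By the Notation preceding the statement, $\hat{R}$ is the $t$-adic completion $\varprojlim_n R/t^n$, with $\hat R/t^n\cong R/t^n$. Moreover $t$ is a non-zero divisor in $R$ and in $\hat R$. This is because $t$ cuts out the finitely many points $\varphi^{-m}(x_\infty)$ in $Y^I$, and $A$ is flat and nuclear, so regularity of $t$ passes from $\tilde{B}^I_{\Cbb_p}$ to $R$. From this we get a short exact sequence
$$0\to R\to R[1/t]\times \hat R\to \hat R[1/t]\to 0.$$
Injectivity follows from $t$-torsion freeness. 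Exactness in the middle and surjectivity both reduce to the identity $R[1/t]/R\cong \hat R[1/t]/\hat R$, which holds because both sides equal $\varinjlim_n t^{-n}R/R$ and $R/t^n=\hat R/t^n$. Since $R$ is noetherian when $\Acal\in \AlgAff_{\Qbb_p}$, or more generally since we have the explicit Banach/nuclear description, these identifications also hold condensedly by \cite[Lemma 3.28]{RJRC22}, in the same way as Remark~\ref{rem:de Rham complete}.

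\textbf{Step 2: fully faithfulness.} Tensoring the sequence from Step 1 with a finite projective $R$-module $V$ keeps it exact. Hence the composite $V\mapsto (V[1/t],V\otimes_R\hat R,\can)\mapsto \Ker$ returns $V$. Applying the same observation to $\intHom_R(V,W)$ shows that the functor from (1) to (2) is fully faithful. The $G_K$-equivariance is automatic.

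\textbf{Step 3: essential surjectivity (main obstacle).} Given a triple $(V',V_1,\tau)$, set $V=\Ker(V'\times V_1\to V_1[1/t])$. We must show that $V$ is finite projective over $R$, and that the natural maps $V[1/t]\to V'$ and $V\otimes_R\hat R\to V_1$ are isomorphisms.

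I would handle this by reducing to the free case. Choose a complement $U'$ so that $V'\oplus U'$ is free over $R[1/t]$. Also choose a complement of $V_1$ in a free $\hat R$-module, arranged so that after inverting $t$ the two complements match. One can take $U'\otimes\hat R[1/t]$ and glue it with some lattice in it; finding a lattice works because $\hat R$ is a finite product of $t$-adically complete rings with $\hat R/t=\prod(\Cbb_p\otimes A)$, and we may enlarge by free summands. This reduces us to the case of a lattice $V_1\subset \hat R[1/t]^n$ with $t^aV_1^0\subset V_1\subset t^{-b}V_1^0$, where $V_1^0=\hat R^n$.

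In that case $V$ lies between $t^aR^n$ and $t^{-b}R^n$, and it is determined by the image of $V_1$ in $t^{-b}\hat R^n/t^a\hat R^n=t^{-b}R^n/t^aR^n$. The main claim is now that $V$ is finite projective over $R$. The key point is that $V_1/t^aV_1^0$ is an $R/t^{a+b}$-module, and $V$ is the preimage of a submodule of $t^{-b}R^n/t^a R^n$. Finite projectivity can then be tested after the faithfully flat-type cover $R\to R[1/t]\times\hat R$. For this I would invoke the descent result \cite[Theorem 1.35]{Mikami24}, or alternatively the standard Beauville--Laszlo argument, using that $V\otimes_R\hat R=V_1$ and $V[1/t]=V'$ follow from the identity in Step 1. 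The equivariance of the resulting $G_K$-action on $V$ is again automatic.
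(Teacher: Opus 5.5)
Your route is the paper's: its proof is just an appeal to the Beauville--Laszlo theorem \cite{BL95}. Your Steps 1 and 2 correctly unpack the formal half of that theorem. The sequence $0\to R\to R[1/t]\times\hat R\to\hat R[1/t]\to 0$ needs only that $t$ is a non-zero-divisor on $R$, which follows from flatness of $A$. The identification $\hat R/t^n\cong R/t^n$ holds condensedly by applying $\varprojlim_m$ to $0\to R/t^m\xrightarrow{t^n}R/t^{m+n}\to R/t^n\to 0$, so you do not need \cite[Lemma 3.28]{RJRC22}. Full faithfulness then follows by tensoring the sequence with finite projective modules and with $\intHom_R(V,W)$.

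Step 3, however, is the non-formal half of the lemma, and as written it does not stand on its own.

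\textbf{The faithfully flat descent step is unjustified.} Invoking \cite[Theorem 1.35]{Mikami24} requires two things you never prove:
\begin{enumerate}
\item that $(R,\Zbb)_{\square}\to(R[1/t]\times\hat R,\Zbb)_{\square}$ is faithfully flat; $t$-adic completion is not flat in general;
\item that $V\otimes^{\Lbb}_R\hat R\cong V_1$ for the glued module $V$.
\end{enumerate}
The second point does not ``follow from Step 1''. For the glued $V$, Step 1 only gives $V[1/t]\cong V'$ and $V/t^nV\cong V_1/t^nV_1$. To pass to $V\otimes_R\hat R\cong\varprojlim_n V/t^nV$ you must already know that $V$ is finitely generated over a noetherian ring.

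\textbf{The reduction to the free case does not help.} It can be made to work with the usual direct-sum trick, but even with $V'$ and $V_1$ free you still have to prove that the preimage of a lattice is projective. That is exactly the open point.

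\textbf{The cleanest repair is what the paper's citation amounts to.} Apply \cite{BL95} to the discrete ring $R(\ast)$ and the non-zero-divisor $t$. Note that $\hat R(\ast)$ is the classical $t$-adic completion of $R(\ast)$, because $(R/t^n)(\ast)=R(\ast)/t^n$. Beauville--Laszlo then gives that the glued $R(\ast)$-module is finite projective, with no flatness or noetherian hypothesis. To return to condensed modules, set $P=V(\ast)\otimes_{R(\ast)}R$. The maps $P[1/t]\to V'$ and $P\otimes_R\hat R\to V_1$ are maps of finite projective modules that are isomorphisms on $\ast$-points, hence isomorphisms. Comparing the two fibre sequences of Step 1 then gives $P\cong V$. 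The $G_K$-action on $V$ is inherited because $V$ is a kernel of equivariant maps.

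\textbf{An alternative repair along your own lines.} In this subsection $\Acal\in\AlgAff_{\Qbb_p}$, so $R(\ast)$ is noetherian, as the paper uses in the proof of Lemma~\ref{lem:BdR flatness noncondensed}. Then $R(\ast)\to R(\ast)[1/t]\times\hat R(\ast)$ is faithfully flat as a map of discrete rings. In your free case $V\subset t^{-b}R^n$ is finitely generated, so $V\otimes_R\hat R\cong\varprojlim_n V/t^nV\cong V_1$. Classical faithfully flat descent of projectivity then finishes the argument. This must be said explicitly, and it is descent for discrete rings, not \cite[Theorem 1.35]{Mikami24}.
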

\begin{proof}
	This follows from the Beauville-Laszlo theorem (\cite{BL95}).
\end{proof}

\begin{construction}
	Let $L/K$ be a finite Galois extension, and let $(D_{L_0},\Fil^{\bullet}D)\in\Fil_{L/K,A}^{\varphi,N,\Gal(L/K)}$. 
	We construct an object $V\in \Vect^{L/K,\st}(X_{\Cbb_p,\Acal}/G_K)$ from $(D_{L_0},\Fil^{\bullet}D)$.
	First, for a closed interval $I\subset (0,\infty)$ of the form $I=[p^{-k},p^{-l}]$ for some integers $l\leq k$, we obtain a finite projective $\tilde{B}^{I}_{\Cbb_p,A}[1/t]$-module $V'^I=(D_0\otimes_{L_0\otimes A}\tilde{B}_{\log,\Cbb_p,A}^{I}[1/t])^{N=0}$ with a semilinear $G_K$-action by Proposition~\ref{prop:outside t construction}.
	The Frobenius automorphism $\varphi$ on $D_0$ induces the $\varphi$-semilinear isomorphism 
	$$\varphi\otimes \varphi \colon D_0\otimes_{L_0\otimes A}\tilde{B}_{\log,\Cbb_p,A}^{I}[1/t] \xrightarrow{\sim} D_0\otimes_{L_0\otimes A}\tilde{B}_{\log,\Cbb_p,A}^{p^{-1}I}[1/t],$$
	and by applying $(-)^{N=0}$, we obtain the $\varphi$-semilinear isomorphism
	$\varphi \colon V'^I\xrightarrow{\sim}V'^{p^{-1}I}.$
	By construction and the proof of Proposition~\ref{prop:outside t construction}, we have
	\begin{align*}
	V'^{[1,1]}\otimes_{\tilde{B}^{[1,1]}_{\Cbb_p,A}[1/t]} B_{\dR,A} &\cong (D_0\otimes_{L_0\otimes A} \tilde{B}^{[1,1]}_{\log,\Cbb_p,A}[1/t])\otimes_{\tilde{B}^{[1,1]}_{\log,\Cbb_p,A}[1/t]} B_{\dR,A} \\
	&\cong D_0\otimes_{L_0\otimes A}B_{\dR,A}\\
	&\cong D\otimes_{K\otimes A} B_{\dR,A}.
	\end{align*}
	By Theorem~\ref{thm:cdR filtered equivalence}, this yields a $G_K$-stable $B_{\dR,A}^+$-lattice $V_{\dR}\subset V'^{[1,1]}\otimes_{\tilde{B}^{[1,1]}_{\Cbb_p,A}[1/t]} B_{\dR,A}$.
	By using the Frobenius isomorphism
	$$\varphi^m\colon V'^{[1,1]}\to V'^{[p^{-m},p^{-m}]},$$
	we obtain a $G_K$-stable $\varphi^k_*B_{\dR,A}^+$-lattice $\varphi^k_*V_{\dR}\subset V'^{[p^{-m},p^{-m}]}\otimes_{\tilde{B}^{[p^{-m},p^{-m}]}_{\Cbb_p,A}[1/t]} \varphi^m_*B_{\dR,A}$.
	From this, for a closed interval $I=[p^{-k},p^{-l}]$, we define the $G_K$-stable $\prod_{l\leq m\leq k} \varphi^m_*B_{\dR,A}^+$-lattice $$\prod_{l\leq m\leq k}\varphi^m_*V_{\dR}\subset V'^{I}\otimes_{\tilde{B}^{I}_{\Cbb_p,A}[1/t]} (\prod_{l\leq m\leq k}\varphi^m_*B_{\dR,A}).$$
	Therefore, by Lemma~\ref{lem:Beauville-Laszlo}, we obtain a finite projective $\tilde{B}_{\Cbb_p,A}^I$-module $V^I$ with a semilinear $G_K$-action.
	By construction, we have a $\varphi$-semilinear isomorphism
	$\varphi \colon V^I\xrightarrow{\sim}V^{p^{-1}I}.$
	Moreover, for closed intervals $I\subset I'$, there is a restriction morphism
	$V^{I'}\to V^I$ such that $V^{I'}\otimes_{\tilde{B}_{\Cbb_p,A}^{I'}}\tilde{B}_{\Cbb_p,A}^I\to V^I$ is an isomorphism.
	Consequently, $\{V^I\}_I$ defines an object in $\Vect(X_{\Cbb_p,\Acal}/G_K)$, which we denote by $V(D_{L_0},\Fil^{\bullet}D)$.
	By construction, we find that $V(D_{L_0},\Fil^{\bullet}D) \in \Vect^{L/K,\st}(X_{\Cbb_p,\Acal}/G_K)$.
	Thus, this assignment defines a functor 
	$$\Fil_{L/K,A}^{\varphi,N,\Gal(L/K)} \to \Vect^{L/K,\st}(X_{\Cbb_p,\Acal}/G_K);\;(D_{L_0},\Fil^{\bullet}D)\to V(D_{L_0},\Fil^{\bullet}D).$$
	By construction, for a finite Galois extension $L'/K$ such that $L\subset L'$, the following diagram is commutative:
	\begin{align}\label{compatibility}
		\xymatrix{
		\Fil_{L/K,A}^{\varphi,N,\Gal(L/K)} \ar[r]\ar[d]^{\iota_{L,L'}} & \Vect^{L/K,\st}(X_{\Cbb_p,\Acal}/G_K) \ar@{^{(}->}[d]\\
		\Fil_{L'/K,A}^{\varphi,N,\Gal(L'/K)} \ar[r] & \Vect^{L'/K,\st}(X_{\Cbb_p,\Acal}/G_K),
	}
	\end{align}
	where the right vertical morphism is the natural inclusion.

	Let $(D^{\unr}, \Fil^{\bullet}D)\in \Fil_{K,A}^{\varphi,N,\Gal_K}$.
	Then we can take a finite Galois extension $L/K$ and $(D_{L_0},\Fil^{\bullet}D)$ such that $\iota_L(D_{L_0},\Fil^{\bullet}D)\cong(D^{\unr}, \Fil^{\bullet}D)$.
	Using this, we define $V(D^{\unr}, \Fil^{\bullet}D)=V(D_{L_0},\Fil^{\bullet}D)\in \Vect^{\dR}(X_{\Cbb_p,\Acal}/G_K)$.
	By the commutativity of the diagram \eqref{compatibility}, this definition does not depend on the choice of $L$.
	Therefore, we get a functor 
	$$\Fil_{K,A}^{\varphi,N,\Gal_K}\to \Vect^{\dR}(X_{\Cbb_p,\Acal}/G_K) ;\; (D^{\unr}, \Fil^{\bullet}D) \to V(D^{\unr}, \Fil^{\bullet}D).$$
\end{construction}

\begin{theorem}\label{thm:dR filtered phi N Gal-module}
	\begin{enumerate}
	\item The functors 
	$$\Vect^{\st}(X_{\Cbb_p,\Acal}/G_K)\xrightarrow{\sim} \Fil_{K,A}^{\varphi,N};\; V \mapsto (D_{\st}(V), \Fil^{\bullet}D_{\dR}(V))$$
	and 
	$$\Fil_{K,A}^{\varphi,N} \xrightarrow{\sim} \Vect^{\st}(X_{\Cbb_p,\Acal}/G_K);\;(D_0,\Fil^{\bullet}D)\mapsto V(D_0,\Fil^{\bullet}D)$$
	are symmetric monoidal and quasi-inverse to each other.
	\item Let $L/K$ be a finite Galois extension.
	Then the functors 
	$$\Vect^{L/K,\st}(X_{\Cbb_p,\Acal}/G_K)\to \Fil_{L/K,A}^{\varphi,N,\Gal(L/K)};\; V \mapsto (D_{\st}^L(V), \Fil^{\bullet}D_{\dR}(V))$$
	and 
	$$\Fil_{L/K,A}^{\varphi,N,\Gal(L/K)} \to \Vect^{L/K,\st}(X_{\Cbb_p,\Acal}/G_K);\;(D_{L_0},\Fil^{\bullet}D)\mapsto V(D_{L_0},\Fil^{\bullet}D).$$
	are symmetric monoidal and quasi-inverse to each other.
	\item The functors 
	$$\Vect^{\dR}(X_{\Cbb_p,\Acal}/G_K) \to \Fil_{K,A}^{\varphi,N,\Gal_K};\; V\mapsto (D_{\pst}(V), \Fil^{\bullet}D_{\dR}(V))$$
	and 
	$$\Fil_{K,A}^{\varphi,N,\Gal_K}\to \Vect^{\dR}(X_{\Cbb_p,\Acal}/G_K) ;\; (D^{\unr}, \Fil^{\bullet}D) \to V(D^{\unr}, \Fil^{\bullet}D)$$
	are symmetric monoidal and quasi-inverse to each other.
	\end{enumerate}
\end{theorem}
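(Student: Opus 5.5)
Part (2) is the core case; (1) is (2) with $L=K$, and (3) follows from (2) by passing to the colimit over $L$. For (2), both functors are already defined and symmetric monoidal: the forward direction by Proposition~\ref{prop:semist to MF}, the backward direction by the Construction preceding the theorem. Since one of them is symmetric monoidal, it suffices to exhibit the two natural isomorphisms $V(D_{\st}^L(V),\Fil^\bullet D_{\dR}(V))\cong V$ and $(D_{\st}^L(V(D)),\Fil^\bullet D_{\dR}(V(D)))\cong D$.

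For the first isomorphism, start from $V$ semistable over $L$. By Proposition~\ref{prop:outside t construction}, for each $I=[p^{-k},p^{-l}]$ we have $V^I[1/t]\cong (D_{\st}^L(V)\otimes\tilde{B}^I_{\log,\Cbb_p,A}[1/t])^{N=0}$. To place $D_{\st}^L(V)$ in $D_{\st}^{L,I}(V)$ we use the gluing along $\varphi$ from the proposition on independence of $I$, and this identification is compatible with $\varphi$ and with restriction. At the point $x_\infty$, the lattice $V^+_{\dR}$ is recovered from $D_{\dR}(V)$ with its filtration by Theorem~\ref{thm:cdR filtered equivalence} (Proposition~\ref{prop:cdR to filtered}(2)). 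Transporting by $\varphi^m$ gives the lattices at the points $\varphi^m$. Lemma~\ref{lem:Beauville-Laszlo} then identifies $V^I$ with the glued object, compatibly in $I$, so $V(D_{\st}^L(V),\dots)\cong V$.

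For the second isomorphism, start from $D=(D_{L_0},\Fil^\bullet D)$ and $V=V(D)$. By construction $V^I[1/t]$ satisfies condition (1) of Proposition~\ref{prop:outside t construction}, and that proposition returns $D_{L_0}$ as $(V^I_{\log}[1/t])^{G_L}$. Hence $D_{\st}^L(V)\cong D_{L_0}$, compatibly with $N$ and $\Gal(L/K)$; the Frobenius matches because $\varphi$ on $V$ was defined from $\varphi\otimes\varphi$. By Lemma~\ref{lem:log-crys implies dR}, $D_{\dR}(V)\cong (L\otimes_{L_0}D_{L_0})^{\Gal(L/K)}=D$. The filtration on $D_{\dR}(V)$ equals $\Fil^\bullet D$ because $V^+_{\dR}$ was defined as the lattice $\Fil^0(D\otimes B_{\dR,A})$ associated to $\Fil^\bullet D$, and Theorem~\ref{thm:cdR filtered equivalence} inverts this. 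One point to check is that $D_{\st}^L$ for the $G_L$-structure commutes with taking the $\Gal(L/K)$-invariants used to define $D$; this holds by Galois descent.

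The main obstacle is the first isomorphism. One must check that the lattice produced from $D_{\dR}$ through the identification $D_{\st}^L\otimes_{L_0}B_{\dR,A}\cong V_{\dR}$ (Lemma~\ref{lem:log-crys implies dR}, which uses $\log[p^\flat]\mapsto u$) is exactly $V^+_{\dR}$. One must also check that the $\varphi^m$-transported lattices agree with $V^{[p^{-m},p^{-m}]}\otimes\varphi^m_*B^+_{\dR,A}$. Both reduce to $\varphi$-equivariance of $V$ together with naturality of the comparison map.

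For (3), essential surjectivity follows from Lemma~\ref{lemma:MF transition}(3) and (2). Full faithfulness follows from (2) applied to a common $L$, using the commutative diagram \eqref{compatibility} and the isomorphism $\iota_L(D_{\st}^L(V),\dots)\cong(D_{\pst}(V),\dots)$ from Proposition~\ref{prop:semist to MF}. The $p$-adic monodromy theorem (Theorem~\ref{thm:p-adic monodromy}) guarantees that every de Rham $V$ lies in some $\Vect^{L/K,\st}$.
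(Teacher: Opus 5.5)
Your proposal is correct and follows essentially the same route as the paper. For (1) and (2), both composites are identified with the identity using Proposition~\ref{prop:outside t construction}, the lattice equivalence of Theorem~\ref{thm:cdR filtered equivalence}, and Lemma~\ref{lem:Beauville-Laszlo}; symmetric monoidality is inherited from the forward functor of Proposition~\ref{prop:semist to MF}. It does not come from the Construction, which only defines the backward functor, but as you note one side suffices. Part (3) is then deduced from (2) via Lemma~\ref{lemma:MF transition}, the diagram \eqref{compatibility}, and the $p$-adic monodromy theorem. Your write-up spells out checks the paper compresses into ``this follows from the construction'': the matching of the $B_{\dR}^+$-lattices at the points $\varphi^m(x_\infty)$, and the compatibility of $D_{\st}^{L,I}$ across intervals and with $\Gal(L/K)$-descent.
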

\begin{proof}
    We first verify that the functors are quasi-inverse to each other. 
    For (1) and (2), this follows from the construction, taking Lemma~\ref{lem:monodromy N=0}, Proposition~\ref{prop:outside t construction}, and Lemma~\ref{lem:Beauville-Laszlo} into account. 
    For (3), it follows from Lemma~\ref{lemma:MF transition} and Proposition~\ref{prop:semist to MF}. 
	Finally, since the functors $\Vect^{-}(-) \to \Fil_{-}^{-}$ are symmetric monoidal by Proposition~\ref{prop:semist to MF}, the same holds for their quasi-inverses.
\end{proof}

\begin{remark}
	When $A=\Qbb_p$, by composing the functor obtained above with 
	$$\Vect^{\dR}(X_{\Cbb_p}/G_K)\subset \Vect(X_{\Cbb_p}/G_K)\simeq \{\text{$(\varphi,\Gamma_K)$-modules over the Robba ring $B^{\dagger}_{\rig,K}$}\},$$
	we obtain a functor
	$$\Fil_{K,\Qbb_p}^{\varphi,N,\Gal_K}\to \{\text{$(\varphi,\Gamma_K)$-modules over the Robba ring $B^{\dagger}_{\rig,K}$}\}.$$
	By construction, this functor coincides with the one defined in \cite{Ber02}.
\end{remark}

Let $\Acal=(A,A^+)_{\square}\to \Bcal=(B,B^+)_{\square}$ be a morphism in $\AlgAff_{\Qbb_p}$.
Then there are natural base change functors
\begin{align*}
	-\otimes_{A}B \colon &\Fil_{K,A}^{\varphi,N}\to \Fil_{K,B}^{\varphi,N};\; \\
	&(D_0, \Fil^{\bullet}D) \mapsto (D_0\otimes_{K_0\otimes A} (K_0\otimes B), \Fil^{\bullet}D\otimes_{K\otimes A}(K\otimes B)),\\
	-\otimes_{A}B \colon &\Fil_{L/K,A}^{\varphi,N,\Gal(L/K)}\to \Fil_{L/K,B}^{\varphi,N,\Gal(L/K)};\; \\
	&(D_{L_0}, \Fil^{\bullet}D) \mapsto (D_{L_0}\otimes_{L_0\otimes A} (L_0\otimes B), \Fil^{\bullet}D\otimes_{K\otimes A}(K\otimes B)),\\
	-\otimes_{A}B \colon &\Fil_{K,A}^{\varphi,N,\Gal_K}\to \Fil_{K,B}^{\varphi,N,\Gal_K};\; \\
	&(D^{\unr}, \Fil^{\bullet}D) \mapsto (D^{\unr}\otimes_{K_0^{\unr}\otimes A} (K_0^{\unr}\otimes B), \Fil^{\bullet}D\otimes_{K\otimes A}(K\otimes B)).
\end{align*}

By construction, we get the following.
\begin{lemma}
	The following diagram commutes:
	$$
	\xymatrix{
		\Vect^{\st}(X_{\Cbb_p,\Acal}/G_K) \ar[r]^{-\otimes_{\Acal}\Bcal} \ar[d]^{\simeq}& \Vect^{\st}(X_{\Cbb_p,\Bcal}/G_K)\ar[d]^{\simeq} \\
		\Fil_{K,A}^{\varphi,N}\ar[r]^{-\otimes_{A}B} &\Fil_{K,B}^{\varphi,N}.
	}
	$$
	Moreover, similar statements hold true for $\Vect^{L/K,\st}$ and $\Vect^{\dR}$.
\end{lemma}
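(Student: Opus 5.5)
The plan is to check that both composites agree, separately for each piece of data, and then extend to the other two cases. Start with a semistable $V$ over $X_{\Cbb_p,\Acal}$. By Lemma~\ref{lem:log-crys basechange}, $V_B=V\otimes_{\Acal}\Bcal$ is semistable, and the natural map
$$D_{\st}(V)\otimes_{K_0\otimes A}(K_0\otimes B)\to D_{\st}(V_B)$$
is an isomorphism compatible with $\varphi$ and $N$. So the $(\varphi,N)$-module part of the square commutes.

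For the filtration, I use Lemma~\ref{lem:dR basechange filtered}. It gives an isomorphism of filtered $K\otimes B$-modules
$$D_{\dR}(V_B)\cong D_{\dR}(V)\otimes_{K\otimes A}(K\otimes B).$$
The base change on the right is the Day-convolution tensor of Definition~\ref{def:Fil}. Since the graded pieces are finite projective, it agrees termwise with $\Fil^{\bullet}D\otimes_{K\otimes A}(K\otimes B)$, which is the base change used in the statement.

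It remains to see that this isomorphism is compatible with the identification $D_{\dR}\cong K\otimes_{K_0}D_{\st}$ of Lemma~\ref{lem:log-crys implies dR}. Both isomorphisms are induced by the natural maps
$$V_{\log}^{[p^{-1},1]}[1/t]\to V_{\log,B}^{[p^{-1},1]}[1/t],\qquad V_{\dR}\to V_{B,\dR},$$
and these maps are compatible with the embedding $\tilde B_{\log}\to B_{\dR}$ that sends $\log[p^\flat]\mapsto u$. Hence the two identifications agree. This gives a natural isomorphism between the two composites around the square.

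For $\Vect^{L/K,\st}$ the argument is identical, using $D_{\st}^L$. One extra check is that the $\Gal(L/K)$-actions match, which holds because every map above is $G_K$-equivariant. For $\Vect^{\dR}$, I reduce to the $L/K$ case: by Theorem~\ref{thm:p-adic monodromy} each object lies in some $\Vect^{L/K,\st}$, and $\iota_L$ commutes with base change by construction. Then Proposition~\ref{prop:semist to MF} identifies $D_{\pst}$ with $\iota_L(D_{\st}^L)$ on both sides.

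The only potentially delicate point is the filtration compatibility. The filtered base change is defined via Day convolution, and I need it to match $D_{\dR}$ of the base change. Lemma~\ref{lem:dR basechange filtered} already handles exactly this through Theorem~\ref{thm:cdR filtered equivalence}, so I expect no real obstacle.
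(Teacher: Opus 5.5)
Your proposal is correct and follows the paper's argument: the paper proves this by combining Lemma~\ref{lem:log-crys basechange} (base change of $D_{\st}$ compatibly with $\varphi$ and $N$) with Lemma~\ref{lem:dR basechange filtered} (base change of the filtered $D_{\dR}$). Your compatibility check with $K\otimes_{K_0}D_{\st}\cong D_{\dR}$ and your reduction of the $\Vect^{\dR}$ case to $\Vect^{L/K,\st}$ via the monodromy theorem and $\iota_L$ spell out details the paper leaves implicit.
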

\begin{proof}
	It follows from Lemma~\ref{lem:dR basechange filtered} and Lemma~\ref{lem:log-crys basechange}.
\end{proof}


\section{Classification of $G_K$-equivariant line bundles}
In this section, we provide a classification of line bundles over $X_{\Cbb_p,\Acal}/G_K$ as an application of the $p$-adic monodromy theorem.
As in the previous section, we fix compatible embeddings $\overline{\Qbb_p}\hookrightarrow B_{\dR}^+$ and $\breve{\Qbb}_p\hookrightarrow \tilde{B}_{\Cbb_p}^{I}$ for closed intervals $I \subset (0,\infty)$.

First, we recall the construction of line bundles over $X_{\Cbb_p,\Acal}/G_K$ from continuous characters of $K^{\times}$ introduced in \cite{Nak13,KPX14,Mikami24}.
Let $\Acal=(A,A^+)_{\square}$ be an algebraic-affinoid analytic $K_{0,\square}$-algebra.
We write $\varphi\in \Gal(K_0/\Qbb_p)$ for the arithmetic Frobenius automorphism.
We set $f=[K_0:\Qbb_p].$
Then for each closed interval $I\subset (0,\infty)$, we have 
\begin{align}\label{K_0 decomposition}
	\tilde{B}_{\Cbb_p,A}^{I}\cong \prod_{i=0}^{f-1} (\tilde{B}_{\Cbb_p}^I\otimes_{K_0} \varphi^{i}_*A),
\end{align}
where $\varphi^{i}_*A$ is the $K_0$-algebra whose structure morphism is given by $K_0\xrightarrow{\varphi^i}K_0\to A$.
Under this isomorphism, the Frobenius morphism $\varphi\colon \tilde{B}_{\Cbb_p,A}^{I} \to \tilde{B}_{\Cbb_p,A}^{p^{-1}I}$ corresponds to
$$
\begin{array}{ccl}
    \prod_{i=0}^{f-1} (\tilde{B}_{\Cbb_p}^I\otimes_{K_0} \varphi^{i}_*A)  &\lra &\prod_{i=0}^{f-1} (\tilde{B}_{\Cbb_p}^{p^{-1}I}\otimes_{K_0} \varphi^{i}_*A) \\
    \rotatebox{90}{$\in$}       &                &  \quad\quad\rotatebox{90}{$\in$}\\
    (x_0,\ldots,x_{f-2},x_{f-1})&\longmapsto &(\varphi(x_{f-1}),\varphi(x_0),\ldots,\varphi(x_{f-2})).
\end{array}
$$          

\begin{definition}[{\cite[Definition 5.7]{Mikami24}}]\label{def:character type}
	Let $\pi$ be a uniformizer of $K$.
	For a continuous character $\delta\colon K^{\times}\to A^{\times}$, we define $\Ocal_{X_{\Cbb_p,\Acal}}(\delta)\in \Vect(X_{\Cbb_p,\Acal}/G_K)$ as follows:
	The choice of $\pi$ gives the decomposition $K^{\times}=\pi^{\Zbb}\times \Ocal_K^{\times}$, and local class field theory provides a continuous character
	$$\delta_{\pi}\colon G_K\to G_K^{\ab}\to K^{\times}/\pi^{\Zbb}=\Ocal_K^{\times}\xrightarrow{\delta}A^{\times}.$$
	Using this character, we define $\Ocal_{X_{\Cbb_p,\Acal}}(\delta)$ as the family $\{\tilde{B}_{\Cbb_p,A}^I\cdot e\}_I$, where $G_K$ acts on $e$ via $\delta_{\pi}$ and the Frobenius $\varphi$ acts on $e$ by setting $\varphi(e)=(\delta(\pi),1,\ldots,1)e\in \left(\prod_{i=0}^{f-1} (\tilde{B}_{\Cbb_p}^I\otimes_{K_0} \varphi^{i}_*A)\right)e$, using the isomorphism \eqref{K_0 decomposition}.
	This definition does not depend on the choice of $\pi$ by \cite[Theorem 5.9]{Mikami24}.
\end{definition}

\begin{definition}[{\cite[Definition 5.10]{Mikami24}}]
A $G_K$-equivariant line bundle $V$ over $X_{\Cbb_p,\Acal}$ is said to be \textit{of character type} if there exists a continuous character $\delta\colon K^{\times}\to A^{\times}$ and a finite projective $\Acal$-module $M$ of rank $1$ such that $V\cong \Ocal_{X_{\Cbb_p,\Acal}}(\delta)\otimes_{\Acal}M.$
\end{definition}

The goal of this section is to prove the following theorem.
\begin{theorem}\label{thm:classification of line bundles}
	Let $V$ be a $G_K$-equivariant line bundle over $X_{\Cbb_p,\Acal}$.
	Then there exist a unique continuous character $\delta\colon K^{\times}\to A^{\times}$ and a unique finite projective $\Acal$-module $M$ of rank $1$ such that $V\cong \Ocal_{X_{\Cbb_p,\Acal}}(\delta)\otimes_{\Acal}M.$
\end{theorem}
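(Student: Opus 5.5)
Following the outline in the introduction, I first normalize the Hodge--Tate weight to $0$, then apply the $p$-adic monodromy theorem, and finally classify rank one filtered $(\varphi,N,G_K)$-modules. Uniqueness is handled separately.

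\textbf{Step 1: twisting to Hodge--Tate weight $0$.} Take $V$ of rank one. By Construction~\ref{construction:Sen}, $D_{\Sen}^{K_n}(V)$ is a rank one projective $K_n\otimes A$-module. Its Sen operator is multiplication by an element $\theta\in K\otimes A$, which is the Sen polynomial. By Theorem~\ref{thm:HTS wts are analytic}, $\theta$ lies in $K\otimes A^b$. I then need a continuous character $\delta_0\colon K^{\times}\to A^{\times}$ such that the Sen weights of $\Ocal_{X_{\Cbb_p,\Acal}}(\delta_0)$ (one for each embedding $\sigma\colon K\to \bar{\Qbb}_p$) equal $\theta$. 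The plan is to set $\delta_0(\pi)=1$ and $\delta_0|_{\Ocal_K^{\times}}=\prod_\sigma \exp(\theta_\sigma\log\sigma(x))$ on a small enough open subgroup. Extending to all of $\Ocal_K^{\times}$ requires boundedness of $\theta$, and this is exactly what $A^b$ provides. Concretely, I expect to use the description of characters factoring through an affinoid algebra of definition, as in \cite[Proposition 5.2]{Mikami24}. Possibly I will pass to a finite étale or analytic cover and descend using Proposition~\ref{prop:HT analytic local}.

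This step is the main obstacle. The exponential converges only on a subgroup $1+p^k\Ocal_K$. To extend to $\Ocal_K^{\times}$ I must extract roots of elements of $A^{\times}$, or work locally and then glue. I will try to show that the obstruction is killed by replacing $A$ with a finite étale extension. The cleaner alternative is to argue only up to a twist that is trivial on a finite index subgroup, which reduces the problem to the case where $V$ is Hodge--Tate of weight $0$ over $G_L$.

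After the twist, $V'=V\otimes\Ocal(\delta_0^{-1})$ has Sen operator $0$. By Proposition~\ref{prop:Sen operator HT} it is Hodge--Tate of weight $0$, and by Corollary~\ref{cor:HT of HT wt 0 implies dR} it is de Rham.

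\textbf{Step 2: monodromy and the rank one filtered module.} By Theorem~\ref{thm:p-adic monodromy}, $V'$ is potentially semistable. By Theorem~\ref{thm:dR filtered phi N Gal-module}(3), it corresponds to a rank one object $(D^{\unr},\Fil^{\bullet}D)$. Since the Hodge--Tate weight is $0$, the filtration is trivial. Since the rank is one, $N$ is nilpotent on a line and therefore $0$ by Lemma~\ref{lem:exp nilpotent}. So we have a rank one $K_0^{\unr}\otimes A$-module with a smooth $G_K$-action and a commuting semilinear $\varphi$.

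Working Zariski locally on $A$, I will use Proposition~\ref{prop:log-crys analytic local} together with uniqueness to glue, and so assume $D_{L_0}$ is free with basis $e$ over $L_0\otimes A$. The $\Gal(L/K)$-action, after untwisting by the unramified part, gives a character of $\Gal(L/K)$ that extends the inertia part. Using local class field theory, this yields a finite-order character $\eta$ of $\Ocal_K^{\times}$. The $\varphi^f$-eigenvalue gives $\delta_1(\pi)$. Here I use the decomposition \eqref{K_0 decomposition} to normalize $\varphi(e)$ to the form $(\alpha,1,\dots,1)e$ after a change of basis. Comparing with Definition~\ref{def:character type}, I expect $V'\cong \Ocal(\delta_1)\otimes_{\Acal} M$, where $M$ is the rank one module recording the failure of $D$ to be free. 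Then $\delta=\delta_0\delta_1$.

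\textbf{Step 3: uniqueness.} It suffices to show that $\Ocal(\delta)\otimes M\cong\Ocal(\delta')\otimes M'$ forces $\delta=\delta'$ and $M\cong M'$. Equivalently, if $W=\Ocal(\delta)\otimes M$ has $W\cong\Ocal_{X_{\Cbb_p,\Acal}}$, then $\delta=1$ and $M$ is trivial. First, $W$ is de Rham of weight $0$, so the Sen weight of $\delta$ vanishes and $\delta$ is smooth on $\Ocal_K^{\times}$. Then $D_{\pst}$ recovers $\delta|_{\Ocal_K^{\times}}$ (through the inertia action) and $\delta(\pi)$ (through $\varphi^f$). The module $M$ is recovered as $W$ modulo these data, via $D_{\st}^L$ and the fully faithfulness in Theorem~\ref{thm:dR filtered phi N Gal-module}.
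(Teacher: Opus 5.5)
Your route is the paper's: twist by a character whose weight is the bounded Sen weight, after a finite étale extension adjoining $p^m$th roots; then apply Corollary~\ref{cor:HT of HT wt 0 implies dR} and Theorem~\ref{thm:p-adic monodromy}; classify rank one filtered $(\varphi,N,G_K)$-modules by class field theory; and get uniqueness from the Sen weight and $D_{\st}^L$.

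\textbf{The gap: descending back to $A$.} You replace $A$ by $B=A[X_1,\dots,X_d]/(X_i^{p^m}-u_i)$, as in Lemma~\ref{lem:wt character}. Before that you also tacitly replace it by $E\otimes_{K_0}A$ for a field $E$ splitting $K$. You need this, since otherwise $K\otimes A$ has no components $\theta_\sigma$ and $\sigma(x)\notin A$. After these base changes, what must descend is the \emph{conclusion}, namely the existence of $(\delta,M)$, not Hodge--Tateness. Proposition~\ref{prop:HT analytic local} does not help: it concerns affinoid coverings and the Hodge--Tate property. The paper uses \cite[Lemma 5.14]{Mikami24} here. With your own tools, prove uniqueness first and apply it over $B\otimes_AB$. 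This gives $p_1\circ\delta=p_2\circ\delta$, so $\delta$ is $A^{\times}$-valued, because $A\to B$ is finite free. Then $M=H^0(X_{\Cbb_p,\Acal}/G_K,V\otimes\Ocal_{X_{\Cbb_p,\Acal}}(\delta^{-1}))$ commutes with $-\otimes_AB$, so it is rank one projective by faithfully flat descent. The map $\Ocal_{X_{\Cbb_p,\Acal}}(\delta)\otimes_{\Acal}M\to V$ is an isomorphism because it is one over $B$. Drop your ``cleaner alternative'' of a twist defined only on a finite index subgroup. It only controls $V|_{G_L}$, and $V$ itself is not de Rham, so there is no way back to $G_K$.

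\textbf{Smaller points in Step 2.} (a) Zariski-localizing to make $D$ free is unnecessary. Decompose $D_0=\prod_iD_0^i$ along $K_0\otimes A\cong\prod_i\varphi^i_*A$ and use $\varphi$ to identify every factor with $M=D_0^0$. Then $\varphi$ becomes $(x_0,\dots,x_{f-1})\mapsto(ax_{f-1},x_0,\dots,x_{f-2})$ with $a\in A^{\times}$, which gives $M$ and $\delta(\pi)=a$ at once. (b) Make the class field theory step precise as in Lemma~\ref{lem:descent to ab ext}. Because the rank is one, $\sigma\mapsto\varphi^{d(\sigma)}\sigma$ is a character of $W_K$, so $\Gal(\bar{K}/K^{\ab})$ acts trivially and $L/K$ can be taken totally ramified abelian. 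Then choose $\pi$ in the kernel of $K^{\times}\to\Gal(L/K)$; otherwise setting $\delta(\pi)=a$ is off by the value of $\delta$ on a unit. (c) Over non-reduced $A$, a nilpotent endomorphism of a line need not vanish. $N=0$ comes instead from $c=p\varphi(c)$ together with Lemma~\ref{lem:nonzero slope}.
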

\begin{remark}
	In \cite[Theorem 5.18]{Mikami24}, the author proved the above classification under a certain freeness condition on $V$.
	The present result improves upon this by dropping the assumption.
\end{remark}

Let $E/K$ be a finite extension such that $[K:\Qbb_p]=\#\Hom_{\Alg_{\Qbb_p}}(K,E)$.
\begin{lemma}
	If Theorem~\ref{thm:classification of line bundles} holds true for any algebraic-affinoid analytic $E_{\square}$-algebra, then it holds true for any algebraic-affinoid analytic $K_{0,\square}$-algebra.
\end{lemma}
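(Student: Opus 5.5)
This is a Galois descent argument along the finite étale extension $\Acal\to\Acal_E:=\Acal\otimes_{K_0}E$. Let $\Acal$ be an algebraic-affinoid analytic $K_{0,\square}$-algebra. Since $E/K_0$ is finite, $\Acal_E=(A\otimes_{K_0}E,\dots)_\square$ is an algebraic-affinoid analytic $E_{\square}$-algebra. We may enlarge $E$ to be Galois over $K_0$ (keeping the embedding condition), and set $H=\Gal(E/K_0)$. Then $A\to A_E:=A\otimes_{K_0}E$ is finite Galois with group $H$, $A_E^{H}=A$, and finite projective $A$-modules are equivalent to finite projective $A_E$-modules with semilinear $H$-action. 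The same descent holds for $G_K$-equivariant vector bundles over $X_{\Cbb_p,\Acal}$, since $\tilde B^I_{\Cbb_p,A_E}=\tilde B^I_{\Cbb_p,A}\otimes_{K_0}E$, and $H$ commutes with $\varphi$ and $G_K$ because it acts only on the coefficient factor $E$.

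\emph{Existence.} Let $V$ be a $G_K$-equivariant line bundle over $X_{\Cbb_p,\Acal}$. Apply the hypothesis to $V_E=V\otimes_{\Acal}\Acal_E$ to get $V_E\cong\Ocal_{X_{\Cbb_p,\Acal_E}}(\delta)\otimes_{\Acal_E}M$ with $\delta\colon K^\times\to A_E^\times$ and $M$ unique. For each $h\in H$, the twist $h^*V_E$ is identified with $V_E$ via the descent datum. On the other hand, $h^*(\Ocal(\delta)\otimes M)\cong\Ocal(h\circ\delta)\otimes h^*M$, because Definition~\ref{def:character type} is functorial in the coefficient ring: apply $h$ to $\delta_\pi$ and to $\delta(\pi)$. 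Uniqueness then forces $h\circ\delta=\delta$ for all $h$, so $\delta$ takes values in $(A_E^\times)^H=A^\times$. It is continuous, since $A\subset A_E$ is a closed direct summand as a module. Now set $W=V\otimes\Ocal_{X_{\Cbb_p,\Acal}}(\delta^{-1})$. Then $W_E\cong\Ocal_{X_{\Cbb_p,\Acal_E}}\otimes M$.

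\emph{Descending $M$.} I will recover $M$ from $W_E$ as $M=(W_E)^{\varphi=1,G_K}$, meaning the global $\varphi$-invariant, $G_K$-invariant sections. This requires $(\tilde B_{\Cbb_p,A_E}^{I})^{\varphi=1,G_K}=A_E$, which follows from $H^0(X_{\Cbb_p},\Ocal)^{G_K}=\Qbb_p$ tensored with $A_E$ by flatness and nuclearity, as in Lemma~\ref{lem:HT fixed vector}. The identification $W_E\cong\Ocal\otimes M$ then gives $M\cong\Gamma(W_E)$ functorially. Hence $M$ carries a semilinear $H$-action coming from the descent datum on $W_E$, and by Galois descent it descends to a rank-one projective $A$-module $M_0=M^H$ with $M_0\otimes_{A}A_E= M$. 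The isomorphism $\Ocal\otimes M\to W_E$ is the evaluation map, so it is $H$-equivariant. Taking $H$-invariants gives $W\cong\Ocal_{X_{\Cbb_p,\Acal}}\otimes M_0$, and therefore $V\cong\Ocal(\delta)\otimes M_0$.

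\emph{Uniqueness.} Suppose $\Ocal(\delta)\otimes M\cong\Ocal(\delta')\otimes M'$ over $\Acal$. Base change to $\Acal_E$ and apply uniqueness there to get $\delta=\delta'$ as maps to $A_E^\times$, hence as maps to $A^\times$. Also $M\otimes A_E\cong M'\otimes A_E$. Then $M=\Gamma(\Ocal\otimes M)\cong\Gamma(\Ocal\otimes M')=M'$ already over $A$, by the invariant computation above.

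The main technical point is the descent step. It has two parts: checking that the invariant-section functor $W\mapsto \Gamma(W)^{\varphi=1,G_K}$ computes $M$ (the global-sections computation with coefficients), and checking that the descent of $\delta$ is compatible with the normalization by $\pi$ in Definition~\ref{def:character type}. Both reduce to base change of period-ring invariants along $A\to A_E$, which is finite free.
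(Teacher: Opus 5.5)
Your proof is correct. The uniqueness part is the same as the paper's. You base change to $\Acal\otimes_{K_0}E$, use injectivity of $A^{\times}\to(A\otimes_{K_0}E)^{\times}$ to get $\delta=\delta'$, twist by $\Ocal_{X_{\Cbb_p,\Acal}}(\delta^{-1})$, and recover $M$ as $H^0(X_{\Cbb_p,\Acal}/G_K,\Ocal\otimes_{\Acal}M)$.

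For existence the routes differ. The paper simply cites \cite[Lemma 5.14]{Mikami24}, a descent statement for ``character type'' along finite faithfully flat extensions. The same citation is used later to replace $A$ by the cover $B$ of Lemma~\ref{lem:wt character}, which is not Galois in general. You instead prove the needed descent by hand:
\begin{enumerate}
\item You enlarge $E$ to be Galois over $K_0$. This is harmless, since an algebraic-affinoid $E'_{\square}$-algebra is also an algebraic-affinoid $E_{\square}$-algebra.
\item You use uniqueness over $\Acal\otimes_{K_0}E$ to force $h\circ\delta=\delta$ for $h\in\Gal(E/K_0)$.
\item You Galois-descend $M$, recovered functorially as global sections.
\end{enumerate}
Your version is self-contained and makes the mechanism explicit. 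The point you handle correctly is taking $h$ to fix $K_0$. This makes $h$ respect the decomposition \eqref{K_0 decomposition} and the normalization $\varphi(e)=(\delta(\pi),1,\ldots,1)e$, so that $h^{*}\Ocal(\delta)\cong\Ocal(h\circ\delta)$. The paper's citation buys brevity and a statement general enough to reuse for non-Galois covers.

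One small imprecision: ``$(\tilde B^I_{\Cbb_p,A_E})^{\varphi=1,G_K}$'' is not meaningful as written, since $\varphi$ changes the interval. What you actually need, and what the paper also uses, is $H^0(X_{\Cbb_p,\Acal_E},\Ocal)=A\otimes_{K_0}E$. This follows by computing $H^0$ as a finite limit over $Y^{[p^{-1},1]}_{\Cbb_p}$ glued along $\varphi$, together with flatness of the coefficients.
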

\begin{proof}
	Let $\Acal=(A,A^+)_{\square}$ be an algebraic-affinoid analytic $K_{0,\square}$-algebra.
	First, we prove that for a continuous character $\delta\colon K^{\times}\to A^{\times}$ and a finite projective $\Acal$-module $M$ of rank $1$, if $\Ocal_{X_{\Cbb_p,\Acal}}\cong \Ocal_{X_{\Cbb_p,\Acal}}(\delta)\otimes_{\Acal}M$, then $\delta$ is trivial and $M\cong A$.
	Applying Theorem~\ref{thm:classification of line bundles} for the algebraic-affinoid analytic $E_{\square}$-algebra $E_{\square}\otimes_{K_{0,\square}}\Acal$, the character $K^{\times}\xrightarrow{\delta} A^{\times}\to (E\otimes_{K_0}A)^{\times}$ is trivial.
	Since $A^{\times}\to (E\otimes_{K_0}A)^{\times}$ is injective, $\delta$ is also trivial.
	Moreover, there is an isomorphism
	$$A\cong H^0(X_{\Cbb_p,\Acal}/G_K, \Ocal_{X_{\Cbb_p,\Acal}})\cong H^0(X_{\Cbb_p,\Acal}/G_K, \Ocal_{X_{\Cbb_p,\Acal}}\otimes_{\Acal} M)\cong M.$$
	Thus, we get the uniqueness.
	The existence follows from \cite[Lemma 5.14]{Mikami24}.
\end{proof}

In the following, let $\Acal=(A,A^+)_{\square}$ be an algebraic-affinoid analytic $E_{\square}$-algebra.
We set $\Sigma=\Hom_{\Alg_{\Qbb_p}}(K,E)$.
Then there is a natural isomorphism
$$K\otimes A\cong \prod_{\sigma \in \Sigma}A_{\sigma},$$
where $A_{\sigma}=A$ for each $\sigma \in \Sigma$, and the subscript $\sigma$ is used merely to distinguish the components.

\begin{definition}
Let $\delta \colon K^{\times}\to A^{\times}$ be a continuous character.
Then it is locally analytic\footnote{When $A$ is an affinoid $L$-algebra, then $\delta$ is locally analytic by \cite[Proposition 8.3]{Buzz07}. In general, it follows from \cite[Proposition 5.2]{Mikami24}.}.
Therefore, we obtain a $\Qbb_p$-linear morphism $d\delta\colon \Lie K^{\times}\cong K\to A$, where the isomorphism $\Lie K^{\times}\cong K$ is induced by the $p$-adic logarithmic morphism.
We define $\wt(\sigma)=(\wt_{\sigma}(\delta))_{\sigma}\in \prod_{\sigma\in \Sigma}A\cong K\otimes A$ so that 
$d\delta=\sum_{\sigma} \wt_{\sigma}(\delta)\sigma$ in $\Hom_{\Qbb_p}(K,A)$.
Since $\Sigma$ is a basis of the $A$-module $\Hom_{\Qbb_p}(K,A)$, this definition is well-defined.
\end{definition}

\begin{proposition}\label{prop:Sen polynomial character}
	Let $\delta \colon K^{\times}\to A^{\times}$ be a continuous character.
	Then the Sen polynomial of $\Ocal_{X_{\Cbb_p,\Acal}}(\delta)$ is equal to $T-\wt(\delta)\in (K\otimes A)[T]$.
\end{proposition}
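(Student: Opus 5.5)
Since $\Ocal_{X_{\Cbb_p,\Acal}}(\delta)$ has rank one, its Sen polynomial is $T-\theta(\delta)$, where $\theta(\delta)\in K\otimes A$ is the scalar by which $\Theta_{\Sen}$ acts on the rank one module $D_{\Sen}^{K_n}(\Ocal_{X_{\Cbb_p,\Acal}}(\delta))$. The plan is therefore to show $\theta(\delta)=\wt(\delta)$.

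The first observation is that only the reduction $V_\infty=V^{[1,1]}/t$ enters the definition of the Sen operator. For $V=\Ocal_{X_{\Cbb_p,\Acal}}(\delta)$ this reduction is $(\Cbb_p\otimes A)e$, with $G_K$ acting on $e$ through the character $\delta_{\pi}$. The Frobenius datum $\delta(\pi)$ plays no role at $x_\infty$. So $\theta(\delta)$ is the Sen weight of the continuous character $\eta=\delta_\pi\colon G_K\to A^\times$, which factors through $G_K^{\ab}\to\Ocal_K^\times$.

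\textbf{Step 1: linearity in the derivative.} I will show that $\eta\mapsto\theta(\eta)$ turns products into sums and only depends on $\eta$ near the identity.

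1. Multiplicativity. Example~\ref{ex:Sen operator} and the tensor compatibility of $D_{\Sen}$ (from Construction~\ref{construction:Sen} and Remark~\ref{rem:Sen operator compatible}) give $\theta(\eta\eta')=\theta(\eta)+\theta(\eta')$.
2. Insensitivity to finite order. A character of finite order has Sen operator $0$, by the argument in the proof of Proposition~\ref{prop:Sen operator HT}.
3. Consequence. By \cite[Proposition 5.2]{Mikami24}, $\delta$ is locally analytic. Hence on a small open subgroup $U\subset\Ocal_K^\times$ we have
$$\delta(x)=\exp\Bigl(\sum_{\sigma}\wt_\sigma(\delta)\,\sigma(\log x)\Bigr).$$
So $\theta(\delta)$ depends only on $d\delta$.
4. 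Reduction to a universal case. I would make the dependence on $d\delta$ explicit by a "universal" argument. Let $A_0$ be the affinoid $E$-algebra of definition through which $\delta|_U$ factors, taking for instance the weight-space coordinates $\wt_\sigma$. Then $\theta(\delta)$ is obtained by pulling back from a universal analytic family over a polydisc in the variables $w_\sigma$. By Corollary~\ref{cor:pointwise criterion of HT}-type injectivity $D_{\Sen}\hookrightarrow\prod_{\mfrak,r}D_{\Sen}(-/\mfrak^r)$, it suffices to verify the identity $\theta=\sum_\sigma w_\sigma\theta_\sigma$ at points with integer $w_\sigma$, which are Zariski dense. Here $\theta_\sigma$ denotes the Sen weight of the character $\sigma\circ\chi_{LT}$ attached to the Lubin--Tate character for $\pi$. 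At integer points $\delta|_U$ agrees with $\prod_\sigma(\sigma\circ\chi_{LT})^{w_\sigma}$ up to a finite order character, and item 1 gives the identity there.

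\textbf{Step 2: the basic case.} It remains to compute $\theta_\sigma$ for $\sigma\circ\chi_{LT}$ with $A=E$. This is classical field-level $p$-adic Hodge theory. The representation $E(\sigma\circ\chi_{LT})$ is Hodge--Tate. Its $\sigma$-component has Hodge--Tate weight $1$, and the other $\tau$-components have weight $0$, with the normalization that $\chi$ has weight $1$. In the decomposition $K\otimes E\cong\prod_\sigma E_\sigma$ this gives $\theta_\sigma=e_\sigma$, the idempotent of the $\sigma$-component. Therefore $\theta(\delta)=\sum_\sigma\wt_\sigma(\delta)e_\sigma=\wt(\delta)$.

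As a consistency check on the Artin map normalization and signs, take $\delta=N_{K/\Qbb_p}\cdot|N_{K/\Qbb_p}|$. Its restriction $\delta_\pi$ is $\chi$, and $\wt_\sigma(\delta)=1$ for all $\sigma$, so the Sen polynomial is $T-1$, as it should be.

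\textbf{Main obstacle.} I expect the hardest point to be Step 1, item 4. There one must justify that the Sen weight of a locally analytic $A$-valued character is $A$-linear in $d\delta$ even though the $\wt_\sigma$ are non-integral elements of $A$. If the universal-family density argument proves awkward, I would instead compute directly on $D_{\Sen}^{K_n}$. The idea is to identify $\Theta_{\Sen}$ with the derivative of the locally analytic $\Gamma_{K_n}$-action on the descended basis vector, i.e. to relate it to the Lie algebra action of $\Ocal_K^\times$ via Lubin--Tate periods in $\Cbb_p\otimes A$. The derivative of $\exp(\sum_\sigma\wt_\sigma\sigma(\log x))$ then visibly gives $\sum_\sigma\wt_\sigma(\delta)\,\theta_\sigma$.
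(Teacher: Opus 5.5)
Your argument is correct in outline, but it is not the paper's route. The paper's proof is a reduction plus a citation. By \cite[Proposition 5.2]{Mikami24}, $\delta$ factors through an affinoid algebra of definition, so one may assume $A$ is affinoid. The statement is then quoted from \cite[Lemma 6.2.12]{KPX14}.

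You instead prove the affinoid case inside the paper's own framework. Your ingredients are additivity of the Sen weight, its vanishing on finite-order characters, $\delta=\exp(d\delta\circ\log)$ near $1$, interpolation over a reduced polydisc $\mathbb{D}$ in the variables $w_\sigma$, Zariski density of integral weights, and the classical Hodge--Tate weights of $\sigma\circ\chi_{LT}$. This is more work, but it removes the dependence on \cite{KPX14}, which fits the paper's stated aim of bypassing that work (this proposition and Lemma~\ref{lem:LT character} still cite it). It also handles non-reduced $A$ automatically, since everything is pulled back from the reduced algebra $\Ocal(\mathbb{D})$. What the paper's route buys is brevity.

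When you write it up, three points need care:
\begin{enumerate}
\item The map $x\mapsto\exp(\sum_\sigma w_\sigma\sigma(\log x))$ is only a character of $U$, so your universal family is not a $G_K$-representation. Either define it on the open subgroup $G_L\subset G_K$ given by the preimage of $U$ under $G_K\to G_K^{\ab}\to\Ocal_K^\times$ (the Sen operator is unchanged by restriction to $G_L$). Or extract $p^m$th roots over a finite \'{e}tale cover, as in Lemma~\ref{lem:wt character}.
\item The density step does not need the injectivity into $\prod_{\mfrak,r}$. What it needs is that $\Ocal(\mathbb{D})$ is reduced and that Sen polynomials commute with the base changes $\Ocal(\mathbb{D})\to A_0\to A$. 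Choose the radius of $\mathbb{D}$ first so that it contains the $\wt_\sigma(\delta)$, and then shrink $U$ to get convergence and $\exp\circ\log=\id$.
\item Your item 2 is the converse of what the proof of Proposition~\ref{prop:Sen operator HT} shows, so that citation does not apply. The vanishing follows instead from Lemma~\ref{lem:potentially HT} and Corollary~\ref{cor:Sen op HT}, because a finite-order character is trivial on some open subgroup $G_L$.
\end{enumerate}
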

\begin{proof}
	Using \cite[Proposition 5.2]{Mikami24}, we may assume that $A$ is an affinoid $L$-algebra.
	Then the claim follows from \cite[Lemma 6.2.12]{KPX14}.
\end{proof}

First, we prove the uniqueness\footnote{It was already proved in \cite[Lemma 5.12]{Mikami24} based on \cite[Theorem 6.2.14]{KPX14}. Here, we give an alternative proof.}.
\begin{proof}[Proof of the uniqueness in Theorem~\ref{thm:classification of line bundles}]
	We prove that for a continuous character $\delta\colon K^{\times}\to A^{\times}$ and a finite projective $\Acal$-module $M$ of rank $1$, if $\Ocal_{X_{\Cbb_p,\Acal}}\cong \Ocal_{X_{\Cbb_p,\Acal}}(\delta)\otimes_{\Acal}M$, then $\delta$ is trivial and $M\cong A$.
	Since the Sen polynomial of $\Ocal_{X_{\Cbb_p,\Acal}}$ is given by $T\in (K\otimes A)[T]$, we get $\wt(\delta)=0$ by Proposition~\ref{prop:Sen polynomial character}.
	Therefore, the character $\delta$ is locally constant.
	We fix a uniformizer $\pi$ of $K$.
	Let $L/K$ be the finite totally ramified abelian extension corresponding to the kernel $\Ker(K^{\times}\to K^{\times}/\pi^{\Zbb}\cong \Ocal_{K}^{\times}\xrightarrow{\delta}A^{\times})$ via local class field theory.
	Note that we can regard $\delta|_{\Ocal_K^{\times}}$ as a character of $\Gal(L/K)$.
	Then $\Ocal_{X_{\Cbb_p,\Acal}}(\delta)\otimes_{\Acal}M$ is semistable as a $G_L$-equivariant line bundle.
	Moreover, under the identification $K_0\otimes A\cong \prod_{i=0}^{f-1} \varphi_*^iA$, the finite projective $K_0\otimes A$-module\footnote{Since $L/K$ is totally ramified, we have $L_0=K_0$.} $D^L_{\st}(\Ocal_{X_{\Cbb_p,\Acal}}(\delta)\otimes_{\Acal}M)$ is isomorphic to $\prod_{i=0}^{f-1}\varphi_*^iM.$
	The action of $\Gal(L/K)$ on $D^L_{\st}(\Ocal_{X_{\Cbb_p,\Acal}}(\delta)\otimes_{\Acal}M)$ is given by $\Gal(L/K)\xrightarrow{\delta} A^{\times}\to (K_0\otimes A)^{\times}$, and the (linear) action of $\varphi^f$ on $D^L_{\st}(\Ocal_{X_{\Cbb_p,\Acal}}(\delta)\otimes_{\Acal}M)$ is given by the multiplication by $\delta(\pi)$.
	On the other hand, $D_{\st}^L(\Ocal_{X_{\Cbb_p,\Acal}})$ is trivial.
	Therefore, we obtain that $M\cong A$, $\delta(\pi)=1$, and $\delta\colon \Gal(L/K)\to A^{\times}$ is trivial.
	The latter two conditions imply that $\delta\colon K^{\times}\to A^{\times}$ is trivial, so we get the claim.
\end{proof}

Next, we prove the existence in Theorem~\ref{thm:classification of line bundles}.
By the proof of Lemma~\ref{lem:exp nilpotent}, the monodromy operator $N$ is automatically zero in the case of rank $1$, so we will not explicitly mention $N$ in what follows.

\begin{lemma}\label{lem:unramified character}
	Let $\delta\colon K^{\times}\to A^{\times}$ be a continuous character such that $\delta\vert_{\Ocal_{K}^{\times}}$ is trivial.
	Then $\Ocal_{X_{\Cbb_p,\Acal}}(\delta)$ is semistable.
	Moreover, $(D_{\st}(\Ocal_{X_{\Cbb_p,\Acal}}(\delta)),\Fil^{\bullet}D_{\dR}(\Ocal_{X_{\Cbb_p,\Acal}}(\delta)))\in \Fil_{K,A}^{\varphi,N}$ is described as follows:
	Let $\pi\in K$ be a uniformizer, and set $a=\delta(\pi)$ (which is independent of the choice of $\pi$).
	Then $D_{\st}(\Ocal_{X_{\Cbb_p,\Acal}}(\delta))\cong (K_0\otimes A)e$, where $\varphi(e)=(a,1,\ldots,1)e \in (K_0\otimes A)e\cong \left(\prod_{i=0}^{f-1}\varphi_*^iA\right)e$, and the filtration on $D_{\dR}(\Ocal_{X_{\Cbb_p,\Acal}}(\delta))$ is trivial (i.e., $\Fil^0 D_{\dR}=D_{\dR}$ and $\Fil^1D_{\dR}=0$).
\end{lemma}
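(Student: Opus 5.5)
The plan is to compute directly. When $\delta|_{\Ocal_K^{\times}}$ is trivial, the character $\delta_{\pi}\colon G_K\to A^{\times}$ of Definition~\ref{def:character type} is trivial. Hence $V=\Ocal_{X_{\Cbb_p,\Acal}}(\delta)$ is the family $\{\tilde{B}^I_{\Cbb_p,A}e\}_I$ with trivial $G_K$-action on $e$ and $\varphi(e)=(a,1,\ldots,1)e$. Independence of the choice of $\pi$ is immediate, since $\delta$ kills $\Ocal_K^{\times}$ and any two uniformizers differ by a unit; so $a=\delta(\pi)$ is well defined.

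First I will check semistability with $I=[p^{-1},1]$. Since $G_K$ fixes $e$, we have
$$V_{\log}^{I}[1/t]=\tilde{B}^{I}_{\log,\Cbb_p,A}[1/t]\,e,$$
and the corollary after Proposition~\ref{prop:log fixed point} gives $(\tilde{B}^I_{\log,\Cbb_p,A}[1/t])^{G_K}=K_0\otimes A$. It follows that
$$D_{\st}(V)=(K_0\otimes A)e,$$
and the comparison map $D_{\st}(V)\otimes_{K_0\otimes A}\tilde{B}^I_{\log,\Cbb_p,A}[1/t]\to V^I_{\log}[1/t]$ is visibly an isomorphism. So $V$ is semistable, and $N=0$, since $N(e)=0$ (as also forced in rank $1$ by Lemma~\ref{lem:exp nilpotent}).

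Next I will identify the Frobenius. By Definition~\ref{def:phi,N}, $\varphi$ on $D_{\st}(V)$ is obtained by restricting to $[1,1]$ and $[p^{-1},p^{-1}]$ and applying $\varphi\colon V^{[1,1]}\to V^{[p^{-1},p^{-1}]}$. The element $e$ is compatible with all restrictions and satisfies $\varphi(e)=(a,1,\ldots,1)e$, where the coefficient lies in $K_0\otimes A\cong\prod_i\varphi^i_*A$ via \eqref{K_0 decomposition}. Restricted to $G_K$-invariants, that decomposition is just the decomposition $K_0\otimes A\cong\prod_{i}\varphi^i_*A$, so the formula transports verbatim. The only care needed here is bookkeeping that the embedding $\breve{\Qbb}_p\hookrightarrow\tilde{B}^I_{\Cbb_p}$ is compatible with the identification; I expect this to be the main (minor) obstacle.

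Finally, for the filtration: by Lemma~\ref{lem:log-crys implies dR}, $D_{\dR}(V)=(K\otimes A)e$ with $V_{\dR}^+=B^+_{\dR,A}e$. Since $e\in V^+_{\dR}$, we get $\Fil^0D_{\dR}=D_{\dR}$. Moreover, $\Fil^1D_{\dR}=(tB^+_{\dR,A})^{G_K}e$, which is $0$ by the computation in the lemma $B_{\dR,A}^{G_K}=K\otimes A$ (namely $(t^mB_{\dR,A}/t^n)^{G_K}=0$ for $0\notin[m,n]$, passing to the limit). Alternatively, this follows because $V_{\infty}=(\Cbb_p\otimes A)e$ is Hodge-Tate of weight $0$, combined with Proposition~\ref{prop:cdR to filtered}(3).
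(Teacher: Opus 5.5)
Your proposal is correct and is exactly the paper's approach. The paper's proof is the one line ``it directly follows from the construction of $\Ocal_{X_{\Cbb_p,\Acal}}(\delta)$''. Your computation is that same verification written out: $\delta_{\pi}$ is trivial, so $D_{\st}=(K_0\otimes A)e$ with $\varphi(e)=(a,1,\ldots,1)e$ and $N=0$; moreover $\Fil^0 D_{\dR}=D_{\dR}$ and $\Fil^1 D_{\dR}=(tB^+_{\dR,A})^{G_K}e=0$.
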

\begin{proof}
	It directly follows from the construction of $\Ocal_{X_{\Cbb_p,\Acal}}(\delta)$.
\end{proof}

\begin{lemma}\label{lem:LT character}
	Each $\sigma \in \Sigma$ induces the continuous character $\sigma\colon K^{\times}\to E^{\times}$.
	Then  $\Ocal_{X_{\Cbb_p,\Acal}}(\sigma)$ is semistable.
	Moreover, we have $D_{\st}(\Ocal_{X_{\Cbb_p,\Acal}}(\sigma))\cong (K_0\otimes A)e$, where $\varphi(e)=e$. 
	Under the decomposition $D_{\dR}(\Ocal_{X_{\Cbb_p,\Acal}}(\sigma))\cong (K\otimes A)e\cong \left(\prod_{\sigma'\in \Sigma}A_{\sigma'}\right)e$, the filtration is described as follows: for $\sigma'\neq \sigma$, the filtration on $A_{\sigma'}e$ is trivial, and for $\sigma$, it is given by $\Fil^{-1}A_{\sigma}e=A_{\sigma}e$ and $\Fil^0A_{\sigma}e=0$.
\end{lemma}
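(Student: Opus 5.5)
The idea is to reduce everything to the case $A=E$ and then transport the answer by base change. By construction, $\Ocal_{X_{\Cbb_p,\Acal}}(\sigma)$ is the pullback of $\Ocal_{X_{\Cbb_p,E_{\square}}}(\sigma)$ along $E\to A$; the character $\sigma$ takes values in $E^{\times}$. Semistability, $D_{\st}$ with its Frobenius, and the filtration on $D_{\dR}$ are all compatible with base change. For semistability and $D_{\st}$ this is Lemma~\ref{lem:log-crys basechange}, and for the filtration it is Lemma~\ref{lem:dR basechange filtered}. So it suffices to prove the statement for $A=E$, where $D_{\st}$ is a free rank-one $K_0\otimes E$-module. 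The filtration description is also compatible with $K\otimes E\to K\otimes A$, $\prod_{\sigma'}E_{\sigma'}\to\prod_{\sigma'}A_{\sigma'}$.

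For $A=E$, we compare with a classical object. Fix a uniformizer $\pi$. The line bundle $\Ocal(\sigma)$ then has $G_K$ acting through $\sigma\circ\mathrm{rec}$ restricted to $\Ocal_K^{\times}$, and $\varphi(e)=(\sigma(\pi),1,\dots,1)e$. This should be the $E$-linear Lubin--Tate character $\chi_\pi^{\sigma}$ of $G_K$, twisted by the unramified character $\sigma(\pi)$ (up to sign conventions), as in \cite[Example 6.2.6]{KPX14} or \cite[\S 5]{Mikami24}.

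A cleaner route is to realise $\Ocal(\sigma)$ inside the weakly admissible picture. Let $D_0=(K_0\otimes E)e$ with $\varphi(e)=e$ and $N=0$, and put on $D=(K\otimes E)e$ the filtration described in the statement, with the jump in the $\sigma$-component. Theorem~\ref{thm:dR filtered phi N Gal-module}(1) produces a semistable line bundle $V(D_0,\Fil^{\bullet}D)$. By the uniqueness part already proved, it is enough to show $V(D_0,\Fil^{\bullet}D)\cong\Ocal(\sigma)\otimes M$, that is, to identify this line bundle with $\Ocal(\sigma)$.

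To make the identification, note that $V(D_0,\Fil^\bullet D)$ is obtained from the trivial bundle $\Ocal$ by a Beauville--Laszlo modification (Lemma~\ref{lem:Beauville-Laszlo}) at the $\varphi$-orbit of $x_\infty$. The modification is of length one, in the $\sigma$-component only. I would then write down an explicit section: the Lubin--Tate period $t_{\pi,\sigma}$, namely the $\sigma$-component of $\log_{\mathrm{LT}}$ of the Lubin--Tate element. It lies in $\tilde{B}^I_{\Cbb_p}\otimes_{K_0}E$, satisfies $g(t_{\pi,\sigma})=\sigma(\chi_\pi(g))t_{\pi,\sigma}$ and $\varphi^f(t_{\pi,\sigma})=\sigma(\pi)t_{\pi,\sigma}$, and vanishes to order one exactly in the $\sigma$-component at $x_\infty$ (Colmez, Fourquaux; cf.\ \cite[Lemma 6.2.12]{KPX14}). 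Multiplying by $t_{\pi,\sigma}$ then gives a $G_K$- and $\varphi$-equivariant isomorphism between $\Ocal(\sigma)$ and the modification of $\Ocal$. Reading off $D_{\st}$ gives $\varphi(e)=e$, with $e=t_{\pi,\sigma}^{-1}e_\sigma$ suitably normalised across the $f$ Frobenius-components. Reading off $D_{\dR}$ gives $\Fil^{-1}=A_\sigma e$ and $\Fil^0=0$ in the $\sigma$-component, and trivial filtrations elsewhere.

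The main obstacle is getting the normalisations to match exactly: the sign of the Hodge--Tate weight (which the statement puts at $-1$), and the Frobenius eigenvalue $(\sigma(\pi),1,\dots,1)$ being exactly absorbed by $\varphi(t_{\pi,\sigma})$ once it is distributed across the decomposition \eqref{K_0 decomposition}. I would check this against the Sen polynomial from Proposition~\ref{prop:Sen polynomial character}: it gives $\wt(\sigma)=(\delta_{\sigma'\sigma})$, which fixes the sign and location of the filtration jump.
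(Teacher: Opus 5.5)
Your proposal is correct, but it argues differently from the paper. The paper's proof is a two-line citation. It combines the unramified case (Lemma~\ref{lem:unramified character}) with the computation of the Lubin--Tate-type rank-one module in \cite[Example 6.2.6 (1)]{KPX14}. That computation is phrased for $(\varphi,\Gamma_K)$-modules over the Robba ring and is implicitly transported here through Theorem~\ref{thm:comparison with the classical definition} and base change.

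You stay on the Fargues--Fontaine side instead:
\begin{itemize}
\item You reduce to $A=E$ using Lemmas~\ref{lem:log-crys basechange} and~\ref{lem:dR basechange filtered}.
\item You build $V(D_0,\Fil^{\bullet}D)$ with the paper's own equivalence, Theorem~\ref{thm:dR filtered phi N Gal-module}, and recognise it as the length-one Beauville--Laszlo modification of $\Ocal$ in the $\sigma$-slot along $\varphi^{\Zbb}(x_\infty)$.
\item You identify this modification with $\Ocal(\sigma)$ through the section built from $t_{\pi,\sigma}$. This section sits in the component indexed by $\sigma|_{K_0}$, is propagated to the other components by $\varphi$, and is rescaled by $\sigma(\pi)^{-1}$ on the components preceding it, so that the Frobenius matrix becomes $(\sigma(\pi),1,\ldots,1)$.
\end{itemize}
Because $\varphi^f(t_{\pi,\sigma})=\sigma(\pi)t_{\pi,\sigma}$, the unramified part of $\sigma$ is absorbed directly, so you never need Lemma~\ref{lem:unramified character}. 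Your use of Proposition~\ref{prop:Sen polynomial character} to fix the class-field-theory sign is legitimate: with the opposite normalisation, $\Ocal(\sigma)$ would have Sen weight $-1$ at $\sigma$ and $D_{\st}$ would not have $\varphi=1$.

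Your route costs you the analytic facts about Lubin--Tate periods, which are exactly what the KPX example packages:
\begin{itemize}
\item $t_{\pi,\sigma}$ lies in $\tilde{B}^I_{\Cbb_p}\otimes_{K_0}E$ for every $I$;
\item its only zeros are simple ones on $\varphi^{\Zbb}(x_\infty)$ in the $\sigma$-slot;
\item it becomes a unit after inverting $t$.
\end{itemize}
In exchange you get a self-contained argument inside the paper's framework, with the normalisations made explicit and checkable.

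One small correction: the appeal to ``the uniqueness part'' is superfluous. Over $E$ every rank-one projective module is free. Once the explicit section gives $\Ocal(\sigma)\cong V(D_0,\Fil^{\bullet}D)$, Theorem~\ref{thm:dR filtered phi N Gal-module} yields semistability, $D_{\st}$ with $\varphi=1$, and the filtration all at once. Do not shortcut through Proposition~\ref{prop:dR character type}, which itself relies on this lemma.
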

\begin{proof}
	This follows from Lemma~\ref{lem:unramified character} and \cite[Example 6.2.6 (1)]{KPX14}.
\end{proof}

\begin{lemma}\label{lem:descent to ab ext}
For $(D^{\unr}, \Fil^{\bullet}D)\in \Fil_{K,A}^{\varphi,N,\Gal_K}$ of constant rank $1$, there exists a finite totally ramified abelian extension $L/K$ such that $(D^{\unr}, \Fil^{\bullet}D)$ lies in the essential image of $\iota_L\colon \Fil_{L/K,A}^{\varphi,N,\Gal(L/K)}\to \Fil_{K,A}^{\varphi,N,\Gal_K}.$
\end{lemma}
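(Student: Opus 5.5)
By Lemma~\ref{lemma:MF transition} (3), we first choose a finite Galois extension $L'/K$ and an object $(D_{L'_0},\Fil^{\bullet}D)\in \Fil_{L'/K,A}^{\varphi,N,\Gal(L'/K)}$ with $\iota_{L'}(D_{L'_0},\Fil^{\bullet}D)\cong (D^{\unr},\Fil^{\bullet}D)$. Equivalently, we use the smooth $G_K$-action on the rank-one module $D^{\unr}$ directly. The plan is to show that the inertia subgroup $I_K\subset G_K$ acts on $D^{\unr}$ through a finite abelian quotient. Such a quotient cuts out a totally ramified abelian extension $L$ of $K$ (after an unramified twist). We then descend along $\Gal(K_0^{\unr}/L_0)$ with $L_0=K_0$.

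\textbf{Step 1: reduction to inertia.} The filtration plays no role, since it lives on $D$ and $\iota_L$ leaves it unchanged. We may work Zariski-locally on $\Spec(K_0\otimes A)$, or use that $D^{\unr}$ has constant rank $1$. Since $I_K=G_{K^{\unr}}$ acts $K_0^{\unr}\otimes A$-linearly on $D^{\unr}$, this action is given by a character $\rho\colon I_K\to (K_0^{\unr}\otimes A)^{\times}$. The character is genuine, and not just a cocycle, because $\End(D^{\unr})=K_0^{\unr}\otimes A$ for a rank-one projective module. The action is smooth, so $\rho$ factors through a finite quotient $I_K/I_{L'}$, where we take $L'$ Galois over $K$. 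Hence $\rho$ kills the commutator subgroup $[I_K,I_K]$ together with $I_{L'}$. For $g\in G_K$ and $h\in I_K$ we have $\rho(ghg^{-1})=g(\rho(h))$, so $\ker\rho$ is normal in $G_K$.

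\textbf{Step 2: producing $L$.} Let $L^{\unr}\supset K^{\unr}$ be the fixed field of $\ker\rho$. It is a finite abelian extension of $K^{\unr}$, and it is Galois over $K$. I expect the main obstacle here: we need a totally ramified abelian $L/K$ with $LK^{\unr}\supseteq L^{\unr}$. I would argue via local class field theory. First, $\rho$ is $\varphi$-compatible, because $\varphi$ commutes with $G_K$ on $D^{\unr}$. Second, conjugation by a Frobenius lift acts on $\rho$ through the Galois action on the coefficients $K_0^{\unr}$. Together these should show that the image of $\rho$ lands in $(K_0\otimes A)^{\times}$-valued points fixed by Frobenius conjugation. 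The tame and wild parts of $I_K^{\ab}$ that survive are then coinvariants under Frobenius, which by class field theory is $\Ocal_K^{\times}$.

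Concretely, $I_K\to \Gal(K^{\ab}/K^{\unr})\cong \Ocal_K^{\times}$ identifies $\rho$ with a character of $\Ocal_K^{\times}$ with finite image. We fix a uniformizer $\pi$ and take $L$ to be the finite subextension of the Lubin--Tate extension $K_\pi$ corresponding to the kernel of this character. This $L$ is totally ramified and abelian over $K$.

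\textbf{Step 3: descent.} With $L$ chosen, $G_{L^{\unr}}=G_L\cap I_K$ acts trivially on $D^{\unr}$. The group $\Gal(L^{\unr}/L)\cong\Gal(K_0^{\unr}/K_0)$ then acts semilinearly on $D^{\unr}$, since $L_0=K_0$. We set $D_{L_0}=(D^{\unr})^{\Gal(L^{\unr}/L)}$. Exactly as in the proof of Lemma~\ref{lemma:MF transition} (3), Galois descent gives $K_0^{\unr}\otimes_{K_0}D_{L_0}\cong D^{\unr}$, with $\Gal(L/K)$, $\varphi$ and $N$ induced on $D_{L_0}$. This gives $(D_{L_0},\Fil^{\bullet}D)\in\Fil_{L/K,A}^{\varphi,N,\Gal(L/K)}$ with $\iota_L(D_{L_0},\Fil^{\bullet}D)\cong(D^{\unr},\Fil^{\bullet}D)$.

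If the Frobenius-invariance argument in Step 2 fails, the fallback is more direct. We twist $D^{\unr}$ by the rank-one unramified object of Lemma~\ref{lem:unramified character} and by characters $\delta$ of $\Gal(K_\pi/K)$, viewed as objects via Lemma~\ref{lem:LT character} and class field theory. Such twists can kill the inertia character, reducing to the unramified case, where the statement holds with $L=K$.
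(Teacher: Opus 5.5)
Your proposal is correct, and its overall architecture coincides with the paper's. You show that the $G_K$-action on $D^{\unr}$ factors through an abelian quotient, use local class field theory ($K^{\ab}=K_\pi K^{\unr}$) to find a finite totally ramified abelian $L/K$ with $G_{LK^{\unr}}$ acting trivially, and then descend exactly as in Lemma~\ref{lemma:MF transition}~(3).

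The difference lies in how the Frobenius-twisted commutators are killed. The paper linearizes the action of the whole Weil group via $\sigma\mapsto\varphi^{d(\sigma)}\sigma$. Since $\varphi$ commutes with $G_K$, this is a continuous homomorphism $W_K\to(K_0^{\unr}\otimes A)^{\times}$ into a commutative group, so $\Gal(\bar{K}/K^{\ab})=\Ker(W_K\to W_K^{\ab})$ acts trivially at once. You instead restrict to $I_K$, where the action is already linear, and use $\varphi$-equivariance.

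Your Step~2 is stated only tentatively, but it does go through. For all $x\in D^{\unr}$ we have $\varphi(\rho(h))\varphi(x)=\varphi(hx)=h\varphi(x)=\rho(h)\varphi(x)$. Since $\varphi$ is bijective and $D^{\unr}$ is faithful, this forces $\rho(h)\in(K_0^{\unr}\otimes A)^{\varphi=1}=A$. Hence $\rho(ghg^{-1})=g(\rho(h))=\rho(h)$ for every $g\in G_K$. So $G_K/\Ker\rho$ is an extension of the procyclic group $G_K/I_K\cong\hat{\Zbb}$ by the central subgroup $I_K/\Ker\rho$, and is therefore abelian. Thus the fixed field of $\Ker\rho$ lies in $K^{\ab}$, and $\rho$ factors through $\Gal(K^{\ab}/K^{\unr})\cong\Ocal_K^{\times}$, as you claim.

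Your version gives the slightly finer fact that the inertia character is $A^{\times}$-valued, at the cost of this small group-theoretic step, which the paper's Weil-group packaging absorbs. The twisting fallback is unnecessary. It would also not really bypass anything, since matching $\rho$ by a twist already presupposes that $\rho$ comes from a character of $\Ocal_K^{\times}$.
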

\begin{proof}
Let $K^{\ab}$ denote the maximal abelian extension of $K$, and set $H=\Gal(\bar{K}/K^{\ab})$.
First, we prove that the action of $H$ on $D^{\unr}$ is trivial.
Let $W_K$ denote the Weil group of $K$.
Note that $H\subset W_K$ and $H=\Ker(W_K\to W_K^{\ab})$.
For $\sigma \in W_K$, we define $d(\sigma)\in \Zbb$ so that $\varphi^{-d(\sigma)}=\sigma\vert_{K_0^{\unr}},$ where $\varphi$ is the $p$th power Frobenius automorphism of $K_0^{\unr}$.
Then we obtain the $K_0^{\unr}\otimes A$-linear automorphism
$$\varphi^{d(\sigma)}\sigma \colon D^{\unr}\to D^{\unr}.$$
Since the actions of $\varphi$ and $W_K\subset G_K$ on $D^{\unr}$ commute, and $D^{\unr}$ is a finite projective $K_0^{\unr}\otimes A$-module of constant rank $1$, it defines a continuous character
$$W_K\to (K_0^{\unr}\otimes A)^{\times}.$$
Therefore, for $\sigma \in H$, the automorphism $\varphi^{d(\sigma)}\sigma$ on $D^{\unr}$ is trivial.
Since $d(\sigma)=0$ for $\sigma \in H$, we find that the action of $H$ on $D^{\unr}$ is trivial.
Since the action of $G_K^{\ab}=\Gal(K^{\ab}/K)$ on the finite projective $K_0^{\unr}\otimes A$-module $D^{\unr}$ is smooth, there exists a finite extension $L'/K^{\unr}$ such that $L'\subset K^{\ab}$ and the action of $G_{L'}$ on $D^{\unr}$ is trivial.
By local class field theory, there is a finite totally ramified abelian extension $L/K$ such that $L'\subset L^{\unr}$.
Then by the same argument as in the proof of Lemma~\ref{lemma:MF transition}, we find that $(D^{\unr}, \Fil^{\bullet}D)$ lies in the essential image of $\iota_L$.
\end{proof}

\begin{proposition}\label{prop:dR character type}
Let $V$ be a de Rham $G_K$-equivariant line bundle over $X_{\Cbb_p,\Acal}$.
Then $V$ is of character type.
\end{proposition}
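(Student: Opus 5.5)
By Theorem~\ref{thm:dR filtered phi N Gal-module} (3), $V$ corresponds to a rank-one object $(D^{\unr},\Fil^{\bullet}D)\in \Fil_{K,A}^{\varphi,N,\Gal_K}$. The plan is to write this object as a tensor product of the filtered modules attached by Lemma~\ref{lem:unramified character} and Lemma~\ref{lem:LT character} to unramified characters and to the characters $\sigma$, together with a locally constant character of $\Ocal_K^{\times}$ and an invertible $A$-module $M$. Since the equivalence is symmetric monoidal and compatible with Tate-type twists, this will express $V$ as $\Ocal_{X_{\Cbb_p,\Acal}}(\delta)\otimes_{\Acal}M$.

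\textbf{Step 1 (filtration).} Since $V$ is de Rham, it is Hodge--Tate by Proposition~\ref{prop:dR implies HT}. Hence $D=D_{\dR}(V)\cong\prod_{\sigma}D_{\sigma}$ is a rank-one projective module over $K\otimes A\cong\prod_\sigma A_\sigma$, with a filtration whose graded pieces are finite projective. Each $D_\sigma$ is an invertible $A$-module, so on each connected component of $\Spec A$ its filtration jumps at a single integer $-k_\sigma$. We may therefore assume $\Spec A$ is connected, so each $k_\sigma$ is a constant integer. Twisting by $\Ocal_{X_{\Cbb_p,\Acal}}(\prod_\sigma \sigma^{-k_\sigma})$, whose filtration is described by Lemma~\ref{lem:LT character}, and using tensor compatibility, we reduce to the case where $\Fil^0D=D$ and $\Fil^1D=0$, i.e.\ the filtration is trivial.

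\textbf{Step 2 (Galois part).} By Lemma~\ref{lem:descent to ab ext}, the object comes from $\Fil_{L/K,A}^{\varphi,N,\Gal(L/K)}$ for some finite totally ramified abelian extension $L/K$; then $L_0=K_0$ and $N=0$. So $D_{L_0}$ is a rank-one projective $K_0\otimes A$-module carrying commuting semilinear actions of $\varphi$ and of $\Gal(L/K)$; since $L_0=K_0$, the $\Gal(L/K)$-action is in fact $K_0\otimes A$-linear.
\begin{itemize}
\item The action of $\Gal(L/K)$ is a character $\Gal(L/K)\to(K_0\otimes A)^\times$ commuting with $\varphi$, so it lands in $((K_0\otimes A)^{\times})^{\varphi=1}=A^{\times}$. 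Via local class field theory this gives a locally constant character $\delta_1$ of $\Ocal_K^{\times}$, extended to $K^\times$ by $\delta_1(\pi)=1$.
\item Using $K_0\otimes A\cong\prod_{i}\varphi^i_*A$, write $D_{L_0}=\prod_i M_i$. The map $\varphi$ gives isomorphisms $M_i\cong M_{i+1}$ of $A$-modules, so $D_{L_0}\cong\prod_i\varphi^i_*M$ with $M=M_0$. After normalizing, $\varphi$ acts as $(a,1,\ldots,1)$ for some $a\in A^{\times}$, namely the scalar by which the $A$-linear automorphism $\varphi^f$ acts on $M$.
\end{itemize}

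\textbf{Step 3 (identification).} Put $\delta(\pi)=a$ and $\delta|_{\Ocal_K^\times}=\delta_1$. Comparing with Lemma~\ref{lem:unramified character} and with the computation in the uniqueness proof (the $\Gal(L/K)$-action via $\delta$ and $\varphi^f$ acting by $\delta(\pi)$), we expect the filtered module of $\Ocal_{X_{\Cbb_p,\Acal}}(\delta)\otimes_{\Acal}M$ to coincide with the one above. Theorem~\ref{thm:dR filtered phi N Gal-module} then gives the isomorphism $V\cong\Ocal_{X_{\Cbb_p,\Acal}}(\delta)\otimes_{\Acal}M$.

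\textbf{Main obstacle.} The delicate point is this last step for characters that are nontrivial on $\Ocal_K^\times$: one must check that $\Ocal_{X_{\Cbb_p,\Acal}}(\delta)$, with $\delta$ of finite order on $\Ocal_K^\times$, is semistable over $L$ with exactly the stated $(\varphi,\Gal(L/K))$-structure. I would argue directly from Definition~\ref{def:character type}, as in the uniqueness proof. I expect it to be routine but bookkeeping-heavy, in particular matching the normalization of local class field theory and the choice of $\pi$.
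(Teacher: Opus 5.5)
Your outline is the paper's argument. You go through the equivalence of Theorem~\ref{thm:dR filtered phi N Gal-module}, remove the filtration with the characters of Lemma~\ref{lem:LT character}, and descend to a totally ramified abelian $L/K$ by Lemma~\ref{lem:descent to ab ext}. You then write $D_0\cong\prod_i\varphi^i_*M$ with $\varphi$ acting through $(a,1,\ldots,1)$ and $\Gal(L/K)$ through an $A^{\times}$-valued character, and set $\delta(\pi)=a$, $\delta|_{\Ocal_K^{\times}}=\delta_1$. Twisting before rather than after the descent changes nothing.

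The step that does not work as written is Step 3: you never fix the uniformizer $\pi$, and the character $\delta$ you define depends on it. The paper takes $\pi\in\Ker(K^{\times}\to\Gal(L/K))$, i.e.\ $\pi\in N_{L/K}(L^{\times})$; such a $\pi$ exists because $\Ocal_K^{\times}\to\Gal(L/K)$ is surjective. Suppose instead you use $\pi'=\pi u$ with $u\in\Ocal_K^{\times}$ and $\delta_1(u)\neq 1$. The resulting $\delta'$ satisfies $\delta'(\pi)=a\,\delta_1(u)^{-1}\neq a=\delta(\pi)$, so $\delta'\neq\delta$. By the uniqueness part of Theorem~\ref{thm:classification of line bundles} (proved independently of this proposition), $\Ocal_{X_{\Cbb_p,\Acal}}(\delta')\otimes_{\Acal}M\not\cong V$. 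The computation in the uniqueness proof that you want to compare with hides exactly this point: there $L$ is defined from $\delta$ and $\pi$, so $\pi$ is automatically a norm from $L$.

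Once $\pi$ is a norm from $L$, your ``main obstacle'' is immediate rather than bookkeeping-heavy. The reciprocity image of $\pi$ acts trivially on $L$, so the character $\delta_\pi$ of Definition~\ref{def:character type} is trivial on $G_L$ and induces on $\Gal(L/K)$ the same character you read off from $D_0$. Hence the basis vector $e$ is $G_L$-invariant, and
$D^L_{\st}(\Ocal_{X_{\Cbb_p,\Acal}}(\delta)\otimes_{\Acal}M)=(K_0\otimes A)e\otimes_A M\cong\prod_i\varphi^i_*M$.
Here $\varphi$ is given by $(a,1,\ldots,1)$, $\Gal(L/K)$ acts through the same character as on $D_0$, and the filtration is trivial. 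Theorem~\ref{thm:dR filtered phi N Gal-module}~(2) then gives $V\cong\Ocal_{X_{\Cbb_p,\Acal}}(\delta)\otimes_{\Acal}M$. No normalization issue from local class field theory arises, as long as the same reciprocity map is used both to define $\delta_\pi$ and to turn the $\Gal(L/K)$-character into $\delta_1$.
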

\begin{proof}
	It is enough to show that for any $(D^{\unr}, \Fil^{\bullet}D)\in \Fil_{K,A}^{\varphi,N,\Gal_K}$ of constant rank $1$, $V(D^{\unr}, \Fil^{\bullet}D)$ is of character type.
	By Lemma~\ref{lem:descent to ab ext}, there exist a finite totally ramified abelian extension $L/K$ and $(D_{0},\Fil^{\bullet}D)\in \Fil_{L/K,A}^{\varphi,N,\Gal(L/K)}$ such that $\iota_L(D_{0},\Fil^{\bullet}D)=(D^{\unr}, \Fil^{\bullet}D)$.
	Let us prove that $V(D_{0},\Fil^{\bullet}D)$ is of character type.
	By Lemma~\ref{lem:LT character}, we may assume that the filtration $\Fil^{\bullet}D$ is trivial.
	We decompose the finite projective $K_0\otimes A$-module $D_0$ of rank $1$ into a product $D_0=\prod_{i=0}^{f-1} D_0^i$ of finite projective $\varphi_*^iA$-modules of rank $1$.
	We set $M=D_0^0$.
	Since $\varphi$ acts transitively on the factors of the product, we get 
	an isomorphism $D_0\cong \prod_{i=0}^{f-1} \varphi_*^iM,$ where the action of $\varphi$ on $D_0$ is given by
	$$(x_0,\ldots,x_{f-2},x_{f-1})\mapsto (ax_{f-1}, x_0,\ldots, x_{f-2})$$
	for some $a\in A^{\times}$.
	The linear action of $\Gal(L/K)$ on $M$ induces a character $\delta_0\colon \Gal(L/K)\to A^{\times}$.
	Since the action of $\Gal(L/K)$ on $D_0$ commutes with the action of $\varphi$, $\Gal(L/K)$ acts on $D_0$ diagonally via $\delta_0$.
	In other words, $\Gal(L/K)$ acts on the finite projective $K_0\otimes A$-module $D_0$ of rank $1$ via the character $\Gal(L/K)\xrightarrow{\delta_0}A^{\times}\to (K_0\otimes A)^{\times}.$
	By local class field theory, we get the locally constant character $\Ocal_K^{\times}\to \Gal(L/K)\xrightarrow{\delta_0} A^{\times},$ 
	which, by abuse of notation, we also denote by $\delta_0$.
	We take a uniformizer $\pi$ of $K$ from the kernel $\Ker(K^{\times}\cong W_K\to \Gal(L/K))$, and define the locally constant character $\delta\colon K^{\times}\to A^{\times}$ by setting $\delta(\pi)=a$ and $\delta\vert_{\Ocal_K^{\times}}=\delta_0$.
	Then by constructions of $V(D_{0},\Fil^{\bullet}D)$ and $\Ocal_{X_{\Cbb_p,\Acal}}(\delta)$, we obtain $V(D_{0},\Fil^{\bullet}D)\cong \Ocal_{X_{\Cbb_p,\Acal}}(\delta)\otimes_{\Acal}M.$
\end{proof}

\begin{lemma}\label{lem:wt character}
	For $a=(a_{\sigma})_{\sigma}\in \prod_{\sigma\in \Sigma}A^b_{\sigma}\cong K\otimes A^b$, there exist a finite \'{e}tale faithfully flat $A$-algebra $B$ and a continuous character $\delta\colon K^{\times}\to B^{\times}$ such that $\wt(\delta)=a$.
\end{lemma}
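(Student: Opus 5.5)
The plan is to build $\delta$ in three layers: first on a small open subgroup $U\subset\Ocal_K^{\times}$ by an explicit exponential formula, then on all of $\Ocal_K^{\times}$ after adjoining roots (this is where the finite étale cover $B$ enters), and finally on $K^{\times}$ by prescribing $\delta(\pi)=1$. Since $d\delta$ only depends on $\delta|_U$ for any open $U$, the weight condition $\wt(\delta)=a$ will be arranged entirely on $U$.

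\emph{Step 1: a convergent exponential on $U$.} Choose an affinoid $\Qbb_p$-algebra of definition $A'\subset A$. By Lemma~\ref{lemma:bounded reduced}, each $a_\sigma\in A^b$ is integral over $A'$, so $A''=A'[(a_\sigma)_\sigma]\subset A$ is finite over $A'$. Hence $A''$ is again an affinoid $\Qbb_p$-algebra, and it is still an algebra of definition of $A$. Moreover $p^k a_\sigma$ is power-bounded in $A''$ for some $k\geq 0$. Since $E\supset$ all conjugates of $K$, we have $\sigma(\log x)\in E$ for $x\in\Ocal_K^\times$. Take $n\gg 0$ and $U=1+\pi^n\Ocal_K$, small enough that the following hold:
\begin{itemize}
\item $\log\colon U\to \pi^n\Ocal_K$ is an isomorphism, so that $U\cong\Zbb_p^{[K:\Qbb_p]}$ is torsion free;
\item $\sum_\sigma a_\sigma\sigma(\log x)$ lies in $p^{c}(E\otimes A''^{\circ})$ with $c$ large enough that $\exp$ converges on it.
\end{itemize}
Then
$$\delta_U(x)=\exp\Big(\sum_\sigma a_\sigma\,\sigma(\log x)\Big)$$
is a continuous homomorphism $U\to A''^{\times}\subset A^{\times}$, and by construction $d\delta_U=\sum_\sigma a_\sigma\sigma$.

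\emph{Step 2: extending to $\Ocal_K^{\times}$.} Write $\Ocal_K^\times=\mu\times W$ with $\mu$ finite and $W\cong\Zbb_p^{d}$, where $d=[K:\Qbb_p]$. Shrinking $U$ if needed, $U\subset W$ with $W/U$ a finite $p$-group, so $p^mW\subset U$ for some $m$. Fix a $\Zbb_p$-basis $w_1,\ldots,w_d$ of $W$ and put $c_i=\delta_U(p^mw_i)\in A^\times$. Let
$$B=A[x_1,\ldots,x_d]/(x_i^{p^m}-c_i).$$
Since each $c_i$ is a unit and $p$ is invertible, $B$ is finite étale and free, hence faithfully flat, over $A$; it is again algebraic-affinoid. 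Define $\delta$ on $W$ by $w_i\mapsto x_i$, extended $\Zbb_p$-linearly. Continuity holds because $\delta$ agrees with $\delta_U$ on the open subgroup $p^mW$ and $B$ is finite over $A$. The remaining point is that this extension restricts to $\delta_U$ on $U$ (not just on $p^mW$). This needs care: one should choose the roots $x_i$ compatibly with $\delta_U$ on $U/p^mW$. The fix I would try first is to directly adjoin roots for a basis of $U$ adapted to $W$ (Smith normal form of $U\subset W$), i.e.\ take bases with $U=\bigoplus p^{m_i}\Zbb_p w_i$ and adjoin $p^{m_i}$-th roots of $\delta_U(p^{m_i}w_i)$. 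With this choice the extension restricts exactly to $\delta_U$. Extend $\delta$ trivially on $\mu$.

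\emph{Step 3: extending to $K^{\times}$ and conclusion.} Set $\delta(\pi)=1$ on the decomposition $K^{\times}=\pi^{\Zbb}\times\Ocal_K^{\times}$. The derivative is computed on $U$, so $\wt(\delta)=a$ in $K\otimes B$. I expect the main obstacle to be Step 1: justifying uniform convergence of the exponential in the condensed/algebraic-affinoid setting. This is exactly where the hypothesis $a\in A^b$ (rather than merely $a\in A$) is needed, and it reduces, via the affinoid algebra of definition $A''$, to the classical Banach estimate.
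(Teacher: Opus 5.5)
Your proposal is correct and follows essentially the same route as the paper. Both pass to an affinoid algebra of definition containing the $a_\sigma$ (the paper cites \cite[Lemma 2.17 (4)]{Mikami24}, where you argue via integrality of $A^b$ over $A'$), define $\delta=\exp(a(\log x))$ on a small open subgroup, adjoin $p^m$-th roots of its values on generators to extend to $\Gamma\cong\Zbb_p^d$, and set $\delta=1$ on $\mu$ and on $\pi$. The paper sidesteps your compatibility concern by taking the small subgroup to be $p^m\Gamma$ from the start; in any case, agreement on the open subgroup $p^mW$ already forces $\wt(\delta)=a$, so your Smith normal form refinement is harmless but unnecessary.
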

\begin{proof}
	By \cite[Lemma 2.17 (4)]{Mikami24}, there exists an affinoid $E$-algebra $A'$ of definition of $A$ such that $a_{\sigma}\in A'_{\sigma}$ for any $\sigma \in \Sigma$.
	Therefore, by replacing $A$ with $A'$, we may assume that $A$ is an affinoid $E$-algebra.

    We fix a uniformizer $\pi$ of $K$.
    Recall that the multiplicative group $K^{\times}$ decomposes as $\pi^{\Zbb} \times \mu \times \Gamma$, where $\mu$ is the finite group of roots of unity in $K$ and $\Gamma\cong \Zbb_p^d$ is a free $\Zbb_p$-module of rank $d = [K:\Qbb_p]$.
    We view $a$ as the $\Qbb_p$-linear map $\sum_{\sigma\in \Sigma} a_{\sigma}\sigma \colon K\to A$.
    For an integer $m \geq 0$, let $U_m$ be the open subgroup of $\Gamma$ corresponding to $(p^m \Zbb_p)^d$ under the identification $\Gamma\cong \Zbb_p^d$.
    For $m$ sufficiently large, the $p$-adic logarithm $\log \colon U_m \to K$ is well-defined, and for any $x \in U_m$, the series $\exp(a(\log x)) = \sum_{k=0}^{\infty} \frac{a(\log x)^k}{k!}$ converges in $A^{\times}$.
    We fix such an integer $m$.
    Then the map $\delta \colon U_m \to A^{\times}$ given by $x \mapsto \exp(a(\log x))$ is a continuous character of weight $a$.

    Let $\gamma_1, \ldots, \gamma_d$ be topological generators of $\Gamma$.
    Then $U_m$ is topologically generated by $\gamma_1^{p^m}, \ldots, \gamma_d^{p^m}$.
    We set $u_i = \delta(\gamma_i^{p^m}) \in A^{\times}$ for each $1\le i \le d$.
    We define the $A$-algebra $B$ as
    $$B = A[X_1, \ldots, X_d] / (X_1^{p^m} - u_1, \ldots, X_d^{p^m} - u_d),$$
    which is a finite \'{e}tale faithfully flat $A$-algebra.
    In $B$, each $X_i$ is a $p^m$th root of $u_i$.
    Therefore, we can extend $\delta$ to a continuous character on $\Gamma$ by setting $\gamma_i \mapsto X_i$.
    Finally, we extend this to a continuous character $\delta \colon K^{\times} \to B^{\times}$ by setting $\delta\vert_{\mu} = 1$ and $\delta(\pi) = 1$.
    By construction, we have $\wt(\delta)=a$, which proves the claim.
\end{proof}

\begin{proof}[Proof of Theorem~\ref{thm:classification of line bundles}]
	It remains to prove that every $G_K$-equivariant line bundle $V$ over $X_{\Cbb_p,\Acal}$ is of character type. 
	Let $T-a\in (K\otimes A)[T]$ be the Sen polynomial of $V$.
	By Theorem~\ref{thm:HTS wts are analytic}, we have $a\in K\otimes A^b$.
	Using Lemma~\ref{lem:wt character}, we obtain a finite faithfully flat $A$-algebra $B$ and a continuous character $\delta_0\colon K^{\times}\to B^{\times}$ such that $\wt(\delta_0)=a$.
	By \cite[Lemma 5.14]{Mikami24}, we may replace $A$ with $B$.
	Then the Sen polynomial of $V\otimes \Ocal_{X_{\Cbb_p,\Acal}}(\delta_0^{-1})$ is given by $T$.
	Therefore, $V\otimes \Ocal_{X_{\Cbb_p,\Acal}}(\delta_0^{-1})$ is Hodge-Tate of Hodge-Tate weight $0$.
	By Corollary~\ref{cor:HT of HT wt 0 implies dR}, it is also de Rham.
	Therefore, by Proposition~\ref{prop:dR character type}, $V\otimes \Ocal_{X_{\Cbb_p,\Acal}}(\delta_0^{-1})$ is of character type.
	Thus, $V$ is also of character type.
\end{proof}

\bibliography{p-adic_monodromy}
\end{document}